\documentclass[11pt,a4paper,reqno]{amsart}
\usepackage{amsmath,amsthm,amsfonts,amssymb,bm,wasysym}
\usepackage{comment}
\usepackage{fullpage,bbm}
\usepackage{graphicx}
\usepackage{tikz,ifthen}
\usepackage{lipsum} 
\usepackage{hyperref}
\usepackage{enumerate}
\usepackage{subcaption}
\usepackage{booktabs}
\usepackage{multirow}
\usepackage{array}
\usetikzlibrary{arrows, positioning, chains}
\newenvironment{spacedtable}[2]{%
\begingroup
\renewcommand{\arraystretch}{#1}
  \setlength{\tabcolsep}{#2}
}{%
  \endgroup
}

\usetikzlibrary{calc}
\usetikzlibrary{intersections}
%
\makeatletter
\newcommand{\storecoords}[3]{%
  \path let \p1=(#1) in
    \pgfextra{%
      \pgfmathsetmacro{\@tempx}{\x1/1pt}%
      \pgfmathsetmacro{\@tempy}{\y1/1pt}%
      \global\edef#2{\@tempx}%
      \global\edef#3{\@tempy}%
    }%
}
\makeatother
\usepackage{mathtools}
\usetikzlibrary{decorations.pathreplacing}
\usepackage{color,latexsym,amsfonts,amssymb,bbm,comment}
\usepackage{hyperref}
\usepackage{amsmath,cite}
\usepackage{amsthm}
\usepackage{fullpage}
\usepackage{graphicx}
\usepackage{etoolbox}

\usepackage{caption,subcaption}
\usepackage[left=1.9cm,right=1.9cm,top=2.5cm,bottom=2.5cm]{geometry}
\usepackage{longtable} 
\usepackage{booktabs} 
\usepackage{thmtools}
\usepackage{thm-restate}

\usepackage{cleveref}

\captionsetup[figure]{font=small}
\usepackage{makecell}
\usepackage{multirow}
\usepackage{mathtools}
\usepackage{mathrsfs}
\usepackage{pgfplots}
\usepackage[utf8]{inputenc}
\usepackage[nolist,nohyperlinks]{acronym}
\usepackage{chngcntr}
\counterwithin{table}{section}

\usepackage{imakeidx}
\makeindex[options= -f] 

\usepackage{setspace}
\setstretch{0.95}     

\usepackage{etoolbox} 



\newcommand{\interior}[1]{%
{\kern0pt#1}^{\mathrm{o}}}

\begin{acronym}
    \acro{PLP}{Poisson line process}
    \acro{MLP}{Manhattan line process}
    \acro{MLCP}{Manhattan line Cox process}
    \acro{PLCP}{Poisson line Cox process}
\end{acronym}

\setlength{\parskip}{0.1in}
\setlength{\parindent}{0pt}

\newtheorem{theorem}{Theorem}
\newtheorem{proposition}[theorem]{Proposition}
\newtheorem{corollary}[theorem]{Corollary}
\newtheorem{lemma}[theorem]{Lemma}

\newtheorem{observation}[theorem]{Observation}
\newtheorem{discussion}[theorem]{Discussion}

\newtheoremstyle{caseStyle}   
  {\topsep}                   
  {\topsep}                   
  {}                          
  {}                          
  {\bfseries}                 
  {.}                         
  {0.5em}                     
  {}                          

\theoremstyle{caseStyle}
\newcounter{proofno}
\AtBeginEnvironment{proof}{\refstepcounter{proofno}}

\newcounter{case}[subsubsection]

\renewcommand{\thecase}{Case \arabic{case}}

\newenvironment{case}[1][]{\refstepcounter{case}\par\noindent\textbf{\textit{\thecase.}} #1}
  {\par}


\newcounter{subcase}[case]
\renewcommand{\thesubcase}{Subcase \arabic{case}.\arabic{subcase}}

\newenvironment{subcase}[1][]
{\refstepcounter{subcase}\par\noindent\textbf{\textit{\thesubcase.}} #1 }{\par}

\newcounter{subsubcase}[subcase]
\renewcommand{\thesubsubcase}{Subsubcase \arabic{case}.\arabic{subcase}.\arabic{subsubcase}}

\newenvironment{subsubcase}[1][]
{\refstepcounter{subsubcase}\par\noindent\textbf{\textit{\thesubsubcase.}} #1 }{\par}


\newcommand{\startappendix}{%
  \appendix 
  \makeatletter
  \@removefromreset{case}{subsubsection}
  \@addtoreset{case}{subsection}
  \makeatother
}

\newtheoremstyle{remarkstyle}
  {0.35em\topsep}   
  {\topsep}   
  {\itshape}  
  {}          
  {\bfseries} 
  {.}         
  {.5em}      
  {}          

\theoremstyle{remarkstyle}
\newtheorem{remark}[theorem]{Remark}

\newtheoremstyle{definition} 
  {\topsep}                     
  {\topsep}                     
  {\itshape}                    
  {}                            
  {\bfseries\itshape}                   
  {.}                           
  {.5em}                        
  {}                            

\theoremstyle{definition}
\newtheorem{definition}[theorem]{Definition}

\numberwithin{equation}{section}
\numberwithin{theorem}{section}
\numberwithin{lemma}{section}
\numberwithin{proposition}{section}
\numberwithin{corollary}{section}
\numberwithin{observation}{section}
\numberwithin{definition}{section}
\numberwithin{remark}{section}
\numberwithin{discussion}{section}
\numberwithin{figure}{section}
\theoremstyle{plain}


\newcommand{\bbinom}[2]{\scalebox{0.7}{$\bigg[ \begin{array}{c} #1 \\ #2 \end{array} \bigg]$}}


\newcommand{\plainref}[1]{%
  \tikz[baseline=(X.base)] \node[inner sep=1pt, minimum width=1em, minimum height=2em, align=center](X){\ref{#1}};%
}

\newcommand{\blue}[1]{\textcolor{blue}{#1}}
\newcommand{\red}[1]{\textcolor{red}{#1}}

\newcommand{\one}   {{\mathds{1}}}

\newcommand{\Ra}{\Rightarrow}

 
\newcommand{\R}     {\mathbb{R}} 
\newcommand{\Z}     {\mathbb{Z}}
\newcommand{\N}     {\mathbb{N}}
 
\renewcommand{\P}   {\mathbb{P}} 
 
\newcommand{\E}     {\mathbb{E}}

\newcommand{\Acal}  {{\mathcal A}}
\newcommand{\Bcal}  {{\mathcal B}}
\newcommand{\Ccal}   {{\mathcal C }}

\newcommand{\Fcal}   {{\mathcal F }} 
 
\newcommand{\Hcal}   {{\mathcal H }}

\newcommand{\Lcal}   {{\mathcal L }} 
\newcommand{\Mcal}   {{\mathcal M }}

\newcommand{\Pcal}   {{\mathcal P }} 
 
\newcommand{\Rcal}   {{\mathcal R }} 
 
\newcommand{\Tcal}   {{\mathcal T }} 
\newcommand{\Ucal}   {{\mathcal U }} 
\newcommand{\Vcal}   {{\mathcal V }} 
\newcommand{\Wcal}   {{\mathcal W }}

\newcommand{\nn}   {{\nonumber}} 
\newcommand{\half}   {{\frac{1}{2}}}


\def\one{\mathbbmss{1}}

\def\a{\alpha}

\def\eps{\varepsilon}
\def\p{\phi}

\def\la{\lambda}

\def\t{{\tau}}
\def\th{\theta}
\def\x{\xi}

\def\La{\Lambda}

\setcounter{tocdepth}{1} 
\pgfplotsset{compat=1.18}

\begin{document}
\author{Fran\c{c}ois Baccelli\textsuperscript{1,2}}
\thanks{\textsuperscript{1} INRIA Paris,  \textsuperscript{2} Ecole Normale Superieure Paris, \textsuperscript{3}T\'{e}l\'{e}com Paris, Correspondence: sanjoy.jhawar@telecom-paris.fr}
\address[FB]{MATHNET, INRIA Paris, 48 Rue Barrault, 75013 Paris, France, and Ecole Normale Superieure Paris, 45 Rue d'Ulm, 75005 Paris, France}
\email{francois.baccelli@inria.fr}

\author{Sanjoy Kumar Jhawar\textsuperscript{3,1}}
\address[SKJ]{INFRES, T\'{e}l\'{e}com Paris, 19, place Marguerite Perey, 91123 Palaiseau, France, and MATHNET, INRIA Paris, 48 Rue Barrault, 75013 Paris, France}
\email{sanjoy.jhawar@telecom-paris.fr, sanjoy-kumar.jhawar@inria.fr}

\title{On a Class of Dynamical Poisson-Voronoi Tessellations}


\date{\today}

\begin{abstract}
Consider a dynamical network model featuring mobile stations on the
Euclidean plane. The initial locations of the stations are given by
a homogeneous Poisson point process. The stations are all moving at
a constant speed and in a random direction. Consider fixed users
located in the Euclidean plane, which are served by the mobile stations.
Each user stays connected to the nearest station at any given point
of time. Since the stations are moving, a user disconnects and
connects with different stations over time, by always selecting which
ever station is the closest. This gives rise to a dynamical version of
the Poisson-Voronoi tessellation. The focus of this paper is on the
sequence of ``handover'' events of a typical user, which are the
epochs when its association changes. This defines a point process on the
time-axis, the ``handover point process''. We show that this point process
is stationary and we determine its main properties, in particular its
intensity and the joint distribution of its inter-event times. We also
analyze the handover Palm distributions of several variables of
practical interest. This includes the distance to the closest mobile
stations and the point process of all other mobile stations at handover
epochs. The analysis is conducted both in the single-speed and in the
multi-speed scenarios. It leads to the identification of the three
dimensional state variables that ``Markovize'' the association dynamics.
The analysis is based on  a specific system of non-compact particles.
The motivations are in the modeling of low or medium orbit satellite
wireless communication networks. The model studied here is a planar
``caricature'' of this problem, which is initially defined on the sphere.
\end{abstract}

\maketitle

\vspace{-0.25in}

\noindent \textbf{Keywords:} Palm calculus; stochastic geometry; Poisson point process; dynamical Voronoi tessellation; handover; terrestrial networks; NTN; dynamic wireless communications; satellite communications.

\textbf{Mathematics Subject Classification (2020):} Primary 60D05; 
60G55; 
82C22; 
Secondary
60K37. 

\vspace{-0.2in}
{\small 
\tableofcontents
}
\vspace{-0.2in}

\section{Introduction}\label{sec-Moti}
Consider stations moving at constant speed and in random directions on the Euclidean plane.
A typical initial condition for the stations is the set of atoms of a homogeneous Poisson point process $\Phi$ on $\R^2$.
Consider a user located at the origin (a user located at any other locus would
see the same statistics as those described below).
This user is bound to keep connected to the closest station at any time.

Here is a set of practical questions within this setting.
At what frequency does the user have to swap from a station to another one?
These events are called {\em handovers} and the corresponding time points are called {\em handover epochs}.
More generally, what is the full structure of the handover (time) point process, for instance
the distribution of the duration between two handovers or more precisely, the joint distribution of the inter-handover times?

Here are further questions, which assume that one can define 
the state of the planar dynamical system of interest at a typical handover epoch,
or more precisely the Palm distribution of the planar dynamical system
with respect to the time handover point process:
What is the distribution of the distance from the user to the two stations involved in the handover?
What is the distribution of the point process made of the stations not involved in the handover?

For the first set of questions, it is interesting to consider the following dual dynamical
system where the stations are not mobile and the user is mobile in place, with a constant speed
and a random direction. 
Here is a first teaser for the reader: for some motion speed, are there more handovers in the mobile station, fixed
user model or in the dual fixed stations, mobile user case?

For the second set of questions, in the PPP model, the distribution, at a typical time of
the dynamical system, of the distance $R$ to the closest station is
well known to be Rayleigh, and the other stations are known to form a Poisson point process of the
same intensity outside the ball of radius $R$.
Here is a second teaser for the reader: under the Palm distribution of the
handover point process, is the distribution of the distance to the
stations involved in the handover still Rayleigh?
What is the law of the point process made of the other stations?

The aim of the machinery introduced in the present paper is to answer these questions and
many others of the same kind. 
These questions are of independent mathematical interest as the simplest and most natural questions about the dynamics of Poisson-Voronoi tessellations. They are also of central importance in the wireless communication setting
alluded to above. The frequency of handovers is important as any such
event means an interruption of the communication stream and a signaling overhead 
from the serving station to the next.
The distribution of the distance to the closest station at handover times
determines the signal power at these epochs.
That of the other stations determines the interference power.
Altogether, this allows one to determine the distribution of the signal to interference
and noise ratio (SINR) and signal to noise ratio (SNR), which in turn determine the distribution of the Shannon rate 
of the link to the user at these critical epochs. These quantities are some of the key performance metrics  in the context of satellite communication network~\cite{Baccelli-Choi}.

The model described in this article can be considered as a first step towards understanding a wide variety of dynamical communication models using unmanned aerial vehicles (UAVs)~\cite{Banagar-etal}, high altitude platform stations (HAPS)~\cite{Tanelli-etal},~\cite{Slim-etal} or constellation of low-Earth orbit (LEO) and medium-Earth orbit (MEO) satellites~\cite{Baccelli-Choi},~\cite{Choi-Baccelli-MA},~\cite{Choi-Baccelli-WC}. Communication networks based on Low Earth Orbit (LEO) or Medium Earth Orbit (MEO) satellite constellations
feature moving stations, more precisely moving on orbits. One can for instance consider the random locations of the satellites to be
a Cox point process on certain Earth orbits, at the same or different altitudes with respect to the surface of the Earth. 
The ever changing locations of the satellites and the rotation of the Earth lead to networks with
stations having a dynamics of the type described in the abstract when
devices on the Earth's surface connect to the closest visible satellite at any given time. 
The central object in this regard is the spherical dynamical tessellations 
formed by the projection of the satellite point process on the surface of the Earth.   
One of the important questions is to mathematically characterize the structure and the properties of the handover point process
under this dynamical setting. Spherical tessellations under various types of LEO constellation of satellites, were first considered in \cite{Okati-2020}. We refer the reader to \cite{Okati-thesis} and the references therein for a study of communication networks based on LEO and MEO satellite constellations, from a stochastic geometry perspective.

The objective of this work is to investigate this class of problems in a simplified setting. The first simplification is that where the spherical geometry is replaced by a planar one. The second one is that where the Cox motion alluded to above is simplified to a simple random way-point model.
These simplifications, which may look extreme, are in fact natural, or even necessary to define first steps that are
tractable in this line of thoughts. As we shall see, even the planar caricature of the problem is already challenging. 
The steps towards relaxing these simplifications are discussed in the paper in terms of future research in Section~\ref{sec-future_work}.

Consider the static scenario where all the stations are fixed and a mobile user is moving along a straight line at speed $v$, without changing its direction. In this case the user performs handover whenever it crosses the boundary of the static Voronoi tessellation. It has been shown in \cite{Baccelli_Zuyev2} using the linear contact length distribution of Voronoi tessellation for stationary point processes in \cite{L_Heinrich}, that the handover frequency is $\frac{4 v\sqrt{\la}}{\pi}$, which is expressed in terms of the intensity $\la$
of stations and the speed $v$ of the user. This result is also discussed in \cite{Baccelli_Madadi_Gustavo}, \cite{BKLZ}, \cite{Baccelli_Zuyev}, \cite{Miles}. The distribution of inter-handover time is also studied in~\cite{Salehi-Hossain}, in a multi-tier setting. Further statistics of the $2$ and higher dimensional Poisson-Voronoi tessellations have also been studied in~\cite{Gilbert},~\cite{Mecke1981},~\cite{Jasper_Moller},~\cite{Muche2010},~\cite{Miles}, where as generalizations to the case of stationary point processes have been studied in~\cite{L_Heinrich}. Mobility in terms of dynamic random geometric graphs studied in~\cite{Diaz-etal}.

In a computational geometry perspective, dynamic Voronoi tessellations were first considered in~\cite{Gowda83} in a finite setting for the study of geometric data structure using the underlying Delaunay triangulation of the Voronoi tessellation evolving over time and further explored in in~\cite{Roos-th},~\cite{Albers-Th},~\cite{Albers-Roos},~\cite{T-Roos93}. To the best of our knowledge, the current work about the dynamic Poisson-Voronoi tessellations with applications to wireless communications in a stochastic geometry setting, appears to be considered for the first time.


\section{Setting and main objects} \label{sec-Results}
Consider a grand probability space $(\Omega, \Fcal, \P)$ on which all our measures and random variables are defined. To start with a simple model on $\R^2$, consider a static user $u$ located at the origin $o=(0,0)$. At time $0$, consider stations located at all atoms of a homogeneous Poisson point process $\Phi_{\la}$ on $\R^2$ with intensity $\la$. For simplicity, we write $\Phi$ for $\Phi_\la$, as long as the intensity $\la$ is fixed. We denote the intensity measure by $\mu$ and it's density by ${\rm d} \mu=\la {\rm d} x$ and denote the probability distribution of $\Phi$ as $\P_{\Phi}$.
We refer to the locations of the mobile stations at time $0$ as their original locations ($\Phi$). At time $0$, the configuration of the atoms $\{X_i\}_{i\in \N}\subset \R^2$ of the point process $\Phi$ are such that $X_0:=\arg\min\{\vert X_i\vert \,\mbox{:}\, i\in \N\}$, where $|x|$ denotes the norm $\vert|x\vert|_2$.\label{notation:norm} This is the location of the nearest station to the user at time $0$. \label{notation:phi} \label{notation:PPsi} 

Let us now define how stations move. At time $0$, they are located at the atoms of $\Phi$. We suppose that each station is moving at a constant speed $v$, but in a random direction, chosen uniformly and independently in $(-\pi, \pi]$, with respect to the $x$-axis. The stations move without any obstacle, collision or change of direction. In the simplest model, the velocities of the mobile stations located at the atoms $\{X_i\}_{i\in \N}$ of $\Phi$, are defined as an i.i.d. collection of vectors $\{V_n\}_{n\in \N}$, where $V_n:=(v\cos \Theta_n, v\sin \Theta_n)$ and $\Theta_n\stackrel{D}{\sim}U[-\pi, \pi)$. One can think of the point process $\Phi$ as being equipped with marks $\{\Theta_i\}_{i\in \N_0}$ representing the i.i.d. directions of motion of the stations. Incorporating the direction of motion with the initial locations, we construct a marked Poisson point process $\tilde{\Phi}:=\sum_{i\in \N} \delta_{(X_i, \Theta_i)}$ with {\em intensity measure} $m_{\tilde{\Phi}}$ having density 
\[
m_{\tilde{\Phi}}({\rm d} x, {\rm d} \theta):=\la {\rm d} x \otimes \frac{1}{2\pi}{\rm d} \theta.\label{notation:m}
\]
As we will see later, our analysis remains the same in case the law for the direction of motion is arbitrary, for instance $\delta_\th$ for $\th\in [-\pi,\pi)$ fixed or $\sum_{i=1}^{n}p_i\delta_{\th_i}$, for some $\th_i\in [-\pi,\pi)$ fixed, $n\in \N$ and $p_i\in [0,1]$ such that $\sum_{i=1}^n p_i=1$. Nevertheless, we proceed with $U[-\pi,\pi)$ distributed random variables for the directions in several parts of this work.

At time $t$, the location of a mobile station, which was initially located at $X_i^0=X_i$, is $X_i^t:=X_i+V_i t$. We denote the set of locations of all the mobile stations at time $t$ by $\tilde{\Phi}^t$. For brevity, we shall write $\tilde{\Phi}^0=\Phi$. By the displacement theorem~\cite[Section 5.5]{kingman}, \cite[Theorem 2.2.17]{Baccelli-Bartek-Karray}, the point process $\Phi$ corresponding to the initial locations of the stations and the new locations, given by $\tilde{\Phi}^t$, have the same distribution for all times $t$. The displacement theorem also guarantees that there is no collision almost surely under the dynamics. For completeness, we shall detail the result in Appendix~\ref{sec-appendix} (Proposition~\ref{proposition:Phi-Phi-t}). We refer the reader to~\cite{Alex-Beutel}, for an interactive simulation of the joint mobility of the stations.

The collective motion of the points on the plane gives rise to a {\em dynamical Voronoi tessellation} on the plane. Due to the motion of the stations, the corresponding Voronoi cells change their shapes continuously. The user being fixed at the origin, the nearest station to the user gets changed to a new one whichever becomes the closest. In other words, the Voronoi cell that covers the user is replaced by a new cell corresponding to the station nearest to the user after some time. The old cell spends a random amount of time covering the user, before another one replaces it. We call the event of change of cell a {\em handover} and the change of cell events create a point process on the time axis, called as the {\em handover point process}. Indeed, as we will show, under this dynamics, we have a countable collection of handover epochs without accumulation on the time axis, for each realization of the marked Poisson point process $\tilde{\Phi}$.  The handover point process is denoted by $\Vcal_\la:=\sum_{t\in S}\delta_t$, where informally,
\begin{equation}
S:=\left\{t\in \R \,\mbox{:}\, \exists (X_i, \Theta_i), (X_j, \Theta_j) \text{ atoms of } \tilde{\Phi}, \text{ such that } |X_i^t|=|X_j^t| \text{ and } \tilde{\Phi}^t(B_{|X_i^t|}(o))=0\right\}.\nn
\end{equation}
We write $B_r(x)$ for the open ball of radius $r$, centered at $x\in \R^2$.\label{notation:ball}\! For simplicity we shall write $\Vcal$ for $\Vcal_\la$. We are interested in the statistical properties of the point process $\Vcal$, for example its stationarity and its intensity, denoted by $\la_\Vcal$. \label{notation:Vcal1} \label{notation:LVcal1}

Another important quantity we are interested in this context is the duration between two successive handovers, namely the {\em inter-handover time}. In terms of the dynamic Voronoi tessellation, the inter-handover time is nothing but the time spent by a station while serving the user. The structure of the ``history'' of the inter-handover times is important as well.  

It is equally important to consider the case when the stations are moving at different speeds and the directions are sampled uniformly at random as above. Suppose the set of speed is $v_{[n]}:=\{v_i\}_{i\in [n]}$ for some $n\in \N$. For each $i\in [n]$, a station of type $i$ moves with speed $v_i$. The initial locations of the stations given by the point process $\Phi$ can be thought of as a superposition $\sum_{i\in [n]} \Phi^{i}$ of the collection of Poisson point processes $\Phi^i$ with intensity $\la_i$, corresponding to the stations of type $i$. The total  intensity of $\Phi$ is $\la=\sum_{i\in [n]}\la_i$. The directions of motion are again chosen uniformly at random as above. Although, the directions having arbitrary distribution give us the same result. The displacement theorem (similar to the one in single-speed case in Proposition~\ref{proposition:Phi-Phi-t}) can also be shown to hold, even in the multi-speed setting. 

Like in the single-speed setting, we will suppose that the system obeys the  same nearest neighbor association rule. The corresponding dynamical Voronoi tessellation is of slightly different nature. The Voronoi cells corresponding to the faster stations pierce through the tessellation while respecting the space required for other cells.  In short, the handover point process consists of points representing handover between stations of the same type and handover between  stations of different types. The inter-handover time is again defined as the duration between two consecutive handovers, which is essentially the time spent by a station, of certain type, while serving a user.  

The multi-speed scenario serves as a natural generalization of the model with single-speed. Apart from this, the motivation for considering this type of multi-speed mobility model comes from the non-terrestrial networks. Consider a communication network consisting of LEO satellite constellation where satellites are moving in different altitude orbits and are serving the users on the Earth directly. The speeds of satellites are different at different orbital altitudes, like for instance in satellite constellation consisting of multi-altitude LEO satellites. In this type of communication networks, the user on Earth performs a ``handover'' whenever it receives a better signal than the previous one. As the power of signal is a function of distance, the best signal can be considered to be from the nearest visible satellite and in this case, the two satellite types have different velocities.    
%


\section{Questions, strategy and structure of the paper} \label{sec-Structure}
\input{Input-Structure}


\section{Handover frequency: single-speed case}\label{sec-Palm_handover}
In what follows, we devise a method allowing us to define the handover point process in a rigorous way.
For this, suppose that at time instant $0$, there is a mobile station at $X$,
an atom of $\Phi$ with polar coordinates $(R=\vert X\vert , \Psi)$, and with velocity vector $V:=(\cos\Theta, \sin\Theta)$. 
For the sake of simplicity, we assume that the stations are moving at unit speed.
The results for a general speed parameter $v$ hold up to a scaling that will be discussed below.
Let $\a\equiv(\Psi-\Theta)\pmod {2\pi}$ be the relative angle between the location vector $X$ and the velocity vector $V$. 
Our method relies on the use of a {\em distance function} (a function of the time variable) defined for each mobile stations, and 
which helps to simplify our analysis by keeping a minimum number of variables. 

For a station starting at $X$, the new location at time $t$ is $X^{t}=X+Vt$.
The distance at time $t$ from the mobile station to the user $u$ at the origin $o$ is hence
\begin{equation}
\vert X^t\vert =\left(\vert X\vert ^2+2t\vert X\vert \cos\a+ t^2\right)^{\half},
\label{eq:bird1}
\end{equation}
where $\a\equiv(\Psi-\Theta)\pmod {2\pi}$ is the relative angle. So the distance at time $t$
only depends on the initial distance $|X|$, the initial relative angle $\a$, and the time $t$. It will be convenient to use the point process $\tilde{\Phi}$ capturing the initial distances of mobile stations
along with their relative motion directions.\label{notation:phiT} It is easy to see that the marked point process
$\tilde{\Phi}:=\sum_{i\in \N}\delta_{(R_i, \a_i)}$, on the space $\R^+ \times [-\pi, \pi)$, is Poisson
with intensity measure $\mu$ of density 
\begin{equation}
{\rm d} \mu:= 2\pi r \la\,{\rm d} r \otimes \frac{1}{2\pi}{\rm d} \a.
\label{eq:mu_r}
\end{equation}
The variable $\alpha$ captures the randomness coming from the relative angle of motion of the station
with respect to the user, which is uniform on the interval $[-\pi, \pi)$.
\begin{remark}
One can also consider the direction of motion $\Theta$ having any law, for instance $\delta_\th$ for fixed $\th\in [-\pi,\pi)$, or $\sum_{i=1}^{n}p_i\delta_{\th_i}$, for fixed $\th_i\in [-\pi,\pi)$, some $n\in \N$ and $p_i\in [0,1]$ such that $\sum_{i=1}^n p_i=1$. In both cases, the relative angle $\a\equiv(\Psi-\Theta)\pmod {2\pi}$, can be proved to be a uniformly distributed random variable on $[-\pi,\pi)$.
\end{remark}
\textbf{Distance function: } For an atom $(R,\a)$ of $\tilde{\Phi}$, corresponding to a station starting at $X$, 
the distance from the user at time $t$ is 
\begin{equation}
\vert X^t\vert =\left(R ^2+2t R \cos \alpha+ t^2\right)^{\half} :=f((R,\a), t).
\label{eq:bird2a}
\end{equation}
We call $f((R,\a),\cdot)$ the {\em distance function}. As a function of the time variable $t$, it possesses a unique minimum.
Heuristically the collection of all minimum time-space locations gives
rise to a point process on the upper half plane, which we analyze in the following subsection.  

\subsection{Head point process}\label{subsection:HPP} Let $\mathbb{H}^+:=\R \times \R^+$, where the
first component represents the {\em time coordinate} and the second one the {\em height coordinate}. \label{notation:UH}
The unique minimum of the function  $f((R,\a),\cdot)$, defined in (\ref{eq:bird2a}), with respect to
the time variable, gives the distance of the mobile station to the user along it's trajectory.
Thus the nearest time and nearest location of the mobile station starting at $X$ are 
\begin{align}
(T, H)&:=\big(\arg\inf_{t\in \R}\{f((R,\a), t)\}, f(\arg\inf_{t\in \R}\{f((R,\a), t)\})\big)= \big(-R \cos\a,R \,\vert\sin\a\vert \big).
\label{eq:heada}
\end{align}
The locations $(T, H)$ are intrinsic and do not change with time, once the initial distances and the
relative angles are fixed.
\begin{definition}[Head point process]
The point process 
	\begin{equation}
		\Hcal:=\sum_{i\in \N}\delta_{(T_i, H_i)},
	\end{equation}
on $\mathbb{H}^+$, formed by the minimum time-space points $(T_i, H_i)$, as defined in (\ref{eq:heada}), associated with the atoms
$(R_i,\a_i)$ of $\tilde{\Phi}$, will be called the {\em head point process}.
\end{definition} \label{notation:Hcal}
\begin{lemma}[Mapping Lemma] 
The point process $\Hcal=\sum_{i\in \N}\delta_{(T_i, H_i)}$ is a homogeneous Poisson point process on
$\mathbb{H}^+$ with intensity measure $\nu$, where ${\rm d}\nu:={\rm d} t\otimes 2\la\, {\rm d} h$.
\label{lemma:tips_density}
\end{lemma}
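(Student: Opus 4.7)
The plan is to recognize the statement as a direct application of the Mapping Theorem for Poisson point processes (see, e.g., \cite{kingman}), together with a concrete change-of-variables computation of the pushforward measure. Since $\tilde{\Phi}$ is a Poisson point process on $\R^+ \times [-\pi,\pi)$ with intensity measure $\mu$ given in \eqref{eq:mu_r}, and $\Hcal$ is obtained as the image of $\tilde{\Phi}$ under the deterministic measurable map
\[
g:(r,\a) \longmapsto (-r\cos\a,\ r|\sin\a|),
\]
it suffices to show that the pushforward $\mu \circ g^{-1}$ equals $\nu = \d t \otimes 2\la\, \d h$ on $\mathbb{H}^+$, and to verify that this image measure is diffuse. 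The Mapping Theorem then yields that $\Hcal = g(\tilde{\Phi})$ is a Poisson point process on $\mathbb{H}^+$ with intensity $\nu$, which is the claim.

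For the pushforward computation, the key observation is that $g$ is two-to-one away from the $\a \in \{0, \pi\}$ axis: the pairs $(r,\a)$ and $(r,-\a)$ for $\a\in(0,\pi)$ are sent to the same point. I would therefore split the angular domain into the two halves $\a\in(0,\pi)$ and $\a\in(-\pi,0)$, on each of which $g$ is a smooth diffeomorphism onto $\mathbb{H}^+$ (modulo a null set). On the branch $\a\in(0,\pi)$, setting $t = -r\cos\a$ and $h = r\sin\a$, the Jacobian of $(r,\a) \mapsto (t,h)$ is
\[
\left|\det \begin{pmatrix} -\cos\a & r\sin\a \\ \sin\a & r\cos\a \end{pmatrix}\right| = r,
\]
so $\d r\, \d\a = r^{-1}\, \d t\, \d h$. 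Since the intensity density with respect to $\d r\, \d\a$ is $\la r$, the contribution of this branch to the pushforward is $\la r \cdot r^{-1} = \la$ with respect to $\d t\, \d h$. The analogous calculation for $\a\in(-\pi,0)$, where $|\sin\a|=-\sin\a$ still maps onto $\mathbb{H}^+$ with the same Jacobian, contributes another $\la$. Adding the two branches gives exactly $\d\nu = 2\la\, \d t\, \d h$. The measure $\nu$ is absolutely continuous with respect to Lebesgue measure on $\mathbb{H}^+$, hence atomless, as required for the Mapping Theorem.

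The main conceptual point---and the only genuine obstacle---is the factor of $2$, which must come from the two-to-one nature of the map induced by the absolute value $|\sin\a|$. Geometrically, this is the fact that two mobile stations whose initial positions are mirror images of each other across the line through $o$ in the direction of $V$ produce identical distance functions $t \mapsto f((r,\a),t)$ and hence identical head points $(T,H)$. Once this reflection symmetry is properly accounted for in the change of variables, the rest of the argument is a routine invocation of the Mapping Theorem; the local finiteness of $\nu$ on bounded subsets of $\mathbb{H}^+$ is immediate. No properties of $\tilde{\Phi}$ beyond its Poisson character and the explicit form of $\mu$ are used.
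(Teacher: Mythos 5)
Your proposal is correct and follows essentially the same route as the paper: both arguments split the angular domain into $\a\in(0,\pi)$ and $\a\in(-\pi,0)$, compute the Jacobian $r$ of the change of variables on each branch, and obtain the factor $2$ in ${\rm d}\nu = {\rm d}t\otimes 2\la\,{\rm d}h$ from the two-to-one nature of the map induced by $|\sin\a|$. The only cosmetic difference is that the paper packages the two branches as a superposition of two independent Poisson point processes mapped bijectively by $g_1$ and $g_2$, whereas you invoke the Mapping Theorem once for the two-to-one map; the underlying computation is identical.
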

\begin{proof}[Proof Lemma~\ref{lemma:tips_density}] Consider two Poisson point processes
$\tilde{\Phi}^1$  on $\R^+\times [0, \pi)$ and $\tilde{\Phi}^2$ on $\R^+\times [-\pi, 0)$, of
intensity measures $\mu_1, \mu_2$ respectively, where $d\mu_1:= \pi r \la\,{\rm d} r \otimes \frac{1}{\pi}{\rm d} \a$
and $d\mu_2:=\pi r \la\,{\rm d} r \otimes \frac{1}{\pi}{\rm d} \a$. The reason we need the intensity measures as defined above is that the location of mobile station at distance $r$ from the user
can be uniformly distributed on the semi-circle of radius $r$ and centered at $o$.
The point processes $\tilde{\Phi}^1, \tilde{\Phi}^2$ are two sub point processes of $\tilde{\Phi}$, and we
can write $\tilde{\Phi}:=\tilde{\Phi}^1+ \tilde{\Phi}^2$. Define two bijective maps
$g_1: \R^+\times[0, \pi) \to\mathbb{H}^+$, $g_2:\R^+\times[-\pi, 0) \to\mathbb{H}^+$ as follows:
\begin{equation}
g_1(r, \alpha):= (-r \cos\alpha, r\sin\alpha), \,\,\,g_2(r, \alpha):= (-r \cos\alpha, -r\sin\alpha).
\label{eq:g1g2}
\end{equation}
By the mapping theorem, the Poisson point processes $\tilde{\Phi}^1$ under $g_1$ and $\tilde{\Phi}^2$ under  $g_2$
map to two independent point processes $\tilde{\Phi}^1\circ g_1^{-1} $ and $ \tilde{\Phi}^2\circ g_2^{-1}$, with intensity measures
$\nu_1:=\mu_1\circ g_1^{-1}$ and $\nu_2:=\mu_2\circ g_2^{-1}$
on $\mathbb{H}^+$, respectively. Thus $\Hcal:=(\tilde{\Phi}^1\circ g_1^{-1})+ (\tilde{\Phi}^2\circ g_2^{-1})$
is also a Poisson point process with intensity measures $\nu:=\mu_1\circ g_1^{-1} \oplus \mu_2\circ g_2^{-1}$ 
on $\mathbb{H}^+$. We give below an exact expression for its intensity measure $\nu$ or it's density.

For any measurable function $f: \mathbb{H}^+\to \R$ we have, by the change of variable formula, 
\begin{align}
   \lefteqn{\int_{0}^{\infty}\int_{-\infty}^{\infty} f(t, h){\rm d} t\otimes2\la\, {\rm d} h}\nn\\
   &= 2 \la \int_{0}^{\infty}\int_{0}^{\pi} f(-r \cos \alpha, r \sin \alpha){\rm d} \a\, r\,{\rm d} r\nn\\
    &= \la \int_{0}^{\infty}\int_{0}^{\pi}f(-r \cos \alpha, r\sin \alpha){\rm d} \a\, r\,{\rm d} r +\la \int_{0}^{\infty}\int_{-\pi}^{0}f(-r \cos \alpha, -r\sin \alpha){\rm d} \a\, r\,{\rm d} r\nn\\
    &=  \int_{0}^{\infty}\int_{0}^{\pi} f(-r \cos \alpha, r\sin \alpha)\frac{1}{\pi}{\rm d} \a \otimes \pi r \la\,{\rm d} r +  \int_{0}^{\infty}\int_{-\pi}^{0} f(-r \cos \alpha, -r\sin \alpha)\frac{1}{\pi}{\rm d} \a \otimes \pi r \la\,{\rm d} r.
\label{eq:map0}
\end{align}
Using the maps $g_1, g_2$ defined in (\ref{eq:g1g2}) in (\ref{eq:map0}), we obtain that 
\begin{align}
\lefteqn{\int_{0}^{\infty}\int_{-\infty}^{\infty} f(t, h){\rm d} t\otimes2\la\, {\rm d} h}\nn\\
&=\int_{0}^{\infty}\int_{0}^{\pi}  f\circ g_1(r, \alpha)\,  \frac{1}{\pi}\, {\rm d} \a \otimes \pi r \la\,{\rm d} r +\int_{0}^{\infty}\int_{-\pi}^{0}  f\circ g_2(r, \alpha)\,  \frac{1}{\pi}\, {\rm d} \a \otimes \pi r \la\,{\rm d} r.
\label{eq:map1}
\end{align}
Above, for both maps $g_1(r, \alpha)= (-r \cos\alpha, r\sin\alpha)$ and $g_2(r, \alpha)= (-r \cos\alpha, -r\sin\alpha)$, the Jacobian is $r$. In particular for any measurable set $A\subset \mathbb{H}^+$,
\begin{align}
\E[\Hcal(A)]&=\E[\tilde{\Phi}^1\circ g_1^{-1}(A)]+ \E[\tilde{\Phi}^2\circ g_2^{-1}(A)]=\mu_1\circ g_1^{-1}(A) + \mu_2\circ g_2^{-1}(A)= \nu(A).\nn
\end{align}
The last equivalences establish the fact that the Poisson point processes $\tilde{\Phi}^1, \tilde{\Phi}^2$
on $\R^+\times[0, \pi)$ and $\R^+\times[-\pi, 0)$, with intensity measures $\mu_1, \mu_2$, respectively,
can be mapped to another Poisson point process on $\mathbb{H}^+$ of intensity measure $\nu$, with density
${\rm d}\nu:={\rm d} t\otimes 2\la\, {\rm d} h$, bijectively, through the maps $g_1, g_2$ together. On the other hand,
we get a $2$ in the density ${\rm d} t\otimes 2\la {\rm d} h$ of the intensity measure of the heads because
there exist two points, one on each of the spaces $\R^+\times[0, \pi)$ and $\R^+\times[-\pi, 0)$,
that have the same image in $\mathbb{H}^+$ under the maps $g_1, g_2$.
\end{proof}
\begin{remark}
We write the probability measure $\P_{\Hcal}$ as the law of the head point process $\Hcal$ and $\E_\Hcal$ denotes the expectation under $\P_\Hcal$.\label{notation:PalmPsiE} From now onward, our analysis will essentially rely on the head point process $\Hcal$,
instead the station point process $\tilde{\Phi}$, exploiting the power of the mapping lemma (Lemma~\ref{lemma:tips_density}).
\label{remark:Hcal-Phi}
\end{remark} \label{notation:PPsiE}
\begin{remark}~\label{remark:r1}(Speed $v\neq 1$)
Suppose the stations move at a speed $v\neq 1$. As before, we consider $\tilde{\Phi}$ as a
marked Poisson point process on $\R^+\times [-\pi, \pi]$. For any $(R,\a)\in \R^+\times [-\pi, \pi]$, 
corresponding to any station, it's nearest location to the user is $(T, H)= \left(-\frac{R}{v}\cos \a, R|\sin\a|\right)$.
Indeed, it is the time-space minimum of the distance function 
\begin{equation}
f_v((R,\a), t)= \left(R^2+2tv R \cos \alpha+ v^2t^2\right)^{\half}.
\label{eq:birda}
\end{equation}
In this case. the intensity of the head point process $\Hcal$ is given by
\[
{\rm d}\nu=v{\rm d}t\otimes 2\la\, {\rm d} h.
\]
Indeed, the maps required in Lemma~\ref{lemma:tips_density} are
\[
g_1(R, \alpha)= \left(-\frac{R}{v} \cos\alpha, R\sin\alpha\right), \,\,\,g_2(R, \alpha)= \left(-\frac{R}{v} \cos\alpha, -R\sin\alpha\right),\]
and, in (\ref{eq:map1}), the corresponding Jacobian for both maps $g_1, g_2$ turns out to be $R/v$.
This remark will be very useful in determining the handover frequency in the single-speed ($v\neq 1$) and multi-speed cases.
    \label{remark:onetov}
\end{remark}
\begin{observation}
In order to determine which station is the nearest at any given time,
it will be important to keep track of the time and distance coordinate data 
$\{\left(t, f((R,\a),t)\right):t\in \R\}\subsetneq \mathbb{H}^+$, for each station
corresponding to $(R, \a)$ in the point process $\tilde{\Phi}$.
\end{observation}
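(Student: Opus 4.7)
The observation makes two claims: a set-theoretic one that for each station $(R,\a)$ the curve $C(R,\a):=\{(t,f((R,\a),t)):t\in\R\}$ is a proper subset of $\mathbb{H}^+=\R\times\R^+$; and an operational one, that the family of such curves is the right bookkeeping object to decide, at every $t$, which station is the closest to the user at the origin. My plan is to verify the first claim by direct inspection of the distance function, and to justify the second by reformulating the nearest-station selection rule as a lower-envelope problem over the curves $C(R_i,\a_i)$.

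For the inclusion, I would start from (\ref{eq:bird2a}) and complete the square under the radical, writing $f((R,\a),t)^2=(t+R\cos\a)^2+R^2\sin^2\a$. This representation makes it immediate that $f((R,\a),t)\ge 0$ for all $t\in\R$, hence $C(R,\a)\subseteq \mathbb{H}^+$, and that the unique minimum is attained at $t=-R\cos\a$ with value $R|\sin\a|$, which is precisely the head coordinate $(T,H)$ of (\ref{eq:heada}). The inclusion is strict since $C(R,\a)$ is the graph of a continuous function, hence a one-dimensional curve, inside the two-dimensional open half plane $\mathbb{H}^+$; any point off the hyperbola branch witnesses the strictness.

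For the operational claim, the station nearest to the user at time $t$ is by definition the atom of $\tilde{\Phi}^t$ realising $\min_i|X_i^t|$, and by (\ref{eq:bird2a}) we have $|X_i^t|=f((R_i,\a_i),t)$. The identity of the serving station at time $t$ is therefore the index that attains $\min_{i\in\N} f((R_i,\a_i),t)$, i.e.\ the one whose curve $C(R_i,\a_i)$ touches the pointwise lower envelope of the family $\{C(R_i,\a_i)\}_{i\in\N}$ at abscissa $t$. This explains why the head point process $\Hcal$ alone is not enough: $\Hcal$ stores only the minimum of each curve, whereas the handover structure depends on the crossings between curves away from their own minima, which in turn require the full curves $C(R_i,\a_i)$. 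The only point needing a separate verification would be that this lower envelope is well-defined almost surely (no pathological accumulation of crossings), which is handled by the collision-free dynamics stated in Proposition~\ref{proposition:Phi-Phi-t} and will be formalised in Subsection~\ref{subsection:FC-HPP}.
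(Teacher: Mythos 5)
Your proposal is correct and matches the paper's own treatment: the Observation is stated without proof as a motivational remark, and the two points you verify (the curve $C_{(R,\a)}$ is the graph of the non-negative function $f((R,\a),\cdot)$ with unique minimum at the head $(T,H)$, hence a proper one-dimensional subset of $\mathbb{H}^+$, and the serving station at time $t$ is the index attaining the pointwise minimum) are exactly what the paper formalizes afterwards via the radial bird closed sets, the lower envelope process of Definition~\ref{definition:Lower_env}, and the half-ball condition of Lemma~\ref{lem:semicircle}. Your remark that the head point process alone is insufficient because handovers occur at crossings away from the minima is likewise consistent with the paper's construction of $\Vcal$ from pairs of heads in Subsection~\ref{subsection:FC-HPP}.
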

\begin{remark}[Distance function as a branch of a hyperbola on $\mathbb H^+$]
Consider a station initially at distance $R$ and moving at a speed $v$ with a relative angle $\a$. The equation of the distance function in (\ref{eq:birda}) can be rephrased as the equation of the hyperbola:
\begin{equation}
h^2- v^2\left(t+\frac{R}{v} \cos\alpha\right)^2
= R^2 \sin^2\alpha,
\label{eq:hyperbola-bird}
\end{equation}
in $\mathbb H^+$ in the $(t,h)$-coordinate, where $h$ denotes the distance of the station at time $t$. The center of the hyperbola is $\left(-\frac{R}{v} \cos\alpha, 0\right)$ and its vertex is $\left(-\frac{R}{v} \cos\alpha, R\vert\sin\alpha\vert\right)$.
\end{remark}
\begin{definition}[Radial bird closed set]
For each $(R,\a)$ atom of  $\tilde{\Phi}$, let \label{notation:BCset1}
\begin{equation}
C_{(R,\a)}:=\left\{(t, f((R,\a), t)): t\in \R\right\}.
\label{eq:bird2}
\end{equation} 
We call the infinite closed set $C_{(R,\a)}\subset\mathbb{H}^+$ a \textbf{radial bird closed set} or
a \textbf{radial bird particle} or simply a \textbf{radial bird}.   
\end{definition}
\begin{remark}
For each $(R,\a)$ atom of the point process $\tilde{\Phi}$, the closed set $C_{(R,\a)}$ defined by the
radial distance of the mobile station to the user at different times and the set looks like a bird, hence the name.
\end{remark}
In the following, we showcase some geometric properties of the distance function and that of the corresponding radial bird.
This is a preparation to the ``radial bird particle process'' in Subsection~\ref{subsection:BPPss}.
\subsection{Properties of the distance function} Consider an atom $(R,\a)$ of $\tilde{\Phi}$.
The corresponding distance function and radial bird, $f((R, \a), \cdot)$ and $C_{(R,\a)}$, respectively,
possess the following features:
\begin{enumerate}[1.]
\item \label{Obs2a} \textbf{Head:} the minimum time-space location $(T, H)$ is the {\em head} of the radial bird.
\vspace{0.04in}
\item \label{Obs2}\textbf{Distance to the head: }the distance function $f((R,\a), \cdot)$, which gives the distance of the station to the user at time $t$, satisfies the relation,
\begin{align}
f((R,\a),t)&= \left(R^2+2tR \cos \alpha+t^2\right)^{\half}= \left((t+R \cos \alpha)^2+ (R|\sin \alpha|)^2\right)^{\half}=\vert (t,0)-(T, H)\vert, 
\label{eq:two-point}
\end{align}
for any $t\in \R$. A key observation in (\ref{eq:two-point}) is that the distance at time $t$
to the mobile station starting at $X$ is also $\vert (t,0)-(T, H)\vert$, see Figure~\ref{figure:RB_properties}~(picture~\subref{sfig:dist2head}). This essentially means
that the radial bird closed sets are purely determined by their heads.
\begin{figure}[ht!]
\begin{subfigure}[t]{0.45\linewidth}
    \centering
\begin{tikzpicture}[scale=1,
every node/.style={scale=0.7}]
\pgftransformxscale{0.8}  
\pgftransformyscale{0.8}    
    \draw[->] (-2, 0) -- (3.2, 0) node[right] {$t$};
    \draw[-] (0, 0) -- (0, 1.58);
    \draw[-] (0, 0) -- (1.51, 0.5);
    \draw[blue, domain=-1.5:3, smooth] plot (\x, {(\x*\x-3*\x+2.5)^0.5});
    \draw[blue](1.51,0.5) node{$\bullet$};
    \draw[](2.2,0.35) node{$(T,H)$};
    \draw[red] (-1.58,0) arc (180:0:1.58);
    \draw[](0,-0.31) node{$(t,0)$};
    \draw[->] (0.9,1.9)--(0.2, 0.5);
    \draw[->] (1,1.9)--(0.7,0.3);
    \draw[](1.2,2.2) node{$|(t,0)-(T,H)|$};
    \end{tikzpicture}
    \caption{The distance of the station at time $t$ is the distance between $(t,0)$ and the head point at $(T,H)$.}
\label{sfig:dist2head}
    \end{subfigure}
    \qquad
      \begin{subfigure}[t]{0.45\linewidth}
    \centering
\begin{tikzpicture}[scale=0.9,
every node/.style={scale=0.7}]
\pgftransformxscale{0.2}  
\pgftransformyscale{0.28}    
    \draw[->] (-8, 0) -- (14, 0) node[right] {$t$};
    \draw[->] (0, -1) -- (0, 10) node[above] {$h$};
    \draw[->] (4, 0) -- (-6.3, 10);
    \draw[->] (4, 0) -- (14.3, 10);
    \draw[](4,-0.8) node{$(s,0)$};      \draw[blue, domain=-6.3:14.3, smooth] plot (\x, {(\x*\x-8*\x+18)^0.5});
       \draw[blue, domain=-6.3:14.3, smooth] plot (\x, {(\x*\x-8*\x+20)^0.5});
        \draw[blue, domain=-6.3:14.3, smooth] plot (\x, {(\x*\x-8*\x+25)^0.5});
        \draw[blue, domain=-6.3:14.3, smooth] plot (\x, {(\x*\x-8*\x+30)^0.5});
    \end{tikzpicture}
    \caption{A family of curves with their heads located at a vertical line $t=s$ and the straight lines $h=s+t$, $h=s-t$ are their asymptotes.}
    \label{sfig:asymptotes}
    \end{subfigure}
    \caption{Properties of the distance function.}
\label{figure:RB_properties}
\end{figure}
\item \label{Obs2b} \textbf{Family of curves: } Fix some $s\in \R$. For $u\in \R^+$, $g_{s, u}(t):=\vert (t,0)-(s, u)\vert$
is the height at time $t$ of the bird with head at $(s,u)$. Then $\{g_{s, u}(\cdot)\}_{u\in \R^+}$ defines the set of all
radial birds with head at $(s,u)$ when varying $u\in \R^+$. Each element in this family has the lines $h=s+t$ and $h=s-t$,
for $h\in \R^+$, as its asymptotes (see picture~\subref{sfig:asymptotes} of Figure~\ref{figure:RB_properties}).
\vspace{0.04in}
\item \label{Obs3} \textbf{Translation:}  Observe that, for fixed $u\in \R^+$ and $s_1< s_2\in \R$, the two subsets
of $\mathbb{H}^+$, $A_1:=\{(t, g_{s_1, u}(t))\,\mbox{:}\,t\in \R\}$ and $A_2:=\{(t, g_{s_2, u}(t))\,\mbox{:}\,t\in \R\}$ are just translated
versions of each other. For instance, $A_2=(s_2-s_1, 0)+ A_1$. This is the pivotal reason for the time-stationarity
of the spatially marked point process described later.
\vspace{0.04in}
\item \textbf{Unique intersection: }\label{Obs4}For $(s_1, u_1), (s_2,u_2)\in \mathbb{H}^+$ with $s_1\neq s_2$, 
the intersection of the two sets, $A_1:=\{(t, g_{s_1, u_1}(t))\,\mbox{:}\,t\in \R\}$ and $A_2:=\{(t, g_{s_2, u_2}(t))\,\mbox{:}\,t\in \R\}$ contains exactly one point.
\vspace{0.04in}
\item \textbf{Non-unit speed: }\label{Obs1} If $v\neq 1$, the head points are given by $(T, H)= \left(-\frac{R}{v}\cos \a, R|\sin\a|\right)$, for some $(R,\a)$ from $\tilde \Phi$. Then similarly to (\ref{eq:two-point}) the distance function $f_v((R,\a), \cdot)$ is given by 
\begin{align}
f_v((R,\a), t)&= \left(v^2\left(t+\frac{R}{v} \cos \alpha\right)^2+ (R |\sin \alpha|)^2\right)^{\half} = \left(v^2(t-T)^2+ H^2\right)^{\half},
\label{eq:two-point-v}
\end{align}
where we only scale the time coordinate. For fixed $v\neq 1$, let $g^{(v)}_{s, u}(t):=\left(v^2(t-s)^2+u^2\right)^{\half}$. Then each element of the family of curves $\left\{g^{(v)}_{s, u}(\cdot)\right\}_{u\in \R^+}$, has the lines $h=s+vt$ and $h=s-vt$,
for $h\in \R^+$, as its asymptotes.  All the properties, $(\ref{Obs2a}),\, (\ref{Obs2b}),\, (\ref{Obs3}), \,(\ref{Obs4})$ also hold true for any $v\neq 1$.
\end{enumerate}

\subsection{Radial bird particle process}
\label{subsection:BPPss}

\begin{definition}[Radial Bird Particle Process (RBPP)] \vspace{0.1in}

The \textbf{radial bird particle process} 
	is the point process 
\[
\Pcal_{c}:=\sum_{i\in \N}\delta_{C_i},
\]
on $\Ccal(\mathbb{H}^+)$, where $C_i$ is the particle corresponding to the atom $(R_i, \a_i)$ of $\tilde{\Phi}$.
\label{definition:BPPss}
\end{definition}\label{notation:RBPP}
We have the following result about $\Pcal_c$ as a Poisson set/particle process. See \cite[Definition 9.2.17]{Baccelli-Bartek-Karray}, for a reference on Poisson set processes. 
\begin{lemma} The following holds true for the radial bird particle process $\Pcal_c$:
\begin{enumerate}[(i)]
\item \label{RBPP1} $\Pcal_c$ is a Poisson particle process on the
space $(\Ccal(\mathbb{H}^+), \Bcal(\Ccal(\mathbb{H}^+)))$,
with support on the set of closed sets of the form (\ref{eq:bird2}).
\item \label{RBPP2} Both $\Pcal_c$ and $\cup_i C_i$ are stationary along the time coordinate.
\end{enumerate}
\label{lemma:bird_time_stationary}
\end{lemma}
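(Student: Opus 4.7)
The plan is to reduce both statements to properties of the head point process $\Hcal$ by exploiting Property~\ref{Obs2}, which shows that every radial bird $C_{(R,\alpha)}$ is a deterministic function of its head $(T,H)$ only: indeed $C_{(R,\alpha)} = \{(t, |(t,0)-(T,H)|) : t \in \R\}$. Equivalently, defining the map
\[
\Psi: \mathbb{H}^+ \longrightarrow \Ccal(\mathbb{H}^+), \qquad \Psi(s,u) := \{(t, g_{s,u}(t)) : t \in \R\},
\]
one has $C_i = \Psi(T_i, H_i)$ for every $i$, and therefore $\Pcal_c = \sum_i \delta_{\Psi(T_i,H_i)}$ is the image of the head point process $\Hcal$ under $\Psi$.

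For part (\ref{RBPP1}), I would first observe that $\Psi$ is injective: by Property~\ref{Obs2a} the head of $\Psi(s,u)$ is the unique time-space minimum of its height function and equals $(s,u)$, so distinct heads yield distinct radial birds. The map $\Psi$ is furthermore continuous (hence Borel measurable) when $\Ccal(\mathbb{H}^+)$ is equipped with the Fell topology, since $g_{s,u}(t)$ depends continuously on $(s,u)$ uniformly on compact sets in $t$. By Lemma~\ref{lemma:tips_density}, $\Hcal$ is a Poisson point process on $\mathbb{H}^+$ with intensity $\mathrm d t \otimes 2\lambda\,\mathrm d h$, so the mapping theorem for Poisson point processes (see~\cite[Section 5.5]{kingman}) immediately yields that $\Pcal_c$ is a Poisson particle process on $(\Ccal(\mathbb{H}^+), \Bcal(\Ccal(\mathbb{H}^+)))$ with intensity measure $\nu \circ \Psi^{-1}$, supported on the subset of closed sets of the form (\ref{eq:bird2}).

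For part (\ref{RBPP2}), the key is Property~\ref{Obs3}: a time shift by $s \in \R$ transforms any radial bird with head $(T,H)$ into a radial bird with head $(T+s, H)$. Formally, denoting by $\tau_s$ the action of time translation by $s$ on $\mathbb{H}^+$ (and on $\Ccal(\mathbb{H}^+)$ via $C \mapsto C + (s,0)$), the commutation relation $\tau_s \circ \Psi = \Psi \circ \tau_s$ holds. The intensity $\mathrm d t \otimes 2\lambda\,\mathrm d h$ of $\Hcal$ is invariant under $\tau_s$, so $\Hcal$ is stationary in the time coordinate; applying $\Psi$ preserves this stationarity and gives it for $\Pcal_c$. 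The distributional invariance of $\cup_i C_i$ under time shifts is then a direct consequence: $\tau_s(\cup_i C_i) = \cup_i \tau_s(C_i)$ has the same law as $\cup_i C_i$ because $\Pcal_c$ and $\tau_s \Pcal_c$ are equal in distribution.

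I do not anticipate serious obstacles. The only delicate point is the measurability/continuity of $\Psi$ into $\Ccal(\mathbb{H}^+)$, which must be checked in the Fell topology; this reduces to the continuous dependence of the hyperbola branch on its vertex, a standard verification. All remaining arguments are direct applications of the mapping theorem and of the intensity formula obtained in Lemma~\ref{lemma:tips_density}.
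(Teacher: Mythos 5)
Your proof is correct and follows essentially the same route as the paper: both reduce the lemma to the head point process $\Hcal$ via the deterministic, injective correspondence between a head and its radial bird, obtain the Poisson property of $\Pcal_c$ from that of $\Hcal$ (you invoke the mapping theorem for the pushforward under $\Psi$, while the paper cites the equivalent inverse construction of a set-valued marked Poisson point process with $\Hcal$ as background), and derive time-stationarity from the translation property of the birds together with the shift-invariance of the intensity ${\rm d}t\otimes 2\la\,{\rm d}h$. Your explicit attention to the injectivity and Fell-measurability of $\Psi$ is a welcome addition but does not change the substance of the argument.
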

\begin{figure}[ht!]
    \centering
\begin{tikzpicture}[scale=1.2,every node/.style={scale=0.7}]
\pgftransformxscale{0.7}  
    \pgftransformyscale{0.7}    
    \draw[<->] (-2, 0) -- (5, 0) node[right] {$t$};
    \draw[->] (-2.5, 1) -- (-2.5, 2) node[above] {$h$};
    \draw[domain=-2:5, smooth] plot (\x, {(\x*\x+2.5)^0.5});
    \draw[](-0.1,1.57) node{$\bullet$};
    \draw[domain=-2:5, smooth] plot (\x, {(\x*\x-3*\x+2.5)^0.5});
    \draw[](1.51,0.5) node{$\bullet$};
    %
    %
    %
    %
    \draw[domain=-2:5, smooth] plot (\x, {(\x*\x-2*\x+2)^0.5});
    \draw[](0.95,1) node{$\bullet$};
    \draw[domain=-2:5, smooth] plot (\x, {(\x*\x+2.1*\x+1.2)^0.5});
    \draw[](-1.05,0.31) node{$\bullet$};
    \draw[domain=-2:5, smooth] plot (\x, {(\x*\x-6*\x+10)^0.5});
    \draw[](3,1) node{$\bullet$};
    \draw[domain=-2:5, smooth] plot (\x, {(\x*\x-8*\x+16.5)^0.5});
    \draw[](4,0.7) node{$\bullet$};
%
%
\end{tikzpicture}
\caption{The birds together with their heads form the {\em radial bird particle process $\Pcal_c\equiv \Hcal_c$}.}
\label{figure:BPP-1}
\end{figure}
\begin{proof}[Proof of Lemma~\ref{lemma:bird_time_stationary}~(\ref{RBPP1})]
There is a bijection between the atom $(R_i, \a_i)$, the corresponding radial bird $C_i\equiv C_{(R_i,\a_i)}$ 
and its head $(T_i, H_i)$.
The proof then follows from the inverse construction of set valued marked Poisson point process from
the set process described in \cite[Proposition 10.2.10]{Baccelli-Bartek-Karray}, using the Poisson point process
$\Hcal=\sum_{i\in \N}\delta_{(T_i, H_i)}$ of the heads as the background process, see Figure~\ref{figure:BPP-1}.
\end{proof}
\begin{proof}[Proof of Lemma~\ref{lemma:bird_time_stationary}~(\ref{RBPP2})]
Let $t\in \R$ and $S_t:\Ccal(\mathbb{H}^+)\to \Ccal(\mathbb{H}^+)$ be the map defined as
$S_t(C):= C-(t, 0)$. The particle process $S_t(\Pcal_c)$ is the particle process $\Pcal_c$ shifted
by $(t, 0)$, namely
\[S_t(\Pcal_{c}):=\sum_{i\in \N}\delta_{S_t(C_i)}= \sum_{i\in \N}\delta_{C_i-(t,0)}.\]
Under the shift $S_t$, each of the radial bird particles is shifted towards the left along the time axis. Using Property~\ref{Obs3}, $S_t(\Pcal_{c})$ has the same distribution as $\Pcal_c$. Hence we have time-stationarity of $\Pcal_{c}$. Alternatively, the shape of the set valued mark depends only
on the height coordinate of the head, not on the time coordinate, see Properties~\ref{Obs2} and ~\ref{Obs3}.
So the time-stationarity of the radial bird particle process $\Pcal_c$ follows from the time stationarity of 
the head point process $\Hcal$.
\end{proof}
\begin{remark}
Rather than the particle process $\Pcal_{c}$, one can consider the marked point process
	\begin{equation}
		\Hcal_c := \sum_{i\in\N}\delta_{(T_i, H_i, C_i)},
		\label{eq:myhc}
	\end{equation}
on $\mathbb{H}^+$, with marks on the space of closed sets of $\mathbb{H}^+$.
It is clear that this Poisson point process is stationary along the time axis as well.
\label{remark:bird-to-mhead}
\end{remark} \label{notation:MHPP}
\begin{discussion}[\textbf{RBPP: stochastic process on $(\R^+)^{\N}$}] For each $t\in \R$, we
can construct a sequence of positive random variables that represent the distances of the stations at time $t$. 
For each atom $(R_i, \a_i)$ of $\tilde{\Phi}=\sum_{i\in \N}\delta_{(R_i, \a_i)}$, let $\xi^t_i:= f((R_i, \a_i),t)$.
We shall re-order the elements of the sequence $\{\xi^t_i\}_{i\in\N}$ by defining $\xi^t_{(i)}$ to be the
distance of the $i$-th closest station at time $t$. Formally, for each $t$, we construct the
sequence $\xi(t):=\left\{\xi^t_{(k)}\right\}_{k\in \N}$ as
\[
\xi^t_{(1)}:=\inf\{\xi^t_i\}_{i\in \N} \text{ and, for } k> 1, \xi^t_{(k)}:=\inf\left\{\{\xi^t_i\}_{i\in \N}\setminus \{\xi^t_{(i)}\}_{1\leq i<k}\right\},
\]
which defines the ordered sequence of distances of the stations. So the radial bird particle process $\Pcal_c$
allows one to build a stochastic process $\Xi:=\{\xi(t)\}_{t\in \R}$ on the space $(\R^+)^{\N}$.
For any $t_0\in \R$, the intersection of the vertical line $t=t_0$ with the union of the radial bird closed sets,
$\cup_{i\in \N}C_i$, is given by the set $\left\{(t_0, \xi^{t_0}_{(i)})\right\}_{i\in \N}$, as seen in Figure~\ref{figure:BPP},
where $C_i\equiv C_{R_i, \a_i}$ with $(R_i, \a_i)$ the points of $\tilde{\Phi}$. What is even more useful is the data
$\xi(t)=\left\{\xi^t_{(i)}\right\}_{i\in \N}$, at time $t$, which correspond to the distances to the stations. In the words of wireless communication, this enable us to compute the signal and the interference power experienced by the user at any given time $t$. In Subsection~\ref{subsection:typT}, we determine the distribution of $\xi(t)$ as a point process on $\R^+$, at different time epochs of interest, in particular at handover epochs and beyond.
\begin{figure}[ht!]
    \centering
\begin{tikzpicture}[scale=1,every node/.style={scale=0.7}]
\pgftransformxscale{0.9}  
\pgftransformyscale{0.9}    
    \draw[thick,<->] (-2.2, 0) -- (5.2, 0) node[right] {$t$};
    \draw[->] (-2.5, 1) -- (-2.5, 2) node[above] {$h$};
     \draw[blue, ->] (2.5, 0) -- (2.5, 5);
    \draw[domain=-2:5, smooth] plot (\x, {(\x*\x+2.5)^0.5});
    \draw[](-0.1,1.57) node{$\bullet$};
    \draw[domain=-2:5, smooth] plot (\x, {(\x*\x-3*\x+2.5)^0.5});
    \draw[](1.51,0.5) node{$\bullet$};
    \draw[->] (1.51, 0) -- (1.51, 5);
    \draw[](1.51,1.12) node{$\bullet$};
    \draw[](1.51,1.78) node{$\bullet$};
    \draw[](1.51,2.2) node{$\bullet$};
    \draw[](1.51,2.57) node{$\bullet$};
    \draw[](1.51,2.60) node{$\bullet$};
    \draw[blue](2.5,1.1) node{$\bullet$};
    \draw[blue](2.5,1.62) node{$\bullet$};
    \draw[blue](2.5,1.8) node{$\bullet$};
    \draw[blue](2.5,2.95) node{$\bullet$};
    \draw[blue](2.5,3.58) node{$\bullet$};
    \draw[blue, ->] (3.25, 0) -- (3.25, 5);
    \draw[blue](3.25,1.02) node{$\bullet$};
    \draw[blue](3.25,1.81) node{$\bullet$};
    \draw[blue](3.25,2.45) node{$\bullet$};
    \draw[blue](3.25,3.59) node{$\bullet$};
    \draw[blue](3.25,4.3) node{$\bullet$};
    \draw[blue, ->] (0.48, 0) -- (0.48, 5);
    \draw[blue](0.48,1.1) node{$\bullet$};
    \draw[blue](0.48,1.54) node{$\bullet$};
    \draw[blue](0.48,1.66) node{$\bullet$};
    \draw[blue](0.48,2.7) node{$\bullet$};
    \draw[blue](0.48,3.59) node{$\bullet$};
    \draw[blue, ->] (0.18, 0) -- (0.18, 5);
    \draw[blue](0.18,1.28) node{$\bullet$};
    \draw[blue](0.18,1.42) node{$\bullet$};
    \draw[blue](0.18,1.6) node{$\bullet$};
    \draw[blue](0.18,3) node{$\bullet$};
    \draw[blue](0.18,3.85) node{$\bullet$};
    \draw[domain=-2:5, smooth] plot (\x, {(\x*\x-2*\x+2)^0.5});
    \draw[](0.95,1) node{$\bullet$};
    \draw[domain=-2:5, smooth] plot (\x, {(\x*\x+2.1*\x+1.2)^0.5});
    \draw[](-1.05,0.31) node{$\bullet$};
     \draw[->] (-1.05, 0) -- (-1.05, 5.5);
    \draw[](-1.05,1.9) node{$\bullet$};
    \draw[](-1.05,2.27) node{$\bullet$};
    \draw[](-1.05,2.6) node{$\bullet$};
    \draw[](-1.05,4.17) node{$\bullet$};
    \draw[](-1.05,5.1) node{$\bullet$};
    \draw[domain=-2:5, smooth] plot (\x, {(\x*\x-6*\x+10)^0.5});
    \draw[](3,1) node{$\bullet$};
     \draw[->] (3, 0) -- (3, 5);
    \draw[](3,1.21) node{$\bullet$};
    \draw[](3,1.58) node{$\bullet$};
    \draw[](3,2.22) node{$\bullet$};
    \draw[](3,3.38) node{$\bullet$};
    \draw[](3,4.05) node{$\bullet$};
    \draw[domain=-2:5, smooth] plot (\x, {(\x*\x-8*\x+16.5)^0.5});
    \draw[](4,0.7) node{$\bullet$};
     \draw[->] (4, 0) -- (4, 5.5);
    \draw[](4,1.4) node{$\bullet$};
    \draw[](4,2.55) node{$\bullet$};
    \draw[](4,3.15) node{$\bullet$};
    \draw[](4,4.3) node{$\bullet$};
    \draw[](4,5.05) node{$\bullet$};
\draw[blue] (0.18, 0) node {$\times$};
\draw[blue] (0.48, 0) node {$\times$};\draw[blue] (2.5, 0) node {$\times$};
\draw[blue] (3.25, 0) node {$\times$};
     \draw[red, ->] (-0.55, 0) -- (-0.55, 5.5);
    \draw[red](-0.55,0.6) node{$\bullet$};
    \draw[red](-0.55,1.67) node{$\bullet$};
    \draw[red](-0.55,1.85) node{$\bullet$};
    \draw[red](-0.55,2.1) node{$\bullet$};
    \draw[red](-0.55,3.7) node{$\bullet$};
    \draw[red](-0.55,4.6) node{$\bullet$};
    \end{tikzpicture}
\captionsetup{width=0.9\linewidth}
    \caption{The blue $\blue{\times}$'s forms the handover point process on the time-axis. The blue nodes ($\blue{\bullet}$), black nodes ($\bullet$) and red nodes ($\red{\bullet}$), respectively on individual vertical blue, black and red lines, form the point processes corresponding to the distance of all the mobile stations at handover times, at a time corresponding to a visible head and at a typical time, respectively.}
    \label{figure:BPP}
\end{figure}
\label{discussion:RBPP-SINR}
\end{discussion}
Having defined the radial bird particle process, we are now interested in understanding it's properties,
in particular the {\em lower envelope process}, which plays a fundamental role for defining the handover epochs.
\begin{definition}[\textbf{Lower envelope process}] Associated with the marked Poisson point process of the stations
$\tilde{\Phi}=\sum_{i\in \N}\delta_{(R_i,\a_i)}$, define the function
$$t\mapsto L(t):=\inf_{i\in \N} f((R_i,\a_i), t).$$
The stochastic process $\{\Lcal(t)=(t,L(t))\}_{t\in \R}$ is called the lower envelope process, where the random variable $L(t)$ represents the distance of the closest station at time $t$. 
The \textbf{lower envelope} is the random closed set on $\mathbb{H}^+$ defined as 
\begin{equation}
	\Lcal_e:=\{(t, L(t)),\ t\in \mathbb R\} \subset \mathbb{H}^+.\nn
\end{equation}
\label{definition:Lower_env}
\end{definition} \label{notation:Le}
\begin{lemma}
The lower envelope process $\Lcal(\cdot)$ is stationary with respect to time.
\label{lemma:L_stationary}
\end{lemma}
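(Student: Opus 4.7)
The plan is to derive the stationarity of $\Lcal(\cdot)$ as a direct consequence of the time-stationarity of the radial bird particle process $\Pcal_c$ established in Lemma~\ref{lemma:bird_time_stationary}~(\ref{RBPP2}). The key point is that $L(\cdot)$ is a shift-equivariant deterministic functional of the random union $\bigcup_i C_i$, so distributional invariance under time-shifts transfers automatically from the particles to the envelope.

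First I would rewrite the lower envelope in terms of the radial bird closed sets:
\[
L(t) \;=\; \inf_{i\in\N} f((R_i,\a_i), t) \;=\; \inf\bigl\{h \geq 0 : (t,h) \in \textstyle\bigcup_{i\in\N} C_i\bigr\},
\]
which exhibits $L(\cdot)$ as a deterministic measurable functional of the random closed set $\bigcup_i C_i$. Almost sure finiteness of $L(t)$ for every fixed $t$ follows from the fact that the head intensity measure $\nu = 2\la\,{\rm d}t \otimes {\rm d}h$ puts infinite mass on every strip $[a,b]\times(0,\eps)$, so radial birds pass arbitrarily close to the user axis near time $t$ with probability one.

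Next, fix $s\in\R$ and recall the shift $S_s: C\mapsto C-(s,0)$ used in the proof of Lemma~\ref{lemma:bird_time_stationary}~(\ref{RBPP2}). A direct rewriting gives the shift-equivariance identity
\[
L(t+s) \;=\; \inf\bigl\{h \geq 0 : (t+s,h) \in \textstyle\bigcup_{i} C_i\bigr\} \;=\; \inf\bigl\{h \geq 0 : (t,h) \in \textstyle\bigcup_{i} S_s(C_i)\bigr\}.
\]
By Lemma~\ref{lemma:bird_time_stationary}~(\ref{RBPP2}), $S_s(\Pcal_c) \stackrel{D}{=} \Pcal_c$, hence $\bigcup_i S_s(C_i) \stackrel{D}{=} \bigcup_i C_i$ as random closed subsets of $\mathbb{H}^+$. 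Plugging this distributional equality into the identity above yields $\{L(t+s)\}_{t\in\R} \stackrel{D}{=} \{L(t)\}_{t\in\R}$, or equivalently $\Lcal_e - (s,0) \stackrel{D}{=} \Lcal_e$ as random closed sets, which is precisely the stated time-stationarity.

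The argument is essentially mechanical once Lemma~\ref{lemma:bird_time_stationary}~(\ref{RBPP2}) is in hand; there is no substantive obstacle. The only minor checks are the a.s.\ finiteness and measurability of $L(\cdot)$ mentioned above, both of which are routine consequences of the Poisson structure of the head point process $\Hcal$.
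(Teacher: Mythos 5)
Your proof is correct and follows essentially the same route as the paper: the lower envelope is a shift-equivariant factor of the radial bird particle process (equivalently, of the head point process), so its stationarity is inherited from Lemma~\ref{lemma:bird_time_stationary}~(\ref{RBPP2}). One immaterial quibble: the strip $[a,b]\times(0,\eps)$ has finite $\nu$-mass $2\la(b-a)\eps$, not infinite, but a.s.\ finiteness of $L(t)$ is trivial anyway since $L(t)\le f((R_1,\a_1),t)<\infty$ for any single atom.
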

\begin{proof}[Proof of Lemma~\ref{lemma:L_stationary}] The time stationarity of
$\Lcal(\cdot)$ follows from the time stationarity of the head point process $\Hcal$. The lower envelope process $\Lcal(\cdot)$ is a factor of the head point process $\Hcal$, from Lemma~\ref{lemma:bird_time_stationary}. 
\end{proof}
\begin{definition}
Let us denote the open region above the lower envelope, as $\Lcal^+_e$, i.e.,
    \begin{equation}
    \Lcal^+_e:=\{(t,h)\in \mathbb H^+: h>L(t)\}.    
    \end{equation}
\end{definition}
\subsection{Characterization of the lower envelope $\Lcal_e$}
In this subsection, we give a characterization of the lower envelope $\Lcal_e$, which will provide a criterion for a point
$(t,h)\in \mathbb{H}^+$ to be a handover point, which will be
instrumental in what follows. We first consider the unit speed case. For $h\in \R^+$, let $U_h:= B_h(o) \cap \mathbb{H}^+$, be the upper open half-ball of radius $h$ and centered at $(0,0)$. 
We shall write $U_{h}^{s}:= (s, 0)+U_h$, so that $U_{h}^{0}:=U_h$. \label{notation:UHB} 
\begin{observation}
Let $(s, h), (s', h')\in\mathbb{H}^+$. Let $C$ be the radial bird with its head at $(s, h)$.
Let $\hat{h}$ be the height at which the radial bird $C$ intersects the vertical line $t=s'$. 
Then $(s, h)\in U_{h'}^{s'}$ if and only if $\hat{h}<h'$, as depicted in Figure~\ref{fig:in-out}.
This follows from the alternative description of the distance function in (\ref{eq:two-point}).
\label{observation:in-out}
\end{observation}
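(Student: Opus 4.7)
The plan is to reduce the claim to a direct computation by invoking the two-point description of the distance function established in Property~\ref{Obs2}, namely equation (\ref{eq:two-point}). That identity is exactly the bridge we need: it says that the height of a radial bird at abscissa $t$ equals the Euclidean distance in $\mathbb{H}^+$ between the point $(t,0)$ on the time axis and the head of the bird.

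First, I would unfold the definition of the upper half-ball. By construction, $U_{h'}^{s'} = (s',0) + (B_{h'}(o)\cap \mathbb{H}^+)$, so $(s,h)\in U_{h'}^{s'}$ is equivalent to $h>0$ together with $|(s,h)-(s',0)|<h'$, i.e.\ $(s-s')^2 + h^2 < (h')^2$. Since both points lie in $\mathbb{H}^+$, the positivity of $h$ is automatic, so the condition reduces to the strict inequality $(s-s')^2+h^2 < (h')^2$.

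Second, I would compute $\hat h$ explicitly. By Definition~\ref{definition:BPPss}, the radial bird $C$ with head at $(s,h)$ is the graph of the function $g_{s,h}(t)=|(t,0)-(s,h)|$ (using the family-of-curves description in Property~\ref{Obs2b} and the identity (\ref{eq:two-point})). Evaluating at $t=s'$ yields
\[
\hat h \;=\; g_{s,h}(s') \;=\; \sqrt{(s'-s)^2 + h^2}.
\]
Hence $\hat h < h'$ is equivalent to $(s'-s)^2 + h^2 < (h')^2$, which is precisely the condition just obtained for $(s,h)\in U_{h'}^{s'}$. This establishes the equivalence in both directions.

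There is no real obstacle here: the statement is essentially a geometric restatement of (\ref{eq:two-point}), and the whole proof is a one-line algebraic identity once the definitions are unfolded. The only thing worth emphasising, for later use, is the symmetry of the criterion in $(s,h)\leftrightarrow(s',h')$: a head lies inside the upper half-ball associated with another head if and only if the bird emanating from the first head passes strictly below the second head at the second head's time coordinate. This symmetry is what will make the ``empty half-ball'' condition in Lemma~\ref{lem:semicircle} a self-dual condition on pairs of heads.
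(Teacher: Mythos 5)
Your proof is correct and is exactly the argument the paper intends: the observation is justified there in one line by appeal to the two-point identity (\ref{eq:two-point}), and your computation $\hat h = |(s',0)-(s,h)| = \sqrt{(s'-s)^2+h^2}$ together with the unfolding of $U_{h'}^{s'}$ simply makes that explicit. No gaps.
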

\begin{figure}[ht!]
\centering
\begin{tikzpicture}[scale=1,every node/.style={scale=0.7}]
\pgftransformxscale{0.9}  
\pgftransformyscale{0.9}    
    \draw[->] (-2, 0) -- (3, 0) node[right] {$t$};
    \draw[<-] (0, 3) -- (0, 0) node[below] {$(s',0)$};
    \draw[<->] (0.1, 0) -- (0.1, 1.1);
    \draw[](0.25,0.5) node{$\hat{h}$};
    \draw[red, domain=-0.5:3.7, smooth] plot (\x, {(\x*\x-5*\x+2.5*2.5+0.2)^0.5});
    \draw[red](2.5,0.447) node{$\bullet$};
    \draw[](-0.6,1.8) node{$(s',h')$};
    \draw[](0,1.58) node{$\bullet$};
    \draw[](-0.6,1.8) node{$(s',h')$};
    \draw[](1,0.5) node{$U^{s'}_{h'}$};
    \draw[] (-1.58,0) arc (180:0:1.58);
   \draw[blue, domain=-2:1.5, smooth] plot (\x, {(\x*\x+2.1*\x+1.2)^0.5});
    \draw[blue](-1.05,0.31) node{$\bullet$};
    \draw[](-0.9,0.8) node{$(s,h)$};
    \end{tikzpicture}
    \captionsetup{width=0.9\linewidth}
    \caption{Scenario in Observation~\ref{observation:in-out}. All birds with head point inside and outside the half-ball of radius $h'$, intersect the vertical line below and above the level $h'$, respectively.}
\label{fig:in-out}
\end{figure}

\begin{lemma}
For any point $(t, h)\in \cup_{i\in \N}C_i$, the following holds:
\begin{enumerate}[(i).]
\item \label{emptyU} The point $(t, h)$ belongs to $\Lcal_e$ if and only if $\Hcal(U_h^t)= 0$, where $\Hcal$ is the head point process.
\item \label{emptyUP} The probability of such an event is
$$\P((t, h) \in \Lcal_e)=e^{-\la\pi h^2}.$$
\end{enumerate}
\label{lem:semicircle}
\end{lemma}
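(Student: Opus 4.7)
The plan is to read both parts off the two-point formula in Property~\ref{Obs2}, namely $f((R_i,\a_i),t)=\vert(t,0)-(T_i,H_i)\vert$, and then apply the Poisson void probability for the head process $\Hcal$.

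For part (\ref{emptyU}), I would argue as follows. Fix $(t,h)\in\cup_{i\in\N}C_i$, so there exists an index $j$ with $f((R_j,\a_j),t)=h$, equivalently $\vert(t,0)-(T_j,H_j)\vert=h$, i.e.\ $(T_j,H_j)$ lies on the boundary of the upper half-ball $U_h^t$. The point $(t,h)$ belongs to $\Lcal_e$ precisely when $L(t)=h$, i.e.\ when $f((R_k,\a_k),t)\ge h$ for every $k$, or equivalently $\vert(t,0)-(T_k,H_k)\vert\ge h$ for every $k$. Using Observation~\ref{observation:in-out} (or directly the two-point formula), this is exactly the statement that no atom of $\Hcal$ lies in the open upper half-ball $U_h^t$, i.e.\ $\Hcal(U_h^t)=0$. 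Conversely, if $\Hcal(U_h^t)=0$, then no bird passes strictly below height $h$ at time $t$; combined with the existence of the index $j$ above, this forces $(t,h)\in\Lcal_e$.

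For part (\ref{emptyUP}), I would invoke the Poisson void probability for $\Hcal$ applied to the bounded set $U_h^t$. By the mapping lemma (Lemma~\ref{lemma:tips_density}), $\Hcal$ is a homogeneous Poisson point process on $\mathbb{H}^+$ with intensity measure $\mathrm{d}\nu=\mathrm{d}t\otimes 2\la\,\mathrm{d}h$, and by translation invariance $\nu(U_h^t)=\nu(U_h)=2\la\cdot\mathrm{Leb}(U_h)=2\la\cdot\tfrac{\pi h^2}{2}=\la\pi h^2$. Hence
\[
\P\bigl((t,h)\in\Lcal_e\bigr)=\P\bigl(\Hcal(U_h^t)=0\bigr)=\exp\bigl(-\nu(U_h^t)\bigr)=e^{-\la\pi h^2}.
\]

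There is no real obstacle here; the entire statement is essentially a reformulation of the fact that $\Lcal_e$ is the lower envelope of a family of distance-to-head curves, translated through Observation~\ref{observation:in-out} into a geometric ``empty half-ball'' condition on the head process. The only point that needs a mild care is the correct factor of $2$: the half-ball $U_h$ has planar Lebesgue measure $\pi h^2/2$ while $\Hcal$ has the extra factor $2\la$ in its height intensity (from the Mapping Lemma), so the two cancel nicely to produce the familiar $\la\pi h^2$ exponent that also governs the standard two-dimensional Poisson nearest-neighbour distance.
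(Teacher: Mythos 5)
Your proof is correct and follows essentially the same route as the paper: the equivalence in part (\ref{emptyU}) is read off from the two-point identity $f((R_k,\a_k),t)=\vert(t,0)-(T_k,H_k)\vert$ (equivalently Observation~\ref{observation:in-out}), and part (\ref{emptyUP}) is the Poisson void probability with the factor $2\la$ from the Mapping Lemma cancelling against the half-ball area $\pi h^2/2$. The only difference is cosmetic: the paper splits part (\ref{emptyU}) into the cases where $(t,h)$ is or is not a head point, whereas your argument handles both at once since the two-point formula makes the case distinction unnecessary.
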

\begin{remark}
We often refer to the key geometric result, Lemma~\ref{lem:semicircle}, as the \textbf{half-ball condition}.
\end{remark}
\begin{proof}[Proof of Lemma~\ref{lem:semicircle}~(\ref{emptyU})]
Without loss of generality, we shall prove the result for $t=0$. Suppose $(0,h)\in \Lcal_e$. Then there exists
a radial bird $C'\equiv C_{(R',\a')}$, for some atom $(R',\a')$ of $\tilde{\Phi}$, such that $(0, h)\in C'$.
We have two cases, depending on whether $(0,h)$ is a head of the radial bird $C'$ or not.
Let $(\mathbb{H}^+)^*$ denote the open upper half plane $\R\times (0, \infty)$ and, for any open set $A\subset \R^2$,
let $\partial^*A:= \partial A\cap (\mathbb{H}^+)^*$, denote the boundary of an open set $A$ contained inside
$(\mathbb{H}^+)^*$. We refer to the horizontal axis as the $t-$axis and to the vertical axis as the $h-$axis.

\begin{case}[$(0,h)$ is a head point] \label{head} Suppose $(0,h)$ is the head of $C'$, as in Figure~(\ref{figure:h-n-h})~(picture~\subref{sfig:head}).
We shall first prove that any radial bird with its head on $\partial^* U_h$, passes through $(0, h)$.  
Suppose $(s,u)$ is an atom of $\Hcal$ such that $(s, u)\in \partial U_h$, then $s^2+u^2 = h^2$. On the other hand,
for the same $(s, u)$, there exists $(R, \a)$, an atom of $\tilde{\Phi}$ such that 
$(s, u)=(-R \cos \alpha, R  |\sin \alpha|)$ and $s^2+u^2=R^2$. The corresponding radial bird is given by
\begin{equation}
f((R,\a), t)= \left(R ^2+2tR \cos \alpha+t^2\right)^{1/2}.
\label{eq:two-points}
\end{equation}
The curve (\ref{eq:two-points}) passes through $(0, h)$, since $f((R,\a), 0)= R =(s^2+u^2)^\half=h$. 
The result follows from  Observation~\ref{observation:in-out}, that any radial bird with its head inside $U_h$ 
intersects the $h$-axis at a height lower than $h$. So, if there is any atom of $\Hcal\vert_{U_h}$, 
then the corresponding radial bird prevents $(0,h)$ from being on the lower envelope $\Lcal_e$.
\begin{figure}[ht!]
\begin{subfigure}[t]{0.47\linewidth}
    \centering
\begin{tikzpicture}[scale=1,every node/.style={scale=0.7}]
\pgftransformxscale{1}  
    \pgftransformyscale{1}    
    \draw[->] (-2, 0) -- (2.8, 0) node[right] {$t$};
    \draw[->] (0, 0) -- (0, 3) node[above] {$h$};
    \draw[red, domain=-2:2, smooth] plot (\x, {(\x*\x+2.5)^0.5})node[right]{$C'$};
    \draw[red](0,1.58) node{$\bullet$};
    \draw[red](0.5,2.1) node{$(0,h)$};
    \draw[blue, domain=-1.5:3, smooth] plot (\x, {(\x*\x-3*\x+2.5)^0.5});
    \draw[blue](1.51,0.5) node{$\bullet$};
    \draw[](2,0.3) node{$(s,u)$};
    \draw[red] (-1.58,0) arc (180:0:1.58);
   \draw[domain=-2:2, smooth] plot (\x, {(\x*\x+2.1*\x+1.2)^0.5});
    \draw[](-1.05,0.31) node{$\bullet$};
    \end{tikzpicture}
    \caption{The point $(0,h)$ is the head of a radial bird (in red). A radial bird (in blue) with its head on $\partial^* U_h$ passes through $(0,h)$ and any other radial bird (in black) with its head inside $U_h$ intersects the $h$-axis below the level $h$.}
\label{sfig:head}
    \end{subfigure}
    \qquad
      \begin{subfigure}[t]{0.47\linewidth}
    \centering
\begin{tikzpicture}[scale=0.9,every node/.style={scale=0.7}]
\pgftransformxscale{1}  
    \pgftransformyscale{1}    
    \draw[->] (-2, 0) -- (2.8, 0) node[right] {$t$};
    \draw[->] (0, 0) -- (0, 3) node[above] {$h$};
    \draw[blue, domain=-1.5:3, smooth] plot (\x, {(\x*\x-3*\x+2.5)^0.5});
    \draw[blue](1.51,0.5) node{$\bullet$};
    \draw[](2,0.3) node{$(s,u)$};
    \draw[red, domain=-1.5:3, smooth] plot (\x, {(\x*\x-2.24*\x+2*1.12*1.12)^0.5})node[right]{$C'$};
    \draw[red](1.12,1.12) node{$\bullet$};
    \draw[red](0.5,2.1) node{$(0,h)$};
    \draw[red] (-1.58,0) arc (180:0:1.58);
    \draw[domain=-2:2, smooth] plot (\x, {(\x*\x+2.1*\x+1.2)^0.5});
    \draw[](-1.05,0.31) node{$\bullet$};
    \end{tikzpicture}
    \caption{The point $(0,h)$ is on a radial bird (in red). A radial bird (in blue) on $\partial^* U_h$ passes through $(0,h)$ and any other radial bird (in black) with its head inside $U_h$ intersects the $h$-axis below level $h$.}
    \label{sfig:not-head}
    \end{subfigure}
    \caption{The two scenarios of $(0,h)$ being or not being a head point.}
    \label{figure:h-n-h}
\end{figure}
\end{case}
\begin{case}[$(0,h)$ is not a head point] \label{otherpoint} Suppose $(0,h)$ is any other point of $C'$ than its head, 
as in Figure~(\ref{figure:h-n-h})~(picture~\subref{sfig:not-head}). Let $(s, u)$ be a point on $\partial^* U_h$. 
Using the same argument as in case~(\ref{head}), one can prove that the radial bird with its head at $(s, u)$ 
passes through $(0,h)$. Due to Observation~\ref{observation:in-out}, any other radial bird with its heads inside $U_h$ 
intersects the $h$-axis below level $h$. So it must be the case that $ \Hcal(U_h)=0$; otherwise $(0,h)\notin \Lcal_e$. 
Hence we have the result.
\end{case}
\end{proof}
\begin{proof}[Proof of Lemma~\ref{lem:semicircle}~(\ref{emptyUP})]
Using part (\ref{emptyU}), the event a point $(t,h)$ lies on the lower envelope we have, 
\[
\{(t, h) \in \Lcal_e\}= \{\Hcal(U_h^s)=0\}.
\]
The probability of such an event equals $e^{-2\la |U_h^s| }=e^{-2\la \pi h^2/2}= e^{-\la \pi h^2}$. 
\end{proof}
\begin{remark}\label{remark:two-point}
Let $(0,h)$ be the intersection point of two radial birds with their heads at $(t_1,h_1)$ and $(t_2,h_2)$, respectively.  
Then the semi-circle $\partial^* U_h$ passes through both heads at $(t_1,h_1)$ and $(t_2,h_2)$
(see Figure~\ref{figure:h-n-h1}~(picture~\subref{sfig:intersection})). This implies that any other radial bird with its head inside $U_h$ 
intersects the height-axis below the level $h$, and this prevents $(0,h)$ from being a handover point. 
As a consequence, we have that $\Hcal(U_h)=0$.    
\end{remark}
\begin{figure}[ht!]
\begin{subfigure}[t]{0.45\linewidth}
    \centering
\begin{tikzpicture}[scale=1,every node/.style={scale=0.7}]
\pgftransformxscale{1}  
    \pgftransformyscale{1}    
        \draw[->] (-2, 0) -- (2.7, 0) node[right] {$t$};
    \draw[-] (0.2, 0) -- (0.2, 1.28);
    \draw[](0.1,1.9) node{$(0,h)$};
    \draw[domain=-1:2.5, smooth] plot (\x, {(\x*\x-2*\x+2)^0.5});
    \draw[](0.95,1) node{$\bullet$};
    \draw[red] (-1.1,0) arc (180:0:1.28);
    \draw[domain=-2:1, smooth] plot (\x, {(\x*\x+2.1*\x+1.2)^0.5});
    \draw[](-1.05,0.31) node{$\bullet$};
    \end{tikzpicture}
    \caption{The semicircle of radius $h$, passing through the intersection of two birds, passes through their heads, for any pair of birds. For $(0,h)$ to be a handover point, the regions $U_{h}$ must be empty of heads.}
\label{sfig:intersection}
    \end{subfigure}
    \quad
      \begin{subfigure}[t]{0.48\linewidth}
    \centering
\begin{tikzpicture}[scale=0.6, every node/.style={scale=0.7}]
\pgftransformxscale{1}  
\pgftransformyscale{1}  
\draw[->] (-2, 0) -- (4.5, 0) node[right] {$t$};
\draw[domain=-2:2, smooth] plot (\x, {((3/2)^2*\x*\x+2^2)^0.5});
\draw[blue](0,2) node{$\bullet$};
\draw[](-1,1.8) node{$(t_1,h_1)$};
\draw[domain=0:4, smooth] plot (\x, {((3/2)^2*\x*\x-2*(3/2)^2*2*\x+(3/2)^2*2^2+2.5^2)^0.5});
\draw[blue](2,2.5) node{$\bullet$};
\draw[](3.2,2.3) node{$(t_2,h_2)$};
\draw[](0.3,2.8) node{$(0,h)$};
\draw[red](0.7,0.9) node{$E^{0,v}_{h}$};
\draw[red]  (3.07,0) arc (0:180:1.82 and 2.75);
\end{tikzpicture}
\caption{The half-ellipse of semi-major axis of length $h$, passing through the intersection of two birds, passes through their heads, for any pair of birds. For $(0,h)$ to be a handover point, the regions $E^{v}_{h}$ must be empty of heads.}
\label{sfig:mixed_birds2}
\end{subfigure}
\caption{Half-ball condition and half-ellipse condition.}
\label{figure:h-n-h1}
\end{figure}
\begin{discussion}[\textbf{Speed $v\neq 1$}]
Consider the general setup where the stations are moving at speed $v\neq 1$. Then the region that needs to be empty 
of head points, for a point $(0, h)$ to be on the lower envelope process, is the region enclosed by the upper half 
ellipse with equation $v^2x^2+y^2=h^2$, see Figure~\ref{figure:h-n-h1}~(picture~\subref{sfig:mixed_birds2}). 
For any $s\in \R$, we denote by $E^{s, v}_h$ the open set enclosed by the upper half-ellipse 
given by the equation $v^2(x-s)^2+y^2=h^2$ and $E^v_h:=E^{0, v}_h$. \label{notation:El}\!
The probability that $E^{s,v}_h$ is empty of head points from $\Hcal$ is
\[
\P\left(\Hcal(E^{s,v}_h)=0\right)=e^{-2\la v|E^{s,v}_h|}
= e^{-2\la v \times \pi\frac{h^2}{2v}}=e^{-\la \pi h^2},
\]
using Remark~\ref{remark:onetov}. We refer to Lemma~\ref{lem:semicircle} as the \textbf{half-ellipse condition} 
in the case of $v\neq 1$.
\label{discussion:ellipse_v}
\end{discussion}
\begin{remark}
The void probability remains the same as in the unit speed case. In the fast movement case $(v>1)$, 
the increase in the intensity of heads is compensated by the region of the upper half ellipse having a smaller area. 
In the slow case $(v<1)$, the decrease in the intensity of heads is balanced by the area of a larger upper half ellipse. In this case, we denote the lower envelope as $\Lcal_e(v)$.  
\end{remark}
\begin{remark}
We say that an intersection point $(s,u)$ of two radial birds, does not correspond to a handover if $(s,u)\in \Lcal^+_e$. For speed $v\neq 1$, we denote the open region above the lower envelope as $\Lcal^+_{e}(v)$.
\label{remark:notHo}
\end{remark} \label{notation:Le+}
\subsection{Construction of the handover point process}\label{subsection:FC-HPP}
This subsection shows how the handover point process can be derived from the lower envelope process, more precisely using the marked head point process $\Hcal_c$ with set valued marks defined in (\ref{eq:myhc}). The construction of the handover point process $\Vcal$ from
$\Hcal_c$ is based on finding pairs of head points corresponding to a handover epoch, along with a non-negative mark attached to them. 
The former represents the time-coordinate at which the two radial birds intersect and the mark represents 
the height at which the corresponding birds intersect. A thinning is used to retain only those pairs of heads that give rise to handovers. 

By Remark~\ref{remark:bird-to-mhead}, $\Hcal_c$ is defined using the head point process 
$\Hcal$ along with the corresponding radial bird attached to each head as a mark. By Property~(\ref{Obs2}), each of the closed set valued marks is determined by the coordinate of the head point.  Also the intersection point of two birds 
is a function of the pair of heads of the corresponding birds. For this reason, we can detect the handovers just using the head point process. We adopt the convention to denote the radial bird with it's head
at $(t,h)$ by $C_{(t, h)}$, in view of Property~\ref{Obs2}. \label{notation:BCset2}
\begin{figure}[ht!]
    \centering
\begin{tikzpicture}[scale=0.85,every node/.style={scale=0.8}]
\pgftransformxscale{1.1}  
\pgftransformyscale{1.1}    
\draw[thick,<->] (-2.2, 0) -- (5.2, 0) node[right] {$t$};
    \draw[->] (-2.5, 1) -- (-2.5, 2) node[above] {$h$};
    \draw[domain=-2:5, smooth] plot (\x, {(\x*\x+2.5)^0.5});
    \draw[](-0.1,1.57) node{$\bullet$};
    \draw[domain=-2:5, smooth] plot (\x, {(\x*\x-3*\x+2.5)^0.5});
    \draw[](1.51,0.5) node{$\bullet$};
    \draw[blue, -] (2.5, 0) -- (2.5, 1.1);
    \draw[blue](2.5,1.1) node{$\bullet$};
    \draw[blue, -] (3.25, 0) -- (3.25, 1.02);
    \draw[blue](3.25,1.02) node{$\bullet$};
    \draw[blue, -] (0.48, 0) -- (0.48, 1.11);
    \draw[blue](0.48,1.11) node{$\bullet$};
    \draw[blue, -] (0.18, 0) -- (0.18, 1.28);
    \draw[blue](0.18,1.28) node{$\bullet$};
    \draw[domain=-2:5, smooth] plot (\x, {(\x*\x-2*\x+2)^0.5});
    \draw[](0.95,1) node{$\bullet$};
    \draw[domain=-2:5, smooth] plot (\x, {(\x*\x+2.1*\x+1.2)^0.5});
    \draw[](-1.05,0.31) node{$\bullet$};
    \draw[domain=-2:5, smooth] plot (\x, {(\x*\x-6*\x+10)^0.5});
    \draw[](3,1) node{$\bullet$};
    \draw[domain=-2:5, smooth] plot (\x, {(\x*\x-8*\x+16.5)^0.5});
    \draw[](4,0.7) node{$\bullet$};
\draw[dashed] (-0.25, 0) -- (-0.25, 1.6); \draw (-0.25, 0) node {$\times$};
\draw[dashed] (0, 0) -- (0, 1.6);
\draw (0, 0) node {$\times$};
\draw[dashed] (0.26, 0) -- (0.26, 1.35); 
\draw (0.26, 0) node {$\times$};
\draw[dashed] (0.63, 0) -- (0.63, 1.7);
\draw (0.63, 0) node {$\times$};
\draw[dashed] (1.08, 0) -- (1.08, 2.15);
\draw (1.08, 0) node {$\times$};
\draw[dashed] (1.25, 0) -- (1.25, 2.03);
\draw (1.25, 0) node {$\times$};
\draw[dashed] (1.52, 0) -- (1.52, 2.6);
\draw (1.52, 0) node {$\times$};
\draw[dashed] (1.77, 0) -- (1.77, 2.3);
\draw (1.77, 0) node {$\times$};
\draw[dashed] (2, 0) -- (2, 1.44);
\draw (2, 0) node {$\times$};
\draw[dashed] (2.41, 0) -- (2.41, 1.7);
\draw (2.41, 0) node {$\times$};
\draw[dashed] (2.8, 0) -- (2.8, 1.4);
\draw (2.8, 0) node {$\times$};
\draw[blue] (0.18, 0) node {$\bigtimes$};
\draw[blue] (0.48, 0) node {$\bigtimes$};
\draw[blue] (2.5, 0) node {$\bigtimes$};
\draw[blue] (3.25, 0) node {$\bigtimes$};
\end{tikzpicture}
\caption{Construction of the handover point process $\Vcal$ and its marked version $\hat \Vcal$.}
\label{figure:RBPP}
\end{figure}

Let $(t, h)\neq (t', h')\in \mathbb{H}^+$ be such that $t'<t$ and the corresponding radial birds $C_{(t, h)}, C_{(t', h')}$ 
intersect at  $(\hat{s}, \hat{h})$. The quantities  $\hat{s}$ and $\hat{h}$ 
are derived later in (\ref{eq:handover-time}) and (\ref{eq:h}), respectively, in terms of $(t, h)$ and $(t', h')$. Let $A(\hat s,\hat h)$ be the event that the intersection point $(\hat s, \hat h)$ represents a handover. By Lemma~\ref{lem:semicircle}, the event $A(\hat s,\hat h)$ is equivalent to having no points of $\Hcal$ in the upper half-ellipse $U_{\hat h}^{\hat s}$, i.e., $\{\Hcal(U_{\hat h}^{\hat s})=0\}$. In this sense the point process given by 
\begin{equation}
\hat \Vcal:=\sum_{(T_i,H_i)\in \Hcal}\;\;\sum_{(T_j,H_j)\in \Hcal\,\mbox{:}\, T_j <  T_i} \delta_{(\hat S, \hat H)}\, \one_{A(\hat S, \hat H)},
\label{eq:VcalHPP1}
\end{equation}
represents the collection of intersection points responsible for handovers, where $(\hat S, \hat H)$ is the intersection point of the radial birds at $(T_i,H_i)$ and $(T_j,H_j)$. We have considered the pair of head points $(T_i,H_i)$ and $(T_j,H_j)$ such that $T_j<T_i$, just to avoid any double counting. In Figure~\ref{figure:RBPP}, the points $\{\blue{\times}, \times\}$ are the projections of intersection of birds on the time axis, representing the corresponding pair of head points. We shall discard the pair of heads corresponding to the black ones ($\times$) and retain only the pair for blue ones $(\blue{\times})$ to form the handover point process. The blue nodes ($\blue{\bullet}$) correspond to the handover epochs and the distances at which the handovers happen. Finally, 
the handover point process is given by \label{notation:Vcal2}
\begin{equation} \Vcal:=\sum_{(T_i,H_i)\in \Hcal}\;\;\sum_{(T_j,H_j)\in \Hcal\,\mbox{:}\, T_j <  T_i} \delta_{\hat S}\, \one_{A(\hat S, \hat H)}.\label{eq:VcalHPP2}
\end{equation}
We call the point process $\hat \Vcal$ the marked version of the handover point process $\Vcal$, where the non-negative mark, associated to each handover point, represent the height at which the corresponding radial birds intersect. In other words, the mark denotes the handover distance.
The mark $\hat{H}$ is also used in for 
the retention of the pair of head points corresponding to the intersection at $(\hat{S}, \hat{H})$.
\begin{remark}
Alternatively, in view of (\ref{eq:VcalHPP2}) one can define the handover point process $\Vcal$ as
\begin{equation} \Vcal:=\sum_{(T_i,H_i)\in \Hcal}\;\;\sum_{(T_j,H_j)\in \Hcal\,\mbox{:}\, T_j >  T_i} \delta_{\hat S}\, \one_{A(\hat S, \hat H)},\label{eq:VcalHPPL}
\end{equation}
by counting the pairs of head points $(T_i,H_i), (T_j,H_j)\in \Hcal $ such that $T_j>T_i$.
\label{remark:RVcal}
\end{remark}
In the following result, the time-stationarity of the handover point process $\Vcal$ is inherited from the same property of the head point process $\Hcal$.
\begin{lemma}[Time-stationarity of $\Vcal$]
The handover point process $\Vcal$ is stationary with respect to time.
\label{lemma:stationary_Tcal}
\end{lemma}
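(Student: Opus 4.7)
The plan is to show that $\Vcal$ is a deterministic factor of the head point process $\Hcal$ and that the factor map is equivariant under time shifts. Combined with the time-stationarity of $\Hcal$ (inherited from Lemma~\ref{lemma:tips_density}, since the intensity measure $\d\nu = \d t \otimes 2\la\, \d h$ is translation-invariant in the time coordinate), this will immediately yield the desired distributional invariance of $\Vcal$.

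First, I would introduce the time shift $S_{t_0}$ acting on point configurations via $S_{t_0}\left(\sum_i \delta_{(T_i,H_i)}\right) := \sum_i \delta_{(T_i + t_0, H_i)}$, and the corresponding shift on $\R$ via $\tau_{t_0}\left(\sum_k \delta_{s_k}\right) := \sum_k \delta_{s_k + t_0}$. Using formula (\ref{eq:VcalHPP2}), I would observe that for any pair of heads $(T_i, H_i), (T_j, H_j)$ with $T_j < T_i$, the intersection point $(\hat S, \hat H)$ of the two radial birds $C_{(T_i,H_i)}$ and $C_{(T_j,H_j)}$ depends on the heads only through solving $f((R_i,\a_i), t) = f((R_j,\a_j), t)$, which by the alternative distance representation (\ref{eq:two-point}) translates into $|(t,0) - (T_i,H_i)| = |(t,0) - (T_j,H_j)|$. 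This equation is clearly equivariant under joint horizontal translation: shifting both heads by $(t_0, 0)$ shifts the solution $\hat S$ to $\hat S + t_0$, while leaving $\hat H$ unchanged.

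Next, the handover indicator $\one_{A(\hat S, \hat H)} = \one_{\{\Hcal(U_{\hat H}^{\hat S}) = 0\}}$ from Lemma~\ref{lem:semicircle} is also equivariant: $U_{\hat H}^{\hat S + t_0} = U_{\hat H}^{\hat S} + (t_0,0)$, so $(S_{t_0}\Hcal)(U_{\hat H}^{\hat S + t_0}) = \Hcal(U_{\hat H}^{\hat S})$. Putting these equivariances together, if we denote by $\Psi$ the deterministic factor map from head configurations to time-axis counting measures defined by (\ref{eq:VcalHPP2}), we obtain the key identity
\[
\Psi(S_{t_0}\Hcal) = \tau_{t_0}\Psi(\Hcal) = \tau_{t_0}\Vcal.
\]

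Finally, since $\Hcal$ is a homogeneous Poisson point process on $\mathbb{H}^+$ with product intensity $\d t \otimes 2\la\,\d h$, it is time-stationary, i.e., $S_{t_0}\Hcal \stackrel{D}{=} \Hcal$ for every $t_0 \in \R$. Applying $\Psi$ to both sides and using the equivariance just established gives $\tau_{t_0}\Vcal = \Psi(S_{t_0}\Hcal) \stackrel{D}{=} \Psi(\Hcal) = \Vcal$, which is exactly time-stationarity of $\Vcal$. The main (minor) obstacle I foresee is purely bookkeeping: one must verify measurability of the factor map $\Psi$ and handle the degenerate case in which two heads share the same time coordinate (an almost-sure empty event for a Poisson point process with diffuse intensity in the time coordinate), so that the strict inequality $T_j < T_i$ in (\ref{eq:VcalHPP2}) causes no issue under the shift.
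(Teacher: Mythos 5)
Your proposal is correct and follows exactly the route the paper intends: the paper states (without writing out the details) that the stationarity of $\Vcal$ is ``inherited from the same property of the head point process $\Hcal$,'' which is precisely your factor-map argument, with the equivariance of the intersection coordinates $(\hat S,\hat H)$ under joint horizontal translation and of the empty half-ball indicator $\one_{\{\Hcal(U_{\hat H}^{\hat S})=0\}}$ supplying the two facts the paper leaves implicit. Your verification that $\hat S$ shifts by $t_0$ while $\hat H$ is unchanged (visible from (\ref{eq:handover-time}) and (\ref{eq:h}), which depend only on $t_1-t_2$, $h_1$, $h_2$) and your remark about the a.s.\ absence of ties in the time coordinate are exactly the bookkeeping needed, so your write-up is in fact more complete than the paper's.
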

\subsection{Handover frequency for the single-speed}
\begin{theorem}[Handover frequency for the single-speed]
The handover frequency or intensity of the
handover point process $\Vcal$ is $\la_\Vcal=\frac{4v\sqrt{\la}}{\pi}$, where $\la$ is 
the intensity of mobile stations and v is their speed.  
\label{theorem:handover_freq1}
\end{theorem}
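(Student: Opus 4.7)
My plan is to apply the Campbell--Mecke (factorial moment) formula to the Poisson head point process $\Hcal$ (Lemma~\ref{lemma:tips_density} and its scaling from Remark~\ref{remark:onetov}), combined with the half-ellipse characterization of handovers from Lemma~\ref{lem:semicircle} and Discussion~\ref{discussion:ellipse_v}. By time stationarity of $\Vcal$ (Lemma~\ref{lemma:stationary_Tcal}), $\la_\Vcal = \E[\Vcal([0,1])]$. Using the construction (\ref{eq:VcalHPP2}), each handover corresponds to an ordered pair of heads in $\Hcal$ whose radial birds intersect at a point in $\Lcal_e$. Since $\Hcal$ has intensity $v\,dt\otimes 2\la\,dh$, and since by Slivnyak's theorem conditioning on two heads $(t_1,h_1),(t_2,h_2)\in\Hcal$ leaves the remaining heads Poisson of the same intensity (and these two conditioned heads lie on the boundary, not the interior, of $E^{\hat s,v}_{\hat h}$), the reduced Campbell formula together with the void probability from Discussion~\ref{discussion:ellipse_v} yields
\[
\la_\Vcal = (2\la v)^2 \int_{\mathbb H^+}\!\!\int_{\mathbb H^+} \mathbf{1}_{t_2<t_1}\,\mathbf{1}_{\hat s\in[0,1]}\,e^{-\la\pi\hat h^2}\,dt_1\,dh_1\,dt_2\,dh_2,
\]
where $(\hat s,\hat h)$ is the intersection of the two radial birds associated with the two heads.

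The key step is then a change of variables from $(t_1,h_1,t_2,h_2)$ to $(\hat s,\hat h,\phi_1,\phi_2)$. The critical observation, already present in the proof of Lemma~\ref{lem:semicircle} and Remark~\ref{remark:two-point}, is that the two heads must lie on the boundary $\partial^* E^{\hat s,v}_{\hat h}$, allowing the parameterization
\[
t_i = \hat s + (\hat h/v)\cos\phi_i,\qquad h_i = \hat h\sin\phi_i,\qquad \phi_i\in(0,\pi),
\]
with the ordering $t_2<t_1$ equivalent to $\phi_1<\phi_2$. A direct $4\times 4$ determinant expansion (using cofactors along the first column, which has two $1$'s and two $0$'s) gives the Jacobian $|J| = (\hat h^2/v)\,|\cos\phi_1+\cos\phi_2|$.

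Substituting and factoring, the integral decouples:
\[
\la_\Vcal = 4\la^2 v \cdot \int_0^1 d\hat s\cdot \int_0^\infty \hat h^2 e^{-\la\pi\hat h^2}\,d\hat h \cdot \int_{0<\phi_1<\phi_2<\pi}|\cos\phi_1+\cos\phi_2|\,d\phi_1\,d\phi_2.
\]
The $\hat s$-integral equals $1$; the $\hat h$-integral is the standard Gaussian moment $\int_0^\infty u^2 e^{-u^2}du=\sqrt\pi/4$ rescaled, giving $1/(4\pi\la^{3/2})$; and the angular integral, computed by splitting along the zero locus $\phi_1=\pi-\phi_2$ of $\cos\phi_1+\cos\phi_2$ and integrating separately on each region, evaluates to $4$. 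Assembling the pieces yields $\la_\Vcal = 4\la^2 v \cdot \frac{1}{4\pi\la^{3/2}}\cdot 4 = \frac{4v\sqrt\la}{\pi}$.

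I expect the main obstacle to be the Jacobian computation: the symmetry between the two head indices, combined with the absolute value (which reflects that the sign depends on which bird is "above" the other on either side of the intersection), requires care. A direct determinant expansion works but is tedious; alternatively, one may interpret $(\hat h/v)(\cos\phi_1+\cos\phi_2)$ geometrically as (up to a factor) the difference of the time-derivatives $\partial_t f_v$ of the two distance functions at the crossing, i.e.\ the transverse crossing speed of the two hyperbola branches, which is exactly the factor one expects when going from pairs of curves to pairs of parameters at a crossing point. The angular integral, while not deep, also requires careful splitting of the absolute value; the simplicity of the answer $4$ is a small surprise.
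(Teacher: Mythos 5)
Your overall strategy is sound and the final answer is right, but your route through the computation is genuinely different from the paper's. The paper also starts from the second-order Campbell--Mecke formula and the void probability $e^{-\la\pi\hat h^2}$, but it then keeps the head coordinates $(t_1,h_1,t_2,h_2)$ as integration variables, substitutes the explicit expression $\hat h^2=\frac14\big[(t_1-t_2)^2+2(h_1^2+h_2^2)+\frac{(h_2^2-h_1^2)^2}{(t_1-t_2)^2}\big]$, reduces to $t=t_1-t_2$, and evaluates the inner integral with the identity $\int_0^\infty e^{-ax^2-b/x^2}\,dx=\frac{\sqrt\pi}{2\sqrt a}e^{-2\sqrt{ab}}$, after which the remaining double integral collapses via $e^{-\la\pi(h_1^2\vee h_2^2)}$. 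Your change of variables to $(\hat s,\hat h,\phi_1,\phi_2)$ --- placing the two heads on the boundary of the empty half-ellipse through the crossing, which is legitimate by Remark~\ref{remark:two-point} and the unique-intersection property --- decouples the integral and avoids that special-function identity entirely; it is a Blaschke--Petkantschin-style disintegration and is arguably cleaner. (The paper also offers a second proof by reduction to the static mobile-user model, which neither of you needs here.)

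There is, however, an error in your Jacobian. A direct expansion of the $4\times4$ determinant for $t_i=\hat s+(\hat h/v)\cos\phi_i$, $h_i=\hat h\sin\phi_i$ gives
\begin{equation}
\det J=\frac{\hat h^2}{v}\,(\cos\phi_2-\cos\phi_1),
\nonumber
\end{equation}
so $|J|=(\hat h^2/v)\,|\cos\phi_1-\cos\phi_2|$, a \emph{difference} of cosines, not the sum $(\hat h^2/v)\,|\cos\phi_1+\cos\phi_2|$ you state. Your own heuristic actually confirms this: the slope of the $i$-th distance function at the crossing is $\partial_t f_v=-v\cos\phi_i$, so the transverse crossing speed is $v(\cos\phi_2-\cos\phi_1)$, consistent with the difference, and inconsistent with your displayed formula. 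You are saved by a coincidence of symmetry: the substitution $\phi\mapsto\pi-\phi$ maps $|\cos\phi_1-\cos\phi_2|$ to $|\cos\phi_1+\cos\phi_2|$ and preserves $[0,\pi]^2$, and both integrands are symmetric under swapping indices, so both ordered-region integrals equal $4$ (on $\{\phi_1<\phi_2\}$ one has $\cos\phi_1>\cos\phi_2$, and $\int_0^\pi(\sin\phi_2-\phi_2\cos\phi_2)\,d\phi_2=2+2=4$). With the corrected Jacobian every factor you list is right, and the assembly $4\la^2v\cdot\frac{1}{4\pi\la^{3/2}}\cdot 4=\frac{4v\sqrt\la}{\pi}$ stands.
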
 \label{notation:LVcal2}
The proof relies on the representation of the 
two-point Palm probability distribution of $\Hcal$, which we define in the following before going into the proof of the last theorem.
\subsubsection{Two point Palm probability distribution of $\Hcal$}
It is clear from (\ref{eq:VcalHPP1}) that each handover event is characterized as the intersection of
two radial bird particles that correspond to two atoms of the head point process $\Hcal$ satisfying
certain properties. This requires us to define the two-point Palm probability measure for $\Hcal$. Recall that, from Lemma~\ref{lemma:tips_density}, the head point process $\Hcal$ is Poisson with intensity measure $\nu$ having for density ${\rm d}\nu= {\rm d} t\otimes 2\la {\rm d} h$. 

Let $M(\mathbb H^+)$ be the space of all locally finite measures on $\mathbb H^+$. We define $\Mcal(\mathbb H^+)$ as the $\sigma$-algebra on $M(\mathbb H^+)$, generated by maps of the form $\mu\mapsto \mu(A)$, for any $A\in \Bcal(\mathbb H^+)$ and $\mu\in M(\mathbb H^+)$, where $\Bcal(\mathbb H^+)$ is the Borel $\sigma$-algebra on $\mathbb H^+$, see \cite{Baccelli-Bartek-Karray} for more details.
The following result is classical for Poisson point processes:
\begin{lemma}[Two-point Palm probability distribution of $\Hcal$]
The two point Palm probability distribution of the head point process is 
\begin{equation}
\P_{\Hcal}^{(t, h),(t', h')}(B):=\P\left(\Hcal^{(t, h), (t', h')}\in B\right),\nn
\end{equation} 
for any $B\in \Mcal(\mathbb{H}^+)$, with $\Hcal^{(t, h), (t', h')}:=\Hcal+ \delta_{(t, h)}+\delta_{(t', h')}$
and $\Hcal$ the stationary version of the head point process. 
\label{lemma:twop-head}
\end{lemma}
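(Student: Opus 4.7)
The plan is to reduce the statement to an iterated application of Slivnyak's theorem for Poisson point processes. By Lemma~\ref{lemma:tips_density}, the head point process $\Hcal$ is Poisson on $\mathbb{H}^+$ with intensity measure $\nu$ of density ${\rm d}\nu = {\rm d} t \otimes 2\lambda\,{\rm d} h$. This is the only structural fact needed: the result stated is the standard ``add two independent atoms'' characterisation of the second-order Palm distribution, which is special to Poisson point processes.

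First I would recall the definition of the two-point Palm distribution via the reduced second-order Campbell-Mecke formula: for any non-negative measurable $f \,\mbox{:}\, \mathbb{H}^+ \times \mathbb{H}^+ \times M(\mathbb{H}^+) \to \R_+$,
\begin{equation*}
\E\!\left[\,\sum_{\substack{x_1, x_2 \in \Hcal \\ x_1 \neq x_2}} f(x_1, x_2, \Hcal)\right]
= \int_{\mathbb{H}^+}\!\!\int_{\mathbb{H}^+} \E_{\Hcal}^{x_1, x_2}\!\left[f(x_1, x_2, \Hcal)\right] \nu({\rm d} x_1)\,\nu({\rm d} x_2),
\end{equation*}
which characterises $\P_\Hcal^{x_1, x_2}$ uniquely as a probability kernel (outside a $\nu^{\otimes 2}$-null set). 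To identify it, I would verify that the choice $\P_\Hcal^{x_1, x_2}(B) := \P(\Hcal + \delta_{x_1} + \delta_{x_2} \in B)$ satisfies this identity.

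The verification proceeds by Slivnyak's theorem applied twice. For the Poisson process $\Hcal$, Slivnyak's theorem gives
\begin{equation*}
\E\!\left[\sum_{x \in \Hcal} g(x, \Hcal)\right] = \int_{\mathbb{H}^+} \E\!\left[g(x, \Hcal + \delta_x)\right]\nu({\rm d} x).
\end{equation*}
Applying this identity with the function $g(x_1, \mu) := \sum_{x_2 \in \mu,\; x_2 \neq x_1} f(x_1, x_2, \mu)$ rewrites the double sum as a single sum over $\Hcal + \delta_{x_1}$ (the added atom being excluded by the condition $x_2 \neq x_1$), after which a second application of Slivnyak's theorem peels off the inner sum and produces the extra atom $\delta_{x_2}$. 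Matching the two expressions identifies $\E_\Hcal^{x_1, x_2}[\cdot]$ with $\E[\,\cdot\,(\Hcal + \delta_{x_1} + \delta_{x_2})]$ for $\nu^{\otimes 2}$-almost every pair, and by taking the canonical (translation-invariant) version one may define it for every $(x_1, x_2)$.

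There is essentially no obstacle: the only subtlety is purely notational, namely being careful that the two added atoms are distinct (so the proof really applies to ordered pairs of distinct points, matching the reduced second factorial moment measure used to define two-point Palm distributions) and that one selects the canonical stationary version of $\Hcal$ so that the formula is literally valid at every $(t,h), (t',h')$ rather than merely almost every such pair.
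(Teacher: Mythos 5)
Your proof is correct: the paper itself gives no argument for this lemma, simply declaring it "classical for Poisson point processes," and your two-fold application of the Mecke/Slivnyak equation (together with the observation that the reduced second factorial moment measure of a Poisson process is $\nu\otimes\nu$) is precisely the standard justification behind that claim, consistent with the references to Propositions 2.3.24--2.3.25 of Baccelli--B\l{}aszczyszyn--Karray that the paper invokes when it later uses this Palm characterisation. The one subtlety you flag is handled correctly: since $\nu$ is diffuse, $\Hcal$ almost surely has no atom at $x_1$, so the inner sum over $x_2\in\Hcal+\delta_{x_1}$ with $x_2\neq x_1$ reduces to a sum over $\Hcal$, and the identification holds for $\nu^{\otimes 2}$-a.e.\ pair, which suffices for all uses in the paper.
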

\label{notation:2point} \label{notation:2pointP}
This result extends naturally to the marked version of the head point process $\Hcal_c$ defined in (\ref{eq:myhc}).
\begin{lemma}[Two-point Palm probability distribution of $\Hcal_c$]
The two point Palm probability distribution of $\Hcal_c$ is
\begin{equation}
\P_{\Hcal_c}^{(t, h,C),(t', h',C')}(\, \cdot\, ):=\P\left(\Hcal_c^{(t, h,C), (t', h',C')}\in \cdot\, \right),\nn
\end{equation}
with $\Hcal_c^{(t, h,C), (t', h',C')}:=\Hcal_c+ \delta_{(t, h,C)}+\delta_{(t', h',C')}$
and $\Hcal_c$ the stationary version of the head point process.
Here $C$ and $C'$ are the radial birds with head point at $(t,h)$ and $(t',h')$, respectively.
\label{definition:twop-headc}
\end{lemma}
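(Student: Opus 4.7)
The plan is to deduce this lemma directly from Lemma~\ref{lemma:twop-head} by observing that $\Hcal_c$ is a deterministic measurable lifting of the unmarked Poisson head point process $\Hcal$. By Property~(\ref{Obs2}), each closed-set mark $C$ is a deterministic function of its head alone, namely $(t,h) \mapsto C_{(t,h)}$. Accordingly, define the mapping
\[
\Psi : \mu \longmapsto \sum_{(s,u) \in \mathrm{supp}(\mu)} \delta_{(s,\,u,\,C_{(s,u)})}
\]
from $M(\mathbb{H}^+)$ to $M(\mathbb{H}^+ \times \Ccal(\mathbb{H}^+))$. This map is measurable with respect to the natural $\sigma$-algebras generated by evaluation maps, and $\Hcal_c = \Psi(\Hcal)$ almost surely. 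Moreover $\Psi$ commutes with the addition of distinct atoms: for $(t,h)\neq (t',h')$ lying outside $\mathrm{supp}(\mu)$,
\[
\Psi\bigl(\mu + \delta_{(t,h)} + \delta_{(t',h')}\bigr) \;=\; \Psi(\mu) + \delta_{(t,\,h,\,C_{(t,h)})} + \delta_{(t',\,h',\,C_{(t',h')})}.
\]

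First I would make precise the definition of $\P_{\Hcal_c}^{(t,h,C),(t',h',C')}$ via the reduced Campbell--Little--Mecke disintegration of the second factorial moment measure of the marked point process $\Hcal_c$. Because the marks are Dirac in the location, this factorial moment measure on $(\mathbb{H}^+ \times \Ccal(\mathbb{H}^+))^2$ is exactly the pushforward, under the map $((t,h),(t',h')) \mapsto ((t,h,C_{(t,h)}),(t',h',C_{(t',h')}))$, of the corresponding factorial moment measure of $\Hcal$. Combining this pushforward identity with Lemma~\ref{lemma:twop-head}, the two-point Palm distribution of $\Hcal_c$ at $((t,h,C),(t',h',C'))$ equals the law of $\Psi(\Hcal) + \delta_{(t,h,C)} + \delta_{(t',h',C')}$, and the equivariance of $\Psi$ rewrites this as the law of $\Hcal_c + \delta_{(t,h,C)} + \delta_{(t',h',C')}$, which is the claim.

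The only obstacle is measure-theoretic bookkeeping: verifying measurability of $\Psi$ between the canonical $\sigma$-algebras, and checking that the factorial-moment definition of the two-point Palm distribution behaves correctly under the deterministic mark lifting (in particular that the diagonal $\{(t,h)=(t',h')\}$ is $\nu^{\otimes 2}$-null, so one may restrict to distinct pairs). Both verifications are routine, so the lemma is essentially a direct corollary of the Slivnyak--Mecke property already established for $\Hcal$.
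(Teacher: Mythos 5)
Your proposal is correct and follows exactly the route the paper intends: the paper gives no explicit proof, merely asserting that the result ``extends naturally'' from Lemma~\ref{lemma:twop-head}, and your argument—that the marks are a deterministic measurable function of the heads (Property~\ref{Obs2}), so the two-point Palm distribution lifts through the map $\Psi$—is precisely the routine verification being omitted. Nothing further is needed.
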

Suppose there are two radial bird particles $C,C'$ with their heads at $(t, h), (t', h')$, respectively.
Let $(\hat{s}, \hat{h})$ be the intersection point of $C, C'$, 
where $\hat{s}$ is the time when the two radial birds intersect and $\hat{h}$ is the distance at which they intersect. 
Let $U_{\hat{h}}^{\hat{s}}$ be the upper half-ball centered at $(\hat{s}, 0)$ and of radius $\hat{h}$. 
It follows from Lemma~\ref{lem:semicircle} and Remark~\ref{remark:two-point} that 
$\partial^* U_{\hat{h}}^{\hat{s}}$ also passes through both heads $(t, h)$ and  $(t', h')$, 
see Figure~\ref{figure:two-bird}. It also follows from Lemma~\ref{lem:semicircle} that 
a handover happens at $(\hat{s}, \hat{h})$ if and only if $\Hcal(U_{\hat{h}}^{\hat{s}})=0$.
\begin{figure}[ht!]
\centering
\begin{tikzpicture}[scale=1.5, every node/.style={scale=0.7}]
\pgftransformxscale{0.65}  
\pgftransformyscale{0.65}  
\draw[->] (-2, 0) -- (2.5, 0) node[right] {$t$};
\draw[-] (0.2, 0) -- (0.2, 1.28);
\draw[](0.2,-0.2) node{$(\hat{s}, 0)$};
\draw[](0.2,1.9) node{$(\hat{s},\hat{h})$};
\draw[domain=-1.7:2, smooth] plot (\x, {(\x*\x-2*\x+2)^0.5});
\draw[](0.95,1) node{$\bullet$};
\draw[](1.1,1.3) node{$(t, h)$};
\draw[](2.1,1.6) node{$C$};
\draw[red] (-1.1,0) arc (180:0:1.28);
\draw[domain=-2:1.7, smooth] plot (\x, {(\x*\x+2.1*\x+1.2)^0.5});
\draw[](-1.05,0.31) node{$\bullet$};
\draw[](-0.4,0.3) node{$(t', h')$};
\draw[](-2.1,1.2) node{$C'$};
\end{tikzpicture}
\captionsetup{width=0.91\linewidth}
\caption{Given $(t, h)$ and $(t', h')$, the intersection $(\hat{s},\hat{h})$ is a handover point if and only if  $\Hcal(U_{\hat{h}}^{\hat{s}})=0$.}
\label{figure:two-bird}
\end{figure}

We have the following result about the independence of the head point processes restricted to
$U_{\hat{h}}^{\hat{s}}$ and $ (U_{\hat{h}}^{\hat{s}})^c$, under the two point Palm probability measure
$\P_{\Hcal_c}^{(t, h,C),(t', h',C')}$. We state it as a corollary without proof, as it directly
follows from the fact that the restriction of a Poisson point process on disjoint sets are independent, together with Lemma~\ref{lem:semicircle}.
\begin{corollary}
    Under the two-point Palm probability measure $\P_{\Hcal}^{(t, h),(t', h')}$, 
    \begin{enumerate}[(i)]
    \item The head point processes $\Hcal\vert_{U^{\hat{s}}_{\hat{h}}}$ and $\Hcal\vert_{(U^{\hat{s}}_{\hat{h}})^c}$ are independent. \label{notation:PRest}
    \item Additionally, under the event $\{\Hcal(U^{\hat{s}}_{\hat{h}})=0\}$, the point process 
	    $\Hcal\vert_{(U^{\hat{s}}_{\hat{h}})^c}$ is Poisson with intensity of density ${\rm d} t\otimes 2\la {\rm d} h$.
    \end{enumerate}
    \label{corollary:CCc}
\end{corollary}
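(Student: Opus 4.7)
The plan is to derive both statements directly from the Slivnyak--Mecke description of the two-point Palm law of $\Hcal$, combined with the elementary fact that a homogeneous PPP on $\mathbb{H}^+$ restricted to disjoint Borel sets yields independent PPPs. By Lemma~\ref{lemma:twop-head}, under $\P_{\Hcal}^{(t,h),(t',h')}$, the head process has the representation $\Hcal^{(t,h),(t',h')} = \Hcal + \delta_{(t,h)} + \delta_{(t',h')}$, where $\Hcal$ on the right-hand side is the stationary Poisson head process of intensity $\d t \otimes 2\la\,\d h$ supplied by the Mapping Lemma (Lemma~\ref{lemma:tips_density}).

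The crucial preliminary observation is that, once the two Palm atoms $(t,h)$ and $(t',h')$ are prescribed, the intersection coordinates $(\hat{s},\hat{h})$ of the two radial birds $C_{(t,h)}$ and $C_{(t',h')}$ are a deterministic function of these two head points, thanks to Property~\ref{Obs2} (a radial bird is determined by its head) together with Property~\ref{Obs4} (uniqueness of the intersection of two distinct radial birds). Consequently the open half-ball $U^{\hat{s}}_{\hat{h}}$ is a fixed, non-random Borel subset of $\mathbb{H}^+$ under $\P_{\Hcal}^{(t,h),(t',h')}$. Moreover, by Remark~\ref{remark:two-point}, both $(t,h)$ and $(t',h')$ lie on the semicircular boundary $\partial^* U^{\hat{s}}_{\hat{h}}$, so the two added Dirac atoms contribute neither to $\Hcal|_{U^{\hat{s}}_{\hat{h}}}$ nor to $\Hcal|_{(U^{\hat{s}}_{\hat{h}})^c}$ once we interpret these as restrictions to the open half-ball and its open complement.

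For part~(i), since $U^{\hat{s}}_{\hat{h}}$ and $(U^{\hat{s}}_{\hat{h}})^c$ are disjoint Borel sets of $\mathbb{H}^+$, the restrictions of the underlying stationary PPP $\Hcal$ to them are independent Poisson point processes with intensity measures $\nu|_{U^{\hat{s}}_{\hat{h}}}$ and $\nu|_{(U^{\hat{s}}_{\hat{h}})^c}$ respectively, which transfers verbatim to the Palm law by the decomposition above. For part~(ii), the event $\{\Hcal(U^{\hat{s}}_{\hat{h}}) = 0\}$ is measurable with respect to the sub-$\sigma$-algebra generated by $\Hcal|_{U^{\hat{s}}_{\hat{h}}}$; by the independence just established, conditioning on this event leaves the law of $\Hcal|_{(U^{\hat{s}}_{\hat{h}})^c}$ unchanged, so it remains a Poisson point process with density $\d t \otimes 2\la\,\d h$ restricted to $(U^{\hat{s}}_{\hat{h}})^c$. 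The argument requires no difficult estimate; the only conceptual point to verify carefully is that the deterministic nature of $(\hat{s},\hat{h})$ as a function of the two prescribed heads reduces the claim to the textbook independence of a PPP over disjoint Borel sets.
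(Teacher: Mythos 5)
Your proposal is correct and follows exactly the route the paper intends: the paper states this corollary without proof, remarking only that it "directly follows from the fact that Poisson point processes on disjoint sets are independent, together with Lemma~\ref{lem:semicircle}," and your argument fleshes out precisely those ingredients (the Slivnyak--Mecke form of the two-point Palm law, the determinism of $(\hat{s},\hat{h})$ given the two heads, and independence of a PPP over disjoint Borel sets). Your extra care about the two Palm atoms sitting on $\partial^* U^{\hat{s}}_{\hat{h}}$ and hence not perturbing either restriction is a detail the paper leaves implicit, but it is consistent with the intended reading.
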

\begin{proof}[Proof of Theorem~\ref{theorem:handover_freq1} (Handover frequency)]
The proof mainly relies on the characterization 
for a point to be on the lower envelope $\Lcal_e$ in terms of void probability of half-balls (Lemma~\ref{lem:semicircle}).
Let $X_i, X_j$ be the initial locations of two stations $(R_i, \a_i)$, $(R_j, \a_j)$ corresponding
to $X_i,X_j$. Let $f((R_i, \a_i), \cdot)$ and $f((R_j, \a_j), \cdot)$ be the curves of the two radial birds
$C_i, C_j$, with their heads at $(T_i, H_i)$ and $(T_j, H_j)$, respectively. For $(T_i, H_i)$ and $(T_j, H_j)$ with $T_j<T_i$, let $(S_{i,j},H_{i,j})$ be the intersection point of the associated radial birds.
We will see later that $(S_{i,j},H_{i,j})$ can be expressed in terms of $(T_i, H_i)$ and $(T_j, H_j)$.
For any $(s,h)\in \mathbb{H}^+$, define
\begin{equation}
A(s,h) :=\{ (s,h) \text{ is a handover point}\}.
\label{eq:HOe}
\end{equation}
In short, we define $A_{i,j}:=A(S_{i,j},H_{i,j})$ and we can write for the event that, $\one_{A_{i,j}}\equiv \one_{A_{i,j}}\left( \Hcal\right)$. Then the handover frequency can be written as 
\begin{align}
\la_\Vcal=m_{\Vcal}[0,1]
%
&=\E\left[\sum_{(T_i,H_i)\in \Hcal\,\mbox{:}\,T_i\in [0,1]}\;\;\sum_{(T_j,H_j)\in \Hcal\,\mbox{:}\, T_j <  T_i} \one_{A_{i,j}}\right],
\label{eq:h1a}
\end{align}
based on the construction of the handover point process in Subsection~\ref{subsection:FC-HPP}, in particular Equation (\ref{eq:VcalHPP1}). The last expression clearly suggests using now the factorial power of order $2$ of $\Hcal$.
The point process $\Hcal$ being Poisson, any factorial power can be expressed in terms of a product measure. 
More precisely, by applying the multivariate Campbell-Mecke formula for the factorial power of order 2 of $\Hcal$, i.e., $\Hcal^{2,\neq}$ (Proposition 2.3.24 and Proposition 2.3.25, in \cite{Baccelli-Bartek-Karray}), we get that \label{notation:FPn}
\begin{align}
\la_{\Vcal}&=\E\left[\sum_{(T_i,H_i)\in \Hcal\,\mbox{:}\,T_i\in [0,1]}\;\;\sum_{(T_j,H_j)\in \Hcal\,\mbox{:}\, T_j <  T_i} \one_{A_{i,j}}\right]\nn\\
&=(2\la)^2\int_{0}^{1} \int_{0}^{\infty}\int_{-\infty}^{t_1}\int_{0}^{\infty} \E^{(t_1, h_1), (t_2, h_2)}_{\Hcal}\left[\one_{A(s,h)} \left(\Hcal\right)\right] \, {\rm d} h_2\, {\rm d} t_2\, {\rm d} h_1\, {\rm d} t_1\nn\\
&=4\la^2\int_{0}^{1} \int_{0}^{\infty}\int_{-\infty}^{t_1}\int_{0}^{\infty} \E_\Hcal\left[\one_{A(s,h)} \left(\Hcal+\delta_{(t_1,h_1)}+\delta_{(t_2,h_2)}\right)\right] \, {\rm d} h_2\, {\rm d} t_2\, {\rm d} h_1\, {\rm d} t_1,
\label{eq:h1b}
\end{align}
where $(s,h)$ is the point that corresponds to the
intersection of the radial birds associated with the heads located at $(t_1, h_1)$ and $(t_2, h_2)$. 
Here $\Hcal^{(t_1, h_1),(t_2, h_2)}=\Hcal+ \delta_{(t_1, h_1)}+\delta_{(t_2, h_2)}$, is the two-point Palm version of
$\Hcal$ and it's two-point Palm expectation $\E^{(t_1, h_1), (t_2, h_2)}_{\Hcal}$ is
as described in Lemma~\ref{lemma:twop-head}. \label{notation:2pointE}

The handover point $(s,h)$ can be expressed in terms of $(t_1,h_1), (t_2,h_2)$, as follows. 
There exists a pair of points $(r_1, \a_1), (r_2, \a_2)\in \R^+\times (-\pi, \pi]$ such that
$(t_1, h_1)=(-r_1\cos \a_1, r_1|\sin\a_1|)$ and $(t_2, h_2)=(-r_2\cos \a_2, r_2|\sin\a_2|)$. 
Let $f((r_1, \a_1), \cdot)$ and $f((r_2, \a_2), \cdot)$ be the curves of the radial birds $C_1,C_2$, 
as defined in (\ref{eq:bird2a}). The time coordinate $s$ of handover satisfies $f((r_1, \a_1), s)=f((r_2, \a_2), s)$, i.e., 
\[s^2+2sr_1\cos\a_1+r_1^2= s^2+2sr_2\cos\a_2+r_2^2.\]
This implies,
\begin{align}
 s&= \frac{r_1^2-r_2^2}{2(r_2\cos\a_2-r_1\cos\a_1)}=\frac{t_1^2+h_1^2- t_2^2-h_2^2}{2(t_1-t_2)}.
\label{eq:handover-time}
\end{align}
The time coordinate $s$ is unique, which comes from the fact that radial birds $C_1,C_2$ with different $t_1, t_2$ 
must intersect exactly once, see Property~\ref{Obs4}. Substituting the value of $s$ from (\ref{eq:handover-time}), 
we get the height $h$ of the intersection point from:
\begin{align}
    h^2&= s^2-2s t_1+ r_1^2\nn\\
    &=\frac{1}{4(t_1-t_2)^2}\left[(t_1^2+h_1^2- t_2^2-h_2^2)^2- 4t_1(t_1-t_2)(t_1^2+h_1^2- t_2^2-h_2^2)+4(t_1^2+h_1^2)(t_1-t_2)^2\right]\nn\\
    &= \frac{1}{4(t_1-t_2)^2} \left[(t_2-t_1)^4+2(h_1^2+h_2^2)(t_1-t_2)^2+(h_2^2-h_1^2)^2\right]\nn\\
    &= \frac{1}{4}\left[(t_1-t_2)^2+2(h_1^2+h_2^2)+\frac{(h_2^2-h_1^2)^2}{(t_1-t_2)^2}\right].
    \label{eq:h}
\end{align}
Recall that $A(s,h)$ is the event that $C_1$ and $C_2$ intersect at $(s,h)$, which is a handover point.
A handover happens at $(s,h)$ if and only if $\Hcal(U^s_h)=0$,
by Lemma~\ref{lem:semicircle}. Since $\Hcal$ is a Poisson point process, by the Slivnyak-Mecke theorem, the probability of the event $A(s,h)$ in (\ref{eq:h1b}), is 
\begin{align}
\E_{\Hcal}\left[\one_{A(s,h)} \left(\Hcal+\delta_{(t_1, h_1)}+\delta_{(t_2, h_2)}\right)\right]&=\P_{\Hcal}\left(A(s,h)\right)=\P_{\Hcal}\left(\Hcal(U^s_h)=0\right)= e^{-\la\pi h^2},
\label{eq:P_A}
\end{align}
which does not depend on the time coordinate $s$. Using the probability from (\ref{eq:P_A}) in (\ref{eq:h1b}), we obtain that
\begin{align}
    \la_\Vcal&= 4\la^2\int_{0}^{1} \int_{0}^{\infty}\int_{0}^{\infty}\int_{-\infty}^{t_1} e^{-\la \pi h^2} {\rm d} t_2\, {\rm d} h_2\, {\rm d} h_1\, {\rm d} t_1,
\label{eq:h1}
\end{align}
as $t_2$ lies on the left of $t_1$. Since the height $h$ is a function of the difference $t_1-t_2$ as seen in (\ref{eq:h}),
the integral in (\ref{eq:h1}) can be converted into an integral in three variables after changing
the variable $t_1-t_2$ to $t$, as follows 
\begin{align}
\la_\Vcal = 4\la^2\int_{0}^{\infty}\int_{0}^{\infty}\int_{0}^{\infty} e^{-\la \pi h^2} {\rm d} t\, {\rm d} h_1\, {\rm d} h_2.
\label{eq:h2}
\end{align}
Using the expression for $h^2$ from (\ref{eq:h}) and keeping aside the term $\half(h_1^2+h_2^2)$,
we evaluate the inner-most integral in (\ref{eq:h2}) only with respect to $t$ as
\begin{align}
\int_{0}^{\infty} e^{-\frac{\la \pi}{4} \left[t^2+\frac{(h_2^2-h_1^2)^2}{t^2}\right]}  {\rm d} t 
&=e^{-\frac{\la\pi}{2}|h_2^2-h_1^2|} \frac{\sqrt{\pi}}{2\sqrt{\frac{\la\pi}{4}}}= e^{-\frac{\la\pi}{2}|h_2^2-h_1^2|} \frac{1}{\sqrt{\la}},
\label{eq:h2a}
\end{align}
where we used the following relation for $a =\frac{\la\pi}{4}>0$ and $b=\frac{\la\pi}{4} (h_2^2-h_1^2)^2>0$ as:
\begin{align}
\int_{0}^{\infty} e^{- a x^2-\frac{b}{x^2}}\, {\rm d} x &= e^{-2\sqrt{ab}} \int_{0}^{\infty} e^{- \left(\sqrt{a} x-\frac{\sqrt{b}}{x}\right)^2}\, {\rm d} x
= e^{-2\sqrt{ab}} \frac{1}{\sqrt{a}} \int_{0}^{\infty} e^{-y^2} {\rm d}y 
=  \frac{\sqrt{\pi}}{2\sqrt{a}}e^{-2\sqrt{ab}},
\label{eq:gaussian_correction}
\end{align}
%
%
computed using Cauchy-Schl\"{o}milch transformation from~\cite{Amdeberhan-etal} 
\[
\int_0^\infty f\left(\left(px-q/x\right)^2\right){\rm d}x= \frac{1}{p} \int_0^\infty f(y^2){\rm d}y,
\]
for any function $f$ and constants $p,q>0$. Substituting the resultant from (\ref{eq:h2a}) to the integral (\ref{eq:h2}), along with an extra factor of $e^{-\frac{\la\pi}{2}(h_1^2+h_2^2)}$,
the right hand side in (\ref{eq:h2}) turns out to be
\begin{align}
    \la_\Vcal&=\frac{4\la^2}{\sqrt{\la}}\int_{0}^{\infty}\int_{0}^{\infty} e^{-\frac{\la\pi}{2}(h_1^2+h_2^2)} e^{-\frac{\la\pi}{2}|h_2^2-h_1^2|} {\rm d} h_1\, {\rm d} h_2\nn\\
    &=4 \la^{\frac{3}{2}} \int_{0}^{\infty}\int_{0}^{\infty}e^{-\frac{\la\pi}{2}\left(h_1^2+h_2^2+|h_2^2-h_1^2|\right)}{\rm d} h_1\, {\rm d} h_2 \nn\\
    &=4 \la^{\frac{3}{2}} \int_{0}^{\infty}\int_{0}^{\infty} e^{-\la\pi\left(h_1^2\vee h_2^2\right)}{\rm d} h_1\, {\rm d} h_2 = 4 \la^{\frac{3}{2}}\frac{1}{\la\pi} \int_{0}^{\infty}\int_{0}^{\infty} e^{-(x^2\vee y^2)}{\rm d} x\, {\rm d}y =\frac{4\sqrt{\la}}{\pi},
\label{eq:h3}
\end{align}
since it can be shown that $\int_{0}^{\infty}\int_{0}^{\infty} e^{-(x^2\vee y^2)}{\rm d} x\, {\rm d}y=1$. This completes the proof of Theorem~\ref{theorem:handover_freq1}.
\end{proof}
\begin{remark}
In view of (\ref{eq:VcalHPPL}), we can give an alternative definition of the handover frequency, instead of the one in (\ref{eq:h1a}), as follows:
\begin{align}
\la_\Vcal=m_{\Vcal}[0,1]
&=\E\left[\sum_{(T_i,H_i)\in \Hcal\, :\, T_i\in [0,1]}\;\;\sum_{(T_j,H_j)\in \Hcal \,:\, T_j >  T_i} \one_{A_{i,j}}\right].\nn
%
\end{align}
Under this definition we can carry out the entire computation of the handover frequency $\la_\Vcal$.
\label{remark:left-right}
\end{remark}
\begin{remark}[Speed $v\neq 1$]
If the mobile stations move at a general speed $v$, the handover frequency can be computed to be
$\la_{\Vcal}=\frac{4v\sqrt{\la}}{\pi}$. This is justified since, in view of Lemma~\ref{lemma:tips_density} and Remark~\ref{remark:r1},
the speed $v$ scales the time axis proportionally.
\label{remark:vspeed}
\end{remark}
\begin{remark}[Comparison with the static scenario]
Consider the same model, but with static stations and a mobile user moving along a straight line with speed $v$,
see Figure~\ref{figure:static}. The static model was studied in various papers of the literature, for example \cite{BKLZ}, \cite{Baccelli_Zuyev},
\cite{Baccelli_Madadi_Gustavo},  \cite{Baccelli_Zuyev2}. The handover frequency is computed as the intensity of the point process
corresponding to the intersection of an infinite line with the boundaries of cells of the corresponding Poisson-Voronoi tessellation. This uses the contact and cord length distribution of the Poisson-Voronoi tessellation~\cite{Muche_Stoyan}.
The frequency of handovers in the simple dual static model is also $\la_\Vcal=\frac{4\sqrt{\la}}{\pi}$, as analytically proved in \cite{Baccelli_Zuyev2}, \cite[7.22]{Jasper_Moller} and \cite{Mecke1981}. This fact will be discussed further in the following, as an alternative  proof of Theorem~\ref{theorem:handover_freq1}.
\begin{figure}[ht!]
    \centering
\includegraphics[height=0.37\linewidth, width=0.37\linewidth]{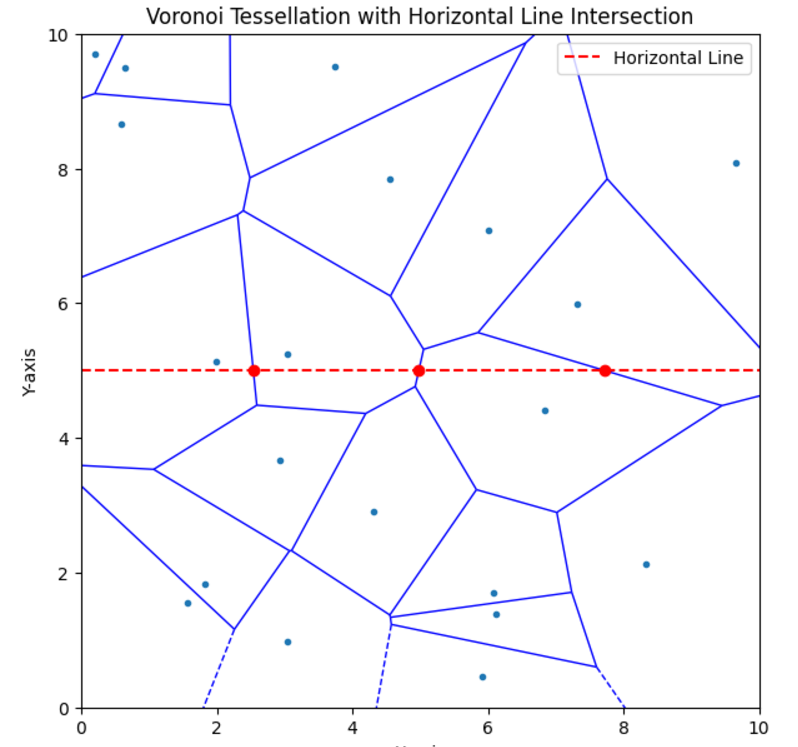}
\captionsetup{width=0.85\linewidth}
\caption{In the dual system the handover points are represented by the crossings of the Voronoi boundaries by the trajectory of an user moving along a horizontal line.}
\label{figure:static}
\end{figure}
\label{remark:static}
\end{remark}
\subsubsection{Another proof of Theorem~\ref{theorem:handover_freq1}}~\label{subsubsection:alter_HF}
We give here a simpler proof of Theorem~\ref{theorem:handover_freq1}. Unfortunately, this simpler approach does not extend to the multi-speed case, which
will centrally rely on the methodology developed for the first proof.
\begin{proof}
Let us first consider the simpler model with the direction of motion fixed and the same for all stations,
let us say $\Theta=\theta$. Then at time $t$, the distance of the mobile station, starting at $X= (|X|, \Psi)$,
from the user is given by
\[
\vert X^t\vert :=\left(\vert X\vert ^2+2t\vert X\vert \cos \a'+ t^2\right)^\half=\left(R'^2+2t R'\cos\a'+ t^2\right)^\half :=f((R',\a'), t),
\]
where $|X|:=R'$ and $\a'\equiv(\Psi-\th)\pmod {2\pi}$. Collectively the pairs of the form $(R',\a')$ give rise
to a marked Poisson point process $\tilde{\Phi}':=\sum_{i\in \N}\delta_{(R'_i, \a'_i)}$ on $\R^+\times [-\pi, \pi]$.
Subsequently, one can construct the head point process $\Hcal':=\sum_{i\in \N}\delta_{(T'_i,H'_i)}$,
where $(T'_i,H'_i):=(-R'_i\cos\a'_i, R'_i|\sin\a'_i|)$ and the marked head point process
$\Hcal'_c:=\sum_{i\in \N}\delta_{(T'_i,H'_i, C'_i)}$, where $C'_i:=\{(t, f((R'_i,\a'_i), t))\,\mbox{:}\, t\in \R\}$ are the radial birds.
The lower envelope process in this case $\Lcal'(\cdot)$ is defined as 
\[
\Lcal'(t):=\{(t, L'(t))\,\mbox{:}\,t\in\R\}, \text{ where, } L'(t):=\inf_{i\in \N} f((R'_i,\a'_i), t).
\]
The key observation is that $\a'$ has the same distribution as $\a$.
Hence the marked head point processes $\Hcal'_c$ and $\Hcal_c$, the one in the random direction case 
(see the Definition ~\ref{definition:BPPss}), have the same distribution. As a result, the lower envelope process $\Lcal'(\cdot)$ has the same distribution as the one in
the random direction case defined in Definition~\ref{definition:Lower_env}, see Figure~\ref{figure:BPP-1}. 

As we have seen in Subsection~\ref{subsection:FC-HPP}, the handover point process is totally determined 
by the lower envelope process $\Lcal'(\cdot)$ of the marked head point process $\Hcal'_c$. 
Hence, the handover point process has the same intensity in both the random and non-random direction cases.
Having just a single direction of motion $\theta$ for all mobile stations is equivalent to the situation where
the user is moving in the $\pi+\theta$ direction, while all the mobile stations are static, 
as considered in \cite{Baccelli_Zuyev}. There, the handover frequency is $\frac{4\sqrt{\la}}{\pi}$
in the  unit speed case. Since the radial bird particle process is oblivious of the direction of motion of the stations,
the handover frequency coincides with the one in the static case. 
This completes the second proof of  Theorem~\ref{theorem:handover_freq1}.
\end{proof}
\begin{remark}
Moving beyond handover frequency, our analysis successfully captures the statistical properties of geometric object of interest, studied in the rest of the article. Note that our results hold for both the dynamic and the dual static model, which are captured together through  Figure~\ref{figure:BPP-MPN}.  
\end{remark}
\subsection{State at typical times and handover times}\label{subsection:typT} 
Let $L_s$ be the vertical line $t=s$, for some $s\in \R$. Observe that all the radial birds intersect the vertical line $L_s$ at certain heights. The point process of interest here, representing the ordered distances of the base stations, is given by the intersection of the birds with this vertical line, see Figure~\ref{figure:typ-dists}. The further away a head point is from $L_s$, on the right or left, the higher its intersection with the corresponding radial bird. This gives rise to a point process on $L_s$, which can be seen as a point process on $\R^+$. Recall that $\pi_2(A)=\{\pi_2(x, y): \text{ for all } (x, y)\in A \}$, for any $A\subset \R^2$, where $\pi_2:\mathbb{H}^+\to \R^+$ is the projection map to the $h$-axis such that $\pi_2(x,y)=y$ for $(x,y)\in \mathbb{H}^+$. \label{notation:L_s}

Let $\eta_s$ be the  point process on $\R^+$ corresponding to the intersection points of all the birds with the vertical line $L_s$, i.e.,\label{notation:eta_s}
\begin{equation}
\eta_s:=\sum_{i\in \N \,\mbox{:}\, (T_i,H_i)\in \Hcal}\delta_{\pi_2(C_i\cap L_s)},\nn
\end{equation}
where $C_i$ is the radial bird corresponding to $(T_i,H_i)\in \Hcal$. The point process $\eta_s$ gives the distances between the user and the mobile stations at time $s$. We would like to characterize the point processes $\{\eta_s\}_{s\in \R}$ on $\R^+$, in particular at any typical time and also at handover times. 
\begin{figure}[ht!]
    \centering
\begin{tikzpicture}[scale=1]
\pgftransformxscale{0.6}  
    \pgftransformyscale{0.6}    
    \draw[->] (-2, 0) -- (3.2, 0) node[right] {$t$};
    \draw[->] (-1.5, 0) -- (-1.5, 4.5) node[above] {$h$};
     \draw[blue, ->] (2.5, 0) -- (2.5, 4.5);
    \draw[domain=-2:3, smooth] plot (\x, {(\x*\x+2.5)^0.5});
    \draw[](-0.07,1.55) node{$\bullet$};
    \draw[domain=-2:3, smooth] plot (\x, {(\x*\x-3*\x+2.5)^0.5});
    \draw[](1.51,0.5) node{$\bullet$};
    \draw[blue](2.5,1.1) node{$\bullet$};
     \draw[blue](2.5,1.8) node{$\bullet$};
      \draw[blue](2.5,2.95) node{$\bullet$};
       \draw[blue](2.5,3.58) node{$\bullet$};
    \draw[](2.5,4.7) node{$L_s$};
    \draw[](2.5,-0.3) node{$s$};
    \draw[domain=-2:3, smooth] plot (\x, {(\x*\x-2*\x+2)^0.5});
    \draw[](0.95,1) node{$\bullet$};
    \draw[domain=-2:3, smooth] plot (\x, {(\x*\x+2.1*\x+1.2)^0.5});
    \draw[](-1.05,0.31) node{$\bullet$};
    \end{tikzpicture}
    \captionsetup{width=0.95\linewidth}
    \caption{All the radial birds intersects the line $L_s$ at different heights and creates a point process $\eta_s$.}
    \label{figure:typ-dists}
\end{figure}
Let $\hat{h}_s:=\inf\left(Supp( \eta_s)\right)$. Then $\hat h_s$ is the distance to the serving station at time $s$, which determines the power of the signal received by the user. On the other hand the point process $\eta_s-\delta_{\hat h_s}$ enables us to encode the distance of all other non-serving stations, contributing to the power of the interference experienced by the user.

Suppose there is a handover at time $0$, namely consider the system under the Palm probability measure $\P^0_\Vcal$ of handovers (to be determined rigorously in Theorem~\ref{theorem:Palm-Vcal} in Section~\ref{sec-HOdist}). Consider the point process $\eta_0$ for which $\inf(Supp(\eta_0))=\hat h$, say. At a typical handover, there are two stations at equal distance $\hat{h}$. We call $\hat h$ the typical handover distance and we derive its Palm probability distribution in Subsection~\ref{subsection:typHO}.  In the following, we give a result about the distribution of point process $\eta_0- \delta_{\hat{h}}$. 
\begin{lemma}
Conditioned on $\hat h$, under the Palm probability measure $\P^0_\Vcal$, the point process $\eta_0$ is a Poisson point process on $[\hat h, \infty)$ with intensity measure $\hat{\mu}$ with density ${\rm d}\hat{\mu}:=2\pi\la r\, {\rm d}r$.   
\label{lemma:eta_s_hat1}
\end{lemma}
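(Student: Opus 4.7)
The plan is to identify $\eta_0$ as the pushforward of the head point process $\Hcal$ under a natural distance map, and then exploit the structural description of $\Hcal$ under the handover Palm measure. By Property~(\ref{Obs2}) applied at $t=0$, a station with head $(t,h)$ is at distance $\sqrt{t^2+h^2}$ from the user at time $0$, so
\[
\eta_0 = \rho_*\Hcal,\qquad \rho(t,h):=\sqrt{t^2+h^2},
\]
with $\rho\colon \mathbb{H}^+\to\R^+$. Crucially, $\rho^{-1}\bigl([\hat h,\infty)\bigr)=\mathbb{H}^+\setminus U_{\hat h}^0$, and $\rho^{-1}(\{\hat h\})=\partial^* U_{\hat h}^0$ contains exactly the two handover heads whose radial birds cross at $(0,\hat h)$.

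Next I would invoke the structure of the handover Palm distribution as set up in Subsection~\ref{subsection:FC-HPP}. Under $\P_\Vcal^0$ with handover height $\hat h$, Lemma~\ref{lem:semicircle} gives $\Hcal(U_{\hat h}^0)=0$, while the two handover heads lie on $\partial^* U_{\hat h}^0$ at Euclidean distance $\hat h$ from the origin. Corollary~\ref{corollary:CCc}~(ii) then ensures that, conditionally on $\hat h$ and on the handover pair, the restriction $\Hcal|_{(U_{\hat h}^0)^c}$ is a Poisson point process on $\mathbb{H}^+\setminus U_{\hat h}^0$ with unchanged intensity ${\rm d}t\otimes 2\la\,{\rm d}h$, independent of the two handover heads.

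To finish, I would apply the mapping theorem to $\rho$. Passing to polar coordinates $(t,h)=(r\cos\theta, r\sin\theta)$ with $r\in\R^+$, $\theta\in(0,\pi)$, the intensity of $\Hcal$ reads $2\la\,r\,{\rm d}r\otimes{\rm d}\theta$, and marginalizing over $\theta\in(0,\pi)$ yields $2\pi\la\,r\,{\rm d}r$, which is precisely ${\rm d}\hat\mu$. By the mapping theorem for Poisson point processes, the image of $\Hcal|_{(U_{\hat h}^0)^c}$ under $\rho$ is a Poisson point process on $(\hat h,\infty)$ of intensity $\hat\mu$. Combined with the two atoms at $\hat h$ coming from the handover pair, this yields the description of $\eta_0$ on $[\hat h,\infty)$ announced in the statement.

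The main obstacle I anticipate is the correct formalization of the conditioning on $\hat h$, since $\hat h$ is itself a random function of the two Palm-placed heads via~(\ref{eq:h}). The clean way to handle it is to proceed as in the proof of Theorem~\ref{theorem:handover_freq1}: express the handover Palm expectation as an integral over pairs of heads $(t_1,h_1),(t_2,h_2)$, weighted by the void indicator $\one\{\Hcal(U_h^s)=0\}$, use the Slivnyak-Mecke theorem to identify the conditional law of $\Hcal|_{(U_{\hat h}^0)^c}$ as PPP with the original intensity, and only then pushforward via $\rho$. Once this bookkeeping is set, the polar-coordinate computation is routine, and the analogous statement for general speed $v$ follows verbatim after replacing $U_{\hat h}^0$ by the half-ellipse $E^{0,v}_{\hat h}$ from Discussion~\ref{discussion:ellipse_v}.
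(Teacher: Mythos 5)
Your proof is correct and follows essentially the same route as the paper's: both use the empty half-ball condition, Corollary~\ref{corollary:CCc} to conclude that $\Hcal$ restricted to the complement of $\overline{U^0_{\hat h}}$ is an independent Poisson point process, and the pushforward under the distance map $(t,h)\mapsto\sqrt{t^2+h^2}$ to recover the radial intensity $2\pi\la r\,{\rm d}r$. Your explicit polar-coordinate computation and your accounting of the two atoms at $\hat h$ are, if anything, slightly more careful than the paper's brief appeal to ``the radial part of (\ref{eq:mu_r})''.
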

\begin{proof}[Proof]
Suppose that, under the Palm probability measure $\P^0_\Vcal$ of handovers, there is a handover at time $0$ at a distance $\hat h$. Then there exists $(t,h), (t',h')\in \Hcal$ such that, the radial birds $C_{(t,h)}$ and $C_{(t',h')}$ intersect at $(0,\hat h)$ and $\Hcal(U^0_{\hat h})=0$. Since $\Hcal$ itself is a Poisson point process, we have from Corollary~\ref{corollary:CCc}, sub point processes $\Hcal\vert_{\overline{U^0_{\hat h}}}$ and $\Hcal\vert_{\left(\overline{U^0_{\hat h}}\right)^c}$ which are independent and such that $\Hcal\vert_{\left(\overline{U^0_{\hat h}}\right)^c}$ is a Poisson point process. Note that the point process $\eta_0-\delta_{\hat h}$ of the distances of the mobile stations at time $0$ is created by all the radial birds associated with the head point process $\Hcal\vert_{\left(\overline{U^0_{\hat h}}\right)^c}$ and $Supp(\eta_0)\subsetneq [\hat h, \infty)$, conditioned on $\hat{h}$. Since $\Hcal\vert_{\left(\overline{U^0_{\hat h}}\right)^c}$ is an independent Poisson point process, the point process $\eta_0-\delta_{\hat h}$ is a Poisson point process with intensity measure $\hat{\mu}$ having density ${\rm d}\hat{\mu}:=2\pi\la r\,{\rm d}r$ on $[\hat h, \infty)$, as seen in the radial part of the one at (\ref{eq:mu_r}). This completes the proof.
\end{proof}

\section{Palm probability with respect to handovers and applications} \label{sec-HOdist}
The goal of this section is to determine the Palm probability distribution w.r.t. handover epochs.
This is done in Subsection \ref{subsection:Palm-Handover}, by extending the method used in the proof of Theorem~\ref{theorem:handover_freq1}. This is then leveraged to determine the Palm distribution of the distance to the stations
involved in the handover and the Palm distribution of the time to the next handover.

\subsection{Palm probability with respect to handovers}\label{subsection:Palm-Handover}
In this section, the reference probability space is $(\Omega,\Fcal,\mathbb P)$, 
with $(\Omega,\Fcal)$ the canonical space of point processes on $\mathbb{H}^+$.
So a point $\omega$ is a realization of the point process 
$\Hcal=\sum_{i\in \N} \delta_{(T_i,H_i)}$.
One can equip this measure space with the shift $\{\theta_t\}_{t\in \mathbb{R}}$ along the time axis.
Its action on $\Hcal$ is defined by
\begin{equation}
\theta_t(\Hcal)=\sum_{i\in\N}  \delta_{(T_i-t,H_i)}.
\label{eq:shift_Hcalc}
\end{equation}
The law $\mathbb P$ of the Poisson point process $\Hcal$ is left invariant by this shift.
This shift is ergodic for $\mathbb P$.

Consider the handover point process $\Vcal$, as constructed in Subsection~\ref{subsection:FC-HPP}.
This point process is stationary ($\theta_t$-compatible, see \cite[Definition 6.1.8]{Baccelli-Bartek-Karray}) using Lemma~\ref{lemma:stationary_Tcal} and has a positive and finite intensity from Theorem~\ref{theorem:handover_freq1}.
So one can define the Palm probability w.r.t. $\Vcal$ on the probability space $(\Omega,\Fcal)$, which will be denoted as
$\mathbb P_{\Vcal}^0$. 

Consider the pairs of points $((t,h), (t',h'))$ of $\mathbb{H}^+$, such that $t'\leq t$ and the condition that
$\Hcal(U^{\hat{s}}_{\hat{h}})=0$ holds, where $(\hat{s},\hat{h})$ are the coordinates of the intersection
of the two radial birds with heads at $(t,h)$ and $(t',h')$, respectively.
The sequence of rightmost points of such pairs that are in the support of $\Hcal$ forms
a stationary ($\theta_t$-compatible) point process $\Rcal$ on $\mathbb R$.\label{notation:Rcal}\! This point process has
a positive and finite intensity.
So one can define the Palm probability w.r.t. $\Rcal$ (on the probability space $(\Omega,\Fcal)$),
which will be denoted as $\mathbb P_{\Rcal}^0$.
\begin{figure}[ht!]
\centering
\begin{center}
\begin{tikzpicture}
\pgftransformxscale{0.35}  
\pgftransformyscale{0.35}    
    \draw[->] (-8,0) -- (11,0) node[right] {$t$};
    \draw[blue, domain=1:10, smooth] plot (\x, {(\x*\x-14*\x+49+4)^0.5});
    \draw[blue](7,2) node{$\bullet$};    
    \draw[blue, domain=-4:6, smooth] plot (\x, {(\x*\x-4*\x+2^2+ 1.2^2)^0.5});
    \draw[blue](2,1.2) node{$\bullet$};    \draw[blue, domain=-7:2, smooth] plot (\x, {(\x*\x+9*\x+4.5^2+2^2)^0.5});
    \draw[blue](-4.5,2) node{$\bullet$}; 
    \draw[<->] (-1.1,3.65) arc (90:20:3.2);
    %
    \draw[<->] (5,3) arc (90:50:3);
    \draw[] (-1.42, 3.7) -- (-1.42, 0);
    \draw[] (4.75, 3.05) -- (4.75, 0);
    \draw[blue] (2, 1.3) -- (2, 0);
    \draw[blue] (7, 2.1) -- (7, 0);
    \end{tikzpicture}
    \captionsetup{width=0.85\linewidth}
    \caption{The bijection between point process $\Rcal$ and the handover point process $\Vcal$ via the head point process $\Hcal$ and the intersection points of the radial birds.}
    \label{fig:mass-transport}
\end{center}
\end{figure}
As already explained, there is a bijection between the points of $\Rcal$ 
and the points of $\Vcal$, that will be denoted by $\beta$, (see Figure~\ref{fig:mass-transport}).
Note that if $((t,h)$ and $(t',h'))$ is as above, then $\beta(t)=\hat{s}$, the abscissa of the intersection of the two radial birds with these heads.
The following lemma is obtained from Theorem 6.1.35 in \cite{Baccelli-Bartek-Karray} (which is itself a consequence
of the mass transport principle for stationary point processes):

\begin{lemma}
	For all non-negative measurable functions $f$ on $(\Omega,\Fcal)$,
	\begin{equation}
		\mathbb E_{\Vcal}^0 [f(\Hcal)] = 
		\mathbb E_{\Rcal}^0 [f (\theta_{\beta(0)}(\Hcal))].
  \label{eq:VR-MTPx}
	\end{equation}
    \label{lemma:VR-MTP}
\end{lemma}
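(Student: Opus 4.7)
The statement is an instance of a general change-of-Palm formula for two stationary point processes linked by a shift-equivariant bijection. My plan is to verify the hypotheses of Theorem 6.1.35 in \cite{Baccelli-Bartek-Karray}, which handles precisely this situation, rather than reproving it from scratch.

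First, I would check that $\Rcal$ is a well-defined stationary point process with positive and finite intensity, so that $\mathbb{P}_\Rcal^0$ exists. Stationarity holds because the construction of $\Rcal$ from $\Hcal$ only uses translation-invariant predicates: for a pair of atoms $((t,h),(t',h'))$ of $\Hcal$ with $t'\leq t$, the condition $\Hcal(U_{\hat h}^{\hat s})=0$ at the intersection $(\hat s,\hat h)$ of the two associated radial birds depends only on relative positions, and selecting the rightmost abscissa of such a pair commutes with the time-shift $\theta_t$. Thus $\Rcal(\theta_t\Hcal)=\theta_t\Rcal(\Hcal)$ a.s. The bijection $\beta$ between points of $\Rcal$ and $\Vcal$ is immediate from the construction in Subsection~\ref{subsection:FC-HPP}: each handover epoch arises from exactly one qualifying pair, whose rightmost head serves as the representative in $\Rcal$. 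In particular $\lambda_\Rcal=\lambda_\Vcal=\frac{4v\sqrt{\la}}{\pi}\in(0,\infty)$ by Theorem~\ref{theorem:handover_freq1}.

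Next, I would verify the shift-equivariance of $\beta$: if $\omega'=\theta_t\omega$, then for every $r\in\Rcal(\omega)$ one has $r-t\in\Rcal(\omega')$ and $\beta^{\omega'}(r-t)=\beta^{\omega}(r)-t$. This is because the intersection time of two radial birds is a translation-covariant function of the two head coordinates (see equation~(\ref{eq:handover-time})), while both the ordering relation on the two heads and the emptiness condition on the corresponding half-ball are translation invariant. Hence the pair-selection rule, the intersection map, and the ``rightmost head'' rule all commute with $\theta_t$.

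With these two hypotheses in hand, the identity (\ref{eq:VR-MTPx}) follows from a single application of the exchange/mass-transport formula (Theorem 6.1.35 of \cite{Baccelli-Bartek-Karray}): starting from $\lambda_\Vcal\mathbb{E}_\Vcal^0[f(\Hcal)]=\mathbb{E}\bigl[\sum_{v\in\Vcal\cap[0,1]}f(\theta_v\Hcal)\bigr]$, the bijection allows one to re-index the sum by $r=\beta^{-1}(v)\in\Rcal$, and the shift-equivariance of $\beta$ together with the refined Campbell formula for $\Rcal$ identifies the result as $\lambda_\Rcal\mathbb{E}_\Rcal^0[f(\theta_{\beta(0)}\Hcal)]$; dividing by $\lambda_\Vcal=\lambda_\Rcal$ gives the claim. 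The only subtlety I expect to be the main obstacle is ensuring that $\beta(0)$ is a measurable functional of $\omega$ under $\mathbb{P}_\Rcal^0$; this is guaranteed because under $\mathbb{P}_\Rcal^0$ the two head points of the qualifying pair are measurably recoverable from $\Hcal$ (the leftmost head being the unique $(t',h')$ with $t'<0$ whose radial bird intersects the one at $0$ above the lower envelope), and $\beta(0)$ is then given by the closed-form expression~(\ref{eq:handover-time}).
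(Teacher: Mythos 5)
Your proposal is correct and takes essentially the same route as the paper: the paper's proof is a one-line application of the mass transport principle, citing \cite[Theorem 6.1.35]{Baccelli-Bartek-Karray} with $\Phi=\Vcal$, $\Phi'=\Rcal$ and $g(y,\omega)=f(\omega)\one_{\{y=\beta^{-1}(0)\}}$, which is exactly the exchange-formula argument you outline. The additional verifications you supply (stationarity of $\Rcal$, shift-equivariance of $\beta$, equality of intensities) are consistent with what the paper establishes in the surrounding text rather than inside the proof itself.
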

\begin{proof}[Proof of Lemma~\ref{lemma:VR-MTP}]
The formula (\ref{eq:VR-MTPx}) is a special case of that given in \cite[Theorem 6.1.35]{Baccelli-Bartek-Karray} when taking
$\Phi=\Vcal$, $\Phi'=\Rcal$ and $g(y,\omega)= f(\omega) \one_{\{y=\beta^{-1}(0)\}}$.
\end{proof}
Let us denote the intensity of the point process $\Rcal$, as $\la_\Rcal$. Due to the bijection $\beta$ between $\Vcal$ and $\Rcal$ in the mass transport principle under the Palm probability measures, we get the equality of intensities:  
\begin{corollary}
The intensities of the point processes $\Vcal $ and $ \Rcal$ are equal: $\la_\Rcal=\la_{\Vcal}$. 
\label{corollary:eqVR}
\end{corollary}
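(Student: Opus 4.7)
The plan is to invoke the mass transport principle directly, in exactly the same form that already underlies the proof of Lemma~\ref{lemma:VR-MTP}.

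First, I would verify that the bijection $\beta:\Rcal\to\Vcal$ is $\theta$-compatible, in the sense that $\beta\circ\theta_t = \theta_t\circ\beta$ for every $t\in\R$. This is essentially tautological: the construction of $\beta$ in Subsection~\ref{subsection:FC-HPP} uses only the intrinsic geometry of $\Hcal$ (the rule for detecting pairs of heads whose associated radial birds intersect on the lower envelope, together with the map sending such a pair to the abscissa $\hat{s}$ of their intersection), and both of these rules are invariant under horizontal translation.

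Next, I would apply Theorem~6.1.35 of \cite{Baccelli-Bartek-Karray} with $\Phi=\Vcal$, $\Phi'=\Rcal$ and transport kernel $g(y,\omega)=\one_{\{y=\beta^{-1}(0)\}}$, in exactly the same way as in the proof of Lemma~\ref{lemma:VR-MTP}, but now keeping the intensity factors explicit. This yields the exchange identity
\begin{equation*}
\la_\Vcal\,\E_\Vcal^0[f(\Hcal)] \;=\; \la_\Rcal\,\E_\Rcal^0[f(\theta_{\beta(0)}(\Hcal))]
\end{equation*}
for every non-negative measurable $f$; specializing to $f\equiv 1$ reduces both sides to constants and immediately gives $\la_\Vcal = \la_\Rcal$.

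I do not anticipate any substantive obstacle: the only non-trivial check is the $\theta$-compatibility of $\beta$, after which the result is a one-line consequence of a standard exchange formula. Intuitively, this is a routine mass conservation argument: each point of $\Rcal$ ships one unit of mass to its unique image in $\Vcal$ under $\beta$, and since $\beta$ is a bijection, each point of $\Vcal$ receives exactly one unit, forcing the two intensities to coincide.
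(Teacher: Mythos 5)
Your proposal is correct and follows essentially the same route as the paper, which justifies the corollary precisely by the bijection $\beta$ together with the mass transport principle of Lemma~\ref{lemma:VR-MTP} (the paper gives no further detail). Your version, which keeps the intensity factors explicit in the exchange formula and specializes to $f\equiv 1$, is a clean way of making that one-line argument rigorous.
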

We now use the Poisson structure of $\Hcal$ to get:
\begin{lemma}
	For all non-negative measurable functions $f$ on $(\Omega,\Fcal)$,
	\begin{align}
		\hspace{-0.1cm} \mathbb E_{\Rcal}^0 [f (\theta_{\beta(0)}(\Hcal))]
		& = 
		\frac {4\la^2} {\lambda_{\Rcal}}
		\int_{(\R^+)^3} \!\!\!
		\E_\Hcal\left[ \one_{\Hcal\left(U^{\hat{s}(0,h,-t',h')}_{\hat{h}(0,h,-t',h')}\right)=0}
		f \left(\theta_{\hat{s}(0,h,-t',h')}           \left(\Hcal+\delta_{(0,h)}+\delta_{(-t',h')}\right)\right)\right] {\rm d} h'{\rm d} h\,  {\rm d} t',
        \nonumber
	\end{align}
where, $(\hat{s}(t,h,t',h'),\hat{h}(t,h,t',h'))$
are the coordinates of the intersection of the two radial birds with heads $(t,h)$ and $(t',h')$ and $\la_\Rcal=\frac{4\sqrt{\la}}{\pi}$.
\label{lemma:Palm-Lcal}
\end{lemma}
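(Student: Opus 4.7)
The plan is to derive the identity in two stages: first apply the Palm inversion formula for the stationary point process $\Rcal$ to convert the left-hand side into a sum over atoms of $\Rcal$ in a unit time window; then unfold $\Rcal$ as a factor of the Poisson point process $\Hcal$ and invoke the refined Campbell formula for the second-order factorial measure of $\Hcal$.

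In the first stage I would use the definition of $\P_\Rcal^0$ together with the $\theta_t$-covariance of the bijection $\beta$ (namely $\beta(0)\circ\theta_t=\beta(t)-t$, so that $\theta_{\beta(0)}\circ\theta_t=\theta_{\beta(t)}$) to write
\[
\la_\Rcal\,\E_\Rcal^0\!\left[f(\theta_{\beta(0)}(\Hcal))\right] \;=\; \E\!\left[\sum_{t\in\Rcal\cap[0,1]} f(\theta_{\beta(t)}(\Hcal))\right].
\]
By the very construction of $\Rcal$, each atom $t\in\Rcal$ arises from a unique ordered pair $((T_i,H_i),(T_j,H_j))$ of atoms of $\Hcal$ with $T_j<T_i=t$ for which the empty half-ball condition $\Hcal(U^{\hat s_{ij}}_{\hat h_{ij}})=0$ of Lemma~\ref{lem:semicircle} holds, where $(\hat s_{ij},\hat h_{ij})=(\hat s(T_i,H_i,T_j,H_j),\hat h(T_i,H_i,T_j,H_j))$ is the intersection of the two associated radial birds. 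Thus the sum rewrites as a double sum against $\Hcal^{2,\ne}$:
\[
\sum_{(T_i,H_i)\in\Hcal,\,T_i\in[0,1]}\;\sum_{(T_j,H_j)\in\Hcal,\,T_j<T_i} \one_{\Hcal(U^{\hat s_{ij}}_{\hat h_{ij}})=0}\, f\!\left(\theta_{\hat s_{ij}}(\Hcal)\right).
\]

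In the second stage I would apply the refined Campbell--Mecke formula for $\Hcal^{2,\ne}$ (see \cite[Prop.~2.3.25]{Baccelli-Bartek-Karray}), which contributes the factor $(2\la)^2=4\la^2$, together with Slivnyak's theorem to replace the two-point Palm version of $\Hcal$ by $\Hcal+\delta_{(t,h)}+\delta_{(t',h')}$. A key observation, which follows from Remark~\ref{remark:two-point}, is that the two injected atoms $(t,h)$ and $(t',h')$ lie on $\partial^* U^{\hat s}_{\hat h}$ rather than in its interior, so the void indicator is unaffected by their addition. This yields
\[
\la_\Rcal\,\E_\Rcal^0\!\left[f(\theta_{\beta(0)}(\Hcal))\right] = 4\la^2\!\int_0^1\!\int_{\R^+}\!\int_{-\infty}^t\!\int_{\R^+}\!\E_\Hcal\!\left[\one_{\Hcal(U^{\hat s}_{\hat h})=0}\,f\!\left(\theta_{\hat s}\!\left(\Hcal+\delta_{(t,h)}+\delta_{(t',h')}\right)\right)\right] dh'\, dt'\, dh\, dt.
\]

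To finish, I would shift the origin by $-t$ and substitute $u'=t-t'>0$, so that $(t,h)\mapsto(0,h)$, $(t',h')\mapsto(-u',h')$ and $\hat s\mapsto\hat s(0,h,-u',h')$. By the time-stationarity of $\Hcal$ the integrand becomes independent of $t$, the integration over $t\in[0,1]$ contributes a factor $1$, and a final division by $\la_\Rcal=4\sqrt\la/\pi$ (Corollary~\ref{corollary:eqVR} together with Theorem~\ref{theorem:handover_freq1}) yields the announced identity after relabeling $u'$ as $t'$. The main care point is bookkeeping: aligning the orientation convention $T_j<T_i$ used in the construction of $\Rcal$ with the convention $(-t',h')$, $t'>0$ appearing in the final statement, so that Slivnyak can be applied without double-counting the ordered pairs.
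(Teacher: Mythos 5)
Your proposal is correct and follows essentially the same route as the paper's proof: the definition of $\P_{\Rcal}^0$ over a unit window, the rewriting of the sum over $\Rcal$ as a double sum over ordered pairs of $\Hcal$ satisfying the empty half-ball condition, the Campbell--Mecke formula for $\Hcal^{2,\neq}$ combined with Slivnyak (contributing the factor $4\la^2$), and the final change of variables using time-stationarity. The only addition is your explicit remark that the two injected heads lie on $\partial^* U^{\hat s}_{\hat h}$ and hence do not perturb the void indicator, which the paper leaves implicit.
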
~\label{notation:hats0}
\begin{proof}[Proof of Lemma~\ref{lemma:Palm-Lcal}]
For all $(T_i,H_i)$ and $(T_j,H_j)$ in the support of $\Hcal$, let
$(\hat{S}_{i,j},\hat{H}_{i,j})$ denote the coordinates of the intersection of the two radial birds with these heads.
By the very definition of $\P_{\Rcal}^0$,
\begin{align}
\mathbb{E}_{\Rcal}^0 [f (\theta_{\beta(0)}(\Hcal))]
& = \frac 1 {\lambda_{\Rcal}}
\mathbb{E} \left[ \sum_{T_i\in \Rcal \,\mbox{:}\, 0\le T_i \le 1} f \left(\theta_{\beta(0)} \circ \theta_{T_i}(\Hcal)\right)\right].
\label{eq:Last-H-1}
\end{align}
Using the property of the shift $\{\th_t\}_{t\in \R}$, we have  $\theta_{\beta(0)} \circ \theta_{T_i}=\theta_{T_i+\beta(T_i)}$ and hence
\begin{align}
\hspace{-.7cm}\lefteqn{\mathbb{E}_{\Rcal}^0 [f (\theta_{\beta(0)}(\Hcal))]} \nonumber\\ 
& = \frac 1 {\lambda_{\Rcal}}
	\mathbb{E} \left[ \sum_{T_i\in \Rcal, 0\le T_i \le 1}
	f \left(\theta_{T_i+\beta(T_i)}(\Hcal)\right) \right]\nonumber \\
	& = 
	\frac 1 {\lambda_{\Rcal}}
	\mathbb{E} \left[ \sum_{(T_i,H_i)\in \Hcal, 0\le T_i \le 1} 
	\one_{\exists (T_j,H_j)\in \Hcal \,\mbox{:}\, T_j< T_i \mbox{ and }
	\Hcal\left(U^{\hat{S}_{i,j}}_{\hat{H}_{i,j}}\right)=0}
	f \left(\theta_{T_i+\beta(T_i)}(\Hcal)\right) \right]\nonumber \\
	& = 
	\frac 1 {\lambda_{\Rcal}}
	\mathbb{E} \left[ \sum_{(T_i,H_i)\in \Hcal, 0\le T_i \le 1} \, 
	\sum_{(T_j,H_j)\in \Hcal \,\mbox{:}\, T_j< T_i}
	\one_{\Hcal\left(U^{\hat{S}_{i,j}}_{\hat{H}_{i,j}}\right)=0}
	f \left(\theta_{T_i+\beta(T_i)}(\Hcal)\right) \right]\nonumber \\
	& = \frac 1 {\lambda_{\Rcal}}
	\mathbb{E} \left[ \sum_{(T_i,H_i)\in \Hcal, 0\le T_i \le 1} \,
	\sum_{(T_j,H_j)\in \Hcal \,\mbox{:}\, T_j< T_i}
	\one_{\Hcal\left(U^{\hat{S}_{i,j}}_{\hat{H}_{i,j}}\right)=0}
	f \left(\theta_{\hat{S}_{i,j}}(\Hcal)\right) \right].
 \label{eq:Last-H-1a}
	\end{align}
Applying the Campbell-Mecke formula to the factorial power of order $2$ of $\Hcal$, we obtain from (\ref{eq:Last-H-1a}) that
\begin{align}
\hspace{-.7cm}\lefteqn{\mathbb{E}_{\Rcal}^0 [f (\theta_{\beta(0)}(\Hcal))]} \nonumber\\
	& = \frac {(2\lambda)^2} {\lambda_{\Rcal}}
	\int_{0}^1  \int_{0}^\infty \int_{0}^\infty \int_{-\infty}^t
	\mathbb{E}^{(t,h),(t',h')}_\Hcal\left[ \one_{\Hcal\left(U^{\hat{s}(t,h,t',h')}_{\hat{h}(t,h,t',h')}\right)=0}
	f \left(\theta_{\hat{s}(t,h,t',h')}(\Hcal)\right)\right] {\rm d} t' {\rm d} h'{\rm d} h {\rm d}t \label{eq:Last1-HR} \\
	& = \frac {4\lambda^2} {\lambda_{\Rcal}}
	\int_{0}^1  \int_{0}^\infty \int_{0}^\infty \int_{-\infty}^t
	\mathbb{E}_\Hcal\left[ \one_{\Hcal\left(U^{\hat{s}(t,h,t',h')}_{\hat{h}(t,h,t',h')}\right)=0}
	f \left(\theta_{\hat{s}(t,h,t',h')}(\Hcal+\delta_{(t,h)}+\delta_{(t',h')})\right)\right] {\rm d} t' {\rm d} h'{\rm d} h {\rm d}t\nn\\
    & = \frac {4\lambda^2} {\lambda_{\Rcal}}
	\int_{0}^1  \int_{0}^\infty \int_{0}^\infty \int_{t-t'=\infty}^0
	\mathbb{E}_\Hcal\left[ \one_{\Hcal\left(U^{\hat{s}(0,h,t'-t,h')}_{\hat{h}(0,h,t'-t,h')}\right)=0}
	f \left(\theta_{\hat{s}(0,h,t'-t,h')}(\Hcal+\delta_{(0,h)}+\delta_{(t'-t,h')})\right)\right] {\rm d} t' {\rm d} h'{\rm d} h {\rm d}t,
    \label{eq:Last-H}
	\end{align}
where we used the Slivnyak-Mecke formula for the last but one equation. We obtain the final result by the change of variable $t-t'$ to $t'$ in (\ref{eq:Last-H}), similar to the one in (\ref{eq:h2}).
\end{proof}
We hence get the following result using $\la_\Rcal=\la_\Vcal=\frac{4\sqrt{\la}}{\pi}$:
\begin{theorem} For all measurable and non-negative $f$
\begin{equation}
\mathbb E_{\Vcal}^0 [f(\Hcal)] = \pi \lambda^{\frac 3 2}
\int_{(\R^+)^3}
\E_\Hcal\left[ \one_{\Hcal\left(U^{\hat{s}(0,h,-t',h')}_{\hat{h}(0,h,-t',h')}\right)=0}
f \left(\theta_{\hat{s}(0,h,-t',h')}           \left(\Hcal+\delta_{(0,h)}+\delta_{(-t',h')}\right)\right)\right] {\rm d} t'{\rm d} h'{\rm d} h.
\label{eq:HRPalm}
\end{equation}
\label{theorem:Palm-Vcal}
\end{theorem}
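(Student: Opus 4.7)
The plan is to combine the two preceding lemmas (Lemma~\ref{lemma:VR-MTP} and Lemma~\ref{lemma:Palm-Lcal}) and plug in the explicit value of $\lambda_{\Rcal}$ to identify the constant. Concretely, I would start from Lemma~\ref{lemma:VR-MTP}, which via the mass transport principle converts the handover-Palm expectation into a Palm expectation with respect to the auxiliary point process $\Rcal$ of ``rightmost head points in handover pairs'':
\[
\mathbb E_{\Vcal}^0 [f(\Hcal)] = \mathbb E_{\Rcal}^0 [f(\theta_{\beta(0)}(\Hcal))].
\]
This is the step that turns an abstract Palm probability (hard to manipulate directly, as handovers are not themselves atoms of $\Hcal$) into a Palm probability taken along atoms of the underlying Poisson head process, where Campbell--Mecke and Slivnyak tools can be applied.

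Next I would invoke Lemma~\ref{lemma:Palm-Lcal} verbatim, which rewrites the right-hand side as
\[
\frac{4\lambda^2}{\lambda_{\Rcal}} \int_{(\R^+)^3} \mathbb E_{\Hcal}\!\left[\one_{\Hcal(U^{\hat s}_{\hat h})=0}\, f\!\left(\theta_{\hat s}\bigl(\Hcal+\delta_{(0,h)}+\delta_{(-t',h')}\bigr)\right)\right] {\rm d} t'\,{\rm d} h'\,{\rm d} h,
\]
where $(\hat s,\hat h)=(\hat s(0,h,-t',h'),\hat h(0,h,-t',h'))$. The only remaining task is to identify the prefactor. By Corollary~\ref{corollary:eqVR} we have $\lambda_{\Rcal}=\lambda_{\Vcal}$, and Theorem~\ref{theorem:handover_freq1} (with $v=1$) gives $\lambda_{\Vcal}=\frac{4\sqrt{\lambda}}{\pi}$. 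Substituting,
\[
\frac{4\lambda^2}{\lambda_{\Rcal}} \;=\; \frac{4\lambda^2 \cdot \pi}{4\sqrt{\lambda}} \;=\; \pi\lambda^{3/2},
\]
which is precisely the coefficient claimed in (\ref{eq:HRPalm}).

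There is essentially no obstacle beyond this chaining, since all the nontrivial work has been done in the preceding lemmas: the Slivnyak/Mecke reduction, the change of variable $t-t'\mapsto t'$ that eliminates the integration over $t$ by stationarity, and the verification that $\hat s,\hat h$ depend only on the coordinate differences. The one subtlety worth double-checking is that the shift $\theta_{\hat s}$ applied to $\Hcal+\delta_{(0,h)}+\delta_{(-t',h')}$ in (\ref{eq:HRPalm}) is consistent with the shift applied in (\ref{eq:Last-H}) after the change of variables; this is immediate from the translation covariance of the intersection map $(\hat s,\hat h)$, i.e.\ $\hat s(t,h,t',h')=t+\hat s(0,h,t'-t,h')$, so shifting by $\hat s(t,h,t',h')$ followed by relabeling is the same as shifting by $\hat s(0,h,t'-t,h')$ on the translated Palm configuration. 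Once this is noted, the theorem follows.
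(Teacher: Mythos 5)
Your proposal is correct and is essentially the paper's own proof: the paper obtains Theorem~\ref{theorem:Palm-Vcal} precisely by chaining Lemma~\ref{lemma:VR-MTP} with Lemma~\ref{lemma:Palm-Lcal} and substituting $\la_\Rcal=\la_\Vcal=\frac{4\sqrt{\la}}{\pi}$ to identify the prefactor $\pi\la^{3/2}$. Your remark on the translation covariance of $(\hat s,\hat h)$ is a sound sanity check of the change of variables already carried out inside Lemma~\ref{lemma:Palm-Lcal}.
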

\begin{remark}
Observe that the steps until (\ref{eq:Last1-HR}) are true for any stationary point process on $\mathbb H^+$. In that sense Lemma~\ref{lemma:Palm-Lcal} is far more general. The same holds true for Lemma~\ref{lemma:VR-MTP}.  
\label{remark:stationary_Hcal}
\end{remark}
\subsection{Typical handover distance}\label{subsection:typHO}  Suppose $\hat{H}$ is the random distance of a typical handover. The following result evaluates the Laplace transform of $\hat{H}^2$ under the Palm distribution $\P^0_\Vcal$ of $\Vcal$, based on Theorem~\ref{theorem:Palm-Vcal}. This enables us to determine the distribution of $\hat{H}$.
\begin{lemma}
The Laplace transform of $\hat{H}^2$ under the Palm probability measure $\P^0_\Vcal$ is 
\begin{equation}
    \Lcal^0_{\hat{H}^2}(\gamma)= \E^0_\Vcal\left[e^{-\gamma \hat{H}^2}\right]=\left(1+\frac{\gamma}{\la \pi}\right)^{-\frac{3}{2}},
\label{eq:Lcal_hatH0}
\end{equation} 
for any $\gamma>0$. The squared handover-distance $\hat{H}^2$ follows a Gamma distribution with parameters $\left(\frac{3}{2}, \frac{1}{\la\pi}\right)$.   Moreover, the handover distance $\hat{H}$ follows a Nakagami distribution with parameters $\left(\frac{3}{2}, \frac{3}{2\la\pi}\right)$.
\label{lemma:typical_h_HO}
\end{lemma}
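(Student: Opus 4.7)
The plan is to apply Theorem~\ref{theorem:Palm-Vcal} with the test function $f(\Hcal) = e^{-\gamma \hat H^2}$, where $\hat H$ is read off from the two ``marked'' heads distinguished by the Palm formula. Under the Palm probability $\P_\Vcal^0$, the handover at time $0$ arises from the intersection of the radial birds attached to two head points $(0,h)$ and $(-t',h')$, so the handover height $\hat h=\hat h(0,h,-t',h')$ is given by formula \eqref{eq:h}, namely
\begin{equation}
\hat h^2 \;=\; \tfrac{1}{4}\!\left[\,t'^{\,2} + 2(h^2+h'^{\,2}) + \tfrac{(h'^{\,2}-h^2)^2}{t'^{\,2}}\,\right].\nonumber
\end{equation}

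First, I would observe that the integrand in \eqref{eq:HRPalm} depends only on the deterministic geometry of the two added atoms (not on $\Hcal$ itself), apart from the void indicator $\one_{\Hcal(U^{\hat s}_{\hat h})=0}$. Under the law of the Poisson head process $\Hcal$ with intensity ${\rm d} t\otimes 2\la\,{\rm d} h$, the void probability of the upper half-ball $U^{\hat s}_{\hat h}$ is $e^{-\la\pi\hat h^2}$, exactly as in Lemma~\ref{lem:semicircle}~(\ref{emptyUP}). Hence Theorem~\ref{theorem:Palm-Vcal} reduces the computation to
\begin{equation}
\Lcal^0_{\hat H^2}(\gamma) \;=\; \pi\la^{3/2}\int_0^\infty\!\!\int_0^\infty\!\!\int_0^\infty e^{-(\gamma+\la\pi)\,\hat h^2}\,{\rm d} t'\,{\rm d} h'\,{\rm d} h.\nonumber
\end{equation}

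Next, I would carry out the triple integral by mimicking the computation in the proof of Theorem~\ref{theorem:handover_freq1}. Setting $c:=\gamma+\la\pi$ and performing the inner $t'$-integral via identity \eqref{eq:gaussian_correction} with $a=c/4$ and $b=c(h'^{\,2}-h^2)^2/4$ gives
\begin{equation}
\int_0^\infty \exp\!\left(-\tfrac{c}{4} t'^{\,2}-\tfrac{c(h'^{\,2}-h^2)^2}{4t'^{\,2}}\right){\rm d} t' \;=\; \tfrac{\sqrt{\pi}}{\sqrt{c}}\,e^{-\frac{c}{2}|h'^{\,2}-h^2|}.\nonumber
\end{equation}
Combining with the factor $e^{-\frac{c}{2}(h^2+h'^{\,2})}$ and using the identity $h^2+h'^{\,2}+|h'^{\,2}-h^2|=2\max(h^2,h'^{\,2})$, the remaining double integral becomes $\int_0^\infty\!\int_0^\infty e^{-c\max(h^2,h'^{\,2})}{\rm d} h'\,{\rm d} h$, which after the rescaling $u=h\sqrt{c}$, $v=h'\sqrt{c}$ and the identity $\int_0^\infty\!\int_0^\infty e^{-\max(u^2,v^2)}{\rm d} u\,{\rm d} v=1$ (already used at the end of \eqref{eq:h3}) evaluates to $1/c$. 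Assembling the pieces yields
\begin{equation}
\Lcal^0_{\hat H^2}(\gamma) \;=\; \pi\la^{3/2}\cdot\tfrac{\sqrt{\pi}}{\sqrt{c}}\cdot\tfrac{1}{c} \;=\; \left(\tfrac{\la\pi}{\gamma+\la\pi}\right)^{3/2} \;=\; \left(1+\tfrac{\gamma}{\la\pi}\right)^{-3/2},\nonumber
\end{equation}
which is \eqref{eq:Lcal_hatH0}. Finally, I would identify this as the Laplace transform of the Gamma$(3/2,\,1/(\la\pi))$ law, and conclude the Nakagami$(3/2,\,3/(2\la\pi))$ claim using the standard relation: if $X^2\sim$ Gamma$(m,\Omega/m)$ then $X\sim$ Nakagami$(m,\Omega)$, which here gives $m=3/2$ and $\Omega=3/(2\la\pi)$.

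I do not anticipate any major obstacle: the structure of \eqref{eq:HRPalm} strips away the random part (via the void probability) and reduces everything to an explicit integral of the same type already handled in the handover frequency computation. The only mildly delicate point is recognizing that the exponent $(\gamma+\la\pi)\hat h^2$ has the exact same algebraic form as the exponent $\la\pi\hat h^2$ in \eqref{eq:h1}, so the same integration trick applies verbatim with $\la\pi$ replaced by $\gamma+\la\pi$, which then produces the $(\la\pi)^{3/2}/(\gamma+\la\pi)^{3/2}$ scaling directly.
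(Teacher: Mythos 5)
Your proposal is correct and follows essentially the same route as the paper's proof: apply Theorem~\ref{theorem:Palm-Vcal} with $f(\Hcal)=e^{-\gamma\hat H^2}$, note that the handover height is a deterministic function of the two marked heads so only the void probability $e^{-\la\pi\hat h^2}$ survives the inner expectation, and then repeat the integration from Theorem~\ref{theorem:handover_freq1} with $\la\pi$ replaced by $\la\pi+\gamma$ via identity \eqref{eq:gaussian_correction} and the $\max$-identity. The final identifications of the Gamma and Nakagami laws also match the paper.
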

\begin{proof}[Proof of Lemma~\ref{lemma:typical_h_HO}] 
In order to evaluate the Laplace transform of $\hat{H}^2$ under the Palm probability measure $\P^0_\Vcal$, we directly apply Theorem~\ref{theorem:Palm-Vcal}, by taking $f(\Hcal)= e^{-\gamma \hat{H}^2}$ as follows:
\begin{align}
\Lcal^0_{\hat{H}^2}(\gamma)&=
\E^0_\Vcal\left[e^{-\gamma \hat{H}^2}\right]\nn\\
& =  \pi\la^{3/2}\int_{0}^{\infty}\int_{0}^{\infty}\int_{0}^{\infty} \E_\Hcal\left[ \one_{\Hcal\left(U^{\hat{s}}_{\hat{h}}\right)=0}
e^{-\gamma (\hat h^2\circ\theta_{\hat{s}})} \right] {\rm d} t'{\rm d} h'{\rm d} h\nn\\
& =  \pi\la^{3/2}\int_{0}^{\infty}\int_{0}^{\infty}\int_{0}^{\infty} e^{-(\la \pi+\gamma) \hat{h}^2} {\rm d} t'\, {\rm d} h'\, {\rm d} h,
\label{eq:Lcal1}
\end{align}
where $\hat{s}(0,h,-t',h')\equiv \hat s$ and $\hat{h}(0,h,-t',h')\equiv \hat h$. The last step (\ref{eq:Lcal1}) is due to the fact that 
\[\E_\Hcal\left[\one_{\Hcal\left(U^{\hat{s}}_{\hat{h}}\right)=0} e^{-\gamma (\hat h^2\circ\theta_{\hat{s}})}\right]
=e^{-\gamma \hat h^2}\P_\Hcal\left(\Hcal\left(U^{\hat{s}}_{\hat{h}}\right)=0\right)
= e^{-\la\pi \hat h^2} e^{-\gamma \hat h^2},
\]
as the handover distance $\hat h$ does not change due to the shift $\theta_{\hat s}$. Recall from (\ref{eq:h}) that 
\begin{align}
\hat{h}^2= \frac{1}{4}\left[t'^2+ \frac{(h^2-h'^2)^2}{t'^2}+ 2(h^2+h'^2) \right]. 
\label{eq:hatH}
\end{align}
Using the expression for $\hat{h}^2$, keeping aside the term $\half(h^2+h'^2)$, we evaluate the innermost integral in (\ref{eq:Lcal1}), with respect to $t$, to be
\begin{align}
\int_{0}^{\infty} e^{-\frac{\la \pi+\gamma}{4} \left[t'^2+\frac{(h^2-h'^2)^2}{t'^2}\right]}  {\rm d} t' 
&=\left(\frac{\pi}{\la \pi+\gamma}\right)^{\half}e^{-\frac{\la\pi+\gamma}{2}|h^2-h'^2|},
\label{eq:h2L}
\end{align}
evaluated using the definite integral $\int_{0}^{\infty} e^{- a x^2-\frac{b}{x^2}}\, {\rm d} x= \frac{\sqrt{\pi}}{2\sqrt{a}}e^{-2\sqrt{ab}}$ from (\ref{eq:gaussian_correction}) with $a =\frac{\la\pi+\gamma}{4}>0$ and $b=\frac{\la\pi+\gamma}{4} (h^2-h'^2)^2>0$. Substituting the resultant from (\ref{eq:h2L}) to (\ref{eq:Lcal1}) along with an extra factor of $e^{-\frac{\la\pi+\gamma}{2}(h^2+h'^2)}$, the right hand side in (\ref{eq:Lcal1}) turns out to be
\begin{align}
    \Lcal^0_{\hat{H}^2}(\gamma)&=\pi\la^{3/2} \left(\frac{\pi}{\la \pi+\gamma}\right)^{\half}\int_{0}^{\infty}\int_{0}^{\infty} e^{-\frac{\la\pi+\gamma}{2}(h^2+h'^2)} e^{-\frac{\la\pi+\gamma}{2}|h^2-h'^2|} {\rm d} h'\, {\rm d} h\nn\\
    &=\left(\frac{\pi^3\la^3}{\la \pi+\gamma}\right)^{\half} \int_{0}^{\infty}\int_{0}^{\infty}e^{-\frac{\la\pi+\gamma}{2}\left(h^2+h'^2+|h^2-h'^2|\right)}{\rm d} h'\, {\rm d} h \nn\\
    &=\left(\frac{\pi^3\la^3}{\la \pi+\gamma}\right)^{\half} \int_{0}^{\infty}\int_{0}^{\infty} e^{-(\la\pi+\gamma)\left(h^2\vee h'^2\right)}{\rm d} h'\, {\rm d} h\nn\\
    &=\left(\frac{\pi^3\la^3}{\la \pi+\gamma}\right)^{\half}\frac{1}{\la\pi+\gamma} \int_{0}^{\infty}\int_{0}^{\infty} e^{-(x^2\vee y^2)}{\rm d} x\, {\rm d} y = \left(1+\frac{\gamma}{\la \pi}\right)^{-\frac{3}{2}},
\label{eq:h3L}
\end{align}
as it can be shown that $\int_{0}^{\infty}\int_{0}^{\infty} e^{-(x^2\vee y^2)}{\rm d} x\, {\rm d} y=1$. Hence it is clear that $\hat{H}^2$ is distributed as a  $\Gamma\left(\frac{3}{2}, \frac{1}{\la\pi}\right)$ random variable and, as a result, the typical handover distance $\hat{H}$ follows a Nakagami distribution with parameters $\left(\frac{3}{2}, \frac{3}{2\la\pi}\right)$. The probability density for $\hat{H}$ is 
\begin{equation}
f_{\hat{H}}(h)=4\pi\la^{3/2} h^2 e^{-\la\pi h^2}, \text{ for } h\geq 0, 
\label{eq:pdfhatH0}
\end{equation}
and its mean is $\E[\hat{H}]= \frac{2}{\pi\sqrt{\la}}$.
\end{proof}
\begin{remark}[Speed $v\neq 1$]
In the single-speed case with any non-unit station speed $v$, the typical handover distance $\hat{H}$ also follows a Nakagami distribution with parameters $\left(\frac{3}{2}, \frac{3}{2\la\pi}\right)$, as the Laplace transform of $\hat{H}^2$, under the Palm probability measure $\P^0_{\Vcal}$, remains the same, namely $\Lcal^0_{\hat{H}^2}(\gamma)=\left(1+\frac{\gamma}{\la \pi}\right)^{-\frac{3}{2}}$. Indeed,
\begin{align}
\Lcal^0_{\hat{H}^2}(\gamma)&= \frac{4\la^2 v^2}{\la_\Vcal} \int_{0}^{\infty}\int_{0}^{\infty}\int_0^{\infty} e^{-(\la \pi+\gamma) \hat{h}^2} {\rm d} t'\, {\rm d} h'\, {\rm d} h,
\label{eq:svn1}
\end{align}
since the head point process $\Hcal$ has intensity $2\la v$, where $\hat{h}^2= \frac{1}{4}\left[v^2t'^2+ \frac{(h^2-h'^2)^2}{v^2t'^2}+ 2(h^2+h'^2) \right]$ and $\la_\Vcal=\frac{4v\sqrt{\la}}{\pi}$. Using this in (\ref{eq:svn1}) yields
\begin{align}
\Lcal^0_{\hat{H}^2}(\gamma) &=\pi\la^{3/2} v\int_{0}^{\infty}\int_{0}^{\infty}e^{-\frac{\la\pi+\gamma}{2}(h^2+h'^2)}\int_0^{\infty} e^{-  \frac{\la\pi+\gamma}{4}\left[v^2t'^2+\frac{(h^2-h'^2)^2}{v^2t'^2}\right]} {\rm d} t'\, {\rm d} h'\, {\rm d} h\nn\\
&=\pi\la^{3/2}  \int_{0}^{\infty}\int_{0}^{\infty}e^{-\frac{\la\pi+\gamma}{2}(h^2+h'^2)}\int_0^{\infty} e^{-  \frac{\la\pi+\gamma}{4}\left[t^2+\frac{(h^2-h'^2)^2}{t^2}\right]} {\rm d} t\, {\rm d} h'\, {\rm d} h\nn\\
&=  \pi\la^{3/2} \left(\frac{\pi}{\la \pi+\gamma}\right)^{\half}\int_{0}^{\infty}\int_{0}^{\infty}e^{-\frac{\la\pi+\gamma}{2}\left((h^2+h'^2)+|h^2-h'^2|\right)}\, {\rm d} h'\, {\rm d} h\nn\\
&=\left(\frac{\pi^3\la^3}{\la \pi+\gamma}\right)^{\half}\frac{1}{\la\pi+\gamma}  = \left(1+\frac{\gamma}{\la \pi}\right)^{-\frac{3}{2}}\nn,
\end{align}
where the last two steps are as obtained in (\ref{eq:h3L}).
\label{remark:vhatH_0}
\end{remark}

\subsection{Comparison with other typical distances}\label{subsection:CompH}
In this section, we compare the statistical properties of the distance of the two stations involved in a typical handover already determined in Lemma~\ref{lemma:typical_h_HO}, the distance to a typical visible head (Lemma~\ref{lemma:LT_visible_head}) and the distance of the nearest station at a typical time (\ref{eq:typical-time}).
\subsubsection{Distance to a typical visible head on $\Lcal_e$} \label{subsubsection:typHead}
Recall that, $\Hcal:=\sum_{i\in \N}\delta_{(T_i,H_i)}$ be the head point process and suppose $\Hcal_{\scriptscriptstyle V}\leq \Hcal$, i.e.,  $Supp(\Hcal_{\scriptscriptstyle V})\subset Supp(\Hcal)$, contains only those heads that lies on the lower envelope $\Lcal_e$. In other words, we say that $\Hcal_{\scriptscriptstyle V}$ is nothing but $\Hcal$ restricted to the lower envelope $\Lcal_e$, i.e.,  $\Hcal_{\scriptscriptstyle V}:=\Hcal\vert_{\Lcal_e}$. We use the subscript $\scriptscriptstyle V$ for the points being ``visible from the time axis'', or equivalently being on the lower envelope $\Lcal_e$. Let us write the point process $\Hcal_{\scriptscriptstyle V}$ as \label{notation:HcalV}
    \begin{equation}
    \Hcal_{\scriptscriptstyle V}:=\sum_{j\in \N}\delta_{(T^{\scriptscriptstyle V}_j, H^{\scriptscriptstyle V}_j)} = \sum_{i\in \N}\delta_{(T_i, H_i)} \one_{\{(T_i, H_i)\in \Lcal_e\}},\label{eq:Hcal_V}
    \end{equation}
where we use the notation $(T^{\scriptscriptstyle V}_j, H^{\scriptscriptstyle V}_j)$ for the visible head points. The sequence of points in $Supp(\Hcal_{\scriptscriptstyle V})$ gives rise to a stationary point process, say $\Acal_{\scriptscriptstyle V}$ on $\R$, of the abscissas of the points in $Supp(\Hcal_{\scriptscriptstyle V})$. In other words,
\begin{equation}
\Acal_{\scriptscriptstyle V}:= \sum_{j\in \N\,\mbox{:}\, (T^{\scriptscriptstyle V}_j, H^{\scriptscriptstyle V}_j)\in \Hcal_{\scriptscriptstyle V}}\delta_{T^{\scriptscriptstyle V}_j},
\label{eq:Acal_v}
\end{equation}\label{notation:supp}
\!\!is the point process of the projections of the atoms of $\Hcal_{\scriptscriptstyle V}$ on the time axis. The time-stationairty of the point processes $\Hcal_{\scriptscriptstyle V}$ and $\Acal_{\scriptscriptstyle V}$ follows from that of $\Hcal$. The following result is about the intensity of the point process $\Acal_{\scriptscriptstyle V}$, and the proof is given in Appendix~\ref{subsection:AcalV} \label{notation:AcalV} 
\begin{lemma}
    The intensity of the point process $\Acal_{\scriptscriptstyle V}$ is $\sqrt{\la}$.
    \label{lem:piP'}
\end{lemma}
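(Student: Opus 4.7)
The plan is to compute $\lambda_{\Acal_{\scriptscriptstyle V}}$ by a direct Campbell--Mecke argument applied to the Poisson head point process $\Hcal$, combined with the geometric characterization of the lower envelope provided by Lemma~\ref{lem:semicircle}. Since each visible head contributes exactly one point to $\Acal_{\scriptscriptstyle V}$ (namely its time coordinate), and by Lemma~\ref{lem:semicircle} a head $(T_i, H_i) \in \Hcal$ lies on $\Lcal_e$ if and only if $\Hcal(U_{H_i}^{T_i}) = 0$, I would write
\[
\lambda_{\Acal_{\scriptscriptstyle V}} = \E\!\left[\Acal_{\scriptscriptstyle V}([0,1])\right]
= \E\!\left[\sum_{(T_i,H_i)\in\Hcal} \one_{\{T_i\in[0,1]\}}\,\one_{\{\Hcal(U_{H_i}^{T_i})=0\}}\right].
\]

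Next, I would apply the Campbell--Mecke formula to the stationary Poisson point process $\Hcal$ of intensity measure ${\rm d}\nu = {\rm d}t\otimes 2\la\,{\rm d}h$ (Lemma~\ref{lemma:tips_density}), using the Palm distribution $\P_{\Hcal}^{(t,h)}$. Slivnyak's theorem, recalled in Lemma~\ref{lemma:twop-head} in its one-point version, gives $\Hcal \setminus \{(t,h)\} \stackrel{D}{=} \Hcal$ under $\P_{\Hcal}^{(t,h)}$, so that
\[
\P_{\Hcal}^{(t,h)}\!\left(\Hcal(U_h^t)=0\right) = e^{-2\la\,|U_h^t|} = e^{-\la\pi h^2},
\]
since $|U_h^t| = \pi h^2/2$. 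Combining these ingredients,
\[
\lambda_{\Acal_{\scriptscriptstyle V}} = \int_0^1 \int_0^\infty 2\la\, e^{-\la\pi h^2}\,{\rm d}h\,{\rm d}t
= 2\la \cdot \frac{1}{2\sqrt{\la}} = \sqrt{\la}.
\]

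There is no real obstacle here: the statement is essentially a corollary of the half-ball condition (Lemma~\ref{lem:semicircle}) and the Slivnyak property of $\Hcal$. The only small subtlety to mention is that the event $\{\Hcal(U_h^t) = 0\}$ is defined with the open half-ball, so the point $(t,h)$ itself (which lies on the boundary $\partial^* U_h^t$) is not counted and it is immaterial whether one works with $\Hcal$ or $\Hcal \setminus \{(t,h)\}$ under the Palm distribution. I would also remark that the same computation, using ${\rm d}\nu = v\,{\rm d}t\otimes 2\la\,{\rm d}h$ and the half-ellipse area $|E_h^{t,v}|=\pi h^2/(2v)$ from Discussion~\ref{discussion:ellipse_v}, yields intensity $v\sqrt{\la}$ for arbitrary speed $v$, which is consistent with the scaling observed for $\lambda_{\Vcal} = 4v\sqrt{\la}/\pi$ in Theorem~\ref{theorem:handover_freq1}.
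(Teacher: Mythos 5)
Your proof is correct and follows essentially the same route as the paper: write $\lambda_{\Acal_{\scriptscriptstyle V}}$ as the expected number of heads in $[0,1]\times\R^+$ whose half-ball is empty, apply the Campbell--Mecke formula with the void probability $e^{-\la\pi h^2}$, and integrate. The remark on the general speed $v$ likewise matches the paper's subsequent observation.
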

\begin{remark}
In the non-unit speed case $(v\neq 1)$, the intensity of $\Acal_{\scriptscriptstyle V}$
is $v\sqrt{\la}$. This result also recovers the assertion made in \cite[Theorem 3]{Baccelli_Madadi_Gustavo}. 
\end{remark}
\begin{figure}[ht!]
\centering
\begin{center}
\begin{tikzpicture}
[scale=1, every node/.style={scale=0.7}]
\pgftransformxscale{0.2}  
\pgftransformyscale{0.2}   
\draw[->] (-9, 0) -- (7.5, 0) node[right] {$t$};
\draw[->] (-6.5, 0) -- (-6.5, 10) node[above] {$h$};
\draw[red, domain=-7:5.5, smooth] plot (\x, {(\x*\x-2*\x+1+4)^0.5});
\draw[red](1,2) node{$\bullet$};    
\draw[blue, domain=-7:5.5, smooth] plot (\x, {(\x*\x-4*\x+2^2+ 1.2^2)^0.5});
\draw[blue](2,1.2) node{$\bullet$};    
\draw[blue, domain=-7:5.5, smooth] plot (\x, {(\x*\x+9*\x+4.5^2+2^2)^0.5});
\draw[blue](-4.5,2) node{$\bullet$};    
\end{tikzpicture}
\captionsetup{width=0.85\linewidth}
\caption{The mobile station, corresponding to the radial bird in red, partly serves the user, but not during the time when it is at its closest distance.}
\label{fig:non-visible-head}
\end{center}
\end{figure}
\begin{remark}
The point process $\Hcal_{\scriptscriptstyle V}$ consists of only those atoms of $\Hcal$ that lie on the lower envelope.
There can be some atoms of $\Hcal$ for which a part of the corresponding radial bird contributes to the lower envelope,
but are such that the head point itself does not lie on $\Lcal_e$, see Figure~\ref{fig:non-visible-head}. 
In other words, the mobile stations corresponding to those points serve as the closest mobile station but not during the time when they are at their shortest distance to the user. Clearly, the point process 
$\Hcal_{\scriptscriptstyle V}$ is not sufficient to provide the true handover frequency,
since  $\la_\Vcal=\frac{4\sqrt{\la}}{\pi}>\sqrt{\la}=\la_V$. As seen earlier, we certainly need more points to obtain
the complete picture of the handover point process $\Vcal$.
\end{remark}
One can define the Palm probability, denoted by $\P^0_{\Acal_{\scriptscriptstyle V}}$, with respect to $\Acal_{\scriptscriptstyle V}$. Let $H_{\scriptscriptstyle V}$ be the distance of the typical head point corresponding to $\Acal_{\scriptscriptstyle V}$. This distance determines the best signal power received from the serving station, which is of practical interest. The proof is given in Appendix~\ref{subsection:L-LTVH} 
\begin{lemma}
For any $\gamma\geq 0$, the Laplace transform of $H_{\scriptscriptstyle V}^2$, under the Palm probability distribution $\P^0_{\Acal_{\scriptscriptstyle V}}$ of $\Acal_{\scriptscriptstyle V}$, is given by 
\begin{equation}
    \Lcal^0_{H_{\scriptscriptstyle V}^2}(\gamma)= \left(1+\frac{\gamma}{\la\pi}\right)^{-1/2}, \label{notation:LT}
\end{equation}
that is, $H_{\scriptscriptstyle V}$  follows a Nakagami distribution with parameter $\left(\half, \frac{1}{2\la\pi}\right)$.
\label{lemma:LT_visible_head}
\end{lemma}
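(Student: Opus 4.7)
The plan is to mimic the strategy used for Lemma~\ref{lem:piP'} and Lemma~\ref{lemma:typical_h_HO}. The key observation is that $\Acal_{\scriptscriptstyle V}$ is obtained from $\Hcal$ by an indicator-type thinning: an atom $(t,h)$ of $\Hcal$ contributes a point to $\Acal_{\scriptscriptstyle V}$ at time $t$ if and only if the empty half-ball condition $\Hcal(U_h^t)=0$ holds, by Lemma~\ref{lem:semicircle}. Because the head point process $\Hcal$ is Poisson, this representation makes $\Acal_{\scriptscriptstyle V}$ directly accessible via the Campbell-Mecke/Slivnyak-Mecke machinery.

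First, I would expand the Palm expectation from its definition as
\begin{equation*}
\la_{\scriptscriptstyle V}\,\E^0_{\Acal_{\scriptscriptstyle V}}\!\left[e^{-\gamma H_{\scriptscriptstyle V}^2}\right] = \E\!\left[\sum_{(T_i,H_i)\in\Hcal,\, 0\le T_i\le 1} e^{-\gamma H_i^2}\, \one_{\Hcal(U_{H_i}^{T_i})=0}\right].
\end{equation*}
Next, I would apply the Campbell-Mecke formula to $\Hcal$, whose intensity measure has density $2\la\,{\rm d}t\otimes{\rm d}h$, and then invoke Slivnyak-Mecke to replace the reduced Palm expectation by the law of $\Hcal$ itself. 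Since the added atom $(t,h)$ does not influence the void event on $U_h^t$, the void probability equals $e^{-\la\pi h^2}$ by Lemma~\ref{lem:semicircle}. The right-hand side thus collapses to the single integral $2\la\int_0^\infty e^{-(\la\pi+\gamma)h^2}\,{\rm d}h$.

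Finally, the elementary Gaussian evaluation yields $\la\sqrt{\pi/(\la\pi+\gamma)}$, and dividing by $\la_{\scriptscriptstyle V}=\sqrt{\la}$ from Lemma~\ref{lem:piP'} produces the claimed Laplace transform $(1+\gamma/(\la\pi))^{-1/2}$. This is exactly the Laplace transform of a $\Gamma(1/2,1/(\la\pi))$ distribution, from which one reads off that $H_{\scriptscriptstyle V}$ is Nakagami with parameters $(1/2, 1/(2\la\pi))$ in the paper's convention, consistently with the computation of $\hat H$ in Lemma~\ref{lemma:typical_h_HO}.

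I do not anticipate a genuine obstacle here: in contrast with Lemma~\ref{lemma:typical_h_HO}, where a two-head factorial Campbell-Mecke identity forced the evaluation of a triple integral through the auxiliary formula (\ref{eq:gaussian_correction}), the present computation concerns a single distinguished head and reduces to a one-dimensional Gaussian integral. The only point requiring care is the bookkeeping between the Gamma parameters obtained for $H_{\scriptscriptstyle V}^2$ and the Nakagami parameters stated for $H_{\scriptscriptstyle V}$; one checks that $\E[H_{\scriptscriptstyle V}^2]=1/(\la\pi)$ matches the spread parameter $1/(2\la\pi)$ multiplied by $2\cdot(1/2)$, confirming the stated parametrization.
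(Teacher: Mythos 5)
Your proposal is correct and follows essentially the same route as the paper's proof: expand the Palm expectation over atoms of $\Hcal$ with the visibility indicator rewritten via the empty half-ball condition of Lemma~\ref{lem:semicircle}, apply the Campbell--Mecke formula with the void probability $e^{-\la\pi h^2}$, and evaluate the resulting one-dimensional Gaussian integral before dividing by $\la_{\scriptscriptstyle V}=\sqrt{\la}$. The parameter bookkeeping at the end also matches the paper's identification of $H_{\scriptscriptstyle V}^2$ as $\Gamma\left(\half,\frac{1}{\la\pi}\right)$.
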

\subsubsection{Distance of nearest station at a typical time}\label{subsubsection:Rayleigh} For any typical time, let $\tilde H$ be the distance to the nearest station to user. Suppose the typical time is $t$, then we write $\tilde H\equiv \tilde{H}_t:=\inf_{i}\vert|X_i^t\vert|$, where $X_i\in \Phi$ is the initial location of a station  and $X_i^t$ at time $t$. Then
\begin{align}
    \P(\tilde{H}_t>h)&=\P(\inf_{i}\vert|X_i^t\vert| >h)=\P(\tilde{\Phi}^t(B_h(o))=0)=\P(\Phi(B_h(o))=0)= e^{-\la\pi h^2},
    \label{eq:typical-time}
\end{align}
since $\tilde{\Phi}^t$ and $\Phi$ has the same distribution. So, $\tilde{H}$ is distributed as Rayleigh with parameter $\frac{1}{\sqrt{2\la\pi}}$.

\subsubsection{Laplace transform order} We say that two non-negative random variables $X, Y$ are ordered for the Laplace transform order, i.e., $X\geq_{Lt}Y$, if the corresponding Laplace transforms are ordered, i.e., $\Lcal_X(\gamma)\leq \Lcal_Y(\gamma)$, for all $\gamma\geq 0$. The theory of Laplace transform order is studied in \cite{Stoyan}. The following result is about the Laplace transform ordering among $\hat{H}$, $\tilde{H}$ and $H_{\scriptscriptstyle V}$. The proof is provided in Appendix~\ref{subsection:Lemma-LT}.~\label{notation:LTO}
\begin{lemma}
The typical distances $\hat{H}$, $\tilde{H}$ and $H_{\scriptscriptstyle V}$ satisfy 
\begin{equation}
\hat{H}\geq_{Lt}\tilde{H}\geq_{Lt}H_{\scriptscriptstyle V}.
\label{eq:LTO}
\end{equation}
\label{lemma:Laplace_order}
\end{lemma}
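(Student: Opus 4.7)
The plan is to convert the claimed Laplace transform order of $\hat H$, $\tilde H$, $H_{\scriptscriptstyle V}$ into the usual stochastic order $\leq_{st}$ of these random variables, and to obtain the latter from a classical comparison of gamma distributions of identical scale but increasing shape.

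First I would record what is already known about the three squared distances. By Lemma~\ref{lemma:typical_h_HO}, $\hat H^{2}$ is $\Gamma(3/2, 1/(\la\pi))$; by the Rayleigh law of $\tilde H$ from~\eqref{eq:typical-time}, $\tilde H^{2}$ is exponential with rate $\la\pi$, i.e. $\Gamma(1, 1/(\la\pi))$; and by Lemma~\ref{lemma:LT_visible_head}, $H_{\scriptscriptstyle V}^{2}$ is $\Gamma(1/2, 1/(\la\pi))$. The three squared distances are thus gamma with a common scale $1/(\la\pi)$, and only the shape parameter varies, in the increasing order $1/2 < 1 < 3/2$.

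Next I would invoke the standard stochastic dominance of gamma laws with equal scale and increasing shape. If $\alpha_{1}\le \alpha_{2}$ and $Y_{\alpha}\sim\Gamma(\alpha,1/(\la\pi))$, one couples $Y_{\alpha_{2}}\stackrel{d}{=}Y_{\alpha_{1}}+Z$ with $Z\sim \Gamma(\alpha_{2}-\alpha_{1},1/(\la\pi))$ independent, so that $Y_{\alpha_{1}}\le Y_{\alpha_{2}}$ pointwise in this coupling, hence $Y_{\alpha_{1}}\leq_{st} Y_{\alpha_{2}}$. Applied to the three shape parameters at hand, this gives $H_{\scriptscriptstyle V}^{2}\leq_{st}\tilde H^{2}\leq_{st}\hat H^{2}$, and since $x\mapsto\sqrt{x}$ is non-decreasing on $\R^{+}$, the stochastic ordering is preserved: $H_{\scriptscriptstyle V}\leq_{st}\tilde H\leq_{st}\hat H$.

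Finally I would pass from $\leq_{st}$ to $\leq_{Lt}$. For every $\gamma\ge 0$ the function $x\mapsto e^{-\gamma x}$ is non-increasing on $\R^{+}$, so taking expectations against this function reverses the inequality and yields $\Lcal_{\hat H}(\gamma)\leq \Lcal_{\tilde H}(\gamma)\leq \Lcal_{H_{\scriptscriptstyle V}}(\gamma)$, which by the definition of $\leq_{Lt}$ recorded just above the lemma is precisely the chain $\hat H\leq_{Lt}\tilde H\leq_{Lt}H_{\scriptscriptstyle V}$. There is no conceptual difficulty here; the only remark is that a more direct route trying to compare $\Lcal_{\hat H}$, $\Lcal_{\tilde H}$, $\Lcal_{H_{\scriptscriptstyle V}}$ themselves would be unattractive, since the Laplace transform of a Nakagami random variable has no tractable closed form while that of its square does, so funnelling through stochastic dominance of the squared variables is the efficient path.
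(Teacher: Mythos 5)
Your proof is correct, and it reaches the stated conclusion by a genuinely different route from the paper. The paper works directly with the densities of the non-squared distances $\hat H$, $\tilde H$, $H_{\scriptscriptstyle V}$: it locates the unique crossing point of each pair of densities (at $h=1/(2\sqrt{\la})$ and $h=1/(\pi\sqrt{\la})$ respectively), splits the integral defining the difference of Laplace transforms at that point, and bounds $e^{-\gamma h}$ by its value at the crossing to reduce everything to the fact that the two densities integrate to the same total mass. This is essentially the single-crossing (cut) criterion carried out by hand. You instead observe that the three squared distances are $\Gamma(1/2,1/(\la\pi))$, $\Gamma(1,1/(\la\pi))$ and $\Gamma(3/2,1/(\la\pi))$, use the additivity of the gamma family in the shape parameter to couple them monotonically, deduce $H_{\scriptscriptstyle V}^2\leq_{st}\tilde H^2\leq_{st}\hat H^2$, push this through $x\mapsto\sqrt{x}$, and then integrate the non-increasing function $x\mapsto e^{-\gamma x}$ to reverse the inequality into the paper's convention for $\leq_{Lt}$. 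Your argument is shorter, avoids any computation with the explicit densities, and yields the strictly stronger conclusion that the three distances are ordered for the usual stochastic order, of which the Laplace transform order is a consequence; the paper's argument is more self-contained in that it only uses the closed-form densities already displayed and elementary calculus, at the cost of an explicit crossing-point computation for each pair.
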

This allows one to compare the performance metrics~\cite{Andrews-etal}, namely coverage probability and Shannon rate in line with the Discussion~\ref{discussion:RBPP-SINR}, at different typical epochs, and we leave the performance evaluation for future research.
\subsection{Distribution of inter-handover time}\label{subsection:distT1} 
Let us formally define the inter-handover time under the Palm probability of the head point process $\Hcal$. For any $s\in \R$, we denote the first time a handovers happens after time $s$ as $T_1(s)$. Suppose there is a handover at time $0$ given by the intersection of two radial birds $C_{(t_1,h_1)}$ and $C_{(t_2,h_2)}$, for some $(t_1,h_1)$ and $(t_2,h_2)$ such that $t_2\leq t_1$. Let $T$ denotes the inter-handover time, which coincides with $T_1(0)$. Then the next handover will be given by an intersection that happens on the part of the radial bird $C_{(t_1,h_1)}$ lying on the right of the vertical line $t=0$. 

Consider the sequence $\{U^t_{\hat h_t}\}_{t\geq 0}$ of open half-balls of heights $\{\hat h_t\}_{t\geq 0}$, where 
$\hat{h}_t:= \left((t-t_1)^2+h_1^2\right)^\half$.
Let us also consider the collection $\left\{S_t\right\}_{t\geq 0}$, where 
\begin{equation}
S_t:=\interior{\left(\bigcup_{t'\in[0,t]} U^{t'}_{\hat{h}_{t'}}\setminus \overline{U^0_{\hat{h}_0}}\right)}.
\label{eq:st}
\end{equation}
We claim that the collection $\left\{S_t\right\}_{t\geq 0}$ of open sets is increasing in $t$, using the following   Lemma~\ref{lemma:increasingS}, for which we provide a geometric proof in Appendix~\ref{subsection:1speed-inc}.  
\begin{lemma}
The collection of sets $\left\{U^{t}_{\hat{h}_{t}}\setminus U^0_{\hat{h}_0}\right\}_{t\geq 0}$ is increasing in $t$.
\label{lemma:increasingS}
\end{lemma}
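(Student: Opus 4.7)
The plan is to exploit the fact that every half-ball in the family pivots around the fixed point $(t_1, h_1)$. This is because by construction $\hat{h}_t = |(t,0) - (t_1, h_1)|$, so the bounding semi-circle $\partial^* U^t_{\hat{h}_t}$ passes through $(t_1, h_1)$ for every $t \geq 0$. Geometrically, as $t$ varies, the half-ball rotates around this pivot. This suggests a short algebraic argument rather than any delicate analysis.

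First I would show that $U^t_{\hat{h}_t} \setminus U^0_{\hat{h}_0}$ is contained in the right half-plane $\{(x,y)\in \mathbb{H}^+ : x > t_1\}$ for every $t > 0$. Indeed, a point $(x,y)$ in this set difference satisfies simultaneously
\[
(x-t)^2 + y^2 < (t-t_1)^2 + h_1^2 \qquad \text{and} \qquad x^2 + y^2 \geq t_1^2 + h_1^2.
\]
Expanding the first inequality and subtracting the second from it yields $2t(x - t_1) > 0$, which forces $x > t_1$ whenever $t > 0$.

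Second, I would observe that for any point with $x > t_1$, the ``signed power'' $p(t') := (x - t')^2 + y^2 - \hat{h}_{t'}^2$ is strictly decreasing in $t'$. A direct computation gives
\[
p(t') - p(t) = (x-t')^2 - (x-t)^2 + \hat{h}_t^2 - \hat{h}_{t'}^2 = -2(t' - t)(x - t_1),
\]
after using $\hat{h}_t^2 - \hat{h}_{t'}^2 = (t-t_1)^2 - (t'-t_1)^2$. Hence for $t' > t$ and $x > t_1$ one has $p(t') < p(t) < 0$, meaning $(x,y)$ remains strictly inside $U^{t'}_{\hat{h}_{t'}}$. Combined with the first step, every $(x,y) \in U^t_{\hat{h}_t} \setminus U^0_{\hat{h}_0}$ is automatically in $U^{t'}_{\hat{h}_{t'}}$ and, since the subtracted set does not depend on $t'$, also in $U^{t'}_{\hat{h}_{t'}} \setminus U^0_{\hat{h}_0}$, proving monotonicity.

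I do not anticipate any real obstacle: once the pivot observation is made, the proof reduces to two elementary inequalities. The only subtlety worth emphasizing is that the truncation by $U^0_{\hat{h}_0}$ is essential. The raw family $\{U^t_{\hat{h}_t}\}_{t \geq 0}$ is not monotone, since the half-ball rotates around $(t_1,h_1)$ and sheds mass on the left of the vertical line $x = t_1$ as $t$ increases; subtracting $U^0_{\hat{h}_0}$ is precisely what discards that left portion, restoring monotonicity. This discarded-left-portion interpretation is also the geometric picture that makes the algebra transparent.
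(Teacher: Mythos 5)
Your proof is correct, and it gets to the conclusion by a genuinely cleaner route than the paper's. The paper argues geometrically: it splits into the cases $t_1\ge 0$ and $t_1<0$, reduces the claim to monotonicity of the auxiliary sets $R_t:=U^t_{\hat h_t}\cap Q_{t_1}$, and then appeals to properties of two intersecting non-concentric circles and, in the second case, to the increasing wing property. Your two inequalities package the same geometric content into a single uniform algebraic computation with no case analysis on the sign of $t_1$: the observation that every bounding semicircle passes through $(t_1,h_1)$ becomes the statement that the radical axis of any two circles in the family is the vertical line $x=t_1$, which is exactly your identity $p(t')-p(t)=-2(t'-t)(x-t_1)$. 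Your first step, showing $U^t_{\hat h_t}\setminus U^0_{\hat h_0}\subset\{x>t_1\}$, is precisely what justifies the paper's reduction to the sets $R_t$ (there stated as the decomposition $R_t=(U^t_{\hat{h}_t}\setminus U^0_{\hat{h}_0})\cup (U^0_{\hat h_0}\cap Q_{t_1})$), and your second step makes precise the paper's informal ``larger circle to the right of the pivot'' assertion. What your version buys is brevity and rigor, since the circle-geometry step in the paper is stated rather than computed; what the paper's version buys is the geometric picture of the pivot and the right quadrant $Q_{t_1}$, which is the template reused in the two-speed setting, where the analogous families of half-ellipses fail to be monotone and the case analysis becomes unavoidable. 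Your closing remark that the raw family $\{U^t_{\hat h_t}\}_{t\ge 0}$ is not monotone, and that subtracting $U^0_{\hat h_0}$ discards exactly the non-monotone left portion, is the right way to see why the truncation is essential.
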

\begin{remark}[Increasing wing property]  In general, we call the decreasing and the increasing branches of the curve of a radial bird as its \textbf{decreasing wing} and \textbf{increasing wing}, respectively.
Suppose $(t_1,h_1)\in \mathbb H^+$ with $t_1\geq 0$, such that there is a radial bird $C_{(t_1,h_1)}$ with head at $(t_1,h_1)$ and $(0,\hat h_0)\in C_{(t_1,h_1)}$. Then the collection $\left\{U^t_{\hat h_t}\setminus U^0_{\hat h_0}\right\}_{t\geq t_1}$ is non-decreasing, where $\hat h_t=\left((t-t_1)^2+h_1^2\right)^\half$, since the point $(t,\hat h_t)$ lies on the increasing wing of the radial bird at $(t_1,h_1)$. We often refer to this property as the \textbf{increasing wing property} for the single speed case. 
\label{remark:inc-arm}
\end{remark}
Using the increasing property of the collection $\left\{U^{t}_{\hat{h}_{t}}\setminus U^0_{\hat{h}_0}\right\}_{t\geq 0}$, as in Lemma~\ref{lemma:increasingS}, we have 
\[
S_t=\interior{\left(\bigcup_{t'\in [0,t]}U^{t'}_{\hat{h}_{t'}}\setminus \overline{U^0_{\hat{h}_0}}\right)}= U^t_{\hat{h}_t}\setminus \overline{U^0_{\hat{h}_0}}.
\]
Also observe that the collection of sets $\left\{S_t\right\}_{t\geq 0}$ lies entirely in the region $Q_0\setminus \overline{U^{0}_{\hat h_0}}$, where $Q_s:=\left\{(t,h)\in \mathbb H^+\,\mbox{:}\, t\geq s\right\}$ is the right quadrant with respect to the vertical line $t=s$.\label{notation:Qs}

\vspace{0.1in}

\begin{definition}[Inter-handover time]
Given that there is a handover at time $0$, the time to the next handover is defined as  
\begin{equation}
T:=\inf\left\{s>0: \forall t\in (0,s], \Hcal\left(S_t \right)=0, \text{ but } \Hcal\left(\overline{S_s}\right)= 2\right\},
\label{eq:T1-0}
\end{equation}
which is equivalent to $T_1(0)$ under the handover Palm probability distribution.
\end{definition}
As a consequence of the increasing property of the sets $\left\{S_t\right\}_{t>0}$, we can conclude that  $\Hcal\left(S_{T_1(0)}\right)=0$, implies $\Hcal\left(S_t\right)=0$, for all $t\in [0,T_1(0)]$. Additionally $\Hcal\left(\overline{S_{T_1(0)}}\right)=2$. This essentially implies there exists a third head point $(t_3,h_3)$ such that $C_{(t_1,h_1)}$ and $C_{(t_3,h_3)}$ intersect at $\left(T_1(0), \hat h_{T_1(0)}\right)$,  $\Hcal\left(S_{T_1(0)}\right)=0$ and $\Hcal\left(\overline{S_{T_1(0)}}\right)=2$, see Figure~\ref{figure:firstHoT}. Hence we have the following corollary of Lemma~\ref{lemma:increasingS}:
\begin{figure}[ht!]
    \begin{tikzpicture}[scale=0.6, every node/.style={scale=0.75}]
    \pgftransformxscale{1.2}  
    \pgftransformyscale{1.2}    
    \draw[->] (-4, 0) -- (10, 0) node[right] {$t$};
    \draw[domain=-2:6, smooth] plot (\x, {(\x*\x-3.5*\x+1.75^2+2.15^2)^0.5});
    \draw[](0.3,-0.35) node{$(0,0)$};
    \draw[](4.5,-0.35) node{$(T_1(0),0)$};
    \draw[](1.75,2.15) node{$\bullet$};
    \draw[](2,3) node{$(t_1,h_1)$}; 
    \draw[]  (3,0) arc (0:180:2.5 and 2.5);
    \draw[]  (1,0) arc (-180:-360:3.5 and 3.5);
    \draw[blue]  (0.3,0) arc (-180:-360:2.35 and 2.35);
    \draw[red]  (-1.3,0) arc (-180:-360:2.3 and 2.3);
    \draw[-] (1.75, 0) -- (1.75, 4);
    \draw[->] (0.5, 0) -- (0.5, 4.3);
    \draw[] (0.5, 4.6)node{$h$};
    \draw[-] (4.5, 0) -- (4.5, 3.55);
    \draw[](0.2, 1.3) node{$\hat h_0$};
    \draw[](5.3, 2) node{$\hat{h}_{T_1(0)}$};
    \draw[](6,1) node{$U_{\hat{h}_{T_1(0)}}^{T_1(0)}$};
     \draw[](-0.5,0.6) node{$U^{0}_{\hat h_0}$};
     \draw[](-1.55,1.42)
     node{$\bullet$};
     \draw[](-2.8,1.2)
     node{$(t_2, h_2)$};
     \draw[domain=-4:2.8, smooth] plot (\x, {(\x*\x+3.1*\x+1.55^2+1.42^2)^0.5});
     \draw[](7.5,1.75) node{$\bullet$};
     \draw[](8.7,1.25)
     node{$(t_3, h_3)$};
    \draw[domain=2.7:9, smooth] plot (\x, {(\x*\x-15*\x+7.5^2+1.75^2)^0.5});
    \end{tikzpicture}
    \captionsetup{width=0.9\linewidth}
    \caption{The sequence of increasing sets of the form  $S_t=\interior{(U^t_{\hat h_t}\setminus U^{0}_{\hat h_0})}$ has no points of $\Hcal$, for all $t\in [0,T_1(0)]$, we just draw a two of them in red and blue. Here $\Hcal\left(\overline{S_{T_1(0)}}\right)=2$.}
    \label{figure:firstHoT}
\end{figure}
\begin{figure}[ht!]
    \begin{tikzpicture}[scale=0.6, every node/.style={scale=0.75}]
    \pgftransformxscale{1.2}  
    \pgftransformyscale{1.2}    
    \draw[->] (-4, 0) -- (10, 0) node[right] {$t$};
    \draw[](-1,2) node{$\bullet$};
    \draw[domain=-3:4, smooth] plot (\x, {(\x*\x+2*1*\x+1^2+2^2)^0.5});
    \draw[](-1.5,2.7) node{$(t_1,h_1)$};
    \draw[-] (-1, 0) -- (-1, 4);
    \draw[](-1,-0.35) node{$t=t_1$};
    \draw[](0.3,-0.35) node{$(0,0)$};
    \draw[](3.5,-0.35) node{$(T_1(0),0)$};
    \draw[]  (3,0) arc (0:180:2.5 and 2.5);
    \draw[]  (-1.45,0) arc (-180:-360:4.67 and 4.67);
     \draw[-] (3.2, 0) -- (3.2, 4.67);
    \draw[blue]  (-1.6,0) arc (-180:-360:3.6 and 3.6);
    \draw[blue] (2, 0) -- (2, 3.6);
    \draw[red]  (-1.7,0) arc (-180:-360:3.3 and 3.3);
    \draw[red] (1.6, 0) -- (1.6, 3.3);
    \draw[->] (0.5, 0) -- (0.5, 4.3);
    \draw[] (0.5, 4.6)node{$h$};
    \draw[](0.2, 1.3) node{$\hat h_0$};
    \draw[](3.7, 1.7) node{$\hat{h}_{T_1(0)}$};
    \draw[](6.3,1) node{$U_{\hat{h}_{T_1(0)}}^{T_1(0)}$};
     \draw[](-0.5,0.6) node{$U^{0}_{\hat h_0}$};
     \draw[](-1.55,1.42)
     node{$\bullet$};
     \draw[](-2.8,1.2)
     node{$(t_2, h_2)$};
     \draw[domain=-4:2.8, smooth] plot (\x, {(\x*\x+3.1*\x+1.55^2+1.42^2)^0.5});
     \draw[](7.5,1.75) node{$\bullet$};
     \draw[](8.7,1.25)
     node{$(t_3, h_3)$};
    \draw[domain=2.7:9, smooth] plot (\x, {(\x*\x-15*\x+7.5^2+1.75^2)^0.5});
    \end{tikzpicture}
    \captionsetup{width=0.9\linewidth}
    \caption{The sequence of increasing sets of the form  $S_t=\interior{(U^t_{\hat h_t}\setminus U^{0}_{\hat h_0})}$ has no points of $\Hcal$, for all $t\in [0,T_1(0)]$, we just draw a two of them in red and blue. Here $\Hcal\left(\overline{S_{T_1(0)}}\right)=2$.}
    \label{figure:firstHoTx}
\end{figure}
\begin{corollary}
For the time point $s$ meeting the stopping condition in the definition of $T$ in (\ref{eq:T1-0}), there exists a unique point, say $(t_3,h_3)\in \mathbb H^+$ with $t_3\geq t_1$, such that the intersection of the radial bird with that point as head and the radial bird $C_{(t_1,h_1)}$, produces a handover at time $T$. 
\label{corollary:third-pt}
\end{corollary}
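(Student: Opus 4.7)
I would identify the two atoms of $\Hcal$ in $\overline{S_T}$ guaranteed by (\ref{eq:T1-0}) and show one of them is $(t_1,h_1)$ while the other is a unique new head $(t_3,h_3)\in\partial^* U^T_{\hat{h}_T}$ with $t_3\geq t_1$. First, $(t_1,h_1)\in\overline{S_t}$ for every $t>0$: by the very definition of $\hat{h}_t$, the point $(t_1,h_1)$ lies on $\partial^* U^t_{\hat{h}_t}$, and since it also lies on $\partial^* U^0_{\hat{h}_0}$ (because $C_{(t_1,h_1)}$ passes through the initial handover point $(0,\hat{h}_0)$, cf.\ Remark~\ref{remark:two-point}), the two semicircles cross transversally at $(t_1,h_1)$, making it a limit point of $S_t$ from the sliver between them. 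On the other hand, $(t_2,h_2)\notin\overline{S_T}$: by Property~\ref{Obs4}, the birds $C_{(t_1,h_1)}$ and $C_{(t_2,h_2)}$ intersect only once, and they do so at time $0$, so for $T>0$ the height of $C_{(t_2,h_2)}$ at $T$, equivalently the distance from $(T,0)$ to $(t_2,h_2)$, strictly exceeds $\hat{h}_T$. Hence one of the two atoms of $\Hcal$ in $\overline{S_T}$ is $(t_1,h_1)$, and the other is a new head $(t_3,h_3)$ distinct from $(t_1,h_1)$ and $(t_2,h_2)$.

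To locate $(t_3,h_3)$, recall that $\{\overline{S_t}\}_{t>0}$ is non-decreasing (inherited from Lemma~\ref{lemma:increasingS}), and by the minimality of $T$ in (\ref{eq:T1-0}), $\Hcal(\overline{S_s})\leq 1$ for every $s<T$. Thus $(t_3,h_3)\in\overline{S_T}\setminus\bigcup_{s<T}\overline{S_s}$, which forces $(t_3,h_3)\in\partial^* U^T_{\hat{h}_T}$, since any interior point of $U^T_{\hat{h}_T}\setminus\overline{U^0_{\hat{h}_0}}$ is already covered by some earlier $\overline{U^s_{\hat{h}_s}}$ for $s$ close to $T$. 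Using the identity $(t_3-T)^2+h_3^2=(T-t_1)^2+h_1^2$ valid on $\partial^* U^T_{\hat{h}_T}$, a direct algebraic expansion reduces the condition $(t_3-s)^2+h_3^2>\hat{h}_s^2$ for all $s<T$ to $(T-s)(t_3-t_1)>0$, equivalently $t_3>t_1$. Uniqueness is then immediate, since any further atom of $\Hcal$ in $\overline{S_T}$ would make the count exceed the prescribed value $2$.

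Finally, to confirm that $(T,\hat{h}_T)$ is an actual handover point, I would apply Lemma~\ref{lem:semicircle} and verify $\Hcal(U^T_{\hat{h}_T})=0$. Decomposing $U^T_{\hat{h}_T}=S_T\cup(U^T_{\hat{h}_T}\cap\overline{U^0_{\hat{h}_0}})$, the first piece is void by $\Hcal(S_T)=0$, and the second is void since $\Hcal(U^0_{\hat{h}_0})=0$ comes from the time-$0$ handover, while the only atoms of $\Hcal$ on $\partial^* U^0_{\hat{h}_0}$ are $(t_1,h_1)$ and $(t_2,h_2)$, which lie respectively on $\partial^* U^T_{\hat{h}_T}$ and strictly outside $\overline{U^T_{\hat{h}_T}}$, hence not in the open set $U^T_{\hat{h}_T}$. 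The main technical obstacle I anticipate is the clean bookkeeping of the arc of $\partial^* U^T_{\hat{h}_T}$ on which the new atom can appear; this reduces neatly to the algebraic inequality above once one exploits the fact that all the moving semicircles $\partial^* U^s_{\hat{h}_s}$ pivot around the common point $(t_1,h_1)$.
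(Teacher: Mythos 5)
Your proof is correct and follows the same route the paper takes: the corollary is derived there directly from the increasing property of $\{S_t\}_{t\geq 0}$ (Lemma~\ref{lemma:increasingS}) together with the stopping condition $\Hcal\big(\overline{S_T}\big)=2$, which is exactly the skeleton of your argument. The paper states the corollary without a separate proof, so your additional bookkeeping --- identifying $(t_1,h_1)$ as one of the two counted atoms, excluding $(t_2,h_2)$ via the unique-intersection property, localizing the new atom on $\partial^* U^{T}_{\hat h_{T}}$ by minimality, the sign computation yielding $t_3\geq t_1$, and the final verification of the empty half-ball condition at $(T,\hat h_T)$ --- is a correct elaboration of details the paper leaves implicit.
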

In the following, we determine the Laplace transform of the inter-handover time $T$ under the Palm probability of handovers.~\label{notation:LTf0}~\label{notation:LTf}
\begin{theorem}
Consider the motion model with speed $v=1$. Under the Palm probability measure $\P^0_\Vcal$, the Laplace transform of the inter-handover time $T$ is given by,
\begin{equation}
\E^0_\Vcal\left[e^{-\rho T}\right] = 2\pi \lambda^{\frac 5 2}
\int_{(\R^+)^5} e^{-\rho (\hat s_1-\hat s)}
 e^{-2\la \vert U^{\hat s_1}_{\hat h_1}\cup U^{\hat s}_{\hat h}\vert}
 {\rm d}t_3\, {\rm d} h_3\, {\rm d} t_2\, {\rm d} h_2\, {\rm d} h_1, 
\label{eq:LT1-Palm}
\end{equation}
    for any $\rho\geq 0$, where $\hat s=\frac{h_1^2-t_2^2-h_2^2}{2t_2}, \hat h^2=\frac{1}{4}\left[t_2^2+2(h_1^2+h_2^2)+\frac{(h_2^2-h_1^2)^2}{t_2^2}\right]$, 
    $\hat s_1=\frac{t_3^2+h_3^2-h_1^2}{2t_3}$ and $\hat h_1^2=\frac{1}{4}\left[t_3^2+2(h_1^2+h_3^2)+\frac{(h_3^2-h_1^2)^2}{t_3^2}\right]$ and $\vert U^{\hat s}_{\hat h}\cup U^{\hat s_1}_{\hat h_1}\vert $ is evaluated in Lemma~\ref{lemma:UcupU}.
   \label{theorem:T-Palm}
\end{theorem}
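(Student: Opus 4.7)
The plan is to apply the Palm formula of Theorem~\ref{theorem:Palm-Vcal} to the test function $f(\Hcal) = e^{-\rho T(\Hcal)}$ and then to perform a second Campbell--Mecke/Slivnyak step for the third head that determines the next handover, collapsing the continuum condition ``no intermediate handover'' into a single void condition on a union of two upper half-balls.

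First I would substitute $f = e^{-\rho T}$ into Theorem~\ref{theorem:Palm-Vcal} and rename the three Palm integration variables as $h_1, t_2, h_2$, so that the two heads of the current handover sit at $(0, h_1)$ and $(-t_2, h_2)$; the explicit expressions for $\hat s$ and $\hat h^2$ in the theorem then follow from (\ref{eq:handover-time})--(\ref{eq:h}). In the shifted process $\theta_{\hat s}(\Hcal + \delta_{(0,h_1)} + \delta_{(-t_2,h_2)})$, the inter-handover time $T$ equals $\hat s_1 - \hat s$, where $(\hat s_1, \hat h_1)$ is the next handover in the unshifted picture. By Corollary~\ref{corollary:third-pt}, this next handover involves the right bird $C_{(0,h_1)}$ together with a unique third head $(t_3, h_3)$ of $\Hcal$ with $t_3 > 0$; substituting $(0, h_1)$ and $(t_3, h_3)$ into (\ref{eq:handover-time})--(\ref{eq:h}) yields the claimed formulas for $\hat s_1$ and $\hat h_1^2$.

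The key geometric reduction is to translate ``$(t_3, h_3)$ produces the next handover'' into a single void condition on $\Hcal$. By Lemma~\ref{lem:semicircle}, the intersection $(\hat s_1, \hat h_1)$ is a valid handover if and only if $\Hcal(U^{\hat s_1}_{\hat h_1}) = 0$. By the increasing wing property (Lemma~\ref{lemma:increasingS} together with Remark~\ref{remark:inc-arm}) applied to the right bird between $\hat s$ and $\hat s_1$, the absence of an intermediate handover on $C_{(0,h_1)}$ is equivalent to $\Hcal$ being empty on the swept region $U^{\hat s_1}_{\hat h_1} \setminus \overline{U^{\hat s}_{\hat h}}$. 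Combined with the Palm-imposed void condition $\Hcal(U^{\hat s}_{\hat h}) = 0$, the joint event collapses to $\Hcal(U^{\hat s}_{\hat h} \cup U^{\hat s_1}_{\hat h_1}) = 0$. I would then apply Campbell--Mecke to the sum over candidate third heads $(T_k, H_k)$ of $\Hcal$ with $T_k > 0$, which by Corollary~\ref{corollary:third-pt} has almost surely a single non-zero term, followed by Slivnyak to remove the added atom at $(t_3, h_3)$. Since $(t_3, h_3) \in \partial U^{\hat s_1}_{\hat h_1}$ by Remark~\ref{remark:two-point}, it does not affect the interior void condition, so the inner probability reduces to $\exp(-2\la |U^{\hat s}_{\hat h} \cup U^{\hat s_1}_{\hat h_1}|)$. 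Multiplying the prefactor $\pi \la^{3/2}$ of Theorem~\ref{theorem:Palm-Vcal} by the intensity factor $2\la$ yields $2\pi \la^{5/2}$ and produces the five-fold integral of (\ref{eq:LT1-Palm}).

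The main obstacle is the collapse from the continuum of void conditions on intermediate times $t \in (\hat s, \hat s_1)$ to the single void condition on the union $U^{\hat s}_{\hat h} \cup U^{\hat s_1}_{\hat h_1}$. This step crucially depends on the monotonicity statement of Lemma~\ref{lemma:increasingS} along the increasing wing of the right bird (whose proof is deferred to Appendix~\ref{subsection:1speed-inc}), and on the uniqueness of the third head furnished by Corollary~\ref{corollary:third-pt}, which guarantees that the decomposition of the inter-handover indicator as a sum over candidate heads of $\Hcal$ has exactly one non-zero term almost surely.
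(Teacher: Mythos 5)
Your proposal follows essentially the same route as the paper's proof: apply Theorem~\ref{theorem:Palm-Vcal} to $f=e^{-\rho T}$, identify the next handover through the a.s.\ unique third head of Corollary~\ref{corollary:third-pt} lying to the right of $(0,h_1)$, use the monotonicity of Lemma~\ref{lemma:increasingS} to collapse the swept region into the single void condition $\Hcal\left(U^{\hat s}_{\hat h}\cup U^{\hat s_1}_{\hat h_1}\right)=0$, and finish with Campbell--Mecke and Slivnyak to obtain the prefactor $2\pi\la^{5/2}$. The only cosmetic difference is that the paper splits the $(t_3,h_3)$-integral into the two sub-domains excluding $U^{\hat s}_{\hat h}$ (as in (\ref{eq:LT1-2a})), a restriction your argument carries implicitly through the Palm void condition.
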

\begin{proof}
Taking $f(\Hcal)= e^{-\rho T}$in Formula (\ref{eq:HRPalm}) from Theorem~\ref{theorem:Palm-Vcal} with $(0,h)=(0,h_1)$ and $(-t',h')=(-t_2,h_2)$, we obtain
\begin{align}
\E^0_\Vcal\left[e^{-\rho T}\right] &= \pi \lambda^{\frac 3 2}
\int_{(\R^+)^3}
\E_\Hcal\left[ \one_{\Hcal\left(U^{\hat{s}}_{\hat{h}}\right)=0}  e^{-\rho(T\circ \th_{\hat s})}\right] {\rm d} t_2\, {\rm d} h_2\, {\rm d} h_1,
\label{eq:LT1-1}
\end{align}
where  $(\hat s,\hat h)\equiv (\hat s(0, h_1, -t_2, h_2),\hat h(0, h_1, -t_2, h_2))$ and from, (\ref{eq:handover-time}) and (\ref{eq:h}),  
\begin{equation}
\hat s(0, h_1, -t_2, h_2)=\frac{h_1^2-t_2^2-h_2^2}{2t_2} \text{ and } \hat h(0, h_1, -t_2, h_2)=\frac{1}{4}\left[t_2^2+2(h_1^2+h_2^2)+\frac{(h_2^2-h_1^2)^2}{t_2^2}\right].
\label{eq:hatsh}
\end{equation}
In (\ref{eq:LT1-1}) we have 
\begin{equation}
T\circ \th_{\hat s}= T_1(\hat s)-\hat s,
\label{eq:T1-hats}
\end{equation}
where $T_1(\hat s)$ is the time of the first handover after time $\hat s$. The next handover can be determined by exploring the head points outside the region $U^{\hat s}_{\hat h}$, the radial bird of which intersects the radial bird $C_{(0,h_1)}$ at a point with abscissa more than $\hat s$. Then there is a unique head point with the property of Corollary~\ref{corollary:third-pt}. Moreover, that particular head point must lie on the right of $(0,h_1)$. This is true, since, any other radial bird with head point being on the left of $(0,h_1)$ and outside $U^{\hat s}_{\hat h}$ intersects the radial bird $C_{(0,h_1)}$ at a point, which is either contained in the open region above the lower envelope, i.e., $\Lcal_e^+$ or which produces a handover before time $\hat s$. So the next eligible head point must be on the right of $(0,h_1)$ and outside $\overline{U^{\hat s}_{\hat h}}$. So the unexplored region is $Q_0\setminus \overline{U^{\hat s}_{\hat h}}$. Suppose the next handover is given by a random head point $(T_j,H_j)$, then the intersection of $C_{(0,h_1)}$ and $C_{(T_j,H_j)}$ is $(\hat s(0,h_1,T_j,H_j), \hat h(0,h_1,T_j,H_j))$ and we can define $T_1(\hat s):=\hat s(0,h_1,T_j,H_j)$.
\begin{figure}[ht!]
    \begin{tikzpicture}[scale=0.7, every node/.style={scale=0.75}]
    \pgftransformxscale{0.95}  
    \pgftransformyscale{0.95}    
    \draw[->] (-4, 0) -- (10, 0) node[right] {$t$};
    \draw[domain=-2:7, smooth] plot (\x, {(\x*\x-3.5*\x+1.75^2+2.15^2)^0.5});
    \draw[](0.3,-0.35) node{$(\hat s,0)$};
    \draw[](1.9,-0.35) node{$(0,0)$};
    \draw[](4.5,-0.35) node{$(\hat s_1,0)$};
    \draw[](1.75,2.15) node{$\bullet$};
    \draw[](2,3) node{$(0,h_1)$};   
    \draw[]  (3,0) arc (0:180:2.5 and 2.5);
    \draw[]  (1,0) arc (-180:-360:3.5 and 3.5);
    \draw[-] (0.5, 0) -- (0.5, 2.5);
    \draw[->] (1.75, 0) -- (1.75, 4);
    \draw[](1.75, 4.3)node{$h$};
    \draw[-] (4.5, 0) -- (4.5, 3.55);
    \draw[](0.2, 1.3) node{$\hat{h}$};
    \draw[](4.9, 2) node{$\hat{h}_1$};
    \draw[](-0.5,0.6) node{$U_{\hat{h}}^{\hat s}$};
     \draw[](6,1.2) node{$U^{\hat s_1}_{\hat{h}_1}$};
     \draw[](-1.55,1.42)
     node{$\bullet$};
     \draw[](-2.8,1.2)
     node{$(-t_2, h_2)$};
     \draw[domain=-4:4, smooth] plot (\x, {(\x*\x+3.1*\x+1.55^2+1.42^2)^0.5});
     \draw[](7.5,1.75) node{$\bullet$};
     \draw[](8.7,1.25)
     node{$(t_3, h_3)$};
     \draw[domain=2.5:9, smooth] plot (\x, {(\x*\x-15*\x+7.5^2+1.75^2)^0.5});
    \end{tikzpicture}
    \captionsetup{width=0.9\linewidth}
    \caption{There exists a head point $(t_3,h_3)$ which is responsible for the first handover at time $\hat s_1$ after time $\hat s$. There should not be any points of $\Hcal$ in the region $U^{\hat s_1}_{\hat{h}_1} \setminus \overline{U_{\hat h}^{\hat s}}$.}
    \label{figure:UcupU}
\end{figure}
Based on the last discussion, we get that the inner expectation in (\ref{eq:LT1-1}) is equal to
\begin{align}
\lefteqn{\E_\Hcal\left[ \one_{\Hcal\left(U^{\hat{s}}_{\hat{h}}\right)=0}  e^{-\rho(T_1(\hat s)-\hat s)} \right]}\nn\\
&=
\E_\Hcal\left[ \one_{\Hcal\left(U^{\hat{s}}_{\hat{h}}\right)=0} \one_{\exists (T_j,H_j)\in \Hcal \,\mbox{:}\, T_j\geq 0, \, \left\{\Hcal\interior{\left(\bigcup_{t\in [0,\hat s(0,h_1,T_j,H_j)]} U^{t}_{\hat h_t}\setminus U^{\hat s}_{\hat h}\right)}=0\right\}} e^{-\rho (\hat s(0,h_1,T_j,H_j)-\hat s)}\right] \nn\\
&=
\E_\Hcal\left[ \one_{\Hcal\left(U^{\hat{s}}_{\hat{h}}\right)=0} \sum_{ (T_j,H_j)\in \Hcal \,\mbox{:}\, T_j\geq 0} \one_{\Hcal\interior{\left(\bigcup_{t\in [0,\hat s(0,h_1,T_j,H_j)]} U^{t}_{\hat h_t}\setminus U^{\hat s}_{\hat h}\right)}=0} e^{-\rho (\hat s(0,h_1,T_j,H_j)-\hat s)}\right].
\label{eq:LT1-2xx}
\end{align}
In the random sum in (\ref{eq:LT1-2xx}), there exists a unique such point $(T_j,H_j)$, as shown in Corollary~\ref{corollary:third-pt}. Using the increasing property of Lemma~\ref{lemma:increasingS} in (\ref{eq:LT1-2xx}) we have 
\begin{align}
\E_\Hcal\left[ \one_{\Hcal\left(U^{\hat{s}}_{\hat{h}}\right)=0}  e^{-\rho(T_1(\hat s)-\hat s)} \right]
&=
\E_\Hcal\left[\sum_{ (T_j,H_j)\in \Hcal \,\mbox{:}\, T_j\geq 0} \one_{\Hcal\left( U^{\hat s(0,h_1,T_j,H_j)}_{\hat h(0,h_1,T_j,H_j)}\cup U^{\hat s}_{\hat h}\right)=0} e^{-\rho (\hat s(0,h_1,T_j,H_j)-\hat s)}\right].
\label{eq:LT1-2x}
\end{align}
Using the Campbell-Mecke formula in (\ref{eq:LT1-2x}), we have
\begin{align}
\lefteqn{2 \la
\int_0^{\hat h}\int_{\left(\hat h^2-(t_3-\hat s)^2\right)^\half}^\infty
\E_\Hcal\left[   \one_{\Hcal(U^{\hat s(0,h_1,t_3,h_3)}_{\hat h(0,h_1,t_3,h_3)}\cup U^{\hat s}_{\hat h})=0} e^{-\rho (\hat s(0,h_1,t_3,h_3)-\hat s)}
\right] {\rm d}h_3\, {\rm d} t_3}\nn\\
&\;\; + 2 \la
\int_{\hat h}^\infty \int_0^\infty
\E_\Hcal\left[   \one_{\Hcal(U^{\hat s(0,h_1,t_3,h_3)}_{\hat h(0,h_1,t_3,h_3)}\cup U^{\hat s}_{\hat h})=0} e^{-\rho (\hat s(0,h_1,t_3,h_3)-\hat s)}\right] {\rm d}h_3\, {\rm d} t_3\nn\\
&=2 \la
\int_0^{\hat h}\int_{\left(\hat h^2-(t_3-\hat s)^2\right)^\half}^\infty e^{-\rho (\hat s_1-\hat s)}\E_\Hcal\left[   \one_{\Hcal(U^{\hat s_1}_{\hat h_1}\cup U^{\hat s}_{\hat h})=0} \right] {\rm d}h_3\, {\rm d} t_3\nn\\
&\;\;+ 2 \la
\int_{\hat h}^\infty\int_0^\infty e^{-\rho (\hat s_1-\hat s)}\E_\Hcal\left[   \one_{\Hcal(U^{\hat s_1}_{\hat h_1}\cup U^{\hat s}_{\hat h})=0} \right] {\rm d}h_3\, {\rm d} t_3.
\label{eq:LT1-2a}
\end{align}
Using the void probability, the last sum in (\ref{eq:LT1-2a}) equals
\begin{align}
\lefteqn{ \hspace{-3.1in} 2 \la
\int_0^{\hat h}\int_{\left(\hat h^2-(t_3-\hat s)^2\right)^\half}^\infty e^{-\rho (\hat s_1-\hat s)}e^{-2\la \big\vert U^{\hat s_1}_{\hat h_1}\cup U^{\hat s}_{\hat h}\big\vert}{\rm d}h_3\, {\rm d} t_3+ 2 \la
\int_{\hat h}^\infty\int_0^\infty e^{-\rho (\hat s_1-\hat s)}e^{-2\la \big\vert U^{\hat s_1}_{\hat h_1}\cup U^{\hat s}_{\hat h}\big\vert}{\rm d}h_3\, {\rm d} t_3}\nn\\
& \hspace{-3.1in} :=\zeta(\rho,h_1,t_2,h_2),
\label{eq:LT1-2}
\end{align}
where $(\hat s_1,\hat h_1)\equiv (\hat s(0, h_1, t_3, h_3),\hat h(0, h_1, t_3, h_3))$ and from (\ref{eq:handover-time}) and (\ref{eq:h}) 
\begin{equation}
\hat s(0, h_1, t_3, h_3):=\frac{t_3^2+h_3^2-h_1^2}{2t_3}\text{ and } \hat h^2(0, h_1, t_3, h_3):=\frac{1}{4}\left[t_3^2+2(h_1^2+h_3^2)+\frac{(h_3^2-h_1^2)^2}{t_3^2}\right].
\label{eq:hatsh1}
\end{equation}
In (\ref{eq:LT1-2}), the function $\zeta(\rho,h_1,t_2,h_2)$ denotes the Laplace transform of the inter-handover time $T$, given that there is a handover produced by $(0,h_1)$ and $(-t_2,h_2)$.
Figure~\ref{figure:UcupU} depicts the existence of the void region $U^{\hat s_1}_{\hat h_1}\cup U^{\hat s}_{\hat h}$. The handover distances $\hat h$ and $\hat h_1$ do not change due to the shift $\{\theta_t\}_t$. By replacing the resultant from (\ref{eq:LT1-2}) in (\ref{eq:LT1-1}) yields 
\begin{align}
\E^0_\Vcal\left[e^{-\rho T}\right] &= \pi \la^{\frac 3 2}\int_{(\R^+)^3}\zeta(\rho,h_1,t_2,h_2)\, {\rm d} t_2\, {\rm d} h_2\, {\rm d} h_1\nn\\
&=2\pi \lambda^{\frac 5 2}\int_{(\R^+)^3}
\int_0^{\hat h}\int_{\left(\hat h^2-(t_3-\hat s)^2\right)^\half}^\infty e^{-\rho (\hat s_1-\hat s)}e^{-2\la \big\vert U^{\hat s_1}_{\hat h_1}\cup U^{\hat s}_{\hat h}\big\vert}{\rm d}h_3\, {\rm d} t_3 {\rm d} t_2\, {\rm d} h_2\, {\rm d} h_1\nn\\
&\;\;+2\pi \lambda^{\frac 5 2}\int_{(\R^+)^3}
\int_{\hat h}^\infty\int_0^\infty e^{-\rho (\hat s_1-\hat s)}e^{-2\la \big\vert U^{\hat s_1}_{\hat h_1}\cup U^{\hat s}_{\hat h}\big\vert}{\rm d}h_3\, {\rm d} t_3{\rm d} t_2\, {\rm d} h_2\, {\rm d} h_1.
\label{eq:LT1-3}
\end{align}
The area of the region $U^{\hat s}_{\hat h}\cup U^{\hat s_1}_{\hat h_1}$ is evaluated below in Lemma~\ref{lemma:UcupU}. Using the expressions of $\hat s, \hat h, \hat s_1$ and $\hat h_1$ from (\ref{eq:hatsh}) and (\ref{eq:hatsh1}), one can evaluate the final expression for $\E^0_{\Vcal}\left[e^{-\rho T}\right]$ from (\ref{eq:LT1-3}).
\end{proof}
We now evaluate the area of the region $U^{\hat s_1}_{\hat h_1}\cup U^{\hat s}_{\hat h}$ in terms of the original variables $h_1, t_2,h_2,t_3,h_3$, using the following geometric lemma, see Figure~\ref{figure:UcupU} for illustration. This is presented in Appendix~\ref{subsection:UcapU}.
\begin{remark}
From (\ref{eq:LT1-3}), for $\rho=0$, we have
\begin{align}
\int_{(\R^+)^3}\left[
\int_0^{\hat h}\int_{\left(\hat h^2-(t_3-\hat s)^2\right)^\half}^\infty e^{-2\la \big\vert U^{\hat s_1}_{\hat h_1}\cup U^{\hat s}_{\hat h}\big\vert}{\rm d}h_3\, {\rm d} t_3 +
\int_{\hat h}^\infty\int_0^\infty e^{-2\la \big\vert U^{\hat s_1}_{\hat h_1}\cup U^{\hat s}_{\hat h}\big\vert}{\rm d}h_3\, {\rm d} t_3\right]{\rm d} t_2\, {\rm d} h_2\, {\rm d} h_1=\frac{1}{2\pi \lambda^{\frac 5 2}}.
\label{eq:rho0}
\end{align}
Also, for small values of $\rho$, we have $\frac{1}{\rho}\left(1-E^0_{\Vcal}[e^{-\rho T}]\right)\approx E^0_{\Vcal}[T]=\frac{1}{\la_\Vcal}= \frac{\pi}{4\sqrt{\la}}$ and hence from (\ref{eq:LT1-3}) we have 
\begin{align}
\lefteqn{\int_{(\R^+)^3}
\int_0^{\hat h}\int_{\left(\hat h^2-(t_3-\hat s)^2\right)^\half}^\infty e^{-\rho (\hat s_1-\hat s)}e^{-2\la \big\vert U^{\hat s_1}_{\hat h_1}\cup U^{\hat s}_{\hat h}\big\vert}{\rm d}h_3\, {\rm d} t_3 {\rm d} t_2\, {\rm d} h_2\, {\rm d} h_1}\nn\\
&\;\;+\int_{(\R^+)^3}
\int_{\hat h}^\infty\int_0^\infty e^{-\rho (\hat s_1-\hat s)}e^{-2\la \big\vert U^{\hat s_1}_{\hat h_1}\cup U^{\hat s}_{\hat h}\big\vert}{\rm d}h_3\, {\rm d} t_3{\rm d} t_2\, {\rm d} h_2\, {\rm d} h_1
\approx \frac{1-\rho \E^0_{\Vcal}[T]}{2\pi \la^{\frac 5 2}}= \frac{1-\frac{\pi\rho}{4\sqrt{\la}}}{2\pi \la^{\frac 5 2}}.
\label{eq:rho-small}
\end{align}
\end{remark}
\begin{remark}
For $v\neq 1$ we have
\begin{align}
\E^0_\Vcal\left[e^{-\rho T}\right] &=2 v^2\pi \lambda^{\frac 5 2}\int_{(\R^+)^3}
\int_0^{\hat h/v}\int_{\left(\hat h^2-v^2(t_3-\hat s)^2\right)^\half}^\infty e^{-\rho (\hat s_1-\hat s)}e^{-2v\la \big\vert E^{\hat s_1,v}_{\hat h_1}\cup E^{\hat s,v}_{\hat h}\big\vert}{\rm d}h_3\, {\rm d} t_3 {\rm d} t_2\, {\rm d} h_2\, {\rm d} h_1\nn\\
&\;\;+2v^2\pi \lambda^{\frac 5 2}\int_{(\R^+)^3}
\int_{\hat h/v}^\infty\int_0^\infty e^{-\rho (\hat s_1-\hat s)}e^{-2v\la \big\vert E^{\hat s_1,v}_{\hat h_1}\cup E^{\hat s,v}_{\hat h}\big\vert}{\rm d}h_3\, {\rm d} t_3{\rm d} t_2\, {\rm d} h_2\, {\rm d} h_1, 
\label{eq:LT1-Palmv}
\end{align}
where the area of the union of half-ellipses $E^{\hat s_1,v}_{\hat h_1}, E^{\hat s,v}_{\hat h}$ is evaluated in Lemma~\ref{lemma:EUE} in Appendix~\ref{sec-appendix}, after expressing $\hat s, \hat h, \hat s_1$ and $\hat h_1$ in terms of $h_1, t_2, h_2, t_3, h_3$ and $v$, similarly to the one in Theorem~\ref{theorem:T-Palm}.
\end{remark}
%

\section{Handover process as a factor of a Markov chain: single-speed case} \label{sec-MC1}
Let us first assume that the speed is $v=1$. The analysis for the case $v\neq 1$ is similar, except for the fact that we use the empty half-ellipse condition, instead of the empty half-ball condition. The preceding analysis of the handover frequency and the Palm distribution of inter-handover time is governed only by the head point process $\Hcal$. Indeed, the radial bird particle process $\Hcal_c$, the collection of their intersection points and the handover point process $\Vcal$ are factors of $\Hcal$. The focus of this section is the representation of the triple made of inter-handover times and distances to handover performing stations, as a factor of a Markov chain. More precisely, consider the sequence $\left\{\left(H_n^l, T^r_n, H^r_n\right)\right\}_{n\in \Z}$ of triples of $(\R^+)^3$, where $H_n^l$ is the distance of the left head point of the $n$-th handover, $T_n^r$ is the time of the right head point of the $n$-th handover, where $T_n^r$ is  measured from the time of the left head point, and $H_n^r$ is the distance of the right head point of the $n$-th handover.
\begin{theorem}
The sequence $\left\{\left(H_n^l, T^r_n, H^r_n\right)\right\}_{n\in \Z}$ forms a homogeneous Markov chain on the state space $(\R^+)^3$. A representation of the transition kernel is given in (\ref{eq:tk-MC1}).
\label{theorem:markov}
\end{theorem}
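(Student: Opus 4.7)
The plan is to work under the Palm probability measure $\P^0_{\Vcal}$, with the convention that the left head of the reference $n$-th handover is placed at the time origin, and to establish that the conditional law of $X_{n+1}$ given $\sigma(X_k : k \leq n)$ depends only on $X_n$, via an explicit transition kernel. Given the state $X_n = (H_n^l, T_n^r, H_n^r)$, the positions of the two heads of the $n$-th handover are recovered as $L_n = (0, H_n^l)$ and $R_n = (T_n^r, H_n^r)$, and the corresponding handover point $(\hat s_n, \hat h_n)$ together with the empty half-ball $U^{\hat s_n}_{\hat h_n}$ are deterministic functions of $X_n$ via formulas~(\ref{eq:handover-time})--(\ref{eq:h}). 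By Corollary~\ref{corollary:third-pt}, the $(n+1)$-th handover is produced by the intersection of $R_n$'s radial bird with the bird of a unique head point $(t_3, h_3)$ satisfying $t_3 \geq T_n^r$, and the update rule is $H_{n+1}^l = H_n^r$, $T_{n+1}^r = t_3 - T_n^r$, $H_{n+1}^r = h_3$, so $X_{n+1}$ is a deterministic function of $X_n$ and $(t_3, h_3)$.

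To identify the transition kernel I would carry out an extended Campbell--Mecke computation, analogous to that in the proof of Theorem~\ref{theorem:T-Palm} but retaining the triple $(L_n, R_n, (t_3, h_3))$. Combined with the Slivnyak--Mecke formula and the void-probability computation, this yields the joint density of $(X_n, X_{n+1})$ under $\P^0_{\Vcal}$; reading off the conditional density produces a transition kernel of the schematic form
\begin{equation*}
K(x, dx') \;=\; 2\la\, \one_{\{H^{l\prime} = H^{r}\}} \, e^{-2\la \,|U^{\hat s'}_{\hat h'} \setminus \overline{U^{\hat s}_{\hat h}}|}\, J(x,x')\, dT^{r\prime}\, dH^{r\prime},
\end{equation*}
where $(\hat s, \hat h)$ and $(\hat s', \hat h')$ are computed from $x$ and $x'$ by~(\ref{eq:handover-time})--(\ref{eq:h}), and $J$ is the Jacobian of the change of variables $(t_3, h_3) \mapsto (T^{r\prime}, H^{r\prime})$. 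By the increasing wing property (Remark~\ref{remark:inc-arm} and Lemma~\ref{lemma:increasingS}), the extra empty region $U^{\hat s'}_{\hat h'} \setminus \overline{U^{\hat s}_{\hat h}}$ lies in the half-plane $\{t \geq T^{r}\}$, so the exponential factor is a function of $x$ and $x'$ alone.

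The Markov property itself will follow from a conditional independence argument. Under $\P^0_{\Vcal}$ and conditionally on $X_n$, Corollary~\ref{corollary:CCc} ensures that the restriction of $\Hcal$ to $(U^{\hat s_n}_{\hat h_n})^c$ is a homogeneous Poisson process of intensity $2\la\,dt\,dh$, and its restrictions to the disjoint half-planes $\{t \leq 0\}$ and $\{t \geq T_n^r\}$ are then independent. A backward version of the increasing wing argument shows that the past sequence $(X_k)_{k \leq n-1}$ is measurable with respect to $\Hcal\vert_{\{t \leq 0\}}$ together with $L_n$, while the forward argument shows that $X_{n+1}$ depends only on $X_n$ and $\Hcal\vert_{\{t \geq T_n^r\}}$, yielding $X_{n+1} \perp\!\!\!\perp (X_k)_{k \leq n-1} \mid X_n$. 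Homogeneity of the kernel is an immediate consequence of the time-stationarity of $\Hcal$. The main obstacle will be the rigorous treatment of heads lying in the middle strip $\{0 < t < T_n^r\} \setminus U^{\hat s_n}_{\hat h_n}$, which could a priori influence both past and future; a careful geometric argument, once again exploiting the increasing wing property together with Corollary~\ref{corollary:third-pt}, should show that such middle-strip heads cannot affect the determination of $X_{n+1}$ once $X_n$ is given, so that the independence argument goes through.
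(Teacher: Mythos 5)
Your construction is essentially the paper's: the same unexplored region $Q_{T_n^r}\setminus \overline{U^{\hat S_n}_{\hat H_n}}$, the same appeal to Corollary~\ref{corollary:third-pt} for the a.s.\ unique next head, the same update rule, and your schematic kernel is in fact the paper's kernel (\ref{eq:tk-MC1}) written in the coordinates $(t_3,h_3)$ rather than via the stopping time $\tau$ and the half-circles $D_t$ — the change of variables $(t,x)\mapsto (x, D_t(x))$ has Jacobian $\partial D_t(x)/\partial t$, which is exactly the density factor the paper carries, and your shift $(t_3,h_3)\mapsto(T^{r\prime},H^{r\prime})$ indeed has $J=1$, so the two expressions agree.

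The one step that does not survive scrutiny as stated is your conditional-independence decomposition into the half-planes $\{t\le 0\}$ and $\{t\ge T_n^r\}$. The past $(X_k)_{k\le n-1}$ is \emph{not} measurable with respect to $\Hcal\vert_{\{t\le 0\}}$ together with $L_n$: in your normalization the right head $R_n=(T_n^r,H_n^r)$ sits at positive abscissa, and the empty half-ball $U^{\hat S_{n-1}}_{\hat H_{n-1}}$ of the previous handover generally extends into $\{t>0\}$, so the exploration that produced the past already reaches into the strip $\{0<t<T_n^r\}$. The paper does not use a half-plane split at all; it phrases the conditional independence directly in terms of the sequential exploration by stopping sets, with the new information at step $n+1$ drawn only from $Q_{T_n^r}\setminus\overline{U^{\hat S_n}_{\hat H_n}}$, where the configuration is conditionally Poisson and independent of everything previously explored. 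The middle-strip obstacle you flag is then dispatched by a one-line geometric observation that you should make explicit rather than defer: any radial bird whose head lies to the left of $T_n^r$ and outside $\overline{U^{\hat S_n}_{\hat H_n}}$ intersects $C_{(T_n^r,H_n^r)}$ at an abscissa strictly less than $\hat S_n$ (its bird passes above level $\hat H_n$ at $t=\hat S_n$ by Observation~\ref{observation:in-out}, and each wing is monotone), so such heads can never determine $X_{n+1}$; this is why the unexplored region can be taken to start at $T_n^r$ and why no information from the strip is needed. With that substitution your argument closes, and the rest of your proposal coincides with the paper's proof.
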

\begin{proof}
Fix $n\in \Z$. Suppose the $n$-th handover is given by the intersection of the radial birds with head points $(0,H_n^l)$ and $(T_n^r,H_n^r)$, with $T^r_n\geq 0$. Then the intersection point $\left(\hat S_n,\hat H_n\right)$ of the radial birds at the two head points $(0,H_n^l)$ and $(T_n^r,H_n^r)$ is such that $\Hcal(U^{\hat S_n}_{\hat H_n})=0$. The possible head point responsible for the next handover must be on the right of $(T_n^r,H_n^r)$ and outside $\overline{U^{\hat S_n}_{\hat H_n}}$. Indeed, any radial bird with head point lying on the left of $(T^r_n,H^r_n)$ and outside $\overline{U^{\hat S_n}_{\hat H_n}}$ intersects $C_{(T^r_n,H^r_n)}$ before $\hat S_n$, so that this intersection point is not eligible to be considered as a future handover. We define the region $Q_{T_n^r}\setminus \overline{U^{\hat S_n}_{\hat H_n}}$ to be the {\em unexplored region}, where, $Q_s:=\{(t,h)\in \mathbb H^+\,\mbox{:}\, t\geq s\}$, is the quadrant on the right of the vertical line $t=s$ on $\mathbb H^+$. 

As explained in (\ref{eq:st}), one can construct a sequence of increasing open sets $\{S_t\}_{t\geq \hat S_n}$ in the unexplored region $Q_{T_n^r}\setminus \overline{U^{\hat S_n}_{\hat H_n}}$, where $S_t:=U^{t}_{h(t)}\setminus \overline{U^{\hat S_n}_{\hat H_n}}$ and $h(t):=\left((t-T_n^r)^2+(H_n^r)^2\right)^\half$. See Figure~\ref{figure:firstHoT} and Figure~\ref{figure:firstHoTx} for two different cases. Subsequently, by Corollary~\ref{corollary:third-pt}, there exists almost surely a unique head point, say $(T_u,H_u)$, with $T_u\geq T_n^r$, in the region $Q_{T_n^r}\setminus \overline{U^{\hat S_n}_{\hat H_n}}$, such that the next handover is given by the intersection $(\hat S_{n+1}, \hat H_{n+1})$ of the radial birds at $(T_n^r,H_n^r)$ and $(T_u,H_u)$, where $\hat H_{n+1}=h(\hat S_{n+1})$. Thus $S_{\hat S_{n+1}}:=U^{\hat S_{n+1}}_{\hat H_{n+1}}\setminus \overline{U^{\hat S_n}_{\hat H_n}}\subset Q_{T_n^r}\setminus \overline{U^{\hat S_n}_{\hat H_n}}$ is the stopping set until one finds the next head point $(T_u,H_u)$ at its boundary. We define the next triple of the chain as 
\begin{equation}
\left(H_{n+1}^l,T_{n+1}^r, H_{n+1}^r\right):= \left(H_n^r,T_u-T_n^r,H_u\right).
\label{eq:stepk+1}
\end{equation}
The Markov property of the sequence $\left\{\left(H_n^l, T^r_n, H^r_n\right)\right\}_{n\in \Z}$ follows from the fact that,  given the state $(H_n^l,T_n^r,H_n^r)$ at step $n$, the state of the process as defined in (\ref{eq:stepk+1}), in this unexplored region $Q_{T_n^r}\setminus \overline{U^{\hat S_n}_{\hat H_n}}$, the head point is conditionally independent of the past and Poisson. 

For the sake of completeness we give below a representation of the transition kernel of this Markov chain. Given $\left(H_{n}^l,T_{n}^r,H_{n}^r\right)=(h^l_n,t^r_n,h^r_n)$, let $C_{(0,h_n^l)}$ and $C_{(t_n^r,h_n^r)}$ be the radial birds with head points at $(0,h_n^l)$ and $(t_n^r,h_n^r)$. Also suppose the intersection point is $(\hat s_n,\hat h_n)$. For any $t\geq \hat s_n+\hat h_n$, there exists a unique upper half circle that has its top most point lying on $C_{(t^r_n,h^r_n)}$, its right most point at $(t,0)$, and is such that the half-circle passes through $(t^r_n,h^r_n)$.
Consider the family of upper half-circles $\{D_t\}_{t\geq \hat s_n+\hat h_n}$ satisfying the last properties and defined by the equation
\[
(u-\hat s(t))^2+h^2=\hat h(t)^2,
\]
in the $(u,h)$-coordinate system, where $(\hat s(t),\hat h(t))$ is the intersection point of the radial bird at $(t,0)$, with the radial bird $C_{(t^r_n,h^r_n)}$, see Figure~\ref{figure:D_t}.
\begin{figure}[ht!]
    \begin{tikzpicture}[scale=0.8, every node/.style={scale=0.75}]
    \pgftransformxscale{0.9}  
    \pgftransformyscale{0.9}    
    \draw[->] (-4, 0) -- (10, 0) node[right] {$t$};
    \draw[domain=-2:6, smooth] plot (\x, {(\x*\x-3.5*\x+1.75^2+2.15^2)^0.5});
    \draw[](2.5,-0.35) node{$(\hat s_n+\hat h_n,0)$};
    \draw[](1.75,2.15) node{$\bullet$};
    \draw[](2,3) node{$(t^r_n,h^r_n)$}; 
    \draw[]  (3,0) arc (0:180:2.5 and 2.5);
    \draw[violet]  (1,0) arc (-180:-360:3.5 and 3.5);
    \draw[blue]  (0.3,0) arc (-180:-360:2.35 and 2.35);
    \draw[red]  (-1.3,0) arc (-180:-360:2.3 and 2.3);
    \draw[->] (-1.55, 0) -- (-1.55, 4.3);
    \draw[] (-1.55, 4.6)node{$h$};
    \draw[] (0.5, 2.9)node{$(\hat s_n,\hat h_n)$};
     \draw[](-0.5,0.6) node{$U^{\hat s_n}_{\hat h_n}$};
     \draw[](-1.55,1.42)
     node{$\bullet$};
     \draw[](-2.8,1.2)
     node{$(0,h^l_n)$};
     \draw[domain=-4:2.8, smooth] plot (\x, {(\x*\x+3.1*\x+1.55^2+1.42^2)^0.5});
    \end{tikzpicture}
    \captionsetup{width=0.9\linewidth}
    \caption{The sequence of half-circles $\{D_t\}_{t\geq \hat s_n+\hat h_n}$. We just draw a few of them, for example in red, blue and violet, respectively, and each of which passes through $(t^r_n,h^r_n)$.}
    \label{figure:D_t}
\end{figure}

Let $U_t$ be the open upper half-ball contained below the half-circle $D_t$. Then we have
\begin{align}
\P\left((T_u,H_u)\notin  U_t\Big\vert\left(H_{n}^l, T_{n}^r, H_{n}^r\right)=(h_{n}^l, t_{n}^r, h_{n}^r) \right)= e^{-2\la\vert \overline{U_t}\setminus \overline{U^{\hat s_n}_{\hat h_n}}\vert}.
\end{align}
Given $(h^l_n,t^r_n,h^r_n)$, define the stopping time
\[
\tau_{(h^l_n,t^r_n,h^r_n)}:=\inf\left\{t\geq \hat s_n+\hat h_n: \Hcal\left(\overline{U_t}\setminus \overline{U^{\hat s_n}_{\hat h_n}}\right)>0\right\}.
\]
In short we write $\t_{(h^l_n,t^r_n,h^r_n)}\equiv \t$. By Corollary~\ref{corollary:third-pt}, $\t<\infty$, $\P$-almost surely. Hence there exists a unique head point at $(T_u,H_u)$ distributed on $D_{\t}\setminus \overline{U^{\hat s_n}_{\hat h_n}}$ with a certain law which we compute below. Before that, observe that, given $(h^l_n,t^r_n,h^r_n)$, the conditional probability density $f_\t$ of $\t$ is 
\begin{align}
f_{\t}(t)&=-\frac{\partial}{\partial t}\left[e^{-2\la\vert\overline{U_t}\setminus \overline{U^{\hat s_n}_{\hat h_n}}\vert}\right] 
=2\la e^{-2\la\vert\overline{U_t}\setminus \overline{U^{\hat s_n}_{\hat h_n}}\vert} \frac{\partial}{\partial t}\vert\overline{U_t}\setminus \overline{U^{\hat s_n}_{\hat h_n}}\vert.
\label{eq:cpdtau}
\end{align}
The quantity $\frac{\partial}{\partial t}\vert\overline{U_t}\setminus \overline{U^{\hat s_n}_{\hat h_n}}\vert$ is the rate at which the exploration region grows as a function of $t\geq \hat s_n+\hat h_n$. By the application of the increasing wing property, Remark~\ref{remark:inc-arm}, we have that $\frac{\partial}{\partial t}\vert\overline{U_t}\setminus \overline{U^{\hat s_n}_{\hat h_n}}\vert\neq 0$. 
\begin{figure}[ht!]
    \begin{tikzpicture}[scale=0.8, every node/.style={scale=0.75}]
    \pgftransformxscale{0.9}  
    \pgftransformyscale{0.9}    
    \draw[->] (-4, 0) -- (10, 0) node[right] {$t$};
    \draw[domain=-2:6, smooth] plot (\x, {(\x*\x-3.5*\x+1.75^2+2.15^2)^0.5});
    \draw[](2.9,-0.35) node{$\hat s_n+\hat h_n$};
    \draw[](1.75,2.15) node{$\bullet$};
    \draw[](2,3) node{$(t^r_n,h^r_n)$}; 
    \draw[]  (3,0) arc (0:180:2.5 and 2.5);
    \draw[]  (0.92,0) arc (-180:-360:3.15 and 3.15);
    \draw[](7.6,-0.35) node{$t+\Delta t$};
    \draw[blue]  (0.85,0) arc (-180:-360:3 and 3);
    \draw[](4,1) node{$U_t$};
    \draw[](7.7,1) node{$U_{t+\Delta t}$};
    \draw[](6.8,-0.35) node{$t$};
    \draw[](5.9,-0.35) node{$s'$};
    \draw[dashed] (5.9, 0) -- (5.9, 3.1);
    \draw[dashed] (5.2, 0) -- (5.2, 3.2);
    \draw[](5.2,-0.4) node{$s$};
    \draw[->] (-1.55, 0) -- (-1.55, 4.3);
    \draw[] (-1.55, 4.6)node{$h$};
    \draw[] (-1.55, -0.3)node{$0$};
    \draw[] (0.5, 2.9)node{$(\hat s_n,\hat h_n)$};
     \draw[](-0.5,0.6) node{$U^{\hat s_n}_{\hat h_n}$};
     \draw[](-1.55,1.42)
     node{$\bullet$};
     \draw[](-2.8,1.2)
     node{$(0,h^l_n)$};
     \draw[domain=-4:2.8, smooth] plot (\x, {(\x*\x+3.1*\x+1.55^2+1.42^2)^0.5});
     \draw[red](5.5,2.65) node{$\bullet$};
     \draw[red](5.5,3.4) node{$(u,h)$};
     \draw[red,domain=1.5:8.5, smooth] plot (\x, {(\x*\x-11*\x+5.5^2+2.65^2)^0.5});
    \end{tikzpicture}
    \captionsetup{width=0.9\linewidth}
    \caption{If $\t\in [t,t+\Delta t]$, the next head point $(u,h)$ lies in the region $\overline{U_{t+\Delta t}}\setminus  \overline{U_t}$.}
    \label{figure:delta-Ut}
\end{figure}

Note that the point $(T_u,H_u)$ is non-uniformly distributed on  $D_{\t}\setminus U^{\hat s_n}_{\hat h_n}$. Indeed, if $\t\in[t,t+\Delta t]$, for $t\geq \hat s_n+\hat h_n$ and an infinitesimal element $\Delta t$, the head point, say $(u,h)$, is uniformly distributed on the region $\left(\overline{U_{t+\Delta t}}\setminus \overline{U^{\hat s_n}_{\hat h_n}}\right)\setminus \left( \overline{U_t}\setminus \overline{U^{\hat s_n}_{\hat h_n}}\right)$ with the shape of a crescent. The region is nothing but $\overline{U_{t+\Delta t}}\setminus \overline{U_t}$, see Figure~\ref{figure:delta-Ut}. It is more likely for $(u,h)$ to lie on the thicker side of $\overline{U_{t+\Delta t}}\setminus \overline{U_t}$. Let $(\hat s,\hat h)$ and $(\hat s',\hat h')$ be the intersection points of $C_{(t_n^r,h^r_n)}$ with the radial birds at the right most points $(t,0)$ and $(t+\Delta t,0)$, respectively. These intersection points can be computed using (\ref{eq:handover-time}) and (\ref{eq:h}). The equation of the two half-circles $D_t$ and $D_{t+\Delta t}$ are given by  
\begin{equation}
(u-\hat s)^2+h^2=\hat h^2 \text{ and } (u-\hat s')^2+h^2=(\hat h')^2,
\label{eq:hcs}
\end{equation}
in $(u,h)$-coordinate system, where $(t,0)$ and $(t+\Delta t,0)$ are the right most point of the half-circles $D_t$ and $D_{t+\Delta t}$.

Let $X_t=(X_t^1,X_t^2)$ be a random variable uniformly distributed on the region $\overline{U_{t+\Delta t}}\setminus \overline{U_t}$, as a representative of the next head point $(T^r_{n+1}+t^r_n,H^r_{n+1})=(T_u,H_u)$, using (\ref{eq:stepk+1}). For $x\in [t^r_n,t+\Delta t]$, let $D_t(x)$ and $D_{t+\Delta t}(x)$ be the heights derived from the equation (\ref{eq:hcs}) of the half circles $D_t$ and $D_{t+\Delta t}$, respectively, expressed as 
\[
D_t(x)=\left(\hat h^2-(x-\hat s)^2\right)^\half \text{ and }D_{t+\Delta t}(x)=\left((\hat h')^2-(x-(\hat s'))^2\right)^\half.
\]
Let $s,s'\in [t^r_n,t+\Delta t]$ be such that $s<s'$. Then
\begin{align}
\P\left(X_t^1\in [s,s']\right) &=\int_s^{s'} \frac{1}{\vert \overline{U_{t+\Delta t}}\setminus \overline{U_t}\vert}\left(D_{t+\Delta t}(x)-D_t(x)\right)\, {\rm d}x \approx  \frac{\Delta t}{\vert \overline{U_{t+\Delta t}}\setminus \overline{U_t}\vert} \int_s^{s'} \frac{\partial}{\partial t}D_t(x)\, {\rm d}x. 
\label{eq:x-direction}
\end{align}
Taking the limit $\Delta t\to 0$ in (\ref{eq:x-direction}), we have $\lim_{\Delta t\to 0} \frac{\vert \overline{U_{t+\Delta t}}\setminus \overline{U_t}\vert}{\Delta t}=\frac{\partial}{\partial t}\vert\overline{U_t}\setminus \overline{U^{\hat s_n}_{\hat h_n}}\vert$. Since $\frac{\partial}{\partial t}\vert\overline{U_t}\setminus \overline{U^{\hat s_n}_{\hat h_n}}\vert \neq 0$, we get that the conditional probability density of $T^r_{n+1}+t^r_n$ at $x \in [t^r_n,t]$ is equal to $\left(\frac{\partial}{\partial t}\vert\overline{U_t}\setminus \overline{U^{\hat s_n}_{\hat h_n}}\vert\right)^{-1} \frac{\partial}{\partial t}D_t(x)$. 

For all $t\geq t^r_n$, $\Acal\subset [0,t-t^r_n]$ and $\Bcal\subset \R^+$, using (\ref{eq:x-direction}), (\ref{eq:stepk+1}) and the underlying time-stationarity, we hence have
\begin{align}
\lefteqn{\hspace{-0.9in}\P\left(T^r_{n+1}\in \Acal, H^r_{n+1}\in \Bcal \Big\vert\left(H_{n}^l, T_{n}^r, H_{n}^r\right)=(h_{n}^l, t_{n}^r, h_{n}^r), \t=t \right)}\nn\\
&= \left(\frac{\partial}{\partial t}\vert\overline{U_t}\setminus \overline{U^{\hat s_n}_{\hat h_n}}\vert\right)^{-1} \int_{\Acal+t^r_n}\frac{\partial D_t(x)}{\partial t}\one_{\Bcal}(D_t(x))\, {\rm d}x.
\end{align}
Here is then the announced representation of the transition kernel. For all $t\geq t^r_n$, $\Acal\subset [0,t-t^r_n]$ and $\Bcal, \Ccal\subset \R^+$, 
\begin{align}
\lefteqn{\P\left(T^r_{n+1}\in \Acal, H^r_{n+1}\in \Bcal, H^l_{n+1}\in \Ccal\Big\vert\left(H_{n}^l, T_{n}^r, H_{n}^r\right)=(h_{n}^l, t_{n}^r, h_{n}^r) \right)}\nn\\
&= \delta_{h^r_n}(\Ccal)\times \int_{\hat s_n+\hat h_n}^\infty \left(\frac{\partial}{\partial t}\vert\overline{U_t}\setminus \overline{U^{\hat s_n}_{\hat h_n}}\vert\right)^{-1} \left( \int_{\Acal+t^r_n}\frac{\partial D_t(x)}{\partial t}\one_{\Bcal}(D_t(x))\,{\rm d}x\right)f_{\t}(t)\, {\rm d}t\nn\\
&= \delta_{h^r_n}(\Ccal)\times \int_{\hat s_n+\hat h_n}^\infty  \left( \int_{\Acal+t^r_n}\frac{\partial D_t(x)}{\partial t}\one_{\Bcal}(D_t(x))\,{\rm d}x\right)2\la e^{-2\la\vert\overline{U_t}\setminus \overline{U^{\hat s_n}_{\hat h_n}}\vert}\,{\rm d}t,
\label{eq:tk-MC1}
\end{align}
where $f_\t$ is the conditional probability density of $\t$, given $(h^l_n,t^r_n,h^r_n)$, as derived in (\ref{eq:cpdtau}). This completes the proof of Theorem~\ref{theorem:markov}.
\end{proof}
\begin{remark}
The sojourn time of the Markov chain $\left\{\left(H_n^l, T^r_n, H^r_n\right)\right\}_{n\in \Z}$ is nothing but the inter-step time of the Markov chain, as depicted in Figure~\ref{figure:BPP-MPN}. This Markov process enables us to explore structural features of the handover process, which we leave for future research.
\end{remark}
\begin{remark}[Structural results about the Markov chain]
One can think of the successive determination of handover epochs as an exploration of sequence of head points with increasing time coordinates. While doing so, at each step of the exploration, we encounter a finite region along with the already explored region on $\mathbb H^+$, which is empty of head points.  
\begin{figure}[ht!]
    \begin{tikzpicture}[line width=0.6pt,scale=0.8, every node/.style={scale=0.75}]
    \pgftransformxscale{1}  
    \pgftransformyscale{1}    
    \draw[->] (-4, 0) -- (8, 0) node[right] {$t$};
    \draw[blue,->] (-1.55, 0) -- (-1.55, 4.3)node[above]{$L_{t^l}$};
    \draw[] (0.5, 2.9)node{$(\hat s,\hat h)$};
     \draw[](-0.5,0.6) node{$U^{\hat s}_{\hat h}$};
     \draw[](-1.55,1.42)
     node{$\bullet$};
     \draw[](-2.8,1.2)
     node{$(t^l,h^l)$};
     \draw[domain=-4:2.8, smooth] plot (\x, {(\x*\x+3.1*\x+1.55^2+1.42^2)^0.5});
    \draw[domain=-2:5, smooth] plot (\x, {(\x*\x-3.5*\x+1.75^2+2.15^2)^0.5});
    \draw[](1.75,2.15) node{$\bullet$};
    \draw[](1.9,1.5) node{$(t^r,h^r)$}; 
    \draw[](4.85,0.7) node{$\bullet$};
    \draw[](5.6,0.5) node{$(t^u,h^u)$};
    \draw[red,domain=1:7, smooth] plot (\x, {(\x*\x-2*4.85*\x+4.85^2+0.7^2)^0.5});
    \draw[] (2.65, 2.9)node{$(\hat s',\hat h')$};
    \draw[blue,->] (1.75, 2.15) -- (1.75, 4.3) node[above]{$L_{t^r}$};
    \draw[blue,dashed] (1.75, 2.15) -- (1.75,0);
    \draw[]  (3,0) arc (0:180:2.5 and 2.5);
    \draw[blue]  (3,0) arc (0:60:2.5 and 2.5);
    \draw[]  (4.96,0) arc (0:180:2.34 and 2.34);
    \draw[blue]  (4.96,0) arc (0:20:2.34 and 2.34);
    \draw[blue,->] (4.85, 0.7) -- (4.85, 4.3)node[above]{$L_{t^u}$};
    \draw[blue,dashed] (4.85, 0.7) -- (4.85,0);
    \draw[](3.5,0.6) node{$U^{\hat s'}_{\hat h'}$};
    \end{tikzpicture}
    \captionsetup{width=0.9\linewidth}
    \caption{A realization of two consecutive handovers, where $(t^u,h^u)$ being very close to the time axis, implies $Q_{t^u}\setminus \overline{U^{\hat s'}_{\hat h'}}$ approximately equal to $Q_{t^u}$.}
    \label{figure:petite-set}
\end{figure}

Suppose $(t^l,h^l), (t^r,h^r)\in \mathbb H^+$ are two head points such that $t^l\leq t^r$ and they are responsible for a handover given by an intersection of the corresponding radial birds at $(\hat s,\hat h)$. The upcoming head point must lie in the unexplored region $Q_{t^r}\setminus \overline{U^{\hat s}_{\hat h}}$. Suppose the upcoming head point $(t^u,h^u)\in Q_{t^r}\setminus \overline{U^{\hat s}_{\hat h}}$ is very close to the time axis and the corresponding handover is given by the intersection point $(\hat s',\hat h')$ of the radial birds at $(t^r,h^r), (t^u,h^u)$.

Then the next unexplored region turns out to be $Q_{t^u}\setminus \overline{U^{\hat s'}_{\hat h'}}$. This region being approximately equal to $Q_{t^u}$, and one can think of the exploration starting fresh from the head point $(t^u,h^u)$ onward, leading to emergence of a renewal structure within this Markovian exploration process. 

This suggests that one can devise a coupling argument using a ``petite set'' or ``small set'' described in~\cite[Theorem~5.2.1]{Meyn-Tweedie}, to establish the renewal property of the Markov chain and also establish various structural results about it. The renewal structure of continuous space Markov chain was first observed in~\cite{Nummelin} and~\cite{Athreya-Ney} independently. This property can be used to establish, for example, the central limit theorem~\cite{Jones-CLT}, the functional central limit theorem~\cite{Dedecker-Rio-FCLT},~\cite{JOS-FCLT}, the law of large numbers~\cite{Jensen-Rahbek-LLN}, large deviations~\cite{Fortelle-LDP-MC},~\cite{Touchette-LDP} etc. The theory extends beyond Markov chains to growth dynamics in hidden Markov models~\cite{duchemin-HMC} and to Markov chain Monte Carlo algorithms~\cite{Andrieu-Moulines-MCMC},~\cite{Roberts-Rosenthal-MCMC}. We postpone this line of work for future research.
\label{remark:petit_set}
\end{remark}
%


\section{Handover frequency: two-speed case}\label{sec-multi_speed}
\subsection{Two-speed case} 
\label{subsection:two_speed}
As already explained, this part is motivated by communication networks comprising of, for example, low earth orbit (LEO) satellites  along with medium earth orbit (MEO) satellites, or LEO satellites with two different altitudes. The LEO and MEO satellites move at different speeds depending on their altitude. Naturally, as a first step towards understanding the handover phenomenon, we would like to consider a model on Euclidean plane with two different types of mobile stations, moving at speed $v_1$ and $v_2$, respectively, as described in Section~\ref{sec-Results}. 

Suppose the initial locations are independent homogeneous Poisson point processes $\tilde \Phi^1$ and $\tilde \Phi^2$ in polar coordinates on $\R^+\times (-\pi,\pi]$, with $\tilde{\Phi}^l=\sum_{i\in \N}\delta_{(R^{(l)}_i, \a^{(l)}_i)}$, for two different speeds $v_l$, with $l=1,2$. The intensity measures $\mu_1, \mu_2$ have for density 
\begin{equation}
{\rm d}\mu_1:= 2\pi r \la_1\,{\rm d}r  \otimes \frac{1}{2\pi}{\rm d}\a \text{ and } {\rm d}\mu_2:= 2\pi r \la_2\,{\rm d}r  \otimes \frac{1}{2\pi}{\rm d}\a,
\label{eq:mu_r12}
\end{equation}
respectively, where $\la_1$ and $\la_2$ are the intensities. In short we shall write $\tilde \Phi:=\tilde \Phi^1+\tilde \Phi^2$ and $\la:= \la_1+\la_2$. Essentially the model inherits all the properties of the single-speed model, except that the radial bird particle process consists of two types of particles. Let us denote the two types of radial bird particles by {\em type 1} and {\em type 2}, corresponding to the two speeds $v_1, v_2$, respectively. A key new fact is that the birds of different types can intersect each other twice or never almost surely, unlike the unique intersection property~\ref{Obs4} of the radial birds in the single-speed case. This fact will be more clear later in Lemma~\ref{lemma:2int}, Lemma~\ref{lemma:2int-rev} and Remark~\ref{remark:as20int}.

\subsubsection{Head point process and radial bird particle process}\label{subsubsection:HPPRBPP2} For any speed $v\in\{v_1, v_2\}$, the radial bird particle corresponding to an atom $(R,\a)$ of $\tilde{\Phi}=\tilde{\Phi}^1+\tilde{\Phi}^2$ is given by the {\em distance function} 
\begin{align}
f_v((R,\a), t)&:=\left(R^2+2vtR\cos \a +v^2t^2\right)^{\half}=\left(v^2\left(t+\frac{R}{v}\cos \a\right)^2+(R|\sin \a|)^2\right)^{\half},
\label{eq:v-bird}
\end{align}
whereas the head of the radial bird is given by $\left(-\frac{R}{v}\cos \a, R|\sin \a|\right)$, see Remark~\ref{remark:r1}.  For an atom $(R^{(l)}_i, \a^{(l)}_i)$ of $\tilde{\Phi}^l$, the head point is at
\[
\left(-\frac{R^{(l)}_i}{v_l}\cos \a^{(l)}_i, R^{(l)}_i|\sin \a^{(l)}_i|\right):= \left(T^{(l)}_i, H^{(l)}_i\right).
\]
Then the head point processes on $\mathbb H^+$ are $\Hcal^{l}:=\sum_{i\in \N}\delta_{\left(T^{(l)}_i, H^{(l)}_i\right)}$, for $l=1,2$. We have a result similar to Lemma~\ref{lemma:tips_density}: \label{notation:HPPl}
\begin{lemma}
For each $l=1, 2$, $\Hcal^{l}$ is a Poisson point process on $\mathbb{H}^+$ with intensity measure $\nu_l$, where ${\rm d}\nu_l:=v_l {\rm d}t \otimes 2\la_l {\rm d}h $, and $\Hcal^{1}$ and $\Hcal^{2}$ are independent.
\label{lemma:tips_density2}
\end{lemma}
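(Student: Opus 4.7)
The plan is to reduce this to the single-speed mapping lemma (Lemma~\ref{lemma:tips_density}) applied type by type, and then use independence of the underlying marked Poisson point processes. First I would fix $l\in\{1,2\}$ and work with the marked Poisson point process $\tilde{\Phi}^l$ on $\R^+\times[-\pi,\pi)$, whose intensity measure $\mu_l$ has density ${\rm d}\mu_l = 2\pi r\lambda_l\,{\rm d}r\otimes\frac{1}{2\pi}{\rm d}\alpha$, as given in (\ref{eq:mu_r12}).

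Next I would split $\tilde{\Phi}^l = \tilde{\Phi}^{l,+} + \tilde{\Phi}^{l,-}$ into its restrictions to $\R^+\times[0,\pi)$ and $\R^+\times[-\pi,0)$ and introduce, in the same spirit as Remark~\ref{remark:r1}, the two bijections
\[
g_1^{(l)}(R,\alpha) := \left(-\tfrac{R}{v_l}\cos\alpha,\, R\sin\alpha\right),\qquad
g_2^{(l)}(R,\alpha) := \left(-\tfrac{R}{v_l}\cos\alpha,\, -R\sin\alpha\right),
\]
so that the head $(T^{(l)}_i,H^{(l)}_i)$ attached to each atom $(R^{(l)}_i,\alpha^{(l)}_i)$ of $\tilde{\Phi}^l$ is the image under $g_1^{(l)}$ or $g_2^{(l)}$ according to the sign of $\alpha^{(l)}_i$. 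The Jacobian of each $g_j^{(l)}$ is $R/v_l$, and repeating the change-of-variable calculation in (\ref{eq:map0})--(\ref{eq:map1}) verbatim (with $v$ replaced by $v_l$ and $\lambda$ by $\lambda_l$) shows that the image measures satisfy $\mu_l\circ (g_1^{(l)})^{-1}\oplus\mu_l\circ (g_2^{(l)})^{-1} = \nu_l$ with ${\rm d}\nu_l = v_l\,{\rm d}t\otimes 2\lambda_l\,{\rm d}h$. By the mapping theorem for Poisson point processes, $\Hcal^l = \tilde{\Phi}^{l,+}\circ (g_1^{(l)})^{-1} + \tilde{\Phi}^{l,-}\circ (g_2^{(l)})^{-1}$ is hence Poisson on $\mathbb{H}^+$ with intensity measure $\nu_l$.

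Finally, independence of $\Hcal^1$ and $\Hcal^2$ is inherited directly from the independence of $\tilde{\Phi}^1$ and $\tilde{\Phi}^2$, since for each $l$ the head point process $\Hcal^l$ is a deterministic factor (through the measurable maps $g_1^{(l)},g_2^{(l)}$) of $\tilde{\Phi}^l$ alone. There is no real obstacle here, as this is a direct recycling of the argument of Lemma~\ref{lemma:tips_density} carried out in parallel for the two types; the only point worth being explicit about is the speed-dependent scaling of the time coordinate, already flagged in Remark~\ref{remark:onetov}, which is what produces the factor $v_l$ in the density of $\nu_l$.
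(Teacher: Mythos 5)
Your proposal is correct and follows essentially the same route as the paper, which simply invokes the construction of Lemma~\ref{lemma:tips_density} (with the speed scaling of Remark~\ref{remark:onetov}) separately for each type and deduces independence of $\Hcal^1$ and $\Hcal^2$ from that of $\tilde{\Phi}^1$ and $\tilde{\Phi}^2$. You have merely written out explicitly the details that the paper leaves implicit.
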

\begin{proof}[Proof Lemma~\ref{lemma:tips_density2}]
The proof follows from constructing $\Hcal^{1}$ and $\Hcal^{2}$ separately from the Poisson point processes $\tilde{\Phi}^{1}$ and $\tilde{\Phi}^{2}$, respectively, in the same way as we have constructed $\Hcal$ from $\tilde{\Phi}$ in Lemma~\ref{lemma:tips_density} for the single-speed case. The  point processes $\Hcal^{1}$ and $\Hcal^{2}$   are independent because $\tilde{\Phi}^{1}$ and $\tilde{\Phi}^{2}$ are independent.
\end{proof}
We write the law of the head point process $\Hcal:=\Hcal^1+\Hcal^2$, as $\P_\Hcal:=\P_{\Hcal^1}\otimes \P_{\Hcal^2}$, which is the product law of $\P_{\Hcal^1}$ and $\P_{\Hcal^2}$, since $\Hcal^1$ and $\Hcal^2$ are independent. We also use the notation $\E_\Hcal$, $\E_{\Hcal^l}$ to denote the expectation under $\P_\Hcal$ and $\P_{\Hcal^l}$, respectively, for $l=1,2$. Observe that, in (\ref{eq:v-bird}), the speed factor only scales the time coordinate of the space $\mathbb{H}^+$, reflected only on the intensity measure $\nu_l$, for $l=1,2$. In this sense the speed $v\in \{v_1,v_2\}$ can be considered as the shape parameter of the radial birds, which are defined as 
\begin{equation}
C^{l}:=\left\{\left(t, f_{v_l}\big((R^{(l)},\a^{(l)}), t\big)\right):t\in \R\right\},
\label{eq:cb2}
\end{equation}
where $C^l$ corresponds to the atom $(R^{(l)},\a^{(l)})$ of $\tilde{\Phi}^l$, for $l=1,2$. The radial bird particle process is defined as
\[
\Pcal_{c}(v_1,v_2):=\sum_{l=1,2}\sum_{i\in \N}\delta_{C^l_i}. 
\]
We also denote by $C^l_i$ the radial bird with head point $(T^l_i,H^l_i)\in \Hcal^l$.\label{notation:RBCSl}\! In the following, we have the same properties for $\Pcal_{c}(v_1,v_2)$ as those explored in Lemma~\ref{lemma:bird_time_stationary} for the single-speed case.
\begin{lemma} The following holds true the radial bird particle process $\Pcal_c(v_1,v_2)$:
\begin{enumerate}[(i)]
\item \label{RBPP12}$\Pcal_c(v_1,v_2)$ is a Poisson particle process on the
space $\left(\Ccal(\mathbb{H}^+), \Bcal(\Ccal(\mathbb{H}^+))\right)$,
with support on the set of closed sets of the form (\ref{eq:cb2}).
\item \label{RBPP22} Both $\Pcal_c(v_1,v_2)$ and $\cup_{l=1,2}\cup_i C^l_i$ are stationary along the time coordinate.
\end{enumerate}
\label{lemma:bird_time_stationary2}
\end{lemma}
In view of Remark~\ref{remark:bird-to-mhead}, we can equivalently write the radial bird particle process $\Pcal_c(v_1,v_2)$ in terms of marked head point process $\Hcal_c(v_1, v_2):=\Hcal_c(v_1)+\Hcal_c(v_2)$, where
\[
\Hcal_c(v_l):=\sum_{i\in\N}\delta_{\left(\left(T^{(l)}_i, H^{(l)}_i\right), C^l_i\right)},
\]
for $l=1,2$. The time-stationarity of $\Hcal_c(v_1, v_2)$ is inherited from the same property of the head point process $\Hcal=\Hcal^1+\Hcal^2$, as seen in part~(\ref{RBPP22}) of Lemma~\ref{lemma:bird_time_stationary2}, for the two-speed case. 

\subsubsection{Joint lower envelope process}\label{subsubsection:LE2}The {\em joint lower envelope process} is a stochastic process defined as $\{L_{v_1, v_2}(t)\}_{t\in \R}$, where 
\[
L_{v_1, v_2}(t):= \min_{l=1,2}\inf_{i\in\N} f_{v_l}\left(\left(R^{(l)}_i,\a^{(l)}_i\right), t\right).
\]
The {\em joint lower envelope} of all the radial birds of the two types together, is a random closed subset of $\mathbb H^+$ depicted in Figure~\ref{figure:BPP-MPN2}, defined as \label{notation:Le2} 
\begin{equation}
\Lcal_e(v_1,v_2):=\left\{(t, L_{v_1, v_2}(t)): t\in \R\right\}.
\label{eq:LE2}
\end{equation}
Let us denote the open region above the lower envelope $\Lcal_e(v_1,v_2)$ as \label{notation:Le2+}
\begin{equation}
\Lcal^+_e(v_1,v_2):=\{(t,h): h>L_{v_1,v_2}(t)\}. \label{eq:Lcal2+}
\end{equation}
The following result about the time-stationarity of the joint lower envelope process can be proved in the same way as in Lemma~\ref{lemma:L_stationary}.
\begin{lemma}
The joint lower envelope process $\{(t, L_{v_1, v_2}(t))\}_{t\in \R}$ is stationary with respect to time.
\label{lemma:L_stationary2}
\end{lemma}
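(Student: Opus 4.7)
The plan is to reduce the statement to the time-stationarity of the underlying head point processes, exactly as in the single-speed case (Lemma~\ref{lemma:L_stationary}), but now using both $\Hcal^1$ and $\Hcal^2$ simultaneously. The key observation is that $L_{v_1,v_2}(t)$ is a measurable factor of the pair $(\Hcal^1,\Hcal^2)$: indeed, by the non-unit-speed extension of Property~\ref{Obs2} (cf.\ equation~(\ref{eq:two-point-v}) and Remark~\ref{remark:r1}), each radial bird $C^l_i$ of type $l\in\{1,2\}$ is uniquely determined by its head $(T^{(l)}_i,H^{(l)}_i)$ together with the shape parameter $v_l$, so that
\begin{equation}
L_{v_1,v_2}(t)\;=\;\min_{l=1,2}\;\inf_{(T,H)\in\Hcal^l}\,\bigl(v_l^2(t-T)^2+H^2\bigr)^{1/2}.\nn
\end{equation}

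First, I would recall from Lemma~\ref{lemma:tips_density2} that each $\Hcal^l$ is a homogeneous Poisson point process on $\mathbb H^+$ with intensity measure ${\rm d}\nu_l=v_l\,{\rm d}t\otimes 2\la_l\,{\rm d}h$, which is invariant under the time shift $\theta_s\colon(t,h)\mapsto(t-s,h)$. Hence the joint law of $(\Hcal^1,\Hcal^2)$ is invariant under the diagonal action of $\theta_s$. Next, I would note that the defining expression above satisfies the covariance relation $L_{v_1,v_2}(t+s)=L_{v_1,v_2}(t)\circ\theta_s$, since shifting every head $(T^{(l)}_i,H^{(l)}_i)$ by $(-s,0)$ translates the corresponding radial birds by $(-s,0)$ as well (Property~\ref{Obs3} extended to speed $v\neq 1$). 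Combining these two facts yields, for any $s\in\R$ and any finite collection of times $t_1,\dots,t_k$,
\begin{equation}
\bigl(L_{v_1,v_2}(t_1+s),\dots,L_{v_1,v_2}(t_k+s)\bigr)\stackrel{d}{=}\bigl(L_{v_1,v_2}(t_1),\dots,L_{v_1,v_2}(t_k)\bigr),\nn
\end{equation}
which is the desired time-stationarity.

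An alternative and even shorter route is to invoke Lemma~\ref{lemma:bird_time_stationary2}~(\ref{RBPP22}) directly: the union $\bigcup_{l=1,2}\bigcup_i C^l_i$ is stationary along the time axis as a random closed set, and $\Lcal_e(v_1,v_2)$ is the lower boundary of this union, a deterministic measurable functional commuting with horizontal shifts. Since stationarity of a random closed set is preserved under shift-commuting measurable factors, $\Lcal_e(v_1,v_2)$ inherits the stationarity, and with it the process $\{(t,L_{v_1,v_2}(t))\}_{t\in\R}$. There is no real obstacle here; the only point requiring a brief justification is the shift-covariance of the distance function in the non-unit speed case, which is already encoded in (\ref{eq:two-point-v}).
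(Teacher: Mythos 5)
Your proof is correct and follows essentially the same route as the paper: the paper simply notes that the lemma is proved exactly as in the single-speed case (Lemma~\ref{lemma:L_stationary}), namely that the lower envelope is a shift-covariant factor of the time-stationary head point process(es), which is precisely your argument (in both of its variants). Your write-up just makes explicit the details the paper leaves implicit.
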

In the following, we describe the characterization for any point of a radial bird to be on the lower envelope, which is analogous to Lemma~\ref{lem:semicircle}. For any point $(s, u)\in \mathbb H^+$ and for any speed $v\in\{v_1, v_2\}$, the curve with respect to the $(t,h)$-coordinates associated with the equation 
\begin{equation}
v^2(t-s)^2+h^2= u^2
\label{eq:ellipse}
\end{equation}
is an ellipse containing $(s,u)$. Let $E^{s,v}_{u}$ be the open region enclosed by the upper half-ellipse given by (\ref{eq:ellipse}) and above the time axis. Let $\la= \la_1+\la_2$.
\begin{lemma}[Half-ellipse condition] For any point $(s,u)\in \cup_{l=1,2}\cup_{i\in \N}C^l_i$, the following holds true: 
\begin{enumerate}[(i)]
\item \label{semiellipse_1} The point $(s,u) \in \Lcal_e(v_1, v_2)$ if and only if $ \Hcal^{l}(E^{s, v_l}_{u})= 0$, for $l=1,2$.
\item \label{semiellipse_2} The probability of such an event is
\begin{equation}
\P\left((s,u) \in \Lcal_e(v_1, v_2)\right)= e^{-\la \pi u^2}. 
\end{equation}
\end{enumerate}
\label{lem:semiellipse2}
\end{lemma}
\begin{proof}[Proof of Lemma~\ref{lem:semiellipse2}]Part~(\ref{semiellipse_1}) can be proved using Lemma~\ref{lem:semicircle} and Discussion~\ref{discussion:ellipse_v} for the point processes $\Hcal^{1}$ and $\Hcal^{2}$ separately. Given two speeds $v_1,v_2$, there are two upper half ellipses $E^{s,v_1}_u $ and $ E^{s,v_2}_u$ which should be empty of points from $\Hcal^1 $ and $ \Hcal^2$, respectively. 
Part~(\ref{semiellipse_2}) can be proved using the independence of $\Hcal^{1}$ and $\Hcal^{2}$ as follows:
\begin{align}
\P\left((s,u) \in \Lcal_e(v_1, v_2)\right) &= \P_{\Hcal^1}\left( \Hcal^{1}(E^{s,v_1}_{u})=0\right) \P_{\Hcal^2}\left( \Hcal^{2}(E^{s, v_2}_{u})=0\right)= e^{-2\la_1 v_1 \frac{\pi u^2}{2v_1}} e^{-2\la_2 v_2 \frac{\pi u^2}{2v_2}}=e^{-\la \pi u^2}.\nn
\end{align}
where $\la_1+\la_2=\la$.
\end{proof}
Before moving to the construction of the handover point process and deriving its intensity for the two-speed case, we present some geometric results, determining the condition on the location of heads for which there are two intersections between the two birds. For a pair of birds of different types, we will refer to the following results, Lemma~\ref{lemma:2int} and Lemma~\ref{lemma:2int-rev}, as the {\em intersection criterion}. For this we suppose, without loss of generality, that $v_1>v_2$.
\subsubsection{Intersection criterion}\label{subsubsection:IC}
Suppose there are tow radial birds $C_1$ and $C_2$ of two different types, with heads at $(t_1,h_1)$ and $(t_2,h_2)$, respectively. Unlike in the single-speed case, a pair of radial birds with different speeds  intersect each other twice or never almost surely. There exists a pair $(r_1, \a_1)$ and $(r_2, \a_2)$ of points of the initial marked Poisson point process $\tilde{\Phi}$ of the stations, such that 
\begin{equation}
(t_1, h_1)=\left(-\frac{r_1}{v_1}\cos \a_1, r_1|\sin\a_1|\right) \text{ and } (t_2, h_2)=\left(-\frac{r_2}{v_2}\cos \a_2, r_2|\sin\a_2|\right).
\label{eq:t1t2c}
\end{equation}
For finding the time-coordinate of the intersections, we solve the quadratic equation
\[
v_1^2s^2+2sv_1r_1\cos\a_1+r_1^2= v_2^2s^2+2sv_2r_2\cos\a_2+r_2^2,
\]
that is,
\begin{align}
\hspace{-0.4in} (v_1^2-v_2^2)s^2-2s(v_1^2t_1-v_2^2t_2)+r_1^2-r_2^2&=0.
\label{eq:handover-time-v}
\end{align}
The time coordinates of intersections are given by the solutions to the quadratic equation (\ref{eq:handover-time-v}), which are
\begin{equation}
\begin{aligned}
\hat s_1\equiv \hat s_1(t_1, t_2, h_1, h_2):= \frac{1}{v_1^2-v_2^2}\left[v_1^2t_1-v_2^2t_2-\sqrt{\Delta}\right],\\
 \hat s_2\equiv  \hat s_2(t_1, t_2, h_1, h_2):= \frac{1}{v_1^2-v_2^2}\left[v_1^2t_1-v_2^2t_2+\sqrt{\Delta}\right],
\end{aligned}
\label{eq:HOs}
\end{equation}
where
\begin{equation}
\Delta := \left(v_1^2t_1-v_2^2t_2\right)^2\!- \!(r_1^2-r_2^2)(v_1^2-v_2^2) = v_1^2v_2^2\left(t_1-t_2\right)^2\!-\! (h_1^2-h_2^2)(v_1^2-v_2^2).
\label{eq:Delta}
\end{equation}
The last simplification is due to the fact that, the head point $(t_i,h_i)$ corresponding to speed $v_i$ satisfies relation  $r_i^2=v_i^2t_i^2+h_i^2$, for $i=1, 2$, which can be derived from (\ref{eq:t1t2c}). In principle, the determinant $\Delta$ is a function of $|t_1-t_2|, h_1,h_2$ and we consider $\Delta\equiv \Delta(|t_1-t_2|, h_1, h_2)$. Let $t^*$ be the non-negative solution of $\Delta(t,h_1,h_2)=0$ in the variable $t$, as
\begin{equation}
t^*\equiv t^*(h_1,h_2)=\frac{1}{v_1v_2}(h_1^2-h_2^2)^\half (v_1^2-v_2^2)^\half,
\label{eq:tstar}
\end{equation}
given $h_1,h_2\geq 0$. We have the following results about describing the intersection criterion for different values of $\Delta$ depending on the variables $t_1,h_1,t_2,h_2$, and the proof of which is given in Appendix~\ref{subsection:Lemma-2int}.    
\begin{lemma}[Intersection criterion]
Let  $t_1\in \R $ and $ h_1, h_2\in \R^+$ be such that there is a bird at $(t_1,h_1)$ of speed $v_1$. Then there exist two sets $D_\ell\equiv D_\ell(t_1,h_1,h_2), D_r\equiv D_r(t_1,h_1,h_2)\subsetneq \R$, such that, for all $t_2\in (D_\ell \cup D_r)^\circ$ or $\partial(D_\ell \cup D_r)$ or $(D_\ell \cup D_r)^c$, the radial bird of speed $v_2$ and head at $(t_2,h_2)$ intersects the first bird, twice, once or never, respectively. The sets $D_\ell, D_r$ are given by,
\begin{equation}
\begin{aligned}
D_\ell&=\begin{cases}
           (-\infty,t_1], & \text{ if } h_2\geq  h_1,\\
           (-\infty, t_1-t^*], &\text{ if } h_2<h_1,
    \end{cases}
&\hspace{0.3in}
D_r&=\begin{cases}
           [t_1,\infty), & \text{ if } h_2\geq h_1,\\
           [t_1+t^*,\infty), &\text{ if } h_2<h_1,
    \end{cases}
\end{aligned}
\label{eq:DlDr}
\end{equation}
where $t^*$ as defined in (\ref{eq:tstar}). 
\label{lemma:2int}
\end{lemma}
\begin{remark}
Given $t_1\in \R $ and $ h_1, h_2\in \R^+$, suppose there is a bird of speed $v_1$ at $(t_1,h_1)$. In the case $h_1\leq h_2$, the bird of speed $v_2$ at $(t_2,h_2)$ intersects the first one exactly once if $t_2=t_1$ and $h_2=h_1$. In the other case, namely $h_1> h_2$, the bird at $(t_2,h_2)$ with speed $v_2$ intersects the first one exactly once if $t_2=t_1-t^*$ or $t_1+t^*$. Hence there exist almost surely two intersections or no intersection between birds of different types. 
\label{remark:as20int}
\end{remark}
\begin{remark}[Symmetry]
    Suppose two radial birds with heads located at $(t_1,h_1)$ and $(t_2, h_2)$, of two different types intersect. Then, $t_2\in D_\ell(t_1,h_1,h_2)$ if and only if $-t_2\in D_r(-t_1,h_1,h_2)$.
\end{remark}
We also have an {\em intersection criterion} for the dual scenario as follows, which can be proved similarly.
\begin{lemma}[Intersection criterion]
Let  $t_2\in \R $ and $ h_1, h_2\in \R^+$ be such that there is a bird at $(t_2,h_2)$ of speed $v_2$. Then there exist two sets $D'_\ell\equiv D'_\ell(t_1,h_1,h_2), D'_r\equiv D'_r(t_1,h_1,h_2)\subsetneq \R$, such that, for all $t_1\in (D'_\ell \cup D'_r)^\circ$ or $\partial(D'_\ell \cup D'_r)$ or $(D'_\ell \cup D'_r)^c$, the radial bird of speed $v_1$ and head at $(t_1,h_1)$ intersects the first bird, twice, once or never, respectively. The sets $D'_\ell, D'_r$ are given by,
\begin{equation}
\begin{aligned}
D'_\ell&=\begin{cases}
           (-\infty,t_2], & \text{ if } h_1\leq  h_2,\\
           (-\infty, t_2-t^*], &\text{ if } h_1>h_2,
    \end{cases}
&\hspace{0.3in}
D'_r&=\begin{cases}
           [t_2,\infty), & \text{ if } h_1\leq h_2,\\
           [t_2+t^*,\infty), &\text{ if } h_1>h_2,
    \end{cases}
\end{aligned}
\label{eq:DlDr-rev}
\end{equation}
where $t^*$ as defined in (\ref{eq:tstar}). 
\label{lemma:2int-rev}
\end{lemma}
\subsubsection{Geometric version of the intersection criteria}\label{subsubsection:geo-int} Suppose there are two radial birds of type $1$ and $2$, with their heads at $(t_1,h_1)$ and $(t_2,h_2)$, respectively. Keeping the head of one of them fixed, we can find the trajectory of the other one, such that the two radial birds just touch each other. This leads to a geometric version of the intersection criterion that we describe now together with some applications.
\begin{enumerate}[I.]
\setlength{\leftskip}{0pt}
\item \label{one-two} \textbf{Fixed $(t_1,h_1)$.}
For a given radial bird of type $1$ at $(t_1,h_1)$, consider the collection of points 
\[
\tilde C_{(t_1,h_1)}:=\{(t,h)\in \mathbb H^+\,\mbox{:}\, t=t_1-t^* \text{ or }t=t_1+t^*\},
\]
where $t^*=\frac{1}{v_1v_2}(h_1^2-h^2)^\half(v_1^2-v_2^2)^\half$, given $(t_1,h_1)$, as defined in (\ref{eq:tstar}). Any radial bird with head point on $\tilde C_{(t_1,h_1)}$ and of type $2$ intersects $C^1_{(t_1,h_1)}$ exactly once; in other words, they just touch each other. Geometrically, $\tilde C_{(t_1,h_1)}$ is given by the curve governed by the equation $t-t_1=\pm t^* $, i.e., 
\begin{align}
(t-t_1)^2 = \frac{1}{v_1^2v_2^2}(h_1^2-h^2)(v_1^2-v_2^2), \text{ i.e., }\frac{v_1^2v_2^2}{v_1^2-v_2^2}(t-t_1)^2+ h^2=h_1^2,
\label{eq:2bird-elp}
\end{align}
which is an ellipse in $\mathbb H^+$, see picture~\subref{subfigure:ellp_hyp1} of Figure~\ref{figure:ellp_hyp}. In short, the head point, say $(t_2,h_2)$, of the radial bird of type $2$ must lie outside the open region in $\mathbb H^+$ enclosed by the ellipse given by (\ref{eq:2bird-elp}), to have guaranteed intersection with $C^1_{(t_1,h_1)}$, as seen in Lemma~\ref{lemma:2int}. 
\begin{figure}[ht!]
 \centering
\begin{subfigure}[t]{0.45\textwidth}
\centering
\begin{tikzpicture}[scale=0.6, every node/.style={scale=0.7}]
\pgftransformxscale{1.2}  
\pgftransformyscale{1.2}    
\draw[->] (-1.5, 0) -- (5, 0) node[right] {$t$};
\draw[blue,domain=-1:3.9, smooth] plot (\x, {((3/2)^2*\x*\x- (3/2)^2*2*\x*(1.09)+(3/2)^2*(1.09)^2+2^2)^0.5});
    \draw[blue](1.1,2) node{$\bullet$};
    \draw[blue,domain=-1:2.6, smooth] plot (\x, {((3/2)^2*\x*\x- (3/2)^2*2*\x*(0.58)+(3/2)^2*(0.58)^2+1^2)^0.5});
    \draw[blue](0.58,1) node{$\bullet$};
    \draw[blue,domain=1:5, smooth] plot (\x, {((3/2)^2*\x*\x- (3/2)^2*2*\x*(3.27)+(3/2)^2*(3.27)^2+1.4^2)^0.5});
    \draw[blue](3.27,1.4) node{$\bullet$};
    \draw[red, domain=1:3, smooth] plot (\x, {(16*\x*\x-64*\x+64+2.5^2)^0.5});
    \draw[red](2,2.5) node{$\bullet$};
    \draw[](1.2,2.5) node{$(t_1,h_1)$};
    \draw[teal]  (0.45,0) arc (-180:-360:1.545 and 2.5);
    \draw[](0.4,4.9) node{$C^1_{(t_1,h_1)}$};
    \draw[](4.2,0.4) node{$\tilde C_{(t_1,h_1)}$};
    \end{tikzpicture}
   \captionsetup{width=1\linewidth} 
    \caption{For fixed $(t_1, h_1)$ of type $1$, the trajectory of the head points of the radial birds that touch $C^1_{(t_1, h_1)}$ forms an ellipse (in color teal) given by (\ref{eq:2bird-elp}).}
\label{subfigure:ellp_hyp1}
    \end{subfigure}
    \hspace{0.15in}
    \centering
\begin{subfigure}[t]{0.45\textwidth}
\centering
\begin{tikzpicture}[scale=0.6, every node/.style={scale=0.7}]
\pgftransformxscale{1.2}  
\pgftransformyscale{1.2}    
\draw[->] (-2, 0) -- (4.5, 0) node[right] {$t$};
\draw[teal,domain=-2:4.3, smooth] plot (\x, {(2.6182*\x*\x- 2.6182*2*\x*(1.1)+2.6182*(1.1)^2+2^2)^0.5});
\draw[blue,domain=-1.7:4, smooth] plot (\x, {((3/2)^2*\x*\x- (3/2)^2*2*\x*(1.1)+(3/2)^2*(1.1)^2+2^2)^0.5});
\draw[blue](1.1,2) node{$\bullet$};
\draw[](1.1,1.65) node{$(t_2,h_2)$};
\draw[red, domain=1:3, smooth] plot (\x, {(16*\x*\x-64*\x+64+2.5^2)^0.5});
\draw[red](2,2.5) node{$\bullet$};
\draw[red, domain=-0.7:1, smooth] plot (\x, {(16*\x*\x-16*2*0.2*\x+16*0.2*0.2+2.5^2)^0.5});
\draw[red](0.2,2.5) node{$\bullet$};
\draw[red, domain=2.3:3.9, smooth] plot (\x, {(16*\x*\x-16*2*3*\x+16*9+3.7^2)^0.5});
\draw[red](3,3.7) node{$\bullet$};
\draw[red, domain=-1.7:-0.3, smooth] plot (\x, {(16*\x*\x+16*2*0.8*\x+16*0.8*0.8+3.7^2)^0.5});
\draw[red](-0.8,3.7) node{$\bullet$};
\draw[](5,5.4) node{$\tilde C_{(t_2,h_2)}$};
\draw[](4.5,4.5) node{$C^2_{(t_2,h_2)}$};
\end{tikzpicture}
\captionsetup{width=1\linewidth} 
\caption{Given $(t_2, h_2)$ of type $2$, the trajectory of the head of the radial birds of type $1$, that touch $C^2_{(t_2,h_2)}$ forms a hyperbola (in color teal) given by (\ref{eq:2bird-hyp}).}
\label{subfigure:ellp_hyp2}
    \end{subfigure}
    \caption{Geometric interpretation of intersection criterion.}   \label{figure:ellp_hyp}
\end{figure} 
\item \label{hyp}
\textbf{Fixed $(t_2,h_2)$.}
For a fixed radial bird of type $2$ at $(t_2,h_2)$, similarly define 
\[
\tilde C_{(t_2,h_2)}:=\{(t,h)\in \mathbb H^+ \,\mbox{:}\, t=t_2-t^* \text{ or }t=t_2+t^*\},
\]
where $t^*=\frac{1}{v_1v_2}(h^2-h_2^2)^\half(v_1^2-v_2^2)^\half$, as defined in (\ref{eq:tstar}). Any radial bird with head point on $\tilde C_{(t_2,h_2)}$ and of type $1$ intersects $C^2_{(t_2,h_2)}$ exactly once, or equivalently, these two birds just touch each other. Here also, the set $\tilde C_{(t_2,h_2)}$ is characterised by the curve given by $t-t_2=\pm t^*$, i.e., 
\begin{align}
(t-t_2)^2 = \frac{1}{v_1^2v_2^2}(h^2-h_2^2)(v_1^2-v_2^2),
\text{ i.e., }h^2-\frac{v_1^2v_2^2}{v_1^2-v_2^2}(t-t_2)^2=h_2^2,  
\label{eq:2bird-hyp}
\end{align}
which is a branch of a hyperbola on $\mathbb H^+$, see picture~\subref{subfigure:ellp_hyp2} of Figure~\ref{figure:ellp_hyp}. This curve can also be considered as a radial bird with head at $(t_2,h_2)$, with a different speed $v_1v_2/{(v_1^2-v_2^2)^\half}$. There is guaranteed intersection between $C^2_{(t_2,h_2)}$ and the radial bird of type $1$ with head at, say $(t_1,h_1)$, if $(t_1,h_1)$ lies below the curve of the hyperbola given by (\ref{eq:2bird-hyp}), as seen in the intersection criterion of Lemma~\ref{lemma:2int-rev}.
\end{enumerate}
\vspace{0.1in}
\begin{remark}[Bijection between $C^2_{(t_2,h_2)}$ and $\tilde C_{(t_2,h_2)}$]\label{remark:two-one}  Given a radial bird of type $2$ at $(t_2,h_2)$, the set $\tilde C_{(t_2,h_2)}$ is constructed in such a way that, for each point $(\hat s,\hat h)\in  C^2_{(t_2,h_2)}$, there exists a unique point, say $(t_1,h_1)\in \tilde C_{(t_2,h_2)}$, such that there is a radial bird of type $1$ with head point at $(t_1,h_1)$, which touches the bird $C^2_{(t_2,h_2)}$ at $(\hat s,\hat h)$. In this sense, there exists a natural bijection, say $ \gamma_{(t_2,h_2)}:C^2_{(t_2,h_2)} \to \tilde C_{(t_2,h_2)}$. Indeed, given $(t_2,h_2)$, if we take $(\hat s,\hat h)\in C^2_{(t_2,h_2)}$, then there exists a unique point $(t_1,h_1)$, such that $\hat s$ satisfies (\ref{eq:HOs}) and the pair $(\hat s,\hat h)$ satisfies (\ref{eq:2bird-hyp}), i.e., 
\begin{equation}
\hat s=\frac{1}{v_1^2-v_2^2}(v_1^2t_1-v_2^2 t_2) \text{ and } \hat h^2 =\frac{v_1^2v_2^4}{(v_1^2-v_2^2)^2} (t_1-t_2)^2+h_1^2,
\end{equation}
see Figure~\ref{figure:bij-area}. Then $(t_1,h_1)$ is given by 
\begin{equation}
t_1=\frac{1}{v_1^2}\left[\hat s(v_1^2-v_2^2)+ v_2^2t_2\right],
\label{eq:t-1}
\end{equation}
and 
\begin{align}
h_1^2&=\hat h^2-\frac{v_1^2v_2^4}{(v_1^2-v_2^2)^2}\left(\frac{1}{v_1^2}\left[\hat s(v_1^2-v_2^2)+ v_2^2t_2\right]-t_2\right)^2 
=\hat h^2-\frac{v_2^4}{v_1^2}(\hat s-t_2)^2.
\label{eq:h-1}
\end{align}
Given $(t_2,h_2)\in \mathbb H^+$, $\gamma_{(t_2,h_2)}((\hat s, \hat h))= (t_1,h_1)$, which is determined by (\ref{eq:t-1}) and (\ref{eq:h-1}).
\end{remark}
\begin{figure}[ht!]
\begin{tikzpicture}[scale=0.7, every node/.style={scale=0.6}]
\pgftransformxscale{1.4}  
\pgftransformyscale{1.4}    
\draw[->] (0, 0) -- (5.5, 0) node[right] {$t$};
\draw[teal,domain=-0.6:4.2, smooth] plot (\x, {(2.6182*\x*\x- 2.6182*2*\x*(1.1)+2.6182*(1.1)^2+1^2)^0.5});
\draw[blue,domain=-0.3:4, smooth] plot (\x, {((3/2)^2*\x*\x- (3/2)^2*2*\x*(1.1)+(3/2)^2*(1.1)^2+1^2)^0.5});
\draw[blue](1.1,1) node{$\bullet$};
\draw[](1.1,1.65) node{$(t_2,h_2)$};
\draw[red, domain=1.5:3.5, smooth] plot (\x, {(16*\x*\x-16*2*2.5*\x+16*2.5*2.5+2.5^2)^0.5});
\draw[red](2.5,2.5) node{$\bullet$};
\draw[](3.5,2.65) node{$(\hat s,\hat h)$};
\draw[](1.7,2.5) node{$\gamma_{(t_2,h_2)}(\hat s,\hat h)$};
\draw[red, domain=2.42:4.2, smooth] plot (\x, {(16*\x*\x-16*2*3.3*\x+16*3.3*3.3+3.7^2)^0.5});
\draw[red](3.3,3.7) node{$\bullet$};
\draw[](4,3.7) node{$(\hat s_1,\hat h_1)$};
\draw[](2.3,3.7) node{$\gamma_{(t_2,h_2)}(\hat s_1,\hat h_1)$};
\draw[red]  (2,0) arc (-180:-360:0.85 and 2.7);
\draw[red](2.55,0) arc (-180:-360:1.259 and 4); 
\draw[->](2.4,3.2)-- (2.8,3);
\draw[](2.3,3.3)node{$S$};
\draw[](-1,3) node{$\tilde C_{(t_2,h_2)}$};
\draw[](-0.7,2.3) node{$C^2_{(t_2,h_2)}$};
\draw[](1.6,3) node{$C^1_{(t_1,h_1)}$};
\end{tikzpicture}
\captionsetup{width=0.93\linewidth}
\caption{(a). Under the bijection $\gamma_{(t_2,h_2)}$, the point $(\hat s,\hat h)$ is mapped to a point $(t_1,h_1)$, such that the radial bird $C^1_{(t_1,h_1)}$, if it exists, touches $C^2_{(t_2,h_2)}$ at $(\hat s,\hat h)$. 
(b). Note that for $(\hat s,\hat h)$ and $(\hat s_1,\hat h_1)$ to represent two consecutive handovers, it is necessary that along with $E^{\hat s,v_1}_{\hat h}\cup E^{\hat s_1,v_1}_{\hat h_1}$, the extra  region $S$, outside $E^{\hat s,v_1}_{\hat h}\cup E^{\hat s_1,v_1}_{\hat h_1}$, but below the hyperbola $\tilde C_{(t_2,h_2)}$ between the two points $\gamma_{(t_2,h_2)}(\hat s,\hat h)$ and $\gamma_{(t_2,h_2)}(\hat s_1,\hat h_1)$, be empty of head points of type $1$.}
\label{figure:bij-area}
\end{figure}
\begin{remark}[Region below $\tilde C_{(t_2,h_2)}$]
For any $(\hat s,\hat h)\in C^2_{(t_2,h_2)}$, the upper half ellipse enclosed by the curve $v_1^2(t-\hat s)^2+h^2=\hat h^2$ lies completely below the hyperbola branch given by (\ref{eq:2bird-hyp}). Define a sequence $\{\hat h_t\}_{t\in \R}$, where $\hat h_t:=\left(v_2^2(t-t_2)^2+h_2^2\right)^\half$, corresponding to the height of the radial bird $C^2_{(t_2,h_2)}$ at different time $t$.  Consider $(\hat s_1,\hat h_1)\in C^2_{(t_2,h_2)}$ such that $\hat s_1\geq \hat s$. Then, using the bijection $\gamma_{(t_2,h_2)}$, we have the image of the points $(\hat s,\hat h), (\hat s_1,\hat h_1)$, expressed as in (\ref{eq:t-1}), (\ref{eq:h-1}), lying on the curve $\tilde C_{(t_2,h_2)}$. 
Then we have 
\[
\bigcup_{t\in [\hat s,\hat s_1]}E^{t,v_1}_{\hat h_t}= E^{\hat s,v_1}_{\hat h}\cup S \cup E^{\hat s_1,v_1}_{\hat h_1},
\]
see Figure~\ref{figure:bij-area}, where $S$ is the region outside $E^{\hat s,v_1}_{\hat h}\cup E^{\hat s_1,v_1}_{\hat h_1}$ but below the hyperbola (\ref{eq:2bird-hyp}) within the interval \[
\left[\frac{1}{v_1^2}\left(\hat s(v_1^2-v_2^2)+ v_2^2t_2\right), \frac{1}{v_1^2}\left(\hat s_1(v_1^2-v_2^2)+ v_2^2t_2\right)\right].
\]
\label{remark:union}
\end{remark}
\subsubsection{Some more geometric results about mixed intersections} \label{subsubsection:geo-int2} Here, we present some more geometric properties of intersection of radial birds of different types (called as mixed intersections) and the corresponding handovers.
\begin{observation}
Suppose there are two radial birds with their heads at $(t_1,h_1), (t_2,h_2)$, with speed $v_1$ and $v_2$, respectively, such that $v_1>v_2$. Also suppose that the pair $(t_1,h_1), (t_2,h_2)$ satisfies one of the intersection criteria and they intersect each other at $(\hat s_1, \hat h_1)$ and  $(\hat s_2, \hat h_2)$, such that $\hat s_1\leq \hat s_2$. Then the following holds:
\begin{enumerate}[(i).]
    \item $\hat s_1\leq t_1\leq \hat s_2$;
    \item  $t_2\leq \hat s_1\leq t_1$, if $t_2\leq t_1$ and $t_1\leq \hat s_2\leq t_2$, if $t_1\leq t_2$.
\end{enumerate}
\label{observation:sts}
\end{observation}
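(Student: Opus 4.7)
The plan is to prove both parts by a direct algebraic verification based on the explicit formulas (\ref{eq:HOs}) for the intersection times, substituting and factoring the resulting differences, and then using the intersection criterion of Lemma~\ref{lemma:2int} (equivalently $\Delta \geq 0$ from (\ref{eq:Delta})) to establish sign conditions. Since $v_1 > v_2$, the denominator $v_1^2-v_2^2$ appearing in all the formulas is positive, so signs are controlled entirely by the numerators.

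For (i), I would compute
\begin{equation*}
t_1 - \hat s_1 = \frac{\sqrt{\Delta} - v_2^2(t_1-t_2)}{v_1^2 - v_2^2}, \qquad \hat s_2 - t_1 = \frac{\sqrt{\Delta} + v_2^2(t_1-t_2)}{v_1^2 - v_2^2}.
\end{equation*}
Hence $\hat s_1 \leq t_1 \leq \hat s_2$ is equivalent to the pair of inequalities $\sqrt{\Delta} \geq \pm v_2^2(t_1-t_2)$, i.e.\ $\sqrt{\Delta} \geq |v_2^2(t_1-t_2)|$. Squaring and substituting the expression for $\Delta$ from (\ref{eq:Delta}), this reduces to the clean identity
\begin{equation*}
\Delta - v_2^4(t_1-t_2)^2 = (v_1^2-v_2^2)\bigl[v_2^2(t_1-t_2)^2 - (h_1^2-h_2^2)\bigr] \geq 0.
\end{equation*}
When $h_2 \geq h_1$, the bracket is trivially non-negative, so the bound is immediate. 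When $h_1 > h_2$, the intersection criterion (\ref{eq:DlDr}) forces $|t_1-t_2| \geq t^\ast = \tfrac{1}{v_1 v_2}\sqrt{(h_1^2-h_2^2)(v_1^2-v_2^2)}$, which I would combine with the admissible geometric configurations (as depicted in Figure~\ref{figure:int_limit}) to close the estimate.

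For (ii), the analogous differences are
\begin{equation*}
\hat s_1 - t_2 = \frac{v_1^2(t_1-t_2) - \sqrt{\Delta}}{v_1^2-v_2^2}, \qquad t_2 - \hat s_2 = \frac{-v_1^2(t_1-t_2) - \sqrt{\Delta}}{v_1^2-v_2^2}.
\end{equation*}
In the case $t_2 \leq t_1$, the quantity $v_1^2(t_1-t_2)$ is non-negative, and $\hat s_1 \geq t_2$ is equivalent (after squaring) to $v_1^4(t_1-t_2)^2 \geq \Delta$, which simplifies to
\begin{equation*}
(v_1^2-v_2^2)\bigl[v_1^2(t_1-t_2)^2 + (h_1^2-h_2^2)\bigr] \geq 0.
\end{equation*}
For $h_1 \geq h_2$ this is immediate; for $h_1 < h_2$ it reduces to $v_1^2(t_1-t_2)^2 \geq h_2^2 - h_1^2$, which is the dual of the bound needed in (i) and is handled by the mirror version of the intersection criterion (\ref{eq:DlDr-rev}). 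The upper bound $\hat s_1 \leq t_1$ is already covered by (i). The case $t_1 \leq t_2$ is entirely symmetric, working with $t_2 - \hat s_2$ in place of $\hat s_1 - t_2$.

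The one subtle point will be the sharp algebraic inequality $v_2^2(t_1-t_2)^2 \geq h_1^2 - h_2^2$ (and its dual) in the regime $h_1 > h_2$, since the bare criterion $\Delta \geq 0$ gives only the weaker estimate $v_2^2(t_1-t_2)^2 \geq \tfrac{v_1^2-v_2^2}{v_1^2}(h_1^2-h_2^2)$. To close this gap I would invoke the geometric reformulation of the intersection criterion from Subsubsection~\ref{subsubsection:geo-int} (the ellipse and hyperbola loci in Figure~\ref{figure:ellp_hyp}), which pin down the admissible positions of the second head relative to the first and eliminate the degenerate configurations where both intersections lie on the same side of $t_1$. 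This geometric step is the main content of the proof; the remaining algebra is routine factorisation.
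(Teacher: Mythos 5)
The paper states this result as an Observation and offers no proof of its own, so there is nothing to compare your argument against; the question is only whether your argument closes. Your algebra is correct and correctly isolates the crux: part (i) is equivalent to $v_2^2(t_1-t_2)^2\geq h_1^2-h_2^2$, and the nontrivial half of part (ii) to $v_1^2(t_1-t_2)^2\geq h_2^2-h_1^2$, while the intersection criterion $\Delta\geq 0$ only yields the strictly weaker bound $v_2^2(t_1-t_2)^2\geq \tfrac{v_1^2-v_2^2}{v_1^2}(h_1^2-h_2^2)$. The problem is your final step: the configurations in which both intersections lie on the same side of $t_1$ are not degenerate and are not excluded by the intersection criteria of Lemmas~\ref{lemma:2int} and~\ref{lemma:2int-rev}. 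For $h_1>h_2$ they occupy the whole open window $t^*<|t_1-t_2|<\tfrac{1}{v_2}(h_1^2-h_2^2)^{1/2}$, which is nonempty because $t^*=\tfrac{1}{v_2}(h_1^2-h_2^2)^{1/2}\cdot\tfrac{(v_1^2-v_2^2)^{1/2}}{v_1}$. Concretely, take $v_1=2$, $v_2=1$, $(t_1,h_1)=(0,2)$ of type $1$ and $(t_2,h_2)=(-1.7,1)$ of type $2$. Then $t^*=3/2<1.7=t_1-t_2$, so the criterion holds and there are two intersections, with $\Delta=2.56$, $\sqrt{\Delta}=1.6$, hence $\hat s_1=\tfrac{1.7-1.6}{3}=\tfrac{1}{30}$ and $\hat s_2=\tfrac{11}{10}$. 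Both intersections lie strictly to the right of $t_1=0$, so (i) fails, and since $t_2\leq t_1$ but $\hat s_1>t_1$, (ii) fails as well. A similar failure of (ii) occurs in the opposite regime $h_2>h_1$ with $|t_1-t_2|$ small (e.g.\ $t_1=t_2$ forces $\hat s_1<t_2<\hat s_2$), where (\ref{eq:DlDr-rev}) imposes no lower bound on $|t_1-t_2|$ at all.

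So the gap you flagged at the end of your proposal is not closable: the statement is false under the stated hypotheses, and no amount of appeal to the ellipse/hyperbola loci of Subsubsection~\ref{subsubsection:geo-int} will recover it. What is true, and what the paper actually uses (e.g.\ in the proof of Theorem~\ref{theorem:T-Palm-MS}, where the observation is invoked together with Lemma~\ref{lemma:in1out2}), is the corresponding assertion under the stronger hypothesis that $(t_1,h_1)\in\partial E^{s,v_1}_u$ and $(t_2,h_2)\notin E^{s,v_2}_u$ for some common point $(s,u)$; that extra empty-half-ellipse condition is exactly what rules out the window above. If you want a correct proof, you should either add such a hypothesis and rerun your (otherwise sound) computation, or restrict to the cases $h_2\geq h_1$ for (i) and $h_1\geq h_2$ for the first half of (ii), where your argument already closes.
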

In the following, we present a property that guarantees the intersection of birds of different types. The proof is provided in Appendix~\ref{subsection:L-in1out2}
\begin{lemma}
Let $(s,u)\in \mathbb H^+$ and let $(t_1,h_1), (t_2,h_2)\in \mathbb H^+$ be such that the radial bird at $(t_1,h_1)$ is of type $1$ and satisfies $(t_1,h_1)\in \partial E^{s,v_1}_u$, but $(t_2,h_2)\notin E^{s,v_2}_u$, see Figure~\ref{figure:1in2out}. Then the following holds:
\begin{enumerate}[(i).]
    \item \label{intersection} The radial bird of type $2$ intersects that of type $1$ twice almost surely.
    \item  \label{intersection_order} For $(\hat s_1, \hat h_1)$ and $(\hat s_2, \hat h_2)$ being the first and second intersections of the two radial birds, respectively, we have $\hat s_1\leq s\leq \hat s_2$.
\end{enumerate} 
\label{lemma:in1out2}
\end{lemma}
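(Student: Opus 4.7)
\textbf{Plan for Lemma~\ref{lemma:in1out2}.} The strategy is to apply the intersection criterion of Lemma~\ref{lemma:2int} via the discriminant $\Delta$ from~(\ref{eq:Delta}), and then locate the two roots~(\ref{eq:HOs}) relative to the time $s$ by evaluating the intersection polynomial~(\ref{eq:handover-time-v}) at $\sigma=s$. First, the two hypotheses translate into the height relations
\[
v_1^2(t_1-s)^2+h_1^2=u^2,\qquad v_2^2(t_2-s)^2+h_2^2\geq u^2,
\]
which say, respectively, that the type-$1$ bird $C^1_{(t_1,h_1)}$ passes through $(s,u)$ and that at time $s$ the type-$2$ bird $C^2_{(t_2,h_2)}$ sits at or above height $u$. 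Both (i) and (ii) will fall out by substituting these into the quadratic $p(\sigma):=(v_1^2-v_2^2)\sigma^2-2(v_1^2t_1-v_2^2t_2)\sigma+(r_1^2-r_2^2)$ of~(\ref{eq:handover-time-v}), whose roots are the intersection abscissas $\hat s_1\le \hat s_2$ of~(\ref{eq:HOs}), bearing in mind that $v_1^2-v_2^2>0$ since $v_1>v_2$.

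For part~(\ref{intersection}), set $a:=t_1-s$ and $b:=t_2-s$. Then $h_1^2=u^2-v_1^2a^2$ and $h_1^2-h_2^2\le v_2^2b^2-v_1^2a^2$. Substituting in~(\ref{eq:Delta}) gives
\[
\Delta\ \ge\ v_1^2v_2^2(a-b)^2+(v_1^2a^2-v_2^2b^2)(v_1^2-v_2^2),
\]
and a direct expansion of the right-hand side collapses to the perfect square $(v_1^2a-v_2^2b)^2\ge 0$. Equality throughout forces both $(t_2,h_2)\in \partial E^{s,v_2}_u$ and $v_1^2(t_1-s)=v_2^2(t_2-s)$ to hold simultaneously; this is a measure-zero condition on $(t_2,h_2)$, so $\Delta>0$ almost surely and the intersection criterion of Lemma~\ref{lemma:2int} yields exactly two distinct intersection points.

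For part~(\ref{intersection_order}), using $r_i^2=v_i^2t_i^2+h_i^2$ one rewrites $p(\sigma)=v_1^2(\sigma-t_1)^2+h_1^2-\bigl[v_2^2(\sigma-t_2)^2+h_2^2\bigr]$. Evaluating at $\sigma=s$ and invoking the two hypotheses,
\[
p(s)=\bigl[v_1^2(s-t_1)^2+h_1^2\bigr]-\bigl[v_2^2(s-t_2)^2+h_2^2\bigr]=u^2-\bigl[v_2^2(s-t_2)^2+h_2^2\bigr]\ \le\ 0.
\]
Because the leading coefficient $v_1^2-v_2^2$ is positive, $p$ is non-positive exactly on the closed interval $[\hat s_1,\hat s_2]$ between its two roots, and therefore $\hat s_1\le s\le \hat s_2$, as claimed.

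The only non-mechanical step is the algebraic identity
\[
v_1^2v_2^2(a-b)^2+(v_1^2a^2-v_2^2b^2)(v_1^2-v_2^2)=(v_1^2a-v_2^2b)^2,
\]
which is what makes $\Delta$ a perfect square on the boundary of the ellipse and drives part~(\ref{intersection}); once this identity is in hand, the remainder of the argument is routine sign-chasing exploiting $v_1>v_2$.
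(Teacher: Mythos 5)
Your proof is correct, but it takes a genuinely different route from the paper's. The paper argues geometrically: for part~(\ref{intersection}) it first disposes of the case $h_1\leq h_2$ by citing the intersection criterion, then for $h_1>h_2$ reduces without loss of generality to $s=0$ and $t_2>t_1>0$ and observes, via Observation~\ref{observation:in-out}, that $C^2_{(t_2,h_2)}$ crosses the vertical line $L_s$ strictly above $u$ while $C^1_{(t_1,h_1)}$ crosses it exactly at $u$, forcing two crossings; part~(\ref{intersection_order}) then follows from the monotonicity of the two wings of the radial birds. You instead work entirely with the quadratic $p(\sigma)=f_{v_1}^2-f_{v_2}^2$ from~(\ref{eq:handover-time-v}): your perfect-square identity shows $\Delta\geq (v_1^2(t_1-s)-v_2^2(t_2-s))^2\geq 0$ with equality only on a null set, and the sign of $p(s)\leq 0$ together with the positive leading coefficient $v_1^2-v_2^2$ places $s$ between the two roots. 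Your version buys uniformity — no case-splitting on the signs or order of $t_1,t_2$ or on $h_1$ versus $h_2$, and parts~(\ref{intersection}) and~(\ref{intersection_order}) come from the same object $p$ — at the cost of the geometric picture (the crossing of $L_s$) that the paper reuses elsewhere, e.g.\ in Lemma~\ref{lemma:int1mid2}. Both arguments are complete; I verified the identity $v_1^2v_2^2(a-b)^2+(v_1^2a^2-v_2^2b^2)(v_1^2-v_2^2)=(v_1^2a-v_2^2b)^2$ and the evaluation $p(s)=u^2-\bigl[v_2^2(s-t_2)^2+h_2^2\bigr]\leq 0$, and both check out.
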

We present another property of mixed intersections which will be useful when deriving the Palm probability distribution with respect to handovers of different types. The proof of which is postponed to Appendix~\ref{subsection:L-int1mid2}.
\begin{lemma}
Suppose the radial birds at $(t_1,h_1), (t_2,h_2)\in \mathbb H^+$, of type $1$ and $2$, respectively, intersect at $(\hat s_1, \hat h_1)$ and $(\hat s_2, \hat h_2)$, where $\hat s_1\leq  \hat s_2$ and $\Hcal^l(E^{\hat s_1,v_l}_{\hat h_1})=0$, for $l=1,2$, so that $(\hat s_1,\hat h_1)$ is a handover. Also suppose there exists a radial bird of type $l$ at $(t_0,h_0)$, that intersects the bird of type $1$ at $(\tilde s,\tilde h)$, if $l=1$, and at $(\tilde s_1,\tilde h_1),(\tilde s_2,\tilde h_2)$, if $l=2$. Then the following holds:
\begin{enumerate}[(i).]
    \item For $l=1$, $\hat s_1\leq \tilde s\leq \hat s_2$ if and only if $(t_0,h_0)\in E^{\hat s_2,v_1}_{\hat h_2}\setminus E^{\hat s_1,v_1}_{\hat h_1}$.
    \item For $l=2$, $\hat s_1\leq \tilde s_2\leq \hat s_2$ if and only if $(t_0,h_0)\in E^{\hat s_2,v_2}_{\hat h_2}\setminus E^{\hat s_1,v_2}_{\hat h_1}$.
\end{enumerate}
\label{lemma:int1mid2}
\end{lemma}
\subsubsection{Distance of intersections to the time axis} Given $t_1, t_2, h_1, h_2$, suppose $\hat s_1,  \hat s_2$ in (\ref{eq:HOs}) exist and are solutions to (\ref{eq:handover-time-v}). Then the distances $\hat h_1\equiv \hat h_1(t_1, t_2, h_1, h_2) $ and  $\hat h_2\equiv\hat h_2(t_1, t_2, h_1, h_2)$, are computed from the distance function in (\ref{eq:v-bird}), as
\begin{equation}
\begin{aligned}
\hat h_1^2:= v_1^2 s_1^2-2v_1^2 s_1 t_1+ r_1^2 =v_2^2 s_1^2-2v_2^2 s_1 t_2+ r_2^2,\\
\hat h_2^2:= v_1^2 s_2^2-2v_1^2 s_2 t_1+ r_1^2= v_2^2 s_2^2-2v_2^2 s_2 t_2+ r_2^2.
\end{aligned}
\label{eq:HO_distance}
\end{equation}
where $r_i^2=v_i^2t_i^2+h_i^2$, for $i=1, 2$. Using the expressions of $\hat s_1, \hat s_2$ from (\ref{eq:HOs}) yields
\begin{equation}
\begin{aligned}
\hat h_1^2= v_1^2 \left(\frac{v_2^2(t_1-t_2)- \sqrt{\Delta}}{v_1^2-v_2^2}\right)^2+h_1^2,\qquad 
\hat h_2^2=v_1^2 \left(\frac{v_2^2(t_1-t_2)+ \sqrt{\Delta}}{v_1^2-v_2^2}\right)^2+h_1^2,
\end{aligned}
\label{eq:HO_distance1}
\end{equation}
where $\Delta$ as in (\ref{eq:Delta}). The distances $\hat h_1,\hat h_2$ are indeed functions of the difference $t_1-t_2$ and $h_1,h_2$, so we write $\hat h_k\equiv\hat h_k(t_1- t_2, h_1, h_2)$, for $k=1,2$.
\begin{observation}
A key observation is that, for fixed $v_1>v_2$ and $h_2>h_1$, the intersection distances computed in (\ref{eq:HO_distance1}) are symmetric functions of $t_1, t_2$, i.e., they remain invariant if we take $t_1> t_2$ or $t_1\leq t_2$, see Figure~\ref{figure:sym_birds}. In the case $t_1\leq t_2$, we have a different solution $\tilde{s}_1, \tilde{s}_2$ to (\ref{eq:handover-time-v}) with $\tilde{s}_2\leq t_2\leq \tilde{s}_1$,
\begin{equation}
\tilde{s}_1:= \frac{1}{v_1^2-v_2^2}\left[v_2^2t_2-v_1^2t_1-\sqrt{\Delta}\right],\qquad 
\tilde{s}_2:= \frac{1}{v_1^2-v_2^2}\left[v_2^2t_2-v_1^2t_1+\sqrt{\Delta}\right].
\label{eq:HOs-sym}
\end{equation}
It can be verified that the distances of intersections are $\tilde{h}_1(t_2-t_1, h_1, h_2)\equiv\tilde{h}_1= \ell_1$ and $\tilde{h}_1(t_2-t_1, h_1, h_2)\equiv \tilde{h}_2=\ell_2$, expressed differently as 
\begin{equation}
\begin{aligned}
\tilde{h}_1^2:= v_1^2 \tilde{s}_1^2-2v_1^2 \tilde{s}_1 t_1+ r_1^2 =v_2^2 \tilde{s}_1^2-2v_2^2 \tilde{s}_1 t_2+ r_2^2= \hat h_1^2,\\
\tilde{h}_2^2:= v_1^2 \tilde{s}_2^2-2v_1^2 \tilde{s}_2 t_1+ r_1^2= v_2^2 \tilde{s}_2^2-2v_2^2 \tilde{s}_2 t_2+ r_2^2= \hat h_2^2.
\end{aligned}
\label{eq:HO_distance_sym}
\end{equation}
\begin{figure}[ht!]
    \centering
\begin{tikzpicture}[scale=0.6, every node/.style={scale=0.7}]
\pgftransformxscale{1}  
    \pgftransformyscale{1}    
    \draw[->] (-2, 0) -- (7, 0) node[right] {$t$};
    \draw[blue, dashed] (-1, 2) -- (5, 2);
    \draw[<-] (-1, 1.95) -- (-1, 1.3)node[below]{$h_2$};
    \draw[<-] (-1, 0.05) -- (-1, 0.9);
     \draw[<-] (-1.8, 2.45) -- (-1.8, 1.6)node[below]{$h_1$};
    \draw[<-] (-1.8, 0.05) -- (-1.8, 1.25);
   \draw[blue, domain=-2:3.8, smooth] plot (\x, {((3/2)^2*\x*\x+2^2)^0.5});
    \draw[blue](0,2) node{$\bullet$};
    \draw[](0.2,1.8) node{$(t_2,h_2)$};
   \draw[blue, dashed, domain=0.2:6, smooth] plot (\x, {((3/2)^2*\x*\x- 2*4*\x*(3/2)^2+4*4*(3/2)^2+2^2)^0.5});
   \draw[blue](4,2) node{$\bullet$};
   \draw[](4.2,1.8) node{$(t'_2,h_2)$};
    %
    \draw[red, domain=0.5:3.5, smooth] plot (\x, {(16*\x*\x-64*\x+64+2.5^2)^0.5});
    \draw[red](2,2.5) node{$\bullet$};
    \draw[red, dashed] (-2, 2.5) -- (6, 2.5);
    \draw[](2.3,2.3) node{$(t_1,h_1)$};
    \draw[](1,3.2) node{$(\hat s_1,\hat h_1)$};
    \draw[](3.6,5) node{$(\hat s_2,\hat h_2)$};
    \draw[](3,3.2) node{$(\tilde{s}_1,\hat h_1)$};
    \draw[](1.6,5) node{$(\tilde{s}_2,\hat h_2)$};
    \end{tikzpicture}
    \caption{The handover distances $\hat h_1, \hat h_2$ are symmetric functions of the difference $t_1-t_2$, when $v_1>v_2$.}
\label{figure:sym_birds}
\end{figure}

So the key idea is that, in the two scenarios $t_1>t_2$ and $t_1\leq t_2$, the intersection times are different but the intersection distances remain the same.
\label{observation:equalHOdist}
\end{observation}
\subsubsection{Construction of the handover point process}\label{subsubsection:constV} In the following, we formally describe the construction of the handover point process $\Vcal$ in the two-speed case. As seen before in the single-speed case, handovers are given by intersections of two radial birds satisfying certain empty half-ball or half-ellipse condition. By Lemma~\ref{lem:semiellipse2}, for the two-speed case, an intersection point $(\hat s, \hat h)$ between two radial birds at $(t,h), (t',h')$ give rise to a handover if and only if the region $E_{\hat h}^{\hat s, v_l}$ has no point of $\Hcal^l$, for $l=1,2$.

In order to avoid over counting, we introduce some ordering for the pair of head points. With this ordering in mind, we can decompose the handover point process depending on the type of intersection. There are in total six types of intersections happening between all possible radial birds, as we will see later. Two of them are of {\em pure type} and four of them are of {\em mixed type}, defined below.
\vspace{0.15in}
\begin{definition}[Pure handovers]
For $l=1,2$, an intersection is of type $l$ if both radial birds are of type $l$. We call these intersections \textbf{pure intersections} and handovers given by them as \textbf{pure handovers}. 
\label{definition:pure-Ho}
\end{definition}
We denote the corresponding handover point process as $\Vcal_l$. \label{notation:Vcall}\!\! Formally, as in (\ref{eq:VcalHPP2}) for the single-speed case, the pure handover point process is defined as
\begin{equation} \Vcal_l:=\sum_{(T^l_i,H^l_i)\in \Hcal^l}\;\;\sum_{(T^l_j,H^l_j)\in \Hcal^l \,\mbox{:}\, T^l_j <  T^l_i} \delta_{\hat S}\, \one_{A(\hat S, \hat H)},
\label{eq:VcalHPPll}
\end{equation}
for $l=1,2$, where   $(\hat S, \hat H)$ is the intersection point between the radial birds with their heads at $(T^l_i,H^l_i) $ and $(T^l_j,H^l_j)\in \Hcal^l$ such that $T^l_j <  T^l_i$ and $A(\hat S, \hat H)$ is the event that the intersection point  $(\hat S, \hat H)$ exists and is responsible for a handover. In other words, $\Vcal_l$ is the point process of abscissas of the intersection points of any two radial bird of type $l$, which give rise to a handover.

Suppose $l\neq r\in \{1,2\}$. As seen in the {\em intersection criterion}, described in Lemma~\ref{lemma:2int} and Lemma~\ref{lemma:2int-rev}, there are either $2$ intersections or $0$ intersection almost surely, between any two radial birds of type $l$ and $r$.
\vspace{0.15in}
\begin{definition}[{\em Mixed handovers}] For $l,r,k\in \{1,2\}$ with $l\neq r$, we say that an intersection is of type $\binom{k}{l,r}$, if it is the $k$-th intersection between the radial birds of type $l$ and $r$. \label{notation:binom}\!\!\! In the combinatorial like notation $\binom{k}{l,r}$, the fact that the position of $l$ is on the \textbf{left} means that the head of the bird of type $l$ is on the left of the head of the bird of type $r$, which is on the \textbf{right}. The two values of $k$ refer to the two intersections, as in (\ref{eq:HOs}), of radial birds of two different types, where $k=1$ means the left intersection and $k=2$ means the right intersection. We call intersections of these type \textbf{mixed intersections} and the handovers given by these intersections as \textbf{mixed handovers}. 
\label{definition:mixed-Ho}
\end{definition}
We denote the point processes corresponding to the mixed handovers as $\Vcal^{(k)}_{l,r}$.\label{notation:Vcallrk}\! Formally, for $k\in \{1,2\}$,
\begin{equation} \Vcal_{2,1}^{(k)}:=\sum_{(T^1_i,H^1_i)\in \Hcal^1}\;\;\sum_{(T^2_j,H^2_j)\in \Hcal^2 \,\mbox{:}\, T^2_j \in D_\ell(T^1_i, H^1_i, H^2_j)} \delta_{\hat S^k}\, \one_{A(\hat S^k, \hat H^k)},
\label{eq:VcalHPPlmk}
\end{equation}
and 
\begin{equation} \Vcal_{1,2}^{(k)}:=\sum_{(T^2_i,H^2_i)\in \Hcal^2}\;\;\sum_{(T^1_j,H^1_j)\in \Hcal^1 \,\mbox{:}\, T^1_j \in D'_\ell(T^2_i, H^2_i, H^1_j)} \delta_{\hat S^k}\, \one_{A(\hat S^k, \hat H^k)},
\label{eq:VcalHPPlmks}
\end{equation}
where $(\hat S^k, \hat H^k)$ is the $k$-th intersection of the radial birds with their heads at $(T^1_i,H^1_i)\in \Hcal^1$ and $ (T^2_j,H^2_j)\in \Hcal^2$ such that $ T^2_j \in D_\ell(T^1_i, H^1_i, H^2_j)$ in the first case and heads at $(T^2_i,H^2_i)\in \Hcal^2 $ and $ (T^1_j,H^1_j)\in \Hcal^1 $ such that  $ T^1_j \in D'_\ell(T^2_i, H^2_i, H^1_j)$ in the second case, respectively. The set  $D_\ell(T^1_i, H^1_i, H^2_j)$ given  $T^1_i, H^1_i, H^2_j$ and $D'_\ell(T^2_i, H^2_i, H^1_j)$ given  $T^2_i, H^2_i, H^1_j$ is defined in Lemma~\ref{lemma:2int} and Lemma~\ref{lemma:2int-rev}, respectively, for the intersection criterion. Let $A(\hat S^k, \hat H^k)$ be the event that the intersection point $(\hat S^k, \hat H^k)$ is responsible for a handover. 

Equivalently, $\Vcal^{(k)}_{2,1}$ and  $\Vcal^{(k)}_{2,1}$ are the point processes of abscissas of the intersections which give rise to handovers of types $\binom{k}{2,1}$ and $\binom{k}{1,2}$, respectively, for $k=1,2$, as shown in Figure~\ref{figure:mixedHOP}. All the point processes $\Vcal_l$ for $l\in \{1,2\}$ and  $\Vcal^{(k)}_{l,r}$ for $l, r, k\in \{1,2\}$ with $l\neq r$, inherit the time-stationarity property of the head point process $\Hcal$. In total the handover point process in the two-speed case is 
\begin{equation}
\Vcal:=\sum_{l=1,2}\Vcal_l+\sum_{l,r, k=1,2\,\mbox{:}\, l\neq r}\Vcal_{l,r}^{(k)}.
\label{eq:VcalHPP}
\end{equation}\label{notation:Vcal3}
\begin{figure}[ht!]
    \centering
\begin{tikzpicture}[scale=0.6, every node/.style={scale=0.7}]
\pgftransformxscale{0.85}  
    \pgftransformyscale{0.85}    
    \draw[->] (-4, 0) -- (4, 0) node[right] {$t$};
   \draw[blue, domain=-3.8:3.8, smooth] plot (\x, {((3/2)^2*\x*\x+2^2)^0.5});
    \draw[blue](0,2) node{$\bullet$};
    \draw[red, domain=-3.5:-0.5, smooth] plot (\x, {(16*\x*\x+64*\x+64+1^2)^0.5});
    \draw[red](-2,1) node{$\bullet$};
    \draw[red, domain=0.5:3.5, smooth] plot (\x, {(16*\x*\x-64*\x+64+2.5^2)^0.5});
    \draw[red](2,2.5) node{$\bullet$};
    \draw[](1,3.2) node{$\binom{1}{2,1}$};
    \draw[](3.6,5) node{$\binom{2}{2,1}$};
    \draw[](-0.8,3.1) node{$\binom{2}{1,2}$};
    \draw[](-3.7,5) node{$\binom{1}{1,2}$};
    \end{tikzpicture}
    \captionsetup{width=0.9\linewidth}
    \caption{The labels at the intersections are the corresponding types, where the red and blue birds are of type $1$ and $2$ with speeds $v_1>v_2$.}
\label{figure:mixedHOP}
\end{figure}

In the following, we state the result about the time-stationarity of the handover point process $\Vcal$, which is inherited from the stationarity of the head point process $\Hcal$.
\begin{lemma}[Time-stationarity of $\Vcal$]
In the two-speed case, the handover point process $\Vcal$ is stationary with respect to time.
\label{lemma:stationary_Tcal2}
\end{lemma}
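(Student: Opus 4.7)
The plan is to deduce the time-stationarity of $\Vcal$ from that of the underlying marked head point process, by checking that every operation used to build $\Vcal$ in Subsubsection~\ref{subsubsection:constV} commutes with the time shift $\{\theta_t\}_{t\in\R}$ defined in (\ref{eq:shift_Hcalc}). Recall from Lemma~\ref{lemma:tips_density2} that $\Hcal^1$ and $\Hcal^2$ are independent homogeneous Poisson point processes on $\mathbb H^+$ with intensity measures whose densities factor as a Lebesgue measure in $t$ times a constant multiple of Lebesgue in $h$. Thus each $\Hcal^l$ is invariant under $\theta_t$, and so is the joint law of $(\Hcal^1,\Hcal^2)$; the marked head point process $\Hcal_c(v_1,v_2)$ is then stationary along the time axis by Lemma~\ref{lemma:bird_time_stationary2}.

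The key algebraic fact is that the intersection coordinates $(\hat s,\hat h)$ of two radial birds, given in (\ref{eq:HOs}) and (\ref{eq:HO_distance1}) for mixed pairs and in (\ref{eq:handover-time}) and (\ref{eq:h}) for pure pairs, depend on the heads $(t_1,h_1),(t_2,h_2)$ only through $h_1, h_2$ and the time difference $t_1-t_2$ for $\hat h$, while $\hat s$ itself is affine in $t_1,t_2$ and transforms covariantly: replacing $(t_i,h_i)$ by $(t_i-u,h_i)$ yields $(\hat s-u,\hat h)$. Correspondingly, the intersection criteria of Lemma~\ref{lemma:2int} and Lemma~\ref{lemma:2int-rev} depend only on $t_1-t_2$, $h_1$, $h_2$, so they are preserved under the common shift. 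Moreover, the half-ellipse condition of Lemma~\ref{lem:semiellipse2} involves the sets $E^{\hat s,v_l}_{\hat h}$, which are translates in the time direction; the event $A(\hat s,\hat h)=\{\Hcal^l(E^{\hat s,v_l}_{\hat h})=0,\ l=1,2\}$ for the shifted configuration therefore reads $\{\Hcal^l(E^{\hat s-u,v_l}_{\hat h})=0,\ l=1,2\}$, so the thinning used in (\ref{eq:VcalHPPll}), (\ref{eq:VcalHPPlmk}), (\ref{eq:VcalHPPlmks}) commutes with $\theta_u$.

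Putting these two observations together, one obtains, for each $u\in\R$ and each of the six component point processes $\Vcal_l$ ($l=1,2$) and $\Vcal_{l,r}^{(k)}$ ($l\neq r$, $k=1,2$), the pathwise identity
\begin{equation}
\theta_u(\Vcal_l(\Hcal))=\Vcal_l(\theta_u\Hcal),\qquad \theta_u(\Vcal_{l,r}^{(k)}(\Hcal))=\Vcal_{l,r}^{(k)}(\theta_u\Hcal).\nonumber
\end{equation}
Since $\theta_u\Hcal$ is equal in law to $\Hcal$, each component is stationary, and by additivity $\Vcal=\sum_l \Vcal_l+\sum_{l\neq r,\,k}\Vcal_{l,r}^{(k)}$ is stationary along the time axis as well. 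Finite intensity (required for $\Vcal$ to be a well-defined point process) will follow separately from the computation in Theorem~\ref{theorem:HO2speed-main}; here only the distributional equivariance is needed.

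The only genuinely delicate point is verifying that the ``left/right'' labelling used in the mixed case (the ordering $T^2_j\in D_\ell(T^1_i,H^1_i,H^2_j)$, etc.) is itself translation-invariant. This is immediate from the explicit description of $D_\ell, D_r, D'_\ell, D'_r$ in (\ref{eq:DlDr}) and (\ref{eq:DlDr-rev}), whose dependence on $t_1$ or $t_2$ is purely through the affine relation $t_2\le t_1-t^*$ (or symmetric variants) with $t^*$ a function of $h_1,h_2$ only; shifting $t_1$ and $t_2$ by the same $u$ preserves the defining inequalities. No further obstacle arises.
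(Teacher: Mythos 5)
Your proof is correct and follows exactly the route the paper intends: the paper states this lemma without a written proof, remarking only that stationarity is ``inherited from the stationarity of the head point process $\Hcal$,'' and your argument supplies precisely the missing details (shift-equivariance of the intersection coordinates, of the half-ellipse void condition, and of the left/right ordering sets $D_\ell, D_r$). No gaps; the deferral of the finiteness of the intensity to Theorem~\ref{theorem:HO2speed-main} matches the paper's own logical ordering.
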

Let $\Rcal_l$ be the point processes on $\R$ made of the abscissas of the right most head points corresponding to pure handovers, for $l\in \{1,2\}$. \label{notation:Rcall}\!\!\! Let $\Rcal^{(k)}_{l,r}$ be the point process made of the abscissas of the right most point of a pair of heads corresponding to a mixed handover, for $l\neq r, k\in \{1,2\}$ and the right-most among the two heads, is of type $r$ and left-most is of type $l$.\label{notation:Rcallrk}\! The time-stationarity of the point processes $\Rcal_l$, for $l\in \{1,2\}$, and $\Rcal^{(k)}_{l,r}$, for $l\neq r, k\in \{1,2\}$, is also inherited from that of $\Hcal$.

Based on the bijection for the single-speed case in Lemma~\ref{lemma:VR-MTP}, in the following, we also have the same bijection among the handover point processes
and the point processes corresponding to the right-most head points. This is crucial for deriving mass transport principle extending Lemma~\ref{lemma:VR-MTP}.
\begin{lemma}
\begin{enumerate}[(i).]
    \item \label{bijection-i}For all $l\in \{1,2\}$, there exists a bijection $\beta_l$ between $\Vcal_l$ and $\Rcal_l$.
    \item \label{bijection-ijk}  For all $l\neq r, k\in \{1,2\}$, there exists a bijection $\beta^{(k)}_{l,r}$ between $\Vcal^{(k)}_{l,r}$ and $\Rcal^{(k)}_{l,r}$.
\end{enumerate}
\label{lemma:vr}
\end{lemma}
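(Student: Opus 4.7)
The plan is to construct both bijections directly from the explicit enumerations used in the definitions of $\Vcal_l$ in (\ref{eq:VcalHPPll}) and of $\Vcal^{(k)}_{l,r}$ in (\ref{eq:VcalHPPlmk}) and (\ref{eq:VcalHPPlmks}). In each case, every atom of the handover point process is already written, by construction, as contributed by a specific ordered pair of heads satisfying an emptiness event; the bijection will simply forget the handover abscissa and record instead the abscissa of the right-most head of the pair.

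For part~(\ref{bijection-i}), in the pure case, I will invoke Property~(\ref{Obs4}) (the unique intersection property for same-type radial birds): any two birds of type $l$ meet at exactly one point. Consequently each atom of $\Vcal_l$ is in one-to-one correspondence with an ordered pair $\bigl((T^l_j,H^l_j),(T^l_i,H^l_i)\bigr) \in (\Hcal^l)^2$ with $T^l_j < T^l_i$ for which the event $A(\hat S,\hat H)$ of empty half-ellipse $E^{\hat S,v_l}_{\hat H}$ holds. The map $\beta_l$ is then defined by sending the handover abscissa $\hat S$ to $T^l_i$, and this is exactly the way $\Rcal_l$ is defined; reading the correspondence backwards, each atom of $\Rcal_l$ returns a unique handover pair and thus a unique abscissa in $\Vcal_l$.

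For part~(\ref{bijection-ijk}), the same idea applies, the only additional input being the intersection criteria (Lemma~\ref{lemma:2int} and Lemma~\ref{lemma:2int-rev}), which state that two birds of different types meet at exactly two points whenever they meet at all. For each atom of $\Vcal^{(k)}_{l,r}$, the summation in (\ref{eq:VcalHPPlmk})/(\ref{eq:VcalHPPlmks}) isolates a unique ordered pair of heads, one of type $l$ to the left and one of type $r$ to the right, whose $k$-th intersection satisfies the corresponding empty half-ellipse event; the bijection $\beta^{(k)}_{l,r}$ is then defined by mapping this handover abscissa $\hat S^k$ to the abscissa of the right-most head (of type $r$), which is precisely the recipe defining $\Rcal^{(k)}_{l,r}$.

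The main technical obstacle is making sure that the type label $\bbinom{k}{l,r}$ together with the right-most head recovers the handover unambiguously, even though a single pair can produce two handovers (when both intersections pass the half-ellipse test) and a given head can be the right-most head in several valid pairs. The discrete index $k\in\{1,2\}$ handles the first issue by separating the two intersections of a pair, while the second is not really an obstacle because $\Rcal^{(k)}_{l,r}$ is, by definition, the point process of right-most head abscissas \emph{counted with the multiplicity inherited from the handovers they index}, so the bijection is simply the tautological identification between an enumeration of handovers and an enumeration of their right-most heads. Finally, measurability and $\theta_t$-compatibility of $\beta_l$ and $\beta^{(k)}_{l,r}$ are inherited from the joint time-stationarity of the underlying processes (Lemma~\ref{lemma:stationary_Tcal2}), which makes these bijections directly usable in the multi-type extension of the mass transport argument of Lemma~\ref{lemma:VR-MTP} needed later.
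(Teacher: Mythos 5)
Your construction is exactly the argument the paper relies on: the lemma is stated without a separate proof precisely because each atom of $\Vcal_l$ (resp.\ $\Vcal^{(k)}_{l,r}$) is, by the defining double sums (\ref{eq:VcalHPPll}), (\ref{eq:VcalHPPlmk}) and (\ref{eq:VcalHPPlmks}), indexed by a unique ordered pair of heads (plus the intersection label $k$ in the mixed case), so that recording the right-most head's abscissa is a tautological bijection onto $\Rcal_l$ (resp.\ $\Rcal^{(k)}_{l,r}$), just as for the single-speed bijection $\beta$ preceding Lemma~\ref{lemma:VR-MTP}. Your handling of the two intersections via the index $k$, of possible multiplicity of right-most heads, and of $\theta_t$-compatibility matches the paper's intent, so the proposal is correct and takes essentially the same route.
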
 
Let $\La_l$ denote the intensity of $\Vcal_l$ for  $l\in\{1,2\}$.\label{notation:Lal}\!\!  We also denote the intensities of $\Vcal^{(k)}_{l,r}$ by $\La^{(k)}_{l,r}$ for $l\neq r, k\in \{1,2\}$.\label{notation:Lalrk}\! We obtain the equality of intensities as a corollary:
\begin{corollary}
\begin{enumerate}[(i).]
    \item For $l=1,2$, the intensities of the point processes $\Vcal_l$ and $ \Rcal_l$ are equal: $\la_{\Rcal_l}=\la_{\Vcal_l}:= \La_l$.
     \item   For all $l\neq r $ and $ k\in \{1,2\}$, the intensities of the point processes $\Vcal_{l,r}^{(k)}$ and $ \Rcal_{l,r}^{(k)}$ are equal: $\la_{\Rcal_{l,r}^{(k)}}=\la_{\Vcal_{l,r}^{(k)}}:=\La_{l,r}^{(k)}$.
\end{enumerate} 
\label{corollary:eqVR2}
\end{corollary}
\subsubsection{Handover frequency}\label{subsubsection:HOPP2f}
Suppose there are two types of stations with speed $v_1$ and $v_2$, respectively. Without loss of generality, assume that $v_1>v_2$. The total handover frequency is~\label{notation:La}
\[
\la_\Vcal= \sum_{l\in \{1,2\}}\La_l+\sum_{l,r, k=1,2\,\mbox{:}\, l\neq r}\La^{(k)}_{l,r}.
\]
We now present the result about the handover frequencies in the two-speed case. We use the same technique as in the proof of Theorem~\ref{theorem:handover_freq1} to obtain the pure handover frequency, whereas the mixed handover frequency requires a slightly different approach. Nevertheless, one can analyze both pure and mixed handovers, only using the head point processes $\Hcal^1, \Hcal^2$.
\begin{theorem}[Handover frequency: two-speed case]
Suppose there are two types of stations with intensities $\la_1, \la_2$ and speeds $v_1> v_2$. Let $\la_1+\la_2=\la$. Then we have:
\begin{enumerate}[(i).]
\item \label{MH2p1} The pure handover frequencies are 
\begin{equation}
     \Lambda_l= \frac{4v_l\sqrt{\la}}{\pi}\left(\frac{\la_l}{\la}\right)^2, \text{ for }l=1,2.
\end{equation}
\item \label{MH2p2}  The mixed handover frequencies satisfies the relations $\Lambda^{(1)}_{1,2}=\Lambda^{(2)}_{2,1}$ and $\Lambda^{(2)}_{1,2}=\Lambda^{(1)}_{2,1}$.
\item \label{MH2p3} For $k=1,2$,
\begin{align}
\Lambda^{(k)}_{2,1}&= 4\la_1\la_2v_1v_2\int_{0}^{\infty}\left[\int_0^{h_1}\int_{t^*}^\infty e^{-\la\pi\hat h^2_k(t, h_1, h_2)}  \,{\rm d}t \, {\rm d}h_2  +\int_{h_1}^{\infty}\int_{0}^{\infty} e^{-\la\pi\hat h^2_k(t, h_1, h_2)}\,{\rm d}t \, {\rm d}h_2 \right] {\rm d}h_1 ,
\end{align}
\end{enumerate}
where $\hat h^2_k$ as in (\ref{eq:HO_distance1}), being a function of $t, h_1, h_2$.
%
\label{theorem:HO2speed-main}
\end{theorem}
\begin{proof}[Proof of Theorem~\ref{theorem:HO2speed-main}~part~(\ref{MH2p1})]For handovers of the pure type, choose a speed, say $v_1$ in $\{v_1, v_2\}$. We have the contribution from the pure handovers which is similar to the one derived for the single-speed case as in Theorem~\ref{theorem:handover_freq1}. By applying the multivariate Campbell Mecke formula for the factorial power of order 2 of $\Hcal^1$, we obtain
\begin{align}
\Lambda_1&=(2v_1\la_1)^2 \int_{0}^{1}  \int_{0}^{\infty} \int_{-\infty}^{t_1} \int_{0}^{\infty} \E^{1,(t_1, h_1), (t_2, h_2)}_{\Hcal}\left[\one_{A(s,h)} \left(\Hcal^1+\Hcal^2\right)\right] \, {\rm d}h_2 \, {\rm d}t_2 \, {\rm d}h_1 \, {\rm d}t_1 \nn\\
&=4v_1^2\la_1^2 \int_{0}^{1}  \int_{0}^{\infty} \int_{-\infty}^{t_1} \int_{0}^{\infty}\E^1_{\Hcal}\left[\one_{A(s,h)} \left(\Hcal^1+\delta_{(t_1, h_1)}+\delta_{(t_2, h_2)}+\Hcal^2\right)\right] \, {\rm d}h_2 \, {\rm d}t_2 \, {\rm d}h_1 \, {\rm d}t_1 ,
%
%
\label{eq:h1b11}
\end{align}
where $A(s,h)$ is the event that there is a handover, at time $s$ and at a distance $h$, between the stations corresponding to the radial birds with heads located at $(t_1, h_1)$ and $(t_2, h_2)$.

Throughout this article, we make the following notational convention, which simply means that a head point at $(t,h)$ will naturally inherit the information about the type, whenever the point mass $\delta_{(t,h)}$ is written on the right of $\Hcal^1$ or $\Hcal^2$. For $i\neq j\in \{1,2\}$,
\begin{enumerate}[(1).]
\item $\Hcal^i {+} \delta_{(t_1, h_1)} {+} \delta_{(t_2, h_2)} {+} \Hcal^j$ is the two-point Palm version of $\Hcal^i{+}\Hcal^j$, with $(t_1,h_1), (t_2,h_2)$ of type $i$. \label{notation:2PHl}
\item   $\Hcal^i+\delta_{(t_1, h_1)}+\Hcal^j+\delta_{(t_2, h_2)}$ is the two-point Palm version of $\Hcal^i+\Hcal^j$, with $(t_1,h_1), (t_2,h_2)$ of type $i$ and $j$, respectively. \label{notation:2PHlm}
\item $\Hcal^i+\delta_{(t_1, h_1)}+\delta_{(t_2, h_2)}+\Hcal^j= \Hcal^j+\Hcal^i+\delta_{(t_1, h_1)}+\delta_{(t_2, h_2)}$  and $\Hcal^i+\delta_{(t_1, h_1)}+\Hcal^j+\delta_{(t_2, h_2)}= \Hcal^j+\delta_{(t_2, h_2)}+\Hcal^i+\delta_{(t_1, h_1)}$.
\end{enumerate}
This convention applies to multi-speed case as well. In (\ref{eq:h1b11}), the expectation $\E^1_\Hcal$ is under the probability measure \label{notation:2PHlP}
\begin{equation}
\P^{1, (t_1, h_1), (t_2,h_2)}_{\Hcal}:=\P^{(t_1, h_1), (t_2,h_2)}_{\Hcal^1}\otimes \P_{\Hcal^2},
\label{eq:2P-Palm1}
\end{equation}
which is the two-point Palm probability measure on $\Hcal$, conditioned on the two points $(t_1,h_1), (t_2,h_2)$ in $\Hcal^1$.
%
We can find the coordinate $(s,h)$ of the intersection point of two radial birds in terms of their heads at $(t_1, h_1)$ and $(t_2, h_2)$, similarly to (\ref{eq:handover-time}) and (\ref{eq:h}). By Lemma~\ref{lem:semiellipse2}, a handover happens at $(s,h)$ if and only if the regions $E^{s,v_1}_{h}, E^{s,v_2}_{h}$ have no point of $\Hcal^{1}, \Hcal^{2}$ respectively. Since $\Hcal^1$ is a Poisson point process, by the Slivnyak-Mecke theorem, the probability of such an event $A(s,h)$, in (\ref{eq:h1b11}), is
\begin{align}
\E^{1}_{\Hcal}\left[\one_{A(s,h)} \left(\Hcal^{1}+\delta_{(t_1, h_1)}+\delta_{(t_2, h_2)}+\Hcal^2\right)\right]
&=\P_{\Hcal^{1}}\left(\Hcal^{1}(E^{s,v_1}_{h})=0\right)\P_{\Hcal^{2}}\left( \Hcal^{2}(E^{s,v_2}_{h})=0\right)\nn\\
&= e^{-2\la_1 v_1\pi h^2/{2v_1} } e^{-2\la_2 v_2\pi h^2/{2v_2} }=e^{-\la\pi h^2},
\label{eq:void1}
\end{align}
where $\la=\la_1+\la_2$. The open regions $E^{s,v_1}_{h}$ and $E^{s,v_2}_{h}$ should have no head points of their respective types, for the point $(s, h)$ to be a handover point.

Then the frequency of the handovers is obtained from the proof of Theorem~\ref{theorem:handover_freq1} as
\begin{align}
\Lambda_1&= 4\la_1^2 v_1^2\int_{0}^{1} \int_{0}^{\infty}\int_{0}^{\infty}\int_{-\infty}^{t_1} e^{-\la \pi h^2} {\rm d}t_2 \, {\rm d}h_2 \, {\rm d}h_1 \, {\rm d}t_1 \nn\\
&= \left(\frac{\la_1}{\la}\right)^2\times 4\la^2 v_1^2\int_{0}^{1} \int_{0}^{\infty}\int_{0}^{\infty}\int_{-\infty}^{t_1} e^{-\la \pi h^2} {\rm d}t_2 \, {\rm d}h_2 \, {\rm d}h_1 \, {\rm d}t_1 
%
%
%
%
%
=\left(\frac{\la_1}{\la}\right)^2\times  \frac{4v_1\sqrt{\la}}{\pi},
\label{eq:Lam1}
\end{align}
since the multiple integral in (\ref{eq:Lam1}) along with the factor $4v_1^2\la^2$, equals $\frac{4v_1\sqrt{\la}}{\pi}$, which is the handover frequency in the single-speed case with speed $v_1$, see Remark~\ref{remark:vspeed}. Similarly we can prove that 
\begin{equation}
      \Lambda_{2}= \frac{4v_2\sqrt{\la}}{\pi} \left(\frac{\la_2}{\la}\right)^2,
    \label{eq:lam1-2}
\end{equation}
by performing the same computations as above for $\La_1$, using the two-point Palm probability measure $\P^{2, (t_1, h_1), (t_2,h_2)}_{\Hcal}:=\P_{\Hcal^1}\otimes \P^{(t_1, h_1), (t_2,h_2)}_{\Hcal^2}$.
\end{proof}
\begin{proof}[Proof of Theorem~\ref{theorem:HO2speed-main}~part~(\ref{MH2p2})]Let us first prove that $\La^{(1)}_{1,2}=\La^{(2)}_{2,1}$. The radial birds corresponding to speeds $v_1$ and $v_2$, with the assumption $v_1>v_2$, are referred to as {\em fast radial birds} and {\em slow radial birds}, respectively.

Let $(T^1_i, H^1_i), (T^2_j, H^2_j)$ be the locations of the heads of the radial birds $C_1, C_2$ corresponding to two stations moving at speeds $v_1, v_2$, respectively. Just to make sure intersection happens between $C_1$ and $C_2$, we assume the following. Given the triple $(T^2_j, H^2_j, H^1_i)$,  $T_i^1\in D'_\ell(T^2_j, H^2_j, H^1_i)$, when $T^1_i<T^2_j$ or $T_i^1\in D'_r(T^2_j, H^2_j, H^1_i)$, when $T^1_i>T^2_j$.  On the other hand, given the triple $(T^1_i, H^1_i, H^2_j)$, $T_j^2\in D_\ell(T^1_i, H^1_i, H^2_j)$ when $T^2_j<T^1_i$, or $T_j^2\in D_r(T^1_i, H^1_i, H^2_j)$ when $T^2_j>T^1_i$. The pairs of sets $(D_\ell, D_r)$ and $(D'_\ell, D'_r)$ are defined as in (\ref{eq:DlDr}) and (\ref{eq:DlDr-rev}), respectively. We say that $C_1$ lies on the left (resp. right) of $C_2$ if $T_i^1\leq T_j^2$ (resp. $T_j^2< T_i^1$). Given the pair $(T^1_i, H^1_i), (T^2_j, H^2_j)$ satisfying the intersection criteria (Lemma~\ref{lemma:2int}), consider the events $A^{(k)}_{1,2}$ and $A^{(k)}_{2,1}$, for $k\in \{1,2\}$ defined by 
\begin{align}
A^{(k)}_{1,2}:=\left\{\text{handover at } (S^k_{i,j},H^k_{i,j}) \text{ : } T^1_i\leq T^2_j\right\}, 
A^{(k)}_{2,1}:=\left\{\text{handover at } (S^k_{j,i},H^k_{j,i}) \text{ : } T^2_j\leq T^1_i\right\},
\label{eq:A12k}
\end{align}
where for $k\in \{1,2\}$, $(S^k_{i,j},H^k_{i,j})$,   and $(S^k_{j,i}, H^k_{j,i})$, are the pairs of intersection points of the birds $C_1, C_2$, when $T^1_i< T^2_j$  and $T^1_i\geq  T^2_j$, respectively, which are given by (\ref{eq:HOs}). In words, $A^{(k)}_{1,2}$ is the event that there is a handover of type $\binom{k}{1,2}$, given by the $k$-th intersection of $C_1, C_2$, with the fast radial bird on the left of the slow one, and $A^{(k)}_{2,1}$ is the event that there is a handover of type $\binom{k}{2,1}$, i.e., a handover due to the $k$-th intersection of $C_1, C_2$, with the slow radial bird being on the left of the fast one. By definition, the mixed handover frequencies are 
\begin{eqnarray}
    \Lambda^{(1)}_{1,2}=\E\Big[\sum_{(T^2_j,H^2_j)\in \Hcal^2 \,\mbox{:}\, T^2_j\in[0,1]}\,\, \sum_{(T^1_i,H^1_i)\in \Hcal^1 \,\mbox{:}\, T^1_i\in D'_\ell(T^2_j, H^2_j, H^1_i)} \one_{A^{(1)}_{1,2}}\Big],  
\label{eq:h1va1}
\end{eqnarray}
\begin{eqnarray}
\Lambda^{(2)}_{2,1}= \E\Big[\sum_{(T^1_i,H^1_i)\in \Hcal^1 \,\mbox{:}\, T^1_i\in[0,1]}\,\, \sum_{(T^2_j,H^2_j)\in \Hcal^2 \,\mbox{:}\, T^2_j\in D_\ell(T^1_i, H^1_i, H^2_j)} \one_{A^{(2)}_{2,1}}\Big]. 
\label{eq:h1vb2}
\end{eqnarray}
where the sets $D_\ell$ and $D'_\ell$ are as defined in (\ref{eq:DlDr}) and (\ref{eq:DlDr-rev}), respectively. Alternatively, when counting backward in time, due to the symmetry seen in Figure~\ref{figure:sym_birdss}, it is also true that
\begin{eqnarray}
\Lambda^{(1)}_{1,2}=\E\Big[\sum_{(T^2_j,H^2_j)\in \Hcal^2 \,\mbox{:}\, T^2_j\in[0,1]}\,\, \sum_{(T^1_i,H^1_i)\in \Hcal^1 \,\mbox{:}\, T^1_i\in D'_r(T^2_j, H^2_j, H^1_i)} \one_{A^{(2)}_{2,1}}\Big],%
\label{eq:h1v1}
\end{eqnarray}
\begin{eqnarray}
\Lambda^{(2)}_{2,1}= \E\Big[\sum_{(T^1_i,H^1_i)\in \Hcal^1 \,\mbox{:}\, T^1_i\in[0,1]}\,\, \sum_{(T^2_j,H^2_j)\in \Hcal^2 \,\mbox{:}\, T^2_j\in D_r(T^1_i, H^1_i, H^2_j)} \one_{A^{(1)}_{1,2}}\Big],
\label{eq:h1vb1}
\end{eqnarray}
where the sets $D_r$ and $D'_r$ are as defined in (\ref{eq:DlDr}) and (\ref{eq:DlDr-rev}), respectively.

The two definitions (\ref{eq:h1va1}) and (\ref{eq:h1v1}) are equivalent since, counting the number of handovers given by the first intersection between the pair of radial birds located at $(T^1_i,H^1_i)$ and $(T^2_j,H^2_j)$, with $T_i^1<T_j^2$, is equivalent to counting backward in time the number of handovers given by the second intersection between the pair of radial birds located at $(T^1_i,H^1_i)$ and $(T^2_j,H^2_j)$, with $T_j^2<T_i^1$. By the same reasoning, the two definitions (\ref{eq:h1vb2}) and (\ref{eq:h1vb1}) are also equivalent.

\begin{figure}[ht!]
    \centering
\begin{tikzpicture}[scale=0.55, every node/.style={scale=0.7}]
\pgftransformxscale{1}  
\pgftransformyscale{1}    
    \draw[->] (-12, 0) -- (4, 0) node[right] {$t$};
    \draw[dashed] (-4.5, 7) -- (-4.5, -0.2)node[below]{$0$};
    \draw[blue, dashed] (-12, 2) -- (4, 2);
    \draw[<-] (2, 1.95) -- (2, 1.3)node[below]{$h_2$};
    \draw[<-] (2, 0.05) -- (2, 0.8);
     \draw[<-] (2.8, 2.45) -- (2.8, 1.6)node[below]{$h_1$};
    \draw[<-] (2.8, 0.05) -- (2.8, 1.1);
   \draw[blue, domain=-3.8:3.8, smooth] plot (\x, {((3/2)^2*\x*\x+2^2)^0.5});
    \draw[blue](0,2) node{$\bullet$};
    \draw[](0.2,1.8) node{$(t_2,h_2)$};
   \draw[blue,dashed, domain=-12.8:-5.2, smooth] plot (\x, {((3/2)^2*\x*\x+(3/2)^2*18*\x+(3/2)^2*9^2+2^2)^0.5});
    \draw[blue](-9,2) node{$\bullet$};
    \draw[](-9,1.8) node{$(-t_2,h_2)$};
    \draw[red, domain=-3.5:-0.5, smooth] plot (\x, {(16*\x*\x+64*\x+64+2.5^2)^0.5});
    \draw[red](-2,2.5) node{$\bullet$};
     \draw[red,dashed, domain=-8.5:-5.5, smooth] plot (\x, {(16*\x*\x+16*14*\x+16*7^2+2.5^2)^0.5});
    \draw[red](-7,2.5) node{$\bullet$};
    \draw[](-6.5,3) node{$(-t_1,h_1)$};
    %
    \draw[red](-2,2.5) node{$\bullet$};
    \draw[red, dashed] (-12, 2.5) -- (4, 2.5);
    \draw[](-3,3) node{$(t_1,h_1)$};
    \draw[](-8.2,3.2) node{$(-\hat s_1,\hat h_1)$};
    \draw[](-6.9,5) node{$(-\hat s_2,\hat h_2)$};
    \draw[](-0.7,3.2) node{$(\hat s_1,\hat h_1)$};
    \draw[](-3.8,5) node{$(\hat s_2,\hat h_2)$};
    \end{tikzpicture}
    \caption{The mixed handovers seen in reverse time.}
\label{figure:sym_birdss}
\end{figure}
We now show that $\Lambda^{(1)}_{1,2}=\Lambda^{(2)}_{2,1}$. Due to the bijection $\beta_{1,2}^{(1)}$ in part~(\ref{bijection-ijk}) of Lemma~\ref{lemma:vr}, one can associate each mixed intersection to the right-most head points or equivalently to the left-most head point. Let $\Lcal^{(1)}_{1,2}$ be the point process on $\R$ made of the abscissas of the left-most head point corresponding to the handovers with the faster bird being on the left of the slower one.\label{notation:Lcallrk}\! Both the point processes $\Rcal^{(1)}_{1,2}$ and $\Lcal^{(1)}_{1,2}$ are stationary with respect to the time axis, which follows from the stationarity of $\Vcal^{(1)}_{1,2}$. Thus we can redefine the mixed handover frequency $\Lambda^{(1)}_{1,2}$ using the point process $\Lcal^{(1)}_{1,2}$, as 
\begin{equation}
\E\Big[\sum_{T_j^2\in \Rcal^{(1)}_{1,2} \,\mbox{:}\, T^2_j\in[0,1]}\,\, 1\Big]= \Lambda^{(1)}_{1,2} =\E\Big[\sum_{T_i^1\in \Lcal^{(1)}_{1,2} \,\mbox{:}\, T^1_i\in[0,1]}\,\, 1\Big]
\label{eq:Lcal-Rcal}
\end{equation}
and hence, from (\ref{eq:Lcal-Rcal}), we have
\begin{align}
\Lambda^{(1)}_{1,2}&=\E\Big[\sum_{T_i^1\in \Lcal^{(1)}_{1,2} \,\mbox{:}\, T^1_i\in[0,1]}\,\, 1\Big] \nn\\
&=\E\Big[\sum_{(T^1_i,H^1_i)\in \Hcal^1 \,\mbox{:}\, T^1_i\in[0,1]}\,\, \one_{\exists (T^2_j,H^2_j)\in \Hcal^2 \,\mbox{:}\, T^2_j\in D_r(T^1_i, H^1_i, H^2_j)} \one_{A^{(1)}_{1,2}}\Big] \nn\\
&=\E\Big[\sum_{(T^1_i,H^1_i)\in \Hcal^1 \,\mbox{:}\, T^1_i\in[0,1]}\,\, \sum_{(T^2_j,H^2_j)\in \Hcal^2 \,\mbox{:}\, T^2_j\in D_r(T^1_i, H^1_i, H^2_j)} \one_{A^{(1)}_{1,2}}\Big]\nn\\
&\stackrel{(\ref{eq:h1vb2}),( \ref{eq:h1vb1})}{=}\E\Big[\sum_{(T^1_i,H^1_i)\in \Hcal^1 \,\mbox{:}\, T^1_i\in[0,1]}\,\, \sum_{(T^2_j,H^2_j)\in \Hcal^2 \,\mbox{:}\, T^2_j\in D_\ell(T^1_i, H^1_i, H^2_j)} \one_{A^{(2)}_{2,1}}\Big] = \Lambda^{(2)}_{2,1}.
\label{eq:L12-L21}
\end{align}
We can prove the other equality $\Lambda^{(2)}_{1,2}= \Lambda^{(1)}_{2,1}$ similarly.
\end{proof} 
\begin{proof}[Proof of Theorem~\ref{theorem:HO2speed-main}~part~(\ref{MH2p3})]
Let us now compute the contribution of mixed handovers by evaluating $\Lambda^{(1)}_{2,1}$ and $\Lambda^{(2)}_{2,1}$. By applying the multivariate Campbell Mecke formula to the factorial power of order 2 of $\Hcal$ in (\ref{eq:h1va1}), similarly to what was done for Equation (\ref{eq:h1b}) in Theorem~\ref{theorem:handover_freq1} for the single-speed case, we obtain that, for $k=1,2$,
\begin{align} 
\!\!\!\!\Lambda^{(k)}_{2,1}
&= 4\la_1\la_2v_1v_2\int_{0}^{1}\!\! \int_{0}^{\infty}\!\!\int_{0}^{\infty}\!\! \int_{D_\ell(t_1, h_1, h_2)}\!\!\!
\E^{(t_1, h_1), (t_2,h_2)}_{\Hcal}\left[\one_{A^{(k)}_{2,1}}\left(\Hcal^1+\Hcal^2\right)\right] {\rm d}t_2 \,   {\rm d}h_2 \, {\rm d}h_1 \, {\rm d}t_1 \nn\\
&=4\la_1\la_2v_1v_2\int_{0}^{1}\!\! \int_{0}^{\infty}\!\!\int_{0}^{\infty}\!\! \int_{D_\ell(t_1, h_1, h_2)} \!\!\!\E_{\Hcal}\left[\one_{A^{(k)}_{2,1}}\left(\Hcal^1+\delta_{(t_1, h_1)}+\Hcal^2+\delta_{(t_2, h_2)}\right)\right] {\rm d}t_2 \,   {\rm d}h_2 \, {\rm d}h_1 \, {\rm d}t_1 ,
\label{eq:h3v}
\end{align}
where the probability measure is \label{notation:2PHlmP}
\begin{equation}
\P^{(t_1, h_1), (t_2,h_2)}_{\Hcal}:=\P^{(t_1, h_1)}_{\Hcal^1}\otimes \P^{(t_2,h_2)}_{\Hcal^2},
\label{eq:2P-Palm3}
\end{equation} 
namely the two-point Palm probability measure on $\Hcal$ with the first point belonging to $\Hcal^1$ and the second point belonging to $\Hcal^2$. Separating the integral (\ref{eq:h3v}) into the cases, $h_2\in [0, h_1]$, for which $D_\ell(t_1, h_1, h_2)=(-\infty, t_1-t^*]$, and $h_2\in (h_1, \infty)$, for which $D_\ell=(-\infty, t_1]$, from Lemma~\ref{lemma:2int}, we obtain
\begin{align} 
\!\!\!\Lambda^{(k)}_{2,1} &=  4\la_1\la_2v_1v_2\!\int_{0}^{1} \int_{0}^{\infty}\!\int_{0}^{h_1} \int_{-\infty}^{t_1-t^*}\E_{\Hcal}\left[\one_{A^{(k)}_{2,1}}\left(\Hcal^1+\delta_{(t_1, h_1)}+\Hcal^2+\delta_{(t_2, h_2)}\right)\right]\, {\rm d}t_2  \, {\rm d}h_2 \, {\rm d}h_1 \, {\rm d}t_1  \nn\\
&\quad+ \; 4\la_1\la_2v_1v_2\!\int_{0}^{1} \int_{0}^{\infty}\int_{h_1}^{\infty} \int_{-\infty}^{t_1} \E_{\Hcal}\left[\one_{A^{(k)}_{2,1}}\left(\Hcal^1+\delta_{(t_1, h_1)}+\Hcal^2+\delta_{(t_2, h_2)}\right)\right]\, {\rm d}t_2  \, {\rm d}h_2 \, {\rm d}h_1 \, {\rm d}t_1 ,
\label{eq:h3va1}
\end{align}
where $t^*$ as defined in (\ref{eq:tstar}).
Using the intersection points $(\hat s_1, \hat h_1), (\hat s_2, \hat h_2)$ of the radial birds, determined in (\ref{eq:HOs}) and (\ref{eq:HO_distance1}), we can redefine the event $A^{(k)}_{2,1}$, from  (\ref{eq:A12k}), for $k=1,2$, as
\begin{equation}
A^{(k)}_{2,1}:=\{(\hat s_k, \hat h_k) \text{ is a handover point}\}.
\label{eq:2HOPk}
\end{equation}
Under the two-point Palm probability measure $\P^{(t_1, h_1), (t_2,h_2)}_{\Hcal}$,
the required probability of the events $A^{(k)}_{2,1}$, for $k=1,2$,
turns out to be a function of $t_1-t_2$, given $h_1,h_2$.
This will be clear later in (\ref{eq:MH1}) in the proof of the following lemma. 
\begin{lemma} 
Suppose $h_1, h_2\in \R^+$ and $t_1, t_2\in \R$ such that $t_2\leq t_1$, for $h_1<h_2$ and $t_2\leq t_1-t^*$, for $h_1>h_2$, where $t^*$ is defined in (\ref{eq:tstar}). Under the two-point Palm probability measure $\P^{(t_1, h_1), (t_2,h_2)}_{\Hcal}$, the probability of the event $A^{(k)}_{2,1}$,  for $k=1,2$, is 
\begin{align}\E_{\Hcal}\left[\one_{A^{(k)}_{2,1}}\left(\Hcal^1+\delta_{(t_1, h_1)}+\Hcal^2+\delta_{(t_2, h_2)}\right)\right]&=e^{-\la\pi\hat h^2_k},\nn
\end{align}
where $\hat h_k$ is determined in (\ref{eq:HO_distance1}) as a function of $t_1-t_2, h_1, h_2$.
\label{lemma:HOProb}
\end{lemma}
Using Lemma~\ref{lemma:HOProb}, we can simplify (\ref{eq:h3va1}) to get 
\begin{align}
\Lambda^{(k)}_{2,1}&=4\la_1\la_2v_1v_2\int_{0}^{1}\int_{0}^{\infty}\int_0^{h_1}\int_{-\infty}^{t_1-t^*} e^{-\la\pi\hat h^2_k(t_1- t_2, h_1, h_2)} \,{\rm d}t_2 \, {\rm d}h_2 \, {\rm d}h_1 \, {\rm d}t_1 \nn\\
&\quad + 4\la_1\la_2v_1v_2\int_{0}^{1}\int_{0}^{\infty}\int_{h_1}^{\infty}\int_{-\infty}^{t_1} e^{-\la\pi\hat h^2_k(t_1- t_2, h_1, h_2)} \,{\rm d}t_2 \, {\rm d}h_2 \, {\rm d}h_1 \, {\rm d}t_1 .
\label{eq:h4v1}
\end{align}
Making the change of variable $t_1-t_2=t$ in the expression of $\hat h_k(t_1-t_2,h_1,h_2)$ in (\ref{eq:h4v1}) yields
\begin{align}
\Lambda^{(k)}_{2,1} &=4\la_1\la_2v_1v_2\int_{0}^{\infty} \!\left[ \int_0^{h_1} \!\!\! \int_{t^*}^\infty \!\!\! e^{-\la\pi\hat h^2_k(t, h_1, h_2)} {\rm d}h_2 \, {\rm d}h_1  {+}   \int_{h_1}^{\infty}\!\!\! \int_{0}^{\infty} \!\!\! e^{-\la\pi\hat h^2_k(t, h_1, h_2)}{\rm d}h_2 \, {\rm d}h_1\right]{\rm d}t.
\label{eq:h4v}
\end{align}
The total the handover frequency is 
\begin{equation}
\la_\Vcal= \frac{4\sqrt{\la}}{\pi}  \sum_{l\in \{1,2\}} v_l\left(\frac{\la_l}{\la}\right)^2+2\sum_{l< r,k\in \{1,2\}}\La^{(k)}_{l,r}.
\label{eq:THF}
\end{equation}
This completes the proof of Theorem~\ref{theorem:HO2speed-main}, provided Lemma~\ref{lemma:HOProb} holds.
\end{proof}
\begin{proof}[Proof of Lemma~\ref{lemma:HOProb}]For the two intersection points $(\hat s_1, \hat h_1), (\hat s_2, \hat h_2)$, given by (\ref{eq:HOs}) and (\ref{eq:HO_distance1}), we have, for the event $A^{(k)}_{2,1}$ with the new definition in (\ref{eq:2HOPk}),
\begin{align}
\E_{\Hcal}\left[\one_{A^{(k)}_{2,1}}\left(\Hcal^1+\delta_{(t_1, h_1)}+\Hcal^2+\delta_{(t_2, h_2)}\right)\right]&=\P^{(t_1,h_1),(t_2,h_2)}_{\Hcal}\left((\hat s_k, \hat h_k) \text{ is a handover point}\right).
\label{eq:HO-12}
\end{align}
\begin{figure}[ht!]
 \centering
      \begin{subfigure}[t]{0.45\linewidth}
       \centering
      \begin{tikzpicture}[scale=0.6, every node/.style={scale=0.7}]
\pgftransformxscale{0.8}  
    \pgftransformyscale{0.8}    
    \draw[->] (-3, 0) -- (7, 0) node[right] {$t$};
   \draw[blue, domain=-3.8:3.8, smooth] plot (\x, {((3/2)^2*\x*\x+2^2)^0.5});
    \draw[blue](0,2) node{$\bullet$};
        \draw[](0.2,1.6) node{$(t_2,h_2)$};
    \draw[red, domain=0.5:3.5, smooth] plot (\x, {(16*\x*\x-64*\x+64+2.5^2)^0.5});
    \draw[red](2,2.5) node{$\bullet$};
    \draw[](2.8,2.5) node{$(t_1,h_1)$};
    \draw[](0.7,3.3) node{$(\hat s_1,\hat h_1)$};
    \draw[](4,5.2) node{$(\hat s_2,\hat h_2)$};
     \draw[blue]  (-0.47,0) arc (-180:-360:2 and 3.07);
     \draw[red]  (2.35,0) arc (0:180:0.8 and 3.07);
     \draw[blue](-1.2,0.6) node{$E^{\hat s_1,v_2}_{\hat h_1}$};
     \draw[red](1.5,0.6) node{$E^{\hat s_1,v_1}_{\hat h_1}$};
    \end{tikzpicture}
    \caption{The regions \red{$E^{s_1,v_1}_{\ell_1}$} and \blue{$E^{s_1,v_2}_{\ell_1}$} must be empty of heads from \red{fast} and \blue{slow} stations respectively, in case the mixed intersection $(\hat s_1, \hat h_1)$ represents an handover.}
    \label{subfigure:mixed_birds1}
    \end{subfigure}
    \hspace{0.1in}
    \centering
      \begin{subfigure}[t]{0.45\linewidth}
       \centering
      \begin{tikzpicture}[scale=0.6, every node/.style={scale=0.7}]
\pgftransformxscale{0.8}  
    \pgftransformyscale{0.8}    
    \draw[->] (-3, 0) -- (7, 0) node[right] {$t$};
   \draw[blue, domain=-3.8:3.8, smooth] plot (\x, {((3/2)^2*\x*\x+2^2)^0.5});
    \draw[blue](0,2) node{$\bullet$};
    \draw[](0.2,1.6) node{$(t_2,h_2)$};
    \draw[red, domain=0.5:3.5, smooth] plot (\x, {(16*\x*\x-64*\x+64+2.5^2)^0.5});
    \draw[red](2,2.5) node{$\bullet$};
    \draw[](2.8,2.5) node{$(t_1,h_1)$};
    \draw[](0.7,3.3) node{$(\hat s_1,\ell_1)$};
    \draw[](4,5.2) node{$(\hat s_2,\hat h_2)$};
    \draw[blue](0.7,0.6) node{$E^{\hat s_2,v_2}_{\hat h_2}$};
     \draw[red](2.6,0.6) node{$E^{\hat s_2,v_1}_{\hat h_2}$};
     \draw[red]  (1.85,0) arc (-180:-360:1.25 and 5.05);
     \draw[blue]  (6.5,0) arc (0:180:3.4 and 5.05);
    \end{tikzpicture}
    \caption{The regions \red{$E^{\hat s_2,v_1}_{\hat h_2}$} and  \blue{$E^{\hat s_2,v_2}_{\hat h_2}$} must be empty of heads from \red{fast} and \blue{slow} stations respectively, in case the mixed intersection $(\hat s_2, \hat h_2)$ represents an handover.}
    \label{subfigure:mixed_birds2}
    \end{subfigure}
    \caption{Mixed handovers due to first and second intersection of birds of two types.}
    \label{figure:mixed_birdss}
\end{figure}

In the rest of the proof, we evaluate the probability in (\ref{eq:HO-12}), for $k=1,2$. Let us now assume that $t_1> t_2$ and $v_1>v_2$ and consider that $(\hat s_k, \hat h_k)$ is a handover point. The upper half ellipses corresponding to $(\hat s_k,\hat h_k)$ are given by,
\begin{equation}
v_1^2(t-\hat s_k)^2+h^2=\hat h_k^2 \text{ and }v_2^2(t-\hat s_k)^2+h^2=\hat h_k^2
\label{eq:ep1}
\end{equation}
and the corresponding regions are $E^{\hat s_k, v_1}_{\hat h_k}$ and $E^{\hat s_k, v_2}_{\hat h_k}$, respectively. For a point $(\hat s_k, \hat h_k)$ to be a handover point, we need that the regions $E^{\hat s_k, v_1}_{\hat h_k}$ and $E^{\hat s_k, v_2}_{\hat h_k}$ be empty of points from $\Hcal^1$ and $\Hcal^2$, respectively (see Figure~\ref{figure:mixed_birdss}). Since $\Hcal^1, \Hcal^2$ are Poisson point processes, the corresponding probability is
\begin{align}
\P^{(t_1,h_1),(t_2,h_2)}_{\Hcal}\left((\hat s_k, \hat h_k) \text{ is a handover point}\right)&=\prod_{l=1,2}\P_{\Hcal^l}\left( \Hcal^l(E^{\hat s_k,v_l}_{\hat h_k})=0\right)\nn\\
&= \prod_{l=1,2}e^{-2\la_lv_l\frac{\pi \hat h_k^2}{2v_l}}= e^{-\pi \hat h_k^2(\la_1+\la_2)}=e^{-\pi\la \hat h_k^2},
\label{eq:MH1}
\end{align}
since $\la_1+\la_2=\la$.
\end{proof}
\begin{remark}
We cannot directly recover the handover frequency for the single-speed case from the two-speed case. However we can use part of the above proof to recover the result. For this assume that, $v_1=v_2=v$, $\la_1=p\la, \la_2=(1-p)\la$, for some $p\in [0, 1]$, then the sum of the first two term in (\ref{eq:h4v}) is 
$\Lambda_1+\Lambda_2=\frac{4v\sqrt{\la}}{\pi}\left(p^2+(1-p)^2\right)$. The rest of the contribution concerns the mixed handover frequency as a sum  $\sum_{l\neq r,k\in \{1,2\}}\Lambda^{(k)}_{l,r}= \Lambda_{1,2}+\Lambda_{2,1}$, where $\Lambda^{(2)}_{1,2}=\Lambda_{1,2}$, $\Lambda^{(1)}_{2,1}=\Lambda_{2,1}$, $\Lambda^{(1)}_{1,2}=0$ and $ \Lambda^{(2)}_{2,1}=0$, since both birds are of the same type and hence intersect exactly once. Using the equality $\Lambda_{1,2}=\Lambda_{2,1}$, we have in total $\sum_{l\neq r,k\in \{1,2\}}\Lambda^{(k)}_{l,r}= 2\Lambda_{2,1}$, where
\begin{align}
\Lambda_{2,1}&= 4p(1-p)\la^2 v^2 \int_{0}^{\infty}\int_{0}^{\infty} \int_0^{\infty} e^{-\la\pi \hat h^2}\, {\rm d}t  \, {\rm d}h_2 \, {\rm d}h_1 ,\nn
\end{align}
where, $\hat h^2= \frac{1}{4}\left[v^2t^2+2(h_1^2+h_2^2)+\frac{(h_2^2-h_1^2)^2}{v^2t^2}\right]$, given $t_1-t_2=t$ and $ h_1, h_2$. We employ the usual analysis as earlier in (\ref{eq:Lam1}) to get
\begin{align}
\Lambda_{2,1}&=p(1-p)\times  4\la^2v^2\int_{(\R^+)^3} e^{-\la\pi \hat h^2}\, {\rm d}t \, {\rm d}h_2 \, {\rm d}h_1  = p(1-p) \frac{4v\sqrt{\la}}{\pi}.\nn
\end{align}
In total we have 
\[
\la_\Vcal=\Lambda_1+\Lambda_2+2\Lambda_{2,1}=\frac{4v\sqrt{\la}}{\pi}\left(p^2+(1-p)^2\right)+2p(1-p) \frac{4v\sqrt{\la}}{\pi}= \frac{4v\sqrt{\la}}{\pi},
\]
which is indeed the handover frequency in the single-speed case.
\end{remark}
\begin{remark}[$\la_1=\la/2=\la_2$]
Consider the case of equal proportions of stations of different speeds, i.e., $\la_1=\la/2=\la_2$. Then, from (\ref{eq:h4v}), we can simplify the term $\Lambda^{(k)}_{2,1}$ for $k=1,2$ as  
\begin{align}
\!\!\!\Lambda^{(k)}_{2,1}
&=\la^2v_1v_2\int_{0}^{\infty}\!\left[\int_0^{h_1}\!\int_{t^*}^\infty e^{-\pi\la \hat h_k^2} {\rm d}t \, {\rm d}h_2   + \int_{h_1}^{\infty}\!\int_{0}^{\infty} e^{-\pi\la \hat h_k^2} {\rm d}t \, {\rm d}h_2 \right] {\rm d}h_1 ,
\label{eq:2speed-HF1}
\end{align}
where $\hat h_k\equiv \hat h_k(t,h_1, h_2)$ is derived in (\ref{eq:HO_distance1}) for both $k=1,2$. The total handover frequency is 
\[\la_\Vcal= \frac{\sqrt{\la}}{\pi}(v_1+v_2)+ 2\La^{(1)}_{2,1}+2\La^{(2)}_{2,1}.\]
\end{remark}


\section{Handover Palm distribution and applications: two-speed case}\label{sec-Distribution_birds_MS}
In this section, we focus again on the two-speed case. We first determine the Palm probability measure with respect to handovers with the goal to determine the Palm probability distribution of inter-handover times. An identical analysis allows one to come up with a result similar to the mass transport principle Lemma~\ref{lemma:VR-MTP} for the two-speed case. In these initial steps, we derive the results in the two-speed case, as a first step towards the multi-speed case. 
\subsection{Palm probability with respect to handovers}\label{subsection:Palm-Handover-2}
Without loss of generality, let us assume that $v_1>v_2$. Like in the single-speed case, we consider that the reference probability space is $(\Omega,\Fcal,\mathbb P)$, 
with $(\Omega,\Fcal)$ the canonical space of head point process on $\mathbb{H}^+$. Each $\omega\in \Omega$ is a realization of the point process 
$\Hcal=\sum_{l\in \{1,2\}} \Hcal^l$. We equip this measure space with the shift $\{\theta_t\}_{t\in \mathbb{R}}$, along the time axis, defined as
\begin{equation}
\theta_t(\Hcal)=\theta_t(\Hcal^1)+\theta_t(\Hcal^2)=\sum_i \delta_{(T^{(1)}_i-t,H^{(1)}_i)}+ \sum_i \delta_{(T^{(2)}_i-t,H^{(2)}_i)}.
\label{eq:shift_Hcalc12}
\end{equation}
The law $\mathbb P$ of the head point process $\Hcal$ is left invariant by this shift and the latter is also ergodic for $\mathbb P$. 
The handover point process $\Vcal$ is constructed as in Subsubsection~\ref{subsubsection:constV}, more precisely as in (\ref{eq:VcalHPP}) using the pure handover point processes $\Vcal_l$, for $l=1,2$, and the mixed handover point processes   $\Vcal^{(k)}_{l,m}$, for $l\neq m, k\in \{1,2\}$. The stationarity, or in other words the $\theta_t$-compatibility of $\Vcal$, is established in Lemma~\ref{lemma:stationary_Tcal2}, and $\Vcal$ has a positive and finite intensity from Theorem~\ref{theorem:HO2speed-main}. We denote the Palm probability w.r.t. $\Vcal$ on the probability space $(\Omega,\Fcal)$ as $\mathbb P_{\Vcal}^0$. In what follows, we describe a variant of the mass transport principle, Lemma~\ref{lemma:VR-MTP}, for the two-speed case.

Recall the point processes $\Rcal_l$, for $l\in \{1,2\}$, and $\Rcal^{(k)}_{l,r}$, for $l\neq r, k\in \{1,2\}$, corresponding to the right-most head points for different types of handovers. The same mass transport principle, Lemma~\ref{lemma:VR-MTP}, under the bijections defined in Lemma~\ref{lemma:vr} applies to all the individual Palm probability measures, stated in the following. 
\begin{lemma}
For all non-negative measurable functions $f$ on $(\Omega,\Fcal)$, we have:
\begin{enumerate}[(i).]
\item \label{beta-i} For all $l\in \{1,2\}$,
\begin{equation}
\E_{\Vcal_l}^0 [f (\Hcal)]=\E_{\Rcal_l}^0 \left[f (\theta_{\beta_l(0)}(\Hcal))\right], 
\label{eq:VR-i}
\end{equation}
where $\beta_l$ is the bijection  between the point processes $\Vcal_l$ and  $\Rcal_l$, corresponding to handovers of pure type $l$.
\item \label{beta-ijk} For all $l\neq r, k\in \{1,2\}$,
\begin{equation}
\E_{\Vcal^{(k)}_{l,r}}^0 [f (\Hcal)]=\E_{\Rcal^{(k)}_{l,r}}^0 \left[f (\theta_{\beta^{(k)}_{l,r}(0)}(\Hcal))\right], 
\label{eq:VR-ijk}
\end{equation}
where $\beta^{(k)}_{l,r}$ is the bijection between the point processes $\Vcal_{l,r}^{(k)}$ and $\Rcal_{l,r}^{(k)}$, corresponding to handovers of mixed type $\binom{k}{l,r}$.
\end{enumerate}
\label{lemma:VR-ijkC}
\end{lemma}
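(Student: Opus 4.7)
The plan is to carry out the same mass-transport argument that produced Lemma~\ref{lemma:VR-MTP} in the single-speed case, performed separately on each of the six stationary pairs of point processes arising from the decomposition $\Vcal=\sum_{l}\Vcal_l+\sum_{l\neq r,k}\Vcal^{(k)}_{l,r}$ and the corresponding right-most-head point processes $\Rcal_l$ and $\Rcal^{(k)}_{l,r}$. All prerequisites are already in place: each handover process is a measurable factor of the stationary head process $\Hcal=\Hcal^1+\Hcal^2$ and hence $\theta_t$-compatible; each has finite positive intensity by Corollary~\ref{corollary:eqVR2}; and Lemma~\ref{lemma:vr} provides the required bijections $\beta_l$ and $\beta^{(k)}_{l,r}$.

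For part (\ref{beta-i}), I would fix $l\in\{1,2\}$ and apply Theorem~6.1.35 of \cite{Baccelli-Bartek-Karray} with $\Phi=\Vcal_l$, $\Phi'=\Rcal_l$, and the test function $g(y,\omega):=f(\omega)\,\one_{\{y=\beta_l^{-1}(0)\}}$. As in the proof of Lemma~\ref{lemma:VR-MTP}, the left-hand side of the resulting mass-transport identity collapses to $\E^0_{\Vcal_l}[f(\Hcal)]$ and the right-hand side to $\E^0_{\Rcal_l}[f(\theta_{\beta_l(0)}(\Hcal))]$, yielding (\ref{eq:VR-i}). For part (\ref{beta-ijk}), I would repeat the procedure with $\Phi=\Vcal^{(k)}_{l,r}$, $\Phi'=\Rcal^{(k)}_{l,r}$, and $g(y,\omega):=f(\omega)\,\one_{\{y=(\beta^{(k)}_{l,r})^{-1}(0)\}}$, delivering (\ref{eq:VR-ijk}).

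The only point that deserves verification, but which I expect to present no conceptual obstacle, is the $\theta_t$-equivariance and measurability of each bijection as a factor of $\Hcal$. Measurability follows immediately from the explicit rational formulas for intersection coordinates ((\ref{eq:handover-time})--(\ref{eq:h}) in the pure case and (\ref{eq:HOs}) in the mixed case), composed with the empty-half-ellipse test of Lemma~\ref{lem:semiellipse2}; equivariance is a consequence of the translation property~\ref{Obs3} of the radial bird curves, which is inherited by both pure and mixed intersections. The only additional bookkeeping, in the mixed case, comes from the branching of the intersection criterion (Lemma~\ref{lemma:2int}) according to the ordering of $h_1$ and $h_2$; this does not affect the argument, since each branch is itself a measurable $\theta_t$-equivariant factor of $\Hcal$, which is all the mass-transport principle requires.
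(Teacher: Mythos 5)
Your proposal is correct and follows essentially the same route as the paper: the paper also obtains the lemma by applying the mass transport principle of Lemma~\ref{lemma:VR-MTP} (i.e.\ Theorem~6.1.35 of \cite{Baccelli-Bartek-Karray} with $g(y,\omega)=f(\omega)\one_{\{y=\beta^{-1}(0)\}}$) separately to each pair $(\Vcal_l,\Rcal_l)$ and $(\Vcal^{(k)}_{l,r},\Rcal^{(k)}_{l,r})$, using the bijections from Lemma~\ref{lemma:vr} and the intensity facts from Corollary~\ref{corollary:eqVR2}. Your additional remarks on measurability and $\theta_t$-equivariance of the bijections are consistent with, though not spelled out in, the paper's treatment.
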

Hence, we have the following decomposition of $\E_{\Vcal}^0 [f (\Hcal)]$, for any non-negative measurable function $f$ on $(\Omega,\Fcal)$, based on these different types of handovers using the Palm probability measure for the sum of multi-type independent point processes (cf. \cite[Subsection 1.6, p.~36]{Baccelli-Bremaud}),
\begin{theorem}
For all non-negative measurable functions $f$ on $(\Omega,\Fcal)$,
\begin{align}
\E_{\Vcal}^0 [f(\Hcal)]= \sum_{l\in \{1,2\}}\frac{\Lambda_l}{\la_{\Vcal}} \mathbb E_{\Vcal_l}^0 [f(\Hcal)]+ \sum_{ l\neq r, k\in\{1,2\}} \frac{\Lambda^{(k)}_{l,r}}{\la_\Vcal}\E^0_{\Vcal_{l,r}^{(k)}} [f (\Hcal)],
\label{eq:DPalm1}
\end{align}
in terms of the pure and mixed Palm probability measures $\mathbb E_{\Vcal_l}^0$, for $l\in\{1,2\}$, and $\E_{\Vcal^{(k)}_{l,r}}^0$, for $l\neq r, k\in\{1,2\}$.
\label{thm:decompfV}
\end{theorem}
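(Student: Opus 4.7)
The plan is to derive this decomposition directly from the construction of $\Vcal$ in Subsubsection~\ref{subsubsection:constV} as the disjoint superposition
\[
\Vcal = \Vcal_1 + \Vcal_2 + \sum_{\substack{l\neq r \\ k\in\{1,2\}}} \Vcal^{(k)}_{l,r},
\]
together with the Campbell--Mecke style definition of the Palm expectation of a stationary point process on $\R$. Concretely, I would start from the representation
\[
\E^0_\Vcal[f(\Hcal)] \;=\; \frac{1}{\la_\Vcal}\, \E\!\left[\sum_{s\in\Vcal\cap[0,1]} f(\theta_s(\Hcal))\right],
\]
split the inner sum according to which component of the superposition the atom $s$ belongs to, and then recognize each resulting term as $\Lambda_l\,\E^0_{\Vcal_l}[f(\Hcal)]$ or $\Lambda^{(k)}_{l,r}\,\E^0_{\Vcal^{(k)}_{l,r}}[f(\Hcal)]$ by the analogous Palm formula applied to each component. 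Dividing and multiplying by $\la_\Vcal$ gives the weighted average announced in the statement, and the weights are probabilities because of the intensity identity $\la_\Vcal = \sum_l \Lambda_l + \sum_{l\neq r,k}\Lambda^{(k)}_{l,r}$ recorded in~(\ref{eq:THF}).

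The only nontrivial point in the argument is to justify that the decomposition is \emph{simple}, in the sense that the six component point processes have almost surely pairwise disjoint supports, so that no atom of $\Vcal$ is double-counted when the inner sum is split. For this I would rely on the Poisson structure of $\Hcal = \Hcal^1+\Hcal^2$: each atom of $\Vcal_l$ or $\Vcal^{(k)}_{l,r}$ is an intersection time produced by a uniquely labelled ordered pair of head points of specified types, via the explicit formula~(\ref{eq:HOs}), which is a real-analytic function of the coordinates of the two heads. Independence and diffuseness of $\P_{\Hcal^1}\otimes\P_{\Hcal^2}$ then imply that almost surely no two distinct labelled pairs of heads yield the same intersection abscissa, so the supports of the six components are pairwise disjoint and the sum $\sum_{s\in\Vcal\cap[0,1]}$ decomposes exactly into six disjoint sums.

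Once this disjointness is established, each of the six sums is treated by the Palm definition of the corresponding stationary component. For instance, since $\Vcal_l$ is stationary with positive and finite intensity $\Lambda_l$ (Theorem~\ref{theorem:HO2speed-main}),
\[
\E\!\left[\sum_{s\in\Vcal_l\cap[0,1]} f(\theta_s(\Hcal))\right] \;=\; \Lambda_l\,\E^0_{\Vcal_l}[f(\Hcal)],
\]
and identically for each $\Vcal^{(k)}_{l,r}$. Summing the six contributions and dividing by $\la_\Vcal$ yields~(\ref{eq:DPalm1}).

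I expect the main (though modest) obstacle to be the disjointness check described above, because it is the one place where the explicit geometry of radial--bird intersections is used rather than a purely abstract superposition argument; everything else is a direct transcription of the standard Palm decomposition formula for a finite sum of jointly stationary point processes (see, e.g., \cite[Subsection~1.6]{Baccelli-Bremaud}). No new integral computation is required, and the identical pattern will later extend verbatim to the multi-speed setting by replacing $\{1,2\}$ with $[n]$.
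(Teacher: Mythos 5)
Your proposal is correct and follows essentially the same route as the paper, which simply invokes the standard Palm decomposition for a superposition of jointly stationary point processes (citing \cite[Subsection 1.6]{Baccelli-Bremaud}) applied to the definition $\Vcal=\sum_l\Vcal_l+\sum_{l\neq r,k}\Vcal^{(k)}_{l,r}$ from (\ref{eq:VcalHPP}). Your additional disjointness check is a reasonable clarification but is not strictly needed, since the superposition formula holds for any sum of jointly stationary point processes with the intensities adding as in (\ref{eq:THF}).
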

Using the Poisson structure of $\Hcal$, the pure handover Palm probability measures $\P^0_{\Vcal_l}$, for $l=1,2$, are expressed as follows, similarly to Lemma~\ref{lemma:Palm-Lcal} for the single-speed case:
\begin{lemma}
For $l=1,2$ and for all non-negative measurable functions $f$ on $(\Omega,\Fcal)$,
\begin{align}
\lefteqn{\E^0_{\Vcal_l} [f (\Hcal)]}\nn\\
& = 
\frac {4\la_l^2v_l^2} {\Lambda_{l}}
\int_{(\R^+)^3}
\E_\Hcal\left[ f\left(\theta_{\hat{s}(0,h,-t',h')}           \left(\Hcal^l+\delta_{(0,h)}+\delta_{(-t',h')}+\Hcal^m\right)\right) \prod_{i=1,2}\one_{\Hcal^i\left(E^{\hat{s}(0,h,-t',h'),v_i}_{\hat{h}(0,h,-t',h')}\right)=0}  \right] {\rm d}t' {\rm d}h' {\rm d}h ,
\nonumber
\end{align}
where, $m\neq l$ and $(\hat{s}(t,h,t',h'),\hat{h}(t,h,t',h'))$ is the coordinates of the intersection point of the two radial birds with head points $(t,h)$ and $(t',h')$.
\label{lemma:Palm-Lcal-ii}
\end{lemma}
As a general principle, for any pair of head points $(t_1,h_1),(t_2,h_2)\in \mathbb H^+$, the intersection point of the associated radial birds of the same type is denoted by $(\hat s(t_1,h_1,t_2,h_2), \hat h(t_1,h_1,t_2,h_2))$.
The intersection points are given by (\ref{eq:HOs}) and (\ref{eq:HO_distance1}) if the corresponding birds are of different types and we denote them by $(\hat s_k(t_1,h_1,t_2,h_2), \hat h_k(t_1,h_1,t_2,h_2))$ for $k=1,2$. The proof of Lemma~\ref{lemma:Palm-Lcal-ii} follows the same steps as those of Lemma~\ref{lemma:Palm-Lcal} and of part~(\ref{beta-ijk}) of Lemma~\ref{lemma:VR-ijkC}, except for the extra condition on  multiple half-ellipses being empty of head points corresponding to different speeds. Having said this, we skip the proof of the last result and move towards the more interesting case of mixed Palm probability measures $\P^0_{\Vcal_{l,r}^{(k)}}$, for $l,r,k \in \{1,2\}$ with $l\neq r$, for which we provide more details in the following:~\label{notation:hatsk1}~\label{notation:hats}
\begin{lemma}
For $l\neq r, k=1,2$ and for all non-negative measurable functions $f$ on $(\Omega,\Fcal)$,
\begin{align}
\lefteqn{ \frac{\Lambda^{(k)}_{l,r}}{4\la_1\la_2 v_1v_2} \E^0_{\Vcal^{(k)}_{l,r}} [f (\Hcal)]}\nn\\
& = \int_{0}^\infty  \int_{0}^h \int_{t^*}^\infty 
\mathbb{E}_\Hcal\Bigg[ f \left(\theta_{\hat{s}_k(0,h,-t',h')}(\Hcal^r+\delta_{(0,h)}+ \Hcal^l+\delta_{(-t',h')})\right) \prod_{i=1,2}\one_{\Hcal^i\left(E^{\hat{s}_k(0,h,-t',h'),v_i}_{\hat{h}_k(0,h,-t',h')}\right)=0}
	\Bigg] {\rm d}t'  {\rm d}h' {\rm d}h \nn\\ 
    &\, +  \int_{0}^\infty  \int_h^\infty \int_0^{\infty}
	\mathbb{E}_\Hcal\Bigg[ f \left(\theta_{\hat{s}_k(0,h,-t',h')}(\Hcal^r+\delta_{(0,h)}+ \Hcal^l+\delta_{(-t',h')})\right) \prod_{i=1,2}\one_{\Hcal^i\left(E^{\hat{s}_k(0,h,-t',h'),v_i}_{\hat{h}_k(0,h,-t',h')}\right)=0}
	\Bigg] {\rm d}t'  {\rm d}h' {\rm d}h ,
\nn
\end{align}
where, $(\hat{s}_k(t,h,t',h'),\hat{h}_k(t,h,t',h'))$
are the coordinates of the $k$-th intersection of the two radial birds with heads $(t,h)$ and $(t',h')$ of type $r$ and $l$, respectively, $t^*=\frac{1}{v_1v_2}(h^2-h'^2)^\half (v_1^2-v_2^2)^\half$.
\label{lemma:Palm-Lcal-ij}
\end{lemma}
\begin{remark}
Because of the integration range, the expression on the right integral does not include the region where there is no intersection of the radial birds with their heads at $(0,h)$ and $(-t',h')$. This is justified using the intersection criteria, Lemma~\ref{lemma:2int} and Lemma~\ref{lemma:2int-rev}. 
\end{remark}
Let us first derive the result for $l=2,r=1$ and $k=1,2$, i.e., for $\E^0_{\Vcal^{(k)}_{2,1}} [f(\Hcal)]$. We prove it in the other case $l=1,r=2$ and $k=1,2$, i.e., for $\E^0_{\Vcal^{(k)}_{1,2}} [f(\Hcal)]$, in Appendix~\ref{subsection:l1r2k}.
\begin{proof}[Proof of Lemma~\ref{lemma:Palm-Lcal-ij}, $l=2,r=1$ and $k=1,2$] The proof follows a similar steps as those of the proof of Lemma~\ref{lemma:Palm-Lcal}, under some adaptation. Suppose  $(T^1_i,H^1_i)$ and $(T^2_j,H^2_j)$ in the support of $\Hcal^1, \Hcal^2$, respectively, such that $T_j^2 \in D_\ell(T_i^1, H_i^1, H_j^2)$. Let
$(\hat{S}^{(k)}_{2,1},\hat{H}^{(k)}_{2,1})$, in short written as $(\hat{S}^{(k)},\hat{H}^{(k)})$, denote the coordinates of the $k$-th intersection of the two radial birds with these heads.
By (\ref{eq:VR-ijk}) and the definition of $\P_{\Rcal^{(k)}_{2,1}}^0$,
\begin{align}
\E^0_{\Vcal^{(k)}_{2,1}} [f (\Hcal)]
=\E^0_{\Rcal^{(k)}_{2,1}} \left[f (\theta_{\beta^{(k)}_{2,1}(0)}(\Hcal))\right]
& = \frac 1 {\Lambda_{2,1}^{(k)}}
\mathbb{E} \left[ \sum_{T^1_i\in \Rcal^{(k)}_{2,1} \,\mbox{:}\, 0\le T^1_i \le 1} f \left(\theta_{\beta^{(k)}_{2,1}(0)} \circ \theta_{T^1_i}(\Hcal)\right)\right]\nn\\
&=\frac 1 {\Lambda_{2,1}^{(k)}}
\mathbb{E} \left[ \sum_{T^1_i\in \Rcal^{(k)}_{2,1} \,\mbox{:}\, 0\le T^1_i \le 1} f \left(\theta_{T^1_i+\beta^{(k)}_{2,1}(T^1_i)}(\Hcal)\right) \right],
\label{eq:Last-Hijjj}
\end{align}
using the fact that $\theta_{\beta^{(k)}_{2,1}(0)} \circ \theta_{T^1_i}=\theta_{T^1_i+\beta^{(k)}_{2,1}(T^1_i)}$. The expectation in (\ref{eq:Last-Hijjj}) can be evaluated as

\begin{align}
\lefteqn{ \mathbb{E} \left[ \sum_{(T^1_i,H^1_i)\in \Hcal^1\,\mbox{:}\, 0\le T^1_i \le 1} 
\!\!\!\!\!\one_{\exists (T^2_j,H^2_j)\in \Hcal^2 \,\mbox{:}\, T^2_j\in D_\ell(T^1_i,H^1_i,H^2_j)}  f \left(\theta_{T^1_i+\beta^{(k)}_{2,1}(T^1_i)}(\Hcal)\right)  \prod_{l=1,2}\one_{\Hcal^l\left(E^{\hat{S}^{(k)},v_l}_{\hat{H}^{(k)}}\right)=0}\right]}\nn \\
& = \mathbb{E} \left[ \sum_{(T^1_i,H^1_i)\in \Hcal^1\,\mbox{:}\, 0\le T^1_i \le 1}\,\sum_{(T^2_j,H^2_j)\in \Hcal^2 \,\mbox{:}\, T^2_j\in D_\ell(T^1_i,H^1_i,H^2_j)}
f \left(\theta_{T^1_i+\beta^{(k)}_{2,1}(T^1_i)}(\Hcal)\right) \prod_{l=1,2}\one_{\Hcal^l\left(E^{\hat{S}^{(k)},v_l}_{\hat{H}^{(k)}}\right)=0}
\right]\nn \\
& = 
\mathbb{E} \left[ \sum_{(T^1_i,H^1_i)\in \Hcal^1 \,\mbox{:}\,  0\le T^1_i \le 1} \;
\sum_{(T^2_j,H^2_j)\in \Hcal^2 \,\mbox{:}\, T^2_j\in D_\ell(T^1_i,H^1_i,H^2_j)}
f \left(\theta_{\hat{S}^{(k)}}(\Hcal)\right) \prod_{l=1,2}\one_{\Hcal^l\left(E^{\hat{S}^{(k)},v_l}_{\hat{H}^{(k)}}\right)=0}
	  \right],
\label{eq:Last-Hijj}
\end{align}
since $T^1_i+\beta^{(k)}_{2,1}(T^1_i)=\hat S^{(k)}$. Applying the Campbell-Mecke formula for the factorial power of order 2 based on the two-point Palm probability measure $\P^{(t, h), (t',h')}_{\Hcal}=\P^{(t, h)}_{\Hcal^1}\otimes \P^{(t',h')}_{\Hcal^2}$, we can evaluate the expectation in (\ref{eq:Last-Hijj}) as
\begin{align}
\lefteqn{\hspace{-0.18in}4\la_1\la_2 v_1 v_2 \!\!
\int_{0}^1 \!\! \int_{0}^\infty \!\! \int_{0}^\infty\!\! \int_{D_\ell(t,h,h')} \!\!\! 
\mathbb{E}^{(t,h),(t',h')}_\Hcal\Bigg[f \left(\theta_{\hat{s}_k(t,h,t',h')}(\Hcal)\right)
\prod_{l=1,2}\one_{\Hcal^l\left(E^{\hat{s}_k(t,h,t',h'),v_l}_{\hat{h}_k(t,h,t',h')}\right)=0} 
\Bigg] {\rm d}t'  {\rm d}h' {\rm d}h {\rm d}t}\label{eq:Last1-H} \\
& \hspace{-0.18in}=4\la_1\la_2 v_1 v_2	\int_{0}^1 \int_{0}^\infty  \int_{0}^\infty \int_{D_\ell(t,h,h')} 
	\mathbb{E}_\Hcal\Bigg[ f \left(\theta_{\hat{s}_k(t,h,t',h')}(\Hcal^1+\delta_{(t,h)}+ \Hcal^2+\delta_{(t',h')})\right)\nn\\
    &\hspace{3.3in} \times \prod_{l=1,2}\one_{\Hcal^l\left(E^{\hat{s}_k(t,h,t',h'),v_l}_{\hat{h}_k(t,h,t',h')}\right)=0}
	\Bigg] {\rm d}t'  {\rm d}h' {\rm d}h {\rm d}t,
\label{eq:Last-Hij}
\end{align}
where the set $D_\ell(t,h,h')$ is defined as in (\ref{eq:DlDr}). In case $h<h'$, we have $D_\ell(t,h,h')=(-\infty, t]$ but in case $h\geq h'$, we have $D_\ell(t,h,h')= (-\infty, t-t^*]$, where $t^*$ is as defined in (\ref{eq:tstar}). This simplifies the multiple integral in (\ref{eq:Last-Hij}) to
\begin{align}
\lefteqn{\int_{0}^1 \!\!\int_{0}^\infty  \!\! \int_{0}^h \!\! \int_{-\infty}^{t-t^*} \!\!
\mathbb{E}_\Hcal\Bigg[ f \left(\theta_{\hat{s}_k(t,h,t',h')}(\Hcal^1+\delta_{(t,h)}+ \Hcal^2+\delta_{(t',h')})\right) \prod_{l=1,2}\one_{\Hcal^l\left(E^{\hat{s}_k(t,h,t',h'),v_l}_{\hat{h}_k(t,h,t',h')}\right)=0}
\Bigg] {\rm d}t'  {\rm d}h' {\rm d}h {\rm d}t}\nn\\ 
&+ \int_{0}^1\!\! \int_{0}^\infty\!\!  \int_h^\infty\!\! \int_{-\infty}^{t} \!\!
\mathbb{E}_\Hcal\Bigg[ f \left(\theta_{\hat{s}_k(t,h,t',h')}(\Hcal^1+\delta_{(t,h)}+ \Hcal^2+\delta_{(t',h')})\right)  \prod_{l=1,2}\one_{\Hcal^l\left(E^{\hat{s}_k(t,h,t',h'),v_l}_{\hat{h}_k(t,h,t',h')}\right)=0}
\Bigg] {\rm d}t'  {\rm d}h' {\rm d}h {\rm d}t,\nn
\end{align}
which can later be transformed to
\begin{align}
\lefteqn{\int_{0}^\infty  \int_{0}^h \int_{t^*}^\infty 
\mathbb{E}_\Hcal\Bigg[ f \left(\theta_{\hat{s}_k(0,h,-t',h')}(\Hcal^1+\delta_{(0,h)}+ \Hcal^2+\delta_{(-t',h')})\right)  \prod_{l=1,2}\one_{\Hcal^l\left(E^{\hat{s}_k(0,h,-t',h'),v_l}_{\hat{h}_k(0,h,-t',h')}\right)=0}
\Bigg] {\rm d}t'  {\rm d}h' {\rm d}h} \nn\\ 
&+  \int_{0}^\infty  \int_h^\infty\!\! \int_0^{\infty}\!\!
\mathbb{E}_\Hcal\Bigg[ f \left(\theta_{\hat{s}_k(0,h,-t',h')}(\Hcal^1+\delta_{(0,h)}+ \Hcal^2+\delta_{(-t',h')})\right) \prod_{l=1,2}\one_{\Hcal^l\left(E^{\hat{s}_k(0,h,-t',h'),v_l}_{\hat{h}_k(0,h,-t',h')}\right)=0}
\Bigg] {\rm d}t'  {\rm d}h' {\rm d}h ,
\label{eq:Last-Hiji}
\end{align}
by making the change of variable $t-t'$ to $t'$, where $t^*=\frac{1}{v_1v_2} (h^2-h'^2)^\half (v_1^2-v_2^2)^\half $. We obtain the result for $l=2,r=1$ and $k=1,2$, by combining the three equations (\ref{eq:Last-Hijj}), (\ref{eq:Last-Hij}) and (\ref{eq:Last-Hiji}). 
\end{proof}
\begin{remark}
For the unit function, i.e., $f(\Hcal)=1$, we can check that $E_{\Vcal}^0 [f (\Hcal)]=1$. Indeed, using Lemma~\ref{lemma:Palm-Lcal-ii} and part~(\ref{MHnp1}) of Theorem~\ref{thm:HOnspeed} we have
\begin{align}
\!\!\!\!\! \E^0_{\Vcal_l} [f (\Hcal)]
& = 
\frac {4\la_l^2v_l^2} {\Lambda_{l}}
\int_{(\R^+)^3}
\E_\Hcal\left[ \prod_{i=1,2} \one_{\Hcal^i\left(E^{\hat{s}(0,h,-t',h'),v_i}_{\hat{h}(0,h,-t',h')}\right)=0} \right] {\rm d}t' \, {\rm d}h'  \, {\rm d}h \nn\\
&=\frac {4\la_l^2v_l^2} {\Lambda_{l}}
\int_{(\R^+)^3}
e^{-\la \pi \hat{h}^2(0,h,-t',h')} {\rm d}t' \, {\rm d}h'  \, {\rm d}h  = \frac{4v_l\sqrt{\la}}{\pi \Lambda_{l}}  \left(\frac{\la_l}{\la}\right)^2 =1.
\end{align}
Using Lemma~\ref{lemma:Palm-Lcal-ij}, we also have
\begin{align}
\E^0_{\Vcal^{(k)}_{l,r}} [f (\Hcal)]
& = 
\frac {4\la_1\la_2 v_1v_2} {\Lambda^{(k)}_{l,r}}
\int_{0}^\infty  \int_{0}^h \int_{t^*}^0 
\mathbb{E}_\Hcal\Bigg[  \prod_{i=1,2}\one_{\Hcal^i\left(E^{\hat{s}_k(0,h,-t',h'),v_i}_{\hat{h}_k(0,h,-t',h')}\right)=0}
\Bigg] {\rm d}t'  {\rm d}h' {\rm d}h \nn\\ 
& \qquad + \frac {4\la_1\la_2 v_1v_2} {\Lambda^{(k)}_{l,r}}  \int_{0}^\infty  \int_h^\infty \int_0^{\infty}
\mathbb{E}_\Hcal\Bigg[ \prod_{i=1,2}\one_{\Hcal^i\left(E^{\hat{s}_k(0,h,-t',h'),v_i}_{\hat{h}_k(0,h,-t',h')}\right)=0}
\Bigg] {\rm d}t'  {\rm d}h' {\rm d}h \nn\\
& = 
\frac {4\la_1\la_2 v_1v_2} {\Lambda^{(k)}_{l,r}}
\int_{0}^\infty \left[ \int_{0}^h \int_{t^*}^\infty 
e^{-\la \pi\hat{h}^2_k(0,h,-t',h')} {\rm d}t'  {\rm d}h'  +    \int_h^\infty \int_0^{\infty}
	e^{-\la \pi\hat{h}^2_k(0,h,-t',h')} {\rm d}t'  {\rm d}h' \right]{\rm d}h=1,\nn
\end{align}
using the expression of $\Lambda^{(k)}_{l,r}$ from the integral in (\ref{eq:h4v}). Thus using  Theorem~\ref{thm:decompfV}, Lemma~\ref{lemma:VR-ijkC} and (\ref{eq:THF}), we have 
\begin{align}
E_{\Vcal}^0 [f (\Hcal)] &= \frac{1}{\la_{\Vcal}}  \sum_{l\in\{1,2\}} \Lambda_l
+ \frac{1}{\la_{\Vcal}}\sum_{l,r, k\in \{1,2\}: l\neq r} \Lambda^{(k)}_{l,r}=1.\nn
\end{align}
\end{remark}
\subsubsection{Decomposition of $\E_{\Vcal}^0 [f(\Hcal)]$ for symmetric functions $f$}\label{subsubsection:symmEf}
The time reversal argument also provides symmetric relations among the Palm probability measures $\E^0_{\Vcal^{(k)}_{l,r}} [f (\Hcal)]$, for pairs of values of $l\neq r, k\in \{1,2\}$. It is described in the forth-coming result, under a necessary condition that the function $f$ is {\em symmetric}, which we define as follows. Define $-\Hcal:=\sum_{i}\delta_{(-T_i, H_i)}$. We say that a function $f:\Omega \to \R^+$ is said to be {\em symmetric}, if $f(-\Hcal)=f(\Hcal)$. 
\begin{lemma} 
For all non-negative measurable and symmetric functions $f$ 
\begin{equation}
\E^0_{\Vcal^{(1)}_{1,2}} \left[f (\Hcal)\right]=\E^0_{\Vcal^{(2)}_{2,1}} \left[f (\Hcal)\right]
\text{ and }
\E^0_{\Vcal^{(2)}_{1,2}} \left[f(\Hcal)\right] =\E^0_{\Vcal^{(1)}_{2,1}} \left[f (\Hcal)\right].
\label{eq:symm12}
\end{equation}
\label{lemma:symm}
\end{lemma}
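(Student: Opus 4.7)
The plan is to exploit time reversal symmetry of the head point process. As in the statement, let $-\Hcal := \sum_i \delta_{(-T_i, H_i)}$, applied componentwise to both $\Hcal^1$ and $\Hcal^2$, and write $\iota$ for this involution on configurations. Since each $\Hcal^l$ is a homogeneous Poisson point process on $\mathbb{H}^+$ with intensity measure ${\rm d}\nu_l = v_l\,{\rm d}t \otimes 2\la_l\,{\rm d}h$, which is invariant under $t \mapsto -t$, and since $\Hcal^1$ and $\Hcal^2$ are independent, the law $\P$ of $\Hcal$ is $\iota$-invariant. This is the one probabilistic input needed.

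Next I would verify precisely how the four mixed handover point processes transform under $\iota$. Suppose the heads $(t_1,h_1) \in \Hcal^1$ and $(t_2,h_2) \in \Hcal^2$ satisfy the intersection criterion of Lemma~\ref{lemma:2int} and produce intersections $(\hat s_1, \hat h_1), (\hat s_2, \hat h_2)$ with $\hat s_1 \le \hat s_2$. By Observation~\ref{observation:equalHOdist}, the reversed pair of heads $(-t_1,h_1), (-t_2,h_2)$ produces intersections at $(-\hat s_2, \hat h_2), (-\hat s_1, \hat h_1)$ with $-\hat s_2 \le -\hat s_1$, so the ordering of the two intersections in time is swapped. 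Simultaneously, the left-right ordering of the two heads is flipped: $t_1 \le t_2$ becomes $-t_2 \le -t_1$. Consequently, a handover of type $\binom{1}{1,2}$ at time $\hat s_1$ in $\omega$ becomes a handover of type $\binom{2}{2,1}$ at time $-\hat s_1$ in $\iota(\omega)$, and similarly type $\binom{2}{1,2}$ swaps with $\binom{1}{2,1}$. At the level of point processes, this gives
\begin{equation}
\Vcal^{(1)}_{1,2}(\iota(\omega)) = -\,\Vcal^{(2)}_{2,1}(\omega), \qquad \Vcal^{(2)}_{1,2}(\iota(\omega)) = -\,\Vcal^{(1)}_{2,1}(\omega). \nonumber
\end{equation}

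The conclusion is then a direct Palm computation. Starting from
\begin{equation}
\Lambda^{(1)}_{1,2}\,\E^0_{\Vcal^{(1)}_{1,2}}[f(\Hcal)] = \E\Big[\sum_{s \in \Vcal^{(1)}_{1,2} \cap [0,1]} f(\theta_s \Hcal)\Big], \nonumber
\end{equation}
I would change variables $\omega \mapsto \iota(\omega)$ (legitimate because $\P\circ \iota^{-1} = \P$), substitute the identity above so that $s$ ranges over $-\Vcal^{(2)}_{2,1} \cap [0,1]$, i.e., $u := -s$ ranges over $\Vcal^{(2)}_{2,1} \cap [-1,0]$, and use the easily checked relation $\theta_{-u}(-\Hcal) = -\theta_u(\Hcal)$ to rewrite the expectation as $\E\big[\sum_{u \in \Vcal^{(2)}_{2,1} \cap [-1,0]} f(-\theta_u \Hcal)\big]$. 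The symmetry hypothesis $f(-\Hcal) = f(\Hcal)$ removes the minus sign, and then by time-stationarity of $\Vcal^{(2)}_{2,1}$ the resulting expectation equals $\Lambda^{(2)}_{2,1}\,\E^0_{\Vcal^{(2)}_{2,1}}[f(\Hcal)]$. Combined with the identity $\Lambda^{(1)}_{1,2} = \Lambda^{(2)}_{2,1}$ from part~(\ref{MH2p2}) of Theorem~\ref{theorem:HO2speed-main}, this gives the first equality in (\ref{eq:symm12}); the second follows verbatim from the analogous bijection between $\binom{2}{1,2}$ and $\binom{1}{2,1}$.

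The only substantive obstacle is the bookkeeping of the type swap under $\iota$, namely verifying simultaneously that the left/right ordering of the two heads flips \emph{and} that the first/second labelling of the two intersections flips. Both are essentially forced by Observation~\ref{observation:equalHOdist} and the explicit formulas (\ref{eq:HOs}) for $\hat s_1, \hat s_2$; once those are in place, the remainder of the argument is a routine change of variable under an intensity-preserving involution.
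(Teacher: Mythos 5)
Your proof is correct, and it reaches the conclusion by a somewhat different technical route than the paper. Both arguments are at bottom time-reversal arguments, but the paper implements the reversal through the mass-transport machinery: it introduces the point process $\Lcal^{(k)}_{1,2}$ of \emph{left-most} head points, the bijections $\tilde\beta^{(k)}_{1,2}$ between $\Vcal^{(k)}_{1,2}$ and $\Lcal^{(k)}_{1,2}$, and then proves a separate lemma (Lemma~\ref{lemma:betaLR}) identifying $\E^0_{\Lcal^{(1)}_{1,2}}[f(\theta_{\tilde\beta^{(1)}_{1,2}(0)}(\Hcal))]$ with $\E^0_{\Rcal^{(2)}_{2,1}}[f(\theta_{\beta^{(2)}_{2,1}(0)}(\Hcal))]$ by manipulating the Campbell sums over pairs of heads and ``counting backward in time''. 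You instead work directly with the measure-preserving involution $\iota(\Hcal)=-\Hcal$ on the configuration space, identify how the four mixed handover point processes transform under $\iota$ (namely $\Vcal^{(1)}_{1,2}\circ\iota=-\Vcal^{(2)}_{2,1}$ and $\Vcal^{(2)}_{1,2}\circ\iota=-\Vcal^{(1)}_{2,1}$), and conclude by a change of variables in the Palm inversion formula together with $\theta_{-u}(-\Hcal)=-\theta_u(\Hcal)$, the symmetry of $f$, and the intensity identity $\Lambda^{(1)}_{1,2}=\Lambda^{(2)}_{2,1}$ from Theorem~\ref{theorem:HO2speed-main}. Your version is more self-contained and arguably cleaner, since it avoids introducing $\Lcal^{(k)}_{1,2}$ and the auxiliary lemma; the paper's version has the advantage of reusing the bijection/mass-transport framework already set up in Lemma~\ref{lemma:vr} and Lemma~\ref{lemma:VR-ijkC}. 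The one point you pass over quickly is that the handover \emph{condition} is also preserved under reflection, i.e.\ that $\iota$ maps the half-ellipse $E^{\hat s,v_l}_{\hat h}$ to $E^{-\hat s,v_l}_{\hat h}$ so that the void events defining a handover correspond under $\iota$; this is immediate from the definitions (and the swap of $D_\ell$ and $D_r$ is exactly the Symmetry remark following Lemma~\ref{lemma:2int}), but it should be stated to make the identity $\Vcal^{(1)}_{1,2}\circ\iota=-\Vcal^{(2)}_{2,1}$ fully justified.
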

\begin{proof}
Let $\Lcal_{1,2}^{(k)}$ be the point process made of the abscissas of the left most head point corresponding to handovers of type $\binom{k}{1,2}$. Then, there exist bijections $\tilde \beta^{(k)}_{1,2}$ between $\Vcal^{(k)}_{1,2}$ and $\Lcal^{(k)}_{1,2}$, defined similarly to $\beta^{(k)}_{1,2}$, for $k=1,2$, as in part~(\ref{bijection-ijk}) of Lemma~\ref{lemma:vr}. Under the symmetry assumption on the measurable function $f$, 
we have the following relations:
\begin{align}
\E^0_{\Vcal^{(1)}_{1,2}} [f (\Hcal)]
&=\E^0_{\Lcal^{(1)}_{1,2}} \left[f (\theta_{\tilde \beta^{(1)}_{1,2}(0)}(\Hcal))\right]=\E^0_{\Rcal^{(2)}_{2,1}} \left[f (\theta_{\beta^{(2)}_{2,1}(0)}(\Hcal))\right]=\E^0_{\Vcal^{(2)}_{2,1}} [f (\Hcal)],
\label{eq:sym1}
\end{align}
\begin{align}
\E^0_{\Vcal^{(2)}_{1,2}} [f (\Hcal)]
&=\E^0_{\Lcal^{(2)}_{1,2}} \left[f (\theta_{\tilde \beta^{(2)}_{1,2}(0)}(\Hcal))\right]=\E^0_{\Rcal^{(1)}_{2,1}} \left[f (\theta_{\beta^{(1)}_{2,1}(0)}(\Hcal))\right]=\E^0_{\Vcal^{(1)}_{2,1}} [f (\Hcal)].
\label{eq:sym2}
\end{align}
In both relations, the third equality can be justified through a time-reversal argument, similarly to (\ref{eq:L12-L21}, proved in the following lemma proved in Appendix~\ref{subsection:L-betaLR}. This completes the proof.  
\end{proof}
\begin{lemma} For all non-negative measurable and symmetric functions $f$, we have
\begin{equation}
\!\!\!\!\!\!\E^0_{\Lcal^{(1)}_{1,2}} \left[f (\theta_{\tilde \beta^{(1)}_{1,2}(0)}(\Hcal))\right]
\!=\!\E^0_{\Rcal^{(2)}_{2,1}} \left[f (\theta_{\beta^{(2)}_{2,1}(0)}(\Hcal))\right],
\label{eq:betaLR1}
\end{equation}
\begin{equation}
\E^0_{\Lcal^{(2)}_{1,2}} \left[f (\theta_{\tilde \beta^{(2)}_{1,2}(0)}(\Hcal))\right] \!=\!\E^0_{\Rcal^{(1)}_{2,1}} \left[f (\theta_{\beta^{(1)}_{2,1}(0)}(\Hcal))\right].
\label{eq:betaLR2}
\end{equation}
\label{lemma:betaLR}
\end{lemma}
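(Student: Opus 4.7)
The plan is to use a time-reversal argument, exploiting the fact that each $\Hcal^l$ is a stationary Poisson point process on $\mathbb H^+$ whose law is invariant under the reflection $\rho:\Omega\to\Omega$ defined by $\rho(\Hcal^l):=\sum_i\delta_{(-T_i^{(l)},H_i^{(l)})}$, for $l=1,2$. The first step is to check how $\rho$ acts on the geometric objects of interest. If two radial birds of types $1$ and $2$ with heads $(t_1,h_1)$ and $(t_2,h_2)$ (with $t_1<t_2$, so of type $\binom{k}{1,2}$) intersect at $\hat s_1<\hat s_2$, then under $\rho$ the heads become $(-t_1,h_1)$ and $(-t_2,h_2)$, now with the head of type $2$ on the left, and the intersections are at $-\hat s_2<-\hat s_1$. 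Consequently, the $k$-th intersection of the original pair is mapped to the $(3-k)$-th intersection of the reversed pair, and the left-most head of the original handover becomes the right-most head of the reversed one. Hence $\rho$ induces a bijection
\[
\Lcal^{(k)}_{1,2}(\Hcal)\;\longleftrightarrow\;-\Rcal^{(3-k)}_{2,1}(\rho\Hcal),
\]
which matches the bijections $\tilde\beta^{(k)}_{1,2}$ and $\beta^{(3-k)}_{2,1}$ in the sense that if $t$ corresponds to the handover time $\hat s=\tilde\beta^{(k)}_{1,2}(t)$ in $\Hcal$, then $-t$ corresponds to $\beta^{(3-k)}_{2,1}(-t)=-\hat s$ in $\rho\Hcal$.

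The second step is to track the interaction between reflection and the time shifts. A direct computation shows $\theta_{-\hat s}(\rho\Hcal)=\rho(\theta_{\hat s}\Hcal)$, so the shifted configurations are related by the symmetry $\rho$. In particular, when $f$ is symmetric, i.e.\ $f\circ\rho=f$, one obtains
\[
f\bigl(\theta_{\tilde\beta^{(k)}_{1,2}(t)}(\Hcal)\bigr)
=f\bigl(\theta_{\beta^{(3-k)}_{2,1}(-t)}(\rho\Hcal)\bigr).
\]
This is the key identity that will allow the two Palm expectations to be matched.

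Finally, we apply the Campbell--Mecke formula. By the very definition of the Palm distribution,
\[
\Lambda^{(k)}_{1,2}\,\E^0_{\Lcal^{(k)}_{1,2}}\Bigl[f\bigl(\theta_{\tilde\beta^{(k)}_{1,2}(0)}(\Hcal)\bigr)\Bigr]
=\E\Bigl[\sum_{t\in\Lcal^{(k)}_{1,2}\cap[0,1]}f\bigl(\theta_{\tilde\beta^{(k)}_{1,2}(t)}(\Hcal)\bigr)\Bigr].
\]
Substituting the identity above, then replacing $-t$ by $s$ and using that $\rho\Hcal\stackrel{d}{=}\Hcal$, followed by the time-stationarity of $\Hcal$ (which allows the interval $[-1,0]$ to be replaced by $[0,1]$), one rewrites this sum as
\[
\E\Bigl[\sum_{s\in\Rcal^{(3-k)}_{2,1}\cap[0,1]}f\bigl(\theta_{\beta^{(3-k)}_{2,1}(s)}(\Hcal)\bigr)\Bigr]
=\Lambda^{(3-k)}_{2,1}\,\E^0_{\Rcal^{(3-k)}_{2,1}}\Bigl[f\bigl(\theta_{\beta^{(3-k)}_{2,1}(0)}(\Hcal)\bigr)\Bigr].
\]
Since $\Lambda^{(k)}_{1,2}=\Lambda^{(3-k)}_{2,1}$ by part~(\ref{MH2p2}) of Theorem~\ref{theorem:HO2speed-main}, the intensities cancel and the two equalities (\ref{eq:betaLR1}) (for $k=1$) and (\ref{eq:betaLR2}) (for $k=2$) follow. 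The main delicate point is the bookkeeping of the first step, i.e.\ confirming that reversal swaps the labels $\binom{k}{1,2}\leftrightarrow\binom{3-k}{2,1}$ while exchanging ``left-most'' and ``right-most'' heads; everything else is a routine combination of the Campbell formula with the invariance $\rho_*\mathbb P=\mathbb P$ and the symmetry of $f$.
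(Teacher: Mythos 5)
Your proposal is correct and follows essentially the same route as the paper: both rest on the time-reversal symmetry of $\Hcal$ (the paper's ``counting backwards in time''), the swap of handover labels $\binom{k}{1,2}\leftrightarrow\binom{3-k}{2,1}$ together with the exchange of left-most and right-most heads, the symmetry $f(\Hcal)=f(-\Hcal)$, and the intensity identity $\Lambda^{(k)}_{1,2}=\Lambda^{(3-k)}_{2,1}$ from Theorem~\ref{theorem:HO2speed-main} to cancel the normalizations. The only difference is presentational: you make the reflection map $\rho$ and the commutation $\theta_{-s}\circ\rho=\rho\circ\theta_s$ explicit, whereas the paper expands the Palm expectation into a double sum over head-point pairs with the empty half-ellipse indicators and invokes its earlier $D_r$/$D_\ell$ counting identities.
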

Under this symmetry in Lemma~\ref{lemma:symm}, we can re-write the decomposition by folding the symmetric terms for mixed handovers in (\ref{eq:DPalm1}), as 
\begin{align}
\E_{\Vcal}^0 [f(\Hcal)]= \sum_{l\in \{1,2\}}\frac{\Lambda_l}{\la_{\Vcal}} \mathbb E_{\Vcal_l}^0 [f(\Hcal)]+ 2 \sum_{ r<l, k\in\{1,2\}} \frac{\Lambda^{(k)}_{l,r}}{\la_\Vcal}\E^0_{\Vcal_{l,r}^{(k)}} [f (\Hcal)].
\label{eq:DPalm2}
\end{align}

\subsection{Typical handover distance in the two-speed case}\label{subsection:Palm-Handover-2H}
Let $\hat{H}$ be the random variable for the distance of the typical handover. Based on the different types of typical handovers, we have following result about the decomposition of the Laplace transform of $\hat{H}^2$ under the Palm probability measure $\P^0_\Vcal$. 
\begin{lemma} Under the Palm probability measure $\P_\Vcal^0$ of handovers, for any parameter $\gamma\geq 0$, the Laplace transform of $\hat{H}^2$ is
\begin{align}
\Lcal^0_{\hat{H}^2}(\gamma)&= \frac{1}{\la_{\Vcal}}\frac{4\sqrt{\la}}{\pi}\left(1+\frac{\gamma}{\la\pi}\right)^{-3/2} \sum_{l\in\{1,2\}} v_l\left(\frac{\la_i}{\la}\right)^2
+ \frac{8\la_1\la_2v_1v_2}{\la_{\Vcal}}\sum_{k\in \{1,2\}} \eta^{(k)}_{2,1}(\gamma),
\label{eq:LTH_0-2sp}
\end{align}
where, for $k=1,2$, $\eta^{(k)}_{2,1}(\gamma)$ is determined in (\ref{eq:H0-ij1}) and (\ref{eq:H0-ij2}), respectively.
\label{lemma:typH-2sp}
\end{lemma}
\begin{proof}[Proof of Lemma~\ref{lemma:typH-2sp}]
We apply the decomposition of the Palm probability measure with respect to handovers as in Theorem~\ref{thm:decompfV}, with the help of Lemma~\ref{lemma:Palm-Lcal-ii}, Lemma~\ref{lemma:Palm-Lcal-ij}, for the symmetric function $f(\Hcal)= e^{-\gamma \hat H^2}$. The decomposition is according to the different types of handovers. Let $\hat{H}_{l}$ is the typical handover distance corresponding to the handover of pure type $l$, for $l\in\{1,2\}$. For $l\neq r, k\in\{1,2\}$, $\hat H_{l,r}^{(k)}$ denotes the typical handover distance due to the typical handover of type $\binom{k}{l,r}$. Observation~\ref{observation:equalHOdist} supports the fact that
\begin{equation}
    \hat H_{2,1}^{(1)} \stackrel{d}{=}\hat H_{1,2}^{(2)} \text{ and }\hat H_{2,1}^{(2)} \stackrel{d}{=}\hat H_{1,2}^{(1)}.
    \label{eq:eqdist}
\end{equation}
Then using the symmetry in (\ref{eq:eqdist}) and the formula (\ref{eq:DPalm2}), the Laplace transform of $\hat{H}^2$ under the Palm probability measure $\P^0_\Vcal$ is 
\begin{align}
   \E^0_\Vcal\left[e^{-\gamma \hat{H}^2}\right] &= \sum_{l\in \{1, 2\}}\frac{\Lambda_l}{\la_{\Vcal}}\E^0_{\Vcal_l}\left[e^{-\gamma \hat{H}^2_{l} }\right]
    + 2 \sum_{k\in \{1, 2\}}\frac{\Lambda^{(k)}_{2, 1}}{\la_{\Vcal}} \E^0_{\Vcal^k_{2,1}}\left[e^{-\gamma (\hat{H}^{(k)}_{2,1})^2 }\right].
    \label{eq:h1L-2}
\end{align}
for any parameter $\gamma\geq 0$. Using calculations similar to those in Equation~(\ref{eq:Lam1}) and Remark~\ref{remark:vhatH_0}, we can obtain that
\begin{align}
\E^{0}_{\Vcal_l}\left[e^{-\gamma \hat{H}^2_{l} }\right]&=\frac{1}{\Lambda_l}\left(\frac{\la_l}{\la}\right)^2\frac{4v_l\sqrt{\la}}{\pi}\left(1+\frac{\gamma}{\la\pi}\right)^{-3/2}=\left(1+\frac{\gamma}{\la\pi}\right)^{-3/2},
\label{eq:H0i}
\end{align}
since $\Lambda_l=\left(\frac{\la_l}{\la}\right)^2\frac{4v_l\sqrt{\la}}{\pi}$. Thus we can say that, for the pure handover of type $l$, the handover distance $\hat{H}_l$ follows a Nakagami distribution with parameters $\left(\frac{3}{2}, \frac{3}{2\la\pi}\right)$, for each $l=1,2$, cf. a similar result, Lemma~\ref{lemma:typical_h_HO} for the single speed case. In what follows, we evaluate the individual Laplace transforms $\E^{0}_{\Vcal^{(1)}_{2,1}}\left[e^{-\gamma (\hat{H}^{(1)}_{2,1})^2}\right]$ and $\E^{0}_{\Vcal^{(2)}_{2,1}}\left[e^{-\gamma (\hat{H}^{(2)}_{2,1})^2 }\right]$, which is enough due to the symmetry in (\ref{eq:eqdist}). Using the formula in Lemma~\ref{lemma:Palm-Lcal-ij}, we get
\begin{align}
\lefteqn{\hspace{-0.1in}\frac{\Lambda_{2,1}^{(1)}} {4\la_1\la_2v_1v_2}\E^0_{\Vcal^{(1)}_{2,1}}\left[e^{-\gamma (\hat{H}^{(1)}_{2,1})^2 }\right]}\nn\\
&=\int_{0}^\infty  \int_{0}^{h} \int_{t^*}^\infty 
\mathbb{E}_\Hcal\Bigg[ e^{-\gamma \hat{h}^2_1(0,h,-t',h')}  \prod_{l=1,2}\one_{\Hcal^l\left(E^{\hat{s}_1(0,h,-t',h'),v_l}_{\hat{h}_1(0,h,-t',h')}\right)=0}
\Bigg] {\rm d}t'  {\rm d}h' {\rm d}h \nn\\ 
&\qquad  +  \int_{0}^\infty  \int_{h}^\infty \int_0^{\infty}
\mathbb{E}_\Hcal\Bigg[e^{-\gamma \hat{h}^2_1(0,h,-t',h')} \prod_{l=1,2}\one_{\Hcal^l\left(E^{\hat{s}_1(0,h,-t',h'),v_l}_{\hat{h}_1(0,h,-t',h')}\right)=0}
\Bigg] {\rm d}t'  {\rm d}h' {\rm d}h \nn\\
&{=}\int_{0}^{\infty}\!\!\int_0^{h}\!\!\int_{t^*}^\infty \!\!\! e^{-(\gamma+\la\pi)\hat{h}^2_1(0,h,-t',h')}\,{\rm d}t' \, {\rm d}h' \, {\rm d}h {+}  \!\!\int_{0}^{\infty}\!\!\!\int_{h}^{\infty}\!\!\!\int_0^{\infty}\!\!\! e^{-(\gamma+\la\pi)\hat{h}^2_1(0,h,-t',h')}\,{\rm d}t' \ {\rm d}h' \ {\rm d}h  {:=}  \eta^{(1)}_{2,1}(\gamma),
\label{eq:H0-ij1}
\end{align}
where $t^* =\frac{1}{v_1v_2}(h^2-h'^2)^\half (v_1^2-v_2^2)^\half$, given $h,h'$. Similarly, we can rewrite the other term as
\begin{align}
\lefteqn{\frac{\Lambda_{2,1}^{(2)}}{4\la_1\la_2v_1v_2}\E^0_{\Vcal^{(2)}_{2,1}}\left[e^{-\gamma (\hat{H}^{(2)}_{2,1})^2 }\right]}\nn\\
&= \!\! \int_{0}^{\infty} \!\! \int_0^{h} \!\! \int_{t^*}^\infty  \!\!\!\! e^{-(\gamma+\la\pi)\hat{h}^2_2(0,h,-t',h')}\,{\rm d}t' \, {\rm d}h' \, {\rm d}h  {+} \!\!  \int_{0}^{\infty} \!\! \int_{h}^{\infty} \!\! \int_0^{\infty} \!\!\!\! e^{-(\gamma+\la\pi)\hat{h}^2_2(0,h,-t',h')}\,{\rm d}t' \, {\rm d}h' \, {\rm d}h :=  \eta^{(2)}_{2,1}(\gamma),
\label{eq:H0-ij2}
\end{align}
where $t^* =\frac{1}{v_1v_2}(h^2-h'^2)^\half (v_1^2-v_2^2)^\half$, given $h, h'$. Observe that, taking $\gamma=0$ in (\ref{eq:H0-ij1}) and (\ref{eq:H0-ij2}), and later using (\ref{eq:MHn8}) we have
\begin{equation}
4\la_1\la_2v_1v_2\, \eta^{(k)}_{2,1}(0)= \Lambda_{2,1}^{(k)},
\label{eq:eta-La}
\end{equation}
for $k=1,2$. The final expression for $\Lcal^0_{\hat{H}^2}(\gamma)$ can be obtained by substituting the resultant from (\ref{eq:H0i}), (\ref{eq:H0-ij1}) and (\ref{eq:H0-ij2}) to (\ref{eq:h1L-2}) as
\begin{align}
\Lcal^0_{\hat{H}^2}(\gamma)&= \sum_{l\in\{1,2\}}\frac{\Lambda_l}{\la_{\Vcal}}\left(1+\frac{\gamma}{\la\pi}\right)^{-3/2}
+2\sum_{k\in \{1,2\}} \frac{\Lambda^{(k)}_{2, 1}}{\la_{\Vcal}} \frac{4\la_1\la_2v_1v_2}{\Lambda_{2,1}^{(k)}} \eta^{(k)}_{2,1}(\gamma)\nn\\
&=\frac{1}{\la_{\Vcal}}\frac{4\sqrt{\la}}{\pi}\left(1+\frac{\gamma}{\la\pi}\right)^{-3/2} \sum_{l\in\{1,2\}} v_l\left(\frac{\la_l}{\la}\right)^2
+ \frac{8\la_1\la_2v_1v_2}{\la_{\Vcal}}\sum_{k\in \{1,2\}}  \eta^{(k)}_{2,1}(\gamma),
\label{eq:H0-ij3}
\end{align}
where $\Lambda_l=\left(\frac{\la_l}{\la}\right)^2\frac{4v_l\sqrt{\la}}{\pi}$, for $l=1,2$ and $\eta^{(k)}_{2,1}(\gamma)$ determined in (\ref{eq:H0-ij1}) and(\ref{eq:H0-ij2}), for $k=1,2$.
\end{proof}
\begin{remark}[$\Lcal^0_{\hat{H}^2}(0)=1$]
We can perform a sanity check that $\Lcal^0_{\hat{H}^2}(0)=1$. Indeed, by taking $\gamma=0$ in (\ref{eq:H0-ij3}), we have,
\begin{align}
\Lcal^0_{\hat{H}^2}(0)= \frac{1}{\la_{\Vcal}}\frac{4\sqrt{\la}}{\pi} \sum_{l\in\{1,2\}} v_l\left(\frac{\la_l}{\la}\right)^2
+\frac{8\la_1\la_2v_1v_2}{\la_{\Vcal}}\sum_{k\in \{1,2\}}  \eta^{(k)}_{2,1}(0)
&= \frac{1}{\la_{\Vcal}}  \sum_{l\in\{1,2\}} \Lambda_l
+ \frac{2}{\la_{\Vcal}}\sum_{k\in \{1,2\}} \Lambda_{2,1}^{(k)}=1,\nn
%
\end{align}
where we have used the fact that $4\la_1\la_2v_1v_2\, \eta^{(k)}_{2,1}(0)= \Lambda_{2,1}^{(k)}$ from (\ref{eq:eta-La}), for $k=1,2$,  $\Lambda_l=\frac{4\sqrt{\la}}{\pi}  v_l\left(\frac{\la_l}{\la}\right)^2$, for $l=1,2$ and the total handover frequency derived in (\ref{eq:THF}).
\end{remark}
\begin{remark}[Single-speed case]
Using the last result, we can recover the Laplace transform of $\hat{H}^2$ under the Palm probability measure $\P^0_\Vcal$ in the single-speed case, which is $\Lcal^0_{\hat{H}^2}(\gamma)=\left(1+\frac{\gamma}{\la\pi}\right)^{-3/2}$, for any speed $v$, as shown in Lemma~\ref{lemma:typical_h_HO}. Take $v_1=v_2=v$ and $\la_1= p\la$ and $\la_2=(1-p)\la$ with $p\in[0,1]$. In (\ref{eq:LTH_0-2sp}), there is contribution from  just one of the pairs of terms  $(\eta^{(1)}_{2,1}(\gamma), \Lambda^{(1)}_{2,1})$ and  $(\eta^{(2)}_{2,1}(\gamma), \Lambda^{(2)}_{2,1})$, corresponding to the mixed handovers, since there is just one intersection between any two radial birds, almost surely. Let us denote the surviving term as  $(\eta_{2,1}(\gamma), \Lambda_{2,1})$, where $\eta_{2,1}(\gamma)$ can be determined from one of (\ref{eq:H0-ij1}) and (\ref{eq:H0-ij2}). Then, from (\ref{eq:LTH_0-2sp}), we have
\begin{align}
\Lcal^0_{\hat{H}^2}(\gamma) &= \frac{1}{\la_{\Vcal}}\frac{4\sqrt{\la}}{\pi}\left(1+\frac{\gamma}{\la\pi}\right)^{-3/2} v(p^2+(1-p)^2)
+ \frac{8\la^2v^2}{\la_{\Vcal}}p(1-p)\eta_{2,1}(\gamma)\nn\\
&= \frac{1}{\la_{\Vcal}}\frac{4\sqrt{\la}}{\pi}\left(1+\frac{\gamma}{\la\pi}\right)^{-3/2} v(p^2+(1-p)^2)
+ \frac{2}{\la_{\Vcal}}p(1-p)\frac{4v\sqrt{\la}}{\pi} \left(1+\frac{\gamma}{\la\pi}\right)^{-3/2}\nn\\
&=\frac{1}{\la_{\Vcal}}\frac{4v\sqrt{\la}}{\pi} \left(1+\frac{\gamma}{\la\pi}\right)^{-3/2} (p^2+(1-p)^2+2p(1-p))=  \left(1+\frac{\gamma}{\la\pi}\right)^{-3/2},
\label{eq:LTH_0-2sp1}
\end{align}
since $\la_\Vcal= \frac{4v\sqrt{\la}}{\pi}$, in the single-speed case and $4\la^2 v^2 \eta_{2,1}(\gamma)= \frac{4v\sqrt{\la}}{\pi} \left(1+\frac{\gamma}{\la\pi}\right)^{-3/2}$, as in (\ref{eq:H0i}), using the fact that  $t^*=0$ in one of (\ref{eq:H0-ij1}) or (\ref{eq:H0-ij2}).
\end{remark}
\subsection{Distribution of inter-handover time: two-speed case}\label{subsection:distributionT0_two}
The entire subsection is devoted to our final result about the Palm probability distribution of the {\em inter-handover time} $T$. It is based on an extension of the ideas in the proof of Theorem~\ref{theorem:T-Palm}, with the required adaptation due to the existence of different types of handovers at a typical time.
Under the Palm probability measure $\P^0_{\Vcal}$ of handovers, the {\em inter-handover time} is the duration between the handover at time $0$ and the next handover. We denote the inter-handover time by $T$. Before moving to the Palm probability distribution of the inter-handover time, we refer the reader to Subsubsection~\ref{subsubsection:geo-int} and Subsubsection~\ref{subsubsection:geo-int2} for the geometric interpretation of the intersection criteria (Subsubsection~\ref{subsubsection:IC}) and their applications. This enables us to properly manage the rise in the complexity resulting from the introduction of more than one speed.
\subsubsection{Inter-handover time} \label{subsubsection:int-HoT} Suppose for any $s\in \R$, $T_1(s)$ denotes the first time a handover happens after time $s$. To formally define the inter-handover time, suppose there is a handover at time $0$ and at distance $\hat h_0$. For ease of usage, we introduce an alternative notation for handover types. We denote the head points of the radial birds corresponding to the station that takes over the service and leaves the service, by $(t_n,h_n)$ and $(t_p,h_p)$, respectively. Here the subscript $n$ stands for {\em next} and $p$ stands for {\em previous}. We use the notation $\t_n=type(n)$, for the type of the next head point, and $\t_p=type(p)$, for the type of the previous head point. Let us define
\begin{equation}
q(t_n,t_p)\equiv q:=\begin{cases}
    1 & \text{ if } t_n\geq t_p,\\
    2& \text{ if } t_n< t_p. 
\end{cases}   
\end{equation}
Then we define $\bbinom{q}{\t_p,\t_n}$ to be a class or type of a handover where the next serving station is of type $\t_n$, the previous one is of type $\t_p$ and $q=1$ or $2$, depending on whether $t_n\geq t_p$ or $t_n<t_p$, respectively. \label{notation:bbinom1}\!\!\! The new notation for the handover types encodes geometric information about the head points responsible for handovers. The correspondence between the notation used so far and the new one is given below. Consider the pure handovers of type $l$ as $\binom{1}{l,l}$, for $l=1,2$. Then the bijection between the two notations is given in Table~\ref{tab:notation-t}.
\begin{table}[ht!]
\centering
\begin{spacedtable}{1.3}{6pt}
\begin{tabular}{|>{\centering\arraybackslash}m{2cm}|c|c|c|c|c|}
\hline
\multicolumn{1}{|c|}{} & 
\multicolumn{2}{c|}{\textbf{Old notation}} & 
\multicolumn{3}{c|}{\textbf{New notation}} \\ \hline

\multicolumn{1}{|c|}{} & 
\multicolumn{1}{c|}{\textbf{type}} & 
\multicolumn{1}{c|}{\textbf{HoPP}} &
\multicolumn{1}{c|}{\textbf{type}} & 
\multicolumn{1}{c|}{\textbf{$\t_n$}} & 
\multicolumn{1}{c|}{\textbf{HoPP}}
\\ \hline

\multirow{2}{=}{\centering\textbf{Pure handovers}} & $\binom{1}{1,1}$ & $\Vcal_{1}$ & $\bbinom{1}{1,1}$ &$\t_n=1$& $\Wcal_1$ \\ \cline{2-6}
 & $\binom{1}{2,2}$ & $\Vcal_{2}$ & $\bbinom{1}{2,2}$ &$\t_n=2$& $\Wcal_2$ \\ \cline{2-6}
\hline
\multirow{4}{=}{\centering\textbf{Mixed handovers}} & $\binom{1}{1,2}$ & $\Vcal^{(1)}_{1,2}$ & $ \bbinom{2}{2,1}$ &$\t_n=1$& $\Wcal^{(2)}_{2,1}$  \\ \cline{2-6}
 & $\binom{2}{1,2}$ & $\Vcal^{(2)}_{1,2}$ & $\bbinom{1}{1,2}$ &$\t_n=2$& $\Wcal^{(1)}_{1,2}$  \\ \cline{2-6}
 & $\binom{1}{2,1}$ & $\Vcal^{(1)}_{2,1}$ & $\bbinom{1}{2,1}$ &$\t_n=1$& $\Wcal^{(1)}_{2,1}$ \\ \cline {2-6}
& $\binom{2}{2,1}$ & $\Vcal^{(2)}_{2,1}$ & $\bbinom{2}{1,2}$ &$\t_n=2$& $\Wcal^{(2)}_{1,2}$   \\ \hline
\end{tabular}
\end{spacedtable}
\caption{Table of correspondence between old and new notations $\binom{k}{l,r}$ and $\bbinom{q}{\t_p,\t_n}$. Here HoPP stands for the handover point process.}
\label{tab:notation-t}
\end{table}

Suppose the typical handover is created by the intersection of the radial birds $C^{\t_n}_{(t_n,h_n)}$ and $C^{\t_p}_{(t_p,h_p)}$, for some head points $(t_n,h_n)$ and $(t_p,h_p)$ of type $\t_n$ and $\t_p$, respectively, for some $\t_n,\t_p\in \{1,2\}$. Note that $\t_n, \t_p$ can be equal. Beware that, unlike in Subsubsection~\ref{subsubsection:geo-int}, we do not fix here the type of the head points $(t_n,h_n)$ and $(t_p,h_p)$. The handover at time $0$ is pure and of type $\bbinom{1}{\t_p,\t_n}$, if $\t_p=\t_n$. If $\t_p\neq \t_n$, these two birds can lead to a handover of any of the types $\bbinom{1}{\t_p,\t_n}, \bbinom{2}{\t_p,\t_n}$, for $\t_p\neq \t_n\in \{1,2\}$. More precisely, for $\t_p\neq \t_n$ with $\t_n=1$ or $2$, if $t_n<t_p$, then the handover is mixed and can be of type $\bbinom{2}{2,1}$ or $\bbinom{2}{1,2}$, and if $t_n\geq t_p$, then it can be of type $\bbinom{1}{1,2}$ or  $\bbinom{1}{2,1}$.

From now onward, we only use the new notational system of $\bbinom{q}{\t_p,\t_n}$ for pure and mixed handovers. We write $v_{\t_n}$ and $v_{\t_p}$ in short as $v_n$ and $v_p$, respectively and write $C^n_{(t_n,h_n)}$ and $C^p_{(t_p,h_p)}$ in short for $C^{\t_n}_{(t_n,h_n)}$ and $C^{\t_p}_{(t_p,h_p)}$, respectively. Consider the set $\{\hat h_n(t)\}_{t\geq 0}$, where $\hat h_n(t)(\equiv \hat h_{\t_n}(t)):= \left(v_n^2(t-t_n)^2+h_n^2\right)^\half$, denotes the height of $C^n_{(t_n,h_n)}$ at time $t$. Essentially, the curve $\hat h_n(\cdot)$, as a function of time, represents the curve of the radial bird $C^{\t_n}_{(t_n,h_n)}$, which takes over. Observe that $\hat h_n(0)=(v_n^2t_n^2+h_n^2)^\half:=\hat h_0$. 

For $m\in \{1,2\}$ such that $m\neq \t_n$, let $\left\{E^{t,v_n}_{\hat h_n(t)}\right\}_{t\geq 0}$ and $\left\{E^{t,v_m}_{\hat h_n(t)}\right\}_{t\geq 0}$, be the two collections of open half-ellipses of heights $\{\hat h_n(t)\}_{t\geq 0}$. For $t\geq 0$, define the extra regions
\begin{equation}
S^n_t:=\interior{\left(\bigcup_{t'\in [0,t]}E^{t',v_n}_{\hat h_n(t')}\setminus E^{0,v_n}_{\hat h_0}\right)}\text{ and }S^m_t:=\interior{\left(\bigcup_{t'\in [0,t]}E^{t',v_m}_{\hat h_n(t')}\setminus E^{0,v_m}_{\hat h_0}\right)}.
\label{eq:SnSm}
\end{equation}
Due to their definition as unions of sets, we have:
\begin{lemma}
For $m\neq \t_n$, the collections of sets $\left\{S^n_t\right\}_{t\geq 0}$ and $\left\{S^m_t\right\}_{t\geq 0}$ are both non-decreasing in $t$.
\label{lemma:increasingSnm}
\end{lemma}
\begin{definition}[{\em Inter-handover time}]
Let $m\in\{1,2\}$ be such that $m\neq \t_n$. Then, the time of the next handover is defined as
\begin{align}
\! T&:=\inf\Big\{s>0: \forall t\in (0,s], \Hcal^n(S^n_t)=0, \Hcal^m(S^m_t)=0 \text{ and } \Hcal^n\left(\overline{S^n_s}\right)=2 \text{ or } \Hcal^m\left(\overline{S^m_s}\right)=1\Big\},
\label{eq:T1-02}
\end{align}
which coincides with $T_1(0)$, the time point of the first handover happens after time $0$. 
\label{definition:T1-2speed}
\end{definition}
Here is the rationale for the definition of the inter-handover time $T=T_1(0)$, essentially under the Palm distribution. As a consequence of the increasing property of the collections $\left\{S^n_t\right\}_{t\geq 0}$ and $\left\{S^m_t\right\}_{t\geq 0}$, presented in Lemma~\ref{lemma:increasingSnm}, we can conclude that   $\Hcal^n\left(S^n_{T_1(0)}\right)=0$ implies $\Hcal^n\left(S^n_t\right)=0$, and $\Hcal^m\left(S^m_{T_1(0)}\right)=0$ implies $\Hcal^m\left(S^m_t\right)=0$,  for all $t\in [0,T_1(0)]$. On the other hand, by the definition of $T_1(0)$ in (\ref{eq:T1-02}), the condition that $\Hcal^n\left(\overline{S^n_{T_1(0)}}\right)=2$ or $\Hcal^m\left(\overline{S^m_{T_1(0)}}\right)=1$ essentially implies that there exists a head point $(t_u,h_u)$, where $u$ stands for {\em upcoming}, of type $\t_u=\t_n$ or $m$ such that $C^n_{(t_n,h_n)}$ and $C^u_{(t_u,h_u)}$ intersect at $\left(T_1(0), \hat h_{T_1(0)}\right)$, $\Hcal^n\left(S^n_{T_1(0)}\right)=0$ and $\Hcal^m\left(S^m_{T_1(0)}\right)=0$, so that a handover happens at $T_1(0)$, and in addition to that, it is the first handover after $0$. Also, at $t=T_1(0)$, either $\Hcal^n\left(\overline{S^n_{T_1(0)}}\right)=2$ or $\Hcal^m\left(\overline{S^m_{T_1(0)}}\right)=1$, depending on whether $(t_u,h_u)$ is of type $\t_u=\t_n$ or type $\t_u=m$. 

The point $(t_u,h_u)$ can again be the previously considered head point $(t_p,h_p)$ in certain situations, where $\t_n\neq \t_p$, in which case we have $\t_u=\t_p=m$ and the condition $\Hcal^m\left(\overline{S^m_{T_1(0)}}\right)=1$, in the definition of $T_1(0)$, is anyway met. This situation appears when the typical handover is given by the first intersection of two radial birds of different types and the next handover is given by the second intersection of the same pair. The reader should refer to Figure~\ref{figure:mixed_birdij2} and Figure~\ref{figure:mixed_bird122} in the later part of this article, for better understanding of this scenario. Thus we also have the following corollary of Lemma~\ref{lemma:increasingSnm}, similar to Corollary~\ref{corollary:third-pt}, for the two-speed case:
\begin{corollary}
For the time $s$ meeting the stopping condition in the definition of $T_1(0)$ in (\ref{eq:T1-02}), there exists an a.s. unique point  $(t_u,h_u)\in \mathbb H^+$, such that the  radial bird with a head at this point, intersects the radial bird $C^n_{(t_n,h_n)}$ and produces the handover at time $T_1(0)$.
\label{corollary:third-pt2}
\end{corollary}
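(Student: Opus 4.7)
My plan is to mirror the single-speed argument of Corollary~\ref{corollary:third-pt}, lifting it to the two-speed setting by combining the monotonicity in Lemma~\ref{lemma:increasingSnm}, the stopping rule of Definition~\ref{definition:T1-2speed}, and the half-ellipse characterisation of handovers in Lemma~\ref{lem:semiellipse2}. The goal is to extract a candidate head $(t_u,h_u)$ from the configuration realising the stopping condition at $s=T_1(0)$, to show that the associated radial bird meets $C^n_{(t_n,h_n)}$ precisely at $(T_1(0),\hat h_n(T_1(0)))$, and to verify the half-ellipse void condition at that meeting point, so that Lemma~\ref{lem:semiellipse2} converts the intersection into a handover.

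For existence I will first record the pivot identity $v_n^2(t-t_n)^2+h_n^2 \equiv \hat h_n(t)^2$, which shows that $(t_n,h_n)\in\partial E^{t,v_n}_{\hat h_n(t)}$ for every $t\geq0$. Consequently $(t_n,h_n)\in\overline{S^n_t}$ for every $t>0$ (it is approached from outside $\overline{E^{0,v_n}_{\hat h_0}}$ by the sweep), contributing a constant $1$ to $\Hcal^n(\overline{S^n_t})$. Thus at $s=T_1(0)$ the rule in (\ref{eq:T1-02}) forces either exactly one further type-$n$ atom in $\overline{S^n_{T_1(0)}}\setminus\{(t_n,h_n)\}$, or one type-$m$ atom in $\overline{S^m_{T_1(0)}}$; I name this atom $(t_u,h_u)$ and let $\t_u\in\{n,m\}$. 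Combining the void condition $\Hcal^{\t_u}(S^{\t_u}_t)=0$ for all $t<T_1(0)$ with Lemma~\ref{lemma:increasingSnm} rules out $(t_u,h_u)$ lying in the open set $S^{\t_u}_{T_1(0)}$, so it must sit on the outer sweep boundary $\partial E^{T_1(0),v_{\t_u}}_{\hat h_n(T_1(0))}$. Algebraically this reads $v_{\t_u}^2(T_1(0)-t_u)^2+h_u^2=\hat h_n(T_1(0))^2$, which is precisely the statement that the point $(T_1(0),\hat h_n(T_1(0)))\in C^n_{(t_n,h_n)}$ also lies on $C^{\t_u}_{(t_u,h_u)}$, producing the claimed intersection.

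To promote this intersection to a handover I will apply Lemma~\ref{lem:semiellipse2} and check that $\Hcal^l(E^{T_1(0),v_l}_{\hat h_n(T_1(0))})=0$ for $l=1,2$, via the decomposition
\[
E^{T_1(0),v_l}_{\hat h_n(T_1(0))}=\bigl(E^{T_1(0),v_l}_{\hat h_n(T_1(0))}\cap E^{0,v_l}_{\hat h_0}\bigr)\cup\bigl(E^{T_1(0),v_l}_{\hat h_n(T_1(0))}\setminus\overline{E^{0,v_l}_{\hat h_0}}\bigr).
\]
The first piece is void of type-$l$ atoms because $(0,\hat h_0)$ is already a handover (Lemma~\ref{lem:semiellipse2} at $t=0$), while the second piece sits inside $S^l_{T_1(0)}$ and is therefore void by (\ref{eq:T1-02}). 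Almost-sure uniqueness of $(t_u,h_u)$ will then follow from the Poisson property of $\Hcal=\Hcal^1+\Hcal^2$: the simultaneous appearance of two distinct atoms on the one-parameter family of sweep boundaries $\partial S^n_t\cup\partial S^m_t$ is a null event, because each boundary has zero planar Lebesgue measure and is parametrised by the scalar time $t$.

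The main subtlety, where I expect the most care will be needed, is the mixed-handover case $\t_n\neq\t_p$ in which the upcoming handover is produced by the \emph{second} intersection of the original pair of birds, so that $(t_u,h_u)=(t_p,h_p)$ with $\t_u=\t_p=m$. The definition of $T_1(0)$ was crafted (via the count $\Hcal^m(\overline{S^m_{T_1(0)}})=1$) to absorb this reappearance, but I will need to check that $(t_p,h_p)$ really does leave $\overline{S^m_t}$ for small $t>0$ and re-enters only at the second intersection epoch, so that it is not spuriously counted earlier nor missed at $T_1(0)$. This bookkeeping will rely on the explicit description of the $v_m$-swept region along $C^n$ given by Remark~\ref{remark:union} and on the intersection criteria of Subsubsection~\ref{subsubsection:IC}.
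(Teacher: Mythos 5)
Your proposal is correct and follows essentially the same route as the paper, which states this corollary without a separate proof and justifies it by precisely the rationale you spell out: the monotone swept regions of Lemma~\ref{lemma:increasingSnm} together with the stopping rule force the new atom onto the outer sweep boundary so that its bird meets $C^n_{(t_n,h_n)}$ at $(T_1(0),\hat h_n(T_1(0)))$, the half-ellipse condition of Lemma~\ref{lem:semiellipse2} is verified by splitting $E^{T_1(0),v_l}_{\hat h_n(T_1(0))}$ into its overlap with $E^{0,v_l}_{\hat h_0}$ and the part contained in $S^l_{T_1(0)}$, and uniqueness is an almost-sure consequence of the Poisson property. The one refinement worth adding is that when $\t_n=2$ the family $\{E^{t,v_1}_{\hat h_n(t)}\}_{t\ge 0}$ is not monotone (part (ii) of Proposition~\ref{prop:monotonic-sets}), so $\overline{S^1_{T_1(0)}}\setminus S^1_{T_1(0)}$ contains, besides $\partial E^{T_1(0),v_1}_{\hat h_n(T_1(0))}$, a piece of the hyperbolic envelope of Remark~\ref{remark:union}; an atom sitting exactly on that envelope (a tangent bird) is another a.s.\ null configuration of the same kind you already exclude, so your argument goes through unchanged once this is noted.
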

\begin{remark}
The specific sub-domain of $\mathbb H^+$ where to encounter the head point in question above, will be determined in the individual cases of the Palm distribution, depending on the type of the handover. We call these sub-domains the \textbf{unexplored regions}.
\end{remark}
\subsubsection{Monotonicity properties of the collection $\left\{E^{t,v_n}_{\hat h_n(t)}\setminus E^{0,v_n}_{\hat h_0}\right\}_{t\geq 0}$} Recall the setting of Subsubsection~\ref{subsubsection:int-HoT}, where there exists a handover at time $0$ such that a station corresponding to type $\t_n$ takes over as a serving station after time $0$.   
In the following, we discuss the monotonicity properties of the collections $\left\{E^{t,v_n}_{\hat h_n(t)}\setminus E^{0,v_n}_{\hat h_0}\right\}_{t\geq 0}$ and $\left\{E^{t,v_m}_{\hat h_n(t)}\setminus E^{0,v_m}_{\hat h_0}\right\}_{t\geq 0}$ for $m\neq \t_n$, under all circumstances.
\begin{proposition} Under the foregoing assumption, we have the following monotonicity properties:
\begin{enumerate}[(i).]
    \item \label{monotonic1} If $\t_n=1$, both collections of sets $\left\{E^{t,v_1}_{\hat h_1(t)}\setminus E^{0,v_1}_{\hat h_0}\right\}_{t\geq 0}$ and $\left\{E^{t,v_2}_{\hat h_1(t)}\setminus E^{0,v_2}_{\hat h_0}\right\}_{t\geq 0}$, where $\hat h_1(t):= \left(v_1^2(t-t_n)^2+h_n^2\right)^\half$, are non-decreasing in $t$.
    \item \label{monotonic2} If $\t_n=2$, the collection of sets $\left\{E^{t,v_2}_{\hat h_2(t)}\setminus E^{0,v_2}_{\hat h_0}\right\}_{t\geq 0}$, where $\hat h_2(t):= \left(v_2^2(t-t_n)^2+h_n^2\right)^\half$, is non-decreasing, but the collection of sets $\left\{E^{t,v_1}_{\hat h_2(t)}\setminus E^{0,v_1}_{\hat h_0}\right\}_{t\geq 0}$ is not always monotonic in $t$. 
\end{enumerate}
\label{prop:monotonic-sets}
\end{proposition}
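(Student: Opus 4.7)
The plan is to reformulate the condition ``$(x,y)\in E^{t,v_m}_{\hat h_n(t)}\setminus E^{0,v_m}_{\hat h_0}$'' in the language of radial birds, so that monotonicity in $t$ becomes a statement about the persistence of a pointwise above/below ordering between two birds. Algebraically, $(x,y)\in E^{s,v_m}_h$ iff $v_m^2(x-s)^2+y^2<h^2$, which says exactly that the type-$m$ radial bird with head $(x,y)$ lies strictly below the height $h$ at time $s$. Taking $h=\hat h_n(t)$, the height of the serving bird $C^n_{(t_n,h_n)}$ at time $t$, gives the clean reformulation: $E^{t,v_m}_{\hat h_n(t)}\setminus E^{0,v_m}_{\hat h_0}$ is the locus of heads $(x,y)$ of type-$m$ birds that lie at or above $C^n$ at time $0$ and strictly below it at time $t$. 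Monotonicity in $t$ is therefore equivalent to the implication: whenever a type-$m$ bird is at or above $C^n$ at time $0$ and strictly below at some time $t>0$, it remains strictly below at every $t'\ge t$.

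With this reformulation, the four subcases split according to whether the type-$m$ bird has the same or a different speed than $C^n$. In the matching-speed cases ($m=\t_n=1$ and $m=\t_n=2$), which cover the first assertion of~(\ref{monotonic1}) and the positive assertion of~(\ref{monotonic2}), the unique intersection property (Property~(\ref{Obs4})) forces any sign change of $C^m-C^n$ to occur at most once, so if the bird drops from above to below between times $0$ and $t$, it stays below for all later times. This disposes of both matching-speed subcases immediately.

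The cross-speed cases turn on the asymptotic comparison of the two birds together with the $0$-or-$2$ intersection dichotomy of Lemma~\ref{lemma:2int}. In the cross-speed part of~(\ref{monotonic1}), $C^n$ is fast and the type-$2$ bird is slow, so $C^n$ dominates the slow bird at $\pm\infty$. If the above/below ordering flips between $0$ and $t$, the two birds must cross exactly twice at some $s_0\le s_1$, with the slow bird above $C^n$ between $s_0$ and $s_1$ and below $C^n$ outside. The boundary conditions then force $s_0\le 0\le s_1<t$, so every $t'\ge t$ lies outside $[s_0,s_1]$ and the slow bird remains below $C^n$ at $t'$. For the negative assertion of~(\ref{monotonic2}) the speed roles are reversed: $C^n$ is slow and the type-$1$ bird is fast, so the fast bird dominates at $\pm\infty$. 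The same analysis now places above-$C^n$ heads outside $[s_0,s_1]$ and below-$C^n$ heads strictly inside, leading to $0\le s_0\le t\le s_1$; for any $t'>s_1$ the fast bird reappears above $C^n$. I would then give an explicit algebraic configuration of $(t_n,h_n),(x,y)$ (with $v_1>v_2$ fixed) for which $s_1<\infty$, exhibiting a head point in the $t$-set but not in the $t'$-set and thereby proving non-monotonicity.

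The main technical point to get right is the sign bookkeeping: one must identify correctly which of the two connected components of $\R\setminus\{s_0,s_1\}$ contains above-$C^n$ versus below-$C^n$ head points, and this flips depending on whether $v_n>v_m$ or $v_n<v_m$. Once this is settled, all four subcases reduce to the same template, with the positive versus negative conclusion being determined purely by whether the times $0$ and $t$ fall inside or outside the crossing interval $[s_0,s_1]$.
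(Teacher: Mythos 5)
Your proof is correct, and it takes a genuinely different and cleaner route than the paper's. You dualize the problem: via the generalization of Observation~\ref{observation:in-out} to ellipses, $(x,y)\in E^{t,v_m}_{\hat h_n(t)}$ iff the type-$m$ bird with head $(x,y)$ is strictly below $C^n_{(t_n,h_n)}$ at time $t$, so the set difference is exactly the locus of type-$m$ heads that are at/above $C^n$ at time $0$ and strictly below it at time $t$, and monotonicity becomes a statement about the sign pattern of $D(s)=v_m^2(s-x)^2+y^2-v_n^2(s-t_n)^2-h_n^2$, which is affine in $s$ when $v_m=v_n$ and a parabola (downward if $v_m<v_n$, upward if $v_m>v_n$) otherwise. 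All four subcases then follow from elementary root-counting, with the positive/negative conclusion determined by whether $0$ and $t$ sit inside or outside the crossing interval. The paper instead works directly with the ellipses: it compares $E^{\tau,v_2}_{\hat h(\tau)}$ and $E^{\tau+\eps,v_2}_{\hat h(\tau+\eps)}$, computes their intersection point $(\bar t,\bar h)$ explicitly, and proves (Proposition~\ref{prop:tau1-tau2}) that it lies inside $E^{0,v_2}_{\hat h_0}$, after a case split on critical times $\tilde t$ and $t^*$ and an appeal to the increasing-wing property. Your approach avoids all of that bookkeeping and treats the subcases uniformly; the paper's buys explicit coordinates for the ellipse intersections, but those are not reused elsewhere. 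The only item you leave open is the explicit configuration witnessing non-monotonicity in part~(\ref{monotonic2}); this is routine (any type-$1$ head whose two crossings with the slow serving bird both occur at positive times works, and such heads exist by the intersection criterion of Lemma~\ref{lemma:2int}), and note that the paper itself only supports this claim with a figure, so supplying the algebraic example would actually make your write-up more rigorous than the original on this point. One minor bookkeeping remark: in the same-speed case you should dispose separately of the degenerate situation $x=t_n$, where $D$ is constant and the two membership conditions are simply incompatible, so no head is ever in the set difference.
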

We provide a proof of this monotonicity in Appendix~\ref{subsection:inc-wing2} and Appendix~\ref{subsection:inc-wing3}, with the help of the
{\em increasing wing property} discussed below for the two-speed case, similar to the one for the single-speed case.

\begin{remark}[Increasing wing property]
Under the Palm distribution of handovers, let $(0,\hat h_0)\in C^n_{(t_n,h_n)}$. Define $\hat h_n(t):=\left(v_n^2(t-t_n)^2+h_n^2\right)^\half$. In case $\t_n=1$, both collections of sets $\left\{E^{t,v_1}_{\hat h_1(t)}\setminus E^{0,v_1}_{\hat h_0}\right\}_{t\geq t_1}$ and $\left\{E^{t,v_2}_{\hat h_1(t)}\setminus E^{0,v_2}_{\hat h_0}\right\}_{t\geq t_1}$, are increasing, since for any $t\geq t_1$ and the point $(t,\hat h_1(t))$ lies on the increasing wing of the radial bird $C^1_{(t_1,h_1)}$, see picture~(\subref{subfigure:inc-wing21}) of Figure~\ref{figure:inc-wing2}. For $\t_n=2$, the collection $\left\{E^{t,v_2}_{\hat h_2(t)}\setminus E^{0,v_2}_{\hat h_0}\right\}_{t\geq t_1}$ is increasing, whereas the collection $\left\{E^{t,v_1}_{\hat h_2(t)}\setminus E^{0,v_1}_{\hat h_0}\right\}_{t\geq t_1}$ is non-monotonic, see picture~(\subref{subfigure:inc-wing22}) of Figure~\ref{figure:inc-wing2}. Indeed, for any $\eps>0$ arbitrarily small, and $t\geq t_n$, we have $E^{t,v_1}_{\hat h_2(t)}\setminus E^{0,v_1}_{\hat h_0}\not\subset E^{t+\eps,v_1}_{\hat h_2(t+\eps)}\setminus E^{0,v_1}_{\hat h_0}$.
\label{remark:inc-arm2}
\end{remark}
\begin{figure}[ht!]
\qquad
 \centering
      \begin{subfigure}[t]{0.45\linewidth}
       \centering
      \begin{tikzpicture}[scale=0.7, every node/.style={scale=0.7}]
\pgftransformxscale{0.6}  
    \pgftransformyscale{0.6}    
     \draw[->] (-3, 0) -- (10, 0) node[right] {$t$};
    \draw[red, domain=0.5:3.8, smooth] plot (\x, {(16*\x*\x-64*\x+64+2.5^2)^0.5});
    \draw[red](2,2.5) node{$\bullet$};
    \draw[](1.2,2) node{$(t_1,h_1)$};
    \draw[](1.6,5.2) node{$(0,\hat h_0)$};
     \draw[red]  (-0.15,0) arc (-180:-360:1.163 and 4.7);
     \draw[blue]  (-2,0) arc (-180:-360:3.03 and 4.7);
     \draw[red]  (5.4,0) arc (0:180:1.733 and 7);
     \draw[blue]  (8.2,0) arc (0:180:4.5 and 7);
     \draw[red]  (1.85,0) arc (-180:-360:1.25 and 5.05);
     \draw[blue]  (-0.15,0) arc (-180:-360:3.247 and 5.05);
    \draw[red](1.1,0.6) node{$E^{0, v_1}_{\hat h_0}$};
    \draw[red](3,0.6) node{$E^{\hat s_1, v_1}_{\hat h_1}$};
     \draw[red](6.2,0.6) node{$E^{\hat s_2,v_1}_{\hat h_2}$};
     \draw[blue](-2.5,1.5) node{$E^{0,v_2}_{\hat h_0}$};
     \draw[blue](6.8,2) node{$E^{\hat s_1, v_2}_{\hat h_1}$};
     \draw[blue](9,1.5) node{$E^{\hat s_2,v_2}_{\hat h_2}$};
    \end{tikzpicture}
\captionsetup{width=0.95\linewidth}
    \caption{For $\t_n=1$ and $t\geq t_1$, the regions $E^{t,v_l}_{h_1(t)}\setminus E^{0,v_l}_{\hat h_0}$, for $l=1,2$, are increasing. We just draw two sets of half-ellipses at time $t=\hat s_1, \hat s_2$, where $h_1(\hat s_1)=\hat h_1$ and $h_1(\hat s_2)=\hat h_2$.}
\label{subfigure:inc-wing21}
    \end{subfigure}
    \hspace{0.1in}
    \centering
      \begin{subfigure}[t]{0.45\linewidth}
       \centering
      \begin{tikzpicture}[scale=0.7, every node/.style={scale=0.7}]
\pgftransformxscale{0.75}  
\pgftransformyscale{0.75}    
 \draw[->] (-2.5, 0) -- (8, 0) node[right] {$t$};
   \draw[blue, domain=-2.2:3.8, smooth] plot (\x, {((3/2)^2*\x*\x+2^2)^0.5});
    \draw[blue](0,2) node{$\bullet$};
    \draw[](-1,1.6) node{$(t_2,h_2)$};
    \draw[blue]  (-0.53,0) arc (-180:-360:1.89 and 2.9);
    \draw[red]  (0.58,0) arc (-180:-360:0.84 and 2.9);
    \draw[blue]  (-0.38,0) arc (-180:-360:2.48 and 3.8);
    \draw[red]  (1.1,0) arc (-180:-360:1.1 and 3.8);
     \draw[blue]  (6.5,0) arc (0:180:3.4 and 5.05);
     \draw[red]  (1.7,0) arc (-180:-360:1.462 and 5.05);
    \draw[blue](-1.2,0.6) node{$E^{0, v_2}_{\hat h_0}$};
    \draw[red](1.3,0.6) node{$E^{0,v_1}_{\hat h_0}$};
    \draw[red](3.8,1.3) node{$E^{\hat s_1,v_1}_{\hat h_1}$};
     \draw[blue](3.4,3.8) node{$E^{\hat s_1,v_2}_{\hat h_1}$};
     \draw[blue](6,4) node{$E^{\hat s_2,v_2}_{\hat h_2}$};
     \draw[red](5.1,1.6) node{$E^{\hat s_2,v_1}_{\hat h_2}$};
    \end{tikzpicture}
\captionsetup{width=0.95\linewidth}
    \caption{For $\t_n=2$ and $t\geq t_2$, the regions $E^{t,v_2}_{h_2(t)}\setminus E^{0,v_2}_{\hat h_0}$ are increasing, where as $E^{t,v_1}_{h_2(t)}\setminus E^{0,v_1}_{\hat h_0}$ is non-monotonic. We just draw two sets of half-ellipses at time $t=\hat s_1, \hat s_2$, where $h_2(\hat s_1)=\hat h_1$ and $h_2(\hat s_2)=\hat h_2$.}
\label{subfigure:inc-wing22}
\end{subfigure}
\captionsetup{width=0.95\linewidth}
    \caption{For $(t_n,h_n)$ of type $\t_n=1$ or $2$ and for $t\geq t_n$, the pairs of upper half-ellipses $\left(E^{t,v_1}_{h_1(t)}, E^{t,v_2}_{h_1(t)}\right)$ or $\left(E^{t,v_1}_{h_2(t)}, E^{t,v_2}_{h_2(t)}\right)$, respectively, benefit from the increasing wing property.}
\label{figure:inc-wing2}
\end{figure}
\subsubsection{Palm distribution of inter-handover time}\label{subsubsection:PD-IHT} The following theorem determines the Laplace transform of $T$, which is equal to $T_1(0)$, under the Palm probability distribution $\P^0_\Vcal$ in terms of the head point process $\Hcal$. Using this alternative nomenclature for handover types, for pure handovers of type $i$ or equivalently $\bbinom{1}{i,i}$, we denote the pure handover point processes and their intensities as $\Wcal_i$ and $L_i$, for $i=1,2$, respectively. \label{notation:Wcal1} \label{notation:L1}\!\!\!\!\!\!  Similarly, we denote the mixed handover point processes and their intensities as $\Wcal^{(q)}_{i,j}$ and $L^{(q)}_{i,j}$, with $\t_p=i$ and $\t_n=j\neq i$, respectively, for mixed handovers of type $\bbinom{q}{i,j}$. \label{notation:Wcalijq} \label{notation:Lijq}\!\!\!\!\!\! We refer to Table~\ref{tab:notation-t} for the bijection between the two sets of notations. We write the handover point process as
\[
\Vcal\equiv \Wcal:=\sum_{i\in \{1,2\}} \Wcal_{i}+ \sum_{i\neq j, q\in \{1,2\}} \Wcal^{(q)}_{i,j},
\]
and $\la_{\Wcal}=\la_{\Vcal}$.~\label{notation:Wcal}~\label{notation:laW}
\begin{theorem}
Let $\rho\geq 0$. In the two-speed case,  under the Palm probability measure $\P^0_{\Vcal}$ of handovers, the Laplace transform of inter-handover time $T$ is given by,
\begin{equation}
\E_{\Wcal}^0\left[e^{-\rho T}\right]= \sum_{i\in \{1,2\}}\frac{L_i}{\la_{\Wcal}} \E^0_{\Wcal_i} [e^{-\rho T}] +  \sum_{i\neq j, q\in \{1,2\}} \frac{L^{(q)}_{i,j}}{\la_\Wcal} \E^0_{ \Wcal_{i,j}^{(q)}}[e^{-\rho T}].
\label{eq:decompLT2}
\end{equation}
\label{theorem:T-Palm-MS}
\end{theorem}
\begin{proof}[Proof of Theorem~\ref{theorem:T-Palm-MS}] The proof is based on the application of Theorem~\ref{thm:decompfV} to the decomposition of the Palm probability measure. To get the result, we apply the decomposition (\ref{eq:DPalm1}) to the function $f(\Hcal)= e^{-\rho T}$, for any $\rho\geq 0$. The terms $\E^0_{\Wcal_i} [e^{-\rho T}]$, for $i\in \{1,2\}$, correspond to the Palm probability measures for pure handovers of type $\bbinom{1}{i,i}$ and $\E^0_{\Wcal_{i,j}^{(q)}}[e^{-\rho T}]$, for $i\neq j,q\in \{1,2\}$, correspond to the Palm probability measures for mixed handovers of type $\bbinom{q}{i,j}$, which are determined in the following. 
\end{proof}
\begin{lemma}
For any $\rho\geq 0$, under the Palm probability measure of pure handovers of type $\bbinom{1}{i,i}$ with $\t_p=\t_n=i$, the Laplace transform of $T$ is
\begin{align}
\E^0_{\Wcal_i}\left[e^{-\rho T}\right]
&= 
\frac {4\la_i^2v_i^2} {L_i}
\int_{(\R^+)^3} \zeta_{i,i}(h,t',h'){\rm d}t' {\rm d}h' {\rm d}h,
\label{eq:T1-l}
\end{align}
for $i=1,2$, where the function $\zeta_{1,1}(h,t',h')$ is determined in (\ref{eq:T12d}) and $\zeta_{2,2}(h,t',h')$, in (\ref{eq:zeta1}).  
\label{lemma:LT1-l}
\end{lemma}
\begin{lemma}
For $i\neq j, q\in \{1,2\}$, under the Palm probability measure of mixed handovers of type $\bbinom{q}{i,j}$, the Laplace transform of $T$ is
\begin{equation}
\E^0_{\Wcal_{i,j}^{(q)}}\left[e^{-\rho T}\right]=\frac{4\la_1\la_2 v_1v_2}{L^{(q)}_{i,j}} \, \xi_{i,j}^{q}(\rho,v_i,v_j), 
\label{eq:Tlm-k}
\end{equation}
where the functions $\xi_{i,j}^{(q)}(\rho,v_i,v_j)$ are determined in (\ref{eq:T12j}), (\ref{eq:T12m}), (\ref{eq:T121g}) and (\ref{eq:T1225}), respectively, for the four different cases for  $i\neq j, q\in \{1,2\}$.
\label{lemma:LT_1ijk} 
\end{lemma}
\begin{remark}
In the expressions of the mixed Palm probability measures, the functions $\xi_{i,j}^{(q)}(\rho,v_i,v_j)$ for $i\neq j, q\in \{1,2\}$, are also functions of the underlying parameters $\la_1, \la_2$. We skip the dependence in $\la_1,\la_2$ for brevity.   
\end{remark}
The derivation of  $\E^0_{\Wcal_{i}} [e^{-\rho T}]$ with $i\in \{1,2\}$,  $\t_p=\t_n=i$, can be found in Subsubsection~\ref{subsubsection:Lemma111} for the case $\bbinom{1}{1,1}$ and Appendix~\ref{subsection:Lemma122} for the case $\bbinom{1}{2,2}$. We derive expressions for  $\E^0_{\Wcal^{(q)}_{i,j}} [e^{-\rho T}]$ with  $i\neq j, q\in \{1,2\}$ in Subsubsection~\ref{subsubsection:Lemma121} for the case $\bbinom{1}{2,1}$, Subsubsection~\ref{subsubsection:Lemma212} for $\bbinom{2}{1,2}$, Appendix~\ref{subsection:Lemma221} for $\bbinom{2}{2,1}$, and Appendix~\ref{subsection:Lemma112} for $\bbinom{1}{1,2}$, respectively.
\subsubsection{Handover pair type} For ease of computation in the proof of last results, as a general principle, we denote by $\bbinom{q,q'}{\t_p, \t_n, \t_u}$ the type of all pairs of consecutive handovers with $\bbinom{q}{\t_p, \t_n}$ the type of the first handover, $\t_u$ the type of the radial bird corresponding to the upcoming station and \label{notation:bbinom2}
\begin{equation}
q':=\begin{cases}
    1 & \text{ if } t_u\geq t_n,\\
    2& \text{ if } t_u< t_n. 
\end{cases} 
\label{eq:pair-Ho}
\end{equation}
In this composite notation $\bbinom{q,q'}{\t_p,\t_n,\t_u}$ of a pair of consecutive handovers, $\bbinom{q'}{\t_n,\t_u}$ denotes the type of the next handover. This notation encodes the information about the types and relative positions of three head points involved in two consecutive handovers. Table~\ref{tab:notation-tu} describes all possible pairs of consecutive handovers that can be produced for each type of handover. 
\begin{table}[ht!]
\centering
\begin{spacedtable}{1.4}{6pt}
\begin{tabular}{|>{\centering\arraybackslash}m{2.7cm}|c||c|c|c|c|c|}
\hline
\multicolumn{1}{|c|}{} & 
\multicolumn{1}{c||}{\textbf{Type}} & 
\multicolumn{2}{c|}{\textbf{Pair}, $\t_u=1$}& \multicolumn{3}{c|}{\textbf{Pair}, $\t_u=2$}  \\ \hline
\multicolumn{1}{|c|}{} & 
\multicolumn{1}{c||}{-} & 
\multicolumn{2}{c|}{-}& \multicolumn{2}{c|}{-} &\multicolumn{1}{c|}{Reappearance}  \\ \hline
\multirow{2}{=}{\centering\textbf{Pure handovers} $\bbinom{q}{\t_p,\t_n} $, $\t_p=\t_n$} & $\bbinom{1}{1,1}$ & $\bbinom{1,1}{1,1,1}$ & - & $\bbinom{1,1}{1,1,2}$ & $\bbinom{1,2}{1,1,2}$ &- \\ \cline{2-7}
 & $\bbinom{1}{2,2}$ & $\bbinom{1,1}{2,2,1}$ & $\bbinom{1,2}{2,2,1}$ &$\bbinom{1,1}{2,2,2}$& -&- \\ \cline{2-7}
\hline
\multirow{4}{=}{\centering\textbf{Mixed handovers} $\bbinom{q}{\t_p,\t_n}$, $\t_p\neq \t_n$} & $\bbinom{1}{2,1}$ & $\bbinom{1,1}{2,1,1}$ & - & $\bbinom{1,2}{2,1,2}$ & $\bbinom{1,1}{2,1,2}$ &$\bbinom{1,2}{2,1,2^*}$ \\ 
\cline {2-7}
& $\bbinom{2}{1,2}$ & $\bbinom{2,1}{1,2,1}$ & - & $\bbinom{2,1}{1,2,2}$ & - & -   \\
\cline{2-7}
 & $ \bbinom{2}{2,1}$ & $\bbinom{2,1}{2,1,1}$ & - & $ \bbinom{2,1}{2,1,2}$ & - & $ \bbinom{2,1}{2,1,2^*}$ \\ \cline{2-7}
 & $\bbinom{1}{1,2}$ & $\bbinom{1,2}{1,2,1}$ & $\bbinom{1,1}{1,2,1}$ &$\bbinom{1,1}{1,2,2}$& -&-  \\ 
  \hline
\end{tabular}
\end{spacedtable}
\captionsetup{width=0.9\linewidth}
\caption{Table of pair types $\bbinom{q,q'}{\t_p,\t_n,\t_u}$, associated to each individual type $\bbinom{q}{\t_p,\t_n}$. The special case: $\bbinom{q,q'}{\t_p,\t_n,\t_p^*}$, denotes the reappearance phenomenon where the previous bird coincides with the upcoming bird.}
\label{tab:notation-tu}
\end{table}
This table also summarizes the strategy to sort out the Palm probability distribution of inter-handover time, depending on the type of initial handover and the type of the radial bird producing the next handover. There can be a scenario where the same station of type $\t_p$, that was a serving station previously, becomes the upcoming serving station again. We denote the pair type in this situation as $\bbinom{q,q'}{\t_p,\t_n,\t_p^*}$, where we use $\cdot^*$ to denote this reappearance of the same station corresponding to $\t_p$, as the upcoming serving station.
\begin{remark}
Table~\ref{tab:transition-mat}, representing the adjacency matrix, summarises the possible transitions among the handover types. We use $1$ to denote the possible transitions and $0$ to denote the impossible transitions. We use $\cdot^*$ to denote that the transition can happen in two different ways, including the one with reappearance in our context.
\begin{table}[ht!]
\centering
\begin{spacedtable}{1.2}{2pt}
\begin{tabular}{|c|c|c|c|c|c|c|}
\hline
    \textbf{Types} & $\bbinom{1}{1,1}$ & $\bbinom{1}{2,2}$ & $\bbinom{1}{1,2}$& $\bbinom{1}{2,1}$ & $\bbinom{2}{1,2}$&$\bbinom{2}{2,1}$ \\ \hline
$\bbinom{1}{1,1}$ & $1$ & $0$ & $1$ & $0$ & $1$ & $0$ \\\hline
$\bbinom{1}{2,2}$ & $0$ & $1$ & $0$ & $1$ & $0$ & $1$ \\\hline
$\bbinom{1}{1,2}$ & $0$ & $1$ & $0$ & $1$ & $0$ & $1$ \\\hline
$\bbinom{1}{2,1}$ & $1$ & $0$ & $1$ & $0$ & $1^*$ & $0$ \\\hline
$\bbinom{2}{1,2}$ & $0$ & $1$ & $0$ & $1$ & $0$ & $0$ \\\hline
$\bbinom{2}{2,1}$ & $1$ & $0$ & $1^*$ & $0$ & $0$ & $0$ \\\hline
\end{tabular}
\end{spacedtable}
\captionsetup{width=0.9\linewidth}
\caption{All possible transitions among the handover types, where the transitions are from each type on the left column to each type on the top row.}
\label{tab:transition-mat}
\end{table}
One can also obtain the transition diagram based on the adjacency matrix among the handover types from Table~\ref{tab:transition-mat}, see Figure~\ref{figure:tra-dia}.
\begin{center}
\begin{figure}[ht!]
\begin{tikzpicture}[
    ->, 
    >=stealth', 
    shorten >=1pt, 
    auto, 
    node distance=1.2cm, 
    semithick,
    state/.style={circle, draw, minimum size=1.2em, font=\bfseries, scale=0.65, every node/.style={scale=0.75}} 
]
\pgftransformxscale{0.35}  
\pgftransformyscale{0.35}  
\node[state] (S1) {$\bbinom{1}{1,1}$};
\node[state] (S2) [right=of S1] {$\bbinom{1}{2,2}$};
\node[state] (S3) [below right=of S2] {$\bbinom{1}{1,2}$};
\node[state] (S4) [below left=of S3] {$\bbinom{1}{2,1}$};
\node[state] (S5) [left=of S4] {$\bbinom{2}{1,2}$};
\node[state] (S6) [above left=of S5] {$\bbinom{2}{2,1}$};


\path (S2) edge node {} (S6);
\path (S3) edge node {} (S2);
\path (S3) edge [bend right=15] node {} (S4);
\path (S3) edge [bend right=15] node {} (S6);
\path (S4) edge node {} (S1);
\path (S4) edge[bend right=20] node {} (S3);
\path (S4) edge [bend right=15] node[above]{$*$} (S5);
\path (S5) edge [bend right=15] node {} (S4);
\path (S5) edge node {} (S2);
\path (S6) edge [bend right=15] node[below]{$*$} (S3); 
\path (S6) edge node {} (S1);

\path (S1) edge [bend left=0] node[swap] {} (S3); 
\path (S1) edge [bend right=0] node[swap] {} (S5); 
\path (S2) edge [bend left=0] node[swap] {} (S4); 
\path (S1) edge [loop above] node {} (S1);
\path (S2) edge [loop above] node {} (S2);
\end{tikzpicture}
\captionsetup{width=0.9\linewidth}
\captionof{figure}{Transition diagram for all possible transitions among the handover types, where $*$ is used on two of the directed edges, to denote that the transition can happen in two possible ways.}
\label{figure:tra-dia}
\end{figure}
\end{center}
\end{remark}
\textbf{Proofs of Lemma~\ref{lemma:LT1-l} and Lemma~\ref{lemma:LT_1ijk}: }In determining the individual Palm expectations in (\ref{eq:decompLT2}), we frequently use the coordinates of the pure and mixed intersections of the radial birds. Suppose $(t_1,h_1), (t_2,h_2)\in \mathbb H^+$ are two head points of same or different types. From now onward, the subscript used in the notation of the head point, does not correspond to it's type, unlike in Subsubsection~\ref{subsubsection:int-HoT}. The intersection point is denoted by $(\hat s(t_1,h_1,t_2,h_2), \hat h(t_1,h_1,t_2,h_2))$, when the two birds are of the same type. On the other hand, when the two birds are of different types, the intersection points are denoted by $(\hat s_1(t_1,h_1,t_2,h_2), \hat h_1(t_1,h_1,t_2,h_2))$ and $(\hat s_2(t_1,h_1,t_2,h_2), \hat h_2(t_1,h_1,t_2,h_2))$. In the following, we may use slightly different notations for the intersection point, wherever necessary in the proofs. Let $L_s$ be the vertical line $t=s$. Define $Q_s:=\{(u,h)\in \mathbb H^+: u\geq s\}$, to be the quadrant on the right of the vertical line $L_s$.  \label{notation:hatsk}~\label{notation:Ls}

\subsubsection{Proof of Lemma~\ref{lemma:LT1-l}, type $\binom{1}{1,1}$ or equivalently $\bbinom{1}{1,1}$}\label{subsubsection:Lemma111}
Using Lemma~\ref{lemma:Palm-Lcal-ii} for the first term in (\ref{eq:decompLT2}) with $\t_n=1=\t_p$ and $q=1$, we have
\begin{align}
\E^0_{\Wcal_1} [e^{-\rho T}]
&= \frac {4\la_1^2v_1^2} {L_{1}}
\int_{(\R^+)^3}
\E_\Hcal\left[ e^{-\rho(T\circ\th_{\hat s})} \prod_{l=1,2}\one_{\Hcal^l\left(E^{\hat{s},v_l}_{\hat{h}}\right)=0}  \right] {\rm d}t' {\rm d}h' {\rm d}h ,
\label{eq:T12a}
\end{align}

where we consider two radial birds of type $\t_n=1=\t_p$ at $(t_n,h_n)=(0,h)$ and $(t_p,h_p)=(-t',h')$ and where, in short, we wrote $(\hat s(0,h,-t',h'), \hat h(0,h,-t',h'))= (\hat s,\hat h)$. The station corresponding to the head point $(0,h)$ is the serving station after time $\hat s$, since $\t_n=1$ and $q=1$. Since $(0,h)$ is of type $1$, we define $\hat h_1(t):=\left(v_1^2t^2+h^2\right)^\half$ and write $\hat h_1(t)=\hat h_t$, within this proof for brevity. Observe that 
$T\circ\th_{\hat s}=T_1(\hat s)-\hat s$, where $T_1(\hat s)$ is the time until the first handover after time $\hat s$. The inner expectation in (\ref{eq:T12a}) can be evaluated by looking for the next handover
given by the intersection of the radial bird with its head at $(0,h)$ and a third radial bird with its head at, say $(T_j,H_j)$, in view of Corollary~\ref{corollary:third-pt2}, which is either of type 1 or type 2, as depicted in Figure~\ref{figure:pure_birdii}.

\begin{case}[\textbf{\em $(T_j,H_j)$ is of type 1.}]\label{case:H111} In this case, $(T_j,H_j)\in \Hcal^1$ is outside the region $\overline{E^{\hat s,v_1}_{\hat h}}$. Necessarily, $T_j\geq 0$. Indeed, if $T_j<0$, then the intersection of the birds with head at $(0,h)$ and $(T_j,H_j)$, either produces a handover before time $\hat s$ or lies in $\Lcal^+_e(v_1,v_2)$, the open region above the joint lower envelope, defined in (\ref{eq:Lcal2+}). Thus $T_j\geq 0$ and the type of the pair of consecutive handovers is $\bbinom{1,1}{1,1,1}$. The upcoming head point $(T_j,H_j)$ of type $1$ can be found in the unexplored region defined as $Q_0\setminus \overline{E^{\hat s,v_1}_{\hat h}}$. Let $(\hat s(0,h,T_j,H_j),\hat h(0,h,T_j,H_j))$ be the unique intersection between the two birds and, in this case, $T_1(\hat s):=\hat s(0,h,T_j,H_j)$.
For this intersection to give that the next handover takes place immediately after time $\hat s$, the region $\interior{\left(\bigcup_{t\in [\hat s,\hat s(0,h,T_j,H_j)]} E^{t,v_l}_{\hat h_t}\setminus E^{\hat s, v_l}_{\hat h}\right)}$, beyond $\overline{E^{\hat s, v_l}_{\hat h}}$, must have no point of $\Hcal^l$ for $l=1,2$. The required region is constructed as a union, using the increasing sets as in Lemma~\ref{lemma:increasingSnm} and the Definition~\ref{definition:T1-2speed}, due to the potential lack of monotonicity among the individual sets in the union.
This corresponds to the first term in (\ref{eq:T12cc}) and is also depicted in picture~(\subref{subfigure:pure_birdii1}) of Figure~\ref{figure:pure_birdii}.
\end{case}

\begin{case}[\textbf{\em $(T_j,H_j)$ is of type 2.}]\label{case:H112} In this case $(T_j,H_j)\in \Hcal^2$ is outside $\overline{E^{\hat s,v_2}_{\hat h}}$ and $T_j$ can be positive or negative. In this case, the unexplored region is $\mathbb{H}^+\setminus \overline{E^{\hat s,v_2}_{\hat h}}$ for the upcoming head point of type $2$. Then, by part~(\ref{intersection}) of Lemma~\ref{lemma:in1out2}, applied to $(s,u)=(\hat s,\hat h)$, it is true that the radial birds with head points $(T_j,H_j)$ and $(0,h)$ intersect each other, even though the point $(T_j,H_j)$ is far on the left or right of $(0,h)$. The type of the pair of handovers is $\bbinom{1,2}{1,1,2}$ or $\bbinom{1,1}{1,1,2}$, respectively, depending on whether $T_j<0$ or $T_j\geq 0$.
Let $(\hat s_1(0,h,T_j,H_j),\hat h_1(0,h,T_j,H_j))$ and $(\hat s_2(0,h,T_j,H_j),\hat h_2(0,h,T_j,H_j))$, be the two intersection points, as illustrated in picture~(\subref{subfigure:pure_birdii2}) of Figure~\ref{figure:pure_birdii}. By part~(\ref{intersection_order}) of Lemma~\ref{lemma:in1out2}, we have $\hat s_1\leq \hat s\leq \hat s_2$ and, by Observation~\ref{observation:sts}, we also have $\hat s_1(0,h,T_j,H_j)\leq 0\leq \hat s_2(0,h,T_j,H_j)$. The first intersection point cannot correspond to a future handover, since $\hat s_1(0,h,T_j,H_j)\leq \hat s$, see picture~(\subref{subfigure:pure_birdii2}) of Figure~\ref{figure:pure_birdii}.
The only possibility for a future handover is hence $(\hat s_2(0,h,T_j,H_j),\hat h_2(0,h,T_j,H_j))$, and in this case, $T_1(\hat s):=\hat s_2(0,h,T_j,H_j)$. Observe that, using Lemma~\ref{lemma:increasingSnm}, for this intersection to be the next handover, we must have the extra region $\interior{\left(\bigcup_{t\in [\hat s,\hat s_2(0,h,T_j,H_j)]} E^{t,v_l}_{\hat h_t}\setminus E^{\hat s, v_l}_{\hat h}\right)}$ empty of points of $\Hcal^l$, for $l=1,2$. The union is due to the Definition~\ref{definition:T1-2speed}, and therein, a potential lack of monotonicity among the individual sets in the union.
We have used this in the second and third terms in (\ref{eq:T12cc}).
\end{case}
\begin{figure}[ht!]
 \centering
      \begin{subfigure}[t]{0.45\linewidth}
       \centering
      \begin{tikzpicture}[scale=0.7, every node/.style={scale=0.7}]
\pgftransformxscale{0.9}  
\pgftransformyscale{0.9}    
    \draw[->] (-1, 0) -- (7, 0) node[right] {$t$};
    \draw[red](2,2.5) node{$\bullet$};
    \draw[red, domain=0.8:3.4, smooth] plot (\x, {(16*\x*\x-64*\x+64+2.5^2)^0.5});
    \draw[red](3.5,3) node{$\bullet$};
    \draw[](3.8,2.7) node{$(0,h)$};
    \draw[red, domain=2.3:4.7, smooth] plot (\x, {(16*\x*\x-32*3.5*\x+16*3.5^2+3^2)^0.5});
    \draw[](1.2,2.3) node{$(-t',h')$};
    \draw[](4.5,4.1) node{$(\hat s',\hat h')$};
    \draw[](2.1,4.4) node{$(\hat s,\hat h)$};
     \draw[blue]  (1.75,0) arc (-180:-360:2.15 and 3.3);
     \draw[red]  (4.7,0) arc (0:180:0.85 and 3.3);
     \draw[red, domain=3.1:5.4, smooth] plot (\x, {(16*\x*\x-16*2*4.5*\x+16*4.5*4.5+2.1^2)^0.5});
     \draw[red](4.5,2.1) node{$\bullet$};
     \draw[](4.5,1.7) node{$(t_1,h_1)$};
     \draw[red]  (1.8,0) arc (-180:-360:1 and 4.1);
     \draw[blue]  (5.5,0) arc (0:180:2.65 and 4.1);   
    \draw[blue](1.3,0.6) node{$E^{\hat s, v_2}_{\hat h}$};
    \draw[red](2.4,0.6) node{$E^{\hat s, v_1}_{\hat h}$};
    \draw[blue](6.7,0.6) node{$E^{\hat s',v_2}_{\hat h'}$};
     \draw[red](4.25,0.6) node{$E^{\hat s',v_1}_{\hat h'}$};
    \end{tikzpicture}
    \caption{For the intersections $(\hat s, \hat h)$ and $(\hat s', \hat h')$ to represent two consecutive  handovers, the regions \red{$E^{\hat s, v_1}_{\hat h} \cup E^{\hat s',v_1}_{\hat h'}$} and \blue{$E^{\hat s, v_2}_{\hat h}\cup E^{\hat s',v_2}_{\hat h'}$} must have no points from $\Hcal^1$ and $\Hcal^2$, respectively. Here $(\hat s', \hat h')$ is a realization of $(\hat s(0,h,T_j,H_j),\hat h(0,h,T_j,H_j))$.}
\label{subfigure:pure_birdii1}
    \end{subfigure}
    \hspace{0.2in}
    \centering
      \begin{subfigure}[t]{0.45\linewidth}
       \centering
      \begin{tikzpicture}[scale=0.7, every node/.style={scale=0.7}]
\pgftransformxscale{0.9}  
\pgftransformyscale{0.9}    
\draw[->] (-1, 0) -- (8, 0) node[right] {$t$};
    \draw[red](2,2.5) node{$\bullet$};
    \draw[red, domain=0.8:3.4, smooth] plot (\x, {(16*\x*\x-64*\x+64+2.5^2)^0.5});
    \draw[red](3.5,3) node{$\bullet$};
    \draw[](3.8,2.7) node{$(0,h)$};
    \draw[red, domain=2.2:4.7, smooth] plot (\x, {(16*\x*\x-32*3.5*\x+16*3.5^2+3^2)^0.5});
    \draw[](1.2,2.3) node{$(-t',h')$};
    \draw[](1.7,5.5) node{$(\hat s_1,\hat h_1)$};
    \draw[](4.5,4.1) node{$(\hat s_2,\hat h_2)$};
    \draw[](2.1,4.4) node{$(\hat s,\hat h)$};
     \draw[blue]  (1.8,0) arc (-180:-360:2.2 and 3.4);
     \draw[red]  (4.8,0) arc (0:180:0.88 and 3.4);
     \draw[blue, domain=2:7.3, smooth] plot (\x, {((3/2)^2*\x*\x- 2*(3/2)^2*5.8*\x+(3/2)^2*5.8*5.8+2^2)^0.5});
     \draw[blue](5.8,2) node{$\bullet$};
     \draw[](6,1.7) node{$(t_1,h_1)$};
     \draw[red]  (1.8,0) arc (-180:-360:1 and 4.1);
     \draw[blue]  (5.5,0) arc (0:180:2.65 and 4.1);   
    \draw[blue](1.3,0.6) node{$E^{\hat s, v_2}_{\hat h}$};
    \draw[red](2.4,0.6) node{$E^{\hat s, v_1}_{\hat h}$};
    \draw[blue](6.7,0.6) node{$E^{\hat s_2,v_2}_{\hat h_2}$};
     \draw[red](4.25,0.6) node{$E^{\hat s_2,v_1}_{\hat h_2}$};
    \end{tikzpicture}
    \caption{For the intersections $(\hat s, \hat h)$ and $(\hat s_2, \hat h_2)$ to represent two consecutive  handovers, the regions \red{$E^{\hat s, v_1}_{\hat h} \cup E^{\hat s_2, v_1}_{\hat h_2}$} and \blue{$E^{\hat s, v_2}_{\hat h}\cup E^{\hat s_2, v_2}_{\hat h_2}$} must have no points from $\Hcal^1$ and $\Hcal^2$, respectively. Here $(\hat s_2, \hat h_2)$ is a realization of $(\hat s_2(0,h,T_j,H_j),\hat h_2(0,h,T_j,H_j))$.}
    \label{subfigure:pure_birdii2}
    \end{subfigure}
    \captionsetup{width=0.9\linewidth}
    \caption{The two cases in (\ref{eq:T12c}), i.e., the next handover can be given by a bird at $(T_j,H_j)=(t_1,h_1)$ of type $1$ or $2$, with $(t_1,h_1)$ lying outside $\overline{E^{\hat s, v_1}_{\hat h}}$ or $\overline{E^{\hat s, v_2}_{\hat h}}$, respectively.}
\label{figure:pure_birdii}
\end{figure}
Based on the last discussions on \ref{case:H111} and \ref{case:H112}, and with the help of Corollary~\ref{corollary:third-pt2}, we can decompose the inner expectation in (\ref{eq:T12a}) as
\begin{align}
\lefteqn{\E_\Hcal\left[e^{-\rho(T_1(\hat s)-\hat s)}  \prod_{l=1,2}\one_{\Hcal^l\left(E^{\hat{s},v_l}_{\hat{h}}\right)=0}\right]}\nn\\
&=
\E_\Hcal\left[ \one_{\exists (T_j,H_j)\in \Hcal^1 \,\mbox{:}\, T_j\geq 0, \, \bigcap_{l=1,2}\left\{\Hcal^l\interior{\left(\bigcup_{t\in [\hat s,\hat s(0,h,T_j,H_j)]} E^{t,v_l}_{\hat h_t}\setminus E^{\hat s, v_l}_{\hat h}\right)}=0\right\}} e^{-\rho (\hat s(0,h,T_j,H_j)-\hat s)} \prod_{l=1,2}\one_{\Hcal^l\left(E^{\hat{s},v_l}_{\hat{h}}\right)=0}\right] \nn\\
&\;+ \E_\Hcal\left[ \one_{\exists (T_j,H_j)\in \Hcal^2 \,\mbox{:}\, T_j< 0, \, \bigcap_{l=1,2}\left\{\Hcal^l\interior{\left(\bigcup_{t\in [\hat s,\hat s_2(0,h,T_j,H_j)]} E^{t,v_l}_{\hat h_t}\setminus E^{\hat s, v_l}_{\hat h}\right)}=0\right\}} e^{-\rho (\hat s_2(0,h,T_j,H_j)-\hat s)} \prod_{l=1,2}\one_{\Hcal^l\left(E^{\hat{s},v_l}_{\hat{h}}\right)=0}\right] \nn\\
&\;+ \E_\Hcal\left[ \one_{\exists (T_j,H_j)\in \Hcal^2 \,\mbox{:}\, T_j\ge 0, \, \bigcap_{l=1,2}\left\{\Hcal^l\interior{\left(\bigcup_{t\in [\hat s,\hat s_2(0,h,T_j,H_j)]} E^{t,v_l}_{\hat h_t}\setminus E^{\hat s, v_l}_{\hat h}\right)}=0\right\}} e^{-\rho (\hat s_2(0,h,T_j,H_j)-\hat s)} \prod_{l=1,2}\one_{\Hcal^l\left(E^{\hat{s},v_l}_{\hat{h}}\right)=0}\right].\label{eq:T12cc}
\end{align}
Using the fact that there is a unique point satisfying the condition in each case, the last expression can be written as
\begin{align}
\lefteqn{\E_\Hcal\left[\sum_{ (T_j,H_j)\in \Hcal^1 \,\mbox{:}\, T_j\geq 0}\;\; \prod_{l=1,2}\one_{\Hcal^l\left(\bigcup_{t\in [\hat s,\hat s(0,h,T_j,H_j)]} E^{t,v_l}_{\hat h_t}\cup E^{\hat s, v_l}_{\hat h}\right)=0} e^{-\rho (\hat s(0,h,T_j,H_j)-\hat s)}\right]}\nn\\
&\;\; +\E_\Hcal\left[\sum_{ (T_j,H_j)\in \Hcal^2 \,\mbox{:}\, T_j< 0}\;\; \prod_{l=1,2}\one_{\Hcal^l\left(\bigcup_{t\in [\hat s,\hat s_2(0,h,T_j,H_j)]} E^{t,v_l}_{\hat h_t}\cup E^{\hat s, v_l}_{\hat h}\right)=0} e^{-\rho (\hat s_2(0,h,T_j,H_j)-\hat s)}\right]\nn\\
&\;\; +\E_\Hcal\left[\sum_{ (T_j,H_j)\in \Hcal^2 \,\mbox{:}\, T_j\ge 0}\;\; \prod_{l=1,2}\one_{\Hcal^l\left(\bigcup_{t\in [\hat s,\hat s_2(0,h,T_j,H_j)]} E^{t,v_l}_{\hat h_t}\cup E^{\hat s, v_l}_{\hat h}\right)=0} e^{-\rho (\hat s_2(0,h,T_j,H_j)-\hat s)}\right].
\label{eq:T12c}
\end{align}
Using part~(\ref{monotonic1}) of Proposition~\ref{prop:monotonic-sets}, the increasing property of the sequence of regions $\left\{E^{t,v_l}_{\hat h_t}\setminus E^{\hat s, v_l}_{\hat h}\right\}_{t\geq \hat s}$, we have, for both $l=1,2$,
\[
\bigcup_{t\in [\hat s,\hat s(0,h,T_j,H_j)]}\!\! E^{t,v_l}_{\hat h_t}\cup E^{\hat s, v_l}_{\hat h}
= E^{\hat s(0,h,T_j,H_j),v_l}_{\hat h(0,h,T_j,H_j)}\cup E^{\hat s, v_l}_{\hat h} \text{ and}\!\! \bigcup_{t\in [\hat s,\hat s_2(0,h,T_j,H_j)]} \!\! E^{t,v_l}_{\hat h_t}\cup E^{\hat s, v_l}_{\hat h}
= E^{\hat s(0,h,T_j,H_j),v_l}_{\hat h(0,h,T_j,H_j)}\cup E^{\hat s, v_l}_{\hat h},
\]
depending on whether  $(T_j,H_j)$ is of type $1$ or type $2$. Hence, (\ref{eq:T12c}) can be written as
\begin{align}
\lefteqn{\E_\Hcal\left[\sum_{ (T_j,H_j)\in \Hcal^1 \,\mbox{:}\, T_j\geq 0}\;\; \prod_{l=1,2}\one_{\Hcal^l\left(E^{\hat s(0,h,T_j,H_j),v_l}_{\hat h(0,h,T_j,H_j)}\cup E^{\hat s, v_l}_{\hat h}\right)=0} e^{-\rho (\hat s(0,h,T_j,H_j)-\hat s)}\right]}\nn\\
&\;\; +\E_\Hcal\left[\sum_{ (T_j,H_j)\in \Hcal^2 \,\mbox{:}\, T_j< 0}\;\; \prod_{l=1,2}\one_{\Hcal^l\left(E^{\hat s_2(0,h,T_j,H_j),v_l}_{\hat h_2(0,h,T_j,H_j)}\cup E^{\hat s, v_l}_{\hat h}\right)=0} e^{-\rho (\hat s_2(0,h,T_j,H_j)-\hat s)}\right]\nn\\
&\;\; +\E_\Hcal\left[\sum_{ (T_j,H_j)\in \Hcal^2 \,\mbox{:}\, T_j\ge 0}\;\; \prod_{l=1,2}\one_{\Hcal^l\left(E^{\hat s_2(0,h,T_j,H_j),v_l}_{\hat h_2(0,h,T_j,H_j)}\cup E^{\hat s, v_l}_{\hat h}\right)=0} e^{-\rho (\hat s_2(0,h,T_j,H_j)-\hat s)}\right]\nn\\
&:=\zeta\bbinom{1,1}{1,1,1}+ \zeta\bbinom{1,2}{1,1,2}+\zeta\bbinom{1,1}{1,1,2}.
\label{eq:T12cs}
\end{align}
We formally define the notation $\zeta\bbinom{q,q'}{\t_p,\t_n,\t_u}$, adopting here a general convention to be used for the rest of the article. Suppose the typical handover is given by the head points $(t_p,h_p), (t_n,h_n)$. Then $\zeta\bbinom{q,q'}{\t_p,\t_n,\t_u}$ is a function of $\rho, h_n, |t_p-t_n|, h_p$, i.e., $\zeta\bbinom{q,q'}{\t_p,\t_n,\t_u}\equiv \zeta\bbinom{q,q'}{\t_p,\t_n,\t_u}(\rho,h_n, |t_p-t_n|, h_p)$. The function $\zeta\bbinom{q,q'}{\t_p,\t_n,\t_u}(\rho, h_n,|t_p-t_n|,h_n)$ is the conditional Laplace transform of the inter-handover time $T$ and the event that the upcoming handover is of type $\bbinom{q'}{\t_n,\t_u}$, given that the typical handover type is $\bbinom{q}{\t_p,\t_n}$, with previous head at $(t_p,h_p)$ and next at $(t_n,h_n)$. For $\rho=0$, $\zeta\bbinom{q,q'}{\t_p,\t_n,\t_u}(0,h_n,|t_p-t_n|,h_p)$ is the conditional probability of the event that the head point corresponding to the upcoming station is of type $\t_u$ and the corresponding head point is on the right or left side of $(t_n,h_n)$ depending on whether $q'=1$ or $2$. In the current proof, since $\bbinom{q}{\t_p,\t_n}=\bbinom{1}{1,1}$, we have $\zeta\bbinom{1,q'}{1,1,\t_u}$ as functions of $\rho, h,t',h'$, for different values of $\t_u$ and $q'$, as appeared in (\ref{eq:T12cs}), for which we evaluate the individual terms.

Applying the Campbell-Mecke formula, the first term in (\ref{eq:T12cs}) is given as follows
\begin{align}
\zeta\bbinom{1,1}{1,1,1} & = 2\la_1v_1
\int_0^{\hat s+ \hat h/v_1} \!\! \int_{\left(\hat h^2-v_1^2(t_1-\hat s)^2\right)^\half}^{\infty}
\E_\Hcal\left[ e^{-\rho (\hat s(0,h,t_1,h_1)-\hat s)} \prod_{l=1,2}  \one_{\Hcal^l\left(E^{\hat s(0,h,t_1,h_1),v_l}_{\hat h(0,h,t_1,h_1)}\cup E^{\hat s, v_l}_{\hat h}\right)=0} 
 \right] {\rm d}h_1 \, {\rm d}t_1 \nn\\
&\;\;+2\la_1v_1
\int_{\hat s+\hat h/v_1}^\infty\int_0^\infty
\E_\Hcal\left[ e^{-\rho (\hat s(0,h,t_1,h_1)-\hat s)} \prod_{l=1,2}  \one_{\Hcal^l\left(E^{\hat s(0,h,t_1,h_1),v_l}_{\hat h(0,h,t_1,h_1)}\cup E^{\hat s, v_l}_{\hat h}\right)=0} 
 \right] {\rm d}h_1 \, {\rm d}t_1,
\label{eq:T1-3rd1}
\end{align}
since the point $(t_1,h_1)$ must be such that $t_1\geq 0$ and must lie outside the region $E^{\hat s,v_1}_{\hat h}$. We write in short $(\hat s',\hat h')$ for $(\hat s(0,h,t_1,h_1),\hat h(0,h,t_1,h_1))$. In the last expression, we have separated the integral in two parts, $[0,\hat s+\hat h/v_1]\times\R^+\setminus E^{\hat s,v_1}_{\hat h}$ and $(\hat s+\hat h/v_1, \infty)\times \R^+$. Using the void probability we have
\begin{align}
\zeta\bbinom{1,1}{1,1,1}
&=2\la_1v_1
\int_0^{\hat s+ \hat h/v_1}\int_{\left(\hat h^2-v_1^2(t_1-\hat s)^2\right)^\half}^{\infty}
e^{-\rho (\hat s'-\hat s)} e^{-\sum_{l=1,2} 2\la_lv_l\left\vert E^{\hat s',v_l}_{\hat h' }\cup E^{\hat s, v_l}_{\hat h}\right\vert} 
 {\rm d}h_1 \, {\rm d}t_1 \nn\\
&\;\;+2\la_1v_1
\int_{\hat s+ \hat h/v_1}^\infty\int_0^\infty
e^{-\rho (\hat s' -\hat s)} e^{-\sum_{l=1,2} 2\la_lv_l\left\vert E^{\hat s' ,v_l}_{\hat h' }\cup E^{\hat s, v_l}_{\hat h}\right\vert} {\rm d}h_1 \, {\rm d}t_1.
\label{eq:T1-3rd1a}
\end{align}
In short, we write $(\hat s_2,\hat h_2)$ for $(\hat s_2(0,h,t_1,h_1),\hat h_2(0,h,t_1,h_1))$ and get a representation of the second term in (\ref{eq:T12cs}) as
\begin{align}
\zeta\bbinom{1,2}{1,1,2}
&=2\la_2v_2
\int_{-\infty}^{\hat s-\hat h/v_2}\int_0^\infty
\E_\Hcal\left[ e^{-\rho (\hat s_2-\hat s)} \prod_{l=1,2}  \one_{\Hcal^l\left(E^{\hat s_2,v_l}_{\hat h_2}\cup E^{\hat s, v_l}_{\hat h}\right)=0} 
 \right] {\rm d}h_1 \, {\rm d}t_1\nn\\
&\;\; +  2\la_2v_2
\int_{\hat s-\hat h/v_2}^{0}\int_{\left(\hat h^2-v_2^2(t_1-\hat s)^2\right)^\half}^\infty
\E_\Hcal\left[ e^{-\rho (\hat s_2-\hat s)} \prod_{l=1,2}  \one_{\Hcal^l\left(E^{\hat s_2,v_l}_{\hat h_2}\cup E^{\hat s, v_l}_{\hat h}\right)=0} 
 \right] {\rm d}h_1 \, {\rm d}t_1, 
\label{eq:T1-3rd2}
\end{align}
where the point $(t_1,h_1)$ with $t_1< 0$ and of type $2$, must lie outside the region $E^{\hat s,v_2}_{\hat h}$. So we have separated the integral in two parts, $(-\infty, \hat s-\hat h/v_2]\times \R^+$ and $[\hat s-\hat h/v_2, 0]\times\R^+\setminus E^{\hat s,v_2}_{\hat h}$. Using the void probabilities we again have
\begin{align}
\zeta\bbinom{1,2}{1,1,2}
&= 2\la_2v_2 \int_{-\infty}^{\hat s-\hat h/v_2}\int_0^\infty e^{-\rho (\hat s_2 -\hat s)} e^{-\sum_{l=1,2} 2\la_lv_l\left\vert E^{\hat s_2 ,v_l}_{\hat h_2 }\cup E^{\hat s, v_l}_{\hat h}\right\vert} {\rm d}h_1 \, {\rm d}t_1 \nn\\
&\;\; +  2\la_2v_2
\int_{\hat s-\hat h/v_2}^{0}\int_{\left(\hat h^2-v_2^2(t_1-\hat s)^2\right)^\half}^\infty   e^{-\rho (\hat s_2 -\hat s)} e^{-\sum_{l=1,2} 2\la_lv_l\left\vert E^{\hat s_2 ,v_l}_{\hat h_2 }\cup E^{\hat s, v_l}_{\hat h}\right\vert} {\rm d}h_1 \, {\rm d}t_1. 
\label{eq:T1-3rd2a}
\end{align}
The third term in (\ref{eq:T12cs}) equals
\begin{align}
\zeta\bbinom{1,1}{1,1,2}&=2\la_2v_2
\int_0^{\hat s+\hat h/v_2}\int_{\left(\hat h^2-v_2^2(t_1-\hat s)^2\right)^\half}^\infty
\E_\Hcal\left[ e^{-\rho (\hat s_2-\hat s)} \prod_{l=1,2}  \one_{\Hcal^l\left(E^{\hat s_2,v_l}_{\hat h_2}\cup E^{\hat s, v_l}_{\hat h}\right)=0} 
 \right] {\rm d}h_1 \, {\rm d}t_1 \nn\\
&\;\; +  2\la_2v_2
\int_{\hat s+\hat h/v_2}^\infty\int_0^\infty
\E_\Hcal\left[ e^{-\rho (\hat s_2-\hat s)} \prod_{l=1,2}  \one_{\Hcal^l\left(E^{\hat s_2,v_l}_{\hat h_2}\cup E^{\hat s, v_l}_{\hat h}\right)=0} 
 \right] {\rm d}h_1 \, {\rm d}t_1,
\label{eq:T1-3rd3}
\end{align}
since the point $(t_1,h_1)$ with $t_1\geq 0$ and of type $2$, must lie outside the region $E^{\hat s,v_2}_{\hat h}$. So we have separated the integral in two parts, $[0,\hat s+\hat h/v_2]\times\R^+\setminus E^{\hat s,v_2}_{\hat h}$ and $(\hat s+\hat h/v_2, \infty)\times \R^+$. By computing the void probabilities we have
\begin{align}
\zeta\bbinom{1,1}{1,1,2}
&= 2\la_2v_2
\int_{0}^{\hat s+\hat h/v_2}\int_{\left(\hat h^2-v_2^2(t_1-\hat s)^2\right)^\half}^\infty   e^{-\rho (\hat s_2 -\hat s)} e^{-\sum_{l=1,2} 2\la_lv_l\left\vert E^{\hat s_2 ,v_l}_{\hat h_2 }\cup E^{\hat s, v_l}_{\hat h}\right\vert} {\rm d}h_1 \, {\rm d}t_1 \nn\\
&\;\; +  2\la_2v_2
\int_{\hat s+\hat h/v_2}^\infty\int_0^\infty
e^{-\rho (\hat s_2 -\hat s)} e^{-\sum_{l=1,2} 2\la_lv_l\left\vert E^{\hat s_2 ,v_l}_{\hat h_2 }\cup E^{\hat s, v_l}_{\hat h}\right\vert}  {\rm d}h_1 \, {\rm d}t_1.
\label{eq:T1-3rd3a}
\end{align}
Substituting the results from (\ref{eq:T1-3rd1a})--(\ref{eq:T1-3rd3a}) to (\ref{eq:T12cs}), we obtain the inner expectation in (\ref{eq:T12a}) as the sum
\begin{align}
\zeta\bbinom{1,1}{1,1,1}+\zeta\bbinom{1,2}{1,1,2}+\zeta\bbinom{1,1}{1,1,2}
%
%
%
%
%
%
&:=\zeta_{1,1}(h,t',h').
\label{eq:T12d}
\end{align}
Using the expression from (\ref{eq:T12d}) in (\ref{eq:T12a}) yields 
\begin{align}
\E^0_{\Wcal_1} [e^{-\rho T}]
&= \frac {4\la_1^2v_1^2} {L_{1}}
\int_{(\R^+)^3} \zeta_{1,1}(h,t',h') {\rm d}t' {\rm d}h' {\rm d}h. 
\end{align}
This completes the proof for the case $\t_n=1=\t_p$.
\qed
%
%
\subsubsection{Proof of Lemma~\ref{lemma:LT_1ijk}, type $\binom{1}{2,1}$ or equivalently $\bbinom{1}{2,1}$}\label{subsubsection:Lemma121} In this case we have $\t_n=1, \t_p=2$, $q=1$.
Using Lemma~\ref{lemma:Palm-Lcal-ij} for the mixed Palm expectations in (\ref{eq:decompLT2}) with these values of $\t_n, \t_p, q$ and $f(\Hcal)=e^{-\rho T}$, we get
\begin{align}
\frac{L^{(1)}_{2,1}}{4\la_1\la_2 v_1v_2}\E^0_{\Wcal_{2,1}^{(1)}} [e^{-\rho T}] &=   \int_{0}^\infty  \int_{0}^h \int_{t^*}^\infty 
\mathbb{E}_\Hcal\Bigg[ e^{-\rho(T\circ\th_{\hat s_1})}  \prod_{l=1,2}\one_{\Hcal^l\left(E^{\hat{s}_1,v_l}_{\hat{h}_1}\right)=0}\Bigg] {\rm d}t'  {\rm d}h' {\rm d}h \nn\\ 
&\;+  \int_{0}^\infty  \int_h^\infty \int_0^{\infty}
\mathbb{E}_\Hcal\Bigg[ e^{-\rho(T\circ\th_{\hat s_1})}  \prod_{l=1,2}\one_{\Hcal^l\left(E^{\hat{s}_1,v_l}_{\hat{h}_1}\right)=0}\Bigg] {\rm d}t'  {\rm d}h' {\rm d}h ,
\label{eq:T12g}
\end{align}
where $(\hat{s}_1,\hat{h}_1)=(\hat{s}_1(0,h,-t',h'),\hat{h}_1(0,h,-t',h'))$ denotes the coordinates of the first intersection between the radial birds at $(t_n,h_n)=(0,h)$ of type $\t_n=1$ and $(t_p,h_p)=(-t',h')$ type $\t_p=2$, which are responsible for a handover. The coordinates $(\hat{s}_2,\hat{h}_2)=(\hat{s}_2(0,h,-t',h'),\hat{h}_2(0,h,-t',h'))$ denote the second intersection. Since $\t_n=1$ and $q=1$, the station corresponding to the head point $(0,h)$ becomes the serving station after time $\hat s_1$. The head point $(0,h)$ being of type $1$, we define $\hat h_1(t):=\left(v_1^2t^2+h^2\right)^\half$, which we write in short as $\hat h_t$ locally. Observe that 
\[
T\circ\th_{\hat s_1}=T_1(\hat s_1)-\hat s_1,
\]
where $T_1(\hat s_1)$ is the time of the next handover after time $\hat s_1$. For the next handover, the extra region $\interior{\left(\bigcup_{t\in [\hat s_1,T_1(\hat s_1)]} E^{t,v_l}_{\hat h_t}\setminus E^{\hat s_1, v_l}_{\hat h_1}\right)}$ beyond $\overline{E^{\hat s_1, v_l}_{\hat h_1}}$ must have no point of $\Hcal^l$, for $l=1,2$. Here we have for any $\hat s\geq \hat s_2$ 
\[
\bigcup_{t\in [\hat s_1,\hat s_2]} E^{t,v_l}_{\hat h_t}\setminus E^{\hat s_1, v_l}_{\hat h_1}\subset \bigcup_{t\in [\hat s_1,\hat s]} E^{t,v_l}_{\hat h_t}\setminus E^{\hat s_1, v_l}_{\hat h_1}.
\]
Hence, the next handover can happen before or at $\hat s_2$, i.e., $T_1(\hat s_1)<\hat s_2$ or $T_1(\hat s_1)=\hat s_2$, which leads to another layer of decomposition.

\begin{case}[\textbf{$T_1(\hat s_1)<\hat s_2$.}]\label{case:H211}
By Corollary~\ref{corollary:third-pt2}, the inner expectation in both terms in (\ref{eq:T12g}) can be evaluated by looking for the next handover created by a radial bird with head at, say $(T_j,H_j)$, which is of type $1$ or $2$. The head point $(T_j,H_j)$ must be located outside $\overline{E^{\hat{s}_1,v_1}_{\hat{h}_1}}$ and $\overline{E^{\hat{s}_1,v_2}_{\hat{h}_1}}$, depending on its type $1$ or $2$ (see Figure~\ref{figure:mixed_birdij}), as follows: 

\begin{subcase}[\textbf{\em $(T_j,H_j)$ is of type 1.}]\label{subcase:H211x} Here, it must be the case that $T_j\geq 0$ and $(T_j,H_j)\notin \overline{E^{\hat s_1,v_1}_{\hat h_1}}$. Indeed, since, for any radial bird with the head point at $(T_j,H_j)\notin \overline{E^{\hat s_1,v_1}_{\hat h_1}}$, such that $T_j<0$, the intersection to the radial bird with head at $(0,h)$, does not contribute to a future handover as it intersects the one with head at $(0,h)$ before time $\hat s_1$. Thus the unexplored region must be $Q_0\setminus \overline{E^{\hat s_1,v_1}_{\hat h_1}}$ and the type of the pair of handovers is $\bbinom{1,1}{2,1,1}$. 
In this case, the handover is given by the intersection $(\hat s(0,h,T_j,H_j),\hat h(0,h,T_j,H_j))$, as both $(0,h)$ and $(T_j,H_j)$ are of the same type, see picture~(\subref{subfigure:mixed_birdij1}) of Figure~\ref{figure:mixed_birdij}. Thus we have $T_1(\hat s_1):=\hat s(0,h,T_j,H_j)$. For this intersection to give the next handover immediately after time $\hat s_1$, the extra region $\interior{\left(\bigcup_{t\in [\hat s_1,\hat s(0,h,T_j,H_j)]} E^{t,v_l}_{\hat h_t}\setminus E^{\hat s_1, v_l}_{\hat h_1}\right)}$, beyond $\overline{E^{\hat s_1, v_l}_{\hat h_1}}$, must have no point of $\Hcal^l$ for $l=1,2$, where $\hat h_t=(v_1^2t^2+h^2)^\half$. The union is due to Definition~\ref{definition:T1-2speed} and to the fact that the sets in the union potentially lack monotonicity.
\end{subcase}

\begin{subcase}[\textbf{\em  $(T_j,H_j)$ is of type 2.}]\label{subcase:H211y} Since $(T_j,H_j)\notin \overline{E^{\hat s_1,v_2}_{\hat h_1}}$, the pair of points $(0,h), (T_j,H_j)$ naturally satisfies the intersection criterion, by part~(\ref{intersection}) of Lemma~\ref{lemma:in1out2}. For $(T_j,H_j)$ to be an eligible candidate, we must have $T_j\geq -t'$. Otherwise, if $T_j<-t'$, then the radial bird with head point at $(T_j,H_j)$ intersects the one with head point at $(0,h)$, at a point contained in the open region above the lower envelope, and as a result this does not contribute to any handover. Hence we assume that $T_j\geq -t'$ and the unexplored region is $Q_{-t'}\setminus \overline{E^{\hat s_1,v_2}_{\hat h_1}}$. Depending on whether $-t'\leq T_j<0$ or $T_j\geq 0$, the type of the pair of consecutive handovers is $\bbinom{1,2}{2,1,2}$ or $\bbinom{1,1}{2,1,2}$, respectively.

In both cases, $-t'\leq T_j<0$ or $T_j\geq 0$, there are two intersections $(\hat s_1(0,h,T_j,H_j),\hat h_2(0,h,T_j,H_j))$ and $(\hat s_2(0,h,T_j,H_j),\hat h_2(0,h,T_j,H_j))$ between the radial birds at $(0,h)$ and $(T_j,H_j)$. The first intersection $(\hat s_1(0,h,T_j,H_j),\hat h_2(0,h,T_j,H_j))$ can never correspond to a handover, because it is contained in $\Lcal^+_e(v_1,v_2)$. So the only intersection that can possibly be responsible for a handover is $(\hat s_2(0,h,T_j,H_j), \hat h_2(0,h,T_j,H_j))$, as described in Figure~\ref{figure:mixed_birdij2}. Hence, we have, $T_1(\hat s_1):=\hat s_2(0,h,T_j,H_j)$. 
For this intersection to give the next handover after time $\hat s_1$, the region $\interior{\left(\bigcup_{t\in [\hat s_1,\hat s_2(0,h,T_j,H_j)]} E^{t,v_l}_{\hat h_t}\setminus E^{\hat s_1, v_l}_{\hat h_1}\right)}$, beyond $\overline{E^{\hat s_1, v_l}_{\hat h_1}}$, must have no point of $\Hcal^l$ for $l=1,2$, where $\hat h_t=(v_1^2t^2+h^2)^\half$. The union is due to Definition~\ref{definition:T1-2speed} and the individual sets in the union possibly lack monotonicity.
\end{subcase}
\end{case}

\begin{case}
[\textbf{$T_1(\hat s_1)=\hat s_2$.}]\label{case:H212} Then the next handover is given by the second intersection $(\hat s_2,\hat h_2)$ of the radial birds with head points $(0,h)$, $(-t',h')$, provided the region $\interior{\left(\bigcup_{t\in [\hat s_1,\hat s_2]} E^{t,v_l}_{\hat h_t}\setminus E^{\hat s_1, v_l}_{\hat h_1}\right)}$ has no points of $\Hcal^l$, for $l=1,2$, see Figure~\ref{figure:mixed_birdij2}, where the union is due to Definition~\ref{definition:T1-2speed} and to the fact that the individual sets therein may not be monotonic. The type of the pair of consecutive handovers is $\bbinom{1,2}{2,1,2^*}$.
\end{case}
\begin{figure}[ht!]
 \centering
      \begin{subfigure}[t]{0.45\linewidth}
       \centering
      \begin{tikzpicture}[scale=0.7, every node/.style={scale=0.7}]
\pgftransformxscale{0.95}  
    \pgftransformyscale{0.95}    
    \draw[->] (-2, 0) -- (7, 0) node[right] {$t$};
   \draw[blue, domain=-2.2:3.8, smooth] plot (\x, {((3/2)^2*\x*\x+2^2)^0.5});
    \draw[blue](0,2) node{$\bullet$};
    \draw[](-0.7,1.8) node{$(-t',h')$};
    \draw[red](2,2.5) node{$\bullet$};
    \draw[red, domain=1.1:3.4, smooth] plot (\x, {(16*\x*\x-64*\x+64+2.5^2)^0.5});
    \draw[red](3.5,3) node{$\bullet$};
    \draw[](3.8,2.7) node{$(t_1,h_1)$};
    \draw[red, domain=2.2:4.7, smooth] plot (\x, {(16*\x*\x-32*3.5*\x+16*3.5^2+3^2)^0.5});
    \draw[](2.6,2.3) node{$(0,h)$};
    \draw[](0.9,3.3) node{$(\hat s_1,\hat h_1)$};
    \draw[](3.9,5.1) node{$(\hat s_2,\hat h_2)$};
    \draw[](3.4,4.4) node{$(\tilde s,\tilde h)$};
     \draw[blue]  (-0.47,0) arc (-180:-360:2 and 3.07);
     \draw[red]  (2.35,0) arc (0:180:0.8 and 3.07);
     \draw[red]  (1.8,0) arc (-180:-360:1 and 4.1);
     \draw[blue]  (5.5,0) arc (0:180:2.65 and 4.1);   
    \draw[blue](-0.9,0.6) node{$E^{\hat s_1, v_2}_{\hat h_1}$};
    \draw[red](1.3,0.6) node{$E^{\hat s_1, v_1}_{\hat h_1}$};
    \draw[blue](4.7,1.6) node{$E^{\tilde s,v_2}_{\tilde h}$};
     \draw[red](2.9,0.6) node{$E^{\tilde s,v_1}_{\tilde h}$};
    \end{tikzpicture}
    \caption{For the intersections $(\hat s_1, \hat h_1)$ and $(\tilde s, \tilde h)$ to represent two consecutive  handovers, the regions \red{$E^{\hat s_1, v_1}_{\hat h_1} \cup E^{\tilde s,v_1}_{\tilde h}$} and  \blue{$E^{\hat s_1, v_2}_{\hat h_1}\cup E^{\tilde s,v_2}_{\tilde h}$} must have no points from $\Hcal^1$ and $\Hcal^2$, respectively. Here $(\tilde s, \tilde h)$ is a realization of $(\hat s(0,h,T_j,H_j),\hat h(0,h,T_j,H_j))$.}
\label{subfigure:mixed_birdij1}
    \end{subfigure}
    \hspace{0.1in}
    \centering
      \begin{subfigure}[t]{0.45\linewidth}
       \centering
      \begin{tikzpicture}[scale=0.7, every node/.style={scale=0.7}]
\pgftransformxscale{0.95}  
\pgftransformyscale{0.95}    
\draw[->] (-2, 0) -- (7, 0) node[right] {$t$};
   \draw[blue, domain=-2.2:3.8, smooth] plot (\x, {((3/2)^2*\x*\x+2^2)^0.5});
    \draw[blue](0,2) node{$\bullet$};
    \draw[](-0.7,1.8) node{$(-t',h')$};
    \draw[red](2,2.5) node{$\bullet$};
    \draw[red, domain=0.8:3.4, smooth] plot (\x, {(16*\x*\x-64*\x+64+2.5^2)^0.5});
    \draw[blue](4.4,3) node{$\bullet$};
    \draw[](5,2.6) node{$(t_1,h_1)$};
    \draw[blue, domain=1:5.7, smooth] plot (\x, {((3/2)^2*\x*\x-(3/2)^2*2*4.4*\x+(3/2)^2*4.4^2+3^2)^0.5});
    \draw[](2.6,2.3) node{$(0,h)$};
    \draw[](0.9,3.3) node{$(\hat s_1,\hat h_1)$};
    \draw[](3.9,5.1) node{$(\hat s_2,\hat h_2)$};
    \draw[](3.4,4.2) node{$(\tilde s,\tilde h)$};
     \draw[blue]  (-0.47,0) arc (-180:-360:2 and 3.07);
     \draw[red]  (2.35,0) arc (0:180:0.8 and 3.07);
     \draw[red]  (1.8,0) arc (-180:-360:0.9 and 3.9);
     \draw[blue]  (5.38,0) arc (0:180:2.6 and 3.9);   
    \draw[blue](-0.9,0.6) node{$E^{\hat s_1, v_2}_{\hat h_1}$};
    \draw[red](1.3,0.6) node{$E^{\hat s_1, v_1}_{\hat h_1}$};
    \draw[blue](4.7,1.6) node{$E^{\tilde s,v_2}_{\tilde h}$};
     \draw[red](2.9,0.6) node{$E^{\tilde s,v_1}_{\tilde h}$};
    \end{tikzpicture}
    \caption{For the intersections $(\hat s_1, \hat h_1)$ and $(\tilde s, \tilde h)$ to represent two consecutive  handovers, the regions \red{$E^{\hat s_1, v_1}_{\hat h_1} \cup E^{\tilde s,v_1}_{\tilde h}$} and  \blue{$E^{\hat s_1, v_2}_{\hat h_1}\cup E^{\tilde s,v_2}_{\tilde h}$} must have no points from $\Hcal^1$ and $\Hcal^2$, respectively. Here $(\tilde s, \tilde h)$ is a realization of $(\hat s_2(0,h,T_j,H_j),\hat h_2(0,h,T_j,H_j))$.}    \label{subfigure:mixed_birdij2}
    \end{subfigure}
    \caption{The cases corresponding to the first and second terms in (\ref{eq:T12h1}), when the head point $(T_j,H_j)=(t_1,h_1)$ is of type $1$ and $2$, respectively.}    \label{figure:mixed_birdij}
\end{figure}
Thus, based on the last arguments, we write the inner expectation in (\ref{eq:T12g}), as the following sum of three terms:
\begin{align}
\lefteqn{\mathbb{E}_\Hcal\Bigg[ e^{-\rho(T_1(\hat s_1)-\hat s_1)} \prod_{l=1,2}\one_{\Hcal^l\left(E^{\hat{s}_1,v_l}_{\hat{h}_1}\right)=0}
\Bigg]}\nn\\
&=  \E_\Hcal\Bigg[ \one_{\exists (T_j,H_j)\in \Hcal^1 \,\mbox{:}\, T_j\geq 0, \, \bigcap_{l=1,2}\left\{\Hcal^l\interior{\left(\bigcup_{t\in [\hat s_1,\hat s(0,h,T_j,H_j)]} E^{t,v_l}_{\hat h_t}\setminus E^{\hat s_1, v_l}_{\hat h_1}\right)}=0\right\}} \one_{\hat s_1\leq \hat s(0,h,T_j,H_j)\leq \hat s_2} \nn\\
& \hspace{3.8in}\times e^{-\rho (\hat s(0,h,T_j,H_j)-\hat s_1)}\prod_{l=1,2}\one_{\Hcal^l\left(E^{\hat{s}_1,v_l}_{\hat{h}_1}\right)=0}\Bigg] \nn\\
&\;\;+ \E_\Hcal\Bigg[ \one_{\exists (T_j,H_j)\in \Hcal^2 \,\mbox{:}\, T_j\geq -t',\, \bigcap_{l=1,2}\left\{\Hcal^l\interior{\left(\bigcup_{t\in [\hat s_1,\hat s_2(0,h,T_j,H_j)]} E^{t,v_l}_{\hat h_t}\setminus E^{\hat s_1, v_l}_{\hat h_1}\right)}=0\right\}} \one_{\hat s_1\leq \hat s_2(0,h,T_j,H_j)\leq \hat s_2} \nn\\
& \hspace{3.8in}\times e^{-\rho (\hat s(0,h,T_j,H_j)-\hat s_1)} \prod_{l=1,2}\one_{\Hcal^l\left(E^{\hat{s}_1,v_l}_{\hat{h}_1}\right)=0}\Bigg] \nn\\
&\;\; + \E_\Hcal\left[ e^{-\rho (\hat s_2-\hat s_1)} \prod_{l=1,2}\one_{\{\Hcal^l\interior{\left(\bigcup_{t\in [\hat s_1,\hat s_2]} E^{t,v_l}_{\hat h_t}\setminus E^{\hat s_1, v_l}_{\hat h_1}\right)}=0\}} \prod_{l=1,2}\one_{\Hcal^l\left(E^{\hat{s}_1,v_l}_{\hat{h}_1}\right)=0}\right]. 
\label{eq:T12h1}
\end{align}
Similarly to (\ref{eq:T12c}), there exists a unique head point that satisfies the condition in each term of the sum in (\ref{eq:T12h1}), the last expression can be written as
\begin{align}
\lefteqn{\E_\Hcal\left[ \sum_{(T_j,H_j)\in \Hcal^1 \,\mbox{:}\, T_j\geq 0} \prod_{l=1,2}\one_{\Hcal^l\left(\bigcup_{t\in [\hat s_1,\hat s(0,h,T_j,H_j)]} E^{t,v_l}_{\hat h_t}\cup E^{\hat s_1, v_l}_{\hat h_1}\right)=0} \one_{\hat s_1\leq \hat s(0,h,T_j,H_j)\leq  \hat s_2} e^{-\rho (\hat s(0,h,T_j,H_j)-\hat s_1)} \right]}\nn\\
&\;\;+ \E_\Hcal\left[ \sum_{(T_j,H_j)\in \Hcal^2 \,\mbox{:}\, T_j\geq -t'} \prod_{l=1,2}\one_{\Hcal^l\left(\bigcup_{t\in [\hat s_1,\hat s_2(0,h,T_j,H_j)]} E^{t,v_l}_{\hat h_t}\cup E^{\hat s_1, v_l}_{\hat h_1}\right)=0} \one_{\hat s_1\leq \hat s_2(0,h,T_j,H_j)\leq  \hat s_2} e^{-\rho (\hat s(0,h,T_j,H_j)-\hat s_1)}\right]\nn\\
&\;\; + \E_\Hcal\left[ e^{-\rho (\hat s_2-\hat s_1)} \prod_{l=1,2}\one_{\Hcal^l\left(\bigcup_{t\in [\hat s_1,\hat s_2]} E^{t,v_l}_{\hat h_t}\cup E^{\hat s_1, v_l}_{\hat h_1}\right)=0} \right].
\label{eq:T12h}
\end{align}
\begin{figure}[ht!]
\begin{tikzpicture}[scale=0.7, every node/.style={scale=0.7}]
\pgftransformxscale{0.8}  
\pgftransformyscale{0.8}    
\draw[->] (-2, 0) -- (7, 0) node[right] {$t$};
   \draw[blue, domain=-2.2:3.6, smooth] plot (\x, {((3/2)^2*\x*\x+2^2)^0.5});
    \draw[blue](0,2) node{$\bullet$};
    \draw[](-0.7,1.8) node{$(-t',h')$};
    \draw[red, domain=0.8:3.3, smooth] plot (\x, {(16*\x*\x-64*\x+64+2.5^2)^0.5});
    \draw[red](2,2.5) node{$\bullet$};
    \draw[](2.6,2.3) node{$(0,h)$};
    \draw[](0.9,3.3) node{$(\hat s_1,\hat h_1)$};
    \draw[](2.5,5.3) node{$(\hat s_2,\hat h_2)$};
     \draw[blue]  (-0.47,0) arc (-180:-360:2 and 3.07);
     \draw[red]  (2.35,0) arc (0:180:0.8 and 3.07);
     \draw[red]  (1.85,0) arc (-180:-360:1.25 and 5.05);
     \draw[blue]  (6.5,0) arc (0:180:3.4 and 5.05);    
    \draw[blue](-0.9,0.6) node{$E^{\hat s_1, v_2}_{\hat h_1}$};
    \draw[red](1.3,0.6) node{$E^{\hat s_1, v_1}_{\hat h_1}$};
    \draw[blue](4.9,1.6) node{$E^{\hat s_2,v_2}_{\hat h_2}$};
     \draw[red](3.4,2.8) node{$E^{\hat s_2,v_1}_{\hat h_2}$};
    \end{tikzpicture}
    \captionsetup{width=0.9\linewidth}
    \caption{The case corresponding to the last term in (\ref{eq:T12h}). For the intersections $(\hat s_1, \hat h_1)$ and $(\hat s_2, \hat h_2)$ to represent two consecutive  handovers, the regions \red{$E^{\hat s_1, v_1}_{\hat h_1} \cup E^{\hat s_2, v_1}_{\hat h_2}$} and  \blue{$E^{\hat s_1, v_2}_{\hat h_1}\cup E^{\hat s_2, v_2}_{\hat h_2}$} must have no points from $\Hcal^1$ and $\Hcal^2$, respectively.}
\label{figure:mixed_birdij2}
\end{figure}
Using the increasing property of the sequence of regions $\left\{E^{t,v_l}_{\hat h_t}\setminus E^{\hat s_1, v_l}_{\hat h_1}\right\}_{t\geq \hat s}$ from part~(\ref{monotonic1}) of Proposition~\ref{prop:monotonic-sets}, in all the terms in (\ref{eq:T12h}), we get
\[
\bigcup_{t\in [\hat s_1,\hat s(0,h,T_j,H_j)]} E^{t,v_l}_{\hat h_t}\cup E^{\hat s_1, v_l}_{\hat h_1}
= E^{\hat s(0,h,T_j,H_j),v_l}_{\hat h(0,h,T_j,H_j)}\cup E^{\hat s_1, v_l}_{\hat h_1},
\]
\[
\bigcup_{t\in [\hat s_1,\hat s_2(0,h,T_j,H_j)]} E^{t,v_l}_{\hat h_t}\cup E^{\hat s_1, v_l}_{\hat h_1}
= E^{\hat s_2(0,h,T_j,H_j),v_l}_{\hat h_2(0,h,T_j,H_j)}\cup E^{\hat s_1, v_l}_{\hat h_1} 
\text{ and } 
\bigcup_{t\in [\hat s_1,\hat s_2]} E^{t,v_l}_{\hat h_t}\cup E^{\hat s_1, v_l}_{\hat h_1}
= E^{\hat s_2,v_l}_{\hat h_2}\cup E^{\hat s_1, v_l}_{\hat h_1},
\]
for $l=1,2$, where the first two equalities depend on whether $(T_j,H_j)$ is of type $1$ or $2$ and the last one is for $\hat s_1$ and $\hat s_2$ to be consecutive handovers. Thus the three terms in (\ref{eq:T12h}) can be written as
\begin{align}
\lefteqn{\E_\Hcal\left[ \sum_{(T_j,H_j)\in \Hcal^1 \,\mbox{:}\, T_j\geq 0} \prod_{l=1,2}\one_{\Hcal^l\left(E^{\hat s(0,h,T_j,H_j),v_l}_{\hat h(0,h,T_j,H_j)}\cup E^{\hat s_1, v_l}_{\hat h_1}\right)=0} \one_{\hat s_1\leq \hat s(0,h,T_j,H_j)\leq  \hat s_2} e^{-\rho (\hat s(0,h,T_j,H_j)-\hat s_1)} \right]}\nn\\
&\;\;+ \E_\Hcal\left[ \sum_{(T_j,H_j)\in \Hcal^2 \,\mbox{:}\, T_j\geq -t'} \prod_{l=1,2}\one_{\Hcal^l\left(E^{\hat s_2(0,h,T_j,H_j),v_l}_{\hat h_2(0,h,T_j,H_j)}\cup E^{\hat s_1, v_l}_{\hat h_1}\right)=0} \one_{\hat s_1\leq \hat s_2(0,h,T_j,H_j)\leq  \hat s_2} e^{-\rho (\hat s(0,h,T_j,H_j)-\hat s_1)}\right]\nn\\
&\;\; + \E_\Hcal\left[ e^{-\rho (\hat s_2-\hat s_1)} \prod_{l=1,2}\one_{\Hcal^l\left(E^{\hat s_2,v_l}_{\hat h_2}\cup E^{\hat s_1, v_l}_{\hat h_1}\right)=0} \right] 
:=\zeta\bbinom{1,1}{2,1,1}+\zeta\bbinom{1,1}{2,1,2}+\zeta\bbinom{1,2}{2,1,2^*},
\label{eq:T12hh}
\end{align}
where the type of the pair of handovers marked with $\cdot^*$ means that we have reappearance of the previous station as an upcoming one, as seen in the last column of Table~\ref{tab:notation-tu}.

Let $R_1(h,t',h')\subset (\R^+)^2\setminus E^{\hat s,v_1}_{\hat h}$ be the region defined as 
\[R_1(h,t',h'):=\left\{(t,u): \hat s_1\leq \hat s(0,h,t,u)\leq \hat s_2\right\},\]
which contains all possible head points that produce such a handover, as discussed in~\ref{subcase:H211x}. It can be proved, using Lemma~\ref{lemma:int1mid2}, that $R_1(h,t',h')=E^{\hat s_2,v_1}_{\hat h_2}\setminus E^{\hat s_1,v_1}_{\hat h_1}$. Applying the Campbell-Mecke formula,  the first term in (\ref{eq:T12hh}) equals 
\vspace{-0.1in}
\begin{align}
\zeta\bbinom{1,1}{2,1,1}&=2\la_1v_1\int_{E^{\hat s_2,v_1}_{\hat h_2}\setminus E^{\hat s_1,v_1}_{\hat h_1}} e^{-\rho (\hat s(0,h,t_1,h_1)-\hat s_1)} \E_\Hcal\left[  \prod_{l=1,2}\one_{\Hcal^l\left(E^{\hat s(0,h,t_1,h_1),v_l}_{\hat h(0,h,t_1,t_1)}\cup E^{\hat s_1,v_l}_{\hat h_1}\right)=0}\right]{\rm d}t_1  {\rm d}h_1\nn\\
&=2\la_1v_1\int_{E^{\hat s_2,v_1}_{\hat h_2}\setminus E^{\hat s_1,v_1}_{\hat h_1}} e^{-\rho (\hat s(0,h,t_1,h_1)-\hat s_1)} e^{-\sum_{l=1,2}2\la_lv_l\left\vert E^{\hat s(0,h,t_1,h_1),v_l}_{\hat h(0,h,t_1,t_1)}\cup E^{\hat s_1,v_l}_{\hat h_1}\right\vert}
{\rm d}t_1  {\rm d}h_1.
\label{eq:R211in}
\end{align}
Let $R_2(h,t',h')\subset Q_{-t'}\setminus E^{\hat s,v_2}_{\hat h}$ be the region defined as
$R_2(h,t',h'):=\left\{(t,u)\,\mbox{:}\, \hat s_1\leq \hat s_2(0,h,t,u)\leq \hat s_2\right\}$, containing the head point that gives the required handover, as seen in~\ref{subcase:H121ly}. It can be proved using Lemma~\ref{lemma:int1mid2}, that the region $R_2(h,t',h')=E^{\hat s_2,v_2}_{\hat h_2}\setminus E^{\hat s_1,v_2}_{\hat h_1}$. Applying the Campbell-Mecke formula, the second term in (\ref{eq:T12hh}) equals
\vspace{-0.1in}
\begin{align}
\zeta\bbinom{1,1}{2,1,2}&=2\la_2v_2 \int_{E^{\hat s_2,v_2}_{\hat h_2}\setminus E^{\hat s_1,v_2}_{\hat h_1}}  e^{-\rho (\hat s_2(0,h,t_1,h_1)-\hat s_1)} \E_\Hcal\left[  \prod_{l=1,2}\one_{\Hcal^l\left(E^{\hat s_2(0,h,t_1,h_1),v_l}_{\hat h_2(0,h,t_1,t_1)}\cup E^{\hat s_1,v_l}_{\hat h_1}\right)=0}\right]{\rm d}t_1  {\rm d}h_1\nn\\
&= 2\la_2v_2 \int_{E^{\hat s_2,v_2}_{\hat h_2}\setminus E^{\hat s_1,v_2}_{\hat h_1}}  e^{-\rho (\hat s_2(0,h,t_1,h_1)-\hat s_1)} e^{-\sum_{l=1,2}2\la_lv_l\left\vert E^{\hat s_2(0,h,t_1,h_1),v_l}_{\hat h_2(0,h,t_1,t_1)}\cup E^{\hat s_1,v_l}_{\hat h_1}\right\vert}
{\rm d}t_1  {\rm d}h_1.
\label{eq:R212in}
\end{align}
\vspace{-0.1in}
The third term in (\ref{eq:T12hh}) equals
\begin{align}
\zeta\bbinom{1,2}{2,1,2^*}&=e^{-\rho (\hat s_2-\hat s_1)} e^{-\sum_{l=1,2}2\la_lv_l\left\vert E^{\hat s_2,v_l}_{\hat h_2}\cup E^{\hat s_1,v_l}_{\hat h_1}\right\vert}.
\label{eq:s1s2}
\end{align}
\vspace{-0.02in}
Define
\vspace{-0.03in}
\begin{align}
\zeta^{(1)}_{2,1}(\rho,h,t',h')    &:=\zeta\bbinom{1,1}{2,1,1}+\zeta\bbinom{1,1}{2,1,2}+\zeta\bbinom{1,2}{2,1,2^*},
\label{eq:zeta221}
\end{align}
using (\ref{eq:R211in})--(\ref{eq:s1s2}). We obtain the result by substituting  the expression of $\zeta^{(2)}_{2,1}(\rho,h,t',h')$ from (\ref{eq:zeta221}) back to both terms in (\ref{eq:T12g}), as 
\begin{align}
\frac{L^{(1)}_{2,1}}{4\la_1\la_2 v_1v_2}\E^0_{ \Wcal_{2,1}^{(1)}} [e^{-\rho T}]& =   \int_{0}^\infty  \int_{0}^h \int_{t^*}^\infty 
\zeta^{(1)}_{2,1}(\rho,t',h',h) {\rm d}t'  {\rm d}h' {\rm d}h   +  \int_{0}^\infty  \int_h^\infty \int_0^{\infty}
\zeta^{(1)}_{2,1}(\rho,t',h',h) {\rm d}t'  {\rm d}h' {\rm d}h \nn\\
&:=  \xi_{2,1}^{(1)}(\rho,v_2,v_1),
\label{eq:T12j}
\end{align}
which is a function of $\rho, v_2, v_1$ only.\qed
\subsubsection{Proof of Lemma~\ref{lemma:LT_1ijk}, type $\binom{2}{2,1}$ or equivalently $\bbinom{2}{1,2}$}~\label{subsubsection:Lemma212}
Using Lemma~\ref{lemma:Palm-Lcal-ij}, the mixed Palm expectations in (\ref{eq:decompLT2}) with $\t_n=2, \t_p=1$, $q=2$, and $f(\Hcal)=e^{-\rho T}$, we have
\begin{align}
\frac{L^{(2)}_{1,2}}{4\la_1\la_2 v_1v_2}\E^0_{\Wcal_{1,2}^{(2)}} [e^{-\rho T}]
& =   \int_{0}^\infty  \int_{0}^h \int_{t^*}^\infty 
	\mathbb{E}_\Hcal\Bigg[ e^{-\rho(T\circ\th_{\hat s_2})}  \prod_{l=1,2}\one_{\Hcal^l\left(E^{\hat{s}_2,v_l}_{\hat{h}_2}\right)=0}
	\Bigg] {\rm d}t'  {\rm d}h' {\rm d}h \nn\\ 
    &\;+  \int_{0}^\infty  \int_h^\infty \int_0^{\infty}
	\mathbb{E}_\Hcal\Bigg[ e^{-\rho(T\circ\th_{\hat s_2})} \prod_{l=1,2}\one_{\Hcal^l\left(E^{\hat{s}_2,v_l}_{\hat{h}_2}\right)=0}
	\Bigg] {\rm d}t'  {\rm d}h' {\rm d}h,
    \label{eq:T12k}
\end{align}
where there are two radial birds of type $\t_n=2$ and $\t_p=1$, respectively, with head at $(t_p,h_p)=(-t',h')$ and $(t_n,h_n)=(0,h)$ and we write in short $(\hat{s}_2,\hat{h}_2)$ for $(\hat{s}_2(0,h,-t',h'),\hat{h}_2(0,h,-t',h'))$ to denote the second intersection, see Figure~\ref{figure:UUtau-MS}. Note that, since $\t_n=2$ and $q=2$, the station corresponding to the head point $(-t',h')$ is the serving station after time $\hat s_2$. Since $(-t',h')$ is of type $2$, we define $\hat h_t:=\left(v_2^2(t+t')^2+h'^2\right)^\half$. Also note that 
\[
T\circ\th_{\hat s_2}:=T_1(\hat s_2)-\hat s_2,
\]
where $T_1(\hat s_2)$ is the time point of the next handover after time $\hat s_2$. The inner expectation in both terms in (\ref{eq:T12k}) can be evaluated by looking for the next handover created by a radial bird with its head at $(T_j,H_j)$ (say), such that it must lie outside $\overline{E^{\hat s_2, v_1}_{\hat h_2}}$ or $\overline{E^{\hat s_2, v_2}_{\hat h_2}}$, depending on its type.
\begin{figure}[ht!]
 \centering
      \begin{subfigure}[t]{0.43\linewidth}
       \centering
      \begin{tikzpicture}[scale=0.7, every node/.style={scale=0.7}]
\pgftransformxscale{0.6}  
\pgftransformyscale{0.6}    
\draw[->] (-1, 0) -- (10, 0) node[right] {$t$};
   \draw[blue, domain=-1.8:5.5, smooth] plot (\x, {((3/2)^2*\x*\x+2^2)^0.5});
    \draw[blue](0,2) node{$\bullet$};
    \draw[](-1.1,1.65) node{$(-t',h')$};
    \draw[red, domain=0.1:3.8, smooth] plot (\x, {(16*\x*\x-64*\x+64+2.5^2)^0.5});
    \draw[red](2,2.5) node{$\bullet$};
    \draw[red, domain=4.5:8, smooth] plot (\x, {(16*\x*\x-2*16*6.5*\x+16*6.5*6.5+4^2)^0.5});
    \draw[red](6.5,4) node{$\bullet$};
    \draw[](7,3) node{$(t_1,h_1)$};
    \draw[](2.8,2.5) node{$(0,h)$};
    \draw[](2.2,5.5) node{$(\hat s_2,\hat h_2)$};
    \draw[blue](0.7,0.6) node{$E^{\hat s_2, v_2}_{\hat h_2}$};
     \draw[red](2.5,0.6) node{$E^{\hat s_2, v_1}_{\hat h_2}$};
     \draw[red]  (1.85,0) arc (-180:-360:1.25 and 5.05);
     \draw[blue]  (6.5,0) arc (0:180:3.4 and 5.05);
     \draw[red]  (3.05,0) arc (-180:-360:1.85 and 7.59);
     \draw[blue]  (9.45,0) arc (0:180:4.8 and 7.59);
     \draw[blue](7.5,1.6) node{$E^{\tilde s, v_2}_{\tilde h}$};
     \draw[red](5,1.6) node{$E^{\tilde s, v_1}_{\tilde h}$};
    \draw[](6,7.4) node{$(\tilde s,\tilde h)$};
\draw[<-] (3.4, 5.2) -- (4.4, 6) node[right] {$S_3$};
\end{tikzpicture}
\caption{For the intersection $(\hat s_2, \hat h_2)$ and $(\tilde s, \tilde h)$ to represent two consecutive  handovers, the regions \red{$S_3\cup E^{\hat s_2, v_1}_{\hat h_2} \cup E^{\tilde s,v_1}_{\tilde h}$} and \blue{$E^{\hat s_2, v_2}_{\hat h_2}\cup E^{\tilde s,v_2}_{\tilde h}$} must have no points from $\Hcal^1$ and $\Hcal^2$, respectively. Here $(\tilde s, \tilde h)$ is a realization of $(\hat s_1(-t',h',T_j,H_j),\hat h_1(-t',h',T_j,H_j))$.}
\label{subfigure:UUtau-MS1}
\end{subfigure}
\hspace{0.2in}
\centering
\begin{subfigure}[t]{0.43\linewidth}
 \centering
\begin{tikzpicture}[scale=0.7, every node/.style={scale=0.7}]
\pgftransformxscale{0.6}  
\pgftransformyscale{0.6}    
    \draw[->] (-2, 0) -- (11, 0) node[right] {$t$};
   \draw[blue, domain=-1.8:5, smooth] plot (\x, {((3/2)^2*\x*\x+2^2)^0.5});
    \draw[blue](0,2) node{$\bullet$};
    \draw[](-1.1,1.65) node{$(-t',h')$};
    \draw[red, domain=0.1:4.3, smooth] plot (\x, {(16*\x*\x-64*\x+64+2.5^2)^0.5});
    \draw[red](2,2.5) node{$\bullet$};
    \draw[blue, domain=3.5:11, smooth] plot (\x, {((3/2)^2*\x*\x-(3/2)^2*2*8.5*\x+(3/2)^2*8.5*8.5+5^2)^0.5});
    \draw[blue](8.5,5) node{$\bullet$};
    \draw[](8,4) node{$(t_1,h_1)$};
    \draw[](2.8,2.5) node{$(0,h)$};
    \draw[](2.2,5.5) node{$(\hat s_2,\hat h_2)$};
    \draw[<-] (3.4, 5.2) -- (4.4, 6) node[right] {$S_4$};
    \draw[blue](0.7,0.6) node{$E^{\hat s_2, v_2}_{\hat h_2}$};
     \draw[red](3,0.6) node{$E^{\hat s_2, v_1}_{\hat h_2}$};
     \draw[red]  (1.85,0) arc (-180:-360:1.25 and 5.05);
     \draw[blue]  (6.5,0) arc (0:180:3.4 and 5.05);
     \draw[red]  (2.85,0) arc (-180:-360:2 and 7.5);
     \draw[blue]  (9.88,0) arc (0:180:5.05 and 7.5);
     \draw[blue](8,1.6) node{$E^{\tilde s, v_2}_{\tilde h}$};
     \draw[red](5.5,5) node{$E^{\tilde s, v_1}_{\tilde h}$};
    \draw[](6,8) node{$(\tilde s,\tilde h)$};
    \end{tikzpicture}
    \caption{For the intersection $(\hat s_2, \hat h_2)$ and $(\tilde s, \tilde h)$ to represent two consecutive  handovers, the regions \red{$S_4\cup E^{\hat s_2, v_1}_{\hat h_2} \cup E^{\tilde s,v_1}_{\tilde h}$} and \blue{$E^{\hat s_2, v_2}_{\hat h_2}\cup E^{\tilde s,v_2}_{\tilde h}$} must have no points from $\Hcal^1$ and $\Hcal^2$, respectively. Here $(\tilde s, \tilde h)$ is a realization of $(\hat s(-t',h',T_j,H_j),\hat h(-t',h',T_j,H_j))$.}
    \label{subfigure:UUtau-MS2}
    \end{subfigure}
    \captionsetup{width=0.9\linewidth}
    \caption{The two pictures correspond to the two cases in (\ref{eq:T_121a}), where the head point $(T_j,H_j)=(t_1,h_1)$ is of type $1$ or $2$.}
\label{figure:UUtau-MS}
\end{figure}
\begin{case}
[\textbf{\em $(T_j,H_j)$ is of type 1.}]\label{case:H2121} We must have $T_j\geq \hat s_2$. Otherwise, if $T_j<\hat s_2$, then the intersection produced by the radial bird with head at $(T_j,H_j)$, with the one at $(0,h)$, is either contained  in $\Lcal^+_e(v_1,v_2)$ or it is responsible for a handover before the time $\hat s_2$. Thus $T_j\geq \hat s_2$ and the unexplored region is $Q_{\hat s_2}\setminus \overline{E^{\hat s_2,v_1}_{\hat h_2}}$. The type of the pair of consecutive handovers is $\bbinom{2,1}{1,2,1}$. Also $(T_j,H_j)$ must be such that, $T_j\in D_r(-t',h', H_j)$, given $t',h', H_j$, for $(T_j,H_j)$ to be in this case. Suppose $(\hat s_1(-t',h',T_j,H_j),\hat h_1(-t',h',T_j,H_j))$ and $(\hat s_2(-t',h',T_j,H_j),\hat h_2(-t',h',T_j,H_j))$ are the two intersection point of the radial birds with head at $(-t',h'), (T_j,H_j)$, such that $\hat s_1(-t',h',T_j,H_j)\leq \hat s_2(-t',h',T_j,H_j)$. 

Observe that, the point $(T_j,H_j)$ being outside $\overline{E^{\hat s_2,v_1}_{\hat h_2}}$,  we have $\hat s_2\leq \hat s_1(-t',h',T_j,H_j)$, see picture~(\subref{subfigure:UUtau-MS1}) of Figure~\ref{figure:UUtau-MS}. For the two intersection points to represent handovers, we must have no point of $\Hcal^l$, for $l=1,2$, in the extra regions $\interior{\left(\bigcup_{t\in [\hat s_2,\hat s_1(-t',h',T_j,H_j)]} E^{t,v_l}_{\hat h_t}\setminus E^{\hat s_2, v_l}_{\hat h_2}\right)}$ and $\interior{\left(\bigcup_{t\in [\hat s_2,\hat s_2(-t',h',T_j,H_j)]} E^{t,v_l}_{\hat h_t}\setminus E^{\hat s_2, v_l}_{\hat h_2}\right)}$, respectively, beyond $\overline{E^{\hat s_2, v_l}_{\hat h_2}}$, where $\hat h_t=(v_2^2(t+t')^2+h'^2)^\half$. Since $\hat s_1(-t',h',T_j,H_j)\leq \hat s_2(-t',h',T_j,H_j)$, we have 
\[
\bigcup_{t\in [\hat s_2,\hat s_1(-t',h',T_j,H_j)]} E^{t,v_l}_{\hat h_t}\setminus E^{\hat s_2, v_l}_{\hat h_2}\subset \bigcup_{t\in [\hat s_2,\hat s_2(-t',h',T_j,H_j)]} E^{t,v_l}_{\hat h_t}\setminus E^{\hat s_2, v_l}_{\hat h_2}.
\]
Hence, the first intersection $(\hat s_1(-t',h',T_j,H_j),\hat h_1(-t',h',T_j,H_j))$ gives the next handover. Thus we have $T_1(\hat s_1):=\hat s_1(0,h,T_j,H_j)$.
\end{case}
\begin{case}
[\textbf{\em $(T_j,H_j)$ is of type 2.}]\label{case:H2122} In this case, the point must be such that $T_j\geq -t'$. Otherwise, the birds with head at $(T_j,H_j)$ and $(-t',h')$, intersects before time $\hat s_2$, which is not of interest here, since we are looking a future handovers after $\hat s_2$. Thus the unexplored region is $Q_{-t'}\setminus \overline{E^{\hat s_2,v_2}_{\hat h_2}}$ and the type of the pair of consecutive handovers is $\bbinom{2,1}{1,2,2}$. The handover must be given by $(\hat s(-t',h',T_j,H_j),\hat h(-t',h',T_j,H_j))$, the only intersection between the birds at $(-t',h')$ and $(T_j,H_j)$, see picture~(\subref{subfigure:UUtau-MS2}) of Figure~\ref{figure:UUtau-MS}. Additionally, we have $T_1(\hat s_1):=\hat s_2(0,h,T_j,H_j)$. The union is due to the Definition~\ref{definition:T1-2speed} and the individual sets in the union may not be monotonic. 
\end{case}
The last discussion leads to a decomposition of the inner expectation in (\ref{eq:T12k}) as
\begin{align}
\lefteqn{\mathbb{E}_\Hcal\Bigg[ e^{-\rho(T_1(\hat s_2)-\hat s_2)}  \prod_{l=1,2}\one_{\Hcal^l\left(E^{\hat{s}_2,v_l}_{\hat{h}_2}\right)=0}
\Bigg]}\nn\\
&= \E_\Hcal\left[ \one_{\exists (T_j,H_j)\in \Hcal^1 \,\mbox{:}\, T_j\geq \hat s_2,\, T_j\in D_r(-t',h', H_j),\, \bigcap_{l=1,2}\left\{\Hcal^l\interior{\left(\bigcup_{t\in [\hat s_2,\hat s_1(-t',h',T_j,H_j)]} E^{t,v_l}_{\hat h_t}\setminus E^{\hat s_2, v_l}_{\hat h_2}\right)}=0\right\}} \right.\nn\\
&\hspace{3.7in}\times\left. e^{-\rho (\hat s(-t',h',T_j,H_j)-\hat s_2)}\prod_{l=1,2}\one_{\Hcal^l\left(E^{\hat{s}_2,v_l}_{\hat{h}_2}\right)=0}\right] \nn\\
&\;\;+ \E_\Hcal\left[ \one_{\exists (T_j,H_j)\in \Hcal^2 \,\mbox{:}\,  \bigcap_{l=1,2}\left\{\Hcal^l\interior{\left(\bigcup_{t\in [\hat s_2,\hat s(-t',h',T_j,H_j)]} E^{t,v_l}_{\hat h_t}\setminus E^{\hat s_2, v_l}_{\hat h_2}\right)}=0\right\}} e^{-\rho (\hat s(-t',h',T_j,H_j)-\hat s_2)}\right.\nn\\
&\hspace{3.7in}\times\left. \prod_{l=1,2}\one_{\Hcal^l\left(E^{\hat{s}_2,v_l}_{\hat{h}_2}\right)=0}\right].
\label{eq:T_121a}
\end{align}
Since there exists a unique head point satisfying the condition in the last sum, it  reduces to
\begin{align}
\lefteqn{\hspace{-0.9in}\E_\Hcal\left[ \sum_{(T_j,H_j)\in \Hcal^1 \,\mbox{:}\, T_j\geq \hat s_2, T_j\in D_r(-t',h', H_j)} \prod_{l=1,2}\one_{\Hcal^l\left(\bigcup_{t\in [\hat s_2,\hat s_1(-t',h',T_j,H_j)]} E^{t,v_l}_{\hat h_t}\cup E^{\hat s_2, v_l}_{\hat h_2}\right)=0}  e^{-\rho (\hat s_1(-t',h',T_j,H_j)-\hat s_2)} \right]}\nn\\
&\hspace{-0.4in} + \E_\Hcal\left[ \sum_{(T_j,H_j)\in \Hcal^2} \prod_{l=1,2}\one_{\Hcal^l\left(\bigcup_{t\in [\hat s_2,\hat s(-t',h',T_j,H_j)]} E^{t,v_l}_{\hat h_t}\cup E^{\hat s_2, v_l}_{\hat h_2}\right)=0}  e^{-\rho (\hat s(-t',h',T_j,H_j)-\hat s_2)} \right]\nn\\
&\hspace{-0.6in}:=\zeta\bbinom{2,1}{1,2,1}+\zeta\bbinom{2,1}{1,2,2}.
\label{eq:T_121}
\end{align}
For the first term in (\ref{eq:T_121}), using Remark~\ref{remark:union} for $l=1$, we have 
\begin{equation}
\bigcup_{t\in [\hat s_2,\hat s_1(-t',h',T_j,H_j)]} E^{t,v_1}_{\hat h_t}\cup E^{\hat s_2, v_1}_{\hat h_2}= E^{\hat s_2, v_1}_{\hat h_2}\cup S_3\cup E^{\hat s_1(-t',h',T_j,H_j),v_1}_{\hat h_1(-t',h',T_j,H_j)},
\label{eq:S3}
\end{equation}
where $\hat h_t=(v_2^2(t+t')^2+h'^2)^\half$ and $S_3$ is the region outside $E^{\hat s_2, v_1}_{\hat h_2}\cup E^{\hat s_1(-t',h',T_j,H_j),v_1}_{\hat h_1(-t',h',T_j,H_j)}$ but below the hyperbola given by the equation $u^2-\frac{v_1^2v_2^2}{v_1^2-v_2^2}(t+t')^2=h'^2$, in the  $(t,u)$-coordinate system similar to (\ref{eq:2bird-hyp}) and within the time interval 
\[
\left[\frac{1}{v_1^2}\left((v_1^2-v_2^2)\hat s_2 -v_2^2t'\right), \frac{1}{v_1^2}\left((v_1^2-v_2^2)\hat s_1(-t',h',T_j,H_j)-v_2^2t'\right)\right].
\]
For $l=2$, using the increasing property of $\left\{E^{t,v_2}_{\hat h_t}\setminus E^{\hat s, v_2}_{\hat h}\right\}_{t\geq \hat s_2}$, from part~(\ref{monotonic2}) of Proposition~\ref{prop:monotonic-sets}, we have
\[
\bigcup_{t\in [\hat s_2,\hat s_1(-t',h',T_j,H_j)]} E^{t,v_2}_{\hat h_t}\cup E^{\hat s, v_2}_{\hat h}= E^{\hat s_1(-t',h',T_j,H_j),v_2}_{\hat h_1(-t',h',T_j,H_j)}\cup  E^{\hat s, v_2}_{\hat h}.
\]
Then the first term in (\ref{eq:T_121}) equals
\begin{align}
\lefteqn{\E_\Hcal\left[ \sum_{(T_j,H_j)\in \Hcal^1\,\mbox{:}\, T_j\geq \hat s_2, T_j\in D_r(-t',h', H_j)} \one_{\Hcal^1\left( E^{\hat s_1(-t',h',T_j,H_j),v_1}_{\hat h_1(-t',h',T_j,H_j)}\cup S_3\cup E^{\hat s_2, v_1}_{\hat h_2}\right)=0} \right.}\nn\\
&\hspace{2.2in} \times\left. \one_{\Hcal^2\left( E^{\hat s_1(-t',h',T_j,H_j),v_2}_{\hat h_1(-t',h',T_j,H_j)}\cup E^{\hat s_2, v_2}_{\hat h_2}\right)=0}  e^{-\rho (\hat s_1(-t',h',T_j,H_j)-\hat s_2)} \right].
\end{align}
The point $(T_j,H_j)$ must lie in the region $R_1(h,t',h')$, where 
\begin{equation}
R_1(h,t',h'):=\left\{(t,u)\,\mbox{:}\,t\geq \hat s_2,\, v_1^2(t-\hat s_2)^2+u^2\geq \hat h_2^2 \; \text{ and }\; u^2-\frac{v_1^2v_2^2}{v_1^2-v_2^2}(t+t')^2\leq h'^2 \right\}.
\label{eq:R1-212}
\end{equation}
For the region $R_1(h,t',h')$ in (\ref{eq:R1-212}), the first inequality $t\geq \hat s_2$ is due to the fact that the head point should lie on the right of $(\hat s_2, \hat h_2)$, as discussed in~\ref{case:H2121} of this proof. The second and the third condition are because the next head point must lie outside the region $\overline{E^{\hat s_2,v_1}_{\hat h_2}}$ and below the hyperbola $u^2- \frac{v_1^2v_2^2}{v_1^2-v_2^2} (t+t')^2=h'^2$, by Property~\ref{hyp}. The latter is equivalent to the intersection criteria of two birds. Let $ \frac{v_1^2v_2^2}{v_1^2-v_2^2}:= v^2$.
Applying the Campbell-Mecke formula, the first term in (\ref{eq:T_121}) equals
\begin{align}
\zeta\bbinom{2,1}{1,2,1}
&=2\la_1v_1 
\int_{R_1(h,t',h')} e^{-\rho (\hat s_1(-t',h',t_1,h_1)-\hat s_2)}  \E_{\Hcal}\left[ \one_{\Hcal^1\left(E^{\hat s_1(-t',h',t_1,h_1),v_1}_{\hat h_1(-t',h',t_1,h_1)}\cup S_3\cup E^{\hat s_2,v_1}_{\hat h_2}\right)=0}\right.\nn\\
&\hspace{2in}\times \left.\one_{\Hcal^2\left(E^{\hat s_1(-t',h',t_1,h_1),v_2}_{\hat h_1(-t',h',t_1,h_1)}\cup E^{\hat s_2,v_2}_{\hat h_2}\right)=0}\right] {\rm d}t_1  {\rm d}h_1.  
\label{eq:R21-1outa}
\end{align}
Using the region $R_1(h,t',h')$ from (\ref{eq:R1-212}) in (\ref{eq:R21-1out}) and computing the void probabilities we get
\begin{align}
\zeta\bbinom{2,1}{1,2,1}
&{=}2\la_1v_1\!\!\! 
\int_{\hat s_2}^{\hat s_2+\hat h_2/v_1} \!\!\! \!\! \int_{\left(\hat h_2^2- v_1^2(t_1-\hat s_2)^2\right)^\half}^{\left(v^2 (t_1+t')^2+ h'^2\right)^\half} \!\!\! \!\!  e^{-\rho (\hat s_1(-t',h',t_1,h_1)-\hat s_2)}  e^{-2\la_1v_1\vert E^{\hat s_1(-t',h',t_1,h_1),v_1}_{\hat h_1(-t',h',t_1,h_1)}\cup S_3\cup E^{\hat s_2,v_1}_{\hat h_2}\vert} \nn\\
&\hspace{3in}\times  e^{-2\la_2 v_2\vert E^{\hat s_1(-t',h',t_1,h_1),v_2}_{\hat h_1(-t',h',t_1,h_1)}\cup E^{\hat s_2,v_2}_{\hat h_2}\vert} {\rm d}h_1  {\rm d}t_1\nn\\
&\;\; +2\la_1v_1 
\int_{\hat s_2+\hat h_2/v_1}^\infty \!\int_0^{\left(v^2 (t_1+t')^2+ h'^2\right)^\half} \!\!\!\!\!\!\!\! e^{-\rho (\hat s_1(-t',h',t_1,h_1)-\hat s_2)}   e^{-2\la_1v_1\vert S_3\cup E^{\hat s_1(-t',h',t_1,h_1),v_1}_{\hat h_1(-t',h',t_1,h_1)}\cup E^{\hat s_2,v_1}_{\hat h_2}\vert}\nn\\
&\hspace{2.5in}\times e^{-2\la_2v_2\vert E^{\hat s_1(-t',h',t_1,h_1),v_2}_{\hat h_1(-t',h',t_1,h_1)}\cup E^{\hat s_2,v_2}_{\hat h_2}\vert} {\rm d}h_1  {\rm d}t_1.
\label{eq:R21-1out}
\end{align}
For the second term in (\ref{eq:T_121}), using Remark~\ref{remark:union} for $l=1$,   
\begin{equation}
\bigcup_{t\in [\hat s_2,\hat s(-t',h',T_j,H_j)]} E^{t,v_1}_{\hat h_t}\cup E^{\hat s_2, v_1}_{\hat h_2}= E^{\hat s_2, v_1}_{\hat h_2}\cup S_4\cup E^{\hat s(-t',h',T_j,H_j),v_1}_{\hat h(-t',h',T_j,H_j)},
\label{eq:S4}
\end{equation}
where $\hat h_t=(v_2^2(t+t')^2+h'^2)^\half$ and $S_4$ is the region outside $E^{\hat s_2, v_1}_{\hat h_2}\cup E^{\hat s(-t',h',T_j,H_j),v_1}_{\hat h(-t',h',T_j,H_j)}$ but below the hyperbola given by the equation $u^2-\frac{v_1^2v_2^2}{v_1^2-v_2^2}(t+t')^2=h'^2$, in the $(t,u)$-coordinate system similar to (\ref{eq:2bird-hyp}) and within the time interval 
\[
\left[\frac{1}{v_1^2}\left((v_1^2-v_2^2)\hat s_2 -v_2^2t'\right), \frac{1}{v_1^2}\left((v_1^2-v_2^2)\hat s(-t',h',T_j,H_j)-v_2^2t'\right)\right].
\]
For $l=2$, using part~(\ref{monotonic2}) of Proposition~\ref{prop:monotonic-sets}, the increasing property of $\left\{E^{t,v_2}_{\hat h_t}\setminus E^{\hat s, v_2}_{\hat h}\right\}_{t\geq \hat s_2}$, we have
\[
\bigcup_{t\in [\hat s_2,\hat s(-t',h',T_j,H_j)]} E^{t,v_2}_{\hat h_t}\cup E^{\hat s, v_2}_{\hat h}= E^{\hat s (-t',h',T_j,H_j),v_2}_{\hat h(-t',h',T_j,H_j)}\cup  E^{\hat s, v_2}_{\hat h}.
\]
The second term in (\ref{eq:T_121}) equals
\begin{align}
\lefteqn{\hspace{-3.3in}\E_\Hcal\left[ \sum_{(T_j,H_j)\in \Hcal^2} \one_{\Hcal^l\left(E^{\hat s_2, v_1}_{\hat h_2}\cup S_4\cup E^{\hat s(-t',h',T_j,H_j),v_1}_{\hat h(-t',h',T_j,H_j)}\right)=0} \one_{\Hcal^l\left(E^{\hat s_2, v_2}_{\hat h_2}\cup E^{\hat s(-t',h',T_j,H_j),v_2}_{\hat h(-t',h',T_j,H_j)}\right)=0} e^{-\rho (\hat s(-t',h',T_j,H_j)-\hat s_2)} \right].}\nn
\end{align}
The point $(T_j,H_j)$ must lie in the region $R_2(h,t',h')$, where 
\begin{equation}
R_2(h,t',h'):=\left\{(t,u):t\geq -t', u\geq 0, \text{ and } v_2^2(t-\hat s_2)^2+u^2\geq \hat h_2^2 \right\}.
\label{eq:R2-211}
\end{equation}
For the region $R_2(h,t',h')$ in (\ref{eq:R2-211}), the first condition $t\geq -t'$, is as discussed in~\ref{case:H2122} of the proof. The last condition is due to the fact that the next head point must lie outside $\overline{E^{\hat s_2, v_2}_{\hat h_2}}$. 

Applying the Campbell-Mecke formula and evaluating void probabilities, the second term in (\ref{eq:T_121}) equals
\begin{align}
\zeta\bbinom{2,1}{1,2,2}&=2\la_2v_2 
\int_{R_2(h,t',h')} e^{-\rho (\hat s(-t',h',t_1,h_1)-\hat s_2)}  \E_{\Hcal}\left[ \one_{\Hcal^1\left(E^{\hat s(-t',h',t_1,h_1),v_1}_{\hat h(-t',h',t_1,h_1)}\cup S_4\cup E^{\hat s_2,v_1}_{\hat h_2}\right)=0} \right.\nn\\
&\hspace{3in}\times \left.\one_{\Hcal^2\left(E^{\hat s(-t',h',t_1,h_1),v_2}_{\hat h(-t',h',t_1,h_1)}\cup E^{\hat s_2,v_2}_{\hat h_2}\right)=0}\right] {\rm d}t_1  {\rm d}h_1\nn\\ 
&=2\la_2v_2 \int_{-t'}^{\hat s_2}\int_{\left(\hat h_2^2-v_2^2(t_1-\hat s_2)^2\right)^\half}^\infty e^{-\rho (\hat s(-t',h',t_1,h_1)-\hat s_2)}  e^{-2\la_1 v_1 \left\vert E^{\hat s(-t',h',t_1,h_1),v_1}_{\hat h(-t',h',t_1,h_1)}\cup S_4\cup E^{\hat s_2,v_1}_{\hat h_2}\right\vert}\nn\\
&\hspace{3.5in}\times  
e^{-2\la_2 v_2 \left\vert E^{\hat s(-t',h',t_1,h_1),v_2}_{\hat h(-t',h',t_1,h_1)}\cup E^{\hat s_2,v_2}_{\hat h_2}\right\vert}{\rm d}t_1  {\rm d}h_1 \nn\\
&\;\;+2\la_2v_2 \int_{\hat s_2}^{\hat s_2+\hat h_2/v_2}\int_{\left(\hat h_2^2-v_2^2(t_1-\hat s_2)^2\right)^\half}^\infty e^{-\rho (\hat s(-t',h',t_1,h_1)-\hat s_2)}  e^{-2\la_1 v_1 \left\vert E^{\hat s(-t',h',t_1,h_1),v_1}_{\hat h(-t',h',t_1,h_1)}\cup S_4\cup E^{\hat s_2,v_1}_{\hat h_2}\right\vert}\nn\\
&\hspace{3.5in}\times  
e^{-2\la_2 v_2 \left\vert E^{\hat s(-t',h',t_1,h_1),v_2}_{\hat h(-t',h',t_1,h_1)}\cup E^{\hat s_2,v_2}_{\hat h_2}\right\vert}{\rm d}t_1  {\rm d}h_1 \nn\\
&\;\;+2\la_2v_2 \int_{\hat s_2+\hat h_2/v_2}^\infty \int_0^\infty e^{-\rho (\hat s(-t',h',t_1,h_1)-\hat s_2)}  e^{-2\la_1 v_1 \left\vert E^{\hat s(-t',h',t_1,h_1),v_1}_{\hat h(-t',h',t_1,h_1)}\cup S_4\cup E^{\hat s_2,v_1}_{\hat h_2}\right\vert}\nn\\
&\hspace{3in}\times  
e^{-2\la_2 v_2 \left\vert E^{\hat s(-t',h',t_1,h_1),v_2}_{\hat h(-t',h',t_1,h_1)}\cup E^{\hat s_2,v_2}_{\hat h_2}\right\vert}{\rm d}t_1  {\rm d}h_1.
\label{eq:R21-2out}
\end{align}
Let us denote the sum of all the terms in (\ref{eq:R21-1out}) and (\ref{eq:R21-2out}) as $\zeta^{(2)}_{1,2}(\rho, t',h',h)= \zeta\bbinom{2,1}{1,2,1}+\zeta\bbinom{2,1}{1,2,2}$. Using the expression for $\zeta^{(2)}_{1,2}(\rho, t',h',h)$ back in (\ref{eq:T12k}) yields
\begin{align}
\frac{L^{(2)}_{1,2}}{4\la_1\la_2 v_1v_2}\E^0_{\Wcal_{1,2}^{(2)}} [e^{-\rho T}]
& =  \int_{0}^\infty  \int_{0}^h \int_{t^*}^\infty 
\zeta^{(2)}_{1,2}(\rho, t',h',h) {\rm d}t'  {\rm d}h' {\rm d}h  +  \int_{0}^\infty  \int_h^\infty \int_0^{\infty}
\zeta^{(2)}_{1,2}(\rho, t',h',h) {\rm d}t'  {\rm d}h' {\rm d}h \nn\\
&:=\xi_{1,2}^{(2)}(\rho, v_2,v_1),
\label{eq:T12m}
\end{align}
which is a function of $\rho, v_2, v_1$ only. \qed
\begin{remark}
In all these expressions, the common thread is the union of the possibly overlapping region enclosed by half ellipses, $E^{s_1,v}_{h_1}\cup E^{s_2,v}_{h_2}$. We have computed the area of these regions in Appendix~\ref{subsection:Uellipses}, using the variables corresponding to the head points.
In particular, the area of the regions of the form of $S_1, S_2, \cdots, S_6$ defined in (\ref{eq:S1a}), (\ref{eq:S2a}), (\ref{eq:S3}), (\ref{eq:S4}), (\ref{eq:S5}) and (\ref{eq:S6}), respectively, can be determined using the idea in Remark~\ref{remark:union}.
\label{remark:MS}
\end{remark}
\begin{remark}
Consider the multi-speed case with set of speeds $v_{[n]}$. Suppose $S^v$ is one of the regions mentioned in Remark~\ref{remark:MS}, for some speed $v\in v_{[n]}$. Then for all $v'\in v_{[n]}$, such that $v'>v$, the region $S^{v'}\subset S^v$. For all $v''\in v_{[n]}$, such that $v''<v$, $S^v$ is contained in the union of half ellipses $E^{\hat s, v''}_{\hat h} \cup E^{\hat s',v''}_{\hat h'}$. This is the only point that needs to be taken care separately in the multi-speed case.
\label{remark:nMS}
\end{remark}
%


\section{Handover process as a factor of a Markov chain: two-speed case} \label{sec-MC2}
Based on the information provided by Table~\ref{tab:notation-tu},  one can construct a discrete Markov chain extending that of Section~\ref{sec-MC1} to the two-speed case. Define the collection of types of handovers as
\[
\Tcal:=\left\{\bbinom{q}{i,j}\,\mbox{:}\, q=1 \text{ if } i=j \text{ and } q=1,2 \text{ for } i\neq j, \text{ where } i,j\in \{1,2\}\right\}.
\]
For each step $k\in \Z$, define $type(k)\in \Tcal$ to be the type of the handover at that step. The goal of this section is to construct a sequence $\left\{\left(H_k^l, T^r_k, H^r_k\right), type(k)\right\}_{k\in \Z}$ that forms a discrete time Markov chain on the state space $(\R^+)^3\times \Tcal$, where the sequence $\left\{\left(H_k^l, T^r_k, H^r_k\right)\right\}_{k\in \Z}$ of triples on $(\R^+)^3$ is determined as follows using the type and coordinate of the head points.
Heuristically, suppose the $k$-th handover is of type $\bbinom{q}{\t_p,\t_n}$ and produced by the head points $(t^p_k,h^p_k)$ and $(t^n_k,h^n_k)$, where the letter $p$ and $n$ stand for {\em previous} and {\em next}. We then define
\begin{equation}
\begin{aligned}
\left(H_k^l, T^r_k, H^r_k\right) &:=\begin{cases}
           (h^p_k,t^n_k-t^p_k,h^n_k) & \text{ if } q=1,\\
           (h^n_k,t^p_k-t^n_k,h^p_k), &\text{ if } q=2.
    \end{cases}
\end{aligned}
\label{eq:HlTrHr}
\end{equation}
The state at the $k$-th step implicitly depends on the previous step as the head point $(t^p_k,h^p_k)$ is involved in previous handover(s). The following theorem establishes the Markovianity of the sequence $\left\{\left(H^l_k, T^r_k, H^r_k\right), type(k) \right\}_{k\in \Z}$, following the foot steps of Theorem~\ref{theorem:markov}.

\begin{theorem}
The sequence $\left\{\left(H^l_k, T^r_k, H^r_k\right), type(k) \right\}_{k\in \Z}$ forms a homogeneous Markov chain on the state space $(\R^+)^3\times \Tcal$. 
\label{theorem:markov2}
\end{theorem}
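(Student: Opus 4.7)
The plan is to adapt the strategy of Theorem~\ref{theorem:markov} to the multi-type setting, with the extra bookkeeping needed to handle the handover type variable and the possibility that the previous bird reappears. Fix $k\in\Z$ and condition on the state $\left(\left(H^l_k, T^r_k, H^r_k\right), type(k)\right) = \left((h^l,t^r,h^r),\bbinom{q}{\t_p,\t_n}\right)$. From (\ref{eq:HlTrHr}) this uniquely reconstructs the ordered pair of head points $((t^p_k,h^p_k),(t^n_k,h^n_k))$ (up to a deterministic time shift, which is irrelevant by time stationarity) together with their types. So my first step will be to place the reference frame at the handover time $\hat S_k$ of the $k$-th handover, i.e., work in $\theta_{\hat S_k}(\Hcal)$, and identify the intersection coordinates $(\hat S_k,\hat H_k)$ via (\ref{eq:HOs})--(\ref{eq:HO_distance1}).

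The core step is to define, for each of the six values of $\bbinom{q}{\t_p,\t_n}$, the \emph{unexplored region}, namely the Borel subset $\mathcal U_k^{(1)}\sqcup\mathcal U_k^{(2)}$ of $\mathbb H^+$ (one component per speed) in which the head point responsible for the next handover must lie, together with the deterministic mapping that sends a candidate head point $(T_u,H_u)$ in this region to the time $T^r_{k+1}$ and height $H^r_{k+1}$ of the next handover. These are exactly the regions that were traced through Subsubsections~\ref{subsubsection:Lemma111}--\ref{subsubsection:Lemma112} in the proof of Lemma~\ref{lemma:LT_1ijk}. For pure handovers of type $\bbinom{1}{1,1}$ or $\bbinom{1}{2,2}$, the unexplored region can be read off from Cases~\ref{case:H111}--\ref{case:H112} and Cases~\ref{case:H221}--\ref{case:H222} respectively, carved out with the intersection criteria of Lemma~\ref{lemma:2int} and Lemma~\ref{lemma:2int-rev}. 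For the four mixed types, in addition to the half-ellipse exclusions I must include the possibility that the \emph{second} intersection of the two current birds (when $q=1$) becomes the next handover; this gives rise to the reappearance symbol $\bbinom{q,q'}{\t_p,\t_n,\t_p^*}$ in Table~\ref{tab:notation-tu} and corresponds to a degenerate event where no point of $\Hcal$ is drawn at all.

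Granting the regions, the Markov property follows the single-speed template: by Corollary~\ref{corollary:third-pt2} there is almost surely a unique head point inside the unexplored region that determines $\hat S_{k+1}$, and the increasing-wing property (Proposition~\ref{prop:monotonic-sets} and Remarks~\ref{remark:inc-arm2}, \ref{remark:union}) lets me realize the event ``next handover happens at time $s$'' as a void condition on a monotonically growing exploration set. Conditionally on the state at step $k$, the restrictions $\Hcal^1|_{\mathcal U_k^{(1)}}$ and $\Hcal^2|_{\mathcal U_k^{(2)}}$ are, by the Poisson property and the independence of $\Hcal^1$ and $\Hcal^2$ (Lemma~\ref{lemma:tips_density2}), independent homogeneous Poisson point processes whose law depends only on $\mathcal U_k^{(1)}$, $\mathcal U_k^{(2)}$, hence only on $((h^l,t^r,h^r),\bbinom{q}{\t_p,\t_n})$; everything past step $k$ is then a deterministic functional of this Poisson configuration together with the current state, establishing the Markov property and the time-homogeneity.

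The transition kernel then has the schematic form
\begin{align*}
\mathbb P\Big(\,&\left(H^l_{k+1},T^r_{k+1},H^r_{k+1}\right)\in\mathcal A,\;type(k+1)=\bbinom{q'}{\t_n,\t_u}\;\Big|\;\text{state at step }k\Big)\\
&= \int_{\mathcal U^{(\t_u)}_k(\t_u,q')} g_{q'}(t,u)\; 2\la_{\t_u} v_{\t_u}\, e^{-2\la_{\t_u} v_{\t_u}\,|\mathcal V(t,u)|}\,\mathrm dt\,\mathrm du\;+\;p^{*}\,\delta_{\bbinom{q,q'}{\t_p,\t_n,\t_p^*}},
\end{align*}
where $\mathcal U^{(\t_u)}_k(\t_u,q')$ is the sub-region of $\mathcal U^{(\t_u)}_k$ that produces a next handover of type $\bbinom{q'}{\t_n,\t_u}$, $\mathcal V(t,u)$ is the corresponding growing exploration set (the arc-length derivative of which yields the density $g_{q'}$ on the stopping curve, exactly as in (\ref{eq:cpdtau})--(\ref{eq:tk-MC1})), and the atomic piece $p^*$ collects the reappearance contribution. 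The main obstacle I expect is organizing the six starting types against Table~\ref{tab:notation-tu} so that every unexplored region is pairwise disjoint and exhaustive, and writing the exploration derivative in the mixed case where Part~(\ref{monotonic2}) of Proposition~\ref{prop:monotonic-sets} warns that $\{E^{t,v_1}_{\hat h_2(t)}\setminus E^{0,v_1}_{\hat h_0}\}_t$ need not be monotone in $t$; this is handled by replacing the naive set with its union (as in Definition~\ref{definition:T1-2speed}), which is monotone, and taking the outward growth rate along its evolving frontier. With these ingredients the transition kernel is explicit and the Markov/homogeneity statement follows.
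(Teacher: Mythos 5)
Your proposal is correct and follows essentially the same route as the paper's proof: condition on the current state, identify for each of the six handover types the type-dependent unexplored regions (one per speed), and invoke the Poisson conditional independence of $\Hcal^1,\Hcal^2$ restricted to those regions together with Corollary~\ref{corollary:third-pt2} and the monotone exploration sets of Definition~\ref{definition:T1-2speed} to get Markovianity and homogeneity. The only difference is cosmetic: you sketch an explicit transition kernel with an atomic reappearance term, whereas the paper stops at the Markov property and relegates the kernel to a remark, but the underlying decomposition by $q\in\{1,2\}$ and by the type of the upcoming head point is the same.
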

\begin{proof}
Fix $k\in \Z$. Let  $type(k)=\bbinom{q}{i,j}$ be the type of the $k$-th handover, for some $i,j\in \{1,2\}$. Given $\left(H^l_k, T^r_k, H^r_k\right)=\left(h^l_k, t^r_k, h^r_k\right)$, suppose the locations of the responsible head points are $(0,h^l_k)$ and $(t^r_k, h^r_k)$, respectively and the intersection point of the corresponding radial birds for the handover in question is $(\hat s_k,\hat h_k)$. Define $\t(t,h)$ to be the type of a head point $(t,h)\in \mathbb H^+$. For any $s\in \R$, define $Q_s:=\{(t,h)\in \mathbb H^+: t\geq s\}$, to be the quadrant on $\mathbb H^+$ and on the right of the vertical line $t=s$.  Depending on the type $\bbinom{q}{i,j}$, we can assign their corresponding type to the head points and carry out our analysis in the following two cases:
\begin{case}[\textbf{\em $q=1$.}]\label{case:q1} In this case, the head point corresponding to the new station is on the right and so we have $\t(t^r_k,h^r_k)=j$ and $\t(0,h^l_k)=i$. Here, we have four possibilities for the handover types: $\bbinom{q}{i,j}=\bbinom{1}{1,1}, \bbinom{1}{2,1}, \bbinom{1}{1,2}$ and $\bbinom{1}{2,2}$. Let $m\neq j\in \{1,2\}$.
The precise unexplored region for different cases plays an essential role in establishing the Markovianity of the process in question. To determine the $(k+1)$-st step of the process, one first needs to find the unexplored regions for potential location of the upcoming head point. Observe that there are two unexplored regions, as the upcoming head point can be of two different types. For any given   type $\bbinom{1}{i,j}$ for the $k$-th handover, let $\Ucal^1_j$ and $ \Ucal^1_m\subset \mathbb H^+$ be the unexplored region of the upcoming head point of type $j$ and $m$, respectively. Table~\ref{tab:unexp-regions-q1} lists out the unexplored regions $\Ucal^1_j$ and $ \Ucal^1_m$ for the case $q=1$, by gathering the details in various parts of the proofs of Lemma~\ref{lemma:LT1-l} and Lemma~\ref{lemma:LT_1ijk} corresponding to different handover types. These unexplored regions only depend on the state $\left(\left(h^l_k, t^r_k, h^r_k\right), type(k)\right)$ at step $k$.
\begin{table}[!ht]
\centering
\renewcommand{\arraystretch}{1.2} 
\begin{spacedtable}{1.7}{2pt}
\begin{tabular}{|c|c|c|c|c|}
\hline
\begin{tabular}{@{}c@{}}
\textbf{Types:} $\bbinom{1}{i,j}$\\ \hline $\Ucal^1_j,\Ucal^1_m$, $m\neq j$
\end{tabular}   & $\bbinom{1}{1,1}$, Fig.~\plainref{figure:pure_birdii} &$\bbinom{1}{2,1}$, Fig.~\plainref{figure:mixed_birdij} \& \plainref{figure:mixed_birdij2}& $\bbinom{1}{1,2}$, Fig.~\plainref{figure:mixed_bird122a} & $\bbinom{1}{2,2}$, Fig.~\plainref{figure:pure_bird22} \& \plainref{figure:pure_bird222}  \\ \cline{1-5}
$\Ucal^1_j$ & $Q_{t^r_k}\setminus \overline{E^{\hat s_k,v_1}_{\hat h_k}}$ & $Q_{t^r_k}\setminus \overline{E^{\hat s_k,v_1}_{\hat h_k}}$ &
$Q_{t^r_k}\setminus \overline{E^{\hat s_k,v_2}_{\hat h_k}}$ & 
$Q_{t^r_k}\setminus \overline{E^{\hat s_k,v_2}_{\hat h_k}}$\\
\hline
$\Ucal^1_m$ & 
$\mathbb H^+\setminus \overline{E^{\hat s_k,v_2}_{\hat h_k}}$& $Q_{0}\setminus \overline{E^{\hat s_k,v_2}_{\hat h_k}}$ &
$Q_{\hat s_k}\setminus \overline{E^{\hat s_k,v_1}_{\hat h_k}}$ & 
$Q_{\hat s_k}\setminus \overline{E^{\hat s_k,v_1}_{\hat h_k}}$  \\
\hline
\end{tabular}
\end{spacedtable}
\caption{Unexplored regions $\Ucal^1_j$, $\Ucal^1_m$, for $m\neq j$, when the $k$-th handover type is $\bbinom{1}{i,j}$. The references to figures are mentioned next to the handover types. The corresponding figure resembles the respective scenario and those can be used to determine the unexplored regions.}
\label{tab:unexp-regions-q1}
\end{table}

As explained in (\ref{eq:SnSm}) and Lemma~\ref{lemma:increasingSnm}, one can construct a sequence of non-decreasing open sets $\{S^j_t\}_{t\geq \hat s_k}$ and $\{S^m_t\}_{t\geq \hat s_k}$ in the unexplored region $\Ucal^1_j$ and $\Ucal^1_m$, using the sequence $\{h_j(t)\}_{t\geq \hat s_k}$ where $h_j(t):=\left(v_j^2(t-t^r_k)^2+(h^r_k)^2\right)^\half$. By Corollary~\ref{corollary:third-pt2}, there exists a head point say $(T_u,H_u)$ of type $\t_u$ in the unexplored region $\Ucal^1_j$ or $\Ucal^1_m$ such that the $(k+1)$-st handover is given by the intersection point, say $\left(\hat S_{k+1},\hat H_{k+1}\right)$, of the radial birds at $(t^r_k,h^r_k)$ and $(T_u,H_u)$. Here, the letter $u$ stands for {\em upcoming}. The pair of stopping set is $\left(S^j_{\hat S_{k+1}}, S^m_{\hat S_{k+1}}\right)$, and the upcoming head point $(T_u,H_u)$ lies on the boundary $\partial S^j_{\hat S_{k+1}}$ or $\partial S^m_{\hat S_{k+1}}$ of the respective stopping sets, depending on whether $\t_u=j$ or $m$. Then the type of this handover is $\bbinom{q'}{j,\t_u}$, where 
\begin{equation}
\begin{aligned}
q' &:=\begin{cases}
           1 & \text{ if } T_u\geq t^r_k,\\
           2, &\text{ if } T_u<t^r_k.
    \end{cases}
\end{aligned}
\end{equation}
Subsequently we define 
\begin{equation}
\begin{aligned}
\left(H_{k+1}^l, T^r_{k+1}, H^r_{k+1}\right) &:=\begin{cases}
           (h^r_k,T_u-t^r_k,H_u) & \text{ if } q'=1,\\
           (h^r_k,t^r_k-T_u,H_u), &\text{ if } q'=2.
    \end{cases}
\end{aligned}
\label{eq:HlTrHrk+1}
\end{equation}
This is the `spatial' part of the state at step $k+1$.
\end{case}
\begin{case}[\textbf{\em $q=2$.}]\label{case:q2}  In this case, the head point corresponding to the new station is on the left and so we have $\t(0,h^l_k)=j$ and $\t(t^r_k,h^r_k)=i$. The possibilities of the handover types in this case are $\bbinom{q}{i,j}=\bbinom{2}{2,1}$ and $\bbinom{2}{1,2}$. Let $m\neq j\in \{1,2\}$. The head point of the upcoming station can be found in the unexplored region, say $\Ucal^2_j$ and $\Ucal^2_m$, listed in the Table~\ref{tab:unexp-regions-q2} in the case $q=2$, obtained using the same recipe as for Table~\ref{tab:unexp-regions-q1} in the case $q=1$, as follows:

\begin{table}[!ht]
\centering
\renewcommand{\arraystretch}{1.2} 
\begin{spacedtable}{1.7}{6pt}
\begin{tabular}{|c|c|c|}
\hline
\begin{tabular}{@{}c@{}}
\textbf{Types:} $\bbinom{2}{i,j}$\\ \hline $\Ucal^2_j,\Ucal^2_m$, $m\neq j$
\end{tabular}   & $\bbinom{2}{2,1}$, Fig.~\plainref{figure:mixed_bird121} &$\bbinom{2}{1,2}$, Fig.~\plainref{figure:UUtau-MS}  \\ \cline{1-3}
$\Ucal^2_j$ & $Q_{0}\setminus \overline{E^{\hat s_k,v_1}_{\hat h_k}}$ & $Q_{0}\setminus \overline{E^{\hat s_k,v_2}_{\hat h_k}}$ \\
\hline
$\Ucal^2_m$ & 
$Q_{t^r_k}\setminus \overline{E^{\hat s_k,v_2}_{\hat h_k}}$& $Q_{\hat s_k}\setminus \overline{E^{\hat s_k,v_1}_{\hat h_k}}$   \\
\hline
\end{tabular}
\end{spacedtable}
\captionsetup{width=0.96\linewidth}
\caption{Unexplored regions $\Ucal^2_j$, $\Ucal^2_m$, for $m\neq j$, when the $k$-th handover type is $\bbinom{2}{i,j}$. The figures mentioned next to the handover types are used to determine the unexplored regions.}
\label{tab:unexp-regions-q2}
\end{table}
These unexplored regions only depend on the state $\left(\left(h^l_k, t^r_k, h^r_k\right), type(k)\right)$ at step $k$. Similarly to the case $q=1$, we construct a sequence of non-decreasing open sets $\{S^j_t\}_{t\geq \hat s_k}$ and $\{S^m_t\}_{t\geq \hat s_k}$, representing explored regions in the unexplored region $\Ucal^2_j$ and $\Ucal^2_m$, using the sequence $\{h_j(t)\}_{t\geq \hat s_k}$, where $h_j(t):=\left(v_j^2 t^2+(h^l_k)^2\right)^\half$. By Corollary~\ref{corollary:third-pt2}, there exists a head point, say $(T_u,H_u)$, of type $\t_u$ in the unexplored region $\Ucal^2_j$ or $\Ucal^2_m$ such that $(k+1)$-st handover is given by the intersection point, say $\left(\hat S_{k+1},\hat H_{k+1}\right)$, of the radial birds at $(0,h^l_k)$ and $(T_u,H_u)$. The pair of stopping sets is $\left(S^j_{\hat S_{k+1}}, S^m_{\hat S_{k+1}}\right)$, and the upcoming head point $(T_u,H_u)$ lie on the boundary of the respective stopping sets, depending on whether $\t_u=j$ or $m$. From Table~\ref{tab:unexp-regions-q2}, it is clear that $T_u\geq 0$. Then, the type of the upcoming handover is $\bbinom{q'}{j,\t_u}$, with $q'$ equal to $1$. Also note that $\bbinom{q'}{j,\t_u}=\bbinom{2}{j,j}$ is not possible for obvious reasons. Subsequently, we define the `spatial' part of the state at $(k+1)$-st step as
\begin{equation}
\left(H_{k+1}^l, T^r_{k+1}, H^r_{k+1}\right) := (H^r_k,T_u-t^r_k,H_u).
\label{eq:HlTrHrk+1a}
\end{equation} 
\end{case} 
The Markov property is satisfied by the sequence $\left\{\left(H^l_k, T^r_k, H^r_k\right), type(k) \right\}_{k\in \Z}$, since the state  at the $(k+1)$-st step, $\left(\left(H^l_{k+1}, T^r_{k+1}, H^r_{k+1}\right), type(k+1)\right)$, is determined using the unexplored regions, $\Ucal^1_j$ and $\Ucal^1_m$ in ~\ref{case:q1} for $q=1$ and $\Ucal^2_j$ and $\Ucal^2_m$ in~\ref{case:q2} for $q=2$, where one finds a Poisson point process of heads which is conditionally independent of the past, given the state $\left(\left(h^l_k, t^r_k, h^r_k\right), type(k)\right)$ at step $k$. This completes the proof of Theorem~\ref{theorem:markov2}. 
\end{proof}
\begin{remark}
Our technique allows one to predict the upcoming handover type $\bbinom{q'}{j,\t_u}$ and the location of the responsible head point, say $(T_u,H_u)$ of type $\t_u$, given the state at step $k$. In addition, it enables us to derive a representation of 
the transition kernel of the chain, for a transition to $\left((H^l_{k+1},T^r_{k+1},H^r_{k+1}),\bbinom{q'}{j,\t_u}\right)$, given the $k$-th state is $\left((H^l_k,T^r_k,H^r_k),\bbinom{q}{i,j}\right)$ along the line of the single-speed case. 
\end{remark}
\begin{remark}[Structural features of the Markov chain] In determining  each upcoming handover epoch the Markov chain explores certain region until it finds the next head point responsible for the next handover. For different typical handover types, the unexplored regions are listed in Table~\ref{tab:unexp-regions-q1} and Table~\ref{tab:unexp-regions-q2}. One can say that, as if the Markov chain starts fresh from a particular step onward, if the next head point lies in a region that has minimum dependence from the past. This happens only when the sets $E^{\hat s,v_1}_{\hat h} $ and $E^{\hat s,v_2}_{\hat h}$ are small, for a handover given by an intersection point $(\hat s,\hat h)$. This is equivalent to the condition that the handover distance $\hat h$ is small. In terms of the Markov chain, this scenario happens when all the components of the spatial part $(H^l,T^r,H^r)$ of the Markov chain are small enough, irrespective of the the type of the handover. In other words, in this scenario both the head points are sufficiently close to each other and also close to the time axis.

This idea leads to a renewal structure in the Markov chain using a ``petite set'' or ``small set'' argument~\cite[Theorem~5.2.1]{Meyn-Tweedie}. Moreover, this opens up a door for a new line of research on the structural properties of handover process in terms of the Markov chain, for example: central limit theorem, law of large numbers, large deviation etc. as mentioned in Remark~\ref{remark:petit_set} for the single-speed case.   
\end{remark}


\section{Handover frequency: multi-speed case}\label{sec-HFn_speed}
\subsection{Multi-speed case}
\label{subsection:multi_speed} Our techniques naturally lift to the finite set of speeds case, except for a small adjustment required to quantify the effect of stations of all the different speeds on handovers of the same or mixed types. Suppose the set of speeds is $v_{[n]}:=\{v_i\}_{i\in [n]}$, with the order $v_1> v_2 > \cdots > v_n$ and initial locations of stations are independent Poisson point processes $\tilde{\Phi}^l=\sum_{i\in \N}\delta_{(R^{(l)}_i, \a^{(l)}_i)}$, for $l\in [n]$. \label{notation:vn1}

\subsubsection{Head point process and radial bird particle process}For different speeds $v_l$ with $l\in[n]$, consider $n$ different head point processes for $l\in [n]$,
\[
\Hcal^{l}:=\sum_{i\in \N}\delta_{\left(T^{(l)}_i, H^{(l)}_i\right)}, 
\text{ where }
\left(T^{(l)}_i, H^{(l)}_i\right):=\left(-\frac{R^{(l)}_i}{v_l}\cos \a^{(l)}_i, R^{(l)}_i|\sin \a^{(l)}_i|\right).
\]
The following  result is similar to Lemma~\ref{lemma:tips_density2}.
\begin{lemma}
For each $l\in[n]$, $\Hcal^l$ is a Poisson point process on $\mathbb{H}^+$ with intensity measure $\nu_l$ where $d\nu_l:=v_l {\rm d}t \otimes 2\la_l {\rm d}h $. For $l\in [n]$, the point processes $\Hcal^l$, are independent.
\label{lemma:tips_densityn}
\end{lemma}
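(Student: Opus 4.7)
The plan is to reduce the multi-speed statement to $n$ independent applications of the single-speed mapping argument already established in Lemma~\ref{lemma:tips_density} (and its non-unit speed variant, Remark~\ref{remark:r1}). Since, by assumption, the point processes $\tilde{\Phi}^1, \ldots, \tilde{\Phi}^n$ are independent Poisson point processes on $\R^+ \times [-\pi, \pi]$ with intensity measures $\mu_l$ of density $2\pi r \la_l\, {\rm d}r \otimes \frac{1}{2\pi}{\rm d}\a$, it suffices to (i) identify, for each fixed $l \in [n]$, the distribution of $\Hcal^l$ individually, and then (ii) propagate the mutual independence through the type-wise deterministic mapping.

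For step (i), I would fix $l$ and split $\tilde{\Phi}^l$ into two sub-Poisson point processes $\tilde{\Phi}^{l,+}$ and $\tilde{\Phi}^{l,-}$, one on $\R^+ \times [0,\pi)$ and the other on $\R^+ \times [-\pi,0)$, each of intensity $\pi r \la_l\, {\rm d}r \otimes \frac{1}{\pi}{\rm d}\a$. Define the bijections
\begin{equation*}
g_1^{(l)}(r,\a) := \left(-\tfrac{r}{v_l}\cos\a,\ r\sin\a\right), \qquad g_2^{(l)}(r,\a) := \left(-\tfrac{r}{v_l}\cos\a,\ -r\sin\a\right),
\end{equation*}
whose Jacobian is $r/v_l$ exactly as in Remark~\ref{remark:r1}. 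The mapping theorem then gives two independent Poisson point processes on $\mathbb{H}^+$ whose superposition is $\Hcal^l$, and the change-of-variable computation carried out verbatim as in the proof of Lemma~\ref{lemma:tips_density} (with the extra factor $1/v_l$ from the Jacobian absorbed into the intensity measure) yields ${\rm d}\nu_l = v_l\,{\rm d}t \otimes 2\la_l\,{\rm d}h$. The factor of $2$ again arises because two source points — one from each of the half-strips — are sent to the same head location in $\mathbb H^+$.

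For step (ii), the independence of $\Hcal^1,\dots,\Hcal^n$ is an immediate consequence of the independence of $\tilde{\Phi}^1,\dots,\tilde{\Phi}^n$: each $\Hcal^l$ is a deterministic (measurable) functional of $\tilde{\Phi}^l$ alone via the pair of maps $(g_1^{(l)}, g_2^{(l)})$, so independence is preserved under pushforward. Equivalently, the marked superposition $\sum_{l \in [n]} \Hcal^l$ is the image under a single measurable map of the independent superposition $\sum_{l \in [n]} \tilde{\Phi}^l$, and the complete independence property of the resulting Poisson point process on $\mathbb H^+$ restricted to the (disjoint) type labels gives the desired independence.

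There is no real obstacle here: the two-speed case already contains the entire structural content and the extension to $n$ speeds is routine, the only bookkeeping being the $n$ independent Jacobian factors $r/v_l$. I would simply remark that the proof is identical to that of Lemma~\ref{lemma:tips_density2} carried out $n$ times in parallel, invoking independence of $\{\tilde{\Phi}^l\}_{l\in[n]}$ to conclude.
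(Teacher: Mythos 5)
Your proposal is correct and follows essentially the same route as the paper: the paper's proof of the two-speed version (Lemma~\ref{lemma:tips_density2}), to which the $n$-speed case is explicitly deferred, likewise constructs each $\Hcal^l$ from $\tilde{\Phi}^l$ by the single-speed mapping argument of Lemma~\ref{lemma:tips_density} with the speed-$v_l$ maps of Remark~\ref{remark:onetov}, and obtains independence directly from the independence of the $\tilde{\Phi}^l$ under the type-wise pushforward. Your Jacobian bookkeeping ($r/v_l$ per half-strip, two preimages per head point) reproduces the stated density ${\rm d}\nu_l = v_l\,{\rm d}t \otimes 2\la_l\,{\rm d}h$ exactly as intended.
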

We shall write $\Hcal=\sum_{l\in [n]}\Hcal^l$, the superposition of $n$ independent Poisson point processes $\{\Hcal^l\}_{l\in [n]}$. We write the law of the head point process $\Hcal=\sum_{l\in [n]}\Hcal^l$, as a product law  $\P_\Hcal:=\bigotimes_{l\in [n]}\P_{\Hcal^l}$, since the $\Hcal^l$'s are independent. The radial bird particle process can be constructed as the sum $\Pcal_c(v_{[n]}):=\sum_{l\in [n]}\Pcal_c(v_l)$, where,
\[
\Pcal_c(v_l)=\sum_{i}\delta_{C_i^l},
\]
where the support is the set of closed subsets of $\mathbb H^+$ of the form (\ref{eq:cb2}). The Poisson structure and time-stationarity of $\Pcal_c(v_{[n]})$ are also preserved in this case, as seen in Lemma~\ref{lemma:bird_time_stationary2} for the two-speed case. In view of Remark~\ref{remark:bird-to-mhead}, we can equivalently consider $\Pcal_c(v_{[n]})$ as the marked head point point process $\Hcal_c(v_{[n]})= \sum_{l\in [n]}\Hcal_c(v_l)$ with closed set valued marks, where
\[
\Hcal_c(v_l)=\sum_{i}\delta_{\left(T^{(l)}_i, H^{(l)}_i\right), C_i^l}.
\]
\subsubsection{Joint lower envelope process} The {\em joint lower envelope process} is a stochastic process on $\mathbb H^+$, is defined as $\{(t, L_n(t))\}_{t\in \R}$, where 
\[
L_n(t):= \min_{l\in [n]} \inf_{i\in\N} f_{v_l}\left(\left(R^{(l)}_i,\a^{(l)}_i\right), t\right).
\]
The lower envelope is also time-stationary in view of Lemma~\ref{lemma:L_stationary2} for two-speed case. The {\em lower envelope} is defined as a random closed subset of $\mathbb H^+$ as \label{notation:Len}
\[
\Lcal_e(v_{[n]}):= \{(t, L_n(t)): t\in \R\}.
\] 
Let us define the open region above the lower envelope $\Lcal_e(v_{[n]})$ as \label{notation:Len+}
\begin{equation}
\Lcal^+_e(v_{[n]}):=\{(t,h)\in \mathbb H^+\,\mbox{:}\, h>L_n(t)\}. \label{eq:Lcaln+}
\end{equation} 
In the multi-speed case, we have a {\em half-ellipse condition}, analogous to  Lemma~\ref{lem:semiellipse2}, for the characterization for a point to be on the lower envelope $\Lcal_e(v_{[n]})$. We shall state the result without proof.
\begin{lemma}[Half-ellipse condition] For any point $(s, u)\in \cup_{k\in[n]}\cup_{i\in \N}C^l_i$ the following holds true: 
\begin{enumerate}[(i).]
\item \label{semiellipse_n1} $(s,u) \in \Lcal_e(v_{[n]})$ if and only if $\Hcal^l(E^{s, v_l}_{u})=0$ for all $l\in [n]$;
\item \label{semiellipse_n2} the probability of such an event is
\begin{equation}
\P\left((s,u) \in \Lcal_e(v_{[n]})\right)= e^{-\la \pi u^2}, 
\end{equation}
where $\la:=\sum_{l\in [n]}\la_l$.
\end{enumerate}
\label{lem:semiellipse_n}
\end{lemma}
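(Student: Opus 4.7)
The plan is to prove Lemma~\ref{lem:semiellipse_n} as a direct generalization of Lemma~\ref{lem:semiellipse2}, exploiting the fact that the head point processes $\Hcal^l$ for $l \in [n]$ are mutually independent Poisson point processes on $\mathbb{H}^+$ (as guaranteed by Lemma~\ref{lemma:tips_densityn}). The key observation is that the joint lower envelope $\Lcal_e(v_{[n]})$ decomposes as the pointwise minimum over type-specific lower envelopes, so being on the joint lower envelope is equivalent to being on the lower envelope \emph{simultaneously} with respect to each type separately.

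First I would prove part~(\ref{semiellipse_n1}) by applying the single-speed half-ellipse condition (Lemma~\ref{lem:semicircle} combined with Discussion~\ref{discussion:ellipse_v}) independently to each of the $n$ families of radial birds. For a point $(s,u) \in \cup_{k \in [n]} \cup_{i} C_i^l$, observe that $(s,u) \in \Lcal_e(v_{[n]})$ means $L_n(s) = u$, i.e., for every type $l \in [n]$ and every radial bird of type $l$, its height at time $s$ is at least $u$. By the geometric argument underlying Observation~\ref{observation:in-out} (adapted to speed $v_l$ through the half-ellipse $E^{s,v_l}_u$), this is equivalent to saying that no head of type $l$ lies inside $E^{s,v_l}_u$, that is $\Hcal^l(E^{s,v_l}_u) = 0$. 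The argument proceeds exactly as in the proof of Lemma~\ref{lem:semicircle}, splitting into the subcases where $(s,u)$ is or is not a head point, with the observation that any bird of type $l$ with head on $\partial^* E^{s,v_l}_u$ passes through $(s,u)$ (this being an immediate consequence of the distance function interpretation in (\ref{eq:two-point-v})). Taking the conjunction over all $l \in [n]$ gives the claim.

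For part~(\ref{semiellipse_n2}), I would exploit the independence of the $\Hcal^l$'s together with their Poisson structure. By Lemma~\ref{lemma:tips_densityn}, $\Hcal^l$ has intensity measure $2v_l \la_l\, {\rm d}t \otimes {\rm d}h$, and the area of the upper half-ellipse $E^{s,v_l}_u$ is $\tfrac{\pi u^2}{2 v_l}$. Hence
\begin{equation}
\P_{\Hcal^l}\left(\Hcal^l(E^{s,v_l}_u) = 0\right) = \exp\!\left(-2 v_l \la_l \cdot \tfrac{\pi u^2}{2 v_l}\right) = e^{-\la_l \pi u^2}.\nonumber
\end{equation}
The speed $v_l$ cancels, so the void probability depends only on $\la_l$ and $u$. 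Multiplying over the independent components $l \in [n]$ yields
\begin{equation}
\P\left((s,u) \in \Lcal_e(v_{[n]})\right) = \prod_{l \in [n]} e^{-\la_l \pi u^2} = e^{-\la \pi u^2},\nonumber
\end{equation}
where $\la = \sum_{l \in [n]} \la_l$. There is no genuine obstacle here; the only thing to be careful about is the bookkeeping showing that the speed factor in the intensity exactly cancels the inverse speed factor in the area of the half-ellipse, which is precisely the cancellation already noted in the remark following Discussion~\ref{discussion:ellipse_v}.
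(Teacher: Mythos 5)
Your proposal is correct and follows essentially the same route the paper takes: the paper states this $n$-speed lemma without proof as the direct analogue of the two-speed Lemma~\ref{lem:semiellipse2}, whose proof applies the single-speed half-ellipse condition to each head point process separately and then multiplies the void probabilities $e^{-2\la_l v_l \cdot \pi u^2/(2v_l)} = e^{-\la_l \pi u^2}$ using independence. Your generalization, including the cancellation of the speed factor against the area of the half-ellipse, is exactly the intended argument.
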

\subsubsection{The handover point process and  intensity}\label{subsubsection:constVn} In view of the construction of the handover point process in the two-speed case in Subsubsection~\ref{subsubsection:constV}, we can also define the handover point process in the multi-speed case using the multi-speed radial bird particle process $\Hcal_c(v_{[n]})$. By Lemma~\ref{lem:semiellipse_n}, for the intersection point $(\hat{s}, \hat{h})$ of two radial birds with their heads at $(t,h), (t',h')$ give rise to a handover if and only if $\Hcal^{l}(E^{\hat s, v_l}_{\hat h})= 0$, for all $l\in [n]$.  

Formally, taking inspiration from the two-speed case in Subsubsection~\ref{subsubsection:constV}, we can construct different handover point processes $\Vcal_l$ corresponding pure handovers of type $l$, for $l\in [n]$, as in (\ref{eq:VcalHPPll}), and $\Vcal^{(k)}_{l,r}$ corresponding to mixed handovers of type $\binom{k}{l,r}$, for $l\neq r\in [n], k\in \{1,2\}$, as in (\ref{eq:VcalHPPlmk}), (\ref{eq:VcalHPPlmks}). Here, we recall that this type of mixed handover is due to the $k$-th intersection of the radial birds of types $l,r$ such that $l\neq r$ and the head point of the former is on the left of that of the latter.

Based on all possible types of pure and mixed intersections, while respecting the {\em intersection criterion} determined in Lemma~\ref{lemma:2int} and Lemma~\ref{eq:DlDr-rev}, the handover point process $\Vcal$, similar to (\ref{eq:VcalHPP}), is defined as 
\begin{equation}
\Vcal:=\sum_{l\in [n]}\Vcal_l+\sum_{l\neq r\in [n], k=1,2}\Vcal_{l,r}^{(k)}.
\label{eq:VcalHPPn}
\end{equation}
In the following, we just state the result about the time stationarity of the handover point process $\Vcal$, which is inherited from the stationarity of the head point process $\Hcal$.
\begin{lemma}[Time-stationarity of $\Vcal$]
In the multi-speed case, the handover point process $\Vcal$ is stationary with respect to time.
\label{lemma:stationary_Tcaln}
\end{lemma}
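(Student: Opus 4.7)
The plan is to mirror the argument already used in Lemma~\ref{lemma:stationary_Tcal2} for the two-speed case and reduce the statement to the time-stationarity of the underlying head point process $\Hcal=\sum_{l\in[n]}\Hcal^l$. By Lemma~\ref{lemma:tips_densityn}, each $\Hcal^l$ is a Poisson point process on $\mathbb{H}^+$ with intensity measure $v_l\,{\rm d}t\otimes 2\la_l\,{\rm d}h$, which is invariant under the time-shift $\theta_t$ defined in (\ref{eq:shift_Hcalc}). Since $\Hcal^1,\ldots,\Hcal^n$ are independent, the joint law of the family $(\Hcal^l)_{l\in[n]}$ is invariant under the diagonal action of $\theta_t$, and therefore so is the law of $\Hcal$.

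Next, I would argue that $\Vcal$ is a deterministic factor of the marked point process $(\Hcal^l)_{l\in[n]}$, which moreover commutes with the time-shift. Indeed, recall from (\ref{eq:VcalHPPn}) the decomposition
\[
\Vcal=\sum_{l\in[n]}\Vcal_l+\sum_{\substack{l\neq r\in[n]\\ k=1,2}}\Vcal_{l,r}^{(k)}.
\]
Each summand is built from pairs of atoms of $\Hcal$ by: (i)~computing the intersection coordinates $(\hat S,\hat H)$ of the two associated radial birds via the closed-form expressions in (\ref{eq:HOs})--(\ref{eq:HO_distance1}), which depend on the head coordinates only through the differences of abscissas and the heights, and (ii)~imposing the void conditions $\Hcal^l(E^{\hat S,v_l}_{\hat H})=0$ for all $l\in[n]$ from Lemma~\ref{lem:semiellipse_n}, together with the intersection criteria of Lemma~\ref{lemma:2int} and Lemma~\ref{lemma:2int-rev}. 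Both the intersection formulae and the void conditions are invariant under a joint translation of all heads along the time-axis: shifting every head by $-t$ shifts $\hat S$ by $-t$, leaves $\hat H$ unchanged, and translates the ellipses $E^{\hat S,v_l}_{\hat H}$ accordingly.

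Consequently, for each $t\in\R$, $\theta_t(\Vcal)$ coincides with the point process built by the same deterministic recipe applied to $\theta_t(\Hcal)$, i.e.\ $\theta_t(\Vcal)=\Psi\bigl(\theta_t(\Hcal)\bigr)$ where $\Psi$ denotes the measurable map sending a head configuration to its handover point process. Combining this with the shift-invariance of the law of $\Hcal$ established above yields $\theta_t(\Vcal)\stackrel{d}{=}\Psi(\Hcal)=\Vcal$, which is the desired time-stationarity.

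The only mildly delicate point is verifying that the handover event does not lose its almost-sure defining properties under the shift, in particular that the intersection criteria of Lemmas~\ref{lemma:2int} and~\ref{lemma:2int-rev}, which are phrased via the sets $D_\ell,D_r,D'_\ell,D'_r$, are themselves defined only through differences $t_i-t_j$ and the heights; this is immediate from formulas~(\ref{eq:DlDr})--(\ref{eq:DlDr-rev}). Once this is noted, the argument is entirely analogous to the two-speed case, and no further obstacle arises.
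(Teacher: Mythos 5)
Your proof is correct and follows exactly the route the paper intends: the paper states the lemma without proof, remarking only that stationarity is ``inherited from the stationarity of the head point process $\Hcal$'', and your argument is precisely the fleshed-out version of that claim (shift-invariance of the law of $\Hcal=\sum_{l}\Hcal^l$ plus the observation that $\Vcal$ is a shift-equivariant deterministic factor of the head configuration). No discrepancy with the paper's approach.
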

The construction of the point processes $\Rcal_l$ for $l\in [n]$ and $\Rcal^{(k)}_{l,r}$ for $l\neq r\in [n], k\in \{1,2\}$, corresponding to the right most head points is also the same and so is the existence of the bijections as in Lemma~\ref{lemma:vr}. 
\begin{lemma}
\begin{enumerate}[(i).]
    \item \label{bijection-in}For all $l\in [n]$, there exists a bijection $\beta_l$ between $\Vcal_l$ and $\Rcal_l$.
    \item \label{bijection-ijkn}  For all $l\neq r\in [n], k\in \{1,2\}$, there exists a bijection $\beta^{(k)}_{l,r}$ between $\Vcal^{(k)}_{l,r}$ and $\Rcal^{(k)}_{l,m}$.
\end{enumerate}
\label{lemma:vrn}
\end{lemma}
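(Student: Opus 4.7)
The plan is to prove both parts by explicit construction, essentially lifting the two-speed argument of Lemma~\ref{lemma:vr} verbatim, since the bijective correspondence between handovers and their ordered head-pairs involves only pairs of radial birds, not all $n$ types at once.

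For part~(\ref{bijection-in}), fix $l\in [n]$ and consider a pure handover of $\Vcal_l$. By construction in (\ref{eq:VcalHPPll}) (adapted to the multi-speed setting), such a handover is produced by a pair of heads $(T_i^l,H_i^l),(T_j^l,H_j^l)\in \Hcal^l$ whose associated radial birds $C_i^l,C_j^l$ intersect at a point $(\hat S,\hat H)$ satisfying $\Hcal^k(E^{\hat S,v_k}_{\hat H})=0$ for every $k\in [n]$. Since both birds share the same speed $v_l$, Property~\ref{Obs4} (unique intersection for identical speeds, which survives the mapping to $\mathbb H^+$ regardless of the ambient number of speeds) guarantees that this pair intersects exactly once, so the pair is uniquely determined by the handover. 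Without loss of generality $T_i^l<T_j^l$, and I define $\beta_l$ to send the handover epoch $\hat S\in\Vcal_l$ to $T_j^l\in\Rcal_l$, the right-most abscissa among the two heads. Injectivity of $\beta_l$ follows from the uniqueness of the intersection, and surjectivity is immediate from the very definition of $\Rcal_l$ as the set of right-most heads of such pairs.

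For part~(\ref{bijection-ijkn}), fix $l\neq r\in [n]$ and $k\in\{1,2\}$, and consider a handover in $\Vcal_{l,r}^{(k)}$. By the construction in (\ref{eq:VcalHPPlmk})--(\ref{eq:VcalHPPlmks}) (adapted to $n$ speeds), such a handover is produced by a pair of heads $(T_i^l,H_i^l)\in\Hcal^l$ and $(T_j^r,H_j^r)\in\Hcal^r$ with $T_i^l<T_j^r$ and satisfying the appropriate intersection criterion from Lemma~\ref{lemma:2int} or Lemma~\ref{lemma:2int-rev}; the handover is the $k$-th intersection of $C_i^l$ and $C_j^r$, which by (\ref{eq:HOs}) is uniquely labelled among the at most two intersection points. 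Because the combinatorial notation $\binom{k}{l,r}$ fixes which of the two birds is on the left and which intersection ($k=1$ or $k=2$) is taken, the pair of heads producing the handover is uniquely recovered. The map $\beta_{l,r}^{(k)}$ sends the handover epoch $\hat S^k$ to the right-most abscissa $T_j^r\in\Rcal_{l,r}^{(k)}$; injectivity follows from the uniqueness just described, and surjectivity again follows directly from the definition of $\Rcal_{l,r}^{(k)}$.

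The only place where multi-speed truly differs from two-speed is that a radial bird of type $l$ may now simultaneously be involved in handovers against birds of any of the other $n-1$ types; but this does not affect the pairwise bijective correspondence, because each handover is, by construction in (\ref{eq:VcalHPPn}), produced by exactly one pair. I expect no real obstacle beyond carefully transcribing the two-speed argument and checking that the labels $\binom{k}{l,r}$ still single out a unique pair when $n>2$, which they do because $l,r,k$ jointly encode both the types involved and which of the two intersections is taken. The stationarity of both $\Vcal_\bullet$ and $\Rcal_\bullet$ along the time axis is inherited from that of $\Hcal$ (Lemma~\ref{lemma:stationary_Tcaln}), so the bijection is compatible with time shifts, which is what makes it useful for later mass transport arguments extending Lemma~\ref{lemma:VR-MTP}.
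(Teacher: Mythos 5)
The paper offers no proof of this lemma: it asserts that the construction ``is also the same'' as in the two-speed case (Lemma~\ref{lemma:vr}), which is itself stated without proof by analogy with the single-speed bijection $\beta$ of Subsection~\ref{subsection:Palm-Handover}. So your explicit construction is the intended argument, and part~(\ref{bijection-in}) is essentially right. One caveat: injectivity of $\beta_l$ does not follow from ``uniqueness of the intersection'' alone. Uniqueness shows that the handover determines its ordered pair of heads; you still need that two distinct pure handovers of type $l$ cannot share the same right-most head. That is true, but for a different reason: if a type-$l$ bird takes over from another type-$l$ bird at their unique crossing, it lies strictly above that bird at all earlier times, so it cannot have been the serving bird just after any earlier pure type-$l$ handover; hence a type-$l$ head is the right-most head of at most one such pair, even in the presence of the other $n-1$ speeds.

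In part~(\ref{bijection-ijkn}) the injectivity step has a genuine gap. Your argument correctly shows that each mixed handover of type $\binom{k}{l,r}$ determines a unique ordered pair of heads, but it does not follow that distinct handovers of that type have distinct right-most heads. Concretely, take $v_l>v_r$ and a type-$r$ bird with head at $(T,H)$ serving the user over a long stretch while descending towards its head; two type-$l$ birds with heads at $T_1<T_2<T$ can each dip below it on disjoint time intervals, with all four crossings lying on $\Lcal_e^+(v_{[n]})^c$, i.e.\ all four being handovers. By Observation~\ref{observation:sts} both dips occur to the left of $T$, so both pairs have the type-$l$ head on the left and the same type-$r$ head on the right, and both first intersections are handovers of type $\binom{1}{l,r}$ sharing the right-most head $(T,H)$. (The reappearance phenomena in Figure~\ref{figure:BPP-MPN2} are exactly of this flavour.) Thus $\beta^{(k)}_{l,r}$, read as a map between supports of simple point processes, is not injective. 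The statement that survives, and that your pair-recovery argument actually proves, is a bijection between the atoms of $\Vcal^{(k)}_{l,r}$ and the ordered pairs of heads producing them, equivalently between the atoms of $\Vcal^{(k)}_{l,r}$ and those of $\Rcal^{(k)}_{l,r}$ counted with multiplicity (one atom per pair). This is all that the mass-transport identities extending Lemma~\ref{lemma:VR-MTP} require, but you should state the bijection at that level rather than claim injectivity onto a simple point process of right-most abscissas.
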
 
We obtain the equality of intensities as a corollary:
\begin{corollary}
\begin{enumerate}[(i).]
    \item For $l\in[n]$, the intensities of the point processes $\Vcal_l$ and $ \Rcal_l$ are equal: $\la_{\Rcal_l}=\la_{\Vcal_l}= \La_l$.
     \item   For all $l\neq r\in [n], k\in \{1,2\}$, the intensities of the point processes $\Vcal_{l,r}^{(k)}$ and $ \Rcal_{l,r}^{(k)}$ are equal: $\la_{\Rcal_{l,r}^{(k)}}=\la_{\Vcal_{l,r}^{(k)}}=\La_{l,r}^{(k)}$.
\end{enumerate} 
\label{corollary:eqVRn}
\end{corollary}
In the multi-speed scenario, the handover frequency can be written as a sum of pure and mixed handovers 
\[
\la_\Vcal= \sum_{l\in [n]}\Lambda_l+\sum_{l\neq r\in [n],k\in \{1,2\}}\Lambda^{(k)}_{l,r},
\]
as described in the following result which generalizes Theorem~\ref{theorem:handover_freq1} and Theorem~\ref{theorem:HO2speed-main}.
\begin{theorem} 
The handover frequency in multi-speed case with set of speeds $v_{[n]}$ is given by:
\begin{enumerate}
\item \label{MHnp1} The pure handover frequencies are 
\begin{equation}
     \Lambda_i= \frac{4v_i\sqrt{\la}}{\pi}\left(\frac{\la_i}{\la}\right)^2, \text{ for }i\in [n].
\end{equation}
\item \label{MHnp2}  For any pair $i\neq j\in [n]$, the mixed handover frequencies satisfies, $\Lambda^{(1)}_{i,j}=\Lambda^{(2)}_{j,i}$ and $\Lambda^{(2)}_{i,j}=\Lambda^{(1)}_{i,j}$.

\item \label{MHnp3}   For any pair $j<i\in [n]$ and $k\in \{1,2\}$, 
\begin{align}
\Lambda^{(k)}_{i,j}&= 4\la_i\la_j v_i v_j\int_{0}^{\infty}\left[\int_0^{h_j}\int_{t^*}^\infty e^{-\la\pi\hat h^2_k(t, h_j, h_i)}   \,{\rm d}t \, {\rm d}h_i +\int_{h_j}^{\infty}\int_{0}^{\infty} e^{-\la\pi\hat h^2_k(t, h_j, h_i)}   \,{\rm d}t \, {\rm d}h_i\right] {\rm d}h_j,
\label{eq:La-ijk}
\end{align}
\end{enumerate} 
where 
\[
\hat h^2_k(t, h_j, h_i)= v_j^2 \left(\frac{v_i^2 t+ (-1)^k \Delta^{\half}_{i,j}}{v_j^2-v_i^2}\right)^2\!\!\!\!{+} h_j^2 \text{ and } \Delta_{i,j}\equiv \Delta_{i,j}(t, h_j, h_i)= v_j^2v_i^2 t^2- (h_j^2-h_i^2)(v_j^2-v_i^2),
\]
both of them as a function of $t, h_j, h_i$. The quantity
$t^*\equiv t^*(h_j,h_i):=\frac{1}{v_j v_i}(h_j^2-h_i^2)^\half (v_j^2-v_i^2)^\half$, is the non-negative solution of $\Delta_{i,j}(t, h_j, h_i)=0$ in the variable $t$. 
\label{thm:HOnspeed}
\end{theorem}
\begin{proof}[Proof of Theorem~\ref{thm:HOnspeed},~part~(\ref{MHnp1})]
In the first part, the intensity of pure handovers is computed similar to (\ref{eq:Lam1}) in the proof of two-speed case, as 
\begin{equation}
\Lambda_i= \frac{4v_i\sqrt{\la}}{\pi}\left(\frac{\la_i}{\la}\right)^2,
\label{eq:HOF-same-n}
\end{equation}
for each $i\in [n]$, where $\la=\sum_{i\in [n]}\la_i$.
\end{proof}
\begin{proof}[Proof of Theorem~\ref{thm:HOnspeed},~part~(\ref{MHnp2})] Let $i>j\in [n]$ and assume that $v_j>v_i$. We are interested in the handover between two stations with speed $v_j,v_i$, which we call mixed handovers.
For such mixed handovers, by the symmetry argument as in the two-speed case in the proof of part~(\ref{MH2p2}) Theorem~\ref{theorem:HO2speed-main}, we have that $\Lambda^{(1)}_{i,j}=\Lambda^{(2)}_{j,i}$ and $\Lambda^{(2)}_{i,j}=\Lambda^{(1)}_{j,i}$ for all $j<i\in [n]$. 
\end{proof}
\begin{proof}[Proof of Theorem~\ref{thm:HOnspeed},~part~(\ref{MHnp3})]
We just evaluate $\Lambda^{(k)}_{i,j}$ for $j<i\in [n]$ and $k=1,2$ for handovers of type $\binom{k}{i,j}$. Let us straight away start from Equation (\ref{eq:h3v}) for the two-speed case in Theorem~\ref{theorem:HO2speed-main} and write the mixed handover frequency $\Lambda^{(k)}_{i,j}$ as an integral by applying the Campbell-Mecke formula for the factorial moment measure of order 2 for the sum $\Hcal:=\sum_{i\in [n]}\Hcal^i$, as
\begin{align}
\Lambda^{(k)}_{i,j}\!&=\! 4\la_i\la_jv_iv_j\!\!\int_{0}^{1} \!\!\int_{0}^{\infty}\!\!\int_{0}^{\infty}\!\!\! \int_{D_\ell(t_j, h_j, h_i)}\!\! \!\!\!\!\!\E^{(i,j)}_{\Hcal}\!\left[\one_{A^{(k)}_{i,j}}\! \Big(\Hcal^i+\delta_{(t_i, h_i)}+\Hcal^j+\delta_{(t_j, h_j)}+\!\!\sum_{m\neq i,j}\Hcal^m\Big)\right] {\rm d}t_i {\rm d}h_i {\rm d}h_j {\rm d}t_j.
\label{eq:hnv}
\end{align}
Here $A^{(k)}_{i,j}$ is the event that there is a handover of type $\binom{k}{i,j}$ at $\left(\hat s_{i,j}^{(k)}, \hat h_{i,j}^{(k)}\right)$, for $k=1,2$, due to the intersection of the radial birds that have their heads at $(t_j, h_j)$ and $(t_i, h_i)$, with $t_i<t_j$. Above, we took the expectation inside the integral under the two-point Palm probability measure\label{notation:2nPHlP}
\[
\P^{(i,j),(t_i, h_i), (t_j,h_j)}_{\Hcal}:=\P^{(t_i,h_i)}_{\Hcal^i}\otimes\P^{(t_j, h_j)}_{\Hcal^j}\otimes \bigotimes_{m\neq i,j}  \P_{\Hcal^m}.
\]
Let us evaluate the term $\Lambda^{(k)}_{i,j}$ similarly to the two-speed case as
\begin{align}
\Lambda^{(k)}_{i,j}&=4\la_j\la_iv_jv_i\int_{0}^{1}\! \int_{0}^{\infty}\!\!\int_0^{h_j}\!\!\int_{-\infty}^{t_r-t^*}\!\!\E_{\Hcal}\left[\one_{A^{(k)}_{i,j}}\Big(\Hcal^i+\delta_{(t_i, h_i)}+\Hcal^j+\delta_{(t_j, h_j)}+\!\!\sum_{m\neq i,j}\Hcal^m\Big)\right] {\rm d}t_i\, {\rm d}h_i\, {\rm d}h_j\, {\rm d}t_j\nn\\
&\quad + 4\la_j\la_iv_jv_i\int_{0}^{1}\! \int_{0}^{\infty}\!\!\int_{h_j}^{\infty}\!\!\int_{-\infty}^{t_j} \!\!\E_{\Hcal}\left[\one_{A^{(k)}_{i,j}}\Big(\Hcal^i+\delta_{(t_i, h_i)}+\Hcal^j+\delta_{(t_j, h_j)}+\!\!\sum_{m\neq i,j}\Hcal^m\Big)\right] {\rm d}t_i\, {\rm d}h_i\, {\rm d}h_j\, {\rm d}t_j,
\label{eq:hnvc}
\end{align}
where $t^*$ is defined similarly to (\ref{eq:tstar}):
\begin{equation}
t^* \equiv t^*(h_j,h_i):=\frac{1}{v_jv_i}(h_j^2-h_i^2)^\half (v_j^2-v_i^2)^\half.
\label{eq:star-tj}
\end{equation}
The quantity $t^*$ is well defined, since $v_j>v_i$. In (\ref{eq:hnvc}), we have used the fact that $t_i\leq t_j-t^*$ if $h_i<h_j$ and $t_i\leq t_j$, for $h_i\geq h_j$. 
\begin{figure}[ht!]
\centering
\begin{tikzpicture}[scale=0.4, every node/.style={scale=0.7}]
\pgftransformxscale{1.3}  
\pgftransformyscale{1.3}   
    \draw[->] (-3, 0) -- (6, 0) node[right] {$t$};
   \draw[blue, domain=-2:3.8, smooth] plot (\x, {((3/2)^2*\x*\x+2^2)^0.5});
    \draw[blue](0,2) node{$\bullet$};
        \draw[](0.2,1.8) node{$(t_i,h_i)$};
    \draw[red, domain=0.8:3.4, smooth] plot (\x, {(16*\x*\x-64*\x+64+2.5^2)^0.5});
    \draw[red](2,2.5) node{$\bullet$};
    \draw[](2.1,2.2) node{$(t_j,h_j)$};
    \draw[](0.9,3.3) node{$(\hat s_1,\hat h_1)$};
    \draw[](3.7,5.2) node{$(\hat s_2,\hat h_2)$};
     \draw[blue]  (-0.47,0) arc (-180:-360:2 and 3.07);
     \draw[red]  (2.35,0) arc (0:180:0.8 and 3.07);
     \draw[green]  (3.05,0) arc (0:180:1.5 and 3.07);
     \draw[]  (5,0) arc (0:180:3.5 and 3.07);
     \draw[blue](-0.9,0.6) node{$E^{\hat s_1,v_i}_{\hat h_1}$};
     \draw[red](1.6,0.6) node{$E^{\hat s_1,v_j}_{\hat h_1}$};
     \draw[green](2.9,0.6) node{$E^{\hat s_1,v_l}_{\hat h_1}$};
     \draw[](5.5,0.6) node{$E^{\hat s_1,v}_{\hat h_1}$};
    \end{tikzpicture}
    \captionsetup{width=0.94\linewidth}
    \caption{We consider  $(\hat s_1,\hat h_1)=(\hat s_{i,j}^{(1)},\hat h_{i,j}^{(1)})$, $(\hat s_2,\hat h_2)=(\hat s_{i,j}^{(2)},\hat h_{i,j}^{(2)})$ for simplicity. The mixed intersection $(\hat s_1, \hat h_1)$ represents a handover if the regions $E^{\hat s_1,v_i}_{\hat h_1}$ are empty of heads from $\Hcal^i$ for all $i\in [n]$. The red and blue ellipses are for the speeds $v_j$ and $v_i$, respectively and the green and black ones correspond to other speeds $v_l,v_k\neq v_i,v_j$.}
\label{figure:mixed_birds1}
\end{figure}

Under the two-point Palm probability measure $\P^{(i,j),(t_i, h_i), (t_j,h_j)}_{\Hcal}$, we have 
\begin{align}\!\!
\E_{\Hcal}\left[\one_{A^{(k)}_{i,j}}\Big(\Hcal^j+\delta_{(t_j, h_j)}+\Hcal^i+\delta_{(t_i, h_i)}+\!\!\sum_{m\neq i,j}\Hcal^m\Big)\right] &= \P^{(i,j),(t_i, h_i), (t_j,h_j)}_{\Hcal}\!\!\left(A^{(k)}_{i,j}\right).
    \label{eq:HO-ij}
\end{align}
Applying the Slivnyak-Mecke theorem and using Figure~\ref{figure:mixed_birds1}, the term in (\ref{eq:HO-ij}) evaluates to 
\begin{align}
\prod_{m\in [n]} \P_{\Hcal^m}\left( \Hcal^m(E^{\hat s_k,v_m}_{\hat h_k})=0\right) &= \prod_{m\in [n]}e^{-2\la_m v_m\frac{\pi \hat h_k^2}{2v_m}}= e^{-\pi \hat h_k^2\sum_{m\in [n]}\la_m}= e^{-\pi\la\hat h_k^2},
\label{eq:MHn1}
\end{align}
since $\sum_{m\in [n]}\la_m=\la$, where $(\hat s_k,\hat h_k)=(\hat s_k(t_i,h_i,t_j,h_j),\hat h_k(t_i,h_i,t_j,h_j))\equiv (\hat s_k(t_j-t_i,h_i,h_j),\hat h_k(t_j-t_i,h_i,h_j))$, the $k$-th intersection point of the radial birds at $(t_i,h_i), (t_j,h_j)$. For any pair $i, j\in [n]$, with $j<i$ and $k=1,2$, the mixed handover frequency is 
\begin{align}
\Lambda^{(k)}_{i,j}
&=4\la_i\la_jv_iv_j\int_{0}^{1} \int_{0}^{\infty}\int_0^{h_j} \int_{-\infty}^{t_j-t^*} e^{-\pi\la \hat h_k^2(t_j-t_i,h_j, h_i)}\; {\rm d}t_i\, {\rm d}h_i\, {\rm d}h_j\, {\rm d}t_j\nn\\
&\quad + 4\la_i\la_jv_iv_j\int_{0}^{1} \int_{0}^{\infty}\int_{h_j}^{\infty} \int_{-\infty}^{t_i}e^{-\pi\la \hat h_k^2(t_j-t_i,h_j, h_i)}\; {\rm d}t_i\, {\rm d}h_i\, {\rm d}h_j\, {\rm d}t_j\nn\\
&=4\la_i\la_jv_iv_j \int_{0}^{\infty} \left[\int_0^{h_j} \int_{t^*}^\infty e^{-\pi\la \hat h_k^2(t,h_j, h_i)}\; {\rm d}t \, {\rm d}h_i +  \int_{h_j}^{\infty} \int_{0}^{\infty}e^{-\pi\la \hat h_k^2(t,h_j, h_i)}\; {\rm d}t \, {\rm d}h_i\right]{\rm d}h_j,
\label{eq:MHn8}
\end{align}
which is the required expression for $\Lambda^{(k)}_{i,j}$, where $t^* \equiv t^*(h_j,h_i):=\frac{1}{v_jv_i}(h_j^2-h_i^2)^\half (v_j^2-v_i^2)^\half$. From (\ref{eq:HOF-same-n}) and (\ref{eq:MHn8}) and part~(\ref{MHnp2}), we get the value of total handover frequency
\[
\la_\Vcal= \sum_{i\in[n]}\Lambda_i+ 2 \sum_{j<i\in[n], k=1,2}\Lambda^{(k)}_{i,j}.
\]
This completes the proof of Theorem~\ref{thm:HOnspeed}. 
\end{proof}


\section{Handover Palm distribution and applications: multi-speed case}\label{sec-Distribution_T1n}
In this section, we first determine the handover Palm probability distribution with the goal to determine the Palm probability distribution of inter-handover times and typical handover distance. An identical analysis follows to come up with a result similar to the mass transport principle (Lemma~\ref{lemma:VR-MTP} for the single-speed and Lemma~\ref{lemma:VR-ijkC} for the two-speed case) for the multi-speed case. 
\subsection{Palm probability with respect to handovers}\label{subsection:Palm-Handover-n}
Without loss of generality, here also, we assume that $v_1>v_2>\cdots > v_n$. Identically to the two-speed case, here also, the reference probability space is considered as $(\Omega,\Fcal,\mathbb P)$, 
with $(\Omega,\Fcal)$ the canonical space of head point process $\Hcal=\sum_{l\in [n]} \Hcal^l$ on $\mathbb{H}^+$, where $\Hcal^l$ is the head point process corresponding to speed $v_l$. A point $\omega\in \Omega$ representing a realization of the point process $\Hcal$. The measure space is equipped with the shift $\{\theta_t\}_{t\in \mathbb{R}}$ along the time axis, defined as
\begin{equation}
\theta_t(\Hcal)=\sum_{l\in [n]}\theta_t(\Hcal^l)=\sum_{l\in [n]}\sum_{i\in \N} \delta_{(T^{l}_i-t,H^{l}_i)},
\label{eq:shift_Hcaln}
\end{equation}
which is ergodic for $\mathbb P$. As in the two-speed case, the law $\mathbb P$ of the head point process $\Hcal$ is left invariant by this shift. 

We have constructed the handover point process $\Vcal$ in Subsubsection~\ref{subsubsection:constVn}, using the pure handover point processes $\Vcal_l$ for $l\in [n]$, and the mixed handover point process $\Vcal^{(k)}_{l,r}$ for $l\neq r\in [n], k\in \{1,2\}$, depending on all possible type of intersections. The handover point process $\Vcal$ is stationary from Lemma~\ref{lemma:stationary_Tcal2} and has a positive and finite intensity from Theorem~\ref{theorem:HO2speed-main}. This allows us to define the Palm probability w.r.t. $\Vcal$ on the probability space $(\Omega,\Fcal)$, which will be denoted as
$\mathbb P_{\Vcal}^0$.

Recall the point processes $\Rcal_l$ for $l\in [n]$ and $\Rcal^{(k)}_{l,r}$ for $l\neq r\in [n], k\in \{1,2\}$, corresponding to the right most head points. The same mass transport principle, Lemma~\ref{lemma:VR-ijkC}, under the bijections defined in Lemma~\ref{lemma:vrn}, applies to all the individual Palm probability measures, stated in the following.
\begin{lemma}
For all non-negative measurable functions $f$ on $(\Omega,\Fcal)$ we have:
\begin{enumerate}[(i).]
    \item \label{beta-in} For all $l\in [n]$,
\begin{equation}
\E_{\Vcal_l}^0 [f (\Hcal)]=\E_{\Rcal_l}^0 \left[f (\theta_{\beta_l(0)}(\Hcal))\right], 
\label{eq:VR-in}
\end{equation}
where $\beta_l$ is the bijection between the point processes $\Vcal_l$ and $\Rcal_l$.
\item \label{beta-ijkn} For all $l\neq r\in [n], k= 1,2$,
\begin{equation}
\E_{\Vcal^{(k)}_{l,r}}^0 [f (\Hcal)]=\E_{\Rcal^{(k)}_{l,r}}^0 \left[f (\theta_{\beta^{(k)}_{l,r}(0)}(\Hcal))\right], 
\label{eq:VR-ijkn}
\end{equation}
where $\beta^{(k)}_{l,r}$ is the bijection between the point processes $\Vcal_{l,r}^{(k)}$ and $\Rcal_{l,r}^{(k)}$, of type $\binom{k}{l,r}$.
\end{enumerate}
\label{lemma:VR-ijkn}
\end{lemma}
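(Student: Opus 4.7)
My plan is to imitate the proof of Lemma~\ref{lemma:VR-MTP}, applying the mass transport principle of \cite[Theorem~6.1.35]{Baccelli-Bartek-Karray} componentwise to each pair of $\theta_t$-compatible point processes linked by a bijection. The prerequisites are already in place: by Lemma~\ref{lemma:stationary_Tcaln} and the construction in Subsubsection~\ref{subsubsection:constVn}, every one of the point processes $\Vcal_l$, $\Rcal_l$ (for $l\in[n]$) and $\Vcal_{l,r}^{(k)}$, $\Rcal_{l,r}^{(k)}$ (for $l\neq r\in[n]$, $k\in\{1,2\}$) is a stationary factor of $\Hcal$, and by Corollary~\ref{corollary:eqVRn} each has the same positive, finite intensity as its partner, namely $\Lambda_l$ or $\Lambda_{l,r}^{(k)}$. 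The bijections $\beta_l$ and $\beta_{l,r}^{(k)}$ exist by Lemma~\ref{lemma:vrn} and are determined entirely by local data (the pair of head points that produce the given handover, together with the intersection criterion of Subsubsection~\ref{subsubsection:IC}), so they are measurable shift-compatible maps.

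For part~(\ref{beta-in}) I would take $\Phi=\Vcal_l$, $\Phi'=\Rcal_l$, and the transport kernel $g(y,\omega)=f(\omega)\,\one_{\{y=\beta_l^{-1}(0)\}}$ in the mass transport formula, exactly as in the proof of Lemma~\ref{lemma:VR-MTP}; this produces
\[
\E_{\Vcal_l}^0[f(\Hcal)]=\E_{\Rcal_l}^0\!\left[f(\theta_{\beta_l(0)}(\Hcal))\right],
\]
since $\beta_l$ sends a right-most head point $T_i^{(l)}$ of a type-$l$ pure-handover pair to the corresponding intersection abscissa in $\Vcal_l$. For part~(\ref{beta-ijkn}) I would repeat the argument with $\Phi=\Vcal_{l,r}^{(k)}$, $\Phi'=\Rcal_{l,r}^{(k)}$ and $g(y,\omega)=f(\omega)\,\one_{\{y=(\beta_{l,r}^{(k)})^{-1}(0)\}}$. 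Here each point of $\Rcal_{l,r}^{(k)}$ is the abscissa of the right-most of an ordered pair of heads $(T_j^{(l)},H_j^{(l)})$, $(T_i^{(r)},H_i^{(r)})$ of types $l$ and $r$ satisfying the intersection criterion of Lemma~\ref{lemma:2int}/Lemma~\ref{lemma:2int-rev}, whose $k$-th intersection, taken from (\ref{eq:HOs})--(\ref{eq:HO_distance1}), contributes to $\Vcal_{l,r}^{(k)}$; the bijection $\beta_{l,r}^{(k)}$ sends this right-most abscissa to that intersection time.

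The step I expect to require care is the unambiguous identification of the bijection $\beta_{l,r}^{(k)}$ in the multi-speed setting: when a pair of birds of distinct types $l\neq r$ crosses twice, both intersections must be separately classified so that the indexing by $k\in\{1,2\}$ matches the one used in Subsubsection~\ref{subsubsection:constVn}, and the interfering radial birds of all the other types $m\in[n]\setminus\{l,r\}$ must be accounted for through the enlarged half-ellipse emptiness condition from Lemma~\ref{lem:semiellipse_n}. Once this bookkeeping is in place, however, the mass transport identity is preserved under summation over types: as in Remark~\ref{remark:stationary_Hcal}, the argument depends only on stationarity of $\Hcal$ under $\{\theta_t\}$ and on the existence of the shift-compatible bijection, both of which carry over verbatim from the single- and two-speed cases. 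The only genuine obstacle is therefore combinatorial (tracking types and intersection order), not probabilistic, and no additional tool beyond Lemma~\ref{lemma:vrn} and the mass transport principle is required.
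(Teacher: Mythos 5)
Your proposal is correct and follows exactly the route the paper takes: the paper proves this lemma by simply invoking the same mass transport argument as in Lemma~\ref{lemma:VR-MTP} (i.e., \cite[Theorem 6.1.35]{Baccelli-Bartek-Karray} with $g(y,\omega)=f(\omega)\,\one_{\{y=\beta^{-1}(0)\}}$), applied componentwise using the bijections of Lemma~\ref{lemma:vrn} and the stationarity and finite-intensity facts you cite. Your additional remarks on the bookkeeping of intersection order and types are consistent with the construction in Subsubsection~\ref{subsubsection:constVn} and do not change the argument.
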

Hence, we have the following decomposition of $\E_{\Vcal}^0 [f (\Hcal)]$, similar to Theorem~\ref{thm:decompfV}, based on different types of handovers, using the Palm probability measure for the superposition of multi-type independent point processes (cf. \cite[Subsection 1.6, p.~36]{Baccelli-Bremaud}).
\begin{theorem}
For all non-negative measurable functions $f$ on $(\Omega,\Fcal)$,
\begin{align}
\E_{\Vcal}^0 [f(\Hcal)]= \sum_{l\in [n]}\frac{\Lambda_l}{\la_{\Vcal}} \mathbb E^0_{\Vcal_l} [f(\Hcal)]+ \sum_{l\neq r\in [n], k=1,2} \frac{\Lambda^{(k)}_{l,r}}{\la_\Vcal}\E^0_{\Vcal^{(k)}_{l,r}} [f (\Hcal)].
\label{eq:DPalmn}
\end{align}
\label{thm:decompfVn}
\end{theorem}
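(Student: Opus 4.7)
The plan is to recognize Theorem~\ref{thm:decompfVn} as a direct instance of the Palm formula for the superposition of jointly stationary point processes on $\R$, as recalled in \cite[Subsection 1.6, p.~36]{Baccelli-Bremaud}. Recall that this formula states that if $\Phi=\sum_{\alpha\in \mathcal{A}}\Phi_\alpha$ is a finite superposition of jointly stationary simple point processes on $\R$, each with positive and finite intensity $\lambda_\alpha$ and with $\lambda_\Phi=\sum_\alpha \lambda_\alpha$, then for every non-negative measurable $f$ on $(\Omega,\Fcal)$ one has
\begin{equation}
\E^0_\Phi [f(\Hcal)] \;=\; \sum_{\alpha\in \mathcal{A}} \frac{\lambda_\alpha}{\lambda_\Phi}\, \E^0_{\Phi_\alpha}[f(\Hcal)].
\label{eq:superposition-palm}
\end{equation}
So the whole task is to cast the handover point process $\Vcal$ as such a superposition over the index set $\mathcal{A}=\{l:l\in[n]\}\sqcup\{(l,r,k):l\neq r\in[n],\,k\in\{1,2\}\}$, with components $\Vcal_l$ and $\Vcal^{(k)}_{l,r}$.

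First, I would recall from the construction of $\Vcal$ in Subsubsection~\ref{subsubsection:constVn} (formula~(\ref{eq:VcalHPPn})) that
\[
\Vcal=\sum_{l\in[n]}\Vcal_l+\sum_{l\neq r\in[n],\,k\in\{1,2\}}\Vcal^{(k)}_{l,r},
\]
so the decomposition of $\Vcal$ as a superposition is definitional. Each of the components $\Vcal_l$ and $\Vcal^{(k)}_{l,r}$ is a factor of the joint head point process $\Hcal=\sum_{l\in[n]}\Hcal^l$, which is itself stationary under the time shift $\{\theta_t\}_{t\in\R}$ defined in~(\ref{eq:shift_Hcaln}); therefore all these components are jointly $\theta_t$-compatible (time-stationary), cf.\ Lemma~\ref{lemma:stationary_Tcaln}. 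Their intensities are positive and finite, namely $\Lambda_l$ and $\Lambda^{(k)}_{l,r}$, respectively, as computed in Theorem~\ref{thm:HOnspeed}, and they sum to $\lambda_\Vcal$. It remains to check that the superposition is almost surely simple, i.e.\ that no two handover epochs from different components coincide; this follows because a coincidence would require three or more radial birds to meet at the same point of $\mathbb H^+$ (or the two intersection points between a single pair of radial birds to have equal abscissas), both events being ruled out almost surely by the absolute continuity of the head intensity measures and the product structure of the Palm computation already used in the proofs of Theorem~\ref{theorem:HO2speed-main} and Theorem~\ref{thm:HOnspeed}.

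Once these ingredients are in place, applying~(\ref{eq:superposition-palm}) with $\Phi=\Vcal$ and the above components yields exactly
\begin{align*}
\E^0_\Vcal[f(\Hcal)]
\;=\; \sum_{l\in[n]}\frac{\Lambda_l}{\lambda_\Vcal}\,\E^0_{\Vcal_l}[f(\Hcal)]
+\sum_{l\neq r\in[n],\,k\in\{1,2\}}\frac{\Lambda^{(k)}_{l,r}}{\lambda_\Vcal}\,\E^0_{\Vcal^{(k)}_{l,r}}[f(\Hcal)],
\end{align*}
which is the claimed identity. I do not expect any substantive obstacle: the result is purely a Palm-superposition decomposition, and the nontrivial analytic content (existence, finiteness of intensities, stationarity, construction of each component) has already been established in the preceding sections. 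The only mild verification is the almost sure simplicity of $\Vcal$, which is standard under the Poisson assumptions on $\Hcal^l$, $l\in[n]$.
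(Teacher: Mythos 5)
Your proposal is correct and follows essentially the same route as the paper, which justifies Theorem~\ref{thm:decompfVn} precisely by invoking the Palm decomposition for a superposition of jointly stationary point processes from \cite[Subsection 1.6, p.~36]{Baccelli-Bremaud}, applied to the definitional splitting $\Vcal=\sum_{l}\Vcal_l+\sum_{l\neq r,k}\Vcal^{(k)}_{l,r}$ of (\ref{eq:VcalHPPn}). The only difference is that you spell out the routine hypotheses (joint $\theta_t$-compatibility, positive finite intensities from Theorem~\ref{thm:HOnspeed}, and almost sure simplicity of the superposition), which the paper leaves implicit.
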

Similar to the supporting results, Lemma~\ref{lemma:Palm-Lcal-ii}, Lemma~\ref{lemma:Palm-Lcal-ij}, individual Palm probability measures are expressed as follows:
\begin{lemma}
For $l\in [n]$ and for all non-negative measurable functions $f$ on $(\Omega,\Fcal)$,
\begin{align}
\E^0_{\Vcal_l} [f (\Hcal)]
& = 
\frac {4\la_l^2v_l^2} {\Lambda_{l}}
\int_{(\R^+)^3}
\E_\Hcal\left[ f\big(\theta_{\hat{s}(0,h,-t',h')}(\Hcal^l+\delta_{(0,h)}+\delta_{(-t',h')}+\sum_{m\neq l}\Hcal^m)\big)\right.\nn\\
&\hspace{3in}\times \left.\prod_{i}\one_{\Hcal^i\left(E^{ \hat{s}(0,h,-t',h'),v_i}_{\hat{h}(0,h,-t',h')}\right)=0}  \right] {\rm d}t' {\rm d}h' {\rm d}h ,
\nn
\end{align}
where $(\hat{s}(t,h,t',h'),\hat{h}(t,h,t',h'))$
is the coordinates of the intersection of the two radial birds with heads $(t,h)$ and $(t',h')$.
\label{lemma:Palm-Lcal-iin}
\end{lemma}
The proof Lemma~\ref{lemma:Palm-Lcal-iin} follows the steps of Lemma~\ref{lemma:Palm-Lcal} and Lemma~\ref{lemma:Palm-Lcal-ii}, except for the extra condition, for the void probabilities, the multiple half-ellipses to be empty of head points corresponding to the different speeds, as in Theorem~\ref{thm:HOnspeed} and Figure~\ref{figure:mixed_birds1}. 
%
%
Having said this, we move towards the interesting case of mixed handovers similar to Lemma~\ref{lemma:Palm-Lcal-ij}, for which we provide more details in the following. 
\begin{lemma}
For $r\neq l\in [n], k=1,2$ and for all non-negative measurable functions $f$ on $(\Omega,\Fcal)$,
\begin{align}
 \frac{\Lambda^{(k)}_{l,r}}{4\la_l\la_r v_lv_r} \E^0_{\Vcal^{(k)}_{l,r}} [f (\Hcal)]
& = \int_{0}^\infty  \int_{0}^h \int_{t^*}^\infty 
\mathbb{E}_\Hcal\Bigg[ f \Big(\theta_{\hat{s}(0,h,-t',h')}(\Hcal^r+\delta_{(0,h)}+ \Hcal^l+\delta_{(-t',h')}+\!\!\!\sum_{ m\neq l,r}\Hcal^m)\Big) \nn\\
&\hspace{2.5in}\times \prod_{i}\one_{\Hcal^i\left(E^{\hat{s}_k(0,h,-t',h'),v_i}_{\hat{h}_k(0,h,-t',h')}\right)=0}
\Bigg] {\rm d}t'  {\rm d}h' {\rm d}h \nn\\ 
&\, +  \int_{0}^\infty  \int_h^\infty \int_0^{\infty}
\mathbb{E}_\Hcal\Bigg[ f \Big(\theta_{\hat{s}(0,h,-t',h')}(\Hcal^r+\delta_{(0,h)}+ \Hcal^l+\delta_{(-t',h')}+\!\!\! \sum_{m\neq l,r}\Hcal^m)\Big) \nn\\
&\hspace{2.5in}\times \prod_{i}\one_{\Hcal^i\left(E^{\hat{s}_k(0,h,-t',h'),v_i}_{\hat{h}_k(0,h,-t',h')}\right)=0}
\Bigg] {\rm d}t'  {\rm d}h' {\rm d}h,
\nn
\end{align}
where, $(\hat{s}_k(t,h,t',h'),\hat{h}_k(t,h,t',h'))$
are the coordinates of the $k$-th intersection of the two radial birds with heads at $(t,h)$ and $(t',h')$, $t^*=\frac{1}{v_lv_r}(h^2-h'^2)^\half (v_r^2-v_l^2)^\half$.
\label{lemma:Palm-Lcal-ijn}
\end{lemma}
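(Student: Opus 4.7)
The plan is to follow essentially the same path as in the proof of Lemma~\ref{lemma:Palm-Lcal-ij} for the two-speed case, with the only substantive change being that the void condition now requires an empty half-ellipse for every speed $v_i$, $i\in[n]$, rather than just for $v_1,v_2$. Concretely, I would fix $l\neq r\in[n]$ and $k\in\{1,2\}$, and begin by invoking part~(\ref{beta-ijkn}) of Lemma~\ref{lemma:VR-ijkn} together with the definition of the Palm probability of $\Rcal^{(k)}_{l,r}$ to write
\begin{align*}
\E^0_{\Vcal^{(k)}_{l,r}}[f(\Hcal)]
&=\E^0_{\Rcal^{(k)}_{l,r}}\!\bigl[f(\theta_{\beta^{(k)}_{l,r}(0)}(\Hcal))\bigr]\\
&=\frac{1}{\Lambda^{(k)}_{l,r}}\,
\E\!\left[\sum_{T^{r}_i\in \Rcal^{(k)}_{l,r}:0\le T^{r}_i\le 1}
f\!\left(\theta_{T^{r}_i+\beta^{(k)}_{l,r}(T^{r}_i)}(\Hcal)\right)\right],
\end{align*}
and then recognize that the event $\{T^{r}_i\in\Rcal^{(k)}_{l,r}\}$ is precisely the event that there exists $(T^{l}_j,H^{l}_j)\in\Hcal^{l}$ with $T^{l}_j\in D_\ell(T^{r}_i,H^{r}_i,H^{l}_j)$ (with $\t_p=l$, $\t_n=r$) such that the $k$-th intersection $(\hat S^{(k)},\hat H^{(k)})$ of the corresponding radial birds satisfies the half-ellipse condition $\Hcal^i(E^{\hat S^{(k)},v_i}_{\hat H^{(k)}})=0$ for \emph{all} $i\in[n]$, which is the multi-speed version of Lemma~\ref{lem:semiellipse2} (Lemma~\ref{lem:semiellipse_n}).

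Next I would convert the outer sum into a double sum over $(T^{r}_i,H^{r}_i)\in\Hcal^{r}$ and $(T^{l}_j,H^{l}_j)\in\Hcal^{l}$ with $T^{l}_j\in D_\ell(T^{r}_i,H^{r}_i,H^{l}_j)$, and apply the Campbell--Mecke formula for the factorial moment measure of order two associated with the product structure
\[
\P^{(l,r),(t,h),(t',h')}_{\Hcal}:=\P^{(t',h')}_{\Hcal^{l}}\otimes\P^{(t,h)}_{\Hcal^{r}}\otimes\!\!\bigotimes_{m\neq l,r}\!\!\P_{\Hcal^m},
\]
exactly as was done for the two-speed case in (\ref{eq:Last-Hij}). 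Using the Slivnyak--Mecke theorem to transfer the two Palm conditions into additive $\delta$-mass perturbations of $\Hcal^{l},\Hcal^{r}$, and using the identity $\theta_{\beta^{(k)}_{l,r}(0)}\circ\theta_{T^{r}_i}=\theta_{\hat S^{(k)}}$, I obtain
\[
\frac{4\lambda_l\lambda_r v_lv_r}{\Lambda^{(k)}_{l,r}}\int_0^1\!\!\int_0^\infty\!\!\int_0^\infty\!\!\int_{D_\ell(t,h,h')}\!\!\!\!\!\E_\Hcal\!\left[f\!\left(\theta_{\hat s_k}\Big(\Hcal^{r}{+}\delta_{(t,h)}{+}\Hcal^{l}{+}\delta_{(t',h')}{+}\!\!\sum_{m\neq l,r}\!\Hcal^m\Big)\right)\prod_{i}\!\one_{\Hcal^i(E^{\hat s_k,v_i}_{\hat h_k})=0}\right]\!{\rm d}t'{\rm d}h'{\rm d}h{\rm d}t.
\]

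Then I would use the intersection criterion of Lemma~\ref{lemma:2int} to split $D_\ell(t,h,h')$ according to whether $h\geq h'$ or $h<h'$: in the first case $D_\ell=(-\infty,t-t^*]$ with $t^*=\frac{1}{v_lv_r}(h^2-h'^2)^{1/2}(v_r^2-v_l^2)^{1/2}$, and in the second case $D_\ell=(-\infty,t]$. Finally, the change of variable $t-t'\mapsto t'$ (which is legitimate because $\hat s_k$, $\hat h_k$ and the void-indicator regions depend only on the time-difference together with the heights, by time stationarity) removes the outer $t$-integral and collapses it against the $[0,1]$-interval, yielding the two announced integrals. The only real adaptation compared to Lemma~\ref{lemma:Palm-Lcal-ij} is bookkeeping: the product of void indicators now ranges over all $i\in[n]$ rather than $i\in\{1,2\}$, and the remaining $n-2$ independent Poisson point processes $\Hcal^m$ appear unmodified inside $f$. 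I do not anticipate a conceptual obstacle---the heart of the argument is identical to the two-speed case---but some care is needed to verify that the splitting $D_\ell$ into the two subcases $h\gtreqless h'$ is correctly transported through the change of variables, and that the Campbell--Mecke identity for the product Palm measure on the multi-type Poisson process $\Hcal=\sum_{l\in[n]}\Hcal^l$ is applied with the right types attached to the two Dirac masses, exactly as codified in the notational conventions following (\ref{eq:h1b11}).
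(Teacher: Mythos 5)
Your proposal is correct and follows exactly the route the paper intends: the paper itself gives no separate proof of this lemma but explicitly defers to the two-speed argument of Lemma~\ref{lemma:Palm-Lcal-ij}, and your adaptation (bijection via Lemma~\ref{lemma:VR-ijkn}, Campbell--Mecke for the order-two factorial power under the product two-point Palm measure, Slivnyak--Mecke, the split of the intersection criterion according to $h\gtreqless h'$, and the change of variable $t-t'\mapsto t'$) reproduces it with the only substantive change being that the void-indicator product runs over all $i\in[n]$. The single point worth tightening is that for a general ordered pair $(l,r)$ the relevant admissible set is $D_\ell$ or $D'_\ell$ (Lemma~\ref{lemma:2int} versus Lemma~\ref{lemma:2int-rev}) depending on which of $v_l,v_r$ is larger, though both reduce to the same pair of ranges $(-\infty,t-t^*]$ and $(-\infty,t]$ and hence to the same final integrals.
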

\begin{remark}
The symmetry envisioned earlier in Lemma~\ref{lemma:symm} can also be shown to hold, using the same time-reversal argument, under the very specific assumption that the function $f$ is symmetric, i.e., $f(\Hcal)= f(-\Hcal)$, defined in Subsubsection~\ref{subsubsection:symmEf}. In other words,
\[
\E^0_{\Vcal^{(1)}_{i,j}} [f (\Hcal)]= \E^0_{\Vcal^{(2)}_{j,i}} [f (\Hcal)] \text{ and }\E^0_{\Vcal^{(2)}_{i,j}} [f (\Hcal)]= \E^0_{\Vcal^{(1)}_{j,i}} [f (\Hcal)],
\]
for all $i\neq j\in [n]$. Under the symmetric assumption on the function $f$, we have the decomposition
\begin{align}
\E_{\Vcal}^0 [f(\Hcal)]= \sum_{i\in [n]}\frac{\Lambda_i}{\la_{\Vcal}} \mathbb E^0_{\Vcal_i} [f(\Hcal)]+ 2\sum_{j<i\in [n], k=1,2} \frac{\Lambda^{(k)}_{i,j}}{\la_\Vcal}\E^0_{\Vcal^{(k)}_{i,j}} [f (\Hcal)].
\label{eq:DPalmn1}
\end{align}
\label{remark:symmetryn}
\end{remark}
\subsection{Typical handover distance: multi-speed case}
Let $\hat{H}$ be the random variable for the typical handover distance. Based on the different types of intersections that happen at time $0$ between the corresponding radial birds, we just state the following result for the Laplace transform of $\hat{H}^2$ under the Palm probability measure $\P^0_\Vcal$, in the multi-speed setting. This can be proved along the same lines of Lemma~\ref{lemma:typH-2sp}, by applying the same ideas along with the formula (\ref{eq:DPalmn1}) and the symmetry, adapted to the multi-speed setting.
\begin{lemma}For any $\gamma\geq 0$, under the Palm probability measure $\P_\Vcal^0$ of handover, the Laplace transform of $\hat{H}^2$ is
\begin{align}
\Lcal^0_{\hat{H}^2}(\gamma)&= \frac{1}{\la_{\Vcal}}\frac{4\sqrt{\la}}{\pi}\left(1+\frac{\gamma}{\la\pi}\right)^{-3/2} \sum_{i\in [n]} v_i\left(\frac{\la_i}{\la}\right)^2
+ \frac{8}{\la_{\Vcal}}\sum_{j<i\in [n],k\in \{1,2\}} \la_i\la_jv_iv_j\, \eta^{(k)}_{i,j}(\gamma),
\label{eq:LTH_0-nsp}
\end{align}
where $\eta^{(k)}_{i,j}(\gamma)$ can be determined as in (\ref{eq:H0-ij1}) and  (\ref{eq:H0-ij2}) for $j<i\in [n]$ and $k\in \{1,2\}$ and $\Lambda_{i, j}^{(k)}= 4\la_i\la_jv_iv_j\,\eta^{(k)}_{i,j}(0)$, for $k=1,2$.
\label{lemma:typH-nsp}
\end{lemma}
\subsection{Distribution of inter-handover time: multi-speed case}
Let us first convert the handover types bijectively to the new set of notations 
\[
\Tcal_n:=\left\{\bbinom{q}{l,m}:q=1 \text{ if } l=m, \text{ and } q=1,2, \text{ for } l\neq m, \text{ and } l,m\in [n]\right\},
\]
using a table similar to Table~\ref{tab:notation-t} for the $n$ types of speeds. As a result, for any $l\in [n]$, there exists a handover of type $\binom{1}{l,l}$ or equivalently $\bbinom{1}{l,l}$, and a pure handover point process $\Vcal_l\stackrel{d}{=}\Wcal_l$. Also, for any $i\neq j\in [n]$ and $k\in \{1,2\}$, there exists $l\neq m\in [n]$ and $q\in \{1,2\}$ such that, $\Vcal_{i,j}^{(k)}\stackrel{d}{=}\Wcal^{(q)}_{l,m}$, where the left hand side corresponds to handovers of type $\binom{k}{i,j}$ (in the old notations) and the right hand side corresponds to handovers of type $\bbinom{q}{l,m}$. With this in hand, we write the handover point process as
\[
\Vcal\equiv \Wcal:=\sum_{l\in [n]} \Wcal_{l}\; + \sum_{l\neq m\in [n], q\in \{1,2\}} \Wcal^{(q)}_{l,m}.
\]
Let $L_l$ be the intensity of the pure handover process $\Wcal_l$, for $l\in [n]$ and $L^{(q)}_{l,m}$ be the intensity of the mixed handover process $\Wcal^{(q)}_{l,m}$ of type $\bbinom{q}{l,m}$, for $l\neq m\in [n]$, $q\in \{1,2\}$. Then we have a decomposition of the Laplace transform of the inter-handover time $T$, identical to Theorem~\ref{theorem:T-Palm-MS}, under the Palm probability measure $\P^0_\Wcal\equiv \P^0_\Vcal$:
\begin{theorem}
Let $\rho\geq 0$. In the multi-speed case, the Laplace transform of $T$ under the Palm probability measure $\P^0_{\Vcal}$ is given by:
\begin{equation}
\E_{\Wcal}^0 [e^{-\rho T}]= \sum_{l\in [n]}\frac{L_{l}}{\la_{\Wcal}} \E_{\Wcal_l}^0 [e^{-\rho T}]\; + \sum_{l\neq m\in [n],k\in \{1,2\}} \frac{L_{l,m}^{(q)}}{\la_{\Wcal}}  \E^0_{\Wcal^{(q)}_{l,m}} [e^{-\rho T}].
\label{eq:decompLTn}
\end{equation}
\label{theorem:T-Palm-MSn}
\end{theorem}
\begin{proof}[Proof of Theorem~\ref{theorem:T-Palm-MSn}]
The result can be proved similarly to Theorem~\ref{theorem:T-Palm-MS}. 
We derive the expressions for $\E^0_{\Wcal_l} [e^{-\rho T}]$, for $l\in [n]$ and $\E^0_{\Wcal^{(q)}_{l,m}} [e^{-\rho T}]$, for $l\neq m\in [n], q\in \{1,2\}$, with the help of Lemma~\ref{lemma:Palm-Lcal-iin} and Lemma~\ref{lemma:Palm-Lcal-ijn}, following the same steps as those of Lemma~\ref{lemma:LT1-l} and Lemma~\ref{lemma:LT_1ijk} in the two-speed case, adapted to the multi-speed setting, with the help of Remark~\ref{remark:MS} and Remark~\ref{remark:nMS}.
\end{proof}
%


\section{Future work}\label{sec-future_work}
\label{open_problems} Our model and the corresponding results are the first steps towards understanding the handover phenomenon in models with mobile stations and static users. One can gradually relax the simplifications along the following line of research in order to analyse a richer class of dynamical tessellations. 
\begin{enumerate}[1.]
    \item \textbf{Random environment}: Exploring the motion of the stations on a random environment, for example on the Poisson line process, in the single and the multi-speed settings. 
    \vspace{0.05in}
    \item \textbf{Higher dimensions}: Generalizing all these models to $3$ and higher dimensions in both single and multiple speed scenarios.
    \vspace{0.05in}
    \item \textbf{Finite visibility}: Understanding all these models in $2$ and higher dimension, where the user or the stations or both have finite visibility. This restriction naturally appears in the spherical case. 
    \vspace{0.05in}
    \item \textbf{Spherical case}: It would be interesting to analyze handovers in the spherical case, where stations are moving along randomly oriented orbits, as in the communication model with LEO and MEO satellite constellations. We believe that the tools of spherical stochastic geometry and the knowledge acquired in all the scenarios defined in the present paper will help us understand this spherical case in the future.
    \vspace{0.05in}
    \item \textbf{Markov structure and more}: In general, the Markov structure described in Section~\ref{sec-MC1} and Section~\ref{sec-MC2} for the handover process should allow one to derive many structural properties of the process, e.g. CLT, LLN and large deviations, for all the models defined above.
    \vspace{0.05in}
    \item \textbf{Other distance metrics}: It would be interesting to determine the handover frequency for a general class of dynamic tessellations, where, instead of the Euclidean metric, one uses other metrics, for example: $\ell_p$-distances, Manhattan distance, supremum distance. This can be considered as a more abstract mathematical problem. 
    \vspace{0.05in}
    \item \textbf{Other dynamical tessellations}: The dynamical versions of the tessellations mentioned in the celebrated book of Okabe et al.~\cite{Okabe2000}, Lagurre-Voronoi tessellation defined in~\cite{Aurenhammer} and Q cells defined in~\cite{Haenggi-Qcells} would be important extensions of this article.
    \vspace{0.05in}
    \item \textbf{Other handover protocols}: One can define the handover in terms of SINR based association rule and answer same questions in this case.
    \vspace{0.05in}
    \item\textbf{Multi-hop association rule}: In the architecture of communication networks involving satellites, a user is served by an anchor station via multiple satellites connected through inter-satellite-links. Change of one of the connections: user to satellite, satellite to satellite or satellite to anchor station, leads to handovers of different types. Analyzing the geometric and statistical properties of this handover process in this practical setup and for all the models described above is an important goal of the line of research. 
    \vspace{0.05in}
    \item \textbf{Other types of dynamics}: In this article we have looked at the dynamics of the stations according to a simple way-point model, where the motion is without change of direction and speed. One can consider other classes of dynamics for the  stations according to, for example, more general random way-point models, jump processes, Brownian motion, diffusion processes etc., as long as the displacement theorem holds true for the underlying point process of stations. This gives rise to a rich classes of dynamical tessellations where a similar analysis would be of interest.  
\end{enumerate}


\section{Table of Notation}\label{sec-notation}
For the ease of reading, we provide a table of notation, abbreviations and their meaning, that have been used at various places in the article. 
\begin{longtable}{@{} l l l p{0.9\textwidth} @{}}
\caption{List of Notation} \label{tab:notation} \\
\toprule
\textbf{Symbol}  & \textbf{Page} & \textbf{Description} \\
\midrule
\endfirsthead 

\multicolumn{3}{@{}l@{}}{\textbf{Table (\ref{tab:notation}) (continued from previous page)}} \\
\toprule
\textbf{Symbol} &\textbf{Page}  & \textbf{Description} \\
\midrule
\endhead 

\midrule
\multicolumn{3}{@{}r@{}}{\textit{Continued to next page}} \\
\endfoot 
\bottomrule
\endlastfoot 
$\vert x\vert $&\pageref{notation:norm} &  Euclidean norm $\vert| x\vert|_2$ of $x\in \R^2$\\
$\vert A\vert $& &  Area of a region $A\subsetneq \R^2$\\
$\vert x\vert $& &  Absolute value of $x\in \R$\\
$B_r(x)$ & \pageref{notation:ball} & Open ball centered at $x$ and radius $r$\\
$\mathbb{H}^+$ &\pageref{notation:UH} & Upper half plane $\R\times \R^+$\\
$L_s$ & \pageref{notation:L_s}&  The vertical line $t=s$\\
$Q_s$ &\pageref{notation:Qs} & $\{(t,h)\in \mathbb H^+:t\geq s\}$, the quadrant on the right of the vertical line $L_s$\\
$\Lcal_{X}(\gamma)$ &\pageref{notation:LT} &Laplace transform of a random variable $X$ for parameter $\gamma\geq 0$\\
$X\leq_{Lt}Y$ & \pageref{notation:LTO} & $\Lcal_X(\gamma)\leq \Lcal_Y(\gamma)$, i.e., Laplace transforms order between two \\
& & random variables $X$ and $Y$, for any $\gamma\geq 0$\\
$\Phi_{\la}$ or $\Phi$ & \pageref{notation:phi} & Homogeneous Poisson point process (PPP)  on $\R^2$\\
$\tilde{\Phi}_{\la}$ or $\tilde{\Phi}$ &\pageref{notation:phiT} & Marked Poisson point process  on $\R^2$\\
$m_{\Psi}$ &\pageref{notation:m} & Intensity measure of any point process $\Psi$\\
$\Psi\vert_A$ &\pageref{notation:PRest} & Point process $\Psi$ restricted to a set $A$\\
$Supp(\Psi)$ &\pageref{notation:supp} & Support of a point process $\Psi$\\
$\P_\Psi$& \pageref{notation:PPsi} & Probability distribution of a Point process $\Psi$ \\
$\E_\Psi$& \pageref{notation:PPsiE} & Expectation with respect to $\P_\Psi$\\
$\Lcal_{\Psi}(f)$& \pageref{notation:LTf} & Laplace transform of a function $f$ w.r.t. a point process $\Psi$\\
$\P^0_\Psi$& \pageref{notation:PalmPsi} & Palm probability distribution of a Point process $\Psi$ \\
$\E^0_\Psi$& \pageref{notation:PalmPsiE} & Palm expectation of $\Psi$\\
$\Lcal^0_{\Psi}(f)$& \pageref{notation:LTf0} & Laplace transform of a function $f$ w.r.t. a point process $\Psi$ under \\
& & the Palm probability measure $\P^0_\Psi$\\
$\Hcal$& \pageref{notation:Hcal} & Head point process (HPP)\\
$C_{(R,\a)}$& \pageref{notation:BCset1} & Radial bird closed set corresponding to the station given by $(R,\a)$ \\
$C_{(t,h)}$& \pageref{notation:BCset2} & Radial bird with head point at $(t,h)\in \mathbb H^+$ \\
$\Hcal_c$& \pageref{notation:MHPP} & Marked head point process (MHPP), with closed-set valued marks \\
$\Pcal_c$& \pageref{notation:RBPP} & Radial bird particle process (RBPP)\\
$\Lcal_e$& \pageref{notation:Le}  & Lower envelope in single-speed\\
$\Lcal^+_e$&  \pageref{notation:Le+} & The open region above the lower envelope $\Lcal_e$ in single-speed\\
$\Psi^{x,x'}$ or $\Psi+\delta_{x}+\delta_{x'}$  &  \pageref{notation:2point} & Two-point Palm version of a Point process $\Psi$ with two points $x,x'$ \\
$\P^{x,x'}_{\Psi}$ & \pageref{notation:2pointP} & Two-point Palm probability measure of $\Psi$ with two points $x,x'$\\
$\E^{x,x'}_{\Psi}$ & \pageref{notation:2pointE} & Two-point Palm expectation with two points $x,x'$\\
$\Psi^{n, \neq}$& \pageref{notation:FPn} & Factorial power of order $n$ of a point process $\Psi$\\
$U_r^x$ & \pageref{notation:UHB} & Upper half-ball of radius $r$, centered at $(x,0)$\\
$\Vcal_\la$ or $\Vcal$ & \pageref{notation:Vcal1},  \pageref{notation:Vcal2}  & 
Handover point process (HoPP) \\
$\eta_s$ & \pageref{notation:eta_s}&  point process on $\R^+$ for intersection points of all the birds with $L_s$\\
$\la_\Vcal$& \pageref{notation:LVcal1},  \pageref{notation:LVcal2} & Intensity of the handover point process $\Vcal$ or the handover frequency\\
$\Hcal_{\scriptscriptstyle V}$& \pageref{notation:HcalV} & The point process of visible heads\\
$\Acal_{\scriptscriptstyle V}$& \pageref{notation:AcalV} & The point process of abscissas of visible head point from $\Hcal_{\scriptscriptstyle V}$\\
RMHP&  & Right-most head point\\
LMHP&  & Left-most head point\\
$\Rcal$&\pageref{notation:Rcal} & PP for abscissas of RMHP for a handover\\
$\Lcal$&  & PP for abscissa of LMHP for a handover\\
$v_{[n]}$ & \pageref{notation:vn1} & Set of speeds $\{v_1,v_2, \cdots, v_n\}$\\
$\Lcal_e(v_1,v_2)$, $\Lcal_e(v_{[n]})$&\pageref{notation:Le2}, \pageref{notation:Len} & Lower envelope in two-speed and $n$-speed case, respectively\\
$\Lcal^+_e(v_1,v_2)$, $\Lcal^+_e(v_{[n]})$ & \pageref{notation:Le2+}, \pageref{notation:Len+} & The open region above the lower envelope $\Lcal_e(v_1,v_2)$ \\
& & and  $\Lcal_e(v_{[n]})$ in   two and multi-speed case, respectively\\
$(\hat s,\hat h)$ & \pageref{notation:hats0}, \pageref{notation:hats} & $\equiv (\hat s(t_1,h_1,t_2,h_2),\hat h(t_1,h_1,t_2,h_2))$,  intersection point of two\\
& & birds of same type at $(t_1,h_1)$, $(t_2,h_2)$\\
$(\hat s_k,\hat h_k)$ & \pageref{notation:hatsk1}, \pageref{notation:hatsk} & $\equiv (\hat s_k(t_1,h_1,t_2,h_2),\hat h_k(t_1,h_1,t_2,h_2))$, $k$-th intersection point of \\
& & two birds of different type at $(t_1,h_1)$ and $(t_2,h_2)$, for $k=1,2$\\
$E_u^{s,v}$ & \pageref{notation:El} & Upper half-ellipse given by $v^2(t-s)^2+h^2=u^2$, for speed $v$, \\
& & in $(t,h)$-coordinate system \\
$\Hcal^l$ & \pageref{notation:HPPl} & HPP with head points of type $l$ \\
$C^l_{(t,h)}$ & \pageref{notation:RBCSl} & Radial bird of type $l$ with head at $(t,h)\in \mathbb H^+$ \\
$\Hcal^l{+}\delta_{(t_1, h_1)} {+}\delta_{(t_2, h_2)}$& \pageref{notation:2PHl} & Two-point Palm version of $\sum_{i\in [n]}\Hcal^i$, $(t_1,h_1), (t_2,h_2)$ of type $l$\\
$+\sum_{i\neq l}\Hcal^i$ & & \\
$\P^{l,(t_1, h_1), (t_2,h_2)}_{\Hcal}$ & \pageref{notation:2PHlP} & $ \P^{(t_1, h_1),(t_2,h_2)}_{\Hcal^l}\otimes\bigotimes_{i\neq l}  \P_{\Hcal^i}$, the two-point Palm probability measure \\
& & for $\Hcal^l+\delta_{(t_1, h_1)}+\delta_{(t_2, h_2)}+\sum_{i\neq l}\Hcal^i$\\
$\Hcal^l+\delta_{(t_1, h_1)}$ & \pageref{notation:2PHlm} &  Two-point Palm version of  $\sum_{i\in [n]}\Hcal^i$,  $(t_1,h_1)$ and $(t_2,h_2)$ of  \\
$+\Hcal^r+\delta_{(t_2, h_2)}$ & & type $l$ and $r$, respectively\\
$+\sum_{i\neq l,r}\Hcal^i$& & \\
$\P^{(l,r),(t_1, h_1), (t_2,h_2)}_{\Hcal}$ & \pageref{notation:2PHlmP}, \pageref{notation:2nPHlP} & $ \P^{(t_1, h_1)}_{\Hcal^l}\otimes \P^{(t_2,h_2)}_{\Hcal^r}\otimes\bigotimes_{i\neq l,r}  \P_{\Hcal^i}$, the two-point Palm probability \\
& & measure for $\Hcal^l+\delta_{(t_1, h_1)}+\Hcal^r+\delta_{(t_2, h_2)}+\sum_{i\neq l,r}\Hcal^i$\\
$\Vcal_l$& \pageref{notation:Vcall} & HoPP of pure type $l$\\
$\Rcal_l$& \pageref{notation:Rcall} & PP for abscissas of RMHP for pure handovers of type $l$\\
$\Lcal_l$& & PP for abscissas of LMHP for pure handovers of type $l$\\
$\binom{k}{l,r}$& \pageref{notation:binom} & The type of handover due to $k$-th intersection of radial birds of \\
& & type $l$ and $r$, with $l\neq r$ and the head point of type $l$ is on the \\
& & left of the head point of type $r$, where $l,r,k\in \{1,2\}$\\
$\Vcal_{l,r}^{(k)}$ for $k=1,2$& \pageref{notation:Vcallrk} & HoPP of type $\binom{k}{l,r}$\\
$\Vcal$ & \pageref{notation:Vcal3} & $:=\sum_{l\in \{1,2\}} \Vcal_{l}+ \sum_{r\neq l, k\in \{1,2\}} \Vcal^{(k)}_{l,r}$\\
$\La_{l,r}^{(k)}$ for $k=1,2$& \pageref{notation:Lalrk} & Intensity of the HoPP $\Vcal_{l,r}^{(k)}$ of type $\binom{k}{l,r}$\\
$\la_\Vcal$ & \pageref{notation:La} & $:=\sum_{l\in \{1,2\}} \La_l+ \sum_{r\neq l, k\in \{1,2\}} \La^{(k)}_{l,r}$, total handover frequency\\
$\Rcal_{l,r}^{(k)}$ for $k=1,2$& \pageref{notation:Rcallrk} &PP for abscissas of RMHP for mixed handover of type $\binom{k}{l,r}$\\
$\Lcal_{l,r}^{(k)}$ for $k=1,2$& \pageref{notation:Lcallrk} & PP for abscissas of LMHP for mixed handovers of type $\binom{k}{l,r}$\\
$\bbinom{q}{i,j}$ & \pageref{notation:bbinom1} & Types of handovers, with new serving station is of type $j$, the  \\
& &  previous serving station is of type $i$ and $q=1$ or $2$, depending    \\
& & on the head point corresponding to the new serving  \\
& & station is on left or right\\
$\Wcal_i$ & \pageref{notation:Wcal1} & The version of $\Vcal_l$, where $\binom{1}{l,l}\mapsto \bbinom{2}{i,i}$ \\
$L_{i}$ & \pageref{notation:L1} & The version of $\La_l$, where $\binom{1}{l,l}\mapsto \bbinom{2}{i,i}$\\
$\Wcal_{i,j}^{(q)}$ & \pageref{notation:Wcalijq} & The version of $\Vcal_{l,r}^{(k)}$, where $\binom{k}{l,r}\mapsto \bbinom{q}{i,j}$ \\
$L_{i,j}^{(q)}$ & \pageref{notation:Lijq} & The version of $\La_{l,r}^{(k)}$, where $\binom{k}{l,r}\mapsto \bbinom{q}{i,j}$ \\
$\Wcal$ & \pageref{notation:Wcal} & $:=\sum_{i\in \{1,2\}} \Wcal_i+ \sum_{i\neq j, q\in \{1,2\}} \Wcal^{(q)}_{i,j}\equiv  \Vcal$\\
$\la_{\Wcal}$ & \pageref{notation:laW} & $\equiv \la_{\Vcal}$\\
$\bbinom{q,q'}{i,j,m}$& \pageref{notation:bbinom2} & Type of pair of consecutive handovers, where the next and the   \\
& & upcoming serving stations are of type $j, m$, respectively and the
\\ 
& &  previous serving station is of type $i$.\\ 
& & The index $q=1$ or $2$, depending on the head point corresponding\\ 
& & to the next serving station is on the left or right that of previous one. \\
& & The index $q'=1$ or $2$, depending on the head point corresponding  \\
& & to the upcoming serving station is on the left or right that of next  \\
\end{longtable}

\appendix
\renewcommand{\thesection}{\Roman{section}}

\section{}\label{sec-appendix0}
\setcounter{equation}{0}
\renewcommand{\theequation}{\arabic{equation}}
\subsection{Displacement theorem for the Poisson point process}
For some $t>0$, define a {\em displacement map} $D_t: \R^2\times (-\pi, \pi] \to \R^2$ such that for any $x\in \R^2$ and direction $\theta\in (-\pi, \pi]$, $D_t(x, \theta):= x+v_xt$,
where $v_x:=(v\cos\theta, v\sin\theta)$, is the velocity vector of the atom at $x$. The displacements are all i.i.d. but all equal in norm for every atom and fixed $t$. For any $t>0$, the {\em displacement kernel} $\kappa_t:\R^2\times (-\pi, \pi]\times \Bcal(\R^2)\to \R^+$, is defined as 
\begin{equation}
\kappa_t(x,\theta, B):=\P(D_t(x, \theta)\in B \vert \tilde{\Phi})= \P(x+v_xt\in B\vert \tilde{\Phi})= \one_{\{x+v_xt\in B\}},
\label{eq:d-kernel}
\end{equation}
for any $B\in \Bcal(\R^2)$, initial location $x\in \R^2$ and direction of motion $\theta\in (-\pi, \pi]$, where $\Bcal(\R^2)$ is the Borel $\sigma$-algebra on $\R^2$. Since $\kappa_t(x, \theta, \R^2)=1$, the measure $\kappa_t(x, \theta, \cdot)$ is a probability kernel for any $x\in \R^2$ and $\theta\in (-\pi, \pi]$, see~\cite[definition 14.D.1]{Baccelli-Bartek-Karray}.  
\begin{proposition}
For any $t>0$, the following statements are true:
\begin{enumerate}[(i)]
\item \label{Phi-Phi-t1} The displacement kernel $\kappa_t$ preserves the Lebesgue measure.   
\item \label{Phi-Phi-t2}  The point process $\tilde{\Phi}^t$ has the same distribution as $\Phi$.
\end{enumerate}
\label{proposition:Phi-Phi-t}
\end{proposition}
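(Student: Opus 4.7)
The plan is to verify part~(\ref{Phi-Phi-t1}) by a direct Fubini computation, and then to deduce part~(\ref{Phi-Phi-t2}) as an application of the classical displacement theorem for Poisson point processes (see, e.g., \cite[Section 5.5]{kingman} or \cite[Theorem 2.2.17]{Baccelli-Bartek-Karray}). Since the marked Poisson point process $\tilde\Phi$ is Poisson on $\R^2\times(-\pi,\pi]$ with intensity measure $\la\,dx\otimes \frac{1}{2\pi}\,d\theta$ and each atom is displaced independently of the others through the deterministic kernel $\kappa_t$ in (\ref{eq:d-kernel}), the displacement theorem says that $\tilde\Phi^t$ is Poisson on $\R^2$, with intensity measure
\[
B\mapsto \int_{\R^2}\int_{-\pi}^{\pi}\kappa_t(x,\theta,B)\,\frac{d\theta}{2\pi}\,\la\, dx.
\]
So the whole statement reduces to verifying that this integrated kernel coincides with $\la\,\Leb(\cdot)$, which is exactly part~(\ref{Phi-Phi-t1}).

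For part~(\ref{Phi-Phi-t1}), the idea is simply to switch the order of integration. For any $B\in\Bcal(\R^2)$ and fixed $\theta$, the map $x\mapsto x+vt(\cos\theta,\sin\theta)$ is a translation of $\R^2$, so that
\[
\int_{\R^2}\kappa_t(x,\theta,B)\,dx=\int_{\R^2}\one_{\{x+vt(\cos\theta,\sin\theta)\in B\}}\,dx=\Leb(B),
\]
by the translation-invariance of Lebesgue measure. Integrating this against the uniform law on $\theta$ (indeed against any probability law on $(-\pi,\pi]$, which is why the result is insensitive to the choice of direction distribution mentioned on page~\pageref{notation:PPsi}) yields $\Leb(B)$, establishing the kernel's Lebesgue-preserving property.

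For part~(\ref{Phi-Phi-t2}), plugging part~(\ref{Phi-Phi-t1}) into the displacement formula above shows that $\tilde\Phi^t$ is Poisson with intensity measure $\la\,dx$, which is exactly the law of $\Phi$. I would close with the almost-sure no-collision observation: for any two distinct atoms with velocity vectors $V_i,V_j$, the event $X_i+V_it=X_j+V_jt$ for some $t$ is a null event under the product of the independent i.i.d.\ direction marks and the Poisson law (the difference $V_i-V_j$ has a density, and for fixed $X_i,X_j$ the set of direction pairs producing a collision is a lower-dimensional set), and a countable union over pairs of atoms in $\tilde\Phi$ stays null. The only potentially subtle point will be to argue cleanly that the ``fixed direction distribution'' variants mentioned in the paper (e.g.\ $\delta_\theta$ or a finite mixture $\sum p_i\delta_{\theta_i}$) also fall under the same displacement theorem; this is immediate from the Fubini computation above, since the translation identity $\int_{\R^2}\kappa_t(x,\theta,B)\,dx=\Leb(B)$ holds pointwise in $\theta$ and so integrates against any probability measure on $(-\pi,\pi]$ to give $\Leb(B)$.
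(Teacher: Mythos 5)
Your proposal is correct and follows essentially the same route as the paper: part~(i) reduces in both cases to Fubini plus the translation invariance of Lebesgue measure (the paper merely packages this computation inside the Campbell--Mecke formula for the intensity measure of $\tilde{\Phi}^t$), and part~(ii) rests on the displacement theorem, which the paper additionally verifies in this instance by an explicit Laplace-functional computation conditioning on $\tilde{\Phi}$ rather than only citing the theorem. Your closing remarks on no-collision and on general direction laws are consistent with what the paper states elsewhere and do not affect the argument.
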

\begin{proof}[Proof of Proposition~\ref{proposition:Phi-Phi-t}(\ref{Phi-Phi-t1})]
For the first part, let $B$ be a Borel subset of $\R^2$, $x\in \R^2$ and $\theta\in (-\pi, \pi]$. The displacement kernel is $\kappa_t(x,\theta, B)= \one_{\{x+v_xt\in B\}}$. The intensity measure of the point process $\tilde \Phi^t$ becomes
\begin{align}
m_{\tilde{\Phi}^t}(B)=\E[\tilde{\Phi}^t(B)]= \E\left[\E\left[\sum_{i\in \N}\one_{\{Y_i\in B\}}\vert \tilde{\Phi}\right]\right]
&= \E\left[\sum_{i\in \N} \E\left[\one_{\{Y_i\in B\}}\vert \tilde{\Phi}\right]\right]= \E\left[\sum_{i\in \N} \kappa_t(X_i, \Theta_i, B)\right].
\label{eq:preserve}
\end{align}
Applying the Campbell-Mecke formula in (\ref{eq:preserve}), we get 
\begin{align}
\E\left[\sum_{i\in \N} \kappa_t(X_i, \Theta_i, B)\right]
=\E\left[\int_{\R^2\times (-\pi, \pi]}\kappa_t(x, \theta, B)m_{\tilde{\Phi}}({\rm d}x , {\rm d}\theta )\right]
&= \int_{\R^2}\int_{-\pi}^{\pi}  \kappa_t(x, \theta, B) \frac{\la}{2\pi} {\rm d}x \, {\rm d}\theta.
\label{eq:preserve0}
\end{align}
Using the displacement kernel from (\ref{eq:d-kernel}) the last term in (\ref{eq:preserve0}) equals to
\begin{align}
\frac{\la}{2\pi} \int_{\R^2}\int_{-\pi}^{\pi} \one_{\{x+v_xt\in B\}}{\rm d}x \, {\rm d}\theta  
&=\frac{\la}{2\pi}\int_{-\pi}^{\pi}\int_{\R^2}\one_{\{x\in B-v_xt\}} {\rm d}x \,{\rm d}\theta  
= \frac{\la}{2\pi}\int_{-\pi}^{\pi} |B-v_x t| {\rm d}\theta  =\la |B|.
\label{eq:preserve1}
\end{align}
Combining (\ref{eq:preserve})--(\ref{eq:preserve1}) we can say that the intensity measure $m_{\tilde{\Phi}^t}$ of $\tilde{\Phi}^t$ also has Lebesgue density.
\end{proof}
\begin{proof}[Proof of Proposition~\ref{proposition:Phi-Phi-t}~(\ref{Phi-Phi-t2})]
For the second part, since the displacements are i.i.d., by the displacement theorem \cite[Theorem 2.2.17]{Baccelli-Bartek-Karray}, the point process $\tilde{\Phi}^t$ is Poisson. This can be seen through the Laplace transform with respect to the point process $\tilde{\Phi}^t$. Let us enumerate the points of $\Phi$ and $\Phi^t$ respectively as $\{X_i\}_{i\in \N}$ and $\{Y_i\}_{i\in \N}$ respectively.  

Let $f:\R^2\to \R^+$ be a measurable function.
The Laplace transform of $f$ with respect to $\tilde \Phi^t$ is
\begin{align}
\!\!\!\!\Lcal_{\tilde{\Phi}^t}(f):=\E\left[\exp{\left(-\int_{\R^2}f\, {\rm d}\tilde{\Phi}^t\right)}\right]=\E\left[\exp\left(-\sum_{i\in \N}f(Y_i)\right)\right]& =\E\left[\prod_{i\in \N}e^{-f(Y_i)}\right].
\label{eq:Laplace-t1x}
\end{align}
Conditioning with respect to the point process $\tilde \Phi$ for last term in (\ref{eq:Laplace-t1x}), we have
\begin{align}
\E\left[\prod_{i\in \N}e^{-f(Y_i)}\right]&=\E\left[\E\left[\prod_{i\in \N}e^{-f(Y_i)}\vert\tilde{\Phi} \right]\right]
=\E\left[\prod_{i\in \N}\int_{\R^2}e^{-f(y)}\kappa_t(X_i, \Theta_i, {\rm d}y  )\right].
\label{eq:Laplace-t1}
\end{align}
From (\ref{eq:d-kernel}) we write, $\kappa_t(X_i,\Theta_i, {\rm d}y  )= \one_{\{X_i+V_it\in {\rm d}y  \}} $. Let us now define a measurable function $g:\R^2\times (-\pi, \pi]\to \R^+$ using the inner integral in (\ref{eq:Laplace-t1}), 
\begin{align}
    g(x,\theta)&=-\log\int_{\R^2}e^{-f(y)}\kappa_t(x, \theta, {\rm d}y  ).
    \label{eq:laplace-tphi}
\end{align} 
Using the function $g$ from (\ref{eq:laplace-tphi}) in (\ref{eq:Laplace-t1}) we obtain the Laplace transform as
\begin{align}
\Lcal_{\tilde{\Phi}^t}(f)&=\E\left[\exp{\left(-\sum_{i\in \N}g(X_i, \Theta_i)\right)}\right]=\E\left[\exp{\left(-\int_{\R^2\times (-\pi, \pi]}g {\rm d}\tilde{\Phi} \right)}\right]= \Lcal_{\tilde{\Phi}}(g).
\label{eq:Laplace-t2}
\end{align}
Evaluating the Laplace transform $\Lcal_{\tilde{\Phi}}(g)$ of $g$ with respect to $\tilde{\Phi}$, we obtain 
\begin{align}
\Lcal_{\tilde{\Phi}}(g)&=\exp\left(-\int_{\R^2}\int_{-\pi}^{\pi}\left(1-e^{-g(x, \theta)}\right)m_{\tilde{\Phi}}({\rm d}x , {\rm d}\theta )\right) \nn\\
&\stackrel{(\ref{eq:laplace-tphi})}{=}\exp\left(-\int_{\R^2}\int_{-\pi}^{\pi}\left(1-\int_{\R^2}e^{-f(y)}\one_{\{x+v_xt\in {\rm d}y  \}}\right)m_{\tilde{\Phi}}({\rm d}x , {\rm d}\theta )\right) \nn\\
&\stackrel{(Prop.~\ref{proposition:Phi-Phi-t}~(\ref{Phi-Phi-t1}))}{=}\exp\left(-\int_{\R^2}\int_{-\pi}^{\pi}\int_{\R^2}\left(1-e^{-f(y)}\right)\one_{\{x+v_xt\in {\rm d}y  \}}m_{\tilde{\Phi}}({\rm d}x , {\rm d}\theta )\right) \nn\\
&\stackrel{(Fubini)}{=}\exp\left(-\int_{\R^2}\left(1-e^{-f(y)}\right) \int_{\R^2}\int_{-\pi}^{\pi}\one_{\{x+v_xt\in {\rm d}y  \}}m_{\tilde{\Phi}}({\rm d}x , {\rm d}\theta )\right)\nn\\
&\stackrel{(\ref{eq:preserve1})}{=}\exp\left(-\int_{\R^2}\left(1-e^{-f(y)}\right)m_{\tilde{\Phi}^t}({\rm d}y)\right)
\stackrel{(Prop.~\ref{proposition:Phi-Phi-t}\,(\ref{Phi-Phi-t1}))}{=}\exp\left(-\int_{\R^2}\left(1-e^{-f(y)}\right)\la {\rm d}y  \right)= \Lcal_{\Phi}(f),
\label{eq:Laplace-t3}
\end{align}
since from part~(\ref{Phi-Phi-t1}), we get that $m_{\tilde{\Phi}^t}$ has Lebesgue density. We obtain from (\ref{eq:Laplace-t2}) and (\ref{eq:Laplace-t3}) that $\Lcal_{\tilde{\Phi}^t}(f)=\Lcal_{\Phi}(f)$ and hence by the characterization of the PPP, $\tilde{\Phi}^t$ is also Poisson.  
\end{proof}

%
\section{}\label{sec-appendix}
\setcounter{equation}{0}
\renewcommand{\theequation}{\thesection.\arabic{equation}}
\subsection{Proof of Lemma~\ref{lem:piP'}}~\label{subsection:AcalV} For $(s,h)\in \mathbb H^+$, the event that $\{(s,h)\text{ is an atom of } \Hcal_{\scriptscriptstyle V}\}$ is
equivalent to the event that $ \Hcal(U_h^s)=0$. Thus the intensity of the point process $\Acal_{\scriptscriptstyle V}$, i.e.,
the expected number of points of $\Acal_{\scriptscriptstyle V}$ in the unit interval $[0,1]$ of the time axis is
\begin{align}
\la_{\scriptscriptstyle V}=\E\left[\sum_{T^{\scriptscriptstyle V}_j\in \Acal_{\scriptscriptstyle V}\,\mbox{:}\, 0\leq T^{\scriptscriptstyle V}_j\leq 1} 1\right] = \E\left[\sum_{(T_i,H_i)\in \Hcal\,\mbox{:}\, 0\leq T_i\leq 1} \one_{\Hcal(U^{T_i}_{H_i})=0}\right],\nn
\end{align}
using (\ref{eq:Hcal_V}). By applying the Campbell-Mecke formula for $\Hcal$, we have
\begin{align}
\la_{\scriptscriptstyle V} &=2\la\int_{0}^{1}\int_{0}^{\infty} e^{-2\la |U_h^t|}\, {\rm d} h\,{\rm d} t=2\la\int_{0}^{\infty} \int_{0}^{1}e^{-\la\pi h^2}\, {\rm d} t\,{\rm d} h= 2\la\int_{0}^{\infty} e^{-\la \pi h^2}\,{\rm d} h= \sqrt{\la}.\nn \qed
\end{align}  
\subsection{Proof of Lemma~\ref{lemma:LT_visible_head}}~\label{subsection:L-LTVH}
Let $\gamma\geq 0$. The Laplace transform of the random variable $H_{\scriptscriptstyle V}^2$, under the Palm probability distribution $\P^0_{\Acal_{\scriptscriptstyle V}}$ of the $\Acal_{\scriptscriptstyle V}$, is
\begin{align}
\Lcal^0_{H_{\scriptscriptstyle V}^2}(\gamma){:=} \E^0_{\Acal_{\scriptscriptstyle V}}\left[e^{-\gamma H_{\scriptscriptstyle V}^2}\right] 
{=} \frac{1}{\la_{\scriptscriptstyle V}}\E\left[\sum_{(T^{\scriptscriptstyle V}_j, H^{\scriptscriptstyle V}_j)\in \Hcal_{\scriptscriptstyle V}\,\mbox{:}\, T^{\scriptscriptstyle V}_j\in [0,1]} \!\!\!\!\!\!\!\! e^{-\gamma  \left(H^{\scriptscriptstyle V}_j\right)^2}\right]
&{=} \frac{1}{\la_{\scriptscriptstyle V}}\E\left[\sum_{(T_i,H_i)\in \Hcal\,\mbox{:}\, T_i\in [0,1]} \!\!\!\!\!\!\!\! \!\!\!\!\!\! e^{-\gamma H_i^2}\one_{\{(T_i, H_i)\in \Lcal_e\}}\right].\nn
\end{align}

Using the fact that for any $(t,h)\in \mathbb H^+$, $\{(t,h)\in \Lcal_e\}=\{\Hcal(U^t_h)=0\}$, by part~(\ref{emptyU}) of Lemma~\ref{lem:semicircle}, in the last expression we get 
\begin{align}
\Lcal^0_{H_{\scriptscriptstyle V}^2}(\gamma)
& {=} \frac{1}{\la_{\scriptscriptstyle V}}\E\left[\sum_{(T_i,H_i)\in \Hcal\,\mbox{:}\, T_i\in [0,1]} \!\!\!\!\!\!\!\! \!\!\!\!\! e^{-\gamma  H_i^2}\one_{\{\Hcal(U^{T_i}_{H_i})=0\}}\right],
\label{eq:Hv}
\end{align}
Applying the Campbell-Mecke formula for the point process $\Hcal$ in (\ref{eq:Hv}), we get
\begin{align}
\Lcal^0_{H_{\scriptscriptstyle V}^2}(\gamma)=\frac{2\la}{\sqrt{\la}}\int_{0}^{1} {\rm d}t \int_{0}^{\infty} e^{-\gamma h^2} e^{-\pi \la h^2} \, {\rm d}h &=2\sqrt{\la} \int_{0}^{\infty}  e^{-(\pi \la +\gamma)h^2} \, {\rm d}h
%
%
=\left(1+\frac{\gamma}{\la\pi}\right)^{-\half}.
\end{align}
%
Thus $H_{\scriptscriptstyle V}^2$ is distributed as a $\Gamma\left(\half, \frac{1}{\la\pi}\right)$ random variable and consequently $H_{\scriptscriptstyle V}$ follows a Nakagami distribution with parameter $\left(\half, \frac{1}{2\la\pi}\right)$. \qed
\subsection{Proof of Lemma~\ref{lemma:Laplace_order}}~\label{subsection:Lemma-LT} Recall the probability densities of the random variables  $\hat{H}$, $\tilde{H}$ and $H_{\scriptscriptstyle V}$  
\[
f_{\hat{H}}(h)=4\pi\la^{3/2} h^2 e^{-\la\pi h^2}, f_{\tilde{H}}(h):=2\la\pi h \, e^{-\la\pi h^2} \text{ and } f_{H_{\scriptscriptstyle V}}(h)=2\la^{1/2} e^{-\la\pi h^2},
\text{ for all } h\geq 0,
\]
respectively. Observe that $f_{\tilde{H}}(h)\leq f_{H_{\scriptscriptstyle V}}(h)$ on $[0, 1/{\pi\sqrt{\la}}]$ and $f_{\tilde{H}}(h)>f_{H_{\scriptscriptstyle V}}(h)$ on $(1/{\pi\sqrt{\la}}, \infty)$. In terms of Laplace transforms for all $\gamma\geq 0$, we have
\begin{align}
\Lcal_{H_{\scriptscriptstyle V}}(\gamma)-\Lcal_{\tilde{H}}(\gamma)&= \int_0^\infty e^{-\gamma h}\left(f_{H_{\scriptscriptstyle V}}(h)- f_{\tilde{H}}(h)\right)\,{\rm d}h\nn\\
&= \int_0^{1/{\pi\sqrt{\la}}} e^{-\gamma h}\left(f_{H_{\scriptscriptstyle V}}(h)- f_{\tilde{H}}(h)\right)\,{\rm d}h
- \int_{1/{\pi\sqrt{\la}}}^\infty e^{-\gamma h}\left(f_{\tilde{H}}(h)-f_{H_{\scriptscriptstyle V}}(h)\right)\,{\rm d}h.\nn
\end{align}
Using the value $h=\frac{1}{\pi\sqrt{\la}}$ gives a lower bound, i.e.,
\begin{align}
\Lcal_{H_{\scriptscriptstyle V}}(\gamma)-\Lcal_{\tilde{H}}(\gamma) &\geq e^{-\frac{\gamma}{\pi\sqrt{\la}}}\left[\int_0^{1/{\pi\sqrt{\la}}} \left(f_{H_{\scriptscriptstyle V}}(h)- f_{\tilde{H}}(h)\right)\,{\rm d}h
- \int_{1/{\pi\sqrt{\la}}}^\infty \left(f_{\tilde{H}}(h)-f_{H_{\scriptscriptstyle V}}(h)\right)\,{\rm d}h\right]\nn\\
&=e^{-\frac{\gamma}{\pi\sqrt{\la}}}\int_0^\infty \left(f_{H_{\scriptscriptstyle V}}(h)- f_{\tilde{H}}(h)\right)\,{\rm d}h =0.\nn
\end{align}
Thus $\Lcal_{H_{\scriptscriptstyle V}}(\gamma)\geq\Lcal_{\tilde{H}}(\gamma)$ for all $\gamma\geq 0$, which implies $ H_{\scriptscriptstyle V} \leq_{Lt} \tilde{H}$. On the other hand, observe that $f_{\hat{H}}(h)\leq f_{\tilde{H}}(h)$ on $[0, 1/{2\sqrt{\la}}]$ and $f_{\hat{H}}(h)>f_{\tilde{H}}(h)$ on $(1/{2\sqrt{\la}}, \infty)$. In terms of Laplace transforms, we have
\begin{align}
    \Lcal_{\tilde{H}}(\gamma) - \Lcal_{\hat{H}}(\gamma)&= \int_0^\infty e^{-\gamma h}\left(f_{\tilde{H}}(h)- f_{\hat{H}}(h)\right)\,{\rm d}h\nn\\
    &= \int_0^{1/{2\sqrt{\la}}} e^{-\gamma h}\left(f_{\tilde{H}}(h)- f_{\hat{H}}(h)\right)\,{\rm d}h
    - \int_{1/{2\sqrt{\la}}}^\infty e^{-\gamma h}\left(f_{\hat{H}}(h)-f_{\tilde{H}}(h)\right)\,{\rm d}h.\nn
\end{align}
Using the value $h=\frac{1}{2\sqrt{\la}}$ again gives the lower bound
\begin{align}
\Lcal_{\tilde{H}}(\gamma) - \Lcal_{\hat{H}}(\gamma)
&\geq e^{-\frac{\gamma}{2\sqrt{\la}}}\left[\int_0^{1/{2\sqrt{\la}}} \left(f_{\tilde{H}}(h)- f_{\hat{H}}(h)\right)\,{\rm d}h - \int_{1/{2\sqrt{\la}}}^\infty \left(f_{\hat{H}}(h)-f_{\tilde{H}}(h)\right)\,{\rm d}h\right] =0.\nn
\end{align}
Thus $\Lcal_{\tilde{H}}(\gamma)\geq\Lcal_{\hat{H}}(\gamma)$ for all $\gamma\geq 0$, which implies $\tilde{H} \leq_{Lt} \hat{H}$. The completes the proof of the Laplace transform ordering (\ref{eq:LTO}) among the typical distances.\qed
\subsection{Proof of Lemma~\ref{lemma:increasingS}}\label{subsection:1speed-inc}
Fix $\hat{h}_0>0$ and suppose there are two radial birds with head at $(t_1,h_1)$ and $(t_2,h_2)$, such that $t_2\leq t_1$. The radial birds $C_{(t_1,h_1)}$  and $C_{(t_1,h_1)}$ intersect at $(0,\hat h_0)$ and $(t_2,h_2)$ lies anywhere on the part of the boundary $U^0_{\hat h_0}$ lying on the left of the vertical line $t=t_1$. We prove the result in two cases: $t_1\geq 0$ and $t_1<0$.
\begin{case}[$t_1\geq 0$.] Consider the set $U^t_{\hat{h}_t}\setminus U^0_{\hat{h}_0}$, where $\hat{h}_t:= \left((t-t_1)^2+h_1^2\right)^\half$, for some fixed $(t_1,h_1)\in \partial U^0_{\hat{h}_0}$. Let $Q_{t_1}:=\{(t,h)\in \mathbb H^+: t\geq t_1\}$, be the region on the right of the vertical line $t=t_1$ in $\mathbb H^+$. 
\begin{figure}[ht!]
    \begin{tikzpicture}[line width=0.6pt, scale=0.8, every node/.style={scale=0.65}]
    \pgftransformxscale{0.95}  
    \pgftransformyscale{0.95}    
    \draw[->] (-3.5, 0) -- (8.5, 0) node[right] {$t$};
    \draw[domain=-2:6, smooth] plot (\x, {(\x*\x-3.5*\x+1.75^2+2.15^2)^0.5});
    \draw[](-0.5,-0.35) node{$(0,0)$};
    \draw[](1.75,2.15) node{$\bullet$};
    \draw[](2,3) node{$(t_1,h_1)$}; 
    \draw[]  (2.75,0) arc (0:180:2.82 and 2.82);
    \draw[teal]  (3,0) arc (0:180:2.5 and 2.5);
    \draw[teal] (0.45, 0) -- (0.45, 2.5);
    \draw[teal] (0.45, -0.25)node{$\tau_1$};
    \draw[purple]  (1,0) arc (-180:-360:3.49 and 3.49);
    \draw[blue]  (0.3,0) arc (-180:-360:2.35 and 2.35);
    \draw[red]  (-1.3,0) arc (-180:-360:2.3 and 2.3);
    \draw[red] (0.9, 0) -- (0.9, 2.3);
    \draw[red] (0.9, -0.25)node{$\tau_2$};
    \draw[->] (-0.07, 0) -- (-0.07, 4.3);
    \draw[] (-0.07, 4.6)node{$h$};
    \draw[-] (1.75, 0) -- (1.75, 4.3);
    \draw[] (2, -0.25)node{$t=t_1$};
    \draw[](-0.3, 1.3) node{$\hat h_0$};
     \draw[](-0.5,0.6) node{$U^{0}_{\hat h_0}$};
     \draw[](-2.5,1.42)
     node{$\bullet$};
     \draw[](-3.2,1.2)
     node{$(t_2, h_2)$};
     \draw[domain=-4:1.7, smooth] plot (\x, {(\x*\x+2*2.5*\x+2.5^2+1.42^2)^0.5});
    \end{tikzpicture}
    \captionsetup{width=0.9\linewidth}
    \caption{For $0<\tau_1<\tau_2$, the half balls in color teal and red correspond to $\tau_1$ and $\tau_2$, respectively. Here   $U^t_{\hat h_{\tau_1}}\cap Q_{t_1}\subsetneq U^t_{\hat h_{\tau_2}}\cap Q_{t_1}$. The blue and purple half balls are for reference.}
    \label{figure:increasingT1}
\end{figure}

Define $R_t:=U^t_{\hat h_t}\cap Q_{t_1}$. Our claim holds if and only if the same holds for the collection $\left\{R_t\right\}_{t>0}$, since for each $t>0$, $R_t=(U^t_{\hat{h}_t}\setminus U^0_{\hat{h}_0})\cup (U^0_{\hat h_0}\cap Q_{t_1})$ and $U^t_{\hat{h}_t}\setminus U^0_{\hat{h}_0}\subset R_t$. Let $0<\tau_1<\tau_2$. The proof will be complete if we can prove that $R_{\tau_1}\subsetneq R_{\tau_2}$. Observe that $\partial U^{\tau_1}_{\hat h_{\tau_1}}$ and $\partial U^{\tau_2}_{\hat h_{\tau_2}}$ intersects at $(t_1,h_1)$, and they intersect each other just once. Using properties of two non-concentric and intersecting circles, for all $t'<t_1$, $\left(\hat h_{\tau_1}^2-(t'-\tau_1)^2\right)^\half > \left(\hat h_{\tau_2}^2-(t'-\tau_2)^2\right)^\half$ and for all $t'>t_1$, $\left(\hat h_{\tau_1}^2-(t'-\tau_1)^2\right)^\half < \left(\hat h_{\tau_2}^2-(t'-\tau_2)^2\right)^\half$, irrespective of the values of $\hat h_{\tau_1}$ and $\hat h_{\tau_2}$. This in particular implies $U^t_{\hat h_{\tau_1}}\cap Q_{t_1}\subsetneq U^t_{\hat h_{\tau_2}}\cap Q_{t_1}$, i.e.,  $R_{\tau_1}\subsetneq R_{\tau_2}$, see Figure~\ref{figure:increasingT1}.
\end{case}
\begin{figure}[ht!]
    \begin{tikzpicture}[line width=0.6pt, scale=0.8, every node/.style={scale=0.65}]
    \pgftransformxscale{1}  
    \pgftransformyscale{1}    
    \draw[->] (-4, 0) -- (7, 0) node[right] {$t$};
    \draw[](0,-0.35) node{$(0,0)$};
    \draw[violet,domain=-4:2.5, smooth] plot (\x, {(\x*\x+3.5*\x+1.75^2+2.25^2)^0.5});
    \draw[](-1.75,2.25) node{$\bullet$};
    \draw[](-2.3,2.8) node{$(t_1,h_1)$}; 
    \draw[]  (2.75,0) arc (0:180:2.82 and 2.82);
    \draw[teal]  (4,0) arc (0:180:3.31 and 3.31);
    \draw[teal] (0.68, 0) -- (0.68, 3.31);
    \draw[teal] (0.68, -0.25)node{$\tau_1$};
    \draw[red]  (-2.51,0) arc (-180:-360:3.7 and 3.7);
    \draw[red] (1.25, 0) -- (1.25, 3.7);
    \draw[red] (1.25, -0.25)node{$\tau_2$};
    \draw[purple]  (-2.4,0) arc (-180:-360:4.2 and 4.2);
    \draw[->] (-0.07, 0) -- (-0.07, 4.3);
    \draw[] (-0.07, 4.6)node{$h$};
    \draw[purple] (1.8, 0) -- (1.8, 4.2);
    \draw[-] (-1.74, 0) -- (-1.74, 4.3);
    \draw[] (-1.8, -0.25)node{$t=t_1$};
    \draw[](-0.3, 1.3) node{$\hat h_0$};
     \draw[](-0.5,0.6) node{$U^{0}_{\hat h_0}$};
     \draw[](-2.5,1.42)
     node{$\bullet$};
     \draw[](-3.2,1.2)
     node{$(t_2, h_2)$};
     \draw[domain=-4:1.7, smooth] plot (\x, {(\x*\x+2*2.5*\x+2.5^2+1.42^2)^0.5});
    \end{tikzpicture}
    \captionsetup{width=0.9\linewidth}
    \caption{For $0<\tau_1<\tau_2$, the half balls in color teal and red correspond to $\tau_1$ and $\tau_2$, respectively. Here   $U^t_{\hat h_{\tau_1}}\cap Q_{t_1}\subsetneq U^t_{\hat h_{\tau_2}}\cap Q_{t_1}$. The purple half ball is just a reference with respect to the next handover.}
    \label{figure:increasingT2}
\end{figure}
\begin{case}[$t_1<0$.] One can prove the claim for the case $t_1<0$ similarly, since the intersection point corresponding to the handover at time $0$, lies on the increasing wing of the radial bird $C_{(t_1,h_1)}$, as depicted in Figure~\ref{figure:increasingT2}. By the {\em increasing wing property} as in Remark~\ref{remark:inc-arm}, for all $t\geq t_1$, the extra region $U^t_{\hat{h}_t}\setminus U^0_{\hat{h}_0}$ created by subsequent half-balls, is increasing. \qed
\end{case} 
\subsection{Area of the region $U^{s}_{h}\cap U^{s'}_{h'}$}~\label{subsection:UcapU}
\begin{lemma}
Suppose $s, s'\in \R$ with $s< s'$ and $h, h'\in \R^+$ for which $U^{s}_{h}\cap U^{s'}_{h'}\neq \emptyset$ and $\exists h_1\geq 0$ such that $ \partial^*U^{s}_{h}\cap \partial^*U^{s'}_{h'}= \{(0, h_1)\}$. Then we have 
\begin{align}
\left\vert U^{s}_{h}\cup U^{s'}_{h'}\right\vert&=
\begin{cases}
    \frac{\pi}{2}h^2+\half h_1(s'-s)+\half F_1(h_1, h, h'), &\text{ if } s<s'<0,\\ 
    \frac{\pi}{2}(h^2+h'^2)+\half h_1(s'-s)-\half F_2(h_1, h, h'), &\text{ if } s<0<s',\\ 
    \frac{\pi}{2}h'^2+\half h_1(s'-s)-\half F_1(h_1, h, h'), &\text{ if }0<s<s',
\end{cases}
\label{UcupU}
\end{align}
where 
\[
F_1(h_1, h, h')=h'^2\arcsin(h_1/h')- h^2\arcsin(h_1/h),\;\; F_2(h_1, h, h')=h'^2\arcsin(h_1/h')+h^2\arcsin(h_1/h).
\]
\label{lemma:UcupU}
\end{lemma}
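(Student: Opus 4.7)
The plan is to cut the union along the vertical line $\{t=0\}$, which by hypothesis passes through the common boundary point $(0,h_1)$. Since the two full circles bounding the upper half-balls both pass through $(0,\pm h_1)$, one has $s^2+h_1^2 = h^2$ and $s'^2+h_1^2 = h'^2$. Using these relations, a direct algebraic computation yields
\begin{equation*}
\bigl[(t-s)^2+y^2-h^2\bigr]-\bigl[(t-s')^2+y^2-h'^2\bigr] \;=\; 2t(s'-s),
\end{equation*}
which, combined with $s<s'$, implies the inclusions $U^{s'}_{h'}\cap\{t\le 0\}\subseteq U^{s}_{h}$ and $U^{s}_{h}\cap\{t\ge 0\}\subseteq U^{s'}_{h'}$. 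Consequently the union decomposes cleanly along the $h$-axis as
\begin{equation*}
\bigl|U^{s}_{h}\cup U^{s'}_{h'}\bigr| \;=\; \bigl|U^{s}_{h}\cap\{t\le 0\}\bigr| + \bigl|U^{s'}_{h'}\cap\{t\ge 0\}\bigr|.
\end{equation*}

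The next step is to compute each of the two half-half-ball areas by a sector-minus-triangle decomposition based at the corresponding disk center. For $U^{s}_{h}$ with $s\le 0$, the region $U^{s}_{h}\cap\{t\ge 0\}$ is exactly the circular sector of $B_h(s,0)$ of central angle $\arcsin(h_1/h)$ at $(s,0)$---the acute angle from the positive $t$-axis to the segment $(s,0)\to(0,h_1)$, via $\sin\theta=h_1/h$---minus the triangle with vertices $(s,0)$, $(0,0)$, $(0,h_1)$ of area $\tfrac{1}{2}|s|h_1$. This gives
\begin{equation*}
\bigl|U^{s}_{h}\cap\{t\le 0\}\bigr| \;=\; \tfrac{\pi h^2}{2}-\tfrac{h^2}{2}\arcsin(h_1/h)-\tfrac{s h_1}{2} \qquad (s\le 0),
\end{equation*}
while a symmetric computation (the sector now placed on the left side of the $h$-axis, with the same central angle $\arcsin(h_1/h)$) yields $\bigl|U^{s}_{h}\cap\{t\le 0\}\bigr|=\tfrac{h^2}{2}\arcsin(h_1/h)-\tfrac{s h_1}{2}$ for $s\ge 0$. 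The two analogous expressions for $\bigl|U^{s'}_{h'}\cap\{t\ge 0\}\bigr|$ follow by reflection.

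Substituting these expressions in each of the three sign regimes $s<s'<0$, $s<0<s'$, and $0<s<s'$ produces the three claimed formulas. The $h_1(s'-s)/2$ contribution assembles uniformly across all three cases; the two $\arcsin$ terms combine with the same sign to form $-F_2/2$ in the middle regime and with opposite signs to form $\pm F_1/2$ in the two extreme regimes; and the half-disk constants $\pi h^2/2$ and $\pi h'^2/2$ survive exactly according to which side of the $h$-axis the respective center lies on. The only delicate point is sign bookkeeping, which reduces to the identity $\arcsin(|s|/h)=\pi/2-\arcsin(h_1/h)$ for $s\in[-h,h]$---the bridge between the sector angle measured at the disk center and the angle $\arcsin(h_1/h)$ appearing in the statement.
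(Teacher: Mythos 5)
Your proof is correct, and it takes a genuinely different route from the paper's. The paper splits according to the sign regime into $|U^{s}_{h}|+|U^{s'}_{h'}\setminus U^{s}_{h}|$ (resp. $|U^{s}_{h}|+|U^{s'}_{h'}|-|U^{s}_{h}\cap U^{s'}_{h'}|$, resp. $|U^{s'}_{h'}|+|U^{s}_{h}\setminus U^{s'}_{h'}|$) and then computes each crescent or lens by integrating the horizontal width $\int_0^{h_1}\bigl[\cdots\bigr]\,{\rm d}y$ between the two arcs. You instead slice the union once and for all along the vertical line through the common boundary point $(0,h_1)$: the identity $\bigl[(t-s)^2+y^2-h^2\bigr]-\bigl[(t-s')^2+y^2-h'^2\bigr]=2t(s'-s)$ (which uses $s^2+h_1^2=h^2$ and $s'^2+h_1^2=h'^2$, both consequences of $(0,h_1)\in\partial^*U^{s}_{h}\cap\partial^*U^{s'}_{h'}$) gives the inclusions $U^{s'}_{h'}\cap\{t\le 0\}\subseteq U^{s}_{h}$ and $U^{s}_{h}\cap\{t\ge 0\}\subseteq U^{s'}_{h'}$, so the union decomposes as $|U^{s}_{h}\cap\{t\le 0\}|+|U^{s'}_{h'}\cap\{t\ge 0\}|$ in all three regimes, and each piece is an elementary sector-minus-triangle computation. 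I checked all three case assemblies against the stated formulas and they agree. What your approach buys is uniformity (a single decomposition, with the case distinction entering only through which side of the $h$-axis each center lies on) and the absence of any integration; what the paper's approach buys is explicit closed forms for the individual quantities $|U^{s'}_{h'}\setminus U^{s}_{h}|$, $|U^{s}_{h}\cap U^{s'}_{h'}|$ and $|U^{s}_{h}\setminus U^{s'}_{h'}|$, which are of independent interest. One small presentational slip: you describe $U^{s}_{h}\cap\{t\ge 0\}$ as the sector minus the triangle but then display the formula for $U^{s}_{h}\cap\{t\le 0\}$; the intended step (subtracting the former from the half-disk area $\pi h^2/2$) should be stated explicitly.
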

\begin{proof}
Suppose $s, s'\in \R$ with $s\leq s'$ and $h, h'\in \R^+$ for which $U^{s}_{h}\cap U^{s'}_{h'}\neq \emptyset$ and $\exists h_1\geq 0$ such that $ \partial^*U^{s}_{h}\cap \partial^*U^{s'}_{h'}= \{(0, h_1)\}$. Observe that the area of the region $U^{s}_{h}\cup U^{s'}_{h'}$ (see Figure~\ref{figure:UcupU}) is
\begin{align}
    \vert U^{s'}_{h'}\cup U^{s}_{h}\vert &=
    \begin{cases}
       |U^{s}_{h}|+|U^{s'}_{h'}\setminus U^{s}_{h}|, & \text{ if } s<s'<0,\\
    |U^{s'}_{h'}|+|U^{s}_{h}|-|U^{s'}_{h'}\cap U^{s}_{h}|,& \text{ if } s<0<s',\\
    |U^{s'}_{h'}|+|U^{s}_{h}\setminus U^{s'}_{h'}|, & \text{ if } 0<s<s'.
\end{cases}
\label{eq:ucu}
\end{align}
In the first case $s<s'<0$, we have 
\begin{align}
|U^{s'}_{h'}\setminus U^{s}_{h}|&=\int_0^{h_1}\int^{s'+\left(h'^2-y^2\right)^\half}_{s+\left(h^2-y^2\right)^\half} \, {\rm d}x \, {\rm d}y  \nn\\
&= h_1(s'-s)+\int_0^{h_1}\left[\left(h'^2-y^2\right)^\half-\left(h^2-y^2\right)^\half\right] \, {\rm d}y  \nn\\
&= h_1(s'-s)+ \half\left[h_1\left(h'^2-h_1^2\right)^\half + h'^2\arcsin(h_1/h')\right] -\half\left[h_1\left(h^2-h_1^2\right)^\half + h^2\arcsin(h_1/h)\right]\nn\\
&=h_1(s'-s) +\half h_1(|s'|-|s|)+  \half\left[ h'^2\arcsin(h_1/h')-h^2\arcsin(h_1/h)\right]\nn\\
&=\half h_1(s'-s)+\half F_1(h_1, h, h'),
\label{eq:U'-U}
\end{align}
where, 
\begin{equation}
F_1(h_1, h, h'):=h'^2\arcsin(h_1/h')- h^2\arcsin(h_1/h).
\label{eq:Lambda1}
\end{equation}
In the second case $s<0<s'$, we have 
\begin{align}
\vert U^{s'}_{h'} \cap U^s_h\vert &=\int_0^{h_1}\int^{s+\left(h^2-y^2\right)^\half}_{s'-\left(h'^2-y^2\right)^\half} \, {\rm d}x \, {\rm d}y  \nn\\
&= - h_1(s'-s)+\int_0^{h_1}\left[\left(h'^2-y^2\right)^\half+\left(h^2-y^2\right)^\half\right] \, {\rm d}y  \nn\\
&= -h_1(s'-s)+ \half\left[h_1\left(h'^2-h_1^2\right)^\half + h'^2\arcsin(h_1/h')\right] + \half\left[h_1\left(h^2-h_1^2\right)^\half + h^2\arcsin(h_1/h)\right]\nn\\
&=-h_1(s'-s) +\half h_1(|s'|+|s|)+  \half\left[ h'^2\arcsin(h_1/h')+h^2\arcsin(h_1/h)\right]\nn\\
&=-\half h_1(s'-s)+\half F_2(h_1, h, h'),
\label{eq:UcapU2}
\end{align}
where, 
\begin{equation}
F_2(h_1, h, h'):=h'^2\arcsin(h_1/h')+ h^2\arcsin(h_1/h).
\label{eq:Lambda2}
\end{equation}
In the last case $0<s<s'$, similarly we have
\begin{align}
|U^{s}_{h}\setminus U^{s'}_{h'}|&=\int_0^{h_1}\int^{s'-\left(h'^2-y^2\right)^\half}_{s-\left(h^2-y^2\right)^\half} \, {\rm d}x \, {\rm d}y  \nn\\
&= h_1(s'-s)+\int_0^{h_1}\left[\left(h^2-y^2\right)^\half-\left(h'^2-y^2\right)^\half\right] \, {\rm d}y  \nn\\
&= h_1(s'-s)+\half\left[h_1\left(h^2-h_1^2\right)^\half + h^2\arcsin(h_1/h)\right]- \half\left[h_1\left(h'^2-h_1^2\right)^\half + h'^2\arcsin(h_1/h')\right] \nn\\
&=h_1(s'-s) +\half h_1(|s|-|s'|)+  \half\left[ h^2\arcsin(h_1/h)-h'^2\arcsin(h_1/h')\right]\nn\\
&=\half h_1(s'-s)-\half F_1(h_1, h, h'),
\label{eq:U-U'}
\end{align}
where the function $F_1$ is defined in (\ref{eq:Lambda1}). Considering all the cases for $s,s'$, we get the result by combining the expressions for $|U^{s'}_{h'}\setminus U^{s}_{h}|$, $|U^{s}_{h}\cap U^{s'}_{h'}|$ and $|U^{s}_{h}\setminus U^{s'}_{h'}|$ respectively from (\ref{eq:U'-U}), (\ref{eq:UcapU2}) and (\ref{eq:U-U'}) to (\ref{eq:ucu}). 
\end{proof}
\subsection{Proof of Lemma~\ref{lemma:2int}}~\label{subsection:Lemma-2int}
Let $(s,h)\in \Lcal_e(v_1, v_2)$ be an intersection point of the two radial birds $C_1,C_2$ with heads at $(t_1, h_1)$ and $(t_2, h_2)$ and with speeds $v_1, v_2$, respectively. We prove the result only for the case with $t_2\leq t_1$. The other case $t_2\geq t_1$ can be proved similarly.

If $h_2\geq h_1$, the quantity $\Delta$ is larger than or equal to $0$. On the other hand, $\Delta$ can become negative for $h_2<h_1$, but depending on $t_1, t_2$. So one can say that the radial birds with different $t_1, t_2$ and different $h_1, h_2$, intersect each other twice, once or never, depending on whether $\Delta> 0$, $\Delta=0$ or $\Delta<0$, respectively. For $s_1,s_2$ to be real, we need $\Delta$ to be non-negative, that is $(t_1-t_2)^2\geq (h_1^2-h_2^2)\frac{(v_1^2-v_2^2)}{v_1^2v_2^2}$. Given the two speeds $v_1,v_2$, the head point $(t_1, h_1)$ of the first radial bird, and $h_2$, the height of the head point of the second radial bird, let 
\begin{equation}
    D_\ell\equiv D_\ell(t_1, h_1, h_2):=\left\{t_2\leq t_1\,\mbox{:}\,(t_1-t_2)^2\geq (h_1^2-h_2^2)\frac{(v_1^2-v_2^2)}{v_1^2v_2^2}, \text{ i.e., } \Delta \geq 0 \right\},
    \label{eq:S2set}
\end{equation}
be the set of all possible values of $t_2$, the first coordinate of the head of the second radial bird, for which $\Delta$ is non-negative.  In case $h_1<h_2$, we have $D_\ell=(-\infty, t_1]$, but in case $h_1\geq h_2$, we have $D_\ell\subsetneq (-\infty, t_1]$, depending on the specific choice of  $t_1$. The main goal of the rest of the proof is to derive the region $D_\ell$, when $h_1\geq h_2$.

The situations where the two radial birds do not intersect or do intersect, are depicted on the first picture in Figure~\ref{figure:int_limit}. Given the speeds $v_1>v_2$, the heights of the heads $h_1\geq h_2$, and the first coordinate $t_1$ of the first radial bird, there exist $t_2$, far enough from $t_1$, and $t_2'$, close enough to $t_1$, such that $(t_1-t_2)^2\geq (h_1^2-h_2^2)\frac{(v_1^2-v_2^2)}{v_1^2v_2^2} $ and $(t_1-t_2')^2< (h_1^2-h_2^2)\frac{(v_1^2-v_2^2)}{v_1^2v_2^2}$.
In other words, $\Delta \geq 0$ for $t_2$ and $\Delta < 0$ for $t'_2$. As a consequence, $t_2\in D_\ell$, but $t'_2\notin D_\ell$. Figure~\ref{figure:int_limit} illustrates the set $D_\ell$.  

\begin{figure}[ht!]
 \centering
      \begin{subfigure}[t]{0.45\linewidth}
       \centering
      \begin{tikzpicture}[scale=0.6, every node/.style={scale=0.7}]
\pgftransformxscale{0.9}  
    \pgftransformyscale{0.9}    
   \draw[->] (-3, 0) -- (5, 0) node[right] {$t$};
    \draw[dashed] (-3, 2) -- (3, 2);
    \draw[<-] (-2.5, 1.95) -- (-2.5, 1.25)node[below]{$h_2$};
    \draw[<-] (-2.5, 0.05) -- (-2.5, 0.7);
     \draw[<-] (-2, 2.45) -- (-2, 1.55)node[below]{$h_1$};
    \draw[<-] (-2, 0.05) -- (-2, 1.1);
   \draw[blue, domain=-3.8:3.9, smooth] plot (\x, {((3/2)^2*\x*\x+2^2)^0.5});
   \draw[blue](0,2) node{$\bullet$};
   \draw[blue,dashed, domain=-2.3:5.5, smooth] plot (\x, {((3/2)^2*\x*\x- (3/2)^2*2*\x*(3/2)+(3/2)^2*(3/2)^2+2^2)^0.5});
    \draw[blue](1.5,2) node{$\bullet$};
    \draw[](1.7,1.6) node{$(t'_2,h_2)$};
    \draw[](0,1.6) node{$(t_2,h_2)$};
    \draw[red, domain=0.5:3.5, smooth] plot (\x, {(16*\x*\x-64*\x+64+2.5^2)^0.5});
    \draw[red](2,2.5) node{$\bullet$};
    \draw[](2.8,2.8) node{$(t_1,h_1)$};
    \draw[red, dashed] (-3, 2.5) -- (3, 2.5);
    \draw[](1.5,3.6) node{$(\hat s_1,\hat h_1)$};
    \draw[](3.8,5) node{$(\hat s_2,\hat h_2)$};
    \end{tikzpicture}
    \caption{There exists $t_2, t_2'<t_1$, such that the radial bird at $(t_2, h_2)$ intersects the radial bird at $(t_1, h_1)$, but the other one at $(t_2',h_2)$ does not.}
\label{subfigure:int_limit1}
    \end{subfigure}
    \hspace{0.2in}
    \centering
      \begin{subfigure}[t]{0.45\linewidth}
       \centering
      \begin{tikzpicture}[scale=0.6, every node/.style={scale=0.7}]
\pgftransformxscale{1.35}  
    \pgftransformyscale{1.35}    
    \draw[->] (-2, 0) -- (4.5, 0) node[right] {$t$};
    \draw[-] (2.18, 2.6) -- (2.18, 0);
    \draw[](2.7,0.2)node {$\hat s_1=\hat s_2$};
\draw[blue,domain=-1:3.9, smooth] plot (\x, {((3/2)^2*\x*\x- (3/2)^2*2*\x*(1.09)+(3/2)^2*(1.09)^2+2^2)^0.5});
    \draw[blue](1.1,2) node{$\bullet$};
    \draw[](1.1,1.65) node{$(t_1-t^*,h_2)$};
    \draw[blue,dashed, domain=-1:2.6, smooth] plot (\x, {((3/2)^2*\x*\x- (3/2)^2*2*\x*(-0.3)+(3/2)^2*(-0.3)^2+2^2)^0.5});
    \draw[blue](-0.3,2) node{$\bullet$};
    \draw[blue,dashed, domain=-1:3.2, smooth] plot (\x, {((3/2)^2*\x*\x- (3/2)^2*2*\x*(0.3)+(3/2)^2*(0.3)^2+2^2)^0.5});
    \draw[blue](0.3,2) node{$\bullet$};
    \draw[red, domain=1:3, smooth] plot (\x, {(16*\x*\x-64*\x+64+2.5^2)^0.5});
    \draw[red](2,2.5) node{$\bullet$};
    \draw[](1.3,2.65) node{$(t_1,h_1)$};
    \end{tikzpicture}
    \caption{There exists $t^*$ such that, two radial birds at $(t_1,h_1), (t_2, h_2)$ intersects each other twice for all $t_2<t_1-t^*$ and $t_2>t_1+t^*$.}
\label{subfigure:star-t2}
    \end{subfigure}
    \caption{Criterion for intersections with $v_1>v_2$, $h_1>h_2$, fixed $t_1$ but variable $t_2$.}   \label{figure:int_limit}
\end{figure} 
In view of the last observation, given $t_1$ and $ h_1\geq h_2$, there exists a maximum value of $t_2$ such that there is a unique intersection point of the two radial birds $C_1, C_2$, see picture~\subref{subfigure:star-t2} of Figure~\ref{figure:int_limit}. It corresponds to the value of $t_2$ such that $\Delta=0$, which is $t_1-t^*$, given $t_1$ and $ h_1\geq h_2$,  where,
\begin{equation}
t^*:=\frac{1}{v_1v_2}(h_1^2-h_2^2)^\half (v_1^2-v_2^2)^\half.
\label{eq:star-t2}
\end{equation}
Hence the set $D_\ell$ turns out to be $(-\infty, t_1-t^*]$, given $t_1$ and $ h_1\geq h_2$. Similarly we can determine the other part in the case $t_2\geq t_1$, where $D_r\equiv D_r(t_1,h_1,h_2)$ and $ D_r(t_1,h_1,h_2)=[t_1+t^*, \infty)$, for $h_1\geq h_2$ and $D_r(t_1,h_1,h_2)=[t_1, \infty)$, for $h_1< h_2$.\qed
\subsection{Proof of Lemma~\ref{lemma:in1out2}}~\label{subsection:L-in1out2}
\begin{proof}[Proof of part~(\ref{intersection})] 
By the intersection criterion in Lemma~\ref{lemma:2int} and Lemma~\ref{lemma:2int-rev}, the two radial birds $C^1_{(t_1,h_1)}$ and $C^2_{(t_2,h_2)}$ must intersect each other if $h_1\leq h_2$, for any $t_1, t_2\in \R$. We now prove the result for the case $h_1>h_2$. Without loss of generality, we prove the result for $s=0$ and $t_2>t_1>0$; one of the other case, $t_2<t_1<0$, can be proved similarly. The two more cases $t_2<0<t_1$ and $t_1<0<t_2$ can be proved as a corollary of the cases $t_2>t_1>0$ and $t_2<t_1<0$, respectively. 


Suppose $t_2>t_1>0$. Let us denote the vertical line $t=0$ as $L_0$. Since $(t_2,h_2)\notin E^{0,v_2}_u$, by Observation~\ref{observation:in-out} (for the single-speed case), the radial bird of type $2$ with head at $(t_2,h_2)$ must intersect the vertical line $L_0$ at a level higher than $u$. On the other hand, the radial bird of type $1$ at $(t_1,h_1)$ intersects the line $L_0$ exactly at level $u$, see Figure~\ref{figure:1in2out}. This implies that the two radial birds at $(t_1,h_1)$ and $(t_2,h_2)$ intersect each other twice almost surely. This completes the proof of this part.
\begin{figure}[ht!]
\begin{tikzpicture}[scale=0.7, every node/.style={scale=0.6}]
\pgftransformxscale{0.75}  
\pgftransformyscale{0.75}    
\draw[->] (-1, 0) -- (10, 0) node[right] {$t$};
\draw[red](3.65,2) node{$\bullet$};
\draw[](4.5,1.7) node{$(t_1,h_1)$};
\draw[red, domain=2:5.3, smooth] plot (\x, {(16*\x*\x-32*3.65*\x+16*3.65^2+2^2)^0.5});
    \draw[](1.9,4.4) node{$(0,u)$};
     \draw[blue, domain=1.6:10.8, smooth] plot (\x, {((3/2)^2*\x*\x- 2*(3/2)^2*6.2*\x+(3/2)^2*6.2*6.2+1.5^2)^0.5});
     \draw[blue](6.2,1.5) node{$\bullet$};
     \draw[](6.5,1) node{$(t_2,h_2)$};
     \draw[->] (2.75, 0) -- (2.75, 7)node[above]{$L_0$};
     \draw[red]  (1.8,0) arc (-180:-360:1 and 4.1);
     \draw[blue]  (5.5,0) arc (0:180:2.65 and 4.1);
    \draw[red](3.3,1) node{$E^{0, v_1}_{u}$};
    \draw[blue](1,1) node{$E^{0, v_2}_{u}$};
    \end{tikzpicture}
    \captionsetup{width=0.9\linewidth}
    \caption{The bird at $(t_1,h_1)$ intersects the line $L_0$ at level $u$, but the bird at $(t_2,h_2)$ intersects the line $L_0$ at level higher than $u$.}
\label{figure:1in2out}
\end{figure}
\end{proof}
\begin{proof}[Proof of part~(\ref{intersection_order})]
Without loss of generality, we prove the result for $s=0$. We also assume $0<t_1<t_2$, whereas the other case $t_2<t_1<0$, can be proved similarly. The two other cases $t_2<0<t_1$ and $t_1<0<t_2$ can be derived from the cases $0<t_1<t_2$ and $t_2<t_1<0$, respectively. Observe that each side of the curve of the radial birds is monotonic. Since the bird at $(t_2,h_2)$ intersects the vertical line $L_0$ at a level higher than $u$, then using monotonicity and the fact that $0\leq t_1$, the first intersection must be on the left of $(0,u)$ and the second intersection on the right of $(0,u)$, i.e., $\hat s_1\leq 0\leq \hat s_2$. This completes the proof.
\end{proof}
\subsection{Proof of Lemma~\ref{lemma:int1mid2}}~\label{subsection:L-int1mid2}
Assume that $(t_0,h_0)$ is of type $1$ and $\hat s_1\leq \tilde s\leq \hat s_2$. Let $L_{\hat s_1}$ and $L_{\hat s_2}$ be the vertical lines $t=\hat s_1$ and $t=\hat s_2$, respectively. Using the monotonicity of the two sides of the radial bird $C^1_{(t_1,h_1)}$, the condition $\hat s_1\leq \tilde s\leq \hat s_2$ is equivalent to the fact that the radial bird at $(t_0,h_0)$ intersects the line $L_{\hat s_1}$ at a height above $\hat h_1$ and the line $L_{\hat s_2}$ at a height below $\hat h_2$, see Observation~\ref{observation:in-out} for the single speed case. This is possible if and only is $(t_0,h_0)$ lies inside $E^{\hat s_2,v_1}_{\hat h_2}$ and outside $E^{\hat s_1,v_1}_{\hat h_1}$, see picture~(\subref{subfigure:3rdb1}) of Figure~\ref{figure:mixed_bird1mid2}. In fact, $(t_0,h_0)$ cannot lie inside $E^{\hat s_1,v_1}_{\hat h_1}$, by assumption.
\begin{figure}[ht!]
 \centering
      \begin{subfigure}[t]{0.45\linewidth}
      \centering
      \begin{tikzpicture}[scale=0.7, every node/.style={scale=0.7}]
\pgftransformxscale{0.85}  
    \pgftransformyscale{0.85}    
     \draw[->] (-2, 0) -- (7, 0) node[right] {$t$};
    \draw[<-] (1.55, 6.3)--(1.55,0)node[below] {$L_{\hat s_1}$};
    \draw[<-] (3.1, 6.3)--(3.1, 0) node[below] {$L_{\hat s_2}$};
   \draw[blue, domain=-2.2:4.2, smooth] plot (\x, {((3/2)^2*\x*\x+2^2)^0.5});
    \draw[blue](0,2) node{$\bullet$};
    \draw[](-0.7,1.8) node{$(t_2,h_2)$};
    \draw[red, domain=0.8:3.4, smooth] plot (\x, {(16*\x*\x-64*\x+64+2.5^2)^0.5});
    \draw[red](2,2.5) node{$\bullet$};
    \draw[](2.6,2.3) node{$(t_1,h_1)$};
    \draw[](0.9,3.3) node{$(\hat s_1,\hat h_1)$};
    \draw[](2.3,5.5) node{$(\hat s_2,\hat h_2)$};
     \draw[red]  (2.35,0) arc (0:180:0.8 and 3.07);
     \draw[red]  (1.85,0) arc (-180:-360:1.25 and 5.05);
     %
    \draw[red](0.3,0.6) node{$E^{\hat s_1, v_1}_{\hat h_1}$};
     \draw[red](5,0.6) node{$E^{\hat s_2,v_1}_{\hat h_2}$};
    \end{tikzpicture}
    \caption{Any radial bird of type $1$, whose head point is in $E^{\hat s_2,v_1}_{\hat h_2}\setminus E^{\hat s_1,v_1}_{\hat h_1}$, intersects the vertical line $L_{\hat s_1}$ above $\hat h_1$ and the vertical line $L_{\hat s_2}$ below $\hat h_2$.}
\label{subfigure:3rdb1}
    \end{subfigure}
    \hspace{0.2in}
    \centering
      \begin{subfigure}[t]{0.45\linewidth}
      \centering
      \begin{tikzpicture}[scale=0.7, every node/.style={scale=0.7}]
\pgftransformxscale{0.85}  
\pgftransformyscale{0.85}    
 \draw[->] (-2, 0) -- (7, 0) node[right] {$t$};
    \draw[<-] (1.55, 6.3)--(1.55,0)node[below] {$L_{\hat s_1}$};
    \draw[<-] (3.1, 6.3)--(3.1, 0) node[below] {$L_{\hat s_2}$};
   \draw[blue, domain=-2.2:4.2, smooth] plot (\x, {((3/2)^2*\x*\x+2^2)^0.5});
    \draw[blue](0,2) node{$\bullet$};
    \draw[](-0.7,1.8) node{$(t_2,h_2)$};
    \draw[red, domain=0.8:3.4, smooth] plot (\x, {(16*\x*\x-64*\x+64+2.5^2)^0.5});
    \draw[red](2,2.5) node{$\bullet$};
    \draw[](2.6,2.3) node{$(t_1,h_1)$};
    \draw[](0.9,3.3) node{$(\hat s_1,\hat h_1)$};
    \draw[](2.3,5.5) node{$(\hat s_2,\hat h_2)$};
     \draw[blue]  (-0.47,0) arc (-180:-360:2 and 3.07);
     \draw[blue]  (6.5,0) arc (0:180:3.4 and 5.05);
    \draw[blue](-1.2,0.6) node{$E^{\hat s_1, v_2}_{\hat h_1}$};
    \draw[blue](5.1,1.6) node{$E^{\hat s_2,v_2}_{\hat h_2}$};
    \end{tikzpicture}
    \caption{Any radial bird of type $2$, whose head point is in $E^{\hat s_2,v_2}_{\hat h_2}\setminus E^{\hat s_1,v_2}_{\hat h_1}$, intersects the vertical line $L_{\hat s_1}$ above $\hat h_1$ and the vertical line $L_{\hat s_2}$ below $\hat h_2$.}
\label{subfigure:3rdb2}
    \end{subfigure}
    \captionsetup{width=0.9\linewidth}
    \caption{Scenario described in Lemma~\ref{lemma:int1mid2}.}
\label{figure:mixed_bird1mid2}
\end{figure}

Similarly, for $(t_0,h_0)$ of type $2$, $\hat s_1\leq \tilde s_2\leq \hat s_2$ if and only if $(t_0,h_0)\in E^{\hat s_2,v_2}_{\hat h_2}\setminus E^{\hat s_1,v_2}_{\hat h_1}$, see picture~(\subref{subfigure:3rdb2}) of Figure~\ref{figure:mixed_bird1mid2}. This is true because the radial bird of type $2$ at $(t_0,h_0)\in E^{\hat s_2,v_2}_{\hat h_2}\setminus E^{\hat s_1,v_2}_{\hat h_1}$ intersects the line $L_{\hat s_1}$ at a height above $\hat h_1$ and the line $L_{\hat s_2}$ at a height below $\hat h_2$.\qed 
\subsection{Proof of Lemma~\ref{lemma:Palm-Lcal-ij}, $l=1,r=2$ and $k=1,2$}~\label{subsection:l1r2k} Here we derive the result for $l=2,r=1$ and $k=1,2$, i.e., for $\E^0_{\Vcal^{(k)}_{1,2}} [f(\Hcal)]$, similarly to the first case.  Suppose  $(T^2_i,H^2_i)$ and $(T^1_j,H^1_j)$ in the support of $\Hcal^2, \Hcal^1$, respectively, such that $T_j^1 \in D'_\ell(T_i^2, H_i^2, H_j^1)$. The $k$-th intersection point of the two radial birds with these heads is
$(\hat{S}^{(k)}_{1,2},\hat{H}^{(k)}_{1,2})$, in short written as $(\hat{S}^{(k)},\hat{H}^{(k)})$.
By (\ref{eq:VR-ijk}) and the definition of $\P_{\Rcal^{(k)}_{1,2}}^0$ we have
\begin{align}
\E^0_{\Vcal^{(k)}_{1,2}} [f (\Hcal)]
=\E^0_{\Rcal^{(k)}_{1,2}} \left[f (\theta_{\beta^{(k)}_{1,2}(0)}(\Hcal))\right] 
& = \frac 1 {\Lambda_{1,2}^{(k)}}
\mathbb{E} \left[ \sum_{T^2_i\in \Rcal^{(k)}_{1,2}\,\mbox{:}\, 0\le T^2_i \le 1} f \left(\theta_{\beta^{(k)}_{1,2}(0)} \circ \theta_{T^2_i}(\Hcal)\right)\right]\nn\\
&=\frac 1 {\Lambda_{1,2}^{(k)}}
\mathbb{E} \left[ \sum_{T^2_i\in \Rcal^{(k)}_{1,2}\,\mbox{:}\, 0\le T^2_i \le 1} f \left(\theta_{T^2_i+\beta^{(k)}_{1,2}(T^2_i)}(\Hcal)\right) \right],
\label{eq:fH121}
\end{align}
using the fact that $\theta_{\beta^{(k)}_{1,2}(0)} \circ \theta_{T^2_i}=\theta_{T^2_i+\beta^{(k)}_{1,2}(T^2_i)}$. The last expectation in (\ref{eq:fH121}) is equal to

\begin{align}
\lefteqn{\mathbb{E} \left[ \sum_{T^2_i\in \Rcal^{(k)}_{1,2}\,\mbox{:}\, 0\le T^2_i \le 1} f \left(\theta_{T^2_i+\beta^{(k)}_{1,2}(T^2_i)}(\Hcal)\right) \right]}\nn\\
&=\mathbb{E} \left[ \sum_{(T^2_i,H^2_i)\in \Hcal^2\,\mbox{:}\, 0\le T^2_i \le 1} 
\one_{\exists (T^1_j,H^1_j)\in \Hcal^1 \,\mbox{:}\, T^1_j\in D'_\ell(T^2_i,H^2_i,H^1_j)}  f \left(\theta_{T^2_i+\beta^{(k)}_{1,2}(T^2_i)}(\Hcal)\right)  \prod_{l=1,2}\one_{\Hcal^l\left(E^{\hat{S}^{(k)},v_l}_{\hat{H}^{(k)}}\right)=0}\right]\nn\\
& = \mathbb{E} \left[ \sum_{(T^2_i,H^2_i)\in \Hcal^2\,\mbox{:}\, 0\le T^2_i \le 1}\,\sum_{(T^1_j,H^1_j)\in \Hcal^1 \,\mbox{:}\, T^1_j\in D'_\ell(T^2_i,H^2_i,H^1_j)}
f \left(\theta_{T^2_i+\beta^{(k)}_{1,2}(T^2_i)}(\Hcal)\right) \prod_{l=1,2}\one_{\Hcal^l\left(E^{\hat{S}^{(k)},v_l}_{\hat{H}^{(k)}}\right)=0}
\right]\nn \\
& = 
\mathbb{E} \left[ \sum_{(T^2_i,H^2_i)\in \Hcal^2 \,\mbox{:}\, 0\le T^2_i \le 1} \;
\sum_{(T^1_j,H^1_j)\in \Hcal^1 \,\mbox{:}\, T^1_j\in D'_\ell(T^2_i,H^2_i,H^1_j)}
f \left(\theta_{\hat{S}^{(k)}}(\Hcal)\right) \prod_{l=1,2}\one_{\Hcal^l\left(E^{\hat{S}^{(k)},v_l}_{\hat{H}^{(k)}}\right)=0}
\right], 
\label{eq:fH122}
\end{align}
where $T^2_i+\beta^{(k)}_{1,2}(T^2_i)=\hat S^{(k)}$. Applying the Campbell-Mecke formula for the factorial power of order 2 based on the two-point Palm probability measure $\P^{(t, h), (t',h')}_{\Hcal}=\P^{(t, h)}_{\Hcal^2}\otimes \P^{(t',h')}_{\Hcal^1}$, the expectation in (\ref{eq:fH122}) evaluates to
\begin{align}
\lefteqn{4\la_1\la_2 v_1 v_2	\int_{0}^1 \int_{0}^\infty  \int_{0}^\infty \int_{D'_\ell(t,h,h')} 
\mathbb{E}_\Hcal\Bigg[ f \left(\theta_{\hat{s}_k(t,h,t',h')}(\Hcal^2+\delta_{(t,h)}+ \Hcal^1+\delta_{(t',h')})\right)}\nn\\
&\hspace{3.3in} \times \prod_{l=1,2}\one_{\Hcal^l\left(E^{\hat{s}_k(t,h,t',h'),v_l}_{\hat{h}_k(t,h,t',h')}\right)=0}
\Bigg] {\rm d}t'  {\rm d}h' {\rm d}h {\rm d}t,
\label{eq:fH124}
\end{align}
similarly to (\ref{eq:Last-Hij}), where the set $D'_\ell(t,h,h')$ is defined as in (\ref{eq:DlDr-rev}). The set $D'_\ell(t,h,h')=(-\infty, t]$ when $h<h'$ but $D'_\ell(t,h,h')= (-\infty, t-t^*]$, when $h\geq h'$, where $t^*$ is as defined in (\ref{eq:tstar}). This leads to a simplification of the multiple integral in (\ref{eq:fH124}) to
\begin{align}
\lefteqn{\int_{0}^1 \int_{0}^\infty  \int_{0}^h \int_{-\infty}^{t-t^*} 
\mathbb{E}_\Hcal\Bigg[ f \left(\theta_{\hat{s}_k(t,h,t',h')}(\Hcal^2+\delta_{(t,h)}+ \Hcal^1+\delta_{(t',h')})\right) \prod_{l=1,2}\one_{\Hcal^l\left(E^{\hat{s}_k(t,h,t',h'),v_l}_{\hat{h}_k(t,h,t',h')}\right)=0}
\Bigg] {\rm d}t'  {\rm d}h' {\rm d}h {\rm d}t}\nn\\ 
&+ \int_{0}^1 \int_{0}^\infty  \int_h^\infty \int_{-\infty}^{t} 
\mathbb{E}_\Hcal\Bigg[ f \left(\theta_{\hat{s}_k(t,h,t',h')}(\Hcal^2+\delta_{(t,h)}+ \Hcal^1+\delta_{(t',h')})\right)  \prod_{l=1,2}\one_{\Hcal^l\left(E^{\hat{s}_k(t,h,t',h'),v_l}_{\hat{h}_k(t,h,t',h')}\right)=0}
\Bigg] {\rm d}t' {\rm d}h' {\rm d}h {\rm d}t,
\label{eq:fH125a}
\end{align}
where $t^*=\frac{1}{v_1v_2} (h^2-h'^2)^\half (v_1^2-v_2^2)^\half $. 

By making the change of variable $t-t'$ to $t'$ we obtain from (\ref{eq:fH125a}) that
\begin{align}
\lefteqn{\int_{0}^\infty  \int_{0}^h \int_{t^*}^\infty 
\mathbb{E}_\Hcal\Bigg[ f \left(\theta_{\hat{s}_k(0,h,-t',h')}(\Hcal^2+\delta_{(0,h)}+ \Hcal^1+\delta_{(-t',h')})\right) \prod_{l=1,2}\one_{\Hcal^l\left(E^{\hat{s}_k(0,h,-t',h'),v_l}_{\hat{h}_k(0,h,-t',h')}\right)=0}
\Bigg] {\rm d}t'  {\rm d}h' {\rm d}h}\nn \nn\\ 
&+  \int_{0}^\infty  \int_h^\infty \int_0^{\infty}
\mathbb{E}_\Hcal\Bigg[ f \left(\theta_{\hat{s}_k(0,h,-t',h')}(\Hcal^2+\delta_{(0,h)}+ \Hcal^1+\delta_{(-t',h')})\right) \prod_{l=1,2}\one_{\Hcal^l\left(E^{\hat{s}_k(0,h,-t',h'),v_l}_{\hat{h}_k(0,h,-t',h')}\right)=0}
\Bigg] {\rm d}t'  {\rm d}h' {\rm d}h.
\label{eq:fH125}
\end{align}
The result is obtained in this case by combining the three equations (\ref{eq:fH125}), (\ref{eq:fH124}) and (\ref{eq:fH122}). \qed
\subsection{Proof of Lemma~\ref{lemma:betaLR}}~\label{subsection:L-betaLR}
For the equality in (\ref{eq:betaLR1}), we can write similarly to (\ref{eq:Last-Hijjj}), 
\begin{align}
\E^0_{\Lcal^{(1)}_{1,2}} \left[f (\theta_{\tilde \beta^{(1)}_{1,2}(0)}(\Hcal))\right]  
&= \frac 1 {\Lambda_{1,2}^{(1)}}
\mathbb{E} \left[ \sum_{T^1_i\in \Lcal^{(1)}_{1,2}\,\mbox{:}\, 0\le T^1_i \le 1} f \left(\theta_{\tilde\beta^{(1)}_{1,2}(0)} \circ \theta_{T^1_i}(\Hcal)\right)\right]\nn\\
&= \frac 1 {\Lambda_{1,2}^{(1)}} \mathbb{E} \left[ \sum_{T^1_i\in \Lcal^{(1)}_{1,2} \,\mbox{:}\, 0\le T^1_i \le 1} f \left(\theta_{T^1_i+\tilde\beta^{(1)}_{1,2}(T^1_i)}(\Hcal)\right) \right],
\label{eq:eqLR1}
\end{align}
using the fact that $\theta_{\tilde\beta^{(1)}_{1,2}(0)} \circ \theta_{T^1_i}=\theta_{T^1_i+\tilde\beta^{(1)}_{1,2}(T^1_i)}$. Thus from (\ref{eq:eqLR1}) we have
\begin{align}
\lefteqn{\!\!\mathbb{E} \left[ \sum_{T^1_i\in \Lcal^{(1)}_{1,2} \,\mbox{:}\, 0\le T^1_i \le 1} f \left(\theta_{T^1_i+\tilde\beta^{(1)}_{1,2}(T^1_i)}(\Hcal)\right) \right]}\nonumber \\
& = \mathbb{E} \left[ \sum_{(T^1_i,H^1_i)\in \Hcal^1 \,\mbox{:}\, 0\le T^1_i \le 1} \!\!\!
	\one_{\exists (T^2_j,H^2_j)\in \Hcal^2 \,\mbox{:}\, T^2_j\in D_r(T^1_i,H^1_i,H^2_j)}
	\;  f \left(\theta_{T^1_i+\tilde\beta^{(1)}_{1,2}(T^1_i)}(\Hcal)\right) \prod_{l=1,2}\one_{\Hcal^l\left(E^{\hat{S}^{(1)},v_l}_{\hat{H}^{(1)}}\right)=0}\right]\nn \\
& = 	\mathbb{E} \left[ \sum_{(T^1_i,H^1_i)\in \Hcal^1 \,\mbox{:}\, 0\le T^1_i \le 1}\, 
\sum_{(T^2_j,H^2_j)\in \Hcal^2 \,\mbox{:}\, T^2_j\in D_r(T^1_i,H^1_i,H^2_j)}
    \!\!\!\!\!\!f \left(\theta_{T^1_i+\tilde\beta^{(1)}_{1,2}(T^1_i)}(\Hcal)\right) \prod_{l=1,2}\one_{\Hcal^l\left(E^{\hat{S}^{(1)},v_l}_{\hat{H}^{(1)}}\right)=0}
	  \right].
   \label{eq:eqLR2x}
\end{align}
Using the fact that $T^1_i+\tilde\beta^{(1)}_{1,2}(T^1_i)=\hat S^{(1)}$ we have (\ref{eq:eqLR2x}) that
\begin{align}
\lefteqn{\mathbb{E} \left[ \sum_{T^1_i\in \Lcal^{(1)}_{1,2} \,\mbox{:}\, 0\le T^1_i \le 1} f \left(\theta_{T^1_i+\tilde\beta^{(1)}_{1,2}(T^1_i)}(\Hcal)\right) \right]}\nn \\
& = \mathbb{E} \left[ \sum_{(T^1_i,H^1_i)\in \Hcal^1 \,\mbox{:}\,  0\le T^1_i \le 1} \;
\sum_{(T^2_j,H^2_j)\in \Hcal^2 \,\mbox{:}\, T^2_j\in D_r(T^1_i,H^1_i,H^2_j)}
    f \left(\theta_{\hat{S}^{(1)}}(\Hcal)\right) \prod_{l=1,2}\one_{\Hcal^l\left(E^{\hat{S}^{(1)},v_l}_{\hat{H}^{(1)}}\right)=0}
	  \right]\nn\\
& = \mathbb{E} \left[ \sum_{(T^1_i,H^1_i)\in \Hcal^1 \,\mbox{:}\,  0\le T^1_i \le 1} \;\sum_{(T^2_j,H^2_j)\in \Hcal^2 \,\mbox{:}\, T^2_j\in D_\ell(T^1_i,H^1_i,H^2_j)}f \left(\theta_{\tilde{S}^{(2)}}(\Hcal)\right) \prod_{l=1,2}\one_{\Hcal^l\left(E^{\tilde{S}^{(2)},v_l}_{\tilde{H}^{(2)}}\right)=0}\right]\nn\\
&\stackrel{(\ref{eq:Last-Hijj}), (\ref{eq:Last-Hijjj})}{=} \Lambda_{2,1}^{(2)}\, \E^0_{\Rcal^{(2)}_{2,1}} \left[f (\theta_{\beta^{(2)}_{2,1}(0)}(\Hcal))\right],
\label{eq:eqLR2}
\end{align}
where $(\tilde{S}^{(2)},\tilde{H}^{(2)})$ is the intersection of type $\binom{2}{2,1}$. In the second step in (\ref{eq:eqLR2}), after counting backwards in time, we have used the symmetry of the function $f$, i.e., $f(\Hcal)=f(-\Hcal)$. As seen before in part~(\ref{MH2p2}) of Theorem~\ref{theorem:HO2speed-main}, we have $\Lambda_{1,2}^{(1)}=\Lambda_{2,1}^{(2)}$. We obtain the result using the last fact and combining (\ref{eq:eqLR1}) and (\ref{eq:eqLR2}). The equality in (\ref{eq:betaLR2}) can be proved similarly.\qed
\subsection{Proof of Proposition~\ref{prop:monotonic-sets}~part~(\ref{monotonic1})}\label{subsection:inc-wing2}
We first provide the justification for the claim for the three type of handovers at time $0$, for $\t_n=1$, i.e., when the station that takes over at time $0$ is of type $1$. This corresponds to intersections of types $1$ or $\bbinom{1}{1,1}$, $\binom{1}{2,1}$ or $\bbinom{2}{2,1}$ and $\binom{1}{1,2}$ or $\bbinom{1}{2,1}$, respectively. Let us start the proof of part~(\ref{monotonic1}) for these cases. 

\begin{proof}[Proof of Proposition~\ref{prop:monotonic-sets},~part~(\ref{monotonic1}), typical handover type $\binom{1}{2,1}$ or equivalently $\bbinom{2}{2,1}$]
Suppose the handover happens at a distance $\hat h_0$, which is given by the first intersection of two radial birds with head points $(t_n,h_n)=(t_1,h_1)$ of type $\t_n=1$ and $(t_p,h_p)=(t_2,h_2)$ of type $\t_p=2$, such that $t_1\geq t_2$. By construction, $(t_1,h_1)$ lies on $\partial E^{0,v_1}_{\hat h_0}$ and $(t_2,h_2)$ lies on $\partial E^{0,v_2}_{\hat h_0}$. In fact, for any $(t_1,h_1)$ lying on $\partial E^{0,v_1}_{\hat h_0}$ and $(t_2,h_2)$ lying on $\partial E^{0,v_2}_{\hat h_0}$, we get the same handover. Given $\hat h_0, t_1, h_1$, define $\hat h_1(t):=(v_1^2(t-t_1)^2+h_1^2)^\half$ and consider the two collections $\left\{E^{t,v_1}_{\hat h_1(t)}\right\}_{t\geq 0}$ and $\left\{E^{t,v_2}_{\hat h_1(t)}\right\}_{t\geq 0}$, of upper half ellipses $E^{t,v_1}_{\hat h_1(t)}$ and $E^{t,v_2}_{\hat h_1(t)}$ with their top point lying on the radial bird $C^1_{(t_1,h_1)}$.
\begin{case}[\textbf{$\left\{E^{t,v_1}_{\hat h_1(t)}\setminus E^{0,v_1}_{\hat h_0}\right\}_{t\geq 0}$}.]
It can be proved, using the techniques applied for the single-speed case, that the collection $\left\{E^{t,v_1}_{\hat h_1(t)}\setminus E^{0,v_1}_{\hat h_0}\right\}_{t\geq 0}$ is non-decreasing in $t$, since the boundaries $\partial E^{t, v_1}_{\hat h_1(t)}$ pass through $(t_1,h_1)$, for all $t$, see picture~\subref{subfigure:mH-FHoT1} of Figure~\ref{figure:mH-FHoT}. 
\end{case}
\begin{figure}[ht!]
 \centering
      \begin{subfigure}[t]{0.47\linewidth}
       \centering
      \begin{tikzpicture}[line width=0.55pt, scale=0.7, every node/.style={scale=0.7}]
\pgftransformxscale{1.2}  
\pgftransformyscale{1.2}  
\draw[->] (-1, 0) -- (6, 0) node[right] {$t$};
\draw[blue, domain=-1.2:3.3, smooth] plot (\x, {((3/2)^2*\x*\x+2^2)^0.5});
\draw[blue](0,2) node{$\bullet$};
    \draw[](-0.7,1.8) node{$(t_2,h_2)$};
    \draw[red, domain=0.6:3.5, smooth] plot (\x, {(9*\x*\x-2*9*2*\x+9*2*2+2.5^2)^0.5});
    \draw[red](2,2.5) node{$\bullet$};
    \draw[](2.8,2.3) node{$(t_1,h_1)$};
    \draw[](0.9,3.3) node{$(0,\hat h_0)$};
     \draw[red]  (2.48,0) arc (0:180:1 and 2.98);
     \draw[red](-0.1,0.6) node{$E^{0, v_1}_{\hat h_0}$};
     \draw[teal]  (0.7,0) arc (-180:-360:0.93 and 2.75);
     \draw[magenta]  (1.65,0) arc (-180:-360:1.175 and 3.5);
     \draw[violet]  (1.75,0) arc (-180:-360:1.544 and 4.6);
\draw[->] (2, 0) -- (2, 5);
    \end{tikzpicture}
    \caption{The collection $\left\{E^{t,v_1}_{\hat h_1(t)}\setminus E^{0,v_1}_{\hat h_0}\right\}_{t\geq 0}$ is non-decreasing, since all the boundaries pass through $(t_1,h_1)$.}
\label{subfigure:mH-FHoT1}
    \end{subfigure}
    \hspace{0.1in}
    \centering
      \begin{subfigure}[t]{0.47\linewidth}
       \centering
      \begin{tikzpicture}[line width=0.55pt, scale=0.7, every node/.style={scale=0.7}]
\pgftransformxscale{1.2}  
\pgftransformyscale{1.2}    
\draw[->] (-1.4, 0) -- (5.5, 0) node[right] {$t$};
   \draw[blue, domain=-1.2:3.3, smooth] plot (\x, {((3/2)^2*\x*\x+2^2)^0.5});
    \draw[blue](0,2) node{$\bullet$};
    \draw[](-0.7,1.8) node{$(t_2,h_2)$};
    \draw[red, domain=1:4, smooth] plot (\x, {(9*\x*\x-2*9*2.5*\x+9*2.5*2.5+2.7^2)^0.5});
    \draw[red](2.5,2.7) node{$\bullet$};
    \draw[](2.6,1.8) node{$(t_1,h_1)$};
    \draw[](0.9,3.3) node{$(0,\hat h_0)$};
     \draw[blue]  (-0.45,0) arc (-180:-360:2.28 and 3.38);
    \draw[blue](-1,0.6) node{$E^{0, v_2}_{\hat h_0}$};
     \draw[teal]  (-0.1,0) arc (-180:-360:2.092 and 3.1);
     \draw[teal] (2, 0) -- (2, 3.1);
     \draw[]  (0.45,0) arc (-180:-360:1.856 and 2.75);
     \draw[] (2.3, 0) -- (2.3, 2.75);
    \end{tikzpicture}
    \caption{Existence of $0\leq\tilde t\leq t_1$, given $(t_1,h_1)$. The collection $\left\{E^{t,v_2}_{\hat h_1(t)}\setminus E^{0,v_2}_{\hat h_0}\right\}_{t\geq 0}$ is non-decreasing, for all $t\in [\tilde t,t_1]$.}
    \label{subfigure:mH-FHoT2}
    \end{subfigure}
    \caption{Monotonicity property of the collection $\left\{E^{t,v_l}_{\hat h_1(t)}\setminus E^{0,v_l}_{\hat h_0}\right\}_{t\geq 0}$.}
\label{figure:mH-FHoT}
\end{figure}
\begin{case}[\textbf{$\left\{E^{t,v_2}_{\hat h_1(t)}\setminus E^{0,v_2}_{\hat h_0}\right\}_{t\geq 0}$}.] We prove this case in two parts. In the first part, for $t\in [t_1, \infty)$, using the {\em increasing wing property}, as described in Remark~\ref{remark:inc-arm2}, $\left\{E^{t,v_2}_{\hat h_1(t)}\setminus E^{0,v_2}_{\hat h_0}\right\}_{t\geq t_1}$ is non-decreasing. 

We consider now the part where $t\in [0,t_1]$, which corresponds to the decreasing wing of the radial bird $C^1_{(t_1,h_1)}$. We first claim that, given $(t_1,h_1)$, there exists $\tilde t>0$ such that, for all $t<\tilde t$, $E^{t,v_2}_{\hat h(t)}\subset E^{0,v_2}_{\hat h_0}$ and for all $t\geq \tilde t$, $E^{t,v_2}_{\hat h(t)}\cap \left(E^{0,v_2}_{\hat h_0}\right)^c\neq \emptyset$. Indeed, the critical value $\tilde t$ is given by the non-trivial solution of the equation $t+\hat h_t/v_2= \hat h_0/v_2$, which is either $0$ or $\frac{2}{v_1^2-v_2^2}(v_1^2t_1-v_2\hat h_0)$. Define,
\begin{equation}
\tilde t=
\begin{cases}
    0, & \text{ if } v_1^2t_1\leq v_2\hat h_0,\\
    \frac{2}{v_1^2-v_2^2}(v_1^2t_1-v_2\hat h_0), &\text{ if } v_1^2t_1> v_2\hat h_0.
\end{cases}
\label{eq:tilde-t}
\end{equation}
Given $v_1,v_2, \hat h_0$, define
\begin{equation}
t^*:=\begin{cases}
           0, & \text{ if } v_1^2t_1\leq v_2\hat h_0,\\
           \frac{2v_2\hat h_0}{v_1^2+v_2^2}, &\text{ if } v_1^2t_1> v_2\hat h_0.
    \end{cases}
\end{equation}
As a side remark, observe that $t^*$ is given by an intersection of the line segment joining the two points $(0,\hat h_0)$ and $(\hat h_0/v_2, 0)$ and $\partial E_{\hat h_0}^{0,v_1}$, which are $(0,\hat h_0)$ and  $\left(\frac{2v_2\hat h_0}{v_1^2+v_2^2},\frac{\hat h_0(v_1^2-v_2^2)}{v_1^2+v_2^2}\right)$.
\begin{subcase}[$v_1^2t_1> v_2\hat h_0$.] In this case $\tilde t=\frac{2}{v_1^2-v_2^2}(v_1^2t_1-v_2\hat h_0)$ and $t^*=\frac{2v_2\hat h_0}{v_1^2+v_2^2}$.  One can show from (\ref{eq:tilde-t}) that, for all $(t_1,h_1)\in \partial E_{\hat h_0}^{0,v_1}$ such that, for all $t_1\in [0, t^*]$, we have $\tilde t\leq t_1$ and for all $t_1\in (t^*,\hat h_0/v_1]$, we have $\tilde t> t_1$. Indeed, if $t_1\in (t^*,\hat h_0/v_1]$, then there does not exist any radial bird of type $2$ with head point lying outside $E^{0, v_2}_{\hat h_0}$ that intersects the radial bird $C^1_{(t_1,h_1)}$, twice on its decreasing wing. We perform further analysis by dividing in two cases: $(t_1,h_1)$ being such that $t_1\in (t^*,\hat h_0/v_1]$ or  $t_1\in [0,t^*]$.

\begin{subsubcase}{$t_1\in (t^*,\hat h_0/v_1]$.} In this case, by definition of $t^*$, we have $\tilde t>t_1$ and by definition of $\tilde t$, $E^{t,v_2}_{\hat h(t)}\subset E^{0,v_2}_{\hat h_0}$, for all $t<\tilde t$ and in particular for all $t<t_1$. 
Also, since $\tilde t>t_1$, for $t_1\in (t^*,\hat h_0/v_1]$, $\tilde t$ corresponds to the increasing wing of the bird $C^1_{(t_1,h_1)}$. Hence, using the {\em increasing wing property}, as described in Remark~\ref{remark:inc-arm2}, the collection of open sets $\left\{E^{t,v_2}_{\hat h_1(t)}\setminus E^{0,v_2}_{\hat h_0}\right\}_{t\geq t_1}$ is non-decreasing, and so is $\left\{E^{t,v_2}_{\hat h_1(t)}\setminus E^{0,v_2}_{\hat h_0}\right\}_{t\geq \tilde t}$ in particular. Also by definition of $\tilde t$, $E^{t,v_2}_{\hat h_1(t)}\setminus E^{0,v_2}_{\hat h_0}=\emptyset$, for all $t\in [0, \tilde t]$. As a whole, the collection $\left\{E^{t,v_2}_{\hat h_1(t)}\setminus E^{0,v_2}_{\hat h_0}\right\}_{t\geq 0}$ is non-decreasing.
\end{subsubcase}

\begin{subsubcase}{$t_1\in [0,t^*]$.}\label{subsubcase:t_1less}  Here again, by definition of $t^*$, we have $\tilde t<t_1$. We are interested in understanding the behavior of the collection $\left\{ E^{t,v_2}_{\hat h_1(t)}\setminus E^{0,v_2}_{\hat h_0}\right\}_{t\in [\tilde t, t_1]}$. It only concerns the interval $[\tilde t, t_1]$, because $\left\{ E^{t,v_2}_{\hat h_1(t)}\setminus E^{0,v_2}_{\hat h_0}\right\}_{t\geq t_1}$ is anyway non-decreasing using the {\em increasing wing property}, as in Remark~\ref{remark:inc-arm2} and $E^{t,v_2}_{\hat h_1(t)}\setminus E^{0,v_2}_{\hat h_0}=\emptyset$, for $t\in [0, \tilde t]$. Let us present a general argument for the monotonicity property of interest for the interval $[\tilde t, t_1]$. 

Let us take a time point $\t\in [\tilde t, t_1)$ and choose $\eps>0$ arbitrarily small. The boundaries of $\partial E^{\tau,v_2}_{\hat h_1(\tau)}$ and $\partial E^{\tau+\eps,v_2}_{\hat h_1(\tau+\eps)}$ are given by the equations
\begin{equation}
    v_2^2(t-\tau)^2+h^2=\hat h_1(\tau)^2,\,\,
v_2^2(t-\tau-\eps)^2+h^2=\hat h_1(\tau+\eps)^2,
\label{eq:e-tau1-tau2}
\end{equation}
respectively, where 
\begin{equation}
\hat h_1(\t)= \left(v_1^2(\t-t_1)^2+h_1^2\right)^\half,\,\, \hat h_1(\t+\eps)= \left(v_1^2(\t+\eps-t_1)^2+h_1^2\right)^\half.
\label{eq:htau-1-2}
\end{equation}
Suppose $(\bar t, \bar h)\equiv (\bar t(\t,\eps), \bar h(\t,\eps))\in \mathbb H^+$ is an intersection point, if it exists, of the two ellipses in (\ref{eq:e-tau1-tau2}). We have the following proposition: 
\begin{proposition}
For any $\t$ and $\eps>0$, with $\tilde t \leq \t< t_1$, the following holds true:
\begin{enumerate}[(a).]
\item \label{existence} $(\bar t, \bar h)$ exists and $(\bar t, \bar h)\in E^{0,v_2}_{\hat h_0}$,
\item \label{t+eps} $E^{\t,v_2}_{\hat h_1(\t)}\setminus E^{0,v_2}_{\hat h_0}\subset E^{\t+\eps,v_2}_{\hat h_1(\t+\eps)}\setminus E^{0,v_2}_{\hat h_0}$.
\end{enumerate}
\label{prop:tau1-tau2}
\end{proposition}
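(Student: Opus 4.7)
The plan is to solve the two boundary equations in~\eqref{eq:e-tau1-tau2} simultaneously. Because both ellipses share the same coefficient of $t^{2}$, their subtraction cancels the quadratic term and produces a linear equation, whose unique solution is
\[
\bar t \;=\; \tau \;+\; \frac{v_1^2(t_1-\tau)}{v_2^2} \;-\; \frac{(v_1^2-v_2^2)\epsilon}{2v_2^2},
\]
and back-substitution gives $\bar h^{2}=\hat h_1(\tau)^{2}-v_2^{2}(\bar t-\tau)^{2}$. Existence of an intersection in $\mathbb H^+$ for part~(\ref{existence}) thus reduces to $\bar h^{2}>0$, which holds whenever $\tau$ is close enough to $t_1$ (in particular for $\epsilon$ small); when $\bar h^{2}\le 0$ the two upper-half boundaries do not meet in $\mathbb H^+$ at all, and part~(\ref{t+eps}) then reduces to an unconditional inclusion of one full ellipse in the other, which can be handled separately using the formulas in~\eqref{eq:htau-1-2}.

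For the location claim in part~(\ref{existence}), the plan is to start from the identity $v_2^{2}\bar t^{2}+\bar h^{2} = \hat h_1(\tau)^{2}+v_2^{2}\tau(2\bar t-\tau)$ and expand it using $\hat h_1(\tau)^{2}=v_1^{2}(\tau-t_1)^{2}+h_1^{2}$ together with $\hat h_0^{2}=v_1^{2}t_1^{2}+h_1^{2}$ (valid because $(t_1,h_1)\in\partial E^{0,v_1}_{\hat h_0}$). After the cross terms telescope, the expression will collapse to
\[
v_2^{2}\bar t^{2}+\bar h^{2} \;=\; \hat h_0^{2} \;-\; (v_1^{2}-v_2^{2})\,\tau(\tau+\epsilon).
\]
Since $v_1>v_2$ and, in the present subsubcase, $\tau\ge\tilde t>0$ (by~\eqref{eq:tilde-t} under the assumption $v_1^{2}t_1>v_2\hat h_0$), the correction is strictly negative for every $\epsilon\ge 0$, which places $(\bar t,\bar h)$ strictly inside $E^{0,v_2}_{\hat h_0}$.

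For part~(\ref{t+eps}), the linearity of the difference of the two ellipse equations implies that the two upper-half boundaries cross at most once in $\mathbb H^+$, and part~(\ref{existence}) locates this unique crossing strictly inside $E^{0,v_2}_{\hat h_0}$. Consequently, on the open complement $\mathbb H^{+}\setminus\overline{E^{0,v_2}_{\hat h_0}}$ the two boundaries are disjoint, and the portions of the two ellipses lying in this complement are nested. The direction of nesting will then be fixed by evaluating the defining inequality of $E^{\tau+\epsilon,v_2}_{\hat h_1(\tau+\epsilon)}$ at one convenient exterior test point on $\partial E^{\tau,v_2}_{\hat h_1(\tau)}\setminus\overline{E^{0,v_2}_{\hat h_0}}$, using~\eqref{eq:htau-1-2} and the range $\tau\in[\tilde t,t_1]$, $t_1\in[0,t^{*}]$.

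The main obstacle is precisely this last point comparison: because $\hat h_1(\cdot)$ is \emph{decreasing} on the decreasing wing $[\tilde t,t_1]$, the two ellipses enjoy no global monotone containment (the apex of $\partial E^{\tau,v_2}_{\hat h_1(\tau)}$ in fact lies outside $E^{\tau+\epsilon,v_2}_{\hat h_1(\tau+\epsilon)}$ for $\epsilon$ small), so the inclusion $E^{\tau,v_2}_{\hat h_1(\tau)}\setminus E^{0,v_2}_{\hat h_0}\subset E^{\tau+\epsilon,v_2}_{\hat h_1(\tau+\epsilon)}\setminus E^{0,v_2}_{\hat h_0}$ relies essentially on a cancellation between the vertical shrinking of the upper-half ellipse and its horizontal shift by $\epsilon$, combined with the fact established in part~(\ref{existence}) that the unique crossing is tucked strictly inside the reference ellipse $E^{0,v_2}_{\hat h_0}$. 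Once this is settled, iterating the infinitesimal step globalizes the monotonicity required for Proposition~\ref{prop:monotonic-sets}~(\ref{monotonic1}) on the decreasing wing, completing the case $\t_n=1$ of the proposition.
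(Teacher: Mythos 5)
Your derivation of the crossing point, the identity $v_2^{2}\bar t^{\,2}+\bar h^{2}=\hat h_0^{2}-(v_1^{2}-v_2^{2})\tau(\tau+\eps)$, and the plan for part~(\ref{t+eps}) (unique crossing of the two boundaries, crossing located inside $E^{0,v_2}_{\hat h_0}$, hence nesting on the exterior) all coincide with the paper's argument. The genuine gap is in the \emph{existence} half of part~(\ref{existence}). You reduce existence to $\bar h^{2}>0$ and then assert this only ``whenever $\tau$ is close enough to $t_1$'', deferring the case $\bar h^{2}\le 0$ to a separate argument. But the proposition claims existence for \emph{every} $\tau\in[\tilde t,t_1)$, and your existence argument never invokes the hypothesis $\tau\ge\tilde t$ --- which is precisely what rules out the bad case. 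The paper's route is to show that on $[\tilde t,t_1)$ one has $(\hat h_1(\tau))'\ge -v_2$ (this is where the definition (\ref{eq:tilde-t}) of $\tilde t$ enters, via (\ref{eq:h1-tilde-t})), so the rightmost point $\tau+\eps+\hat h_1(\tau+\eps)/v_2$ of the shifted half-ellipse does not retreat, while its apex $(\tau+\eps,\hat h_1(\tau+\eps))$ lies strictly inside $E^{\tau,v_2}_{\hat h_1(\tau)}$ for $\eps$ small; the intermediate value theorem then forces a crossing for the whole range of $\tau$, not just near $t_1$.

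The fallback you sketch is moreover not viable as stated. Since the apex of $E^{\tau,v_2}_{\hat h_1(\tau)}$ always lies outside $E^{\tau+\eps,v_2}_{\hat h_1(\tau+\eps)}$ (as you yourself observe), the only configuration consistent with the upper boundaries not meeting in $\mathbb H^+$ is $E^{\tau+\eps,v_2}_{\hat h_1(\tau+\eps)}\subset E^{\tau,v_2}_{\hat h_1(\tau)}$ --- a nesting that goes in the \emph{wrong} direction for part~(\ref{t+eps}). So the non-intersecting case cannot be ``handled separately''; it has to be excluded, and excluding it requires the derivative bound coming from $\tau\ge\tilde t$. Two minor remarks: the strict inclusion $(\bar t,\bar h)\in E^{0,v_2}_{\hat h_0}$ needs $\tau>0$, which holds when $v_1^{2}t_1>v_2\hat h_0$ but degenerates to a boundary point when the proposition is reused with $\tilde t=0$ and $\tau=0$ (a point the paper also glosses over); and for the direction of nesting in part~(\ref{t+eps}) no test point is needed, since the difference of the two defining quadratics in (\ref{eq:e-tau1-tau2}) is affine in $t$ with positive slope $2v_2^{2}\eps$ and vanishes at $\bar t$, which gives the ordering on $\{t>\bar t\}$ directly.
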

\begin{figure}[ht!]
    \begin{tikzpicture}[line width=0.6pt, scale=0.8, every node/.style={scale=0.75}]
    \pgftransformxscale{1.3}  
    \pgftransformyscale{1.3}
    \draw[->] (-1.4, 0) -- (5.5, 0) node[right] {$t$};
   \draw[blue, domain=-1.2:2.5, smooth] plot (\x, {((3/2)^2*\x*\x+2^2)^0.5});
    \draw[blue](0,2) node{$\bullet$};
    \draw[](-0.7,1.8) node{$(t_2,h_2)$};
    \draw[red, domain=1.5:4, smooth] plot (\x, {(2.3^2*\x*\x-2*2.3^2*2.8*\x+2.3^2*2.8*2.8+2.4^2)^0.5});
    \draw[red](2.8,2.4) node{$\bullet$};
    \draw[](2.8,2.8) node{$(t_1,h_1)$};
    \draw[](1.2,3.5) node{$(0,\hat h_0)$};
     \draw[blue]  (-0.45,0) arc (-180:-360:2.28 and 3.35);
     \draw[blue](-0.8,0.8) node[below]{$E^{0,v_2}_{\hat h_0}$};
     \draw[]  (0.65,0) arc (-180:-360:1.757 and 2.58);
     \draw[] (2.4,2.58) -- (2.4,0);
     \draw[](2.3,0) node[below]{$\t$};
     \draw[](0.3,0.8) node[below]{$E^{\t,v_2}_{\hat h_1(\t)}$};
     \draw[magenta] (2.6, 2.45) -- (2.6,0);
     \draw[](2.8,0)node[below]{$\t+\eps$};
     \draw[magenta](1.5,0.8) node[below]{$E^{\t+\eps,v_2}_{\hat h_1(\t+\eps)}$};
     \draw[magenta]  (0.95,0) arc (-180:-360:1.669 and 2.45);
     \draw[](3.2,1.4) node{$(\bar t,\bar h)$};
      \draw[->] (3.2, 1.6) -- (3.2, 2.25);
\end{tikzpicture}
\captionsetup{width=0.9\linewidth}
    \caption{For any $\t\in [\tilde t, t_1)$ and the intersection point $(\bar t,\bar h)$ of the two boundaries  $\partial E^{\t,v_2}_{\hat h_1(\t)}$ and $\partial E^{\t+\eps,v_2}_{\hat h_1(\t+\eps)}$, belongs to $E^{0,v_2}_{\hat h_0}$.} 
    \label{figure:int-in}
\end{figure}
By Proposition~\ref{prop:tau1-tau2}, the collection of sets $\left\{E^{t,v_2}_{\hat h_1(t)}\setminus E^{0,v_2}_{\hat h_0}\right\}_{t\in [\tilde t, t_1]}$ is non-decreasing, in particular. Finally, the sequence $\left\{E^{t,v_2}_{\hat h_1(t)}\setminus E^{0,v_2}_{\hat h_0}\right\}_{t\geq 0}$ is non-decreasing.
\end{subsubcase}
\end{subcase}
\begin{subcase}[$v_1^2t_1\leq v_2\hat h_0$.]\label{subcase:two-2} Then $t^*=0$, and $\tilde t=0$ as well. By definition of $\tilde t$, $E^{\t,v_2}_{\hat h_1(\t)}\cap (E^{0,v_2}_{\hat h_0})^c\neq \emptyset$, for all $\t\geq 0$. We establish the claim using the same steps of \ref{subsubcase:t_1less}, along with the help of Proposition~\ref{prop:tau1-tau2}, with $\tilde t=0$. We do not provide the complete proof of Proposition~\ref{prop:tau1-tau2}, with $\tilde t=0$. As a matter of fact, we just show that $(\hat h_1(0))^{'}\geq -v_2$. The last inequality holds true, since from (\ref{eq:h1-tilde-t}), we have 
\begin{align}
(\hat h_1(0))^{'}= -\frac{v_1^2t_1}{\hat h_0}\geq -v_2,
\end{align}
using the assumption $v_1^2t_1\leq v_2\hat h_0$ of this~\ref{subcase:two-2}.
\end{subcase}
\end{case}
This completes the proof of part~(\ref{monotonic1}) of Proposition~\ref{prop:monotonic-sets}, when the typical handover is of type $\binom{1}{2,1}$ or equivalently $\bbinom{2}{2,1}$.
\end{proof}
\begin{proof}[Proof of Proposition~\ref{prop:tau1-tau2},\,part~(\ref{existence})] We first prove that the two boundaries  $\partial E^{\tau,v_2}_{\hat h_1(\tau)}$ and $\partial E^{\tau+\eps,v_2}_{\hat h_1(\tau+\eps)}$ intersect for any $0\leq \t<t_1$ and arbitrarily small $\eps>0$. For this, it is enough to show that, $\hat h_1(\t+\eps)\leq \hat h_1(\t)$ and 
\begin{align}
\t+\eps+\frac{1}{v_2}\hat h_1(\t+\eps)\geq \t+\frac{1}{v_2}\hat h_1(\t) \text{ i.e., } \frac{\hat h_1(\t+\eps)-\hat h_1(\t)}{\eps}\geq -v_2,
\label{eq:ineq-eps}
\end{align}
i.e., the right most point $(\t+\eps+\frac{1}{v_2}\hat h_1(\t+\eps), 0)$ of the upper half-ellipse $E^{\tau+\eps,v_2}_{\hat h_1(\tau+\eps)}$, is on the right of the right-most point $(\t+\frac{1}{v_2}\hat h_1(\t), 0)$ of the upper half-ellipse $E^{\tau,v_2}_{\hat h_1(\tau)}$.
The first inequality $\hat h_1(\t+\eps)\leq\hat h_1(\t)$ is true since, $\t<t_1$, which correspond to the decreasing wing of $C^1_{(t_1,h_1)}$.
The second inequality (\ref{eq:ineq-eps}) is true if we can show that $(\hat h_1(\t))'\geq -v_2$ for all $0\leq \t<t_1$, where
\begin{align}
\left(\hat h_1(\t)\right)^{'} 
&= \frac{v_1^2(\t-t_1)}{\hat h_1(\t)}.
\end{align}
If we can show that $(\hat h_1(\tilde t ))'\geq -v_2$, then, by the property of the decreasing wing of the radial bird $C^1_{(t_1,h_1)}$, we have $(\hat h_1(\t))'>(\hat h_1(\tilde t ))'\geq -v_2$, for all $\t\in [\tilde t, t_1)$. Indeed, using the equality $\tilde t+\hat h_1(\tilde t)/v_2= \hat h_0/v_2$, we get
\begin{align}
\left(\hat h_1(\tilde t )\right)'= \frac{v_1^2(\tilde t-t_1)}{\hat h_1(\tilde t)}= \frac{v_1^2(\tilde t-t_1)}{\hat h_0-v_2\tilde t}.
\label{eq:h1-tilde-t}
\end{align}
We have $\frac{v_1^2(\tilde t-t_1)}{\hat h_0-v_2\tilde t}\geq-v_2$, since 
\begin{align}
v_1^2(\tilde t-t_1)+ v_2 (\hat h_0-v_2\tilde t) = (v_1^2-v_2^2) \left( \tilde t- \frac{1}{v_1^2-v_2^2}(v_1^2t_1-v_2\hat h_0)\right) \stackrel{(\ref{eq:tilde-t})}{=} v_1^2t_1-v_2\hat h_0\geq 0.
\end{align}
This proves the inequality (\ref{eq:ineq-eps}) and as a result shows the existence of the intersection between the two boundaries  $\partial E^{\tau,v_2}_{\hat h_1(\tau)}$ and $\partial E^{\tau+\eps,v_2}_{\hat h_1(\tau+\eps)}$. 

We find the coordinate of the intersection $(\bar t,\bar h)$ of the two ellipses using the governing equations in (\ref{eq:e-tau1-tau2}), where   
\begin{equation}
\bar t = \half\left[2\t+\eps+\frac{\hat h_1^2(\t)-\hat h^2_1(\t+\eps)}{v_2^2\eps}\right], \,\, \bar h^2= \hat h_1^2(\t)-  v_2^2(\bar t-\t)^2= \hat h_1^2(\t)-\frac{v_2^2}{4}\left[\eps+\frac{\hat h_1^2(\t)-\hat h^2_1(\t+\eps)}{v_2^2\eps}\right]^2.
\label{eq:b-th}
\end{equation}
Let us first prove that for any $\t$, with $\tilde t\leq \t<t_1$, $(\bar t, \bar h)\in E^{0,v_2}_{\hat h_0}$, i.e., $0<v_2^2\bar t^2+\bar h^2<\hat h_0^2$. Using the expressions of $\bar t, \bar h^2$ from (\ref{eq:b-th}) we get that,
\begin{align}
v_2^2\bar t^2+\bar h^2 &= \frac{v_2^2}{4}\left[2\t+\eps+\frac{\hat h_1^2(\t)-\hat h^2_1(\t+\eps)}{v_2^2\eps}\right]^2  + \hat h_1^2(\t)-\frac{v_2^2}{4}\left[\eps+\frac{\hat h_1^2(\t)-\hat h^2_1(\t+\eps)}{v_2^2\eps}\right]^2\nn\\
&=v_2^2 \t \left[\t+\eps+\frac{\hat h_1^2(\t)-\hat h^2_1(\t+\eps)}{v_2^2\eps}\right]+ \hat h_1^2(\t)\label{eq:tau-eps}\\
&=v_2^2\t(\t+\eps) +\frac{\t+\eps}{\eps}\hat h_1^2(\t)-\frac{\t}{\eps}\hat h_1^2(\t+\eps).
\label{eq:inside-h0}
\end{align}
Now, due to the fact that $(\t,\hat h_1(\t)), (\t+\eps, \hat h_1(\t+\eps)) \in C^1_{(t_1,h_1)}$, using the expressions for $\hat h_1(\t) $ and $ \hat h_1(\t+\eps)$
from (\ref{eq:htau-1-2}) to the last expression in (\ref{eq:inside-h0}), we obtain that
\begin{align}
v_2^2\bar t^2+\bar h^2  &=v_2^2\t(\t+\eps) +\frac{\t+\eps}{\eps}\left[v_1^2(\t-t_1)^2+h_1^2\right]-\frac{\t}{\eps}\left[v_1^2(\t+\eps-t_1)^2+h_1^2\right]\nn\\
&=v_2^2\t(\t+\eps)+h_1^2 +v_1^2(t_1^2-\t(\t+\eps))\nn\\
&=h_1^2 +v_1^2t_1^2- (v_1^2-v_2^2)\t(\t+\eps) =\hat h_0^2- (v_1^2-v_2^2)\t(\t+\eps), 
\label{eq:bar-th}
\end{align}
since $h_1^2 +v_1^2t_1^2=\hat h_0^2$. The quantity in the last step of (\ref{eq:bar-th}) is less than $\hat h_0^2$, since $v_1>v_2$ and greater than $0$, which is clear from (\ref{eq:tau-eps}), since $\t<t_1$ and $\eps>0$ is arbitrarily small. This implies $(\bar t,\bar h)\in E^{0,v_2}_{\hat h_0}$, see Figure~\ref{figure:int-in}.
\end{proof}
\begin{proof}[Proof of Proposition~\ref{prop:tau1-tau2},\,part~(\ref{t+eps})]
Let us take $\t\in [\tilde t,t_1)$ and $\eps>0$ arbitrarily small. Then $(\bar t,\bar h)$ being the intersection of $\partial E^{\tau,v_2}_{\hat h_1(\tau_1)}$ and $\partial E^{\tau+\eps,v_2}_{\hat h_1(\tau+\eps)}$, implies that for all $t'<\bar t$, we have $ v_2^2(t'-\tau)^2+h^2<
v_2^2(t'-\tau-\eps)^2+h^2$ and for all $t'>\bar t$, we have $ v_2^2(t'-\tau)^2+h^2>
v_2^2(t'-\tau-\eps)^2+h^2$, using the curves as in (\ref{eq:e-tau1-tau2}) of the boundaries, see Figure~\ref{figure:int-in}. This along with the fact that $(\bar t, \bar h)\in E^{0,v_2}_{\hat h_0}$, allows one to conclude that $E^{\t,v_2}_{\hat h_1(\t)}\setminus E^{0,v_2}_{\hat h_0}\subset E^{\t+\eps,v_2}_{\hat h_1(\t+\eps)}\setminus E^{0,v_2}_{\hat h_0}$. This justifies the claim.
\end{proof}
\begin{proof}[Proof of Proposition~\ref{prop:monotonic-sets},\,part~(\ref{monotonic1}), typical handover type $1$ or $\bbinom{1}{1,1}$ and $\binom{1}{1,2}$ or $\bbinom{1}{2,1}$]
\begin{figure}[ht!]
 \centering
      \begin{subfigure}[t]{0.48\linewidth}
      \centering
      \begin{tikzpicture}[scale=0.7, every node/.style={scale=0.7}]
\pgftransformxscale{0.8}  
    \pgftransformyscale{0.8}    
    \draw[->] (-1, 0) -- (7, 0) node[right] {$t$};
    \draw[red](2,2.5) node{$\bullet$};
    \draw[red, domain=0.8:3.4, smooth] plot (\x, {(16*\x*\x-64*\x+64+2.5^2)^0.5});
    \draw[red](3.5,3) node{$\bullet$};
    \draw[](3.8,2.7) node{$(t_1,h_1)$};
    \draw[red, domain=2.3:4.7, smooth] plot (\x, {(16*\x*\x-32*3.5*\x+16*3.5^2+3^2)^0.5});
    \draw[](1.2,2.3) node{$(t_2,h_2)$};
    \draw[](2.1,4.4) node{$(0,\hat h_0)$};
     \draw[red]  (1.8,0) arc (-180:-360:1 and 4.1);
     \draw[blue]  (5.5,0) arc (0:180:2.65 and 4.1);   
    \draw[blue](1.15,0.6) node{$E^{0, v_2}_{\hat h_0}$};
    \draw[red](2.4,0.6) node{$E^{0, v_1}_{\hat h_0}$};
    \end{tikzpicture}
    \caption{The case where typical handover is of type $1$ or equivalently $\bbinom{1}{1,2}$. The subsequent half ellipses corresponding to both the speeds $v_1,v_2$, have their top point lying on $C^1_{(t_1,h_1)}$.}
\label{subfigure:pure_bird-ap1}
    \end{subfigure}
    \hspace{0.08in}
    \centering
      \begin{subfigure}[t]{0.48\linewidth}
      \centering
      \begin{tikzpicture}[scale=0.7, every node/.style={scale=0.7}]
\pgftransformxscale{0.8}  
\pgftransformyscale{0.8}    
\draw[->] (-5, 0) -- (4, 0) node[right] {$t$};
   \draw[red, domain=-1:2, smooth] plot (\x, {(16*\x*\x- 16*2*0.5*\x+16*(0.5)^2+2^2)^0.5});
    \draw[red](0.5,2) node{$\bullet$};
    \draw[](-0.8,2) node{$(t_2,h_2)$};
    \draw[blue](2,2.5) node{$\bullet$};
    \draw[](2.7,2.2) node{$(t_1,h_1)$};
    \draw[blue, domain=-1.5:4, smooth] plot (\x, {((3/2)^2*\x*\x-2*(3/2)^2*2*\x+(3/2)^2*2^2+2.5^2)^0.5});
    \draw[](-1.7,4.8) node{$(0,\hat h_0)$};
     \draw[blue]  (-3.6,0) arc (-180:-360:3.05 and 4.5);
     \draw[red]  (0.6,0) arc (0:180:1.15 and 4.5);
    \draw[blue](-2.5,0.6) node{$E^{0, v_2}_{\hat h_0}$};
    \draw[red](-1.1,0.9) node{$E^{0, v_1}_{\hat h_0}$};
    \end{tikzpicture}
    \caption{The case where typical handover is of type $\binom{1}{1,2}$ or $\bbinom{1}{2,1}$. The subsequent half ellipses corresponding to both the speeds $v_1,v_2$, have their top point lying on $C^1_{(t_2,h_2)}$.}
\label{subfigure:pure_bird-ap2}
    \end{subfigure}
    \caption{The other two cases of typical handovers being of type $1$ and type $\binom{1}{1,2}$ or $\bbinom{1}{2,1}$.}
\label{figure:pure_bird-ap}
\end{figure}
Our claim for this part in the case of typical handovers of type $1$ or equivalently $\bbinom{1}{1,1}$ and $\binom{1}{1,2}$ or equivalently $\bbinom{1}{2,1}$, can be proved similarly, since the proof only concerns the type of the radial bird, say $C^1_{(t_1,h_1)}$, corresponding to the station that takes over the service after the handover at the intersection, see picture~(\subref{subfigure:pure_bird-ap1}) of Figure~\ref{figure:pure_bird-ap} and $C^1_{(t_2,h_2)}$, corresponding to picture~(\subref{subfigure:pure_bird-ap2}) of Figure~\ref{figure:pure_bird-ap} respectively. The sequence of upper half-ellipses that have their top point lying on $C^1_{(t_1,h_1)}$ and $C^1_{(t_2,h_2)}$, respectively. 
\end{proof}
\begin{remark}
The converse of the argument used in the proof of Proposition~\ref{prop:tau1-tau2}, part~(\ref{t+eps}), can be proved to be false in general case, using a counter example depicted in picture~\subref{subfigure:mH-FHoT2} of Figure~\ref{figure:mH-FHoT}. Indeed, there exists $t> t_1$ such that the boundaries $\partial E^{t,v_2}_{\hat h_1(t)}$, $\partial E^{t+\eps,v_2}_{\hat h_1(t+\eps)}$, with the top point lying on the increasing wing of $C^1_{(t_1,h_1)}$, intersect each other outside of $E^{0,v_2}_{\hat h_0}$ or never, but still we have $E^{t,v_2}_{\hat h_1(t)}\setminus E^{0,v_2}_{\hat h_0}\subset E^{t+\eps,v_2}_{\hat h_1(t+\eps)}\setminus E^{0,v_2}_{\hat h_0}$.
\end{remark}
\subsection{Proof of Proposition~\ref{prop:monotonic-sets}~part~(\ref{monotonic2})}~\label{subsection:inc-wing3}
This scenario appears when at the typical handover a station of type $2$ takes over, which correspond to one of the types: $2$ or $\bbinom{1}{2,2}$, $\binom{2}{1,2}$ or $\bbinom{2}{1,2}$ and $\binom{2}{2,1}$ or $\bbinom{1}{2,1}$, respectively.
\begin{case}[Type $\binom{2}{1,2}$ or equivalently $\bbinom{2}{1,2}$.]\label{case:two-12}
Here $(t_n,h_n)=(t_1,h_1)$, which is of type $2$. Define $\hat h_2(t):=(v_2^2(t-t_1)^2+h_1^2)^\half$. In this case, the boundaries $\partial E^{t,v_2}_{\hat h_2(t)}$ of the sets in the collection $\{E^{t,v_2}_{\hat h_2(t)}\}_{t\geq 0}$, passes through the head point $(t_1,h_1)$, see picture~(\subref{subfigure:2in1nota}) of Figure~\ref{figure:2in1not}. By applying the same argument of single speed case, Lemma~\ref{lemma:increasingS}, we can show that $\left\{E^{t,v_2}_{\hat h_2(t)}\setminus E^{0,v_2}_{\hat h_0}\right\}_{t\geq 0}$ is non-decreasing. 
\begin{figure}[ht!]
 \centering
      \begin{subfigure}[t]{0.43\linewidth}\begin{tikzpicture}[scale=0.7, every node/.style={scale=0.7}]
\pgftransformxscale{1}  
    \pgftransformyscale{1}    
\draw[->] (-2, 0) -- (6, 0) node[right] {$t$};
   \draw[red, domain=-0.3:1.3, smooth] plot (\x, {(16*\x*\x- 16*2*0.5*\x+16*(0.5)^2+2^2)^0.5});
    \draw[red](0.5,2) node{$\bullet$};
    \draw[](-0.8,2) node{$(t_2,h_2)$};
    \draw[blue](2,2.5) node{$\bullet$};
    \draw[](2.7,2.2) node{$(t_1,h_1)$};
    \draw[blue, domain=-0.3:4.4, smooth] plot (\x, {((3/2)^2*\x*\x-2*(3/2)^2*2*\x+(3/2)^2*2^2+2.5^2)^0.5});
    \draw[](1.7,3.4) node{$(0,\hat h_0)$};
     \draw[blue]  (2.9,0) arc (0:180:1.8 and 2.9); 
     \draw[](-1.2,0.4) node{\blue{$E^{0,v_2}_{\hat h_0}$}};
     \draw[blue]  (-0.3,0) arc (-180:-360:1.676 and 2.7);
     \draw[blue]  (1.05,0) arc (-180:-360:1.62 and 2.73);
     \draw[blue]  (1.35,0) arc (-180:-360:2.1 and 3.4);
    \end{tikzpicture}
    \caption{The collection $\left\{E^{t,v_2}_{\hat h_2(t)}\setminus E^{0,v_2}_{\hat h_0}\right\}_{t\geq 0}$ is non-decreasing in $t$.}
\label{subfigure:2in1nota}
    \end{subfigure}
    \hspace{0.1in}
    \centering
\begin{subfigure}[t]{0.43\linewidth}\begin{tikzpicture}[scale=0.7, every node/.style={scale=0.7}]
\pgftransformxscale{1}  
\pgftransformyscale{1}    
\draw[->] (-2.3, 0) -- (5, 0) node[right] {$t$};
\draw[red, domain=-0.9:0.4, smooth] plot (\x, {(16*\x*\x+ 16*2*0.5*\x+16*(0.5)^2+3^2)^0.5});
    \draw[red](-0.5,3) node{$\bullet$};
    \draw[](-0.8,2.5) node{$(t_2,h_2)$};
    \draw[blue](2,2.5) node{$\bullet$};
    \draw[](1.8,3) node{$(t_1,h_1)$};
    \draw[blue, domain=-0.3:4, smooth] plot (\x, {((3/2)^2*\x*\x-2*(3/2)^2*2*\x+(3/2)^2*2^2+2.5^2)^0.5});
    \draw[](1,4) node{$(0,\hat h_0)$};
    \draw[](-1.4,0.4) node{\red{$E^{0,v_1}_{\hat h_0}$}};
     \draw[red]  (-0.95,0) arc (-180:-360:1.043 and 3.78); 
     \draw[red]  (0,0) arc (-180:-360:0.84 and 3.05);  
     \draw[red]  (2.2,0) arc (0:180:0.731 and 2.65);
     \draw[red](3.15,0) arc (0:180:0.7 and 2.6);   
     \draw[red](3.95,0) arc (0:180:0.827 and 3);
    \end{tikzpicture}
    \caption{The collection $\left\{E^{t,v_1}_{\hat h_2(t)}\setminus E^{0,v_1}_{\hat h_0}\right\}_{t\geq 0}$ is non-monotonic in $t$, as the sequence of sets are not comparable.}
    \label{subfigure:2in1notb}
    \end{subfigure}
    \caption{Sequence of sets $\left\{E^{t,v_l}_{\hat h_2(t)}\setminus E^{0,v_l}_{\hat h_0}\right\}_{t\geq 0}$, for $l=1,2$.}   
    \label{figure:2in1not}
\end{figure}

The other collection $\left\{E^{t,v_1}_{\hat h_{2}(t)}\setminus E^{0,v_1}_{\hat h_0}\right\}_{t\geq 0}$ corresponding to the speed $v_1$, can be shown to be not always monotonic, by creating artificial example depicted in picture~(\subref{subfigure:2in1notb}) of Figure~\ref{figure:2in1not}.
\end{case}
\begin{case}[Type $2$ or equivalently $\bbinom{1}{2,2}$.]
This case can be proved similarly to \ref{case:two-12}, since the subsequent half-ellipses corresponding to different speeds $v_1,v_2$ are such that their top point lies on the radial bird $C^2_{(t_1,h_1)}$, see picture~(\subref{subfigure:two121}) of Figure~\ref{figure:two12}.
\end{case}
\begin{figure}[ht!]
 \centering
      \begin{subfigure}[t]{0.45\linewidth}
      \centering
      \begin{tikzpicture}[scale=0.7, every node/.style={scale=0.7}]
\pgftransformxscale{0.8}  
\pgftransformyscale{0.8}    
\draw[->] (-2, 0) -- (7, 0) node[right] {$t$};
    \draw[blue](0.73,2.5) node{$\bullet$};
    \draw[blue, domain=-1:4, smooth] plot (\x, {((3/2)^2*\x*\x-2*(3/2)^2*0.73*\x+(3/2)^2*0.73*0.73+2.5^2)^0.5});
    \draw[blue](4.7,2.85) node{$\bullet$};
    \draw[](5.8,2.5) node{$(t_1,h_1)$};
    \draw[blue, domain=1.6:7, smooth] plot (\x, {((3/2)^2*\x*\x-2*(3/2)^2*4.7*\x+(3/2)^2*4.7^2+2.85^2)^0.5});
    \draw[](-0.2,2.3) node{$(t_2,h_2)$};
    \draw[](1.8,4.2) node{$(0,\hat h_0)$};
     \draw[red]  (1.8,0) arc (-180:-360:1 and 4.05);
     \draw[blue]  (5.5,0) arc (0:180:2.65 and 4.05);   
    \draw[blue](1.1,0.6) node{$E^{0, v_2}_{\hat h_0}$};
    \draw[red](2.4,0.6) node{$E^{0, v_1}_{\hat h_0}$};
\end{tikzpicture}
\caption{The case of typical handover is of type $2$ or equivalently $\bbinom{1}{2,2}$. The subsequent half-ellipses corresponding to speeds $v_1,v_2$ are such that their top point lies on the radial bird $C^2_{(t_1,h_1)}$.}
\label{subfigure:two121}
\end{subfigure}
\hspace{0.1in}
\centering
\begin{subfigure}[t]{0.45\linewidth}
\centering
\begin{tikzpicture}[scale=0.7, every node/.style={scale=0.7}]
\pgftransformxscale{0.6}  
\pgftransformyscale{0.6}    
    \draw[->] (-2, 0) -- (8, 0) node[right] {$t$};
   \draw[blue, domain=-1.8:4, smooth] plot (\x, {((3/2)^2*\x*\x+2^2)^0.5});
    \draw[blue](0,2) node{$\bullet$};
    \draw[](-1.1,1.65) node{$(t_2,h_2)$};
    \draw[red, domain=0.5:3.5, smooth] plot (\x, {(16*\x*\x-64*\x+64+2.5^2)^0.5});
    \draw[red](2,2.5) node{$\bullet$};
    \draw[](2.8,2.2) node{$(t_1,h_1)$};
    \draw[](2.2,5.5) node{$(0,\hat h_0)$};
     \draw[blue](3,0.6) node{$E^{0, v_2}_{\hat h_0}$};
     \draw[red](4.5,0.6) node{$E^{0, v_1}_{\hat h_0}$};
     \draw[red]  (1.85,0) arc (-180:-360:1.25 and 5.05);
     \draw[blue]  (6.5,0) arc (0:180:3.4 and 5.05);
    \end{tikzpicture}
    \caption{The case of typical handover is of type $\binom{2}{2,1}$ or $\bbinom{1}{1,2}$. The subsequent half-ellipses corresponding to speeds $v_1,v_2$ are such that their top point lies on the increasing wing of the  radial bird $C^2_{(t_2,h_2)}$.}
    \label{subfigure:two122}
    \end{subfigure}
    \caption{The other two cases of typical handovers being of type $2$ or equivalently $\bbinom{1}{2,2}$ and type $\binom{2}{2,1}$ or equivalently $\bbinom{1}{1,2}$.}
\label{figure:two12}
\end{figure}
\begin{case}[Type $\binom{2}{2,1}$ or equivalently $\bbinom{1}{1,2}$.]
In this case the subsequent half-ellipses are such that their top point lies on the increasing wing of the radial bird $C^2_{(t_2,h_2)}$, see picture~(\subref{subfigure:two122}) of Figure~\ref{figure:two12}. The proof of monotonicity of $\left\{E^{t,v_2}_{\hat h_2(t)}\setminus E^{0,v_2}_{\hat h_0}\right\}_{t\geq 0}$ follows when using the increasing wing property, as in Remark~\ref{remark:inc-arm2}. The non-monotonicity of $\left\{E^{t,v_1}_{\hat h_2(t)}\setminus E^{0,v_1}_{\hat h_0}\right\}_{t\geq 0}$ can be shown using a similar counter example in \ref{case:two-12}.
\end{case}
\qed
\subsection{Proof of Lemma~\ref{lemma:LT1-l}, type $\binom{1}{2,2}$ or equivalently $\bbinom{1}{2,2}$}~\label{subsection:Lemma122}
Using Lemma~\ref{lemma:Palm-Lcal-ii} for the second term in (\ref{eq:decompLT2}), with $\t_n=2=\t_p$ and $q=1$, we have
\begin{align}
\E^0_{\Wcal_2} [e^{-\rho T}]
&= \frac {4\la_2^2v_2^2} {L_{2}}
\int_{(\R^+)^3}
\E_\Hcal\left[ e^{-\rho(T_1\circ\th_{\hat s})}  \prod_{l=1,2}\one_{\Hcal^l\left(E^{\hat{s},v_l}_{\hat{h}}\right)=0}  \right] {\rm d}t' {\rm d}h' {\rm d}h, 
\label{eq:T22a}
\end{align}
where we consider two radial birds of type $\t_n=2=\t_p$ at $(t_n,h_n)=(0,h)$ and $(t_p,h_p)=(-t',h)$ and where, in short, we wrote $(\hat s(0,h,-t',h'), \hat h(0,h,-t',h'))= (\hat s,\hat h)$. Note that, since $\t_n=2$ and $q=1$, the station corresponding to the head point $(0,h)$ is the serving station after time $\hat s$. Since $(0,h)$ is of type $2$, we define $\hat h_2(t):=\left(v_2^2t^2+h^2\right)^\half$, which we write in short as $\hat h_t$ within this proof. Notice that 
\[
T\circ \th_{\hat s}=T_1(\hat s)-\hat s,
\]
where $T_1(\hat s)$ is the time to the handover after time $\hat s$. By Corollary~\ref{corollary:third-pt2}, the inner expectation in (\ref{eq:T22a}) can be determined by exploring the next handover as an intersection of the radial bird at $(0,h)$ and a third radial bird with its head at, say $(T_j,H_j)$, that lies outside of $\overline{E^{\hat s, v_1}_{\hat h}}$ or $\overline{E^{\hat s, v_2}_{\hat h}}$, depending on its type. We have the following cases:
\begin{case}[\textbf{\em $(T_j,H_j)$ is of type 1.}]\label{case:H221} In this case, we must have $(T_j,H_j)\notin E^{\hat s, v_1}_{\hat h}$. 
Along with this, we must assume that $T_j\geq \hat s$. In case $T_j<\hat s$,  the intersection created by the bird with head at $(T_j,H_j)$ either contributes to a handover that happens before time $\hat s$, or is such that  the intersection lies in $\Lcal^+_e(v_1,v_2)$, see Figure~\ref{figure:ge-hats}. In this case $T_j\geq \hat s$ and the unexplored region is $Q_{\hat s}\setminus \overline{E^{\hat s,v_1}_{\hat h}}$. There are two intersection points $(\hat s_1(0,h, T_j, H_j),\hat h_1(0,h, T_j, H_j))$ and $(\hat s_2(0,h, T_j, H_j),\hat h_2(0,h, T_j, H_j))$. For any of these points to represent a handover, we need the extra regions $\interior{\left(\bigcup_{t\in [\hat s,\hat s_1(0,h,T_j,H_j)]} E^{t,v_l}_{\hat h_t}\setminus E^{\hat s, v_l}_{\hat h}\right)}$ and $\interior{\left(\bigcup_{t\in [\hat s,\hat s_2(0,h,T_j,H_j)]} E^{t,v_l}_{\hat h_t}\setminus E^{\hat s, v_l}_{\hat h}\right)}$ to be empty of points from $\Hcal^l$, for $l=1,2$, where $\hat h_t=(v_2^2t^2+h^2)^\half$. The reason for the union is the same as above. Since $\hat s_1(0,h, T_j, H_j)\leq \hat s_2(0,h, T_j, H_j)$, we have the inclusion 
\[
\bigcup_{t\in [\hat s,\hat s_1(0,h,T_j,H_j)]} E^{t,v_l}_{\hat h_t}\setminus E^{\hat s, v_l}_{\hat h}\subset \bigcup_{t\in [\hat s,\hat s_2(0,h,T_j,H_j)]} E^{t,v_l}_{\hat h_t}\setminus E^{\hat s, v_l}_{\hat h}.
\]
Hence, it cannot happen that $\hat s_2(0,h, T_j, H_j)$ corresponds to a handover, while $\hat s_1(0,h, T_j, H_j)$ does not. We must take the first intersection point as a handover, and thus $T_1(\hat s):=\hat s_1(0,h, T_j, H_j)$.

Additionally, the point $(T_j,H_j)$ must be chosen in such a way that $T_j\in D_\ell(0,h, H_j)$ or $T_j\in D_r(0,h, H_j)$, depending on whether $T_j<0$ or $T_j\geq 0$, respectively, given $h,H_j$, see Figure~\ref{figure:pure_bird22}. The type of the pair of consecutive handovers is $\bbinom{1,2}{2,2,1}$ or $\bbinom{1,1}{2,2,1}$, depending on whether $T_j<0$ or $T_j\geq 0$, respectively.
\end{case}
\begin{figure}[ht!]
\centering
      \begin{subfigure}[t]{0.43\linewidth}
       \centering
      \begin{tikzpicture}[scale=0.7, every node/.style={scale=0.7}]
\pgftransformxscale{0.95}  
\pgftransformyscale{0.95}    
\draw[->] (-1, 0) -- (8.5, 0) node[right] {$t$};
\draw[blue](0.73,2.5) node{$\bullet$};
\draw[blue, domain=-1:4, smooth] plot (\x, {((3/2)^2*\x*\x-2*(3/2)^2*0.73*\x+(3/2)^2*0.73*0.73+2.5^2)^0.5});
\draw[blue](4.7,2.85) node{$\bullet$};
\draw[](5.8,2.7) node{$(0,h)$};
\draw[dashed] (4.7,0) -- (4.7,5);
\draw[blue, domain=2:7, smooth] plot (\x, {((3/2)^2*\x*\x-2*(3/2)^2*4.7*\x+(3/2)^2*4.7^2+2.85^2)^0.5});
\draw[](1.2,2.3) node{$(-t',h')$};
\draw[](4,4.4) node{$(\hat s,\hat h)$};
\draw[red, domain=0.5:2.5, smooth] plot (\x, {(16*\x*\x-16*2*1.5*\x+16*1.5*1.5+3^2)^0.5});
\draw[red](1.5,3) node{$\bullet$};
\draw[](0.5,3) node{$(t_1,h_1)$};
\draw[red]  (1.8,0) arc (-180:-360:1 and 4.05);
\draw[blue]  (5.5,0) arc (0:180:2.65 and 4.05);   \draw[blue](1.3,0.6) node{$E^{\hat s, v_2}_{\hat h}$};
\draw[red](2.4,0.6) node{$E^{\hat s, v_1}_{\hat h}$};
\end{tikzpicture}
\caption{In this case, the intersection of $(0,h)$ and $(t_1,h_1)$, lies \blue{in $\Lcal^+_e(v_1,v_2)$}.}
\label{subfigure:ge-hats1}
\end{subfigure}
\hspace{0.2in}
\centering
\begin{subfigure}[t]{0.43\linewidth}
 \centering
\begin{tikzpicture}[scale=0.7, every node/.style={scale=0.7}]
\pgftransformxscale{0.95}  
\pgftransformyscale{0.95}    
    \draw[->] (-1, 0) -- (7, 0) node[right] {$t$};
    \draw[blue](0.73,2.5) node{$\bullet$};
    \draw[blue, domain=-1:4, smooth] plot (\x, {((3/2)^2*\x*\x-2*(3/2)^2*0.73*\x+(3/2)^2*0.73*0.73+2.5^2)^0.5});
    \draw[blue](4.7,2.85) node{$\bullet$};
    \draw[](5.8,2.5) node{$(0,h)$};
    \draw[blue, domain=1.6:7, smooth] plot (\x, {((3/2)^2*\x*\x-2*(3/2)^2*4.7*\x+(3/2)^2*4.7^2+2.85^2)^0.5});
    \draw[](-0.2,2.3) node{$(-t',h')$};
    \draw[dashed] (4.7,0) -- (4.7,5);
    \draw[](4,4.2) node{$(\hat s,\hat h)$};
     \draw[red, domain=0.3:2.6, smooth] plot (\x, {(16*\x*\x-16*2*1.5*\x+16*1.5*1.5+2.3^2)^0.5});
     \draw[red](1.5,2.3) node{$\bullet$};
     \draw[](1.5,1.8) node{$(t_1,h_1)$};
     \draw[red]  (1.8,0) arc (-180:-360:1 and 4.05);
     \draw[blue]  (5.5,0) arc (0:180:2.65 and 4.05);   
    \draw[blue](1.1,0.6) node{$E^{\hat s, v_2}_{\hat h}$};
    \draw[red](2.4,0.6) node{$E^{\hat s, v_1}_{\hat h}$};
    \end{tikzpicture}
    \caption{In this case, the intersection of $(0,h)$ and $(t_1,h_1)$ gives a handover before time $\hat s$.}
\label{subfigure:ge-hats2}
    \end{subfigure}
    \caption{The next head point $(T_j,H_j)=(t_1,h_1)\notin \overline{E^{\hat s, v_1}_{\hat h}}$, but it cannot be such that $t_1<\hat s$.}
\label{figure:ge-hats}
\end{figure}
\begin{figure}[ht!]
 \centering
      \begin{subfigure}[t]{0.43\linewidth}
       \centering
      \begin{tikzpicture}[scale=0.7, every node/.style={scale=0.7}]
\pgftransformxscale{0.95}  
\pgftransformyscale{0.95}    
    \draw[->] (-1, 0) -- (7, 0) node[right] {$t$};
    \draw[blue](0.73,2.5) node{$\bullet$};
    \draw[blue, domain=-1:4, smooth] plot (\x, {((3/2)^2*\x*\x-2*(3/2)^2*0.73*\x+(3/2)^2*0.73*0.73+2.5^2)^0.5});
    \draw[blue](4.7,2.85) node{$\bullet$};
    \draw[](5.8,2.5) node{$(0,h)$};
    \draw[blue, domain=1.9:7, smooth] plot (\x, {((3/2)^2*\x*\x-2*(3/2)^2*4.7*\x+(3/2)^2*4.7^2+2.85^2)^0.5});
    \draw[](-0.2,2.3) node{$(-t',h')$};
    \draw[dashed] (4.7,0) -- (4.7,5.5);
    \draw[](4,4.7) node{$(\hat s',\hat h')$};
    \draw[->] (3.8, 4.4) -- (3.7,3.4);
    \draw[](1.8,4.2) node{$(\hat s,\hat h)$};
     \draw[blue]  (1.65,0) arc (-180:-360:2 and 3.28);
     \draw[red]  (4.45,0) arc (0:180:0.85 and 3.28);
     \draw[red, domain=3.1:5.2, smooth] plot (\x, {(16*\x*\x-16*2*4.3*\x+16*4.3*4.3+1.8^2)^0.5});
     \draw[red](4.3,1.8) node{$\bullet$};
     \draw[](4.5,1.3) node{$(t_1,h_1)$};
     \draw[red]  (1.8,0) arc (-180:-360:1 and 4.05);
     \draw[blue]  (5.5,0) arc (0:180:2.65 and 4.05);   
    \draw[blue](1.1,0.6) node{$E^{\hat s, v_2}_{\hat h}$};
    \draw[red](2.4,0.6) node{$E^{\hat s, v_1}_{\hat h}$};
    \draw[blue](6.2,0.6) node{$E^{\hat s',v_2}_{\hat h'}$};
     \draw[red](3.45,0.6) node{$E^{\hat s',v_1}_{\hat h'}$};
    \end{tikzpicture}
    \caption{For the intersection $(\hat s, \hat h)$ and $(\hat s', \hat h')$ to represent two consecutive  handovers, the regions \red{$S_1\cup E^{\hat s, v_1}_{\hat h} \cup E^{\hat s',v_1}_{\hat h'}$} and \blue{$E^{\hat s, v_2}_{\hat h}\cup E^{\hat s',v_2}_{\hat h'}$} must have no points from $\Hcal^1$ and $\Hcal^2$, respectively. Here $(\hat s', \hat h')$ is a realization of $(\hat s_1(0,h,T_j,H_j),\hat h_1(0,h,T_j,H_j))$.}
\label{subfigure:pure_bird221}
    \end{subfigure}
    \hspace{0.2in}
    \centering
      \begin{subfigure}[t]{0.43\linewidth}
       \centering
      \begin{tikzpicture}[scale=0.7, every node/.style={scale=0.7}]
\pgftransformxscale{0.95}  
\pgftransformyscale{0.95}    
\draw[->] (-1, 0) -- (8.5, 0) node[right] {$t$};
    \draw[blue](0.73,2.5) node{$\bullet$};
    \draw[blue, domain=-1:3.2, smooth] plot (\x, {((3/2)^2*\x*\x-2*(3/2)^2*0.73*\x+(3/2)^2*0.73*0.73+2.5^2)^0.5});
    \draw[blue](4.7,2.85) node{$\bullet$};
    \draw[](3.8,2.7) node{$(0,h)$};
    \draw[dashed] (4.7,0) -- (4.7,5);
    \draw[blue, domain=2:7.3, smooth] plot (\x, {((3/2)^2*\x*\x-2*(3/2)^2*4.7*\x+(3/2)^2*4.7^2+2.85^2)^0.5});
    \draw[](1.2,2.3) node{$(-t',h')$};
    \draw[](5.4,3.7) node{$(\hat s',\hat h')$};
    \draw[](2.1,4.4) node{$(\hat s,\hat h)$};
     \draw[blue]  (3.45,0) arc (-180:-360:1.95 and 3.05);
     \draw[red]  (6.2,0) arc (0:180:0.75 and 3.05);
     \draw[red, domain=4.9:7.1, smooth] plot (\x, {(16*\x*\x-16*2*6*\x+16*6*6+2.1^2)^0.5});
     \draw[red](6,2.1) node{$\bullet$};
     \draw[](6.2,1.7) node{$(t_1,h_1)$};
     \draw[red]  (1.8,0) arc (-180:-360:1 and 4.05);
     \draw[blue]  (5.5,0) arc (0:180:2.65 and 4.05);   
    \draw[blue](1.3,0.6) node{$E^{\hat s, v_2}_{\hat h}$};
    \draw[red](2.4,0.6) node{$E^{\hat s, v_1}_{\hat h}$};
    \draw[blue](6.7,0.6) node{$E^{\hat s',v_2}_{\hat h'}$};
     \draw[red](5.3,0.6) node{$E^{\hat s',v_1}_{\hat h'}$};
    \end{tikzpicture}
    \caption{For the intersections $(\hat s, \hat h)$ and $(\hat s', \hat h')$ to represent two consecutive  handovers, the regions \red{$S_1\cup E^{\hat s, v_1}_{\hat h} \cup E^{\hat s',v_1}_{\hat h'}$} and \blue{$E^{\hat s, v_2}_{\hat h}\cup E^{\hat s',v_2}_{\hat h'}$} must have no points from $\Hcal^1$ and $\Hcal^2$, respectively. Here $(\hat s', \hat h')$ is a realization of $(\hat s_1(0,h,T_j,H_j),\hat h_1(0,h,T_j,H_j))$.}
\label{subfigure:pure_bird222}
    \end{subfigure}
    \captionsetup{width=0.9\linewidth}
    \caption{The case where $(T_j,H_j)=(t_1,h_1)$ is of type $1$. The two cases considered are, $t_1<0$ and $t_1\geq 0$, as in the first two terms in (\ref{eq:T22bb}).}
\label{figure:pure_bird22}
\end{figure}
\begin{figure}[ht!]
\begin{tikzpicture}[scale=0.7, every node/.style={scale=0.7}]
\pgftransformxscale{0.95}  
\pgftransformyscale{0.95}    
\draw[->] (-1, 0) -- (8.5, 0) node[right] {$t$};
    \draw[blue](0.73,2.5) node{$\bullet$};
    \draw[blue, domain=-1:3.7, smooth] plot (\x, {((3/2)^2*\x*\x-2*(3/2)^2*0.73*\x+(3/2)^2*0.73*0.73+2.5^2)^0.5});
    \draw[blue](4.7,2.85) node{$\bullet$};
    \draw[](3.8,2.7) node{$(0,h)$};
    \draw[dashed] (4.7,0) -- (4.7,5);
    \draw[blue, domain=2:7, smooth] plot (\x, {((3/2)^2*\x*\x-2*(3/2)^2*4.7*\x+(3/2)^2*4.7^2+2.85^2)^0.5});
    \draw[](1.2,2.3) node{$(-t',h')$};
    \draw[](5.4,3.7) node{$(\hat s',\hat h')$};
    \draw[](2.1,4.4) node{$(\hat s,\hat h)$};
     \draw[blue]  (3.4,0) arc (-180:-360:1.95 and 3.03);
     \draw[red]  (6.1,0) arc (0:180:0.75 and 3.03);
     \draw[blue, domain=4:8, smooth] plot (\x, {((3/2)^2*\x*\x-(3/2)^2*2*6.8*\x+(3/2)^2*6.8*6.8+2.05^2)^0.5});
     \draw[blue](6.8,2.05) node{$\bullet$};
     \draw[](7,1.7) node{$(t_1,h_1)$};
     \draw[red]  (1.8,0) arc (-180:-360:1 and 4.05);
     \draw[blue]  (5.5,0) arc (0:180:2.65 and 4.05);   
    \draw[blue](1.3,0.6) node{$E^{\hat s, v_2}_{\hat h}$};
    \draw[red](2.4,0.6) node{$E^{\hat s, v_1}_{\hat h}$};
    \draw[blue](6.7,0.6) node{$E^{\hat s',v_2}_{\hat h'}$};
     \draw[red](5.3,0.6) node{$E^{\hat s',v_1}_{\hat h'}$};
    \end{tikzpicture}
    \captionsetup{width=0.9\linewidth}
    \caption{The case where $(t_1,h_1)$ is of type $2$, with $t_1\geq 0$, as in the last term in (\ref{eq:T22bb}). For the intersections $(\hat s, \hat h)$ and $(\hat s', \hat h')$ to represent two consecutive  handovers, the regions \red{$S_2\cup E^{\hat s, v_1}_{\hat h} \cup E^{\hat s',v_1}_{\hat h'}$} and \blue{$E^{\hat s, v_2}_{\hat h}\cup E^{\hat s',v_2}_{\hat h'}$} must have no points from $\Hcal^1$ and $\Hcal^2$, respectively. Here $(\hat s', \hat h')$ is a realization of $(\hat s(0,h,T_j,H_j),\hat h(0,h,T_j,H_j))$.}
\label{figure:pure_bird222}
\end{figure}
\begin{case}[\textbf{\em $(T_j,H_j)$ is of type 2.}]\label{case:H222} The intersection of the radial birds of type $2$ at $(0,h)$ and $(T_j,H_j)\notin E^{\hat s,v_2}_{\hat h}$, gives rise to a handover if $T_j\geq 0$, see Figure~\ref{figure:pure_bird222}. Otherwise, if $T_j<0$, then the intersection given by the birds at $(T_j,H_j)$ and $(0,h)$ is either contained in $\Lcal^+_e(v_1,v_2)$ or it gives rise to a handover before time $\hat s$. So we must have $T_j\geq 0$ and the unexplored region is $Q_0\setminus \overline{E^{\hat s,v_2}_{\hat h}}$. The type of the pair of handovers is $\bbinom{1,1}{2,2,2}$. Let $(\hat s(0,h, T_j, H_j),\hat h(0,h, T_j, H_j))$ be their intersection point. Here, we can consider that $T_1(\hat s):=\hat s(0,h, T_j, H_j)$. For this intersection to give that the next handover takes place after time $\hat s$, the extra region $\interior{\left(\bigcup_{t\in [\hat s,\hat s(0,h,T_j,H_j)]} E^{t,v_l}_{\hat h_t}\setminus E^{\hat s, v_l}_{\hat h}\right)}$, beyond $\overline{E^{\hat s, v_l}_{\hat h}}$, must have no point of $\Hcal^l$ for $l=1,2$, where $\hat h_t=(v_2^2t^2+h^2)^\half$.
\end{case}

Based on the last discussions with \ref{case:H221} and \ref{case:H222}, we can decompose the inner expectation in (\ref{eq:T22a}) as
\begin{align}
\lefteqn{\E_\Hcal\left[ e^{-\rho(T_1(\hat s)-\hat s)}  \prod_{l=1,2}\one_{\Hcal^l\left(E^{\hat{s},v_l}_{\hat{h}}\right)=0}  \right]}\nn\\
&=
\E_\Hcal\left[ \one_{\exists (T_j,H_j)\in \Hcal^1 \,\mbox{:}\, \hat s\leq T_j\leq 0,\, T_j\in D_\ell(0,h,H_j) \, \bigcap_{l=1,2}\left\{\Hcal^l\interior{\left(\bigcup_{t\in [\hat s,\hat s_1(0,h,T_j,H_j)]} E^{t,v_l}_{\hat h_t}\setminus E^{\hat s,v_l}_{\hat h}\right)}=0\right\}}  \right.\nn\\
&\hspace{3in}\times e^{-\rho (\hat s_1(0,h,T_j,H_j)-\hat s)}\prod_{l=1,2}\one_{\Hcal^l\left(E^{\hat{s},v_l}_{\hat{h}}\right)=0}\Bigg] \nn\\
&\;+ \E_\Hcal\left[ \one_{\exists (T_j,H_j)\in \Hcal^1 \,\mbox{:}\, T_j\in D_r(0,h,H_j),\, \bigcap_{l=1,2}\left\{\Hcal^l\interior{\left(\bigcup_{t\in [\hat s,\hat s_1(0,h,T_j,H_j)]} E^{t,v_l}_{\hat h_t}\setminus E^{\hat s,v_l}_{\hat h}\right)}=0\right\}} e^{-\rho (\hat s_1(0,h,T_j,H_j)-\hat s)} \right.\nn\\
&\hspace{4in}\times \prod_{l=1,2}\one_{\Hcal^l\left(E^{\hat{s},v_l}_{\hat{h}}\right)=0}\Bigg] \nn\\
&\;\;+ \E_\Hcal\left[ \one_{\exists (T_j,H_j)\in \Hcal^2 \,\mbox{:}\, T_j\ge 0, \, \bigcap_{l=1,2}\left\{\Hcal^l\interior{\left(\bigcup_{t\in [\hat s,\hat s(0,h,T_j,H_j)]} E^{t,v_l}_{\hat h_t}\setminus E^{\hat s,v_l}_{\hat h}\right)}=0\right\}} e^{-\rho (\hat s(0,h,T_j,H_j)-\hat s)}\right.\nn\\
&\hspace{4in}\times \prod_{l=1,2}\one_{\Hcal^l\left(E^{\hat{s},v_l}_{\hat{h}}\right)=0}\Bigg].
\label{eq:T22bb}
\end{align}
Similarly to the final step in (\ref{eq:T12c}), there exists a unique head point $(T_j,H_j)$ satisfying the condition inside the expectation in each individual term in (\ref{eq:T22bb}), the last expression equals
\begin{align}
\lefteqn{\E_\Hcal\left[\sum_{ (T_j,H_j)\in \Hcal^1 \,\mbox{:}\, \hat s\leq T_j\leq 0,\,T_j\in D_\ell(0,h,H_j)} \prod_{l=1,2}\one_{\Hcal^l\left(\bigcup_{t\in [\hat s,\hat s_1(0,h,T_j,H_j)]} E^{t,v_l}_{\hat h_t}\cup  E^{\hat s,v_l}_{\hat h}\right)=0} e^{-\rho (\hat s_1(0,h,T_j,H_j)-\hat s)}\right]}\nn\\
&\;\;+\E_\Hcal\left[\sum_{ (T_j,H_j)\in \Hcal^1 \,\mbox{:}\, T_j\in D_r(0,h,H_j)}\;\; \prod_{l=1,2}\one_{\Hcal^l\left(\bigcup_{t\in [\hat s,\hat s_1(0,h,T_j,H_j)]} E^{t,v_l}_{\hat h_t}\cup  E^{\hat s,v_l}_{\hat h}\right)=0} e^{-\rho (\hat s_1(0,h,T_j,H_j)-\hat s)}\right]\nn\\
&\;\; +\E_\Hcal\left[\sum_{ (T_j,H_j)\in \Hcal^2 \,\mbox{:}\, T_j\ge 0}\;\; \prod_{l=1,2}\one_{\Hcal^l\left(\bigcup_{t\in [\hat s,\hat s(0,h,T_j,H_j)]} E^{t,v_l}_{\hat h_t}\cup E^{\hat s,v_l}_{\hat h}\right)=0} e^{-\rho (\hat s(0,h,T_j,H_j)-\hat s)} \right].
\label{eq:T22b}
\end{align}
We write the sum of three terms in (\ref{eq:T22b}) as $\zeta\bbinom{1,2}{2,2,1}+\zeta\bbinom{1,1}{2,2,1}+\zeta\bbinom{1,1}{2,2,2}$. Using Remark~\ref{remark:union} for $l=1$, the region in the first two terms in (\ref{eq:T22b}) satisfies
\begin{equation}
\bigcup_{t\in [\hat s,\hat s_1(0,h,T_j,H_j)]} E^{t,v_1}_{\hat h_t}\cup E^{\hat s, v_1}_{\hat h}= E^{\hat s, v_1}_{\hat h}\cup S_1\cup E^{\hat s_1(0,h,T_j,H_j),v_1}_{\hat h_1(0,h,T_j,H_j)}.
\label{eq:S1a}
\end{equation}
In the last union, $S_1$ is the region outside $E^{\hat s, v_1}_{\hat h}\cup E^{\hat s_1(0,h,T_j,H_j),v_1}_{\hat h_1(0,h,T_j,H_j)}$, but below the hyperbola with equation $u^2-\frac{v_1^2v_2^2}{v_1^2-v_2^2}t^2=h^2$, in the $(t,u)$-coordinate system similar to (\ref{eq:2bird-hyp}), and in the time interval 
\[
\left[\left(1-\frac{v_2^2}{v_1^2}\right)\hat s, \left(1-\frac{v_2^2}{v_1^2}\right) \hat s_1(0,h,T_j,H_j)\right].
\]
For $l=2$, using part~(\ref{monotonic2}) of Proposition~\ref{prop:monotonic-sets}, the increasing property of $\left\{E^{t,v_2}_{\hat h_t}\setminus E^{\hat s, v_2}_{\hat h}\right\}_{t\geq \hat s}$ in the first and second term in (\ref{eq:T22b}), we get that 
\[
\bigcup_{t\in [\hat s,\hat s_1(0,h,T_j,H_j)]} E^{t,v_2}_{\hat h_t}\cup E^{\hat s, v_2}_{\hat h}= E^{\hat s_1(0,h,T_j,H_j),v_2}_{\hat h_1(0,h,T_j,H_j)}\cup  E^{\hat s, v_2}_{\hat h}.
\]
The next head point $(T_j,H_j)$ must be in the region $R_1(h, \hat s, \hat h)$, where
\begin{equation}
R_1(h, \hat s, \hat h):=\left\{(t,u): u\geq 0,\;\;  t\geq \hat s,\;\; v_1^2(t-\hat s)^2+u^2\geq \hat h^2 \text{ and } u^2-\frac{v_1^2v_2^2}{v_1^2-v_2^2} t^2\leq h^2\right\},
\label{eq:R1-221}
\end{equation}
as indicated in the picture~\subref{subfigure:pure_bird221} of Figure~\ref{figure:pure_bird22}. In (\ref{eq:R1-221}), the first inequality $t\geq \hat s$ is due to the discussion in~\ref{case:H221} of this proof. The second inequality $v_1^2(t-\hat s)^2+u^2\geq \hat h^2$, signifies that the next head point must be outside $\overline{E^{\hat{s},v_1}_{\hat{h}}}$. The last inequality $u^2-\frac{v_1^2v_2^2}{v_1^2-v_2^2} t^2\leq h^2$, signifies that the next head point must be below the hyperbola (\ref{eq:2bird-hyp}) corresponding to the head point $(0,h)$, which is a more geometric version of the intersection criteria. Then, implementing the last discussions, the first two terms in  (\ref{eq:T22b}) together are equal to
\begin{align}
\lefteqn{\E_\Hcal\left[\sum_{ (T_j,H_j)\in \Hcal^1 \,\mbox{:}\, \hat s\leq T_j\leq 0,\,T_j\in D_\ell(0,h,H_j)} \one_{\Hcal^l\left(E^{\hat s, v_1}_{\hat h}\cup S_1\cup E^{\hat s_1(0,h,T_j,H_j),v_1}_{\hat h_1(0,h,T_j,H_j)}\right)=0} \one_{\Hcal^l\left(E^{\hat s, v_2}_{\hat h}\cup E^{\hat s_1(0,h,T_j,H_j),v_2}_{\hat h_1(0,h,T_j,H_j)}\right)=0} \right.}\nn\\
&\hspace{4in}\times e^{-\rho (\hat s_1(0,h,T_j,H_j)-\hat s)}\Bigg]\nn\\
&\;\;+\E_\Hcal\left[\sum_{ (T_j,H_j)\in \Hcal^1 \,\mbox{:}\,T_j\in D_r(0,h,H_j)} \one_{\Hcal^l\left(E^{\hat s, v_1}_{\hat h}\cup S_1\cup E^{\hat s_1(0,h,T_j,H_j),v_1}_{\hat h_1(0,h,T_j,H_j)}\right)=0} \one_{\Hcal^l\left(E^{\hat s, v_2}_{\hat h}\cup E^{\hat s_1(0,h,T_j,H_j),v_2}_{\hat h_1(0,h,T_j,H_j)}\right)=0} \right.\nn\\
&\hspace{4in}\times e^{-\rho (\hat s_1(0,h,T_j,H_j)-\hat s)}\Bigg].
\label{eq:T22cx}
\end{align}
Applying the Campbell-Mecke formula in (\ref{eq:T22cx}) and using the region $R_1(h, \hat s, \hat h)$ defined in (\ref{eq:R1-221}), we get
\begin{align}
\zeta\bbinom{1,2}{2,2,1}+\zeta\bbinom{1,1}{2,2,1}
&=2\la_1v_1
\int_{R_1(h, \hat s, \hat h)} \!\!\!\! e^{-\rho (\hat s_1-\hat s)}
\E_\Hcal\left[   \one_{\Hcal^l\left(E^{\hat s,v_1}_{\hat h}\cup S_1\cup E^{\hat s_1,v_1}_{\hat h_1}\right)=0} \one_{\Hcal^l\left(E^{\hat s,v_2}_{\hat h}\cup E^{\hat s_1,v_2}_{\hat h_1}\right)=0}\right] {\rm d}t_1 \, {\rm d}h_1,
\label{eq:T22c}
\end{align}
where in short, we wrote $(\hat s_1,\hat h_1)$ for $(\hat s_1(0,h,t_1,h_1),\hat h_1(0,h,t_1,h_1))$. Observe that the curves $v_1^2(t-\hat s)^2+u^2= \hat h^2 $ and $u^2-\frac{v_1^2v_2^2}{v_1^2-v_2^2} t^2=h^2$ intersect exactly at one point, whose abscissa is $ \frac{v_1^2-v_2^2}{v_1^2}\, \hat s$. So we divide the domain of integration in two parts and use the void probabilities to get that the last expression is equal to  
\begin{align}
\lefteqn{2\la_1v_1
\int_{\frac{v_1^2-v_2^2}{v_1^2}\, \hat s}^{\hat s+\hat h/v_1}\int_{\left(\hat h^2-v_1^2(t_1-\hat s)^2\right)^\half}^{\left(\frac{v_1^2v_2^2}{v_1^2-v_2^2} t_1^2+ h^2\right)^\half} e^{-\rho (\hat s_1-\hat s)}
e^{-  2\la_1v_1\left\vert E^{\hat s,v_1}_{\hat h}\cup S_1\cup E^{\hat s_1,v_1}_{\hat h_1}\right\vert} e^{-  2\la_2v_2\left\vert E^{\hat s,v_2}_{\hat h}\cup E^{\hat s_1,v_2}_{\hat h_1}\right\vert} {\rm d}h_1 \, {\rm d}t_1}\nn\\
&\;\;+ 2\la_1v_1
 \int_{\hat s+ \hat h/v_1}^\infty\int_0^{\left(\frac{v_1^2v_2^2}{v_1^2-v_2^2} t_1^2+ h^2\right)^\half} e^{-\rho (\hat s_1-\hat s)}
e^{-  2\la_1v_1\left\vert E^{\hat s,v_1}_{\hat h}\cup S_1\cup E^{\hat s_1,v_1}_{\hat h_1}\right\vert} e^{-  2\la_2v_2\left\vert E^{\hat s,v_2}_{\hat h}\cup E^{\hat s_1,v_2}_{\hat h_1}\right\vert} {\rm d}h_1 \, {\rm d}t_1.
\label{eq:T22d}
\end{align}
For the third term in  (\ref{eq:T22b}), using Remark~\ref{remark:union} for $l=1$,
\begin{equation}
\bigcup_{t\in [\hat s,\hat s(0,h,T_j,H_j)]} E^{t,v_1}_{\hat h_t}\cup E^{\hat s, v_1}_{\hat h}= E^{\hat s, v_1}_{\hat h}\cup S_2\cup E^{\hat s(0,h,T_j,H_j),v_1}_{\hat h(0,h,T_j,H_j)},
\label{eq:S2a}
\end{equation}
where $S_2$ is the region outside $E^{\hat s, v_1}_{\hat h}\cup E^{\hat s(0,h,T_j,H_j),v_1}_{\hat h(0,h,T_j,H_j)}$, but below the hyperbola given by the equation $u^2-\frac{v_1^2v_2^2}{v_1^2-v_2^2}t^2=h^2$ in the $(t,u)$-coordinate system similar to (\ref{eq:2bird-hyp}) and in the time interval 
\[
\left[\left(1-\frac{v_2^2}{v_1^2}\right)\hat s, \left(1-\frac{v_2^2}{v_1^2}\right) \hat s(0,h,T_j,H_j)\right].
\]
For $l=2$, using part~(\ref{monotonic1}) of Proposition~\ref{prop:monotonic-sets}, the increasing property of $\left\{E^{t,v_2}_{\hat h_t}\setminus E^{\hat s, v_2}_{\hat h}\right\}_{t\geq \hat s}$, we have
\[
\bigcup_{t\in [\hat s,\hat s(0,h,T_j,H_j)]} E^{t,v_2}_{\hat h_t}\cup E^{\hat s, v_2}_{\hat h}= E^{\hat s(0,h,T_j,H_j),v_2}_{\hat h(0,h,T_j,H_j)}\cup  E^{\hat s, v_2}_{\hat h}.
\]
The next responsible head point $(T_j,H_j)$ must belong to the region $R_2(h, \hat s, \hat h)$, where 
\begin{equation}
R_2(h, \hat s, \hat h):=\left\{(t,u): v_2^2(t-\hat s)^2+u^2\geq \hat h^2, \;\; u\geq 0 \text{ and } t\geq 0 \right\}.
\label{eq:R2-222}
\end{equation}
The region $R_2(h, \hat s, \hat h)$ in (\ref{eq:R2-222}) is defined in such a way that, the next head point must lie outside $\overline{E^{\hat s, v_2}_{\hat h}}$ and on the right of $(0,h)$ as discussed in~\ref{case:H222} above. Applying the Campbell-Mecke formula, the third term in  (\ref{eq:T22b}), after invoking the last discussions, equals,
\begin{align}
\zeta\bbinom{1,1}{2,2,2}
&{=}\E_\Hcal\!\!\left[\!\sum_{ (T_j,H_j)\in \Hcal^2 \,\mbox{:}\, T_j\ge 0} \!\!\!\!\one_{\Hcal^l\left(E^{\hat s(0,h,T_j,H_j),v_1}_{\hat h(0,h,T_j,H_j)}\cup S_2 \cup  E^{\hat s, v_1}_{\hat h}\right)=0} \!\!\one_{\Hcal^l\left(E^{\hat s(0,h,T_j,H_j),v_2}_{\hat h(0,h,T_j,H_j)}\cup  E^{\hat s, v_2}_{\hat h}\right)=0} \!\! e^{-\rho (\hat s(0,h,T_j,H_j)-\hat s)} \right]\nn\\
&=2\la_2v_2
\int_{R_2(h,\hat s,\hat h)} e^{-\rho (\hat s'-\hat s)}
\E_\Hcal\left[   \one_{\Hcal^1\left(E^{\hat s',v_1}_{\hat h'}\cup S_2\cup  E^{\hat s,v_1}_{\hat h}\right)=0} \one_{\Hcal^2\left(E^{\hat s',v_2}_{\hat h'}\cup E^{\hat s,v_2}_{\hat h}\right)=0}\right] {\rm d}t_1 \, {\rm d}h_1\nn\\
&= 2\la_2v_2
\int_0^{\hat s+\hat h/v_2}\int_{\left(\hat h^2-v_2^2(t_1-\hat s)^2\right)^\half}^\infty e^{-\rho (\hat s'-\hat s)}
e^{-2\la_1v_1\left\vert E^{\hat s',v_1}_{\hat h'}\cup S_2\cup  E^{\hat s,v_1}_{\hat h}\right\vert} e^{-2\la_2v_2\left\vert E^{\hat s',v_2}_{\hat h'}\cup E^{\hat s,v_2}_{\hat h}\right\vert} {\rm d}h_1 \, {\rm d}t_1\nn\\
&\;\;+ 2\la_2v_2
\int_{\hat s+\hat h/v_2}^\infty\int_0^\infty e^{-\rho (\hat s'-\hat s)}
e^{-2\la_1v_1\left\vert E^{\hat s',v_1}_{\hat h'}\cup S_2\cup  E^{\hat s,v_1}_{\hat h}\right\vert} e^{-2\la_2v_2\left\vert E^{\hat s',v_2}_{\hat h'}\cup E^{\hat s,v_2}_{\hat h}\right\vert} {\rm d}h_1 \, {\rm d}t_1,
\label{eq:T22f}
\end{align}
where in short, we wrote $(\hat s',\hat h')$ for $(\hat s(0,h,t_1,h_1),\hat h(0,h,t_1,h_1))$. We denote the sum of all the terms in (\ref{eq:T22d}) and (\ref{eq:T22f}) as 
\begin{equation}
\zeta_{2,2}(h,t',h'):= \zeta\bbinom{1,2}{2,2,1}+\zeta\bbinom{1,1}{2,2,1}+\zeta\bbinom{1,1}{2,2,2}.
\label{eq:zeta1}
\end{equation}
Finally, we obtain the result by substituting the expression of $\zeta_{2,2}(h,t',h')$ to  
(\ref{eq:T22a}).\qed
\subsection{Proof of Lemma~\ref{lemma:LT_1ijk}, type $\binom{1}{1,2}$ or equivalently $\bbinom{2}{2,1}$}~\label{subsection:Lemma221}
Using Lemma~\ref{lemma:Palm-Lcal-ij}, for the mixed Palm expectations in (\ref{eq:decompLT2}) with $\t_n=2, \t_p=1$, $q=2$ and $f(\Hcal)=e^{-\rho T}$, we have
\begin{align}
\frac{L^{(2)}_{2,1}}{4\la_1\la_2 v_1v_2}\E^0_{\Wcal_{2,1}^{(2)}} [e^{-\rho T}]
& =   \int_{0}^\infty  \int_{0}^h \int_{t^*}^\infty 
\mathbb{E}_\Hcal\Bigg[ e^{-\rho(T\circ\th_{\hat s_1})}  \prod_{l=1,2}\one_{\Hcal^l\left(E^{\hat{s}_1,v_l}_{\hat{h}_1}\right)=0}
\Bigg] {\rm d}t'  {\rm d}h' {\rm d}h \nn\\ 
&\;+  \int_{0}^\infty  \int_h^\infty \int_0^{\infty}
\mathbb{E}_\Hcal\Bigg[ e^{-\rho(T\circ\th_{\hat s_1})} \prod_{l=1,2}\one_{\Hcal^l\left(E^{\hat{s}_1,v_l}_{\hat{h}_1}\right)=0}
\Bigg] {\rm d}t'  {\rm d}h' {\rm d}h.
\label{eq:T1211}
\end{align}
where $(\hat{s}_1,\hat{h}_1)=(\hat{s}_1(0,h,-t',h'),\hat{h}_1(0,h,-t',h'))$ is the first intersection of the birds at $(t_p,h_p)=(0,h)$ and $(t_n,h_n)=(-t',h')$, which are of type $\t_p=2$ and $\t_n=1$, respectively. 

The station corresponding to the head point $(-t',h')$ is the serving station after time $\hat s_1$. Since $(-t',h')$ is of type $1$, we define $\hat h_1(t):=\left(v_1^2(t+t')^2+h'^2\right)^\half$, which write in short as $\hat h_t$. Observe that 
\[
T\circ\th_{\hat s_1}=T_1(\hat s_1)-\hat s_1,
\]
where $T_1(\hat s_1)$ is the time point of the first handover after time $\hat s_1$. The intersection point $(\hat{s}_1,\hat{h}_1)$ corresponding to the typical handover in this case, the second intersection $(\hat{s}_2,\hat{h}_2)$ will correspond to the next handover, if there is no radial bird that intersects the bird at $(0,h)$ before time $\hat s_2$, i.e., $T_1(\hat s_1)<\hat s_2$ or the next handover happens at time $\hat s_2$, in which case $T_1(\hat s_1)=\hat s_2$. The reason for the next handover to happen at most before $\hat s_2$, will be clear later in the following cases.
\begin{case}[\textbf{$T_1(\hat s_1)<\hat s_2$.}]\label{case:H121l} The next handover can be given by a bird with head point, say $(T_j,H_j)$, outside the region $\overline{E^{\hat{s}_1,v_1}_{\hat{h}_1}}$ or $\overline{E^{\hat{s}_1,v_2}_{\hat{h}_1}}$, depending on its type. Thus we have two sub-cases: 

\begin{subcase}[\textbf{\em $(T_j,H_j)$ is of type 1.}]\label{subcase:H121lx} In this case, $(T_j,H_j)$ must be at least on the right of $(-t',h')$, i.e., $T_j\geq -t'$. Otherwise, the next radial bird with head point $(T_j,H_j)$ being outside $\overline{E^{\hat s_1, v_1}_{\hat h_1}}$ and $T_j< -t'$, intersects the one with head at $(-t',h')$, at a point that is either contained in $\Lcal^+_e(v_1,v_2)$ or contributes to a handover before time $\hat s_1$, which is not of interest. Then, we must have $T_j\geq -t'$ and the unexplored region is $Q_{-t'}\setminus \overline{E^{\hat s_1,v_1}_{\hat h_1}}$. The type of the pair of consecutive handovers is $\bbinom{2,1}{2,1,1}$.
There is just one intersection, say $(\hat{s}(-t',h',T_j,H_j),\hat{h}(-t',h',T_j,H_j))$ between the radial birds at $(-t',h')$ and $(T_j,H_j)$ and the handover is given by that, see picture~(\subref{subfigure:mixed_bird12a}) of Figure~\ref{figure:mixed_bird121}. Thus we have $T_1(\hat s_1):=\hat s(-t',h',T_j,H_j)$. In this case, the extra region $\interior{\left(\bigcup_{t\in [\hat s_1,\hat s(-t',h',T_j,H_j)]} E^{t,v_l}_{\hat h_t}\setminus E^{\hat s, v_l}_{\hat h}\right)}$ must have no point of $\Hcal^l$ for $l=1,2$. The union is due to Definition~\ref{definition:T1-2speed} and to the fact that the individual sets in the union may lack monotonicity.
\end{subcase}

\begin{subcase}[\textbf{\em  $(T_j,H_j)$ is of type 2.}]\label{subcase:H121ly} Here it must be the case that $T_j\geq 0$. Otherwise, if $T_j<0$, then the two birds with head point $(T_j,H_j)$ and $(0,h)$ intersect at a point before time $\hat s_1$, which is not relevant in this case. Then, we must have $T_j\geq 0\geq -t'$ and the unexplored region is $Q_{0}\setminus \overline{E^{\hat s_1,v_2}_{\hat h_1}}$. The type of the pair of consecutive handovers is $\bbinom{2,1}{2,1,2}$. By Lemma~\ref{lemma:in1out2}, there are two intersections $(\hat s_1(-t',h',T_j,H_j),\hat h_1(-t',h',T_j,H_j))$ and $(\hat s_2(-t',h',T_j,H_j),\hat h_2(-t',h',T_j,H_j))$ between the birds at $(-t',h'), (T_j,H_j)$. The first intersection cannot give rise to a handover, as it is contained in $\Lcal^+_e(v_1,v_2)$. So the next handover must be given by the second intersection, see picture~(\subref{subfigure:mixed_bird12b}) of Figure~\ref{figure:mixed_bird121}. Thus we have $T_1(\hat s_1):=\hat s_2(-t',h',T_j,H_j)$. For the last intersection to give a handover, the extra region $\interior{\left(\bigcup_{t\in [\hat s_1,\hat s_2(-t',h',T_j,H_j)]} E^{t,v_l}_{\hat h_t}\setminus E^{\hat s_1, v_l}_{\hat h_1}\right)}$ must have no point of $\Hcal^l$ for $l=1,2$. The union is due to Definition~\ref{definition:T1-2speed} and the fact that individual sets in the union potentially lack  monotonicity.
\end{subcase}
\end{case}
\begin{figure}[ht!]
 \centering
      \begin{subfigure}[t]{0.45\linewidth}
       \centering
      \begin{tikzpicture}[scale=0.7, every node/.style={scale=0.7}]
\pgftransformxscale{0.97}  
    \pgftransformyscale{0.97}    
    \draw[->] (-5, 0) -- (4, 0) node[right] {$t$};
   \draw[red, domain=-1:2, smooth] plot (\x, {(16*\x*\x- 16*2*0.5*\x+16*(0.5)^2+2^2)^0.5});
    \draw[red](0.5,2) node{$\bullet$};
    \draw[](-0.8,2) node{$(-t',h')$};
    \draw[blue](2,2.5) node{$\bullet$};
    \draw[](2.7,2.2) node{$(0,h)$};
    \draw[blue, domain=-1.5:4, smooth] plot (\x, {((3/2)^2*\x*\x-2*(3/2)^2*2*\x+(3/2)^2*2^2+2.5^2)^0.5});
    \draw[red](1.2,1.4) node{$\bullet$};
    \draw[](1.4,0.9) node{$(t_1,h_1)$};
    \draw[red, domain=-0.3:2.3, smooth] plot (\x, {(16*\x*\x-32*1.2*\x+16*1.2^2+1.4^2)^0.5});
    \draw[](-1.7,4.8) node{$(\hat s_1,\hat h_1)$};
    \draw[](1.7,3.4) node{$(\hat s_2,\hat h_2)$};
    \draw[](1,4.4) node{$(\tilde s,\tilde h)$};
    \draw[->] (0.9, 4.15) -- (0.8, 2.5);
     \draw[blue]  (-3.6,0) arc (-180:-360:3.05 and 4.5);
     \draw[red]  (0.6,0) arc (0:180:1.15 and 4.5);
     \draw[red]  (0.15,0) arc (-180:-360:0.6 and 2.25);
     \draw[blue]  (2.16,0) arc (0:180:1.4 and 2.25);   
    \draw[blue](-2.5,0.6) node{$E^{\hat s_1, v_2}_{\hat h_1}$};
    \draw[red](-1.1,0.9) node{$E^{\hat s_1, v_1}_{\hat h_1}$};
    \draw[blue](1.8,-0.5) node{$E^{\tilde s,v_2}_{\tilde h}$};
     \draw[red](0.6,-0.5) node{$E^{\tilde s,v_1}_{\tilde h}$};
    \end{tikzpicture}
    \caption{For the intersections $(\hat s_1, \hat h_1)$ and $(\tilde s, \tilde h)$ to represent two consecutive  handovers, the regions \red{$E^{\hat s_1, v_1}_{\hat h_1} \cup E^{\tilde s,v_1}_{\tilde h}$} and  \blue{$E^{\hat s_1, v_2}_{\hat h_1}\cup E^{\tilde s,v_2}_{\tilde h}$} must have no points from $\Hcal^1$ and $\Hcal^2$, respectively. Here $(\tilde s, \tilde h)$ is a realization of $(\hat s(-t',h',T_j,H_j),\hat h(-t',h',T_j,H_j))$.}
\label{subfigure:mixed_bird12a}
    \end{subfigure}
    \hspace{0.1in}
    \centering
      \begin{subfigure}[t]{0.45\linewidth}
       \centering
      \begin{tikzpicture}[scale=0.7, every node/.style={scale=0.7}]
\pgftransformxscale{0.9}  
    \pgftransformyscale{0.9}    
\draw[->] (-5, 0) -- (4, 0) node[right] {$t$};
   \draw[red, domain=-1:2, smooth] plot (\x, {(16*\x*\x- 16*2*0.5*\x+16*(0.5)^2+2^2)^0.5});
    \draw[red](0.5,2) node{$\bullet$};
    \draw[](-0.8,2) node{$(-t',h')$};
    \draw[blue](2,2.5) node{$\bullet$};
    \draw[](2.7,2.2) node{$(0,h)$};
    \draw[blue, domain=-1.5:4, smooth] plot (\x, {((3/2)^2*\x*\x-2*(3/2)^2*2*\x+(3/2)^2*2^2+2.5^2)^0.5});
    \draw[blue](2.6,0.8) node{$\bullet$};
    \draw[](3.3,0.6) node{$(t_1,h_1)$};
    \draw[blue, domain=-1.5:4, smooth] plot (\x, {((3/2)^2*\x*\x-2*(3/2)^2*2.6*\x+(3/2)^2*2.6^2+0.8^2)^0.5});
    \draw[](-1.7,4.8) node{$(\hat s_1,\hat h_1)$};
    \draw[](1.7,3.4) node{$(\hat s_2,\hat h_2)$};
    \draw[](1,4.6) node{$(\tilde s,\tilde h)$};
    \draw[->] (0.9, 4.15) -- (0.85, 2.8);
     \draw[blue]  (-3.6,0) arc (-180:-360:3.05 and 4.5);
     \draw[red]  (0.6,0) arc (0:180:1.15 and 4.5);
     \draw[red]  (0.25,0) arc (-180:-360:0.66 and 2.63);
     \draw[blue]  (2.65,0) arc (0:180:1.78 and 2.63);   
    \draw[blue](-2.5,0.6) node{$E^{\hat s_1, v_2}_{\hat h_1}$};
    \draw[red](-1.1,0.9) node{$E^{\hat s_1, v_1}_{\hat h_1}$};
    \draw[blue](2,-0.5) node{$E^{\tilde s,v_2}_{\tilde h}$};
     \draw[red](0.9,-0.5) node{$E^{\tilde s,v_1}_{\tilde h}$};
    \end{tikzpicture}
    \caption{For the intersections $(\hat s_1, \hat h_1)$ and $(\tilde s, \tilde h)$ to represent two consecutive  handovers, the regions \red{$E^{\hat s_1, v_1}_{\hat h_1} \cup E^{\tilde s,v_1}_{\tilde h}$} and  \blue{$E^{\hat s_1, v_2}_{\hat h_1}\cup E^{\tilde s,v_2}_{\tilde h}$} must have no points from $\Hcal^1$ and $\Hcal^2$, respectively. Here $(\tilde s, \tilde h)$ is a realization of $(\hat s_2(-t',h',T_j,H_j),\hat h_2(-t',h',T_j,H_j))$.}
\label{subfigure:mixed_bird12b}
    \end{subfigure}
    \captionsetup{width=0.9\linewidth}
    \caption{The cases corresponding to the first and second term in (\ref{eq:T1212}), when the head point $(T_j,H_j)=(t_1,h_1)$ is of type $1$ and type $2$, respectively.}    \label{figure:mixed_bird121}
\end{figure}
\begin{case}
[\textbf{$T_1(\hat s_1)=\hat s_2$.}]\label{case:H121a} In this case, the next handover is given by the second intersection $(\hat s_2,\hat h_2)$ of the radial birds with head points $(0,h)$, $(-t',h')$ and the extra region $\interior{\left(\bigcup_{t\in [\hat s_1,\hat s_2]} E^{t,v_l}_{\hat h_t}\setminus E^{\hat s_1, v_l}_{\hat h_1}\right)}$ has no point of $\Hcal^l$, for $l=1,2$, see Figure~\ref{figure:mixed_bird122}. The next handover in question happens as late as $\hat s_2$, since for any $\hat s>\hat s_2$
\[
\bigcup_{t\in [\hat s_1,\hat s_2]} E^{t,v_l}_{\hat h_t}\setminus E^{\hat s_1, v_l}_{\hat h_1}
\subset \bigcup_{t\in [\hat s_1,\hat s]} E^{t,v_l}_{\hat h_t}\setminus E^{\hat s_1, v_l}_{\hat h_1}.
\]
Thus the type of the pair of consecutive handovers is $\bbinom{2,1}{2,1,2^*}$.
\end{case}
Based on last discussions we have the following decomposition of the inner expectation in (\ref{eq:T1211}),
\begin{align}
\lefteqn{\mathbb{E}_\Hcal\Bigg[ e^{-\rho(T_1(\hat s_1)-\hat s_1)} \prod_{l=1,2}\one_{\Hcal^l\left(E^{\hat{s}_1,v_l}_{\hat{h}_1}\right)=0}\Bigg]}\nn\\
&=  \E_\Hcal\Bigg[ \one_{\exists (T_j,H_j)\in \Hcal^1 \,\mbox{:}\, T_j\geq -t' ,\, \bigcap_{l=1,2}\{\Hcal^l\interior{\left(\bigcup_{t\in [\hat s_1,\hat s(-t',h',T_j,H_j)]} E^{t,v_l}_{\hat h_t}\setminus E^{\hat s, v_l}_{\hat h}\right)}=0\}} \one_{\hat s_1\leq \hat s(-t',h',T_j,H_j)\leq \hat s_2} \nn\\
& \hspace{3.4in}\times e^{-\rho (\hat s(-t',h',T_j,H_j)-\hat s_1)}\prod_{l=1,2}\one_{\Hcal^l\left(E^{\hat{s}_1,v_l}_{\hat{h}_1}\right)=0}\Bigg] \nn\\
&\;\;+ \E_\Hcal\Bigg[ \one_{\exists (T_j,H_j)\in \Hcal^2 \,\mbox{:}\, T_j\geq 0,\, \bigcap_{l=1,2}\{\Hcal^l\interior{\left(\bigcup_{t\in [\hat s_1,\hat s_2(-t',h',T_j,H_j)]} E^{t,v_l}_{\hat h_t}\setminus E^{\hat s, v_l}_{\hat h}\right)}=0\}} \one_{\hat s_1\leq \hat s_2(-t',h',T_j,H_j)\leq \hat s_2}\nn\\
& \hspace{3.4in}\times e^{-\rho (\hat s_2(-t',h',T_j,H_j)-\hat s_1)}\prod_{l=1,2}\one_{\Hcal^l\left(E^{\hat{s}_1,v_l}_{\hat{h}_1}\right)=0}\Bigg] \nn\\
&\quad + \E_\Hcal\left[ e^{-\rho (\hat s_2-\hat s_1)} \prod_{l=1,2}\one_{\Hcal^l\interior{\left(\bigcup_{t\in [\hat s_1,\hat s_2]} E^{t,v_l}_{\hat h_t}\setminus E^{\hat s_1, v_l}_{\hat h_1}\right)}=0} \prod_{l=1,2}\one_{\Hcal^l\left(E^{\hat{s}_1,v_l}_{\hat{h}_1}\right)=0}\right]\nn\\
&:=\zeta\bbinom{2,1}{2,1,1}+\zeta\bbinom{2,1}{2,1,2}+\zeta\bbinom{2,1}{2,1,2^*},
\label{eq:T1212}
\end{align}
where the pair type marked with $\cdot^*$ means that we have recurrence of the previous station as an upcoming station, seen in the last column of Table~\ref{tab:notation-tu}.
Similarly to (\ref{eq:T12h}), there exists a unique head point satisfying the condition in each term in (\ref{eq:T1212}), and hence the sum can be written as
\begin{align}
\lefteqn{\E_\Hcal\Bigg[ \sum_{ (T_j,H_j)\in \Hcal^1 \,\mbox{:}\, T_j\geq -t'} \prod_{l=1,2}\one_{\Hcal^l(\bigcup_{t\in [\hat s_1,\hat s(-t',h',T_j,H_j)]} E^{t,v_l}_{\hat h_t}\cup E^{\hat s, v_l}_{\hat h})=0} \one_{\hat s_1\leq \hat s(-t',h',T_j,H_j)\leq \hat s_2}}\nn\\
&\hspace{4in}\times e^{-\rho (\hat s(-t',h',T_j,H_j)-\hat s_1)}\Bigg] \nn\\
&\;+ \E_\Hcal\left[ \sum_{(T_j,H_j)\in \Hcal^2 \,\mbox{:}\, T_j\geq 0}\prod_{l=1,2}\one_{\Hcal^l(\bigcup_{t\in [\hat s_1,\hat s_2(-t',h',T_j,H_j)]} E^{t,v_l}_{\hat h_t}\cup E^{\hat s, v_l}_{\hat h})=0} \one_{\hat s_1\leq \hat s_2(0,h,T_j,H_j)\leq \hat s_2}\right.\nn\\ 
&\hspace{4in}\times e^{-\rho (\hat s_1\leq \hat s(0,h,T_j,H_j)-\hat s_1)}\Bigg] \nn\\
&\; + \E_\Hcal\left[ e^{-\rho (\hat s_2-\hat s_1)} \prod_{l=1,2}\one_{\Hcal^l( \bigcup_{t\in [\hat s_1,\hat s_2]} E^{t,v_l}_{\hat h_t}\cup E^{\hat s_1, v_l}_{\hat h_1})=0} \right].\label{eq:T121b}
\end{align}
\begin{figure}[ht!]
\begin{tikzpicture}[scale=0.7, every node/.style={scale=0.7}]
\pgftransformxscale{1}  
\pgftransformyscale{1}    
\draw[->] (-5, 0) -- (4, 0) node[right] {$t$};
   \draw[red, domain=-0.8:1.7, smooth] plot (\x, {(16*\x*\x- 16*2*0.5*\x+16*(0.5)^2+2^2)^0.5});
    \draw[red](0.5,2) node{$\bullet$};
    \draw[](-0.8,2) node{$(-t',h')$};
    \draw[blue](2,2.5) node{$\bullet$};
    \draw[](1.7,2) node{$(0,h)$};
    \draw[blue, domain=-1.5:4.4, smooth] plot (\x, {((3/2)^2*\x*\x-2*(3/2)^2*2*\x+(3/2)^2*2^2+2.5^2)^0.5});
    \draw[](-1.7,4.8) node{$(\hat s_1,\hat h_1)$};
    \draw[](1.7,3.4) node{$(\hat s_2,\hat h_2)$};
     \draw[blue]  (-3.6,0) arc (-180:-360:3.05 and 4.5);
     \draw[red]  (0.6,0) arc (0:180:1.15 and 4.5);
     \draw[red]  (0.28,0) arc (-180:-360:0.8 and 2.9);
     \draw[blue]  (2.9,0) arc (0:180:1.8 and 2.9);   
    \draw[blue](-2.5,0.6) node{$E^{\hat s_1, v_2}_{\hat h_1}$};
    \draw[red](-1.1,0.9) node{$E^{\hat s_1, v_1}_{\hat h_1}$};
     \draw[red](1.2,0.7) node{$E^{\hat s_2,v_1}_{\hat h_2}$};
    \draw[blue](3.8,0.7) node{$E^{\hat s_2,v_2}_{\hat h_2}$};
    \end{tikzpicture}
    \captionsetup{width=0.9\linewidth}
    \caption{The case corresponding to the last term in (\ref{eq:T1212}). For the intersections $(\hat s_1, \hat h_1)$ and $(\hat s_2, \hat h_2)$ to represent two consecutive  handovers, the regions \red{$E^{\hat s_1, v_1}_{\hat h_1} \cup E^{\hat s_2, v_1}_{\hat h_2}$} and  \blue{$E^{\hat s_1, v_2}_{\hat h_1}\cup E^{\hat s_2, v_2}_{\hat h_2}$} must have no points from $\Hcal^1$ and $\Hcal^2$, respectively.}
\label{figure:mixed_bird122}
\end{figure}

For the first term in (\ref{eq:T121b}), by part~(\ref{monotonic1}) of Proposition~\ref{prop:monotonic-sets}, the increasing property of $\left\{E^{t,v_l}_{\hat h_t}\setminus E^{\hat s_1, v_l}_{\hat h_1}\right\}_{t\geq \hat s_1}$, we have 
\[
\bigcup_{t\in [\hat s_1,\hat s(-t',h',T_j,H_j)]} E^{t,v_l}_{\hat h_t}\cup E^{\hat s_1, v_l}_{\hat h_1}=  E^{\hat s(-t',h',T_j,H_j),v_l}_{\hat h(-t',h',T_j,H_j)}\cup E^{\hat s_1, v_l}_{\hat h_1}.
\]

The point $(T_j,H_j)$ must lie in the region $R_1(h,t',h')\subset [-t',\infty)\times \R^+\setminus E^{\hat s_1,v_1}_{\hat h_1}$ be the region defined as 
\[R_1(h,t',h'):=\left\{(t,u)\,\mbox{:}\, \hat s_1\leq \hat s(-t',h',t,u)\leq \hat s_2\right\},\]
which contains such a possible head point, as seen in~\ref{subcase:H211x}. Using Lemma~\ref{lemma:int1mid2} it can be proved that the region $R_1(h,t',h')=E^{\hat s_2,v_1}_{\hat h_2}\setminus E^{\hat s_1,v_1}_{\hat h_1}$. By the Campbell-Mecke formula, after implementing the last discussions, the first term in (\ref{eq:T121b}) equals 
\begin{align}
\zeta\bbinom{2,1}{2,1,1}&=2\la_1v_1\int_{E^{\hat s_2,v_1}_{\hat h_2}\setminus E^{\hat s_1,v_1}_{\hat h_1}} e^{-\rho (\hat s(-t',h',t_1,h_1)-\hat s_1)} \E_\Hcal\left[  \prod_{l=1,2}\one_{\Hcal^l\left(E^{\hat s(-t',h',t_1,h_1),v_l}_{\hat h(-t',h',t_1,t_1)}\cup E^{\hat s_1,v_l}_{\hat h_1}\right)=0}\right]{\rm d}t_1  {\rm d}h_1\nn\\
&=2\la_1v_1\int_{E^{\hat s_2,v_1}_{\hat h_2}\setminus E^{\hat s_1,v_1}_{\hat h_1}} e^{-\rho (\hat s(-t',h',t_1,h_1)-\hat s_1)} e^{-\sum_{l=1,2}2\la_lv_l\left\vert E^{\hat s(-t',h',t_1,h_1),v_l}_{\hat h(-t',h',t_1,t_1)}\cup E^{\hat s_1,v_l}_{\hat h_1}\right\vert}
{\rm d}t_1  {\rm d}h_1.
\label{eq:T121c}
\end{align}
For the second term in (\ref{eq:T121b}), by the increasing property of $\left\{E^{t,v_l}_{\hat h_t}\setminus E^{\hat s_1, v_l}_{\hat h_1}\right\}_{t\geq \hat s_1}$ from part~(\ref{monotonic1}) of Proposition~\ref{prop:monotonic-sets}, we have
\[
\bigcup_{t\in [\hat s_1,\hat s_2(-t',h',T_j,H_j)]} E^{t,v_l}_{\hat h_t}\cup E^{\hat s_1, v_l}_{\hat h_1}=  E^{\hat s_2(-t',h',T_j,H_j),v_l}_{\hat h_2(-t',h',T_j,H_j)}\cup E^{\hat s_1, v_l}_{\hat h_1}.
\]
The point $(T_j,H_j)$ must lie in the region $R_2(h,t',h')\subset (\R^+)^2\setminus E^{\hat s_1,v_2}_{\hat h_1}$ defined as 
\[
R_2(h,t',h'):=\left\{(t,u)\,\mbox{:}\, \hat s_1\leq \hat s_2(-t',h',t,u)\leq \hat s_2\right\},
\]
that contains the required head points, as discussed in~\ref{subcase:H211y}. It can be proved using Lemma~\ref{lemma:int1mid2}, that $R_2(h,t',h')=E^{\hat s_2,v_2}_{\hat h_2}\setminus E^{\hat s_1,v_2}_{\hat h_1}$. 
Using this along with the Campbell-Mecke formula, we get that the second term in (\ref{eq:T121b}) equals 
\hspace{-0.2in}
\begin{align}
\zeta\bbinom{2,1}{2,1,2}&=2\la_2v_2\int_{E^{\hat s_2,v_2}_{\hat h_2}\setminus E^{\hat s_1,v_2}_{\hat h_1}} e^{-\rho (\hat s_2(-t',h',t_1,h_1)-\hat s_1)} \E_\Hcal\left[  \prod_{l=1,2}\one_{\Hcal^l\left(E^{\hat s_2(-t',h',t_1,h_1),v_l}_{\hat h_2(-t',h',t_1,t_1)}\cup E^{\hat s_1,v_l}_{\hat h_1}\right)=0}\right]{\rm d}t_1  {\rm d}h_1.
\label{eq:T121dx}
\end{align}
Using void probabilities in (\ref{eq:T121dx}) we obtain that
\begin{align}
\zeta\bbinom{2,1}{2,1,2}
&=2\la_2v_2\int_{E^{\hat s_2,v_2}_{\hat h_2}\setminus E^{\hat s_1,v_2}_{\hat h_1}} e^{-\rho (\hat s_2(-t',h',t_1,h_1)-\hat s_1)} e^{-\sum_{l=1,2}2\la_lv_l\left\vert E^{\hat s_2(-t',h',t_1,h_1),v_l}_{\hat h_2(-t',h',t_1,t_1)}\cup E^{\hat s_1,v_l}_{\hat h_1}\right\vert}
{\rm d}t_1  {\rm d}h_1.
\label{eq:T121d}
\end{align}
For the third term in (\ref{eq:T121b}), we have
\[
\bigcup_{t\in [\hat s_1,\hat s_2]} E^{t,v_l}_{\hat h_t}\cup E^{\hat s_1, v_l}_{\hat h_1}=  E^{\hat s_2,v_l}_{\hat h_2(-t',h',T_j,H_j)}\cup E^{\hat s_1, v_l}_{\hat h_1},
\]
and hence this third term equals
\begin{align}
\zeta\bbinom{2,1}{2,1,2^*} &= e^{-\rho (\hat s_2-\hat s_1)} e^{-\sum_{l=1,2}2\la_lv_l\left\vert  E^{\hat s_2,v_l}_{\hat h_2}\cup E^{\hat s_1,v_l}_{\hat h_1}\right\vert}.
\label{eq:12s1s2}
\end{align}
Let us denote the sum of all the terms in (\ref{eq:T121c})--(\ref{eq:12s1s2}) as $\zeta^{(2)}_{2,1}(\rho, t',h',h)=\zeta\bbinom{2,1}{2,1,1}+\zeta\bbinom{2,1}{2,1,2}+\zeta\bbinom{2,1}{2,1,2^*}$. Using the expression for $\zeta^{(2)}_{2,1}(\rho, t',h',h)$ in (\ref{eq:T1211}) yields
\begin{align}
\frac{L^{(2)}_{2,1}}{4\la_1\la_2 v_1v_2}\E^0_{\Wcal_{2,1}^{(2)}} [e^{-\rho T}]
& =  \int_{0}^\infty  \int_{0}^h \int_{t^*}^\infty 
\zeta^{(2)}_{2,1}(\rho, t',h',h) {\rm d}t'  {\rm d}h' {\rm d}h  +  \int_{0}^\infty  \int_h^\infty \int_0^{\infty}
\zeta^{(2)}_{2,1}(\rho, t',h',h) {\rm d}t'  {\rm d}h' {\rm d}h \nn\\
&:=\xi_{2,1}^{(2)}(\rho, v_1,v_2),
\label{eq:T121g}
\end{align}
which is a function of $\rho, v_1, v_2$ only.\qed
\subsection{Proof of Lemma~\ref{lemma:LT_1ijk}, type $\binom{2}{1,2}$ or equivalently $\bbinom{1}{1,2}$}~\label{subsection:Lemma112}
Using Lemma~\ref{lemma:Palm-Lcal-ij}, the mixed Palm expectations in (\ref{eq:decompLT2}) with $\t_n=2, \t_p=1$, $q=1$, and $f(\Hcal)=e^{-\rho T}$, we have
\begin{align}
\frac{L^{(1)}_{1,2}}{4\la_1\la_2 v_1v_2}\E^0_{\Wcal_{1,2}^{(1)}} [e^{-\rho T}]
& =   \int_{0}^\infty  \int_{0}^h \int_{t^*}^\infty 
\mathbb{E}_\Hcal\Bigg[ e^{-\rho(T\circ\th_{\hat s_2})} \prod_{l=1,2}\one_{\Hcal^l\left(E^{\hat{s}_2,v_l}_{\hat{h}_2}\right)=0}
\Bigg] {\rm d}t'  {\rm d}h' {\rm d}h \nn\\ 
&\;+  \int_{0}^\infty  \int_h^\infty \int_0^{\infty}
\mathbb{E}_\Hcal\Bigg[e^{-\rho(T\circ\th_{\hat s_2})} \prod_{l=1,2}\one_{\Hcal^l\left(E^{\hat{s}_2,v_l}_{\hat{h}_2}\right)=0}
\Bigg] {\rm d}t'  {\rm d}h' {\rm d}h.
\label{eq:T1221}
\end{align}
where $(\hat{s}_2,\hat{h}_2)=(\hat{s}_2(0,h,-t',h'),\hat{h}_2(0,h,-t',h'))$ is the second intersection of the birds at $(t_n,h_n)=(0,h), (t_p,h_p)=(-t',h')$, which are of type $\t_n=2$ and $\t_p=1$, respectively. We write this intersection  $(\hat{s}_2,\hat{h}_2)$ in short. Note that the station corresponding to the head point $(0,h)$ is the serving station after time $\hat s_2$. Since $(0,h)$ is of type $2$, we define $\hat h_2(t):=\left(v_2^2t^2+h^2\right)^\half$, which we write in short as $\hat h_t$. Notice that 
\[
T\circ\th_{\hat s_2}:= T_1(\hat s_2)-\hat s_2,
\]
where $T_1(\hat s_2)$ is the handover that happens for the first time after time $\hat s_2$. The next handover point can be found by looking for the first head point, say $(T_j,H_j)$, the radial bird of which intersects the one with head point $(0,h)$. Depending on the type of $(T_j,H_j)$ there are two cases as follows:

\begin{case}[\textbf{\em $(T_j,H_j)$ is of type 1.}]\label{case:H1221} For the point $(T_j,H_j)$ to be eligible for a handover, it must be outside $\overline{E^{\hat s_2,v_1}_{\hat h_2}}$ and satisfy $T_j\geq \hat s_2$. Otherwise, if $T_j<\hat s_2$, then the intersection between the birds with head at $(T_j,H_j)$ and $(0,h)$, is either contained in $\Lcal^+_e(v_1,v_2)$ or it produces a handover before time $\hat s_2$. So it must be the case that $T_j\geq \hat s_2$ and the unexplored region is $Q_{\hat s_2}\setminus \overline{E^{\hat s_2,v_1}_{\hat h_2}}$.

In addition to that, if $\hat s_2< 0$, we must have $T_j\in D_\ell(0,h,H_j)$ or $T_j\in D_r(0,h,H_j)$, when $\hat s_2\leq T_j\leq 0$ and the type of the pair of consecutive handovers can be $\bbinom{1,2}{1,2,1}$ or $\bbinom{1,1}{1,2,1}$, depending on whether $\hat s_2\leq T_j<0$ or $T_j\geq 0$. If $\hat s_2\geq 0$, we must have $T_j\in D_r(0,h,H_j)$ and the type of the pair of consecutive handovers can be $\bbinom{1,1}{1,2,1}$ only.
Suppose the two intersection points are $(s_1(0,h,T_j,H_j), \hat h_1(0,h,T_j,H_j))$ and $(s_2(0,h,T_j,H_j), \hat h_2(0,h,T_j,H_j))$, both of which can be eligible for handovers. For the two intersection points to represent handovers we must have no point of $\Hcal^l$, for $l=1,2$, in the extra regions $\interior{\left(\bigcup_{t\in [\hat s_2,\hat s_1(0,h,T_j,H_j)]} E^{t,v_l}_{\hat h_t}\setminus E^{\hat s_2, v_l}_{\hat h_2}\right)}$ and $\interior{\left(\bigcup_{t\in [\hat s_2,\hat s_2(0,h,T_j,H_j)]} E^{t,v_l}_{\hat h_t}\setminus E^{\hat s_2, v_l}_{\hat h_2}\right)}$, respectively, beyond $\overline{E^{\hat s_2, v_l}_{\hat h_2}}$. Since $\hat s_1(0,h,T_j,H_j)\leq \hat s_2(0,h,T_j,H_j)$, we have 
\[
\bigcup_{t\in [\hat s_2,\hat s_1(0,h,T_j,H_j)]} E^{t,v_l}_{\hat h_t}\setminus E^{\hat s_2, v_l}_{\hat h_2}\subset \bigcup_{t\in [\hat s_2,\hat s_2(0,h,T_j,H_j)]} E^{t,v_l}_{\hat h_t}\setminus E^{\hat s_2, v_l}_{\hat h_2},
\]
Hence, we choose the first intersection to be responsible for the next handover. Thus we have $T_1(\hat s_2):= \hat s_1(0,h,T_j,H_j)$ and the extra region $\interior{\left(\bigcup_{t\in [\hat s_2, \hat s_1(0,h,T_j,H_j)]}E^{t, v_l}_{\hat h_t}\setminus E^{\hat s_2,v_l}_{\hat h_2}\right)}$ has no points of $\Hcal^l$, for $l=1,2$, where $\hat h_t=(v_2^2t^2+h^2)^\half$, see picture~(\subref{subfigure:mixed_bird122a}) of Figure~\ref{figure:mixed_bird122a}. This union is again due to Definition~\ref{definition:T1-2speed} and to the fact that the individual sets in the union may not be monotonic.
\end{case}
\begin{case}[\textbf{\em $(T_j,H_j)$ is of type 2.}]\label{case:H1222} For the point $(T_j,H_j)$ to be eligible, it must be from outside of $\overline{E^{\hat s_2,v_2}_{\hat h_2}}$ and satisfy $T_j\geq 0$. Otherwise if $T_j<0$, the two birds with heads at $(T_j,H_j)$ and $(0,h)$ intersect at a point which is contained in $\Lcal^+_e(v_1,v_2)$ or produce a handover before time $\hat s_2$, which is not interesting in this case. Thus we must have $T_j\geq 0$ and the unexplored region is $Q_0\setminus \overline{E^{\hat s_2,v_2}_{\hat h_2}}$. The type of the pair of consecutive handovers is $\bbinom{1,1}{1,2,2}$. Hence we have $T_1(\hat s_2):= \hat s(0,h,T_j,H_j)$ and the region $\interior{\left(\bigcup_{t\in [\hat s_2, \hat s(0,h,T_j,H_j)]}E^{t, v_l}_{\hat h_t}\setminus E^{\hat s_2,v_l}_{\hat h_2}\right)}$ has no points of $\Hcal^l$, for $l=1,2$, where $\hat h_t=(v_2^2t^2+h^2)^\half$, see picture~(\subref{subfigure:mixed_bird122b})  of Figure~\ref{figure:mixed_bird122a}. 
\end{case}
Based on the reasoning in~\ref{case:H1221} and~\ref{case:H1222} we can decompose the inner expectation in (\ref{eq:T1221}) as:
\begin{align}
\lefteqn{\mathbb{E}_\Hcal\Bigg[ e^{-\rho(T_1(\hat s_2)-\hat s_2)} \prod_{l=1,2}\one_{\Hcal^l\left(E^{\hat{s}_2,v_l}_{\hat{h}_2}\right)=0}\Bigg]}\nn\\
&= \E_\Hcal\left[ \one_{\exists (T_j,H_j)\in \Hcal^1 \,\mbox{:}\, \hat s_2\leq T_j\leq 0, T_j\in D_\ell(0,h,H_j),\, \bigcap_{l=1,2}\left\{\Hcal^l\interior{\left(\bigcup_{t\in [\hat s_2, \hat s_1(0,h,T_j,H_j)]}E^{t, v_l}_{\hat h_t}\setminus E^{\hat s_2,v_l}_{\hat h_2}\right)}=0\right\}} \right.\nn\\
&\hspace{2.5in}\times e^{-\rho (\hat s_1(-t',h',T_j,H_j)-\hat s_2)}  \left.\prod_{l=1,2}\one_{\Hcal^l\left(E^{\hat{s}_2,v_l}_{\hat{h}_2}\right)=0}\right] \nn\\
&\;+\E_\Hcal\left[ \one_{\exists (T_j,H_j)\in \Hcal^1 \,\mbox{:}\, T_j\in D_r(0,h,H_j),\, \bigcap_{l=1,2}\left\{\Hcal^l\interior{\left(\bigcup_{t\in [\hat s_2, \hat s_1(0,h,T_j,H_j)]}E^{t, v_l}_{\hat h_t}\setminus E^{\hat s_2,v_l}_{\hat h_2}\right)}=0\right\}} e^{-\rho (\hat s_1(-t',h',T_j,H_j)-\hat s_2)} \right.\nn\\
&\hspace{3.5in}\times \left.\prod_{l=1,2}\one_{\Hcal^l\left(E^{\hat{s}_2,v_l}_{\hat{h}_2}\right)=0}\right] \nn\\
&\;\;+ \E_\Hcal\left[ \one_{\exists (T_j,H_j)\in \Hcal^2 \,\mbox{:}\, T_j\geq 0,\,  \bigcap_{l=1,2}\left\{\Hcal^l\interior{\left(\bigcup_{t\in [\hat s_2, \hat s(0,h,T_j,H_j)]}E^{t, v_l}_{\hat h_t}\setminus E^{\hat s_2,v_l}_{\hat h_2}\right)}=0\right\}} e^{-\rho (\hat s(0,h,T_j,H_j)-\hat s_2)}\right.\nn\\
&\hspace{3.5in}\times \left. \prod_{l=1,2}\one_{\Hcal^l\left(E^{\hat{s}_2,v_l}_{\hat{h}_2}\right)=0}\right].
\label{eq:T1222y}
\end{align}
\begin{figure}[ht!]
\centering
\begin{subfigure}[t]{0.45\linewidth}\begin{tikzpicture}[scale=0.7, every node/.style={scale=0.7}]
\pgftransformxscale{1}  
\pgftransformyscale{1}    
\draw[->] (-2, 0) -- (5, 0) node[right] {$t$};
\draw[red, domain=-0.7:1.7, smooth] plot (\x, {(16*\x*\x- 16*2*0.5*\x+16*(0.5)^2+2^2)^0.5});
\draw[red](0.5,2) node{$\bullet$};
    \draw[](-0.8,2) node{$(-t',h')$};
    \draw[blue](2,2.5) node{$\bullet$};
    \draw[](2.7,2.2) node{$(0,h)$};
    \draw[blue, domain=-1:4.4, smooth] plot (\x, {((3/2)^2*\x*\x-2*(3/2)^2*2*\x+(3/2)^2*2^2+2.5^2)^0.5});
    \draw[red](3.2,1.7) node{$\bullet$};
    \draw[](3.5,1.2) node{$(t_1,h_1)$};
    \draw[red, domain=2:4.3, smooth] plot (\x, {(16*\x*\x-32*3.2*\x+16*3.2^2+1.7^2)^0.5});
    \draw[](-1.7,4.8) node{$(\hat s_1,\hat h_1)$};
    \draw[](1.7,3.4) node{$(\hat s_2,\hat h_2)$};
    \draw[](3,3.4) node{$(\tilde s,\tilde h)$};
     \draw[red]  (0.28,0) arc (-180:-360:0.8 and 2.9);
     \draw[blue]  (2.9,0) arc (0:180:1.8 and 2.9); 
     \draw[blue]  (1,0) arc (-180:-360:1.62 and 2.73);
     \draw[red]  (3.35,0) arc (0:180:0.67 and 2.73);
    \draw[blue](-0.2,0.6) node{$E^{\hat s_2,v_2}_{\hat h_2}$};
     \draw[red](1,0.6) node{$E^{\hat s_2,v_1}_{\hat h_2}$};
      \draw[red](2.5,0.6) node{$E^{\tilde s,v_1}_{\tilde h}$};
     \draw[blue](3.7,0.6) node{$E^{\tilde s,v_2}_{\tilde h}$};
    \end{tikzpicture}
    \caption{For the intersections $(\hat s_2, \hat h_2)$ and $(\tilde s_1, \tilde h_1)$ to represent two consecutive  handovers, the regions \red{$S_5\cup E^{\hat s_2, v_1}_{\hat h_2} \cup E^{\tilde s,v_1}_{\tilde h}$} and  \blue{$E^{\hat s_2, v_2}_{\hat h_2}\cup E^{\tilde s,v_2}_{\tilde h}$} must have no points from $\Hcal^1$ and $\Hcal^2$, respectively. Here $(\tilde s, \tilde h)$ is a realization of $(\hat s_1(0,h,T_j,H_j),\hat h_1(0,h,T_j,H_j))$.}
\label{subfigure:mixed_bird122a}
\end{subfigure}
\hspace{0.1in}
\centering
\begin{subfigure}[t]{0.45\linewidth}\begin{tikzpicture}[scale=0.7, every node/.style={scale=0.7}]
\pgftransformxscale{1}  
\pgftransformyscale{1}    
    \draw[->] (-2, 0) -- (5, 0) node[right] {$t$};
   \draw[red, domain=-0.7:1.7, smooth] plot (\x, {(16*\x*\x- 16*2*0.5*\x+16*(0.5)^2+2^2)^0.5});
    \draw[red](0.5,2) node{$\bullet$};
    \draw[](-0.8,2) node{$(-t',h')$};
    \draw[blue](2,2.5) node{$\bullet$};
    \draw[](1.7,2) node{$(0,h)$};
    \draw[blue, domain=-1:4.4, smooth] plot (\x, {((3/2)^2*\x*\x-2*(3/2)^2*2*\x+(3/2)^2*2^2+2.5^2)^0.5});
    \draw[blue](4,1) node{$\bullet$};
    \draw[](4,0.7) node{$(t_1,h_1)$};
    \draw[blue, domain=1:5, smooth] plot (\x, {((3/2)^2*\x*\x-2*(3/2)^2*4*\x+(3/2)^2*4^2+1^2)^0.5});
    \draw[](-1.7,4.8) node{$(\hat s_1,\hat h_1)$};
    \draw[](1.7,3.4) node{$(\hat s_2,\hat h_2)$};
    \draw[](3,2.4) node{$(\tilde s_1,\tilde h_1)$};
     \draw[red]  (0.28,0) arc (-180:-360:0.8 and 2.9);
     \draw[blue]  (2.9,0) arc (0:180:1.8 and 2.9); 
     \draw[blue]  (0.75,0) arc (-180:-360:1.7 and 2.6);
     \draw[red]  (3.15,0) arc (0:180:0.7 and 2.6);   
   \draw[blue](-0.2,0.6) node{$E^{\hat s_2,v_2}_{\hat h_2}$};
     \draw[red](1,0.6) node{$E^{\hat s_2,v_1}_{\hat h_2}$};
      \draw[red](2.3,0.6) node{$E^{\tilde s_1,v_1}_{\tilde h_1}$};
     \draw[blue](5,0.6) node{$E^{\tilde s_1,v_2}_{\tilde h_1}$};
    \end{tikzpicture}
\captionsetup{width=0.9\linewidth}
\caption{For the intersections $(\hat s_2, \hat h_2)$ and $(\tilde s_1, \tilde h_1)$ to represent two consecutive  handovers, the regions \red{$S_6\cup E^{\hat s_2, v_1}_{\hat h_2} \cup E^{\tilde s_1, v_1}_{\tilde h_1}$} and  \blue{$E^{\hat s_2, v_2}_{\hat h_2}\cup E^{\tilde s_1, v_2}_{\tilde h_1}$} must have no points from $\Hcal^1$ and $\Hcal^2$, respectively. Here $(\tilde s_1, \tilde h_1)$ is a realization of $(\hat s(0,h,T_j,H_j),\hat h(0,h,T_j,H_j))$.}
\label{subfigure:mixed_bird122b}
\end{subfigure}
\captionsetup{width=0.9\linewidth}
\caption{\ref{case:H1221} and~\ref{case:H1222}, when the head point $(T_j,H_j)=(t_1,h_1)$ is of type $1$ or $2$, respectively.}    \label{figure:mixed_bird122a}
\end{figure}
There exists a unique head point satisfying the condition in each term of last sum in (\ref{eq:T1222y}),  and hence it can be written as 
\begin{align}
\lefteqn{\E_\Hcal\left[ \sum_{(T_j,H_j)\in \Hcal^1\,\mbox{:}\, \hat s_2\leq T_j\leq 0, T_j\in D_\ell(0,h,H_j)} \prod_{l=1,2}\one_{\Hcal^l\left(\bigcup_{t\in [\hat s_2, \hat s_1(0,h,T_j,H_j)]}E^{t, v_l}_{\hat h_t}\cup E^{\hat s_2,v_l}_{\hat h_2}\right)=0}  e^{-\rho (\hat s_1(0,h,T_j,H_j)-\hat s_2)} \right]}\nn\\
&\;\;+\E_\Hcal\left[ \sum_{(T_j,H_j)\in \Hcal^1\,\mbox{:}\, T_j\in D_r(0,h,H_j)} \prod_{l=1,2}\one_{\Hcal^l\left(\bigcup_{t\in [\hat s_2, \hat s_1(0,h,T_j,H_j)]}E^{t, v_l}_{\hat h_t}\cup E^{\hat s_2,v_l}_{\hat h_2}\right)=0}  e^{-\rho (\hat s_1(0,h,T_j,H_j)-\hat s_2)} \right]\nn\\
&\;\;+ \E_\Hcal\left[ \sum_{(T_j,H_j)\in \Hcal^2 \,\mbox{:}\, T_j\geq 0} \prod_{l=1,2}\one_{\Hcal^l\left(\bigcup_{t\in [\hat s_2, \hat s(0,h,T_j,H_j)]}E^{t, v_l}_{\hat h_t}\cup E^{\hat s_2,v_l}_{\hat h_2}\right)=0}  e^{-\rho (\hat s(0,h,T_j,H_j)-\hat s_2)} \right] \label{eq:T1222z}\\
&:=\zeta\bbinom{1,2}{1,2,1}+\zeta\bbinom{1,1}{1,2,1}+\zeta\bbinom{1,1}{1,2,2}.
\label{eq:T1222}
\end{align}
Using Remark~\ref{remark:union} for the first two terms in (\ref{eq:T1222z}) for $l=1$, we have
\begin{equation}
\bigcup_{t\in [\hat s_2,\hat s_1(0,h,T_j,H_j)]} E^{t,v_1}_{\hat h_t}\cup E^{\hat s_2, v_1}_{\hat h_2}= E^{\hat s_2, v_1}_{\hat h_2}\cup S_5\cup E^{\hat s_1(0,h,T_j,H_j),v_1}_{\hat h_1(0,h,T_j,H_j)},
\label{eq:S5}
\end{equation}
where $\hat h_t=(v_2^2t^2+h^2)^\half$ and $S_5$ is the region outside $E^{\hat s_2, v_1}_{\hat h_2}\cup E^{\hat s_1(0,h,T_j,H_j),v_1}_{\hat h_1(0,h,T_j,H_j)}$, but below the hyperbola given by the equation $u^2-\frac{v_1^2v_2^2}{v_1^2-v_2^2}t^2=h^2$, in the $(t,u)$-coordinate system similar to (\ref{eq:2bird-hyp}) and within the time interval 
\[
\left[\frac{v_1^2-v_2^2}{v_1^2}\hat s_2, \frac{v_1^2-v_2^2}{v_1^2}\hat s_1(0,h,T_j,H_j)\right].
\]
For $l=2$, using part~(\ref{monotonic2}) of Proposition~\ref{prop:monotonic-sets}, the increasing property of $\left\{E^{t,v_2}_{\hat h_t}\setminus E^{\hat s, v_2}_{\hat h}\right\}_{t\geq \hat s_2}$, we have
\[
\bigcup_{t\in [\hat s_2,\hat s_1(0,h,T_j,H_j)]} E^{t,v_2}_{\hat h_t}\cup E^{\hat s, v_2}_{\hat h}= E^{\hat s_1(0,h,T_j,H_j),v_2}_{\hat h_1(0,h,T_j,H_j)}\cup  E^{\hat s, v_2}_{\hat h}.
\]
The first two terms in (\ref{eq:T1222}) become
\begin{align}
\lefteqn{\zeta\bbinom{1,2}{1,2,1}+\zeta\bbinom{1,1}{1,2,1}}\nn\\ &=\E_\Hcal\left[ \sum_{(T_j,H_j)\in \Hcal^1 \,\mbox{:}\, \hat s_2\leq T_j\leq 0, T_j\in D_\ell(0,h,H_j)} \!\!\! \one_{\Hcal^1\left(E^{\hat s_2, v_1}_{\hat h_2}\cup S_5\cup E^{\hat s_1(0,h,T_j,H_j),v_1}_{\hat h_1(0,h,T_j,H_j)}\right)=0} \one_{\Hcal^2\left(E^{\hat s_2, v_2}_{\hat h_2}\cup E^{\hat s_1(0,h,T_j,H_j),v_2}_{\hat h_1(0,h,T_j,H_j)}\right)=0} \right.\nn\\
&\hspace{3.5in}\times  e^{-\rho (\hat s_1(0,h,T_j,H_j)-\hat s_2)} \Bigg]\nn\\
&\;+\E_\Hcal\left[ \sum_{(T_j,H_j)\in \Hcal^1\,\mbox{:}\, T_j\in D_r(0,h,H_j)} \one_{\Hcal^1\left(E^{\hat s_2, v_1}_{\hat h_2}\cup S_5\cup E^{\hat s_1(0,h,T_j,H_j),v_1}_{\hat h_1(0,h,T_j,H_j)}\right)=0} \one_{\Hcal^2\left(E^{\hat s_2, v_2}_{\hat h_2}\cup E^{\hat s_1(0,h,T_j,H_j),v_2}_{\hat h_1(0,h,T_j,H_j)}\right)=0} \right.\nn\\
&\hspace{3.5in}\times  e^{-\rho (\hat s_1(0,h,T_j,H_j)-\hat s_2)} \Bigg].
\label{eq:T1222a}
\end{align}
The point $(T_j,H_j)$ must belong to the region $R_1(h,t',h')\subset [\hat s_2,\infty)\times \R^+\setminus E^{\hat s_2,v_1}_{\hat h_2}$, where
\begin{equation}
R_1(h,t',h'):=\left\{(t,u)\,\mbox{:}\, t>\hat s_2,\, u\geq 0,\,v_1^2(t-\hat s_2)^2+u^2\geq\hat h_2^2,\, u^2-\frac{v_1^2v_2^2}{v_1^2-v_2^2}t^2\leq h^2\right\}.
\label{eq:R1-122}
\end{equation}
For the region $R_1(h,t',h')$ in (\ref{eq:R1-122}), the first condition $t>\hat s_2$ is due to the discussion in~\ref{case:H1221}. The last two conditions are due to the fact that the next head point must lie outside $\overline{E^{\hat s_2, v_1}_{\hat h_2}}$ and below the curve of the hyperbola $u^2-\frac{v_1^2v_2^2}{v_1^2-v_2^2}t^2=h^2$. Let $\frac{v_1^2v_2^2}{v_1^2-v_2^2}:= v^2$.

Applying the Campbell-Mecke formula, the first two terms in (\ref{eq:T1222}) become
\begin{align}
\lefteqn{\zeta\bbinom{1,2}{1,2,1}+\zeta\bbinom{1,1}{1,2,1}}\nn\\ 
&=2\la_1v_1\int_{R_1(h,t',h')} e^{-\rho (\hat s_1(0,h,t_1,h_1)-\hat s_2)} e^{-2\la_1v_1\left\vert  E^{\hat s_1(0,h,t_1,h_1),v_1}_{\hat h_1(0,h,t_1,h_1)}\cup S_5\cup E^{\hat s_2,v_1}_{\hat h_2}\right\vert} e^{-2\la_2v_2\left\vert E^{\hat s_1(0,h,t_1,h_1),v_2}_{\hat h_1(0,h,t_1,h_1)}\cup E^{\hat s_2,v_2}_{\hat h_2}\right\vert}{\rm d}t_1\, {\rm d}h_1\nn\\
&=2\la_1v_1\int_{\frac{v_1^2-v_2^2}{v_1^2+v_2^2}\,\hat s_2}^{\hat s_2+\hat h_2/v_1} \!\!\!\int_{\left(\hat h_2^2-v_1^2(t_1-\hat s_2)^2\right)^\half}^{\left(v^2 t_1^2+h^2\right)^\half} \!\! 
e^{-\rho (\hat s_1(0,h,t_1,h_1)-\hat s_2)} 
e^{-2\la_1v_1\left\vert  E^{\hat s_1(0,h,t_1,h_1),v_1}_{\hat h_1(0,h,t_1,h_1)}\cup S_5\cup E^{\hat s_2,v_1}_{\hat h_2}\right\vert}\nn\\
&\hspace{3.5in}\times e^{-2\la_2v_2\left\vert E^{\hat s_1(0,h,t_1,h_1),v_2}_{\hat h_1(0,h,t_1,h_1)}\cup E^{\hat s_2,v_2}_{\hat h_2}\right\vert}{\rm d}t_1\, {\rm d}h_1\nn\\
&\;\;+ 2\la_1v_1  \!\!\! \int_{\hat s_2+\hat h_2/v_1}^\infty  \!\! \int_0^{\left(v^2 t_1^2+h^2\right)^\half}   \!\!\!\! \!\!\!\! e^{-\rho (\hat s_1(0,h,t_1,h_1)-\hat s_2)} e^{-2\la_1v_1\left\vert  E^{\hat s_1(0,h,t_1,h_1),v_1}_{\hat h_1(0,h,t_1,h_1)}\cup S_5\cup E^{\hat s_2,v_1}_{\hat h_2}\right\vert}\nn\\
&\hspace{3.5in}\times e^{-2\la_2v_2\left\vert E^{\hat s_1(0,h,t_1,h_1),v_2}_{\hat h_1(0,h,t_1,h_1)}\cup E^{\hat s_2,v_2}_{\hat h_2}\right\vert}{\rm d}t_1\, {\rm d}h_1.
\label{eq:T1223}
\end{align}
For the third term in (\ref{eq:T1222}), using Remark~\ref{remark:union} for $l=2$, the region satisfies
\begin{equation}
\bigcup_{t\in [\hat s_2,\hat s(0,h,T_j,H_j)]} E^{t,v_1}_{\hat h_t}\cup E^{\hat s_2, v_1}_{\hat h_2}= E^{\hat s_2, v_1}_{\hat h_2}\cup S_6\cup E^{\hat s(0,h,T_j,H_j),v_1}_{\hat h(0,h,T_j,H_j)},
\label{eq:S6}
\end{equation}
where $\hat h_t=(v_2^2t^2+h^2)^\half$ and $S_2$ is the region outside $E^{\hat s_2, v_1}_{\hat h_2}\cup E^{\hat s(0,h,T_j,H_j),v_1}_{\hat h(0,h,T_j,H_j)}$, but below the hyperbola given by the equation $u^2-\frac{v_1^2v_2^2}{v_1^2-v_2^2}t^2=h^2$, in the $(t,u)$-coordinate system similar to (\ref{eq:2bird-hyp}) and within the time interval 
\[
\left[\frac{v_1^2-v_2^2}{v_1^2}\hat s_2, \frac{v_1^2-v_2^2}{v_1^2}\hat s(0,h,T_j,H_j)\right].
\]
For $l=2$, using the increasing property of $\left\{E^{t,v_2}_{\hat h_t}\setminus E^{\hat s, v_2}_{\hat h}\right\}_{t\geq \hat s_2}$ from part~(\ref{monotonic2}) of Proposition~\ref{prop:monotonic-sets}, we have that
\[
\bigcup_{t\in [\hat s_2,\hat s(0,h,T_j,H_j)]} E^{t,v_2}_{\hat h_t}\cup E^{\hat s, v_2}_{\hat h}= E^{\hat s(0,h,T_j,H_j),v_2}_{\hat h(0,h,T_j,H_j)}\cup  E^{\hat s, v_2}_{\hat h}.
\]
Then the third term in (\ref{eq:T1222}) equals,
\begin{align}
\zeta\bbinom{1,1}{1,2,2}
&=\E_\Hcal\left[ \sum_{(T_j,H_j)\in \Hcal^2 \,\mbox{:}\, T_j\geq 0} \one_{\Hcal^1\left(E^{\hat s(0,h,T_j,H_j), v_1}_{\hat h(0,h,T_j,H_j)}\cup S_6\cup E^{\hat s_2,v_l}_{\hat h_2}\right)=0} \one_{\Hcal^2\left(E^{\hat s(0,h,T_j,H_j), v_2}_{\hat h(0,h,T_j,H_j)}\cup E^{\hat s_2,v_2}_{\hat h_2}\right)=0}\right.\nn\\
&\hspace{4in}\times  e^{-\rho (\hat s(0,h,T_j,H_j)-\hat s_2)} \Big].
\label{eq:T1222x}
\end{align}
The point $(T_j,H_j)$ must lie in the region $R_2(h,t',h')\subset  (\R^+)^2\setminus E^{\hat s_2,v_2}_{\hat h_2}$, defined as 
\begin{equation}
R_2(h,t',h'):=\left\{(t,u)\,\mbox{:}\, t\geq 0,\, u\geq 0,\,v_2^2(t-\hat s_2)^2+u^2\geq \hat h_2^2\right\}.
\label{eq:R2-122}
\end{equation}
For the region $R_2(h,t',h')$ in (\ref{eq:R2-122}), the first condition $t\geq 0$ is due to the discussion in~\ref{case:H1222}. The last two conditions are due to the fact that the next head point must lie outside $\overline{E^{\hat s_2, v_2}_{\hat h_2}}$. By the Campbell-Mecke formula for (\ref{eq:T1222x}), we have
\begin{align}
\zeta\bbinom{1,1}{1,2,2}
&=2\la_2v_2\int_{R_2(h,t',h')} e^{-\rho (\hat s(0,h,t_1,h_1)-\hat s_2)} \E_\Hcal\left[  \one_{\Hcal^1\left(E^{\hat s(0,h,t_1,h_1), v_1}_{\hat h(0,h,t_1,h_1)}\cup S_6\cup E^{\hat s_2,v_l}_{\hat h_2}\right)=0} \right.\nn\\
&\hspace{3in}\left.\times \one_{\Hcal^2\left(E^{\hat s(0,h,t_1,h_1), v_2}_{\hat h(0,h,t_1,h_1)}\cup E^{\hat s_2,v_2}_{\hat h_2}\right)=0}\right]{\rm d}t_1  {\rm d}h_1.
\label{eq:T1224a}
\end{align}
Computing the void probabilities in (\ref{eq:T1224a}) we have
\begin{align}
\zeta\bbinom{1,1}{1,2,2}
&= 2\la_2v_2\int_{0}^{\hat s_2+\hat h_2/v_2}\int_{\left(\hat h_2^2-v_2^2(t_1-\hat s_2)^2\right)^\half}^{\infty} e^{-\rho (\hat s(0,h,t_1,h_1)-\hat s_2)} e^{-2\la_1v_1\left\vert E^{\hat s(0,h,t_1,h_1), v_1}_{\hat h(0,h,t_1,h_1)}\cup S_6\cup  E^{\hat s_2,v_1}_{\hat h_2}\right\vert}\nn\\
&\hspace{3.1in} \times e^{-2\la_2v_2\left\vert E^{\hat s(0,h,t_1,h_1), v_2}_{\hat h(0,h,t_1,h_1)}\cup   E^{\hat s_2,v_2}_{\hat h_2}\right\vert}
{\rm d}t_1  {\rm d}h_1\nn\\
&\;\;+  2\la_2v_2\int_{\hat s_2+\hat h_2/v_2}^\infty\int_0^{\infty} e^{-\rho (\hat s(0,h,t_1,h_1)-\hat s_2)} e^{-2\la_1v_1\left\vert E^{\hat s(0,h,t_1,h_1), v_1}_{\hat h(0,h,t_1,h_1)}\cup S_6\cup  E^{\hat s_2,v_1}_{\hat h_2}\right\vert}
\nn\\
&\hspace{3.1in} \times
e^{-2\la_2v_2\left\vert E^{\hat s(0,h,t_1,h_1), v_2}_{\hat h(0,h,t_1,h_1)}\cup   E^{\hat s_2,v_2}_{\hat h_2}\right\vert}
{\rm d}t_1  {\rm d}h_1.
\label{eq:T1224}
\end{align}
Let us denote the sum of all terms in (\ref{eq:T1223}), (\ref{eq:T1224}) as $\zeta^{(1)}_{1,2}(\rho, t',h',h)=\zeta\bbinom{1,2}{1,2,1}+\zeta\bbinom{1,1}{1,2,1}+\zeta\bbinom{1,1}{1,2,2}$. Using the expression for $\zeta^{(1)}_{1,2}(\rho, t',h',h)$ in (\ref{eq:T1221}) yields
\begin{align}
\frac{L^{(1)}_{1,2}}{4\la_1\la_2 v_1v_2}\E^0_{\Wcal_{1,2}^{(1)}} [e^{-\rho T}]
& =  \int_{0}^\infty  \int_{0}^h \int_{t^*}^\infty 
\zeta^{(1)}_{1,2}(\rho, t',h',h) {\rm d}t'  {\rm d}h' {\rm d}h  +  \int_{0}^\infty  \int_h^\infty \int_0^{\infty}
\zeta^{(1)}_{1,2}(\rho, t',h',h) {\rm d}t'  {\rm d}h' {\rm d}h \nn\\
&:=\xi_{1,2}^{(1)}(\rho, v_1,v_2),
\label{eq:T1225}
\end{align}
which is a function of $\rho, v_1, v_2$ only. \qed
\subsection{Area of the regions $E^{s_1,v}_{\ell_1}\cup E^{s_2,v}_{\ell_2}$}~\label{subsection:Uellipses} The area of union of half ellipses are computed using elementary geometry in the following Lemma. We compute the area of the region $E^{s_1,v}_{\ell_1}\cup E^{s_2,v}_{\ell_2}$ for general speed $v$, also as preparation of the multi-speed case. For any speed $v$, two half-ellipses $E^{s_1,v}_{\ell_1}, E^{s_2,v}_{\ell_2}$ are given by the equations
\begin{align}
    v^2(t-s_1)^2+h^2=\ell_1^2 \text{ and }v^2(t-s_2)^2+h^2=\ell_2^2.
    \label{eq:v_ellipses}
\end{align}

Let the intersection point in $\mathbb H^+$, of the two ellipses in (\ref{eq:v_ellipses}) be $(t(v), h(v))$, which is given by
\begin{equation}
t(v):=\frac{1}{2(s_2-s_1)}\left[(\ell_1^2-\ell_2^2)/v^2-s_1^2+s_2^2\right] \text{ and }h(v):=\left(\ell_1^2-v^2(t(v)-s_1)^2\right)^\half. 
\label{eq:tvhv}
\end{equation}
\begin{figure}[ht!]
\begin{subfigure}[b]
{0.3\linewidth}\begin{tikzpicture}[scale=0.6, every node/.style={scale=0.7}]
\pgftransformxscale{0.7}  
\pgftransformyscale{0.7}    
    \draw[->] (-1, 0) -- (7, 0) node[right] {$t$};
    \draw[](0.1,2.3) node{$\bullet$};
    \draw[](1.3,1.9) node{$(t(v),h(v))$};
    \draw[](1.4,3.4) node{$(s_1,\ell_1)$};
    \draw[](3.3,5.5) node{$(s_2,\ell_2)$};
     \draw[]  (-0.6,0) arc (-180:-360:2 and 3.07);
     \draw[]  (6.5,0) arc (0:180:3.4 and 5.05);
    \draw[](0.7,0.6) node{$E^{s_1,v}_{\ell_1}$};
    \draw[](4.7,2.6) node{$E^{s_2,v}_{\ell_2}$};
    \end{tikzpicture}
\end{subfigure}
    \begin{subfigure}[b]{0.3\linewidth}
      \begin{tikzpicture}[scale=0.5, every node/.style={scale=0.7}]
\pgftransformxscale{0.7}  
\pgftransformyscale{0.7}    
    \draw[->] (-1, 0) -- (9, 0) node[right] {$t$};
    \draw[](2.25,2.8) node{$\bullet$};
    \draw[](3.85,2.95) node{$(t(v),h(v))$};
    \draw[](1.5,3.4) node{$(s_1,\ell_1)$};
    \draw[](5.1,5.4) node{$(s_2,\ell_2)$};
    \draw[]  (-0.47,0) arc (-180:-360:2 and 3.07);
     \draw[]  (8.5,0) arc (0:180:3.4 and 5.05);
     %
    \draw[](0.7,1) node{$E^{s_1,v}_{\ell_1}$};
     \draw[](6,2) node{$E^{s_2,v}_{\ell_2}$};
    \end{tikzpicture}
\end{subfigure}
\;
\begin{subfigure}[b]
{0.3\linewidth}\begin{tikzpicture}[scale=0.6, every node/.style={scale=0.7}]
\pgftransformxscale{0.7}  
\pgftransformyscale{0.7}    
    \draw[->] (-1, 0) -- (8, 0) node[right] {$t$};
    \draw[](5.95,2.65) node{$\bullet$};
    \draw[](7.4,2.9) node{$(t(v),h(v))$};
    \draw[](4.7,3.4) node{$(s_2,\ell_2)$};
    \draw[](3.3,5.5) node{$(s_1,\ell_1)$};
     \draw[]  (3,0) arc (-180:-360:2 and 3.07);
     \draw[]  (6.5,0) arc (0:180:3.4 and 5.05);
    \draw[](0.6,0.6) node{$E^{s_1,v}_{\ell_1}$};
    \draw[](4.7,1.6) node{$E^{s_2,v}_{\ell_2}$};
    \end{tikzpicture}
\end{subfigure}
\captionsetup{width=0.9\linewidth}
\caption{Given $(s_1,\ell_1)$ and $(s_2,\ell_2)$, the area of the union of two half-ellipses should be different for different speed and depending on $t(v)<s_1<s_2$ or $s_1< t(v)<s_2$ or $s_1<s_2<t(v)$.}
\label{figure:mixed_birds12v}
\end{figure}
\vspace{-0.1in}
\begin{lemma}
Let $v>0$ and $(s_1, \ell_1), (s_2, \ell_2)$ be two points such that $s_1<s_2$. Suppose the ellipses in (\ref{eq:v_ellipses}) intersects at $(t(v), h(v))$. Then the area of the region $E^{s_1,v}_{\ell_1}\cup E^{s_2,v}_{\ell_2}$ is given by
\begin{align}
|E^{s_1,v}_{\ell_1}\cup E^{s_2,v}_{\ell_2}|&= \begin{cases}
    \frac{1}{2v}\left[\pi \ell_1^2+vh(v)(s_2-s_1)+F_1(h(v), \ell_1,\ell_2)\right], & \text{ if }s_1<s_2< t(v),\\
    \frac{1}{2v}\left[ \pi \ell_2^2+\pi \ell_1^2+vh(v)(s_2-s_1)-F_2(h(v), \ell_1, \ell_2)\right], &\text{ if } s_1< t(v)<s_2,\\
    \frac{1}{2v}\left[\pi \ell_2^2+vh(v)(s_2-s_1)-F_1(h(v), \ell_1,\ell_2)\right], & \text{ if } t(v)<s_1<s_2,\\
    \end{cases}
\label{eq:MHn72}
\end{align}
where $t(v), h(v)$ as in (\ref{eq:tvhv}) and the functions $F_1, F_2$ are as in (\ref{eq:Lambda1}) and (\ref{eq:Lambda2}).
\label{lemma:EUE}
\end{lemma}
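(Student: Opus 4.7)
The plan is to reduce the ellipse computation to the half-ball computation of Lemma~\ref{lemma:UcupU} by an area-scaling change of variables, and then to match the three geometric cases. Concretely, I would consider the affine map $\phi : \mathbb{H}^+ \to \mathbb{H}^+$ defined by $\phi(t,h) = (vt, h)$. The image of the ellipse equation $v^2(t-s)^2 + h^2 = \ell^2$ under $\phi$ is the circle equation $(t'-vs)^2 + h^2 = \ell^2$, so $\phi(E^{s,v}_{\ell}) = U^{vs}_{\ell}$. Since $\phi^{-1}$ has Jacobian determinant $1/v$, we get the key identity
\begin{equation}
\left|E^{s_1,v}_{\ell_1} \cup E^{s_2,v}_{\ell_2}\right| = \frac{1}{v}\,\left|U^{vs_1}_{\ell_1} \cup U^{vs_2}_{\ell_2}\right|. \nonumber
\end{equation}

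Next I would normalize the configuration so that Lemma~\ref{lemma:UcupU} applies directly: compose $\phi$ with the horizontal translation $(t,h) \mapsto (t - vt(v), h)$, which preserves areas. Under this translation the intersection point $(vt(v), h(v))$ of the two circles moves to $(0, h(v))$, while the centers move to $v(s_1 - t(v))$ and $v(s_2 - t(v))$, with $h(v)$ playing the role of $h_1$ in Lemma~\ref{lemma:UcupU}. In particular, the difference between the two abscissas is $v(s_2 - s_1)$, and the radii $\ell_1, \ell_2$ play the roles of $h$ and $h'$.

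The three cases in the statement correspond to the signs of the shifted centers: $s_1 < s_2 < t(v)$ gives two negative centers (first case of Lemma~\ref{lemma:UcupU}), $s_1 < t(v) < s_2$ gives centers of opposite signs (second case), and $t(v) < s_1 < s_2$ gives two positive centers (third case). Substituting these correspondences into the three formulas of Lemma~\ref{lemma:UcupU} and dividing by $v$ produces exactly the three claimed expressions with the common prefactor $\tfrac{1}{2v}$ and the functions $F_1, F_2$ evaluated at $(h(v), \ell_1, \ell_2)$. There is no substantial obstacle here: the only point requiring mild care is checking that the intersection abscissa $t(v)$ computed from the system (\ref{eq:v_ellipses}) agrees with the translation needed to align with the hypothesis of Lemma~\ref{lemma:UcupU}, which is a direct algebraic verification using the expressions in (\ref{eq:tvhv}).
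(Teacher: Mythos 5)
Your proposal is correct, and it takes a genuinely different route from the paper. The paper proves Lemma~\ref{lemma:EUE} by redoing the direct integration for ellipses: it splits $\left|E^{s_1,v}_{\ell_1}\cup E^{s_2,v}_{\ell_2}\right|$ into the three configurations according to the position of $t(v)$ relative to $s_1,s_2$ and computes $\left|E^{s_2,v}_{\ell_2}\setminus E^{s_1,v}_{\ell_1}\right|$, $\left|E^{s_1,v}_{\ell_1}\cap E^{s_2,v}_{\ell_2}\right|$ and $\left|E^{s_1,v}_{\ell_1}\setminus E^{s_2,v}_{\ell_2}\right|$ by integrating the boundary curves $s\pm\frac{1}{v}(\ell^2-h^2)^{1/2}$ in $h$, exactly mirroring the proof of Lemma~\ref{lemma:UcupU}. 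You instead observe that the anisotropic dilation $(t,h)\mapsto(vt,h)$ carries $E^{s,v}_{\ell}$ onto $U^{vs}_{\ell}$ with Jacobian $v$, so the ellipse statement is the circle statement read backwards through an area-scaling map; after the harmless translation placing the intersection abscissa at $0$, the identifications $s\mapsto v(s_1-t(v))$, $s'\mapsto v(s_2-t(v))$, $h\mapsto\ell_1$, $h'\mapsto\ell_2$, $h_1\mapsto h(v)$ turn the three cases of Lemma~\ref{lemma:UcupU} into precisely the three claimed formulas, with $s'-s=v(s_2-s_1)$ producing the $vh(v)(s_2-s_1)$ term and the overall $\frac{1}{v}$ giving the prefactor $\frac{1}{2v}$. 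Your reduction is shorter, eliminates the duplicated integral computations, and makes transparent why the ellipse formulas are the circle formulas with $s_i$ replaced by $vs_i$ and the whole expression divided by $v$; the paper's self-contained computation, on the other hand, does not rely on the single-intersection-point normalization of Lemma~\ref{lemma:UcupU} being met after the change of variables, though as you note that verification is immediate since affine bijections preserve boundary intersections and the two circle boundaries meet the open upper half-plane in exactly one point.
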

\begin{proof}
The area of the region $E^{s_1,v}_{\ell_1}\cup E^{s_2,v}_{\ell_2}$ is given by
\begin{align}
|E^{s_1,v}_{\ell_1}\cup E^{s_2,v}_{\ell_2}|&=
\begin{cases}
|E^{s_1,v}_{\ell_1}|+|E^{s_2,v}_{\ell_2}\setminus E^{s_1,v}_{\ell_1}|, &\text{ if } s_1<s_2<t(v),\\
|E^{s_1,v}_{\ell_1}|+| E^{s_2,v}_{\ell_2}|- |E^{s_1,v}_{\ell_1}\cap E^{s_2,v}_{\ell_2}|, &\text{ if } s_1<t(v)<s_2,\\
|E^{s_2,v}_{\ell_2}|+|E^{s_1,v}_{\ell_1}\setminus E^{s_2,v}_{\ell_2}|, &\text{ if } t(v)<s_1<s_2.
    \end{cases}
\end{align}
In the first case $s_1<s_2<t(v)$, we have 
\begin{align}
|E^{s_2,v}_{\ell_2}\setminus E^{s_1,v}_{\ell_1}| &= \int_{0}^{h(v)}\int^{s_2+\frac{1}{v}\left(\ell_2^2-h^2\right)^\half}_{s_1+\frac{1}{v}\left(\ell_1^2-h^2\right)^\half} \, {\rm d}t \, {\rm d}h\nn\\
&=h(v)(s_2-s_1)+\frac{1}{v}\int_{0}^{h(v)}\left(\left(\ell_2^2-h^2\right)^\half-\left(\ell_1^2-h^2\right)^\half\right) \, {\rm d}h\nn\\
&=h(v)(s_2-s_1)+\frac{1}{2v}\left[h(v)\left(\ell_2^2-h(v)^2\right)^\half-h(v)\left(\ell_1^2-h(v)^2\right)^\half\right]\nn\\
&\quad +\frac{1}{2v}\left[ \ell_2^2\arcsin{(h(v)/\ell_2)}- \ell_1^2\arcsin{(h(v)/\ell_1)}\right]\nn\\
&\stackrel{(\ref{eq:v_ellipses}),  (\ref{eq:Lambda1})}{=}h(v)(s_2-s_1)+\frac{1}{2v}\left[v h(v) |t(v)-s_2|-vh(v) |t(v)-s_1|+ F_1(h(v), \ell_1,\ell_2)\right]\nn\\
&=h(v)(s_2-s_1)- \half h(v) (s_2-s_1)+\frac{1}{2v}F_1(h(v), \ell_1,\ell_2)\nn\\
&=\half h(v)(s_2-s_1)+\frac{1}{2v} F_1(h(v), \ell_1,\ell_2),
\label{eq:s1s2tv}
\end{align}
Let us now consider the second case, namely $s_1<t(v)<s_2$. Then $
|E^{s_1,v}_{\ell_1}\cup E^{s_2,v}_{\ell_2}|=|E^{s_1,v}_{\ell_1}|+| E^{s_2,v}_{\ell_2}|- |E^{s_1,v}_{\ell_1}\cap E^{s_2,v}_{\ell_2}|$, where the area of the common region is
\begin{align}
|E^{s_1,v}_{\ell_1}\cap E^{s_2,v}_{\ell_2}| &= \int_{0}^{h(v)}\int_{s_2-\frac{1}{v}\left(\ell_2^2-h^2\right)^\half}^{s_1+\frac{1}{v}\left(\ell_1^2-h^2\right)^\half} \, {\rm d}t \, {\rm d}h\nn\\
&= h(v)(s_1-s_2)+\frac{1}{2v}\left[h(v)\left(\ell_1^2-h(v)^2\right)^\half+h(v)\left(\ell_2^2-h(v)^2\right)^\half\right] \nn\\
&\qquad +\frac{1}{2v}\left[\ell_1^2\arcsin{(h(v)/\ell_1)}+ \ell_2^2\arcsin{(h(v)/\ell_2)}\right]\nn\\
&=h(v)(s_1-s_2)+\frac{1}{2v}\left[h(v)\left(\ell_2^2-h(v)^2\right)^\half+ h(v)\left(\ell_1^2-h(v)^2\right)^\half\right] + \frac{1}{2v}F_2(h(v), \ell_1,\ell_2)\nn\\
&\stackrel{(\ref{eq:v_ellipses})}{=} h(v)(s_1-s_2)+\half h(v)\left[|t(v)-s_2|+ |t(v)-s_1| \right]+\frac{1}{2v}F_2(h(v), \ell_1,\ell_2)\nn\\
&= -\half h(v)(s_2-s_1)+\frac{1}{2v} F_2(h(v), \ell_1,\ell_2),
\label{eq:MH5}
\end{align}
since $|t(v)-s_2|+|t(v)-s_1| =s_2-s_1$, since $s_1\leq t(v)\leq s_2$, from the second picture of Figure~\ref{figure:mixed_birds12v}. Above $F_2(h(v), \ell_1,\ell_2)$ is as defined in (\ref{eq:Lambda2}).

In the last case, $t(v)<s_1<s_2$, we have $|E^{s_1,v}_{\ell_1}\cup E^{s_2,v}_{\ell_2}| = |E^{s_2,v}_{\ell_2}|+|E^{s_1,v}_{\ell_1}\setminus E^{s_2,v}_{\ell_2}|$. Thus
\begin{align}
|E^{s_1,v}_{\ell_1}\setminus E^{s_2,v}_{\ell_2}| &= \int_{0}^{h(v)}\int_{s_1-\frac{1}{v}\left(\ell_1^2-h^2\right)^\half}^{s_2-\frac{1}{v}\left(\ell_2^2-h^2\right)^\half} \, {\rm d}t \, {\rm d}h\nn\\
&=h(v)(s_2-s_1)+\frac{1}{v}\int_{0}^{h(v)}\left(\left(\ell_1^2-h^2\right)^\half-\left(\ell_2^2-h^2\right)^\half\right) \, {\rm d}h\nn\\
&=h(v)(s_2-s_1)+\frac{h(v)}{2v}\left[\left(\ell_1^2-h(v)^2\right)^\half-\left(\ell_2^2-h(v)^2\right)^\half\right]- \frac{1}{2v} F_1(h(v), \ell_1,\ell_2) ,
\label{eq:MH4a}
\end{align}
where $F_1(h(v), \ell_1,\ell_2)$ is as defined in (\ref{eq:Lambda1}). Using (\ref{eq:v_ellipses}) in (\ref{eq:MH4a}) we have
\begin{align}
|E^{s_1,v}_{\ell_1}\setminus E^{s_2,v}_{\ell_2}|
&=h(v)(s_2-s_1)+\frac{h(v)}{2v}\left[v \left\{|t(v)-s_1|-|t(v)-s_2|\right\}\right]- \frac{1}{2v}F_1(h(v), \ell_1,\ell_2)\nn\\
&=h(v)(s_2-s_1)+ \half h(v) (s_1-s_2)-\frac{1}{2v}F_1(h(v), \ell_1,\ell_2)\nn\\
&=\half h(v)(s_2-s_1)-\frac{1}{2v} F_1(h(v), \ell_1,\ell_2),
\label{eq:MH4}
\end{align}
since $|t(v)-s_1|- |t(v)-s_2|=s_1-s_2$, as $t(v)<s_1<s_2$, see the first picture of  Figure~\ref{figure:mixed_birds12v}. Thus we have the result.
\end{proof}


\let\oldaddcontentsline\addcontentsline
\renewcommand{\addcontentsline}[3]{}%

\section*{Acknowledgement}\label{sec-Acknowledgement}
The authors would like to thank Bart\l{}omiej B\l{}aszczyszyn, Rapha{\"e}l Lachi\`{e}ze-Rey and Gourab Ghatak for many important discussions and for providing many references. The authors are thankful to the ERC-NEMO grant, under the European Union’s Horizon 2020 research and innovation program, grant agreement number 788851 to INRIA Paris. This research was also funded in part by the France 2030 BPI ``5G NTN mmWave'' project to T{\'e}l{\'e}com Paris.

\let\addcontentsline\oldaddcontentsline 

\end{document}